\newcommand{\dom}{\operatorname{dom}}
\newcommand{\Int}{\operatorname{Int}}
\newcommand{\specrad}{r_{\rm spec}}
\newcommand{\nspace}[2]{\ensuremath{{\mathbb{#1}}^{#2}}}
\newcommand{\module}[1]{\ensuremath{\mathcal{#1}}}
\newcommand{\vecspace}[1]{\ensuremath{\mathcal{#1}}}
\newcommand{\ring}{\ensuremath{\mathcal{R}}}
\newcommand{\field}{\ensuremath{\mathbb{K}}}
\newcommand{\ncspace}[1]{\ensuremath{#1}_{{\rm nc}}}
\newcommand{\ncspacej}[2]{\ensuremath{#1}_{{#2},{\rm nc}}}
\newcommand{\ncspaced}[2]{(\ensuremath{#1}^{{#2}})_{\rm nc}}
\newcommand{\mat}[2]{\ensuremath{{#1}^{#2\times #2}}}
\newcommand{\mattuple}[3]{\ensuremath{\left({#1}^{#2\times #2}\right)^{#3}}}
\newcommand{\rmat}[3]{\ensuremath{{#1}}^{#2\times #3}}
\newcommand{\rmattuple}[4]{\ensuremath{\left({#1}^{#2\times #3}\right)^{#4}}}
\newcommand{\free}{\boldsymbol{\mathcal{G}}}
\newcommand{\brho}{\boldsymbol{\rho}}
\newcommand{\trans}{\top}
\newcommand{\mtrans}{\boldsymbol{\top}}
\newcommand{\cdotop}{\cdot_\text{op}}
\newcommand{\id}{\operatorname{id}}
\newcommand{\rad}{\operatorname{rad}}
\newcommand{\clos}{\operatorname{clos}}
\newcommand{\col}{\operatornamewithlimits{col}}
\newcommand{\row}{\operatornamewithlimits{row}}
\newcommand{\diag}{\operatornamewithlimits{diag}}
\newcommand{\tclass}[1]{\ensuremath{\mathcal{T}^{#1}}}
\newcommand{\tensa}[1]{\mathbf{T}(\ensuremath{#1})}
\newcommand{\spa}{\operatornamewithlimits{span}}
\newcommand{\cb}{\operatorname{\rm cb}}
\newcommand{\cG}{\vecspace{G}}
\newcommand{\cU}{\vecspace{U}}
\newcommand{\cY}{\vecspace{Y}}
\theoremstyle{plain}
\newtheorem{thm}{Theorem}[chapter]
\newtheorem{cor}[thm]{Corollary}
\newtheorem{lem}[thm]{Lemma}
\newtheorem{prop}[thm]{Proposition}
\theoremstyle{definition}
\theoremstyle{remark}
\newtheorem{rem}[thm]{Remark}
\newtheorem{ex}[thm]{Example}
\numberwithin{equation}{chapter}
\numberwithin{section}{chapter}
\begin{document}

\frontmatter

\title{Foundations of Free Noncommutative Function
Theory}

\author{Dmitry S.
Kaliuzhnyi-Verbovetskyi}
\address{Department of Mathematics \\
Drexel University\\
3141 Chestnut Str.\\
 Philadelphia, PA, 19104}
\email{dmitryk@math.drexel.edu}

\author{Victor Vinnikov}
\address{Department of Mathematics \\
Ben-Gurion University of the Negev\\
 Beer-Sheva, Israel, 84105}
\email{vinnikov@math.bgu.ac.il}

\thanks{This work was partially supported by the US--Israel
Binational Science foundation (BSF) grant 2010432. The first
author was partially supported by the US National Science
Foundation (NSF) grant DMS 0901628 and by the Center of Advanced
Studies in Mathematics of Ben-Gurion University of the Negev. The
second author was partially supported by the Israel Science
Foundation (ISF). Part of this work was carried out during the
stays at Banff International Research Station (BIRS) under the
program Research in Teams, and at Mathematisches
Forschungsinstitut Oberwolfach (MFO) under the program Research in
Pairs.}



\date{}

\maketitle

\tableofcontents

\mainmatter

\chapter{Introduction}\label{sec:intro}

The goal of this work is to develop, in a systematic way and in a
full natural generality, the foundations of a theory of functions
of free\footnote{We consider only the case of free noncommuting
variables, namely a free algebra or more generally the tensor
algebra of a module; we will therefore say simply
``noncommutative'' instead of ``free noncommutative''.}
noncommuting variables. This theory offers a unified treatment for
many free noncommutative objects appearing in various branches of
mathematics.

Analytic functions of $d$ noncommuting variables originate in the
pioneering work of J. L. Taylor on noncommutative spectral theory
\cite{T1,T2}. The underlying idea is that a function of $d$
noncommuting variables is a function on $d$-tuples of square
matrices of all sizes that respects simultaneous intertwinings (or
equivalently --- as we will show --- direct sums and simultaneous
similarities). Taylor showed that such functions admit a good
differential (more precisely, difference-differential) calculus,
all the way to the noncommutative counterpart of the classical
(Brook) Taylor formula. Of course a $d$-tuple of matrices (say
over ${\mathbb C}$) \index{${\mathbb C}$} is the same thing as a
matrix over ${\mathbb C}^d$, \index{$\mathbb{C}^d$} so we can view
a noncommutative function as defined on square matrices of all
sizes over a given vector space. This puts noncommutative function
theory in the framework of operator spaces \cite{ER,Pa,Pi}.
Also, noncommutative functions equipped with the difference-differential operator
form an infinitesimal bialgebra \cite{Rota,Agui}.
\footnote{
More precisely, to use our terminology,
we have to consider noncommutative functions with values in the noncommutative space over an algebra
with a directional noncommutative difference-differential operator as a comultiplication,
and it is a topological version of the bialgebra concept where the range of the comultiplication
is a completed tensor product.
See Section \ref{subsub:prod} for the Leibnitz rule, and Section \ref{subsec:integra}
for the coassociativity of the comultiplication.
We will not pursue the infinitesimal bialgrebra viewpoint explicitly.}
The theory has been pushed forward by Voiculescu \cite{Voi00,Voi04,Voi09},
with an eye towards applications in free probability
\cite{Voi85,Voi86,Voi95,VoiDyNi}.
We mention also the work of Hadwin \cite{Ha78} and Hadwin--Kaonga--Mathes \cite{HaKaMa03},
of Popescu \cite{Po06,Po10,Po12,Po13}, of Helton--Klep--McCullough \cite{HKMcCS,HKMcC1,HKMcC2,HKMcC3},
and of Muhly--Solel \cite{MS,MS3}.
The (already non-trivial) case of functions of a single noncommutative variable\footnote{
See Remark \ref{rem:ncfun_1var}.}
was considered by Schanuel \cite{Sch} (see also Schanuel--Zame \cite{SchZa})
and by Niemiec \cite{Nie}.

In a purely algebraic setting, polynomials and rational functions
in $d$ noncommuting indeterminates and their evaluations on
$d$-tuples of matrices of an arbitrary fixed size (over a
commutative ring $\ring$) \index{$\ring$} are central objects in
the theory of polynomial and rational identities; see, e.g.,
\cite{Row80,Form}.
A deep and detailed study of the ring of noncommutative polynomials and the skew field
of noncommutative rational functions has been pursued in the work of P. M. Cohn \cite{Co71,Co06}.
The noncommutative difference-differential operator in the setting of noncommutative polynomials
is well known as the universal derivation on the free algebra, see \cite{Lew,BerDicks,DicksLew}.

In systems and control, noncommutative rational functions and
formal power series appear naturally as recognizable formal power
series of the theory of automata and formal languages
\cite{Kle,Schutz61,Schutz62b,Fliess70,Fliess74a,Fliess74b,BR} and
as transfer functions of multidimensional systems with evolution
along the free monoid \cite{Cuntz2,BGM1,BGM2,BGM3,AK0,BK-V}. In
particular, transfer functions of conservative noncommutative
multidimensional systems are characterized as formal power series
whose values on a certain class of $d$-tuples of operators are
contractive (matrix evaluations actually suffice --- see
\cite{AK}).
Such classes of formal power series appear
as the noncommutative generalization of the classical Schur class
of contractive analytic functions on the unit disc \cite{AA,Helton-scat,BC}
in the operator model theory for row contractions and more general noncommuting operator tuples
\cite{Popescu-model1,Popescu-model2,Popescu-CLT1,Popescu-CLT2,Popescu-OTncdom}
and in the representation theory of the Cuntz algebra \cite{BJ,DP},
see also the
generalized Hardy algebras associated to a $W^*$-correspondence \cite{MS1,MS2,BBFtH}.

Coming from a different direction, it turns out that most
optimization problems appearing in systems and control are
dimension-independent, i.e., the natural variables are matrices,
and the problem involves rational expressions in these matrix
variables which have therefore the same form independent of matrix
sizes; see \cite{H03}. This leads to exploring such techniques as
Linear Matrix Inequalities (LMIs) --- see, e.g.,
\cite{NN,N06,SIG97} --- in the context of noncommutative convexity
and noncommutative real semialgebraic geometry, where one
considers polynomials and rational functions in $d$ noncommuting
indeterminates evaluated on $d$-tuples of matrices over
$\mathbb{R}$ \cite{HSOS,HMcCDeg2,HMcCV,HMcCPV,HKMcC-freecon,HMcC-last}.

A key feature of noncommutative functions that we establish in
this work is very strong analyticity under very mild assumptions.
In an algebraic setting, this means that a noncommutative function
which is polynomial in matrix entries when it is evaluated on
$n\times n$ matrices, $n=1,2,\ldots$, of bounded degree, is a
noncommutative polynomial. In an analytic setting, local
boundedness implies the existence of a convergent noncommutative
power series expansion.

Difference-differential calculus for noncommutative rational
functions, its relation to matrix evaluations, and applications
were considered in \cite{KVV2,KVV3}.

Recent papers \cite{PV,BPV1,BPV2} used the results of the present
work on noncommutative function theory to study noncommutative
infinite divisibility and limit theorems in operator-valued free
probability. For instance, in the scalar-valued case a measure is
free infinitely divisible if and only if its so-called
$R$-transform has positive imaginary part on the complex upper
half-plane (\cite{BeVoi93}, see also \cite{NS}
--- this is the free analogue of the classical Levi--Hin\c cin
Theorem). One of the main results of \cite{PV} is a similar
statement in the operator-valued case except that the
$R$-transform of an operator-valued distribution is a
noncommutative function.

In a recent paper \cite{AKV}, a general fixed point
theorem for noncommutative functions has been proved, and, in
particular, the corresponding variation of the Banach contraction
mapping theorem has been obtained. This result was applied then to
prove the existence and uniqueness theorem for ODEs in
noncommutative spaces. In addition, a noncommutative version of
the principle of nested closed sets has been established.

The recent papers \cite{AgMcC1,AgMcC2,AgY} made further progress in noncommutative function theory.
Specifically, \cite{AgMcC1} established an analogue of the realization theorem of \cite{AT} and \cite{BB}
for noncommutative functions on a domain defined by a matrix noncommutative polynomial
(the result was originally established for special cases in \cite{BGM2} in the framework of noncommutative power series,
see Section \ref{subsec:realize} below for a further discussion).
\cite{AgMcC1} used this realization result to establish noncommutative analogues of the Oka--Weil approximation theorem
and of the Carleson corona theorem.
\cite{AgMcC2} applied these ideas to a noncommutative version of the Nevanlinna--Pick interpolation problem,
and \cite{AgY} gave an application to symmetric functions of two noncommuting variables.

We proceed now to give some motivating examples of noncommutative
functions followed by their definition; we then discuss the
difference-differential calculus and present some of the main
results of the theory, and we finish the introduction with a detailed
overview.
We postpone a review of and a comparison to some of the earlier work on the subject
to the short chapter at the end of the book.

\section*{Acknowledgements} We wish to thank Mihai Putinar for
directing our attention to the work of J. L. Taylor, and
Shibananda Biswas for a careful reading of the manuscript and
valuable suggestions. It is also a pleasure to thank J. A. Ball,
S. Belinschi, M. Dritschel, J. W. Helton, I. Klep, S. McCullough,
P. Muhly, M. Popa, and B. Solel for fruitful discussions.
Last but not least, we would like to thank the three anonymous referees for their many helpful remarks.

\section{Noncommutative (nc) functions: examples and genesis}
\label{subsec:ncpoly}

Let $\ring$ be a unital commutative ring, and let $\ring\langle
x_1,\ldots,x_d\rangle$ \index{$\ring\langle
x_1,\ldots,x_d\rangle$} be the ring of nc polynomials (the free
associative algebra) over $\ring$. Here $x_1,\ldots,x_d$ are nc
indeterminates, and $f \in \ring\langle x_1,\ldots,x_d\rangle$ is
of the form
\begin{equation} \label{eq:ncpoly}
f = \sum_{w \in \free_d} f_w x^w,
\end{equation}
where $\free_d$ \index{$\free_d$} denotes the free monoid on
$d$ generators (letters) $g_1,\ldots,g_d$ with identity
$\emptyset$ (the {empty word}), \index{$\emptyset$} $f_w \in
\ring$, $x^w$ are nc monomials in $x_1,\ldots,x_d$
($x^w=x_{j_1}\cdots x_{j_m}$ \index{$x^w$} for $w=g_{j_1}\cdots
g_{j_m}\in\free_d$ and $x^\emptyset =1$), and the sum is finite.
$f$ can be evaluated in an obvious way on $d$-tuples of square
matrices of all sizes over $\ring$: for $X=(X_1,\ldots,X_d) \in
\mattuple{\ring}{n}{d}$, \index{$\mattuple{\ring}{n}{d}$}
\begin{equation} \label{eq:ncpolyeval}
f(X) =\sum_{w \in \free_d}f_w X^w = \sum_{w \in \free_d} X^w f_w
\in \mat{\ring}{n}. \index{$\mat{\ring}{n}$}
\end{equation}
\index{$X^w$}

We can also consider nc formal power series or nc rational
functions. The ring $\ring\langle\langle
x_1,\ldots,x_d\rangle\rangle$ \index{$\ring\langle\langle
x_1,\ldots,x_d\rangle\rangle$} of nc formal power series over
$\ring$ is the (formal) completion of the ring of nc polynomials;
$f \in \ring\langle\langle x_1,\ldots,x_d\rangle\rangle$ is of the
same form as in \eqref{eq:ncpoly}, except that the sum is in
general infinite. There are two settings in which we can define
the evaluation of $f$ on $d$-tuples of square matrices:
\begin{itemize}
\item Assume that $X=(X_1,\ldots,X_d) \in \mattuple{\ring}{n}{d}$
is a jointly nilpotent $d$-tuple, i.e., $X^w = 0$ for all $w \in
\free_d$ with $|w| \ge k$ for some $k$, where $|w|$ \index{$\vert
w\vert$} denotes the length of the word $w$; when $\ring=\field$
\index{$\field$} is a field, this simply means that $X$ is jointly
similar to a $d$-tuple of strictly upper triangular matrices. Then
we can define $f(X)$ as in \eqref{eq:ncpolyeval}, since the sum is
actually finite. \item Assume that $\ring=\field$ is the field of
real or complex numbers and that $f$ has a positive nc multiradius
of convergence, i.e., there exists a $d$-tuple
$\rho=(\rho_1,\ldots,\rho_d)$ of strictly positive numbers such
that
\begin{equation*}
\operatornamewithlimits{limsup}_{k \to \infty}
\sqrt[k]{\sum_{|w|=k} |f_w| \rho^w} \leq 1.
\end{equation*}
Then we can define $f(X)$ as in \eqref{eq:ncpolyeval}, where the
infinite series converges absolutely and uniformly on any \emph{nc
polydisc} \index{nc polydisc}
\begin{equation*}
\coprod_{n=1}^\infty \left\{X \in \left({\field}^{n \times
n}\right)^d \colon \|X_j\| < r_j,\ j=1,\ldots,d\right\}
\end{equation*}
of multiradius $r=(r_1,\ldots,r_d)$ with $r_j < \rho_j$,
$j=1,\ldots,d$.
\end{itemize}

The skew field of nc rational functions over a field $\field$ is
the universal skew field of fractions of the ring of nc
polynomials over $\field$. This involves some non-trivial details
since unlike the commutative case, a nc rational function does not
admit a canonical coprime fraction representation; see
\cite{Am66,Be70,Co71a,Co72} for some of the original
constructions, and \cite[Chapter 8]{Row80} and \cite{Co71,Co06}
for good expositions and background. The following quick
description follows \cite{KVV2,KVV3}, to which we refer for both
details and further references. We first define (scalar) nc
rational expressions by starting with nc polynomials and then
applying successive arithmetic operations --- addition,
multiplication, and inversion. A nc rational expression $r$ can be
evaluated on a $d$-tuple $X$ of $n \times n$ matrices in its {\em
domain of regularity}, $\dom{r}$, \index{$\dom{r}$} which is
defined as the set of all $d$-tuples of square matrices of all
sizes such that all the inverses involved in the calculation of
$r(X)$ exist. (We assume that $\dom{r} \neq \emptyset$, in other
words, when forming nc rational expressions we never invert an
expression that is nowhere invertible.) Two nc rational
expressions $r_1$ and $r_2$ are called {\em equivalent} if
$\dom{r_1} \cap \dom{r_2} \neq \emptyset$ and $r_1(Z) = r_2(Z)$
for all $d$-tuples $Z \in \dom{r_1} \cap \dom{r_2}$. We define a
{\em nc rational function} ${\mathfrak r}$ to be an equivalence
class of nc rational expressions; notice that it has a
well-defined evaluation on $\bigcup_{r \in {\mathfrak r}} \dom{r}$
(in fact, on a somewhat larger set called the extended domain of
regularity of ${\mathfrak r}$ \index{$\mathfrak{r}$}).

We notice that in all these cases the evaluation of a formal
algebraic object $f$ (a nc polynomial, formal power series, or
rational function) on $d$-tuples of matrices possesses two key
properties.
\begin{itemize}
\item $f$ \emph{respects direct sums}: \index{respecting direct
sums} $f(X \oplus Y) = f(X) \oplus f(Y)$, where
\begin{equation*}
X \oplus Y =  (X_1 \oplus Y_1,\ldots,X_d \oplus Y_d) = \left(
\begin{bmatrix} X_1 & 0 \\ 0 & Y_1 \end{bmatrix}, \ldots,
\begin{bmatrix} X_d & 0 \\ 0 & Y_d \end{bmatrix} \right)
\end{equation*}
(we assume here that $X=(X_1,\ldots,X_d)$, $Y=(Y_1,\ldots,Y_d)$
are such that $f(X)$, $f(Y)$ are both defined). \item $f$
\emph{respects simultaneous similarities}: \index{respecting
simultaneous similarities} $f(T X T^{-1}) = T f(X) T^{-1}$, where
\begin{equation*}
T X T^{-1} =  (T X_1 T^{-1}, \ldots, T X_d T^{-1})
\end{equation*}
(we assume here that $X=(X_1,\ldots,X_d)$ and $T$ are such that
$f(X)$ and $f(T X T^{-1})$ are both defined).
\end{itemize}

More generally, one can consider a $p \times q$ matrix nc
polynomial $f$ with coefficients $f_w \in \rmat{\field}{p}{q}$ and
with evaluation
\begin{equation} \label{eq:ncpolyeval-mv}
f(X) = \sum_{w \in \free_d} X^w \otimes f_w \in \mat{\field}{n}
\otimes \rmat{\field}{p}{q} \cong \rmat{\field}{np}{nq}
\end{equation}
for $X \in \mattuple{\field}{n}{d}$, and similarly for
 nc formal power series with matrix coefficients and matrix nc rational
functions.\footnote{Notice that, unlike in \cite{KVV2} and
\cite{KVV3}, we write the coefficients $f_w$ on the right.} It is
still true that the evaluation of $f$ on matrices respects direct
sums and simultaneous similarities (where for similarities we
replace $T f(X) T^{-1}$ by $(T \otimes I_p) f(X) (T \otimes
I_q)^{-1}$). In the case $\field={\mathbb C}$ or
$\field=\mathbb{R}$, one can also consider operator nc polynomials
and formal power series.

{\em Quasideterminants} \cite{GRet1,GRet2,GGRetW} and {\em nc
symmetric functions} \cite{GKLLRT} are important examples of nc
rational functions, whereas {\em formal Baker--Campbell--Hausdorff
series} \cite{Dyn} are an important example of nc formal power
series. Let us also mention here {\em nc continued fractions}
\cite{Wed}.

\section{NC sets, nc functions, and nc difference-differential calculus}
\label{subsec:ncfun}

Both for the sake of potential applications and for the sake of
developing the theory in its natural generality, it turns out that
the proper setting for the theory of nc functions is that of
matrices of all sizes over a given vector space or a given module.
In the special case when the module is $\ring^d$, $n \times n$
matrices over $\ring^d$ can be identified with $d$-tuples of $n
\times n$ matrices over $\ring$, and we recover nc functions of
$d$ variables, key examples of which appeared in Section
\ref{subsec:ncpoly}.

Let $\module{M}$ \index{$\module{M}$} be a module over a unital
commutative ring $\ring$; we call
\begin{equation*}
\ncspace{\module{M}} = \coprod_{n=1}^\infty \mat{\module{M}}{n}
\end{equation*}
\index{$\mat{\module{M}}{n}$}\index{$\ncspace{\module{M}}$}the
\emph{nc space over $\module{M}$}. \index{nc space} A subset
$\Omega \subseteq \ncspace{\module{M}} $ \index{$\Omega$} is
called a \emph{nc set} \index{nc set} if it is closed under direct
sums, i.e., we have
\begin{equation*}
X \oplus Y = \begin{bmatrix} X & 0 \\ 0 & Y \end{bmatrix} \in
\Omega_{n+m}
\end{equation*}
\index{$X\oplus Y$}for all $n, m \in {\mathbb N}$ and all $X \in
\Omega_n$, \index{$\Omega_n$} $Y \in \Omega_m$, where we denote
$\Omega_n = \Omega \cap \mat{\module{M}}{n}$. NC sets are the only
reasonable domains for nc functions, but additional conditions on
the domain are needed for the development of the nc
difference-differential calculus. Essentially, we need the domain
to be closed under formation of upper-triangular block matrices
with an arbitrary upper corner block, but this is too stringent a
requirement (e.g., this is false for nc polydiscs or nc balls ---
see Section \ref{subsubsec:os} below). The proper notion turns out
to be as follows: a nc set $\Omega \subseteq \ncspace{\module{M}}$
is called \emph{right admissible} \index{right admissible nc set}
\footnote{``Upper admissible'' could have been a more appropriate
terminology, however we stick with ``right admissible'' since it
is related to the right difference-differential operator, see
below. A similar comment applies to the analogous notion of ``left
admissible'' which could have been called ``lower admissible''.}
if for all $X \in \Omega_n$, $Y \in \Omega_m$ and all $Z \in
\rmat{\module{M}}{n}{m}$ there exists an invertible $r\in\ring$
such that
\begin{equation*}
\begin{bmatrix} X & r Z \\ 0 & Y \end{bmatrix} \in \Omega_{n+m}.
\end{equation*}

Our primary examples of right admissible nc sets are as follows:
\subsection{}\label{subsub:Nilp} The set $\Omega={\rm Nilp}(\module{M})$ \index{${\rm Nilp}(\module{M})$} of
nilpotent matrices over a module $\module{M}$. Here $X \in
\mat{\module{M}}{n}$ is called nilpotent if $X^{\odot k} = 0$
\index{$X^{\odot k}$} for some $k$, where $X^{\odot k}$ denotes
the power of $X$ as a matrix over the tensor algebra
\begin{equation*}
{\mathbf T}(\module{M}) = \bigoplus_{j=0}^\infty
\module{M}^{\otimes j}
\end{equation*}
\index{${\mathbf T}(\module{M})$}of $\module{M}$ (this is often
called the ``faux'' product in operator space theory, when
$\module{M}=\vecspace{V}$ is an operator space); in the case where
$\ring=\field$ is a field, this means that there exists an
invertible $T \in \mat{\field}{n}$ such that $T X T^{-1}$ is
strictly upper triangular.
\subsection{}\label{subsub:queen} Assume that $\vecspace{V}$ is a Banach space (so
$\field={\mathbb C}$ or $\field=\mathbb{R}$) and that so are the
spaces $\mat{\vecspace{V}}{n}$, $n=2,3,\ldots$. We usually need
the topologies on $\mat{\vecspace{V}}{n}$ to be compatible in the
following sense: we require that the corresponding system of
matrix norms $\|\cdot\|_n$ \index{$\Vert\cdot\Vert_n$} is
\emph{admissible}, \index{admissible system of matrix norms} i.e.,
for every $n,m\in\mathbb{N}$ there exist $C_1(n,m)$, $C_1'(n,m)>0$
    such that for all $X\in\mat{\vecspace{V}}{n}$ and
    $Y\in\mat{\vecspace{V}}{m}$,
    \begin{multline}\label{eq:intr-dirsums-norms}
C_1(n,m)^{-1}\max\{\|X\|_n,\| Y\|_m\}\le\| X\oplus Y\|_{n+m}\\
\le
C_1'(n,m)\max\{\|X\|_n,\| Y\|_m\},
    \end{multline}
and
   for every $n\in\mathbb{N}$ there exists $C_2(n)>0$ such that for all
    $X\in\mat{\vecspace{V}}{n}$ and
    $S,T\in\mat{\field}{n}$,
    \begin{equation}\label{eq:intr-simprod-norms}
\| SXT\|_{n}\le C_2(n)\|S\|\,\|X\|_n\|T\|,
    \end{equation}
   where $\|\cdot\|$ denotes the operator norm of
   $\mat{\field}{n}$ with respect to the standard Euclidean
   norm of $\field^n$. If $\Omega\subseteq\ncspace{\vecspace{V}}$ is open in the
sense that $\Omega_n \subseteq {\mathcal V}^{n \times n}$ is open
for all $n$, then $\Omega$ is right admissible.
\subsection{} \label{subsubsec:os}
Assume that ${\mathcal V}$ is an \emph{operator space},
\index{operator space} see, e.g., \cite{ER,Pa,Pi,Ru}. Recall that
this means (by Ruan's Theorem) that there exists a system of norms
$\|\cdot\|_n$ on ${\mathcal V}^{n \times n}$, $n=1,2,\ldots$,
satisfying
\begin{align}\label{eq:os-dirsum-norms}
& \|X \oplus Y\|_{n+m} = \max\{\|X\|_n,\|Y\|_m\}
\quad\text{for all } X \in {\mathcal V}^{n \times n},\, Y \in {\mathcal V}^{m \times m},\\
\intertext{and} & \|T X S\|_n \leq \|T\| \|X\|_n \|S\|
\quad\text{for all } X \in {\mathcal V}^{n \times n},\, T,S \in
{\mathbb C}^{n \times n}. \label{eq:os-simprod-norms}
\end{align}
Clearly, this system of norms is admissible, with the constants in
\eqref{eq:intr-dirsums-norms} and \eqref{eq:intr-simprod-norms}
satisfying
$$C_1(n,m)=C_1'(n,m)=C_2(n)=1,\quad n,m\in\mathbb{N}.$$  For $Y\in\mat{\vecspace{V}}{s}$ and
$r>0$, define a \emph{nc ball centered at $Y$ of radius $r$}
\index{nc ball} as
\begin{equation*}
B_{\mathrm{nc}}(Y,r)=\coprod_{m=1}^\infty
B\Big(\bigoplus_{\alpha=1}^mY,r\Big) =\coprod_{m=1}^\infty\Big\{
X\in\mat{\vecspace{V}}{sm}\colon \Big\|
X-\bigoplus_{\alpha=1}^mY\Big\|_{sm}<r\Big\}.
\end{equation*}
\index{$B_{\mathrm{nc}}(Y,r)$}NC balls form a basis for a
topology on $\ncspace{\vecspace{V}}$, that we call the
\emph{uniformly-open topology},\index{uniformly-open topology}
which is weaker than the disjoint union topology of Section
\ref{subsub:queen}. In particular, every uniformly-open nc set is
right admissible.

Let $\module{M}$ and $\module{N}$ be modules over a unital
commutative ring $\ring$, and let $\Omega \subseteq
\ncspace{\module{M}}$ be a nc set. A function $f \colon \Omega \to
\ncspace{\module{N}}$ with $f(\Omega_n) \subseteq
\mat{\module{N}}{n}$ is called a \emph{nc function} \index{nc
function} if:
\begin{itemize}
\item $f$ \emph{respects direct sums}: \index{respecting direct
sums} $f(X \oplus Y) = f(X) \oplus f(Y)$ for all $X \in \Omega_n$,
$Y\in \Omega_m$. \item $f$ \emph{respects similarities}:
\index{respecting similarities} $f(T X T^{-1}) = T f(X) T^{-1}$
for all $X \in \Omega_n$ and invertible $T \in \mat{\ring}{n}$
such that $T X T^{-1} \in \Omega_n$.
\end{itemize}
It turns out that these two conditions are equivalent to a single
one: $f$ \emph{respects intertwinings},\index{respecting
intertwinings} namely if $X S = S Y$ then $f(X) S = S f(Y)$, where
$X \in \Omega_n$, $Y \in \Omega_m$, and $S \in
\rmat{\ring}{n}{m}$. This condition originates in the pioneering
work of Taylor \cite{T2}. We denote the module of nc functions on
$\Omega$ with values in $\ncspace{\module{N}}$ by
$\tclass{}(\Omega;\ncspace{\module{N}})$.

The main idea behind the nc difference-differential calculus is to
evaluate a nc function on block upper triangular matrices. Let $f$
be a nc function on a right admissible nc set $\Omega \subseteq
\ncspace{\module{M}}$ with values in $\ncspace{\module{N}}$. Let
$X\in\Omega_n$, $Y\in\Omega_m$, and $Z\in\rmat{\module{M}}{n}{m}$,
and let $r\in\ring$ be invertible and such that
{\small $\left[\begin{matrix} X & r Z\\
0 & Y
\end{matrix}\right]\in\Omega_{n+m}$.} Then it turns out that
\begin{equation*}
f\left(\begin{bmatrix} X & r Z\\
0 & Y
\end{bmatrix}\right)=\begin{bmatrix} f(X) & \ \Delta_R f(X,Y)(rZ)\\
0 & f(Y)
\end{bmatrix},
\end{equation*}
where the mapping $Z\mapsto\Delta_Rf(X,Y)(Z)$
\index{$\Delta_Rf(X,Y)(Z)$} from $\rmat{\module{M}}{n}{m}$ to
$\rmat{\module{N}}{n}{m}$ is $\ring$-linear.

We will call $\Delta=\Delta_R$ \index{$\Delta_R$} the \emph{right
nc difference-differential operator}.\index{right nc
difference-differential operator} (The left nc
difference-differential operator $\Delta_L$ \index{$\Delta_L$}
\index{left nc difference-differential operator} can be defined
analogously via evaluations on block lower triangular matrices.)
Its main property is that for all $n,m\in\mathbb{N}$,
$X\in\Omega_n$, $Y\in\Omega_m$, and $S\in\rmat{\ring}{m}{n}$ one
has
\begin{equation}\label{eq:gen-fin-dif}
Sf(X)-f(Y)S = \Delta f(Y,X)(SX-YS).
\end{equation}
In particular, when $n=m$ and $S=I_n$, we have the following
formula of  finite differences:
\begin{equation}\label{eq:fin-dif}
f(X)-f(Y) = \Delta f(Y,X)(X-Y)\quad (=\Delta f(X,Y)(X-Y)).
\end{equation}
Thus, the linear mapping $\Delta f (Y,Y)(\cdot)$ plays the role of
a nc  differential.

Let $\ring=\field$ be a field of real  or complex numbers. Setting
$X=Y+tZ$ (with $t\in\field$), we obtain from \eqref{eq:fin-dif}
that
$$f(Y+tZ)-f(Y)=t\Delta f(Y,Y+tZ)(Z).$$
Under appropriate continuity conditions, it follows that $\Delta
f(Y,Y)(Z)$ is the directional derivative of $f$ at $Y$ in the
direction $Z$.

In the case of $\module{M}=\ring^d$, \eqref{eq:fin-dif} turns into
\begin{equation}\label{eq:elagr}
f(X)-f(Y)=\sum_{i=1}^d\Delta_i f(Y,X)(X_i - Y_i),\quad
X,Y\in\Omega_n,
\end{equation}
where $\Delta_i f(Y,X)(C)=\Delta f(Y,X)(0,\ldots,0,C,0,\ldots,0)$,
and $C\in\mat{\ring}{n}$ is at the $i$-th position. The linear
mapping $\Delta_i f(Y,Y)(\cdot)$ plays the role of an $i$-th
partial nc differential at the point $Y$.

For $\module{M}_0$, $\module{M}_1$, $\module{N}_0$, $\module{N}_1$
modules over a unital commutative ring $\ring$, and
$\Omega^{(0)}\subseteq \ncspacej{\module{M}}{0}$,
$\Omega^{(1)}\subseteq \ncspacej{\module{M}}{1}$ nc sets, we
define a \emph{nc function of order 1} to be a function $f$ on
$\Omega^{(0)}\times\Omega^{(1)}$ so that for $X^0 \in
\Omega^{(0)}_{n_0}$ and $X^1 \in \Omega^{(1)}_{n_1}$, $f(X^0,X^1)
\colon \rmat{\module{N}_1}{n_0}{n_1} \to
\rmat{\module{N}_0}{n_0}{n_1}$ is a linear mapping, and so that
$f$ respects, in a natural way, direct sums and similarities in
each argument. A typical nc function of order 1 is $f(X,Y)(Z) =
f_0(X) (Z f_1(Y))$, where $f_0 \in
\tclass{}(\Omega^{(0)};\ncspacej{\module{N}}{0})$ and $f_1 \in
\tclass{}(\Omega^{(1)};\ncspacej{\module{N}^*}{1})$. We denote the
class of nc functions of order 1 by
$\tclass{1}(\Omega^{(0)},\Omega^{(1)};\ncspacej{\module{N}}{0},\ncspacej{\module{N}}{1})$.

It turns out that for $f \in
\tclass{}(\Omega;\ncspace{\module{N}})$
\index{$\mathcal{T}(\Omega;\ncspace{\module{N}})$} one has $\Delta
f \in
\tclass{1}(\Omega,\Omega;\ncspace{\module{N}},\ncspace{\module{M}})$.
More generally, one can define \emph{nc functions of order $k$},
\index{nc function of order $k$}
$$\tclass{k}(\Omega^{(0)},\ldots,\Omega^{(k)};\ncspacej{\module{N}}{0},\ldots,\ncspacej{\module{N}}{k}),$$
\index{$\tclass{k}(\Omega^{(0)},\ldots,\Omega^{(k)};\ncspacej{\module{N}}{0},\ldots,\ncspacej{\module{N}}{k})$}where
$\Omega^{(0)}\subseteq\ncspacej{\module{M}}{0}$, \ldots,
$\Omega^{(k)}\subseteq\ncspacej{\module{M}}{k}$, to be functions
of $k+1$ arguments in
$\Omega^{(0)}_{n_0},\ldots,\Omega^{(k)}_{n_k}$ whose values are
$k$-linear mappings
$$
\rmat{\module{N}_1}{n_0}{n_1} \times \cdots\times
\rmat{\module{N}_k}{n_{k-1}}{n_k} \longrightarrow
\rmat{\module{N}_0}{n_0}{n_k},
$$
and that respect, in a natural way, direct sums and similarities
in each argument. There is a difference-differential operator
$\Delta \colon \tclass{k} \to \tclass{k+1}$, so that by iteration
we obtain for $f \in \tclass{}(\Omega;\ncspace{\module{N}})$ that
$\Delta^\ell f \in
\tclass{\ell}(\Omega,\ldots,\Omega;\ncspace{\module{M}},\ncspace{\module{N}},\ldots,\ncspace{\module{N}})$.
(We view usual nc functions as nc functions of order 0:
$\tclass{}(\Omega;\ncspace{\module{N}})=\tclass{0}(\Omega;\ncspace{\module{N}})$.)
Rather than using iterations, we can also calculate $\Delta^\ell
f(X^0,\ldots,X^\ell)(Z^1,\ldots,Z^\ell)$ directly, by evaluating
$f$ on a block matrix having $X^0,\ldots,X^\ell$ on the main
diagonal, $Z^1,\ldots,Z^\ell$ just above the main diagonal, and
all the other block entries zero, and taking the $(1,\ell+1)$-th
block entry.

The difference-differential operators $\Delta$ (and $\Delta_i$)
are obviously linear. They also satisfy a version of the chain
rule (for a composition of nc functions), and --- in the case when
a product operation is defined --- a version of the Leibnitz rule.

Using these higher order difference-differential operators,
iterating the first order finite difference formula
\eqref{eq:fin-dif}, and using the fact that direct sums are
respected, leads to the following nc counterpart of the classical
Taylor formula (see, e.g., \cite{Sh} for the commutative
multivariable version), that we call the \emph{Taylor--Taylor
formula} (or simply the \emph{TT formula}),\index{Taylor--Taylor
(TT) formula} in honor of Brook Taylor and of Joseph L. Taylor.
Let $f\colon \Omega \to\ncspace{\module{N}}$ be a nc function on a
right admissible nc set $\Omega\subseteq\ncspace{\module{M}}$, let
$s \in {\mathbb N}$, and let $Y \in \Omega_s$; then for all
$m\in\mathbb{N}$, $X \in \Omega_{ms}$, and  $N=0,1,\ldots$ one has
\begin{multline}\label{eq:tt}
f(X) = \sum_{\ell=0}^N \Big(X - \bigoplus_{\alpha=1}^m
Y\Big)^{\odot_s\,\ell} \Delta^\ell f(\underset{\ell+1\ {\rm times}}{\underbrace{Y,\ldots,Y}}) \\
+ \Big(X - \bigoplus_{\alpha=1}^m Y\Big)^{\odot_s\,N+1}
\Delta^{N+1}f(\underset{N+1\ {\rm
times}}{\underbrace{Y,\ldots,Y}},X).
\end{multline}
Here $(X - \bigoplus_{\alpha=1}^m Y)^{\odot_s\,\ell}$
\index{$X^{\odot_s\,\ell}$} denotes the $\ell$-th power of $X -
\bigoplus_{\alpha=1}^m Y$ viewed as a $m \times m$ matrix over the
tensor algebra ${\mathbf T}(\mat{\module{M}}{s})$; this power is a
$m \times m$ matrix over $\mattuple{\module{M}}{s}{\otimes \ell}$,
to which the
 mapping $\Delta^\ell f(Y,\ldots,Y)$ viewed as a linear mapping on
$\mattuple{\module{M}}{s}{\otimes \ell}$ is applied entrywise,
yielding a $m \times m$ matrix over $\mat{\module{N}}{s}$, i.e.,
an element of $\mat{\module{N}}{ms}$. The remainder term has a
similar meaning.

The TT formula \eqref{eq:tt} makes it natural to consider the
infinite TT series of $f$ around $Y \in \Omega_s$,
\begin{equation} \label{eq:ttseries}
\sum_{\ell=0}^\infty \Big(X - \bigoplus_{\alpha=1}^m
Y\Big)^{\odot_s\,\ell} \Delta^\ell f(Y,\ldots,Y).
\end{equation}
In the case $\module{M}=\nspace{\ring}{d}$ and $Y=\mu \in
\nspace{\field}{d}$ is a scalar point (so $s=1$),
\eqref{eq:ttseries} can be rewritten as an ordinary nc power
series using the $d$-tuple of partial nc difference-differential
operators $\Delta=(\Delta_1,\ldots,\Delta_d)$ (with an obvious
abuse of notation),
\begin{equation} \label{eq:ttseries_free}
\sum_{w \in \free_d} (X - \mu I_m)^w \Delta^{w^\top}\!
f(\mu,\ldots,\mu),
\end{equation}
where $\Delta^{w^\top}=\Delta_{i_\ell}\cdots\Delta_{i_1}$ for a
word $w=g_{i_1}\cdots g_{i_\ell} \in \free_d$. There is a version
of \eqref{eq:ttseries_free} for a general matrix center $Y$:
\begin{equation} \label{eq:matr-ttseries_free}
\sum_{w \in \free_d} \Big(X -
\bigoplus_{\alpha=1}^mY\Big)^{\odot_sw} \Delta^{w^\top}\!
f(Y,\ldots,Y).
\end{equation}
\index{$X^{\odot_s\,w}$}

 For
$f\in\tclass{}(\Omega;\ncspace{\module{N}})$ and $Y\in\Omega_s$,
the sequence of $\ell$-linear mappings $f_\ell:=\Delta^\ell
f(Y,\ldots,Y)\colon
\left(\mat{\module{M}}{s}\right)^\ell\to\mat{\module{N}}{s}$,
$\ell=0,1,\ldots$, satisfies the following conditions:
\begin{equation}\label{eq:LAC_0}
Sf_0-f_0S=f_1(SY-YS),
\end{equation}
and for $\ell=1,\ldots$,
\begin{equation}\label{eq:LAC-ell_0}
Sf_\ell(Z^1,\ldots,Z^\ell)-f_\ell(SZ^1,Z^2,\ldots,Z^\ell)=f_{\ell+1}(SY-YS,Z^1,\ldots,Z^\ell),
\end{equation}
\begin{multline}\label{eq:LAC_ell_j}
f_\ell(Z^1,\ldots,Z^{j-1},Z^jS,Z^{j+1},\ldots,Z^\ell)-f_\ell(Z^1,\ldots,Z^j,SZ^{j+1},Z^{j+2},\ldots,Z^\ell)\\
=f_{\ell+1}(Z^1,\ldots,Z^j,SY-YS,Z^{j+1},\ldots,Z^\ell),
\end{multline}
\begin{equation}\label{eq:LAC_ell_ell}
f_\ell(Z^1,\ldots,Z^{\ell-1},Z^\ell S)-f_\ell(Z^1,\ldots,Z^\ell)S
=f_{\ell+1}(Z^1,\ldots,Z^\ell,SY-YS),
\end{equation}
for every $S\in\mat{\ring}{s}$. (Notice, that in the case $s=1$,
conditions \eqref{eq:LAC_0}--\eqref{eq:LAC_ell_ell} are trivial.)
Conversely, given a sequence of $\ell$-linear mappings
$f_\ell\colon
\left(\mat{\module{M}}{s}\right)^\ell\to\mat{\module{N}}{s}$,
$\ell=0,1,\ldots$, satisfying conditions
\eqref{eq:LAC_0}--\eqref{eq:LAC_ell_ell}, the sum of the series
\begin{equation} \label{eq:series}
f(X)=\sum_{\ell=0}^\infty \Big(X - \bigoplus_{\alpha=1}^m
Y\Big)^{\odot_s\,\ell} f_\ell,
\end{equation}
whenever it makes sense, defines a nc function. Similar direct and
inverse statement can also  be  formulated in the case where
$\module{M}=\ring^d$ for the  series along $\free_d$.

\section{Applications of the Taylor--Taylor formula}
\label{subsec:appl-tt}

The Taylor--Taylor formula is the main tool for the study of local
behavior of nc functions. Here are some sample results.

\subsection{} Any nc function on ${\rm Nilp}_d(\ring):={\rm
Nilp}(\ring^d)$ \index{${\rm Nilp}_d(\ring)$} (the set of jointly
nilpotent $d$-tuples of matrices over a unital commutative ring
$\ring$) is given by a nc
 power series
$$f(X)=\sum_{w\in\free_d}X^w\Delta^{w^\top}f(0,\ldots,0),$$
where the sum is finite.
 More generally, let $Y\in\mattuple{\ring}{s}{d}$. Denote by ${\rm
Nilp}_d(\ring,Y)$ \index{${\rm Nilp}_d(\ring,Y)$} the set of
$d$-tuples of $sm\times sm$ matrices over $\ring$, $m=1,2,\ldots$,
such that $X-\bigoplus_{\alpha=1}^mY\in {\rm Nilp}_d(\ring)$. Then
any nc function on ${\rm Nilp}_d(\ring,Y)$ can be written as in
\eqref{eq:matr-ttseries_free} where, again, the sum is finite.

\subsection{} \label{subsubsec:poly} Let $\field$ be
an infinite field, and let $f$ be a nc function on
$\ncspaced{\field}{d}$ such that for every $n$ each matrix entry
of $f(X_1,\ldots,X_d)$ is a polynomial in matrix entries of
$X_1,\ldots,X_d$ of a uniformly (in $n$) bounded degree. Then $f$
is a nc polynomial.

The condition of uniform boundedness of the degrees is necessary.
However, we can get a sharp result without this condition as well:
namely, $f$ belongs to the completion of the ring of nc
polynomials with respect to the decreasing sequence of ideals
$\left\{\boldsymbol{\mathcal I}_n\right\}_{n=1}^\infty$, where
$\boldsymbol{\mathcal I}_n$ denotes the ideal of identities
\cite{Row80} for $n \times n$ matrices (over $\field$ and in $d$
variables).

\subsection{} Let $\field={\mathbb C}$, let $\vecspace{V}$ and $\vecspace{W}$
be Banach spaces equipped with admissible systems of matrix norms
over $\vecspace{V}$ and over $\vecspace{W}$ --- see Section
\ref{subsub:queen}, and let $\Omega \subseteq
\ncspace{\vecspace{V}}$ be an open nc set. If a nc function $f$ on
$\Omega$ is locally bounded, then it is analytic, i.e.,
$f|_{\Omega_n}$ is an analytic function on $\Omega_n \subseteq
\mat{\vecspace{V}}{n}$ with values in $\mat{\vecspace{W}}{n}$
(see, e.g., \cite{HiPh,Mu} for the general theory of analytic
functions between Banach spaces), and the TT series
\eqref{eq:ttseries} converges to $f$ locally uniformly. More
precisely, if $f$ is bounded on $B(\bigoplus_{\alpha=1}^m
Y,\delta)$, then the series \eqref{eq:ttseries} converges
absolutely and uniformly on $B(\bigoplus_{\alpha=1}^m Y,r)$ for
all $r<\delta$. In the case $\vecspace{V}=\nspace{{\mathbb C}}{d}$
and $\vecspace{W}={\mathbb C}$, analyticity simply means that for
every $n$ each matrix entry of $f(X_1,\ldots,X_d)$ is an analytic
function of the matrix entries of $X_1,\ldots,X_d$; in this case
it is enough to require that for every $n$, $f|_{\Omega_n}$ is
locally bounded on slices, that is, for any fixed $d$-tuples of
matrices $X=(X_1,\ldots,X_d)\in\Omega_s$ and
$Z=(Z_1,\ldots,Z_d)\in\mattuple{\mathbb{C}}{s}{d}$, $f(X+tZ)$ is
bounded for all $t\colon |t|<\epsilon$, with some $\epsilon>0$. In
fact, if a nc function $f\colon\Omega\to\ncspace{\vecspace{W}}$
(for a general vector space $\vecspace{V}$ and a Banach space
$\vecspace{W}$ as before) is locally bounded on slices, then $f$
is G\^{a}teaux differentiable and its TT series converges.

\subsection{} Let $\vecspace{V}$ and $\vecspace{W}$ be operator spaces, and let
$\Omega \subseteq \ncspace{\vecspace{V}}$ be a uniformly-open nc
set (see Section \ref{subsubsec:os}). If a nc function $f$ on
$\Omega$ is locally bounded in the uniformly-open topology, then
the TT series \eqref{eq:ttseries} converges to $f$ locally
uniformly in that topology. More precisely, if $f$ is bounded on
$B_{\mathrm{nc}}(Y,\delta)$, then the series converges absolutely
and uniformly on $B_{\mathrm{nc}}(Y,r)$ for all $r<\delta$. $f$ is
called a uniformly analytic nc function.

In the case where $\vecspace{V}=\nspace{{\mathbb C}}{d}$ with some
operator space structure given by a sequence of norms
$\|\cdot\|_n$, it is further true that the series
\eqref{eq:ttseries_free}
--- without grouping together terms of the same degree into
homogeneous nc polynomials
 --- converges absolutely and uniformly on somewhat smaller
sets, namely on every \emph{open nc diamond about $Y$}, \index{nc
diamond}
\begin{equation*}
\diamondsuit_{\mathrm{nc}}(Y,r):=\coprod_{m=1}^\infty\Big\{X\in
\Omega_{sm}\colon
\sum_{j=1}^d\|e_j\|_1\,\Big\|X_j-\bigoplus_{\alpha=1}^mY_j\Big\|<r\Big\}
\end{equation*}
\index{$\diamondsuit_{\mathrm{nc}}(Y,r)$}with $r<\delta$; here $e_1,\ldots,e_d$ denote the standard basis
for $\nspace{{\mathbb C}}{d}$.

\subsection{} \label{subsec:realize}
TT series expansions allow us to reformulate some results that
were originally established for nc power series as results about
nc functions. This includes, in particular, realizations of nc
power series as transfer functions of  noncommutative
multidimensional systems (systems with evolution along the free
monoid $\free_d$\footnote{
Referred to often as systems with evolution along the free semigroup.}). These systems were first studied in
\cite{Cuntz2} in the conservative infinite-dimensional setting, in
the context of operator model theory for row contractions due
mostly to Popescu
\cite{Popescu-model1,Popescu-model2,Popescu-CLT1,Popescu-CLT2} and
of representation theory of the Cuntz algebra \cite{BJ,DP}. A
comprehensive study of nc realization theory, in both
finite-dimensional and infinite-dimensional setting, appears in
\cite{BGM1,BGM2,BGM3}; these papers give a unified framework of
structured nc multidimensional linear systems for different kinds
of realization formulae. We also mention the paper \cite{BK-V}
where an even more general class of nc systems (given though in a
frequency domain) was described and the corresponding dilation
theory was developed.

We give (a somewhat imprecise version of) the main conservative
realization theorem from \cite{BGM2} restated as a theorem about
nc functions. Let $Q_1,\ldots,Q_d$ be $p \times q$ complex
matrices with a certain additional structure that comes from a
bipartite graph. Let $\vecspace{U}$ and $\vecspace{Y}$ be Hilbert
spaces, and let $\mathcal{L}(\vecspace{U},\vecspace{Y})$
\index{$\mathcal{L}(\vecspace{U},\vecspace{Y})$} be the space of
bounded linear operators from $\vecspace{U}$ to $\vecspace{Y}$.
The corresponding {\em nc Schur--Agler class} consists of
contraction-valued $f \in \tclass{}(B_{\rm
nc}(0,1);\ncspace{\mathcal{L}(\vecspace{U},\vecspace{Y})})$, where
$B_{\rm nc}(0,1)\subseteq\ncspaced{\mathbb{C}}{d}$, and
$\mathbb{C}^d$ is equipped with the operator space structure
defined by the system of matrix norms
$$\|X\|_n=\|Q(X)\|_{\mathcal{L}(\mathbb{C}^n\otimes\mathbb{C}^q,\mathbb{C}^n\otimes\mathbb{C}^p)},$$
where $Q(X):=X_1 \otimes Q_1 + \cdots + X_d \otimes Q_d$. Then the
following are equivalent:
\begin{enumerate}
\item $f$ is in the nc Schur--Agler class. \item There exists an
{\em Agler decomposition}
$$
I_{\mathbb{C}^n\otimes \vecspace{Y}} - f(X) f(Y)^* = H(X)
(I_{\mathbb{C}^n\otimes\mathbb{C}^p\otimes\vecspace{G}} - (Q(X)
\otimes I_{\vecspace{G}})(Q(Y)^* \otimes I_{\vecspace{G}})) H(Y)^*
$$
for an auxiliary Hilbert space $\vecspace{G}$ and some $H \in
\tclass{}(B_{\rm
nc}(0,1);\ncspace{\mathcal{L}(\mathbb{C}^p\otimes\vecspace{G},\vecspace{Y})})$
which is bounded on every nc ball $B_{\rm nc}(0,r)$ of radius
$r<1$. \item There exists a conservative realization: $$f(X) =
I_n\otimes D + (I_n\otimes
C)(I_{\mathbb{C}^n\otimes\mathbb{C}^p\otimes\vecspace{G}}-Q(X)(I_n\otimes
A))^{-1} Q(X) (I_n\otimes B).$$ Here the colligation matrix
$$
\begin{bmatrix} A & B \\ C & D \end{bmatrix} \colon
\begin{bmatrix} \nspace{{\mathbb C}}{p} \otimes \cG \\ \cU \end{bmatrix} \to
\begin{bmatrix} \nspace{{\mathbb C}}{q} \otimes \cG \\ \cY \end{bmatrix}
$$
is unitary.
\end{enumerate}
(The statement in \cite{BGM2} is considerably more precise in
identifying the state spaces $\nspace{{\mathbb C}}{p} \otimes \cG$
and $\nspace{{\mathbb C}}{q} \otimes \cG$ of the unitary
colligation in terms of the bipartite graph generating the
matrices $Q_1,\ldots,Q_d$.) This is the nc version of the
commutative multidimensional realization theorem of \cite{AT} and
\cite{BB}
 for the linear nc function $Q$ defining the domain of
$f$, which in turn generalizes Agler's seminal result on the
polydisc \cite{Ag}, see also \cite{BT}. Such generalized
transfer-function realizations occur also in the context of the
generalized Hardy algebras of Muhly--Solel (elements of which are
viewed as functions on the unit ball of representations of the
algebra---see \cite{MS1,MS2, BBFtH}) and of the generalized
Schur--Agler class associated with an admissible class of test
functions (see \cite{DMMcC, DMcC}).

We note that the formulation of the realization theorem for the nc
Schur--Agler class given above suggests generalizations to an
arbitrary finite- or infinite-dimensional operator space (instead
of $\mathbb{C}^d$ equipped with the system of norms associated
with the linear nc function $Q$) and to more general nc domains (a
nc counterpart of the commutative domains of \cite{AT} and
\cite{BB}, where the defining nc function $Q$ is a nc polynomial).
In the finite-dimensional case such a generalization has been carried out recently in \cite{AgMcC1};
we refer to both this paper and \cite{AgMcC2,AgY} for applications.

\section{An overview}\label{subsec:road-map}
In Chapter \ref{sec:ncfun}, we introduce nc functions and their
difference-differential calculus. In Section
\ref{subsec:ncfundef}, we define nc spaces, nc sets, and nc
functions, and prove that respecting direct sums and similarities
is equivalent to respecting intertwinings (Proposition
\ref{prop:simint}). In Section \ref{subsec:difdif}, we introduce
the right and left difference-differential operators $\Delta_R$
and $\Delta_L$ via evaluation of nc functions on block upper
triangular matrices and then taking the upper right (left) corner
block of the value, and show that $\Delta_Rf(X,Y)(Z)$ and
$\Delta_Lf(X,Y)(Z)$ are linear in $Z$ (Propositions
\ref{prop:homog} and \ref{prop:add}). We proceed to formulate
basic nc calculus rules in Section \ref{subsec:calc1} and to
establish the first-order finite difference formulae in Section
\ref{subsec:Lagrange}. Then we prove in Section
\ref{subsec:proper} that $\Delta_Rf(X,Y)(Z)$ and
$\Delta_Lf(X,Y)(Z)$ respect direct sums and similarities in $X$
and in $Y$ (in the terminology of Chapter \ref{sec:difdif_k},
$\Delta_Rf$ and $\Delta_Lf$ are nc functions of order 1). We
conclude Chapter \ref{sec:ncfun} by discussing in Section
\ref{subsec:dir_difdif} directional and (in the special case
$\module{M}=\ring^d$) partial nc difference-differential
operators.

In Chapter \ref{sec:difdif_k}, we first introduce higher order nc
functions, and then extend the difference-differential operators
to these functions, which leads to a higher order calculus for
ordinary and higher order nc functions. In Section
\ref{subsec:higher}, we define nc functions of order $k$ and
introduce the classes $\tclass{k}$ of those functions (so that the
original nc functions belong to the class $\tclass{0}$). The
values of nc functions of order $k$ are $k$-linear mappings of
matrices over modules, which can also be interpreted as elements
of tensor products of $k+1$ matrix modules (Remarks
\ref{rem:natur_map} and \ref{rem:tensor_values}). This allows us
to define a natural mapping
$\tclass{k}\otimes\tclass{\ell}\to\tclass{k+\ell+1}$,
$k,\ell=0,1,\ldots$, and consequently --- a mapping from the
tensor product of $k+1$ classes $\tclass{0}$ (perhaps over
different operator spaces) to $\tclass{k}$ (Remark
\ref{rem:tensor_prod}) and interpret a higher order nc function in
terms of nc functions of order $0$. We then extend in Section
\ref{subsec:difdif-k} the right difference-differential operator
$\Delta_R$ to a mapping of classes $\tclass{k}\to\tclass{k+1}$,
$k=0,1,\ldots$. In this definition,  the last argument of
$f(X^0,\ldots,X^k)$, where $f$ is a nc function of order $k$, is
taken in the form of a block matrix
$X^k=\begin{bmatrix} X^{k\prime} & Z\\
0 & X^{k\prime\prime}\end{bmatrix}$, and then we take the upper
right corner block of the matrix value. The extended operator
$\Delta_R$ is also linear as a function of $Z$ (Propositions
\ref{prop:homog-k} and \ref{prop:add-k}). Starting from this
point, we concentrate on the right version of calculus, since the
left version can be developed analogously or simply obtained from
the right one by symmetry.

The iterated operator
$\Delta_R^\ell\colon\tclass{k}\to\tclass{k+\ell}$ can also be
computed directly, by evaluating a nc function on block upper
bidiagonal matrices and then taking the upper right corner block
of its matrix value (Therems \ref{thm:bidiag} and
\ref{thm:bidiag_k}). The definition of the extended operator
$\Delta_R$ becomes more transparent when we interpret higher order
nc functions using tensor products of nc functions of order $0$
(Remark \ref{rem:delta_tensor_prod}).

We can also take $X^j$ in $f(X^0,\ldots,X^k)$ in the block upper
triangular form, which leads to operators
${}_j\Delta_R\colon\tclass{k}\to\tclass{k+1}$, $j=0,1,\ldots$ (so
that $\Delta_R={}_k\Delta_R$), that also extend the original
operator $\Delta_R\colon\tclass{0}\to\tclass{1}$ (Remark
\ref{rem:j-delta}). In Section \ref{subsec:Lagrange_k}, we
establish the first order difference formulae for higher order nc
functions (Theorem \ref{thm:Lagrange_k}) and its generalized
version  for the case of matrices $X$ and $Y$ where we evaluate
the higher order nc function in these formulae, of possibly
different size (Theorem \ref{thm:gen-Lagrange_k}). Similar
formulae are valid for operators ${}_j\Delta_R$ (Remark
\ref{rem:j-Lagrange_k}).

In Section \ref{subsec:integra}, we show that for a nc function
$f\in\tclass{k}$ which is $k$ times integrable, i.e., can be
represented as $\Delta_R^kg$ with $g\in\tclass{0}$, one has
${}_i\Delta_R\,{}_j\Delta_Rf={}_j\Delta_R\,{}_i\Delta_Rf$, for all
$i,j=0,\ldots,k$. Finally, we discuss in Section
\ref{subsec:dir_difdif-k} higher order directional nc
difference-differential operators; in particular, when the
underlying module is $\ring^d$, we discuss higher order partial
difference-differential operators $\Delta_R^w$, $w\in\free_d$.

In Chapter \ref{sec:TT}, we establish the Taylor--Taylor formula
(Theorem \ref{thm:TT} or more general Theorem
\ref{thm:tt-power-gen}; in the case $\module{M}=\ring^d$
--- Corollary \ref{cor:part_TT} and, in tensor product
interpretation of $\Delta_R^{w^\top}f$, Theorem
\ref{thm:tt-power}). We show in Remark \ref{rem:lost_abbey} that
the coefficients in the TT formula, the multilinear mappings
$f_\ell=\Delta_Rf(Y,\ldots,Y)$, $\ell=0,1,\ldots$, satisfy
conditions \eqref{eq:LAC_0}--\eqref{eq:LAC_ell_ell} that we
mentioned in Section \ref{subsec:ncfun}. The counterpart of these
condition for the sequence $f_w=\Delta_R^{w^\top}f(Y,\ldots,Y)$,
$w\in\free_d$, in the case $\module{M}=\ring^d$ is established in
Remark \ref{rem:lost_abbey_semigr}.

As a first application of the TT formula, we describe in Chapter
\ref{sec:nilp} nc functions on the set ${\rm Nilp}(\module{M})$
(or, more generally, on ${\rm Nilp}(\module{M};Y)$) of nilpotent
matrices (or nilpotent matrices about $Y$) over a module
$\module{M}$ --- see Section \ref{subsub:Nilp} for the definition.
In Section \ref{subsec: from_ncfun_to_ncps} we show that every
such a nc function is a sum of its TT series (Theorems
\ref{thm:nc_nilp-gen}, \ref{thm:nc_nilp}, \ref{thm:nc_nilp-gen-Y},
and \ref{thm:nc_nilp-Y}). Then we show that the coefficients of a
nc power series representing the nc function are uniquely
determined. In fact, this series is unique and is equal to the
corresponding TT series (Theorems \ref{thm:ncps-nilp-gen-unique}
and \ref{thm:ncps-nilp-unique}).
In Section
\ref{subsec:from_ncps_to_ncfun} we show that, conversely, given a
sequence of multilinear mappings
$f_\ell\colon\mattuple{\module{M}}{s}{\ell}\to\mat{\module{N}}{s}$,
$\ell=0,1,\ldots$, satisfying conditions
\eqref{eq:LAC_0}--\eqref{eq:LAC_ell_ell}, the sum of the nc power
series $\sum_{\ell=0}^\infty
(X-\bigoplus_{\alpha=1}^mY)^{\odot_s\ell}f_\ell$  is a nc function
on ${\rm Nilp}(\module{M};Y)$ with values in
$\ncspace{\module{N}}$ (Theorem \ref{thm:sufficiency_of_LAC}), and
a version of this result, Theorem
\ref{thm:sufficiency_of_LAC-semigr}, for the case
$\module{M}=\ring^d$ and a sequence of multilinear mappings
$f_w\colon\mattuple{\module{M}}{s}{\ell}\to\mat{\module{N}}{s}$,
$w\in\free_d$, satisfying the conditions analogous to
\eqref{eq:LAC_0}--\eqref{eq:LAC_ell_ell}. These results are
algebraic in their nature, since the sum of the corresponding
series is finite at every point of ${\rm Nilp}(\module{M};Y)$.

In Chapter \ref{sec:alg}, we give some other algebraic
applications of the TT formula. We prove that a nc function on
$\ncspace{(\field^d)}$, with $\field$  an infinite field, which is
polynomial in matrix entries when evaluated on $n\times n$
matrices, $n=1,2,\ldots$, of bounded degree, is necessarily a nc
polynomial (Theorem \ref{thm:king-poly}). The degree boundedness
condition is essential and cannot be omitted (Example
\ref{ex:degrees_unbdd}). However, if we do not require this
condition, then the following is true: an arbitrary nc function on
$\ncspace{(\field^d)}$, which is polynomial of degree $M_n$ in
matrix entries when evaluated on $n\times n$ matrices, is a sum of
a nc polynomial and an infinite series of homogeneous polynomials
$f_j$ vanishing on all $d$-tuples of $n\times n$ matrices for
$j>M_n$
 (Theorem \ref{thm:queen-poly}). We also obtain a
generalization of Theorem \ref{thm:king-poly} to nc functions on
(much more general) nc spaces, which are polynomial on slices
(Theorem \ref{thm:king-poly-gen}).

In Chapter \ref{sec:conv} we study analytic nc functions and the
convergence of their TT series. We consider three different
topologies on a nc space $\ncspace{\vecspace{V}}$ (over a complex
vector space $\vecspace{V}$), or three types of convergence of nc
power series, and, correspondingly, three types of analyticity of
nc functions: finitely open topology and analyticity on slices,
norm topology and analyticity on $n\times n$ matrices over
$\vecspace{V}$ for every $n=1,2,\ldots$, and uniformly-open
topology and uniform (in $n$) analyticity. The main feature of
analytic nc functions established in Chapter \ref{sec:conv} is
that local boundedness implies analyticity, in each of the three
settings. In the first part of Section \ref{subsec:analytic}, we
introduce the finitely open topology on $\ncspace{\vecspace{V}}$
and prove that a nc function which is locally bounded on slices is
analytic on slices and discuss the convergence of the
corresponding TT series (Theorem \ref{thm:g-queen}). In the second
part of Section \ref{subsec:analytic}, we show that a nc function
which is locally bounded (with respect to the norm topology on
$\mat{\vecspace{V}}{n}$, for every $n$) is analytic, with the TT
series convergent uniformly and absolutely on certain open
complete circular nc sets about the center $Y\in\Omega_s$ (Theorem
\ref{thm:f-queen}) or on open balls centered at $Y$ (Corollary
\ref{cor:balls}). We also establish the uniqueness of the
convergent nc power series expansions, in both of the above
topologies (Theorem \ref{thm:ncps-unique}). In Section
\ref{subsec:unif-open-top}, we introduce and study the
uniformly-open topology on a nc space $\ncspace{\vecspace{V}}$
over an operator space $\vecspace{V}$. In Section
\ref{subsec:u-analytic}, we show that a uniformly locally bounded
nc function is uniformly analytic, and its TT series converges
uniformly and absolutely on certain uniformly-open complete
circular nc sets about $Y$ (Theorem \ref{thm:king-conv}) or on
certain uniformly-open matrix circular nc sets (Theorem
\ref{thm:king-conv-nc}) or on uniformly-open nc balls (Corollary
\ref{cor:nc-balls}). We also prove a version of this result for
the case $\vecspace{V}=\mathbb{C}^d$ and the TT series convergent
along $\free_d$ (Theorem \ref{thm:king-semigr} and Corollary
\ref{cor:nc-balls-semigr}). In Section
\ref{subsec:analytic-higher} we introduce and study analytic
higher order nc functions, also in the three different settings.
The main results are analogous to those in  Sections
\ref{subsec:analytic} and \ref{subsec:u-analytic}.

In Chapter \ref{sec:nc-ps}, we study the convergence of nc power
series in the three different topologies, as in Chapter
\ref{sec:conv}, to an (analytic in the corresponding sense) nc
function. The results are the converse of the results of Sections
\ref{subsec:analytic} and \ref{subsec:u-analytic}. We discuss the
convergence of nc power series in the finitely open topology
(resp., in the norm topology and in the uniformly-open topology)
in Section \ref{subsec:conv-fin-open} (resp.,
\ref{subsec:conv-norm} and \ref{subsec:conv-unif-open}). In each
topology, we have Cauchy--Hadamard type estimates for radii of
convergence and sharp estimates for the size of convergence
domains of various shapes. We also characterize the maximal nc
sets where the sum of the series is an analytic on slices (resp.,
analytic and uniformly analytic) nc function and give many examples
illustrating the differences between various types of convergence
domains.

In Chapter \ref{sec:dirsum-ext}, we define and study so-called
direct summands extensions of nc sets and nc functions. If a nc
set $\Omega$ is invariant under similarities, then one can extend
$\Omega$ to a larger nc set, $\Omega_{\rm d.s.e.}$, which
contains, together with a matrix which is decomposable into a
direct sum of matrices, every direct summand of the decomposition.
This extension preserves many properties of $\Omega$ (Proposition
\ref{prop:dirsum-ext-properties}). We then can extend a nc
function $f$ on $\Omega$ to a nc function $f_{\rm d.s.e.}$ on
$\Omega_{\rm d.s.e.}$, which inherits most important properties of
$f$, in particular, analyticity (Proposition
\ref{prop:ncfun-dirsum-ext}). Similarly, we define and establish
the properties of the direct summands extensions of higher order
nc functions (Proposition \ref{prop:higher-ncfun-dirsum-ext}). We
also define and study direct summands extensions of sequences
$f_\ell$, $\ell=0,1,\ldots$ (resp., $f_w$, $w\in\free_d$) of
multilinear mappings satisfying conditions
\eqref{eq:LAC_0}--\eqref{eq:LAC_ell_ell} (resp., their
counterparts in the case $\module{M}=\ring^d$) in Proposition
\ref{prop:LAC-dirsum-ext} (resp., in Proposition
\ref{prop:LAC-dirsum-ext-semigr}).

As we mentioned in Section \ref{subsec:ncfun}, we need a nc set
$\Omega$ to be right (left) admissible in order to define right
(left) difference-differential operators via evaluations of nc
functions $f$ on $\Omega$ at block upper (lower) triangular
matrices. However,  for a (say, upper) block triangular matrix
$\begin{bmatrix} X & Z\\
 0 & Y
\end{bmatrix}$ we need to scale $Z$, so that $\begin{bmatrix} X &
rZ\\
 0 & Y
\end{bmatrix}\in\Omega$. Then we can define $\Delta_Rf(X,Y)(rZ)$
and extend this definition to arbitrary $Z$  by linearity of
$\Delta_Rf(X,Y)(\cdot)$. Although the properties of $\Delta_R$
($\Delta_L$) can be established using these scalings, this creates
cumbersome technicalities in the proofs. To bypass those
technicalities, we define the so-called similarity invariant
envelope $\widetilde{\Omega}$ of $\Omega$, which is the smallest
nc set containing $\Omega$ and invariant under similarities. Then
it turns out that $\widetilde{\Omega}$ is also invariant under
formation of block triangular matrices (Proposition
\ref{prop:adm-env}). Our next step is to extend a nc function $f$
on $\Omega$ to a nc function $\widetilde{f}$ on
$\widetilde{\Omega}$. Such an extension is unique (Proposition
\ref{prop:ncfun-ext}). A similar extension is constructed for
higher order nc functions (Proposition
\ref{prop:higher-ncfun-ext}). It turns out that
$(\Delta_R\widetilde{f})|_{\Omega\times\Omega}=\Delta_Rf$ (Remark
\ref{rem:alternative}), which allows us to prove various
statements about the difference-differential operators throughout
the monograph assuming, without loss of generality, that the
underlying nc set $\Omega$ is similarity invariant. The
 results on similarity invariant envelopes and the corresponding
extensions of nc functions are presented in Appendix \ref{app},
which is written in collaboration with Shibananda Biswas.

\chapter{NC functions and their difference-differential
calculus}\label{sec:ncfun} \section{Definition of nc
functions}\label{subsec:ncfundef} Let $\ring$ \index{$\ring$} be a
commutative ring with identity. For $\module{M}$
\index{$\module{M}$} a module over $\ring$, we define the \emph{nc
space over $\module{M}$} \index{nc space}
\begin{equation*}
\ncspace{\module{M}}=\coprod_{n=1}^\infty\mat{\module{M}}{n}.
\end{equation*}
\index{$\mat{\module{M}}{n}$} \index{$\ncspace{\module{M}}$}For
$X\in\mat{\module{M}}{n}$ and $Y\in\mat{\module{M}}{m}$ we define
their \emph{direct sum} \index{direct sum of matrices}
\begin{equation*}
X\oplus Y=\begin{bmatrix}
X & 0\\
0 & Y\end{bmatrix}\in \mat{\module{M}}{(n+m)}.
\end{equation*}
\index{$X\oplus Y$}Notice that matrices over $\ring$ act from the
right and from the left on matrices over $\module{M}$  by the
standard rules of matrix multiplication and by the action of
$\ring$ on $\module{M}$: if $X\in\rmat{\module{M}}{p}{q}$ and
$T\in\rmat{\ring}{r}{p}$, $S\in\rmat{\ring}{q}{s}$, then
\begin{equation*}
TX  \in \rmat{\module{M}}{r}{q},\quad XS  \in
\rmat{\module{M}}{p}{s}.
\end{equation*}
A subset $\Omega\subseteq\ncspace{\module{M}}$ \index{$\Omega$} is
called a \emph{nc set} \index{nc set} if it is closed under direct
sums; explicitly, denoting $\Omega_n = \Omega \cap
\mat{\module{M}}{n}$, \index{$\Omega_n$} we have $X \oplus Y \in
\Omega_{n+m}$ for all $X \in \Omega_n$, $Y \in \Omega_m$. We will
sometimes use the convention that $\rmat{\module{M}}{n}{0}$,
\index{$\rmat{\module{M}}{n}{0}$} $\rmat{\module{M}}{0}{n}$,
\index{$\rmat{\module{M}}{0}{n}$} and
$\Omega_0=\mat{\module{M}}{0}$ \index{$\Omega_0$}
\index{$\mat{\module{M}}{0}$} consist each of a single (zero)
element, the empty matrix ``of appropriate size", and that $X
\oplus Y=Y \oplus X=X\in\Omega_n$ for $n\in\mathbb{N}$,
$X\in\Omega_n$, and $Y\in\Omega_0$.

In the case of $\module{M}=\ring^d$, \index{$\ring^d$} we identify
matrices over $\module{M}$ with $d$-tuples of matrices over
$\ring$:
\begin{equation*}
\rmat{\left(\ring^d\right)}{p}{q}\cong\rmattuple{\ring}{p}{q}{d}.
\end{equation*}
Under this identification, for $d$-tuples
$X=(X_1,\ldots,X_d)\in\mattuple{\ring}{n}{d}$ and
$Y=(Y_1,\ldots,Y_d)\in\mattuple{\ring}{m}{d}$,
\begin{equation*}
X\oplus Y=\left(\begin{bmatrix}
X_1 & 0\\
0 & Y_1\end{bmatrix},\ldots,\begin{bmatrix}
X_d & 0\\
0 & Y_d\end{bmatrix}\right)\in \mattuple{\ring}{(n+m)}{d};
\end{equation*}
and for a $d$-tuple
$X=(X_1,\ldots,X_d)\in\rmattuple{\ring}{p}{q}{d}$ and matrices
$T\in\rmat{\ring}{r}{p}$, $S\in\rmat{\ring}{q}{s}$,
\begin{equation*}
TX = \left(TX_1,\ldots,TX_d\right) \in
\rmattuple{\ring}{r}{q}{d},\quad XS =
\left(X_1S,\ldots,X_dS\right) \in \rmattuple{\ring}{p}{s}{d}.
\end{equation*}

Let $\module{M}$ and $\module{N}$ be modules over $\ring$, and let
$\Omega\subseteq\ncspace{\module{M}}$ be a nc set. A mapping $f
\colon \Omega \to \ncspace{\module{N}}$, with $f(\Omega_n)
\subseteq \mat{\module{N}}{n}$, $n=0,1,\ldots$, is called a
\emph{nc function} \index{nc function} if $f$ satisfies the
following two conditions:
\begin{itemize}
\item $f$ \emph{respects direct sums}: \index{respecting direct
sums}
\begin{equation} \label{eq:dirsums}
f(X \oplus Y) = f(X) \oplus f(Y)
\end{equation}
for all $X,Y \in\Omega$. \item $f$ \emph{respects similarities}:
\index{respecting similarities} if $X \in \Omega_n$ and $S \in
\mat{\ring}{n}$ is invertible with $SXS^{-1} \in \Omega_n$, then
\begin{equation} \label{eq:simsim}
f(SXS^{-1}) = S f(X) S^{-1}.
\end{equation}
\end{itemize}

The two conditions in the definition of a nc function can be
actually replaced by a single one.
\begin{prop}\label{prop:simint}
A mapping $f \colon \Omega \to \ncspace{\module{N}}$, with
$f(\Omega_n) \subseteq \mat{\module{N}}{n}$, $n=0,1,\ldots$,
respects direct sums and similarities if and only if it
\emph{respects intertwinings}: \index{respecting intertwinings}
for any $X\in\Omega_n$, $Y\in\Omega_m$, and $T\in
\rmat{\ring}{n}{m}$, one has
\begin{equation}\label{eq:simint}
XT=TY\Longrightarrow f(X)T=Tf(Y).
\end{equation}
\end{prop}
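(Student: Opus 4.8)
The plan is to prove both implications by exploiting a standard trick: direct sums and similarities together let us realize an arbitrary intertwining relation as a similarity of a single big block matrix. For the ``only if'' direction, suppose $f$ respects direct sums and similarities, and let $X\in\Omega_n$, $Y\in\Omega_m$, $T\in\rmat{\ring}{n}{m}$ satisfy $XT=TY$. The key observation is that
\begin{equation*}
\begin{bmatrix} I_n & T\\ 0 & I_m\end{bmatrix}
\begin{bmatrix} X & 0\\ 0 & Y\end{bmatrix}
\begin{bmatrix} I_n & T\\ 0 & I_m\end{bmatrix}^{-1}
=\begin{bmatrix} X & XT-TY\\ 0 & Y\end{bmatrix}
=\begin{bmatrix} X & 0\\ 0 & Y\end{bmatrix},
\end{equation*}
using $XT=TY$ and that $\begin{bmatrix} I_n & T\\ 0 & I_m\end{bmatrix}$ is invertible over $\ring$ with inverse $\begin{bmatrix} I_n & -T\\ 0 & I_m\end{bmatrix}$. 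So $X\oplus Y$ is fixed by conjugation with $S:=\begin{bmatrix} I_n & T\\ 0 & I_m\end{bmatrix}$, hence $S(X\oplus Y)S^{-1}=X\oplus Y\in\Omega_{n+m}$ since $\Omega$ is a nc set. Applying first that $f$ respects similarities and then that $f$ respects direct sums gives $S\,f(X\oplus Y)\,S^{-1}=f(X\oplus Y)=f(X)\oplus f(Y)$, i.e. $S(f(X)\oplus f(Y))=(f(X)\oplus f(Y))S$. Writing this $2\times2$ block identity out, the $(1,2)$ block reads $f(X)T=Tf(Y)$, which is exactly \eqref{eq:simint}.

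For the ``if'' direction, assume $f$ respects intertwinings. To get \eqref{eq:dirsums}, take $X\in\Omega_n$, $Y\in\Omega_m$ and apply \eqref{eq:simint} with the three intertwining relations furnished by the partial isometries between the summands: with $T=\begin{bmatrix} I_n\\ 0\end{bmatrix}\in\rmat{\ring}{(n+m)}{n}$ one has $(X\oplus Y)T=TX$, so $f(X\oplus Y)\begin{bmatrix} I_n\\ 0\end{bmatrix}=\begin{bmatrix} I_n\\ 0\end{bmatrix}f(X)$; similarly with $T'=\begin{bmatrix} 0\\ I_m\end{bmatrix}$ we get $f(X\oplus Y)\begin{bmatrix} 0\\ I_m\end{bmatrix}=\begin{bmatrix} 0\\ I_m\end{bmatrix}f(Y)$. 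These two identities pin down the first and second block columns of $f(X\oplus Y)$ (since $f(X\oplus Y)\in\mat{\module{N}}{(n+m)}$), forcing $f(X\oplus Y)=\begin{bmatrix} f(X) & *\\ 0 & f(Y)\end{bmatrix}$; to kill the off-diagonal block, use instead the row intertwinings, e.g. $\begin{bmatrix} I_n & 0\end{bmatrix}(X\oplus Y)=X\begin{bmatrix} I_n & 0\end{bmatrix}$ read with the roles of the two matrices swapped, or more directly note that applying the column relations on both sides already gives $f(X\oplus Y)\begin{bmatrix} I_n & 0\\ 0 & I_m\end{bmatrix}=\begin{bmatrix} f(X) & 0\\ 0 & f(Y)\end{bmatrix}$ once one also uses $\begin{bmatrix} I_n & 0\end{bmatrix}f(X\oplus Y)=f(X)\begin{bmatrix}I_n & 0\end{bmatrix}$, which follows from \eqref{eq:simint} applied to $X\oplus Y$, $X$ and the transpose-shaped intertwiner. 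Thus \eqref{eq:dirsums} holds. Finally, for \eqref{eq:simsim}, let $X\in\Omega_n$ and $S\in\mat{\ring}{n}$ invertible with $SXS^{-1}\in\Omega_n$; then $(SXS^{-1})S=SX$ is an intertwining of $SXS^{-1}\in\Omega_n$ with $X\in\Omega_n$ via $S\in\mat{\ring}{n}$, so \eqref{eq:simint} yields $f(SXS^{-1})S=Sf(X)$, i.e. $f(SXS^{-1})=Sf(X)S^{-1}$.

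I do not expect a serious obstacle here; the statement is essentially bookkeeping with $2\times2$ block matrices. The one point that needs a little care is making sure every big matrix one conjugates or intertwines actually lies in $\Omega$ — for the similarity-to-intertwining direction this is automatic because the conjugated matrix equals $X\oplus Y$, which is in $\Omega$ by the nc-set property, and for the intertwining-to-direct-sums direction $X\oplus Y\in\Omega$ is again just the nc-set axiom. The only mildly delicate step is extracting \eqref{eq:dirsums} cleanly: one must use intertwinings in both the ``column'' and ``row'' directions (i.e. intertwiners of shape $(n+m)\times n$ and of shape $n\times(n+m)$) to determine all four blocks of $f(X\oplus Y)$ and conclude it is block-diagonal with the right entries; I would organize this as: column relations give the two block columns of $f(X\oplus Y)$ up to the $(1,2)$-entry being undetermined, and one further row relation (or equivalently applying \eqref{eq:simint} to the pair $(X\oplus Y,\,X)$ with an $n\times(n+m)$ intertwiner) forces that entry to vanish.
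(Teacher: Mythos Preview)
Your approach is correct and essentially identical to the paper's: the forward direction uses that $X\oplus Y$ is fixed by conjugation with $\begin{bmatrix} I_n & T\\ 0 & I_m\end{bmatrix}$ and reads off the $(1,2)$ block, and the backward direction uses the column injections $\begin{bmatrix} I_n\\ 0\end{bmatrix}$ and $\begin{bmatrix} 0\\ I_m\end{bmatrix}$ as intertwiners for the direct-sum property, together with the obvious intertwining $SX=(SXS^{-1})S$ for the similarity property.

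There is one slip in your backward direction. You correctly state that the two column intertwinings ``pin down the first and second block columns of $f(X\oplus Y)$,'' but then write the result as $\begin{bmatrix} f(X) & *\\ 0 & f(Y)\end{bmatrix}$ and go on to discuss row intertwiners to kill the $*$. In fact the second column relation $f(X\oplus Y)\begin{bmatrix} 0\\ I_m\end{bmatrix}=\begin{bmatrix} 0\\ I_m\end{bmatrix}f(Y)$ already gives $\begin{bmatrix} B\\ D\end{bmatrix}=\begin{bmatrix} 0\\ f(Y)\end{bmatrix}$, so $B=0$ immediately and the two column intertwinings alone yield $f(X\oplus Y)=f(X)\oplus f(Y)$. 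The extra row-intertwining discussion is unnecessary and somewhat muddled; simply drop it.
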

Condition \eqref{eq:simint} was used by J. L. Taylor in \cite{T2}.
\begin{proof}[Proof of Proposition \ref{prop:simint}]
Assume that $f$ respects direct sums and similarities. If
$X\in\Omega_n$, $Y\in\Omega_m$, and $T\in \rmat{\ring}{n}{m}$ are
such that $XT=TY$, then
\begin{equation*}
\begin{bmatrix}
X & 0\\
0 & Y
\end{bmatrix}=\begin{bmatrix}
I_n & -T\\
0 & I_m
\end{bmatrix}\begin{bmatrix}
X & 0\\
0 & Y
\end{bmatrix}\begin{bmatrix}
I_n & T\\
0 & I_m
\end{bmatrix}.
\end{equation*}
Since \begin{equation*}
\begin{bmatrix}
I_n & -T\\
0 & I_m
\end{bmatrix}=\begin{bmatrix}
I_n & T\\
0 & I_m
\end{bmatrix}^{-1},
\end{equation*}
we can apply first \eqref{eq:simsim} and then \eqref{eq:dirsums}
to get
\begin{equation*}
\begin{bmatrix}
f(X) & 0\\
0 & f(Y)
\end{bmatrix}=\begin{bmatrix}
I_n & -T\\
0 & I_m
\end{bmatrix}\begin{bmatrix}
f(X) & 0\\
0 & f(Y)
\end{bmatrix}\begin{bmatrix}
I_n & T\\
0 & I_m
\end{bmatrix}.
\end{equation*}
Equating the $(1,2)$ block entries on the two sides of this
equality, we get $0=f(X)T-Tf(Y)$, i.e., \eqref{eq:simint} holds.

Conversely, assume that $f$ respects  intertwinings. It is then
clear that $f$ respects  similarities. To prove that $f$ respects
direct sums, let $X\in\Omega_n$, $Y\in\Omega_m$. Obviously,
\begin{equation*}
\begin{bmatrix}
X & 0\\
0 & Y
\end{bmatrix}\begin{bmatrix}
I_n \\
0
\end{bmatrix}=\begin{bmatrix}
I_n \\
0
\end{bmatrix}X.
\end{equation*}
Therefore
\begin{equation}\label{eq:int1}
f\left(\begin{bmatrix}
X & 0\\
0 & Y
\end{bmatrix}\right)\begin{bmatrix}
I_n \\
0
\end{bmatrix}=\begin{bmatrix}
I_n \\
0
\end{bmatrix}f(X).
\end{equation}
Denote
\begin{equation*}
f\left(\begin{bmatrix}
X & 0\\
0 & Y
\end{bmatrix}\right)=
\begin{bmatrix}
A & B\\
C & D
\end{bmatrix}.
\end{equation*}
Then \eqref{eq:int1} becomes
\begin{equation*}
\begin{bmatrix}
A\\
C
\end{bmatrix}=\begin{bmatrix}
f(X)\\
0
\end{bmatrix}.
\end{equation*}
In the same way the identity
\begin{equation*}
\begin{bmatrix}
X & 0\\
0 & Y
\end{bmatrix}\begin{bmatrix}
0\\
I_m
\end{bmatrix}=\begin{bmatrix}
0 \\
I_m
\end{bmatrix}Y
\end{equation*}
implies that
\begin{equation*}
\begin{bmatrix}
B\\
D
\end{bmatrix}=\begin{bmatrix}
0\\
f(Y)
\end{bmatrix}.
\end{equation*}
Thus $A=f(X)$, $B=0$, $C=0$, $D=f(Y)$, i.e., \eqref{eq:dirsums}
holds.
\end{proof}

\section{NC difference-differential
operators}\label{subsec:difdif} Our next goal is to introduce
right and left difference-differential operators, $\Delta_R$
\index{$\Delta_R$} and $\Delta_L$, \index{$\Delta_L$} on nc
functions. This will be done by evaluating a nc function $f$ on
block upper (respectively, lower) triangular matrices. The idea is
to show that, for $X\in\Omega_n$ and $Y\in\Omega_m$,
\begin{equation}\label{eq:uptr}
f\left(\begin{bmatrix} X & Z\\
0 & Y
\end{bmatrix}\right)=\begin{bmatrix} f(X) & \Delta_R f(X,Y)(Z)\\
0 & f(Y)
\end{bmatrix}.
\end{equation}
Here $Z\mapsto \Delta_R f(X,Y)(Z)$ \index{$\Delta_R f(X,Y)(Z)$} is
a $\ring$-linear operator from $\rmat{\module{M}}{n}{m}$ to
$\rmat{\module{N}}{n}{m}$ which plays the role of a right nc
difference operator when $Y\neq X$, and of a right nc differential
operator when $Y= X$. Analogously,
\begin{equation}\label{eq:ltr}
f\left(\begin{bmatrix} X & 0\\
Z & Y
\end{bmatrix}\right)=\begin{bmatrix} f(X) & 0\\
\Delta_L f(X,Y)(Z) & f(Y)
\end{bmatrix},
\end{equation}
where $Z\mapsto \Delta_L f(X,Y)(Z)$ \index{$\Delta_L f(X,Y)(Z)$}
is a $\ring$-linear operator from $\rmat{\module{M}}{m}{n}$ to
$\rmat{\module{N}}{m}{n}$ which plays the role of a left nc
difference operator when $Y\neq X$, and  of a left nc differential
operator when $Y=X$. The assumption
that $\begin{bmatrix} X & Z\\
0 & Y
\end{bmatrix}\in\Omega_{n+m}$ for all $X\in\Omega_n$,
$Y\in\Omega_m$, and $Z\in\rmat{\module{M}}{n}{m}$ (respectively, that $\begin{bmatrix} X & 0\\
Z & Y
\end{bmatrix}\in\Omega_{n+m}$ for all $X\in\Omega_n$,
$Y\in\Omega_m$, and $Z\in\rmat{\module{M}}{m}{n}$) would be
however too strong, and we will proceed with a weaker assumption
on a nc set $\Omega$, which is naturally suggested by the
following proposition.
\begin{prop}\label{prop:delta_r}
Let $f$ be a nc function on a nc set $\Omega$. Let $X\in\Omega_n$,
$Y\in\Omega_m$, and $Z\in\rmat{\module{M}}{n}{m}$ be such that
$\begin{bmatrix} X & Z\\
0 & Y
\end{bmatrix}\in\Omega_{n+m}$. Then \eqref{eq:uptr} holds, where
the off-diagonal block entry $\Delta_R f(X,Y)(Z)$  is determined
uniquely by \eqref{eq:uptr} and has
the following property. If $r\in\ring$ is such that $\begin{bmatrix} X & rZ\\
0 & Y
\end{bmatrix}\in\Omega_{n+m}$, then
\begin{equation}\label{eq:homog_r}
\Delta_R f(X,Y)(rZ)=r\Delta_R f(X,Y)(Z).
\end{equation}
\end{prop}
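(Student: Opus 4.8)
The plan is to deduce everything from the characterization of nc functions as maps that respect intertwinings (Proposition \ref{prop:simint}), applied to the block matrix $M:=\begin{bmatrix} X & Z\\ 0 & Y\end{bmatrix}$, which lies in $\Omega_{n+m}$ by hypothesis, and to its scaled variant $M_r:=\begin{bmatrix} X & rZ\\ 0 & Y\end{bmatrix}$.

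First I would pin down the block structure of $f(M)$, exactly as in the ``conversely'' part of the proof of Proposition \ref{prop:simint}. Write $f(M)=\begin{bmatrix} A & B\\ C & D\end{bmatrix}$ with $A\in\mat{\module{N}}{n}$, $B\in\rmat{\module{N}}{n}{m}$, and so on; this is legitimate since $M\in\mat{\module{M}}{(n+m)}$ forces $f(M)\in\mat{\module{N}}{(n+m)}$. Using the obvious intertwinings $M\begin{bmatrix} I_n\\ 0\end{bmatrix}=\begin{bmatrix} I_n\\ 0\end{bmatrix}X$ and $\begin{bmatrix} 0 & I_m\end{bmatrix}M=Y\begin{bmatrix} 0 & I_m\end{bmatrix}$, and noting that $M$, $X$, $Y$ all lie in $\Omega$, Proposition \ref{prop:simint} yields $\begin{bmatrix} A\\ C\end{bmatrix}=\begin{bmatrix} f(X)\\ 0\end{bmatrix}$ and $\begin{bmatrix} C & D\end{bmatrix}=\begin{bmatrix} 0 & f(Y)\end{bmatrix}$. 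Hence $A=f(X)$, $C=0$, $D=f(Y)$, which is exactly \eqref{eq:uptr}; the $(1,2)$-block $B$ is uniquely determined by $f(M)$, and we set $\Delta_Rf(X,Y)(Z):=B\in\rmat{\module{N}}{n}{m}$.

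For the homogeneity property \eqref{eq:homog_r}, the key point is that the diagonal scaling matrix $S:=\begin{bmatrix} rI_n & 0\\ 0 & I_m\end{bmatrix}\in\mat{\ring}{(n+m)}$ intertwines $M_r$ with $M$: a one-line computation gives $M_rS=\begin{bmatrix} rX & rZ\\ 0 & Y\end{bmatrix}=SM$. Assuming, as in the statement, that $M_r\in\Omega_{n+m}$, both $M_r$ and $M$ lie in $\Omega$, so Proposition \ref{prop:simint} gives $f(M_r)S=Sf(M)$. Substituting the block forms $f(M_r)=\begin{bmatrix} f(X) & \Delta_Rf(X,Y)(rZ)\\ 0 & f(Y)\end{bmatrix}$ and $f(M)=\begin{bmatrix} f(X) & \Delta_Rf(X,Y)(Z)\\ 0 & f(Y)\end{bmatrix}$ from the first step and reading off the $(1,2)$-blocks on both sides produces $\Delta_Rf(X,Y)(rZ)=r\,\Delta_Rf(X,Y)(Z)$. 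Note that $S$ need not be invertible, so one really must invoke respecting intertwinings here, not just respecting similarities.

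The argument is purely formal and I do not anticipate any genuine obstacle; the only thing to keep track of is the membership of the various block matrices in $\Omega$, which is supplied by the hypotheses together with $\Omega$ being a nc set containing $X$ and $Y$. The substantive linearity statement --- additivity of $Z\mapsto\Delta_Rf(X,Y)(Z)$, which together with \eqref{eq:homog_r} would give full $\ring$-linearity --- is not part of this proposition and is handled separately by a similar but more delicate intertwining/direct-sum argument.
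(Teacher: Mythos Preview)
Your proposal is correct and follows essentially the same approach as the paper: the same two intertwinings with $\begin{bmatrix} I_n\\ 0\end{bmatrix}$ and $\begin{bmatrix} 0 & I_m\end{bmatrix}$ pin down the diagonal blocks and kill the $(2,1)$ block, and the same intertwining with $S=\begin{bmatrix} rI_n & 0\\ 0 & I_m\end{bmatrix}$ yields the homogeneity \eqref{eq:homog_r}. Your remark that $S$ need not be invertible, so that one must invoke respecting intertwinings rather than similarities, is exactly right and is implicit in the paper's use of Proposition \ref{prop:simint}.
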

\begin{proof}
Denote \begin{equation*}
f\left(\begin{bmatrix} X & Z\\
0 & Y
\end{bmatrix}\right)=\begin{bmatrix} A & \Delta_R f(X,Y)(Z)\\
C & D
\end{bmatrix}.
\end{equation*}
We have
\begin{equation*}
\begin{bmatrix} X & Z\\
0 & Y
\end{bmatrix}\begin{bmatrix} I_n\\
0
\end{bmatrix}=\begin{bmatrix} I_n\\
0
\end{bmatrix}X.
\end{equation*}
By Proposition \ref{prop:simint},
\begin{equation*}
f\left(\begin{bmatrix} X & Z\\
0 & Y
\end{bmatrix}\right)\begin{bmatrix} I_n\\
0
\end{bmatrix}=\begin{bmatrix} I_n\\
0
\end{bmatrix}f(X).
\end{equation*}
This implies that $A=f(X)$ and $C=0$. We have also
\begin{equation*}
Y\begin{bmatrix} 0 & I_m
\end{bmatrix}=\begin{bmatrix} 0 & I_m
\end{bmatrix}\begin{bmatrix} X & Z\\
0 & Y
\end{bmatrix}.
\end{equation*}
By Proposition \ref{prop:simint},
\begin{equation*}
f(Y)\begin{bmatrix} 0 & I_m
\end{bmatrix}=\begin{bmatrix} 0 & I_m
\end{bmatrix}f\left(\begin{bmatrix} X & Z\\
0 & Y
\end{bmatrix}\right),
\end{equation*}
which implies that $D=f(Y)$ and that again $C=0$. Thus,
\eqref{eq:uptr} holds.

Next, we have
\begin{equation*}
\begin{bmatrix} rI_n & 0\\
0 & I_m
\end{bmatrix}\begin{bmatrix} X & Z\\
0 & Y
\end{bmatrix}=\begin{bmatrix} X & rZ\\
0 & Y
\end{bmatrix}\begin{bmatrix} rI_n & 0\\
0 & I_m
\end{bmatrix}.
\end{equation*}
By Proposition \ref{prop:simint},
\begin{equation*}
\begin{bmatrix} rI_n & 0\\
0 & I_m
\end{bmatrix}f\left(\begin{bmatrix} X & Z\\
0 & Y
\end{bmatrix}\right)=f\left(\begin{bmatrix} X & rZ\\
0 & Y
\end{bmatrix}\right)\begin{bmatrix} rI_n & 0\\
0 & I_m
\end{bmatrix},
\end{equation*}
which is by \eqref{eq:uptr} equivalent to
\begin{equation*}
\begin{bmatrix} rI_n & 0\\
0 & I_m
\end{bmatrix}\begin{bmatrix} f(X) & \Delta f(X,Y)(Z)\\
0 & f(Y)
\end{bmatrix}=\begin{bmatrix} f(X) & \Delta f (X,Y)(rZ)\\
0 & f(Y)
\end{bmatrix}\begin{bmatrix} rI_n & 0\\
0 & I_m
\end{bmatrix}.
\end{equation*}
Writing out the $(1,2)$ block in the product matrix on both sides
of this equality, we obtain \eqref{eq:homog_r}.
\end{proof}
The ``left" counterpart of Proposition \ref{prop:delta_r} (which
has a similar proof) is the following.
\begin{prop}\label{prop:delta_l}
Let $f$ be a nc function on a nc set $\Omega$. Let $X\in\Omega_n$,
$Y\in\Omega_m$, and $Z\in\rmat{\module{M}}{m}{n}$ be such that
$\begin{bmatrix} X & 0\\
Z & Y
\end{bmatrix}\in\Omega_{n+m}$. Then \eqref{eq:ltr} holds, where
the off-diagonal block entry $\Delta_L f(X,Y)(Z)$  is determined
uniquely by \eqref{eq:ltr} and has
the following property. If $r\in\ring$ is such that $\begin{bmatrix} X & 0\\
rZ & Y
\end{bmatrix}\in\Omega_{n+m}$, then
\begin{equation}\label{eq:homog_l}
\Delta_L f(X,Y)(rZ)=r\Delta_L f(X,Y)(Z).
\end{equation}
\end{prop}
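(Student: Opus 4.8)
The plan is to imitate, essentially verbatim, the proof of Proposition \ref{prop:delta_r}, transposing everything across the anti-diagonal: wherever the right version uses the column $\begin{bmatrix} I_n\\ 0\end{bmatrix}$ and the row $\begin{bmatrix} 0 & I_m\end{bmatrix}$, the left version will use the row $\begin{bmatrix} I_n & 0\end{bmatrix}$ and the column $\begin{bmatrix} 0\\ I_m\end{bmatrix}$, and the scaling intertwiner $\begin{bmatrix} rI_n & 0\\ 0 & I_m\end{bmatrix}$ is replaced by $\begin{bmatrix} I_n & 0\\ 0 & rI_m\end{bmatrix}$. Throughout, the only tool is Proposition \ref{prop:simint} (a nc function respects intertwinings), applied to block lower triangular matrices lying in $\Omega$.

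First I would write $f\left(\begin{bmatrix} X & 0\\ Z & Y\end{bmatrix}\right)=\begin{bmatrix} A & B\\ \Delta_L f(X,Y)(Z) & D\end{bmatrix}$ and read off the three remaining blocks from two intertwinings. The identity $X\begin{bmatrix} I_n & 0\end{bmatrix}=\begin{bmatrix} I_n & 0\end{bmatrix}\begin{bmatrix} X & 0\\ Z & Y\end{bmatrix}$ forces, via Proposition \ref{prop:simint}, $f(X)\begin{bmatrix} I_n & 0\end{bmatrix}=\begin{bmatrix} I_n & 0\end{bmatrix}f\left(\begin{bmatrix} X & 0\\ Z & Y\end{bmatrix}\right)$, whence $A=f(X)$ and $B=0$. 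Dually, $\begin{bmatrix} X & 0\\ Z & Y\end{bmatrix}\begin{bmatrix} 0\\ I_m\end{bmatrix}=\begin{bmatrix} 0\\ I_m\end{bmatrix}Y$ gives $f\left(\begin{bmatrix} X & 0\\ Z & Y\end{bmatrix}\right)\begin{bmatrix} 0\\ I_m\end{bmatrix}=\begin{bmatrix} 0\\ I_m\end{bmatrix}f(Y)$, so $D=f(Y)$ (and $B=0$ again). This proves \eqref{eq:ltr}, and the off-diagonal block is visibly determined uniquely by it.

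For the homogeneity property \eqref{eq:homog_l} I would invoke the identity $\begin{bmatrix} I_n & 0\\ 0 & rI_m\end{bmatrix}\begin{bmatrix} X & 0\\ Z & Y\end{bmatrix}=\begin{bmatrix} X & 0\\ rZ & Y\end{bmatrix}\begin{bmatrix} I_n & 0\\ 0 & rI_m\end{bmatrix}$, which is an intertwining between the two block lower triangular matrices $\begin{bmatrix} X & 0\\ rZ & Y\end{bmatrix}$ and $\begin{bmatrix} X & 0\\ Z & Y\end{bmatrix}$ (both in $\Omega_{n+m}$ by hypothesis). Proposition \ref{prop:simint} then yields $f\left(\begin{bmatrix} X & 0\\ rZ & Y\end{bmatrix}\right)\begin{bmatrix} I_n & 0\\ 0 & rI_m\end{bmatrix}=\begin{bmatrix} I_n & 0\\ 0 & rI_m\end{bmatrix}f\left(\begin{bmatrix} X & 0\\ Z & Y\end{bmatrix}\right)$; substituting \eqref{eq:ltr} into both sides and comparing the $(2,1)$ blocks gives $\Delta_L f(X,Y)(rZ)=r\,\Delta_L f(X,Y)(Z)$.

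There is no genuine obstacle here; the proof is a routine mirror image. The only points requiring care are bookkeeping: keeping the size $\rmat{\module{M}}{m}{n}$ of $Z$ straight (it is transposed relative to the right-hand case), and observing that in both applications of Proposition \ref{prop:simint} the intertwiner need not be invertible — this is precisely why the characterization is stated in terms of respecting intertwinings rather than similarities, and it is what makes the argument go through for an arbitrary $r\in\ring$.
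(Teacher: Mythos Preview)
Your proof is correct and is precisely the mirror of the paper's proof of Proposition \ref{prop:delta_r}; the paper itself simply remarks that Proposition \ref{prop:delta_l} ``has a similar proof'' and omits the details, which are exactly the ones you supplied.
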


We will say that the nc set $\Omega\subseteq\ncspace{\module{M}}$
is \emph{right admissible} \index{right admissible nc set} if for
every $X\in\Omega_n$, $Y\in\Omega_m$, and
$Z\in\rmat{\module{M}}{n}{m}$ there exists an invertible
$r\in\ring$ such that $\begin{bmatrix} X & rZ\\
0 & Y
\end{bmatrix}\in\Omega_{n+m}$. We will say that the nc set $\Omega\subseteq\ncspace{\module{M}}$ is
\emph{left admissible} \index{left admissible nc set} if for every
$X\in\Omega_n$, $Y\in\Omega_m$, and $Z\in\rmat{\module{M}}{m}{n}$
there exists an invertible
$r\in\ring$ such that $\begin{bmatrix} X & 0\\
rZ & Y
\end{bmatrix}\in\Omega_{n+m}$.

Let the nc set $\Omega$ be right admissible, and let $f$ be a nc
function on $\Omega$. Then for every $X\in\Omega_n$,
$Y\in\Omega_m$, and $Z\in\rmat{\module{M}}{n}{m}$, and for an
invertible
$r\in\ring$ such that $\begin{bmatrix} X & rZ\\
0 & Y
\end{bmatrix}\in\Omega_{n+m}$ we define first $\Delta_Rf(X,Y)(rZ)$
as the $(1,2)$ block of the matrix $f\left(\begin{bmatrix} X & rZ\\
0 & Y
\end{bmatrix}\right)$ (see \eqref{eq:uptr}), and then define
\begin{equation}\label{eq:def_delta_r}
\Delta_Rf(X,Y)(Z)=r^{-1}\Delta_Rf(X,Y)(rZ).
\end{equation}

Similarly, let the nc set $\Omega$ be left admissible, and let $f$
be a nc function on $\Omega$. Then for every $X\in\Omega_n$,
$Y\in\Omega_m$, and $Z\in\rmat{\module{M}}{m}{n}$, and for an
invertible
$r\in\ring$ such that $\begin{bmatrix} X & 0\\
rZ & Y
\end{bmatrix}\in\Omega_{n+m}$ we define first $\Delta_Lf(X,Y)(rZ)$
as the $(2,1)$ block of the matrix $f\left(\begin{bmatrix} X & 0\\
rZ & Y
\end{bmatrix}\right)$ (see \eqref{eq:ltr}), and then define
\begin{equation}\label{eq:def_delta_l}
\Delta_Lf(X,Y)(Z)=r^{-1}\Delta_Lf(X,Y)(rZ).
\end{equation}

By Proposition \ref{prop:delta_r} (respectively, by Proposition
\ref{prop:delta_l}), the right-hand side of \eqref{eq:def_delta_r}
(respectively, of \eqref{eq:def_delta_l}) is independent of $r$,
hence the left-hand side is defined unambiguously.
 Moreover, we obtain the following.

\begin{prop}\label{prop:homog} Let $f$ be a nc function on a nc
set $\Omega$. The mappings $Z\mapsto\Delta_Rf(X,Y)(Z)$ and
$Z\mapsto\Delta_Lf(X,Y)(Z)$ are \emph{homogeneous}
\index{homogeneous mapping} as operators acting from
$\rmat{\module{M}}{n}{m}$ to $\rmat{\module{N}}{n}{m}$ and from
$\rmat{\module{M}}{m}{n}$ to $\rmat{\module{N}}{m}{n}$,
respectively. In other words, if $\Omega$ is right admissible then
for every $X\in\Omega_n$, $Y\in\Omega_m$, and
$Z\in\rmat{\module{M}}{n}{m}$, and every $r\in\ring$,
\eqref{eq:homog_r} holds; respectively, if $\Omega$ is left
admissible then for every $X\in\Omega_n$, $Y\in\Omega_m$, and
$Z\in\rmat{\module{M}}{m}{n}$, and every $r\in\ring$,
\eqref{eq:homog_l} holds.
\end{prop}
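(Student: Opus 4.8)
The plan is to prove homogeneity of $\Delta_Rf(X,Y)(\cdot)$ by reducing the claim for an arbitrary $r \in \ring$ to the already-known case of an \emph{invertible} scalar, which is exactly \eqref{eq:homog_r} from Proposition \ref{prop:delta_r}. First I would fix $X \in \Omega_n$, $Y \in \Omega_m$, $Z \in \rmat{\module{M}}{n}{m}$, and an arbitrary $r \in \ring$, and choose (using right admissibility) an invertible $t \in \ring$ with $\left[\begin{smallmatrix} X & tZ\\ 0 & Y\end{smallmatrix}\right] \in \Omega_{n+m}$. The definition \eqref{eq:def_delta_r} then gives $\Delta_Rf(X,Y)(Z) = t^{-1}\Delta_Rf(X,Y)(tZ)$, where $\Delta_Rf(X,Y)(tZ)$ is literally the $(1,2)$ block of $f$ evaluated on that block-triangular matrix.

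Next I would compute $\Delta_Rf(X,Y)(rZ)$ by the same recipe: again by right admissibility pick an invertible $s \in \ring$ with $\left[\begin{smallmatrix} X & s(rZ)\\ 0 & Y\end{smallmatrix}\right] \in \Omega_{n+m}$, so that $\Delta_Rf(X,Y)(rZ) = s^{-1}\Delta_Rf(X,Y)(srZ)$. Now both $st$ and $sr$ are scalars that, when used as the off-diagonal scaling of $Z$, land us inside $\Omega_{n+m}$ (for $st$, because $\left[\begin{smallmatrix} X & stZ\\ 0 & Y\end{smallmatrix}\right]$ is obtained from $\left[\begin{smallmatrix} X & tZ\\ 0 & Y\end{smallmatrix}\right]$ by conjugating with $\operatorname{diag}(sI_n, I_m)$, $s$ invertible, and $\Omega$ is... well, here I must be careful: $\Omega$ need not be similarity-invariant, so instead I simply observe $\left[\begin{smallmatrix} X & stZ\\ 0 & Y\end{smallmatrix}\right] = \left[\begin{smallmatrix} X & s(tZ)\\ 0 & Y\end{smallmatrix}\right]$ and reselect an invertible scalar $u$ with $\left[\begin{smallmatrix} X & u(stZ)\\ 0 & Y\end{smallmatrix}\right]\in\Omega_{n+m}$ if needed). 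The cleanest route: apply \eqref{eq:homog_r}, which is valid for \emph{invertible} scalars, twice. Since $f$ is defined and gives a block-triangular value on $\left[\begin{smallmatrix} X & tZ\\ 0 & Y\end{smallmatrix}\right]$, and $sr/t$... no — the point is that $s$ and $t$ are invertible, so $\Delta_Rf(X,Y)(sZ') = s\,\Delta_Rf(X,Y)(Z')$ whenever both $Z'$ and $sZ'$ are admissible off-diagonal entries for this pair, by Proposition \ref{prop:delta_r}.

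So the argument runs: starting from an invertible $t$ with $\left[\begin{smallmatrix} X & tZ\\ 0 & Y\end{smallmatrix}\right]\in\Omega_{n+m}$, we have by \eqref{eq:homog_r} that $\Delta_Rf(X,Y)(rtZ) = r\,\Delta_Rf(X,Y)(tZ)$ — \textbf{but} this use of \eqref{eq:homog_r} requires $\left[\begin{smallmatrix} X & rtZ\\ 0 & Y\end{smallmatrix}\right]\in\Omega_{n+m}$, which we do not know. This is the genuine subtlety. To circumvent it, I would instead run the matrix computation at the source: conjugate $\left[\begin{smallmatrix} X & tZ\\ 0 & Y\end{smallmatrix}\right]$ not by $\operatorname{diag}(rI_n, I_m)$ (which may leave $\Omega$) but observe that the \emph{value} $f\left(\left[\begin{smallmatrix} X & tZ\\ 0 & Y\end{smallmatrix}\right]\right) = \left[\begin{smallmatrix} f(X) & \Delta_Rf(X,Y)(tZ)\\ 0 & f(Y)\end{smallmatrix}\right]$ is already computed, and similarly $f\left(\left[\begin{smallmatrix} X & srZ\\ 0 & Y\end{smallmatrix}\right]\right) = \left[\begin{smallmatrix} f(X) & \Delta_Rf(X,Y)(srZ)\\ 0 & f(Y)\end{smallmatrix}\right]$. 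Then I use the intertwining $\operatorname{diag}(srt^{-1}I_n, I_m)\cdot\left[\begin{smallmatrix} X & tZ\\ 0 & Y\end{smallmatrix}\right] = \left[\begin{smallmatrix} X & srZ\\ 0 & Y\end{smallmatrix}\right]\cdot\operatorname{diag}(srt^{-1}I_n, I_m)$ — note $srt^{-1}$ need not be invertible, so this is an intertwining, not a similarity — and apply Proposition \ref{prop:simint} directly to the two block matrices (both in $\Omega_{n+m}$). Reading off the $(1,2)$ block of the resulting identity $\operatorname{diag}(srt^{-1}I_n,I_m)\,f\!\left(\left[\begin{smallmatrix} X & tZ\\ 0 & Y\end{smallmatrix}\right]\right) = f\!\left(\left[\begin{smallmatrix} X & srZ\\ 0 & Y\end{smallmatrix}\right]\right)\operatorname{diag}(srt^{-1}I_n,I_m)$ yields $srt^{-1}\,\Delta_Rf(X,Y)(tZ) = \Delta_Rf(X,Y)(srZ)$. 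Combining with $\Delta_Rf(X,Y)(Z) = t^{-1}\Delta_Rf(X,Y)(tZ)$ and $\Delta_Rf(X,Y)(rZ) = s^{-1}\Delta_Rf(X,Y)(srZ)$ gives $\Delta_Rf(X,Y)(rZ) = s^{-1}\cdot srt^{-1}\Delta_Rf(X,Y)(tZ) = r\cdot t^{-1}\Delta_Rf(X,Y)(tZ) = r\,\Delta_Rf(X,Y)(Z)$, which is \eqref{eq:homog_r} for arbitrary $r$. The left-admissible case is entirely symmetric, using block lower-triangular matrices and Proposition \ref{prop:delta_l}. The main obstacle is precisely the one flagged above: one cannot naively conjugate by $\operatorname{diag}(rI_n,I_m)$ when $r$ is not invertible, so the key trick is to use \emph{intertwinings} (via Proposition \ref{prop:simint}) between two block matrices that are \emph{both} already known to lie in $\Omega$, rather than trying to move a matrix into $\Omega$ by a non-invertible scaling.
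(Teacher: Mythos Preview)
Your final argument is correct and is essentially identical to the paper's proof: you pick invertible $t$ and $s$ (the paper's $r_1$ and $r_2$) so that the block matrices with off-diagonal entries $tZ$ and $srZ$ lie in $\Omega$, and then relate $\Delta_Rf(X,Y)(tZ)$ to $\Delta_Rf(X,Y)(srZ)$ via the scalar $srt^{-1}$. The only cosmetic difference is that where the paper cites Proposition~\ref{prop:delta_r} for this relation, you unpack that citation and apply the intertwining characterization (Proposition~\ref{prop:simint}) directly---which is precisely how Proposition~\ref{prop:delta_r} establishes that step anyway.
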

\begin{proof}
Let us prove one of these homogeneity properties --- the proof of
the other one is analogous. Let $\Omega$ be right admissible,
$X\in\Omega_n$, $Y\in\Omega_m$,
 $Z\in\rmat{\module{M}}{n}{m}$, and $r\in\ring$. Then there
 exist invertible $r_1,r_2\in\ring$ such that $\begin{bmatrix} X & r_1Z\\
0 & Y
\end{bmatrix}\in\Omega_{n+m}$ and $\begin{bmatrix} X & r_2rZ\\
0 & Y
\end{bmatrix}\in\Omega_{n+m}$. Then
\begin{equation*}
\Delta_Rf(X,Y)(Z)=r_1^{-1}\Delta_Rf(X,Y)(r_1Z)
\end{equation*}
 and
\begin{equation*}
\Delta_Rf(X,Y)(rZ)=r_2^{-1}\Delta_Rf(X,Y)(r_2rZ).
\end{equation*}
Moreover, by Proposition \ref{prop:delta_r},
\begin{equation*}
\Delta_Rf(X,Y)(r_2rZ)=r_2rr_1^{-1}\Delta_Rf(X,Y)(r_1Z).
\end{equation*}
These three equalities imply \eqref{eq:homog_r}.
\end{proof}

\begin{rem}\label{rem:alternative}
The proof of Proposition \ref{prop:homog} can be simplified with a
use of the similarity invariant envelope \index{similarity
invariant envelope} $\widetilde{\Omega}$
\index{$\widetilde{\Omega}$} of a right (resp., left) admissible
nc set $\Omega$ and the canonical extension \index{canonical
extension of a nc function}
$\widetilde{f}\colon\widetilde{\Omega}\to\ncspace{\vecspace{N}}$
\index{$\widetilde{f}$} of a nc function
$f\colon\Omega\to\ncspace{\vecspace{N}}$; see Appendix \ref{app}
for the definitions. We will show this for a right admissible nc
set $\Omega$. By Proposition \ref{prop:adm-env}, $\begin{bmatrix}
\widetilde{X}
& Z\\
0 & \widetilde{Y}
\end{bmatrix}\in\widetilde{\Omega}_{n+m}$ for all $\widetilde{X}\in\widetilde{\Omega}_n$,
$\widetilde{Y}\in\widetilde{\Omega}_m$, and
$Z\in\rmat{\module{M}}{n}{m}$, and by Proposition
\ref{prop:ncfun-ext}, $\widetilde{f}$ is a nc function. Therefore,
a formula analogous to \eqref{eq:uptr} defines $\Delta_R
\widetilde{f}(\widetilde{X},\widetilde{Y})(Z)$ for all
$n,m\in\mathbb{N}$, $\widetilde{X}\in\widetilde{\Omega}_n$,
$\widetilde{Y}\in\widetilde{\Omega}_m$, and
$Z\in\rmat{\module{M}}{n}{m}$, and by Proposition
\ref{prop:delta_r} applied to $\widetilde{f}$, one has
$$\Delta_R\widetilde{f}(\widetilde{X},\widetilde{Y})(rZ)=r\Delta_R\widetilde{f}(\widetilde{X},\widetilde{Y})(Z)$$
for every $r\in\ring$. In particular, if $X\in\Omega_n$,
$Y\in\Omega_m$, $Z\in\rmat{\module{M}}{n}{m}$, and
$r\in\ring$ is such that $\begin{bmatrix} X & rZ\\
0 & Y\end{bmatrix}\in\Omega_{n+m}$, then
$$\Delta_R{f}({X},{Y})(rZ)=\Delta_R\widetilde{f}({X},{Y})(rZ)=r\Delta_R\widetilde{f}({X},{Y})(Z).$$
If, in addition, $r$ is invertible, then from
\eqref{eq:def_delta_r} we obtain
\begin{equation}\label{eq:delta=delta_ext}
\Delta_Rf(X,Y)(Z)=\Delta_R\widetilde{f}({X},{Y})(Z).
\end{equation}
 Consequently,
the mapping $Z\mapsto\Delta_Rf(X,Y)(Z)$ is homogeneous, and the
statement of Proposition \ref{prop:homog} for a right admissible
nc set $\Omega$ follows.  An analogous argument works for a left
admissible nc set $\Omega$.

We see that the homogeneity of the mapping
$Z\mapsto\Delta_Rf(X,Y)(Z)$ follows directly from the homogeneity
of the mapping
$Z\mapsto\Delta_R\widetilde{f}(\widetilde{X},\widetilde{Y})(Z)$,
and the proof for the latter is less involved, since it does not
require manipulating with appropriate invertible elements $r\in
\ring$ so that the relevant block triangular matrices get into the
nc set $\Omega$. In the sequel, we will often use similarity
envelopes of nc sets and canonical extensions of nc functions to
simplify the proofs.
\end{rem}

\begin{prop}\label{prop:add}
Let $f$ be a nc function on a nc set $\Omega$. The mappings
$Z\mapsto\Delta_Rf(X,Y)(Z)$ and $Z\mapsto\Delta_Lf(X,Y)(Z)$ are
\emph{additive} \index{additive mapping} as operators acting from
$\rmat{\module{M}}{n}{m}$ to $\rmat{\module{N}}{n}{m}$ and from
$\rmat{\module{M}}{m}{n}$ to $\rmat{\module{N}}{m}{n}$,
respectively. In other words, if $\Omega$ is right admissible,
then for every $X\in\Omega_n$, $Y\in\Omega_m$, and $Z_I,Z_{I\!
I}\in\rmat{\module{M}}{n}{m}$,
\begin{equation}\label{eq:add_r}
\Delta_Rf(X,Y)(Z_I+Z_{I\!
I})=\Delta_Rf(X,Y)(Z_I)+\Delta_Rf(X,Y)(Z_{I\! I});
\end{equation}
respectively, if $\Omega$ is left admissible then for every
$X\in\Omega_n$, $Y\in\Omega_m$, and $Z_I,Z_{I\!
I}\in\rmat{\module{M}}{m}{n}$,
\begin{equation}\label{eq:add_l}
\Delta_Lf(X,Y)(Z_I+Z_{I\!
I})=\Delta_Lf(X,Y)(Z_I)+\Delta_Lf(X,Y)(Z_{I\! I}).
\end{equation}
\end{prop}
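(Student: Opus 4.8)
The plan is to establish additivity of $\Delta_R$; the statement for $\Delta_L$ then follows by the same argument run with block lower triangular matrices (or, if one prefers, by transposition). Following Remark \ref{rem:alternative}, I would first pass to the similarity invariant envelope $\widetilde{\Omega}$ of $\Omega$ and the canonical extension $\widetilde{f}$ of $f$. Since $\widetilde{\Omega}$ is closed under the formation of block upper triangular matrices with diagonal blocks in $\widetilde{\Omega}$, since $\widetilde{f}$ is again a nc function, and since $(\Delta_R\widetilde{f})|_{\Omega\times\Omega}=\Delta_R f$, it suffices to prove \eqref{eq:add_r} for $\widetilde{f}$. In other words, I may assume that for all $X\in\Omega_n$, $Y\in\Omega_m$ and $W\in\rmat{\module{M}}{n}{m}$ one has $\begin{bmatrix} X & W\\ 0 & Y\end{bmatrix}\in\Omega_{n+m}$, so that $\Delta_R f(X,Y)(W)$ is read off directly from \eqref{eq:uptr} with no rescaling.

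The device is to package $Z_I$ and $Z_{II}$ into a single first block row. Fix $X\in\Omega_n$, $Y\in\Omega_m$ and $Z_I,Z_{II}\in\rmat{\module{M}}{n}{m}$, and put
\[ M=\begin{bmatrix} X & Z_I & Z_{II}\\ 0 & Y & 0\\ 0 & 0 & Y\end{bmatrix}\in\Omega_{n+2m}. \]
The first task is to compute $f(M)$: applying Proposition \ref{prop:simint} to the identity $M\,\bigl[\begin{smallmatrix} I_n & 0\\ 0 & I_m\\ 0 & 0\end{smallmatrix}\bigr]=\bigl[\begin{smallmatrix} I_n & 0\\ 0 & I_m\\ 0 & 0\end{smallmatrix}\bigr]\bigl[\begin{smallmatrix} X & Z_I\\ 0 & Y\end{smallmatrix}\bigr]$ and using \eqref{eq:uptr} for the $2\times2$ block on the right determines the first $n+m$ columns of $f(M)$, while the analogous intertwining with $\bigl[\begin{smallmatrix} I_n & 0\\ 0 & 0\\ 0 & I_m\end{smallmatrix}\bigr]$ and $\bigl[\begin{smallmatrix} X & Z_{II}\\ 0 & Y\end{smallmatrix}\bigr]$ determines the last $m$ columns. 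Together these give
\[ f(M)=\begin{bmatrix} f(X) & \Delta_R f(X,Y)(Z_I) & \Delta_R f(X,Y)(Z_{II})\\ 0 & f(Y) & 0\\ 0 & 0 & f(Y)\end{bmatrix}. \]
To conclude, I would use one more intertwining, $MT=T\,\bigl[\begin{smallmatrix} X & Z_I+Z_{II}\\ 0 & Y\end{smallmatrix}\bigr]$ with $T=\bigl[\begin{smallmatrix} I_n & 0\\ 0 & I_m\\ 0 & I_m\end{smallmatrix}\bigr]$, verified by a direct multiplication. By Proposition \ref{prop:simint} and \eqref{eq:uptr},
\[ f(M)\,T=T\begin{bmatrix} f(X) & \Delta_R f(X,Y)(Z_I+Z_{II})\\ 0 & f(Y)\end{bmatrix}. \]
Substituting the formula for $f(M)$ and equating the top-right $n\times m$ blocks of the two sides yields $\Delta_R f(X,Y)(Z_I)+\Delta_R f(X,Y)(Z_{II})=\Delta_R f(X,Y)(Z_I+Z_{II})$, i.e.\ \eqref{eq:add_r}; the left-admissible case \eqref{eq:add_l} is obtained verbatim from \eqref{eq:ltr} and the corresponding block lower triangular matrices.

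I do not expect any deep obstacle: once one hits on stacking $Z_I$ and $Z_{II}$ in a common block row, everything reduces to block-matrix bookkeeping, and in the envelope formulation one does not even use the homogeneity of Proposition \ref{prop:homog}. The one genuinely technical point is to ensure that $M$ and the auxiliary $2\times2$ block matrices all lie in the domain on which $\Delta_R$ is defined, and this is precisely the friction that passing to $\widetilde{\Omega}$ removes. Alternatively, staying inside $\Omega$, one can run the same scheme in the style of the proof of Proposition \ref{prop:homog}: rescale $Z_I,Z_{II}$ by an invertible $r$ so that $M\in\Omega$, rescale separately so that each relevant $2\times2$ block lies in $\Omega$, and take the scalar multiplying $I_m$ in the intertwining matrices above to be the ratio of the two rescaling factors so that the off-diagonal entries of the two sides agree, then use Proposition \ref{prop:homog} to cancel the leftover scalars. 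This merely inflates the notation.
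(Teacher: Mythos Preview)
Your proof is correct and follows essentially the same strategy as the paper: reduce to the similarity invariant envelope, embed $Z_I$ and $Z_{II}$ in a single $3\times 3$ block upper triangular matrix, and extract additivity via three intertwinings. The only difference is a harmless duality: the paper stacks $Z_I,Z_{II}$ as a block \emph{column}, working with $\begin{bmatrix} X & 0 & Z_I\\ 0 & X & Z_{II}\\ 0 & 0 & Y\end{bmatrix}\in\Omega_{2n+m}$ and left intertwinings by $2\times 3$ block matrices, whereas you stack them as a block \emph{row} in $\Omega_{n+2m}$ and use right intertwinings by $3\times 2$ block matrices; the two arguments are mirror images of one another and neither offers any real advantage over the other.
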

\begin{proof}
Let us prove one of these additivity properties --- the proof of
the other one is analogous. Let $\Omega$ be right admissible,
$X\in\Omega_n$, $Y\in\Omega_m$,
 $Z_I,Z_{I\! I}\in\rmat{\module{M}}{n}{m}$. By \eqref{eq:delta=delta_ext}, we may assume, without loss of
 generality,
  that $\Omega$ is similarity
 invariant, i.e., $\widetilde{\Omega}=\Omega$; see Remark
 \ref{rem:alternative} and Appendix \ref{app}.
  Then, by Proposition \ref{prop:adm-env}, $$\begin{bmatrix} X & Z_I\\
0 & Y
\end{bmatrix}\in\Omega_{n+m},\quad \begin{bmatrix} X & Z_{I\! I}\\
0 & Y
\end{bmatrix}\in\Omega_{n+m},\quad  \begin{bmatrix} X & Z_I+Z_{I\! I}\\
0 & Y
\end{bmatrix}\in\Omega_{n+m},$$ and $$\begin{bmatrix} X & 0 & Z_I\\
0 & X & Z_{I\! I}\\
0 & 0 & Y
\end{bmatrix}\in\Omega_{2n+m}$$ (for the latter, we use the fact that if $X\in\Omega_n$ then
$\begin{bmatrix} X & 0\\
0 & X
\end{bmatrix}\in\Omega_{2n}$).
We have
\begin{equation*}
\begin{bmatrix}  I_n & 0 & 0\\
0 & 0 & I_m
\end{bmatrix}
\begin{bmatrix} X & 0 & Z_I\\
0 & X & Z_{I\! I}\\
0 & 0 & Y
\end{bmatrix}
=\begin{bmatrix} X & Z_I\\
0 & Y
\end{bmatrix}\begin{bmatrix} I_n & 0 & 0\\
0 & 0 & I_m
\end{bmatrix}.
\end{equation*}
By Proposition \ref{prop:simint},
\begin{equation*}
\begin{bmatrix} I_n & 0 & 0\\
0 & 0 & I_m
\end{bmatrix}
f\left(\begin{bmatrix} X & 0 & Z_I\\
0 & X & Z_{I\! I}\\
0 & 0 & Y
\end{bmatrix}\right)
=f\left(\begin{bmatrix} X & Z_I\\
0 & Y
\end{bmatrix}\right)\begin{bmatrix} I_n & 0 & 0\\
0 & 0 & I_m
\end{bmatrix}.
\end{equation*}
Clearly, we have
\begin{equation*}
f\left(\begin{bmatrix} X & 0 & Z_I\\
0 & X & Z_{I\! I}\\
0 & 0 & Y
\end{bmatrix}\right)=\begin{bmatrix} f(X) & 0 & A\\
0 & f(X) & B\\
0 & 0 & f(Y)
\end{bmatrix},
\end{equation*}
with some matrices $A$ and $B$, and also we have
\begin{equation*}
f\left(\begin{bmatrix} X & Z_I\\
0 & Y
\end{bmatrix}\right)=\begin{bmatrix} f(X) & \Delta_Rf(X,Y)(Z_I)\\
0 & f(Y)
\end{bmatrix}.
\end{equation*}
Therefore,
\begin{equation*}
\begin{bmatrix} I_n & 0 & 0\\
0 & 0 & I_m
\end{bmatrix}
\begin{bmatrix} f(X) & 0 & A\\
0 & f(X) & B\\
0 & 0 & f(Y)
\end{bmatrix}
=\begin{bmatrix} f(X) & \Delta_Rf(X,Y)(Z_I)\\
0 & f(Y)
\end{bmatrix}\begin{bmatrix} I_n & 0 & 0\\
0 & 0 & I_m
\end{bmatrix}.
\end{equation*}
Comparing the $(1,3)$ block entries in a product matrix on both
sides, we get
\begin{equation*}
A=\Delta_Rf(X,Y)(Z_I).
\end{equation*}
Similarly, we proceed with the intertwining
\begin{equation*}
\begin{bmatrix} 0  & I_n & 0\\
0 & 0 & I_m
\end{bmatrix}
\begin{bmatrix} X & 0 & Z_I\\
0 & X & Z_{I\! I}\\
0 & 0 & Y
\end{bmatrix}
=\begin{bmatrix} X & Z_{I\! I}\\
0 & Y
\end{bmatrix}\begin{bmatrix} 0 & I_n & 0\\
0 & 0 & I_m
\end{bmatrix}.
\end{equation*}
By Proposition \ref{prop:simint},
\begin{equation*}
\begin{bmatrix} 0 & I_n & 0\\
0 & 0 & I_m
\end{bmatrix}
f\left(\begin{bmatrix} X & 0 & Z_I\\
0 & X & Z_{I\! I}\\
0 & 0 & Y
\end{bmatrix}\right)
=f\left(\begin{bmatrix} X & Z_{I\! I}\\
0 & Y
\end{bmatrix}\right)\begin{bmatrix} 0 & I_n & 0\\
0 & 0 & I_m
\end{bmatrix},
\end{equation*}
or equivalently,
\begin{equation*}
\begin{bmatrix} 0 & I_n & 0\\
0 & 0 & I_m
\end{bmatrix}
\begin{bmatrix} f(X) & 0 & A\\
0 & f(X) & B\\
0 & 0 & f(Y)
\end{bmatrix}
=\begin{bmatrix} f(X) & \Delta_Rf(X,Y)(Z_{I\! I})\\
0 & f(Y)
\end{bmatrix}\begin{bmatrix} 0 & I_n & 0\\
0 & 0 & I_m
\end{bmatrix}.
\end{equation*}
Comparing the $(1,3)$ block entries in a product matrix on both
sides, we get
\begin{equation*}
B=\Delta_Rf(X,Y)(Z_{I\! I}).
\end{equation*}
Next, we use the intertwining relation
\begin{equation*}
\begin{bmatrix}I_n  & I_n & 0\\
0 & 0 & I_m
\end{bmatrix}
\begin{bmatrix} X & 0 & Z_I\\
0 & X & Z_{I\! I}\\
0 & 0 & Y
\end{bmatrix}
=\begin{bmatrix} X & Z_I+Z_{I\! I}\\
0 & Y
\end{bmatrix}\begin{bmatrix} I_n & I_n & 0\\
0 & 0 & I_m
\end{bmatrix}.
\end{equation*}
By Proposition \ref{prop:simint},
\begin{equation*}
\begin{bmatrix} I_n & I_n & 0\\
0 & 0 & I_m
\end{bmatrix}
f\left(\begin{bmatrix} X & 0 & Z_I\\
0 & X & Z_{I\! I}\\
0 & 0 & Y
\end{bmatrix}\right)
=f\left(\begin{bmatrix} X & Z_I+Z_{I\! I}\\
0 & Y
\end{bmatrix}\right)\begin{bmatrix} I_n & I_n & 0\\
0 & 0 & I_m
\end{bmatrix},
\end{equation*}
or equivalently,
\begin{align*}
\begin{bmatrix} I_n & I_n & 0\\
0 & 0 & I_m
\end{bmatrix}
&\begin{bmatrix} f(X) & 0 & \Delta_Rf(X,Y)(Z_I)\\
0 & f(X) & \Delta_Rf(X,Y)(Z_{I\! I})\\
0 & 0 & f(Y)
\end{bmatrix}\\
&=\begin{bmatrix} f(X) & \Delta_Rf(X,Y)(Z_I+Z_{I\! I})\\
0 & f(Y)
\end{bmatrix}\begin{bmatrix} I_n & I_n & 0\\
0 & 0 & I_m
\end{bmatrix}.
\end{align*}
Comparing the $(1,3)$ block entries in a product matrix on both
sides, we get
\begin{equation*}
\Delta_Rf(X,Y)(Z_I)+\Delta_Rf(X,Y)(Z_{I\!
I})=\Delta_Rf(X,Y)(Z_I+Z_{I\! I}),
\end{equation*}
i.e., \eqref{eq:add_r} is true.
\end{proof}

The transposition of matrices induces naturally a transposition
operation on nc functions and enables us to relate right and left
nc difference-differential operators.
\begin{prop}\label{prop:trans}
\begin{enumerate}
    \item If $\Omega\subseteq\ncspace{\module{M}}$ is a nc set,
    then so is $$\Omega^\top:=\{X^\top\colon
    X\in\Omega\}\subseteq\ncspace{\module{M}}.$$
    \index{$\Omega^\top$}
    \item If $f\colon \Omega\to\ncspace{\module{N}}$ is a nc
    function, then so is $f^\top\colon
    \Omega^\top\to\ncspace{\module{N}}$, \index{$f^\top$} where $f^\top(X):=f\left(X^\top\right)^\top$.
    \item If $\Omega$ is
    right (respectively, left) admissible, then $\Omega^\top$ is left (respectively,
    right) admissible. In this case,
    $$\Delta_Lf^\top(X,Y)(Z)=\left(\Delta_Rf\left(X^\top,Y^\top\right)
    \left(Z^\top\right)\right)^\top$$ (respectively,
    $\Delta_Rf^\top(X,Y)(Z)=\left(\Delta_Lf\left(X^\top,Y^\top\right)\left(Z^\top\right)\right)^\top$).
\end{enumerate}
\end{prop}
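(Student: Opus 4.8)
The whole argument is bookkeeping built on three elementary facts about transposition of (block) matrices over $\module{M}$: it distributes over direct sums, $(X\oplus Y)^\top=X^\top\oplus Y^\top$; it reverses products, $(AB)^\top=B^\top A^\top$, and in particular $(A^{-1})^\top=(A^\top)^{-1}$ for invertible $A$ over $\ring$; and it interchanges block-upper- and block-lower-triangular forms,
\begin{equation*}
\begin{bmatrix} X & Z\\ 0 & Y\end{bmatrix}^\top=\begin{bmatrix} X^\top & 0\\ Z^\top & Y^\top\end{bmatrix}.
\end{equation*}
For (1): if $X^\top,Y^\top\in\Omega^\top$ with $X,Y\in\Omega$, then $X\oplus Y\in\Omega$, so $X^\top\oplus Y^\top=(X\oplus Y)^\top\in\Omega^\top$, and $\Omega^\top$ is a nc set.

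For (2) I would check that $f^\top$ respects intertwinings and invoke Proposition \ref{prop:simint}. Given $X\in\Omega^\top_n$, $Y\in\Omega^\top_m$, and $T\in\rmat{\ring}{n}{m}$ with $XT=TY$, write $X=A^\top$, $Y=B^\top$ with $A,B\in\Omega$; transposing $XT=TY$ gives $B(T^\top)=(T^\top)A$ with $T^\top\in\rmat{\ring}{m}{n}$. Since $f$ respects intertwinings, $f(B)T^\top=T^\top f(A)$, and transposing back yields $f(A)^\top T=Tf(B)^\top$, that is $f^\top(X)T=Tf^\top(Y)$. Hence $f^\top$ respects intertwinings, so it is a nc function by Proposition \ref{prop:simint}. (Alternatively, one can verify \eqref{eq:dirsums} and \eqref{eq:simsim} for $f^\top$ directly, using $(A^{-1})^\top=(A^\top)^{-1}$ for the similarity part.)

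For (3): right admissibility of $\Omega$ yields left admissibility of $\Omega^\top$, since for $P\in\Omega^\top_n$, $Q\in\Omega^\top_m$, $W\in\rmat{\module{M}}{m}{n}$ the matrix $\begin{bmatrix} P & 0\\ rW & Q\end{bmatrix}$ lies in $\Omega^\top_{n+m}$ precisely when its transpose $\begin{bmatrix} P^\top & rW^\top\\ 0 & Q^\top\end{bmatrix}$ lies in $\Omega_{n+m}$, and the latter is exactly the block-upper-triangular matrix supplied by the right admissibility of $\Omega$ applied to $P^\top\in\Omega_n$, $Q^\top\in\Omega_m$, $W^\top\in\rmat{\module{M}}{n}{m}$, with the same scalar $r$. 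For the identity, fix $X\in\Omega^\top_n$, $Y\in\Omega^\top_m$, $Z\in\rmat{\module{M}}{m}{n}$, choose an invertible $r$ with $\begin{bmatrix} X & 0\\ rZ & Y\end{bmatrix}\in\Omega^\top_{n+m}$, and compute $f^\top$ of this matrix in two ways: by \eqref{eq:ltr} its $(2,1)$ block is $\Delta_Lf^\top(X,Y)(rZ)$, while by the definition of $f^\top$ together with \eqref{eq:uptr} applied to $f$ at the transposed matrix $\begin{bmatrix} X^\top & rZ^\top\\ 0 & Y^\top\end{bmatrix}$ the $(2,1)$ block equals $\bigl(\Delta_Rf(X^\top,Y^\top)(rZ^\top)\bigr)^\top$. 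Equating the two and then using homogeneity on each side (Proposition \ref{prop:homog}; the identities \eqref{eq:homog_l} and \eqref{eq:homog_r} are available once $\Omega^\top$ is known to be left admissible) to pull out and cancel the invertible factor $r$ gives $\Delta_Lf^\top(X,Y)(Z)=\bigl(\Delta_Rf(X^\top,Y^\top)(Z^\top)\bigr)^\top$. The parenthetical statement (for left admissible $\Omega$) follows by the symmetric argument, interchanging $L\leftrightarrow R$ and ``upper''$\leftrightarrow$``lower'' throughout.

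There is no genuinely hard step; the only pitfalls are the direction reversals forced by transposition. Transposing an intertwining $XT=TY$ produces an intertwining of $B$ with $A$ \emph{in the opposite order}, and transposing a block-upper-triangular matrix gives a block-lower-triangular one --- which is exactly why right admissibility maps to left admissibility and $\Delta_R$ to $\Delta_L$. One must keep the arguments of $\Delta$ in the order $(X^\top,Y^\top)$ (not $(Y^\top,X^\top)$), and, if proving (2) through similarities, not forget $(A^{-1})^\top=(A^\top)^{-1}$.
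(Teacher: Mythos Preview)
Your proof is correct and is exactly the direct verification the paper has in mind; the paper itself omits the argument with the remark ``The proof is straightforward,'' and your bookkeeping with $(X\oplus Y)^\top=X^\top\oplus Y^\top$, $(AB)^\top=B^\top A^\top$, and the block-transpose identity is the natural way to carry it out.
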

The proof is straightforward. Proposition \ref{prop:trans} allows
us to deduce results for $\Delta_L$ from the corresponding results
for $\Delta_R$ and vice versa, using a transposition argument.

In the case where a nc set $\Omega$ is both right and left
admissible, there is also a simple symmetry relation between right
and left nc difference-differential operators.
\begin{prop}\label{prop:RLsym}
Let $f\colon\Omega\to\ncspace{\module{N}}$ be a nc function on a
nc set $\Omega\subseteq\ncspace{\module{M}}$ which is both right
and left admissible. Then
\begin{equation*}
\Delta_Rf(X,Y)(Z)=\Delta_Lf(Y,X)(Z)
\end{equation*}
for all $n,m\in\mathbb{N}$, $X\in\Omega_n$, $Y\in\Omega_m$, and
$Z\in\rmat{\module{M}}{n}{m}$.
\end{prop}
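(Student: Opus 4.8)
The plan is to exhibit the two block matrices $\begin{bmatrix} X & Z\\ 0 & Y\end{bmatrix}$ and $\begin{bmatrix} Y & 0\\ Z & X\end{bmatrix}$ as conjugate to one another by the permutation interchanging the two coordinate blocks, and then to read off the claimed identity by comparing off-diagonal blocks after applying $f$. Fix $X\in\Omega_n$, $Y\in\Omega_m$, $Z\in\rmat{\module{M}}{n}{m}$, and let $P=\begin{bmatrix} 0 & I_m\\ I_n & 0\end{bmatrix}\in\mat{\ring}{(n+m)}$ be the permutation matrix that swaps the first block of $n$ coordinates with the last block of $m$ coordinates; it is invertible with $P^{-1}=P^\top=\begin{bmatrix} 0 & I_n\\ I_m & 0\end{bmatrix}$. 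A routine block multiplication shows that for any conformally partitioned block matrix one has
\begin{equation*}
P\begin{bmatrix} A & B\\ 0 & D\end{bmatrix}P^{-1}=\begin{bmatrix} D & 0\\ B & A\end{bmatrix};
\end{equation*}
in particular $P\begin{bmatrix} X & Z\\ 0 & Y\end{bmatrix}P^{-1}=\begin{bmatrix} Y & 0\\ Z & X\end{bmatrix}$.

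The first step is to reduce to the case where $\Omega$ is similarity invariant, as in Remark \ref{rem:alternative} and Appendix \ref{app}. By \eqref{eq:delta=delta_ext} and its left analogue (which follows either by the same argument or by the transposition trick of Proposition \ref{prop:trans}), we have $\Delta_Rf(X,Y)(Z)=\Delta_R\widetilde f(X,Y)(Z)$ and $\Delta_Lf(Y,X)(Z)=\Delta_L\widetilde f(Y,X)(Z)$, so it is enough to prove the identity for $\widetilde f$ on $\widetilde\Omega$; hence we may assume $\Omega=\widetilde\Omega$. Under this assumption Proposition \ref{prop:adm-env} gives $\begin{bmatrix} X & Z\\ 0 & Y\end{bmatrix}\in\Omega_{n+m}$, and then $\begin{bmatrix} Y & 0\\ Z & X\end{bmatrix}=P\begin{bmatrix} X & Z\\ 0 & Y\end{bmatrix}P^{-1}\in\Omega_{n+m}$ because $\Omega$ is now similarity invariant. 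Consequently \eqref{eq:uptr} and \eqref{eq:ltr} apply directly (no scaling of $Z$ is needed), and $f$ respects the similarity implemented by $P$.

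The heart of the argument is then a short computation. Applying $f$ to the conjugation above, using that $f$ respects similarities, then \eqref{eq:uptr}, then the displayed block identity,
\begin{multline*}
f\!\left(\begin{bmatrix} Y & 0\\ Z & X\end{bmatrix}\right)=P\,f\!\left(\begin{bmatrix} X & Z\\ 0 & Y\end{bmatrix}\right)P^{-1}\\
=P\begin{bmatrix} f(X) & \Delta_Rf(X,Y)(Z)\\ 0 & f(Y)\end{bmatrix}P^{-1}=\begin{bmatrix} f(Y) & 0\\ \Delta_Rf(X,Y)(Z) & f(X)\end{bmatrix}.
\end{multline*}
On the other hand \eqref{eq:ltr}, applied to $\begin{bmatrix} Y & 0\\ Z & X\end{bmatrix}$ with $Y$ as its upper-left and $X$ as its lower-right diagonal block, identifies the $(2,1)$ block of the left-hand side as $\Delta_Lf(Y,X)(Z)$. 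Comparing $(2,1)$ blocks gives $\Delta_Lf(Y,X)(Z)=\Delta_Rf(X,Y)(Z)$, which is the assertion.

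The computation itself is routine; the one point that needs care — and the reason for passing to the similarity invariant envelope at the outset — is that a general nc set need not be closed under similarities. Without the reduction to $\widetilde\Omega$, right admissibility and left admissibility would only furnish possibly different invertible scalars placing the upper- and lower-triangular matrices into $\Omega$, and the conjugate $P\begin{bmatrix} X & rZ\\ 0 & Y\end{bmatrix}P^{-1}$ would not be guaranteed to lie in $\Omega$; working inside $\widetilde\Omega$ makes all the relevant block triangular matrices available at once and dissolves this difficulty.
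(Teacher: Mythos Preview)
Your proof is correct and follows essentially the same route as the paper: reduce to the similarity invariant envelope, relate the upper- and lower-triangular block matrices via the block permutation, and compare off-diagonal entries after applying $f$. The only cosmetic difference is that the paper phrases the key step as an intertwining (via Proposition \ref{prop:simint}) and reads off the $(1,1)$ block, whereas you phrase it as a similarity and read off the $(2,1)$ block; since the permutation is invertible, these are equivalent.
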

\begin{proof}
 By \eqref{eq:delta=delta_ext} and its left analogue, we may assume, without loss of
 generality,
  that $\Omega$ is similarity
 invariant, i.e., $\widetilde{\Omega}=\Omega$; see Remark
 \ref{rem:alternative} and Appendix \ref{app}.
  Then by Proposition \ref{prop:adm-env},   for arbitrary
$n,m\in\mathbb{N}$, $X\in\Omega_n$, $Y\in\Omega_m$, and
$Z\in\rmat{\module{M}}{n}{m}$ one has
$$\begin{bmatrix}
X & Z\\
0 & Y \end{bmatrix},\ \begin{bmatrix}
Y & 0\\
Z & X \end{bmatrix}\in\Omega_{n+m}.$$

Clearly,
$$\begin{bmatrix}
X & Z\\
0 & Y \end{bmatrix}
\begin{bmatrix}
0 & I_n\\
I_m & 0 \end{bmatrix}=\begin{bmatrix}
0 & I_n\\
I_m & 0\end{bmatrix}\begin{bmatrix}
Y & 0\\
Z & X \end{bmatrix}.
$$
By Proposition \ref{prop:simint},
$$f\left(\begin{bmatrix}
X & Z\\
0 & Y \end{bmatrix}\right)
\begin{bmatrix}
0 & I_n\\
I_m & 0 \end{bmatrix}=\begin{bmatrix}
0 & I_n\\
I_m & 0\end{bmatrix}f\left(\begin{bmatrix}
Y & 0\\
Z & X \end{bmatrix}\right),
$$
i.e., \begin{equation*}\begin{bmatrix}
f(X) & \Delta_Rf(X,Y)(Z)\\
0 & f(Y) \end{bmatrix}
\begin{bmatrix}
0 & I_n\\
I_m & 0 \end{bmatrix}  =\begin{bmatrix}
0 & I_n\\
I_m & 0\end{bmatrix}\begin{bmatrix}
f(Y) & 0\\
\Delta_Lf(Y,X)(Z) & f(X) \end{bmatrix}.
\end{equation*}
Comparing the (1,1) block entries in the matrix products on the
left-hand side and on the right-hand side, we obtain
\begin{equation*}
\Delta_Rf(X,Y)(Z)=\Delta_Lf(Y,X)(Z).
\end{equation*}
\end{proof}

\section{Basic rules of nc difference-differential
calculus}\label{subsec:calc1} We proceed to identify basic
building blocks which allow one to compute $\Delta_Rf$ and
$\Delta_Lf$ much as in the classical commutative calculus; here
and below, whenever it does not cause confusion, we often write
$\Delta_Rf$ and $\Delta_Lf$ without arguments.
\subsection{} \label{subsub:const} If $f\colon
\ncspace{\module{M}} \to \ncspace{\module{N}}$ is a \emph{constant
nc function}, \index{constant nc function} i.e., there is a
$c\in\module{N}$ such that $f(X)=cI_n$ ($=\diag[c,\ldots,c]$) for
all $n\in\mathbb{N}$ and all $X\in\mat{\module{M}}{n}$,
 then $\Delta_R f(X,Y)(Z)=0$ and $\Delta_L f(X,Y)(Z)= 0$ for all $X,Y,Z$ of appropriate sizes.
\begin{proof} Trivial.
\end{proof}
\subsection{} \label{subsub:sum}  If $f,g\colon \Omega \to
\ncspace{\module{N}}$ are nc functions on a right (respectively,
left) admissible nc set $\Omega\subseteq\ncspace{\module{M}}$ and
$a,b\in\ring$, then
  $\Delta_R(af+bg)=a\Delta_R f+b\Delta_R g$ (respectively, $\Delta_L(af+bg)=a\Delta_L f+b\Delta_L g$).
\begin{proof} Trivial.
\end{proof}
\subsection{} \label{subsub:coord} Let
$l\colon\module{M}\to\module{N}$ be a $\ring$-linear mapping. We
extend $l$  to a linear mapping from $\rmat{\module{M}}{n}{m}$ to
$\rmat{\module{N}}{n}{m}$, $n,m\in\mathbb{N}$ by
$l([\mu_{ij}])=[l(\mu_{ij})]$.
 Then $l$ is a nc function
from $\ncspace{\module{M}}$ to $\ncspace{\module{N}}$, and
$\Delta_R l(X,Y)(Z)=l(Z)$ and $\Delta_L l(X,Y)(Z)= l(Z)$ for all
$X,Y,Z$ of appropriate sizes. In particular, if $l_j\colon
\ncspace{(\ring^d)}\to\ncspace{\ring}$ is the \emph{$j$-th
coordinate nc function}, \index{coordinate nc function} i.e.,
$$l_j(X)=l_j(X_1,\ldots,X_d)=X_j,$$ then $\Delta_Rl_j(X,Y)(Z)=Z_j$ and
$\Delta_Ll_j(X,Y)(Z)=Z_j$ for all $d$-tuples $X,Y,Z$ of matrices
over $\ring$ of appropriate sizes.
\begin{proof} Trivial.
\end{proof}
\subsection{} \label{subsub:prod} Let $f\colon \Omega\to
\ncspace{\module{N}}$, $g\colon\Omega\to \ncspace{\module{O}}$ be
nc functions on a right (respectively, left) admissible nc set
$\Omega\subseteq \ncspace{\module{M}}$. Assume that we are given a
product operation $(x,y)\mapsto x\cdot y$ on
$\module{N}\times\module{O}$ with values in a module $\module{P}$
over $\ring$ which is left and right distributive and commutes
with the action of $\ring$ (i.e., we are given a $\ring$-linear
map from  $\module{N}\otimes_\ring\module{O}$ to $\module{P}$). We
extend the product operation to matrices over $\module{N}$ and
over $\module{O}$ of appropriate sizes. It is easy to check that
$f\cdot g\colon \Omega\to\ncspace{\module{P}}$ is a nc function.

Then $$\Delta_R(f\cdot g)(X,Y)(Z)=f(X)\cdot\Delta_R
g(X,Y)(Z)+\Delta_R f(X,Y)(Z)\cdot g(Y)$$ for all
$n,m\in\mathbb{N}$, $X\in\Omega_n$, $Y\in\Omega_m$, and
$Z\in\rmat{\module{M}}{n}{m}$ (respectively,
$$\Delta_L(f\cdot g)(X,Y)(Z)=\Delta_L
f(X,Y)(Z)\cdot g(X)+ f(Y)\cdot\Delta_L g(X,Y)(Z)$$ for all
$n,m\in\mathbb{N}$, $X\in\Omega_n$, $Y\in\Omega_m$, and
$Z\in\rmat{\module{M}}{m}{n}$).
\begin{proof}
We shall now give a proof of the first equality. By
\eqref{eq:delta=delta_ext}, we may assume, without loss of
 generality,
  that $\Omega$ is similarity
 invariant, i.e., $\widetilde{\Omega}=\Omega$; see Remark
 \ref{rem:alternative} and Appendix \ref{app}.
  Then by Proposition \ref{prop:adm-env},
for arbitrary $X\in\Omega_n$, $Y\in\Omega_m$, and
$Z\in\rmat{\module{M}}{n}{m}$ one has
$\begin{bmatrix} X & Z\\
0 & Y
\end{bmatrix}\in\Omega_{n+m}$.
On one hand, \begin{equation}\label{eq:1hand} (f\cdot g)
\left(\begin{bmatrix}
X & Z\\
0 & Y \end{bmatrix}\right)=\begin{bmatrix}
(f\cdot g) (X) & \Delta_R(f\cdot g)(X,Y)(Z)\\
0 &  (f\cdot g)(Y) \end{bmatrix}. \end{equation}
 On the other
hand,
\begin{multline}\label{eq:2hand}
f\left(\begin{bmatrix}
X & Z\\
0 & Y \end{bmatrix}\right)\cdot g\left(\begin{bmatrix}
X & Z\\
0 & Y \end{bmatrix}\right)\\
=\begin{bmatrix} f (X) & \Delta_Rf(X,Y)(Z)\\
0 &  f(Y) \end{bmatrix}\cdot\begin{bmatrix} g (X) & \Delta_Rg(X,Y)(Z)\\
0 &  g(Y) \end{bmatrix}\\
=\begin{bmatrix} f(X)\cdot g(X) & f(X)\cdot\Delta_R
g(X,Y)(Z)+\Delta_R f(X,Y)(Z)\cdot
g(Y)\\
0 &  f(Y)\cdot g(Y) \end{bmatrix}.
\end{multline}
Comparing the $(1,2)$ blocks on the right hand sides of
\eqref{eq:1hand} and \eqref{eq:2hand}, we obtain the required
formula for $\Delta_R(f\cdot g)(X,Y)(Z)$.

The formula for $\Delta_L(f\cdot g)(X,Y)(Z)$ is proved
analogously. In the case where $\Omega$ is both right and left
admissible, we can also use a symmetry argument (Proposition
\ref{prop:RLsym}).
\end{proof}
\begin{rem}\label{rem:fg_trans}
We can define also an opposite product operation $(y,x)\mapsto
y\cdotop x$ \index{$y\cdotop x$} on $\module{O}\times\module{N}$
with values in $\module{P}$ by $y\cdotop x:=x\cdot y$ (i.e., we
are using the canonical isomorphism
$\module{N}\otimes_\ring\module{O}\cong\module{O}\otimes_\ring\module{N}$).
Extending the product operation ``$\cdotop$" to matrices over
$\module{O}$ and over $\module{N}$ of appropriate sizes, we
clearly have $(X\cdot Y)^\top=X^\top\cdotop Y^\top$, hence also
$(f\cdot g)^\top=f^\top\cdotop g^\top$. \index{$f\cdotop g$} In
particular, we can use Proposition \ref{prop:trans} to derive the
formula for $\Delta_L(f\cdot g)(X,Y)(Z)$ from the formula for
$\Delta_R\left(f^\top\cdotop
g^\top\right)\left(X^\top,Y^\top\right)\left(Z^\top\right)$.
\end{rem}
\subsection{}\label{subsub:inv} Let $f\colon \Omega\to
\ncspace{\module{A}}$ be a nc function on a right (respectively,
left) admissible nc set $\Omega\subseteq \ncspace{\module{M}}$,
where $\module{A}$ is a unital algebra over $\ring$. Let
$$\Omega^\text{inv}:=\coprod_{n=1}^\infty\{ X\in\Omega_n\colon
f(X)\ \text{is invertible in\ } \mat{\module{A}}{n}\}.$$ Then
$\Omega^\text{inv}$ \index{$\Omega^\text{inv}$} is a right
(respectively, left) admissible nc set,
$f^{-1}\colon\Omega^\text{inv}\to\ncspace{\module{A}}$ defined by
$f^{-1}(X):=f(X)^{-1}$ is a nc function, and
 $$\Delta_Rf^{-1}(X,Y)(Z)=-f(X)^{-1}\Delta_Rf(X,Y)(Z)f(Y)^{-1}$$
for all $n,m\in\mathbb{N}$, $X\in\Omega^\text{inv}_n$,
$Y\in\Omega^\text{inv}_m$, and $Z\in\rmat{\module{A}}{n}{m}$
(respectively,
$$\Delta_Lf^{-1}(X,Y)(Z)=-f(Y)^{-1}\Delta_Lf(X,Y)(Z)f(X)^{-1}$$
for all $n,m\in\mathbb{N}$, $X\in\Omega^\text{inv}_n$,
$Y\in\Omega^\text{inv}_m$, and $Z\in\rmat{\module{A}}{m}{n}$).
\begin{proof}
Observe that $\Omega^\text{inv}$ is a nc set since $f$ respects
direct sums.

We show next that if $\Omega$ is right admissible, then so is
$\Omega^\text{inv}$. Recall first that a block upper triangular
matrix
$\begin{bmatrix} A & C\\
0 & B
\end{bmatrix}$ is invertible if and only if both $A$ and $B$ are
invertible, and in this case
\begin{equation}\label{eq:invuptr}
\begin{bmatrix} A & C\\
0 & B
\end{bmatrix}^{-1}=\begin{bmatrix} A^{-1} & -A^{-1}CB^{-1}\\
0 & B^{-1}
\end{bmatrix}.
\end{equation}
Now, let $X\in\Omega^\text{inv}_n\subseteq\Omega_n$,
$Y\in\Omega^\text{inv}_m\subseteq\Omega_m$, and
$Z\in\rmat{\module{M}}{n}{m}$. Then there exists an invertible
$r\in\ring$ such that $\begin{bmatrix} X & rZ\\
0 & Y
\end{bmatrix}\in\Omega_{n+m}$. We have
\begin{equation*}
f\left(\begin{bmatrix} X & rZ\\
0 & Y
\end{bmatrix}\right)=\begin{bmatrix} f(X) & \Delta_R f(X,Y)(rZ)\\
0 & f(Y)
\end{bmatrix}.
\end{equation*} Since $f(X)$ and $f(Y)$ are invertible, so is the
right-hand side. Therefore, $\begin{bmatrix} X & rZ\\
0 & Y
\end{bmatrix}\!\in\!\Omega^\text{inv}_{n+m}$, and $\Omega^\text{inv}$
is right admissible.

By \eqref{eq:invuptr},
$$\Delta_Rf^{-1}(X,Y)(rZ)=-f(X)^{-1}\Delta_Rf(X,Y)(rZ)f(Y)^{-1},$$
which implies the required formula for $\Delta_Rf^{-1}(X,Y)(Z)$ by
linearity.

The ``left" statements can be proved analogously, or can be
deduced from the ``right" statements by a transposition argument
(see Proposition \ref{prop:trans}) using the fact that
$\left(f^{-1}\right)^\top=\left(f^\top\right)^{-1}$. In the case
where $\Omega$ is both right and left admissible, we can also use
a symmetry argument (Proposition \ref{prop:RLsym}).

Alternatively, we could also use equality
\eqref{eq:delta=delta_ext} and its left analogue, and Proposition
\ref{prop:adm-env} which guarantees that the similarity envelope
$\widetilde{\Omega}$ of the right (resp., left) admissible nc set
$\Omega$ is right (resp., left) admissible; see Remark
 \ref{rem:alternative} and Appendix \ref{app}. Then, similar to the argument
 in the proof above, so is $\widetilde{\Omega}^{\rm
 inv}=\widetilde{\Omega^{\rm inv}}$, and then the rest of the
 proof above works with $r=1$.
\end{proof}
\subsection{}\label{subsub:compos} Let $f\colon \Omega \to
\ncspace{\module{N}}$ and $g\colon \Lambda\to\ncspace{\module{O}}$
be nc functions on right (respectively, left) admissible nc sets
$\Omega\subseteq\ncspace{\module{M}}$ and
$\Lambda\subseteq\ncspace{\module{N}}$, such that
$f(\Omega)\subseteq\Lambda$. Then the composition $g\circ f\colon
\Omega \to \ncspace{\module{O}}$ is a nc function, and
$$\Delta_R(g\circ f)(X,Y)(Z)=\Delta_R g(f(X),f(Y))(\Delta_R f(X,Y)(Z))$$
for all $n,m\in\mathbb{N}$, $X\in\Omega_n$, $Y\in\Omega_m$, and
$Z\in\rmat{\module{M}}{n}{m}$ (respectively, $$\Delta_L(g\circ
f)(X,Y)(Z)=\Delta_L g(f(X),f(Y))(\Delta_L f(X,Y)(Z))$$ for all
$n,m\in\mathbb{N}$, $X\in\Omega_n$, $Y\in\Omega_m$, and
$Z\in\rmat{\module{M}}{m}{n}$).
\begin{proof}
It is easy to see that $g\circ f$ is a nc function.

By \eqref{eq:delta=delta_ext} applied to $f$, $g$, and $g\circ f$,
we may assume, without loss of
 generality,
  that $\Omega$ is similarity
 invariant, i.e., $\widetilde{\Omega}=\Omega$; see Remark
 \ref{rem:alternative} and Appendix \ref{app}.
  Then, by Proposition \ref{prop:adm-env}, for
  arbitrary $n,m\in\mathbb{N}$, $X\in\Omega_n$, $Y\in\Omega_m$,
  and $Z\in\rmat{\module{M}}{n}{m}$ one has
$\begin{bmatrix} X & Z\\
0 & Y
\end{bmatrix}\in\Omega_{n+m}$.
On one hand, \begin{equation}\label{eq:1h} (g\circ f)
\left(\begin{bmatrix}
X & Z\\
0 & Y \end{bmatrix}\right)=\begin{bmatrix}
(g\circ f) (X) & \Delta_R(g\circ f)(X,Y)(Z)\\
0 &  (g\circ f)(Y) \end{bmatrix}. \end{equation}
 On the other
hand,
\begin{multline}\label{eq:2h}
g\left(f\left(\begin{bmatrix}
X & Z\\
0 & Y \end{bmatrix}\right)\right)
=g\left(\begin{bmatrix} f (X) & \Delta_Rf(X,Y)(Z)\\
0 &  f(Y) \end{bmatrix}\right)\\
=\begin{bmatrix} g(f(X)) & \Delta_R g(f(X),f(Y))(\Delta_Rf(X,Y)(Z))\\
0 &  g(f(Y)) \end{bmatrix}.
\end{multline}
Comparing the $(1,2)$ blocks on the right hand sides of
\eqref{eq:1h} and \eqref{eq:2h}, we obtain the required formula
for $\Delta_R(g\circ f)(X,Y)(Z)$.

The formula for $\Delta_L(g\circ f)(X,Y)(Z)$ can be proved
analogously, or by a transposition argument, or (when $\Omega$ is
both right and left admissible) by a symmetry argument.
\end{proof}

\section{First order difference formulae}\label{subsec:Lagrange}
We establish now formulae which justify the claim made in Section
\ref{subsec:difdif} that $\Delta_R$ and $\Delta_L$ play the role
of nc finite difference operators of the first order. We will see
in the sequel that these formulae are nc analogues of zeroth order
Brook Taylor expansions.
\begin{thm}\label{thm:Lagrange}
Let $f\colon \Omega \to\ncspace{\module{N}}$ be a nc function on a
right admissible nc set $\Omega$. Then for all $n\in\mathbb{N}$,
and arbitrary $X,Y\in\Omega_n$,
\begin{align}\label{eq:RightLagr}
f(X)-f(Y) &= \Delta_Rf(Y,X)(X-Y)\\
\intertext{and}
 f(X)-f(Y)  &= \Delta_Rf(X,Y)(X-Y). \label{eq:RightLagr'}
\end{align}

Analogously, let $f\colon \Omega \to\ncspace{\module{N}}$ be a nc
function on a left admissible nc set $\Omega$. Then for all
$n\in\mathbb{N}$, and arbitrary $X,Y\in\Omega_n$,
\begin{align}\label{eq:LeftLagr}
f(X)-f(Y) &= \Delta_Lf(Y,X)(X-Y)\\
\intertext{and}
 f(X)-f(Y)      &= \Delta_Lf(X,Y)(X-Y). \label{eq:LeftLagr'}
\end{align}
\end{thm}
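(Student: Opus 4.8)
The plan is to exhibit $f(X)-f(Y)$ as an off-diagonal block of the value of $f$ on a $2n\times 2n$ block-triangular matrix, and to extract it by playing the intertwining characterization of nc functions (Proposition \ref{prop:simint}) against the defining formula \eqref{eq:uptr} of $\Delta_R$. Before doing so I would make the standard reduction to the case where $\Omega$ is similarity invariant: by \eqref{eq:delta=delta_ext} (see Remark \ref{rem:alternative} and Appendix \ref{app}), replacing $f$ and $\Omega$ by the canonical extension $\widetilde f$ and the similarity invariant envelope $\widetilde\Omega$ changes neither side of \eqref{eq:RightLagr} or \eqref{eq:RightLagr'}, so we may assume $\widetilde\Omega=\Omega$. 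Then Proposition \ref{prop:adm-env} guarantees that for $X,Y\in\Omega_n$ the matrices $\begin{bmatrix} Y & X-Y\\ 0 & X\end{bmatrix}$ and $\begin{bmatrix} X & X-Y\\ 0 & Y\end{bmatrix}$ lie in $\Omega_{2n}$, so \eqref{eq:uptr} applies to them directly.

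For \eqref{eq:RightLagr} I would use the column intertwiner: since
\[
\begin{bmatrix} Y & X-Y\\ 0 & X\end{bmatrix}\begin{bmatrix} I_n\\ I_n\end{bmatrix}=\begin{bmatrix} I_n\\ I_n\end{bmatrix}X,
\]
Proposition \ref{prop:simint} together with \eqref{eq:uptr} gives
\[
\begin{bmatrix} f(Y) & \Delta_R f(Y,X)(X-Y)\\ 0 & f(X)\end{bmatrix}\begin{bmatrix} I_n\\ I_n\end{bmatrix}=\begin{bmatrix} I_n\\ I_n\end{bmatrix}f(X),
\]
and comparing the top block entries yields $f(Y)+\Delta_R f(Y,X)(X-Y)=f(X)$. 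For \eqref{eq:RightLagr'} I would instead use the row intertwiner $\begin{bmatrix} I_n & I_n\end{bmatrix}\begin{bmatrix} X & X-Y\\ 0 & Y\end{bmatrix}=X\begin{bmatrix} I_n & I_n\end{bmatrix}$; applying Proposition \ref{prop:simint} and \eqref{eq:uptr} and comparing the second block entries gives $\Delta_R f(X,Y)(X-Y)+f(Y)=f(X)$.

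The left-hand identities \eqref{eq:LeftLagr} and \eqref{eq:LeftLagr'} would be obtained in exactly the same way from the block lower-triangular matrices $\begin{bmatrix} Y & 0\\ X-Y & X\end{bmatrix}$ and $\begin{bmatrix} X & 0\\ X-Y & Y\end{bmatrix}$ and the formula \eqref{eq:ltr}, using the row intertwiner $\begin{bmatrix} I_n & I_n\end{bmatrix}$ for the first and the column intertwiner $\begin{bmatrix} I_n\\ I_n\end{bmatrix}$ for the second; alternatively, they are immediate from the right-hand identities via the transposition argument of Proposition \ref{prop:trans}, using $f^\top(X^\top)-f^\top(Y^\top)=\bigl(f(X)-f(Y)\bigr)^\top$ and $X^\top-Y^\top=(X-Y)^\top$.

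I do not expect a genuine obstacle here: the only point that requires care is placing the block-triangular matrices inside $\Omega$, which is precisely what the reduction to similarity-invariant $\Omega$ via \eqref{eq:delta=delta_ext} takes care of. If one prefers to argue directly with a merely right admissible $\Omega$, one chooses an invertible $r\in\ring$ with $\begin{bmatrix} Y & r(X-Y)\\ 0 & X\end{bmatrix}\in\Omega_{2n}$, runs the computation above with the intertwiner $\begin{bmatrix} I_n\\ r^{-1}I_n\end{bmatrix}$ in place of $\begin{bmatrix} I_n\\ I_n\end{bmatrix}$, and then invokes the definition \eqref{eq:def_delta_r}, namely $\Delta_R f(Y,X)(X-Y)=r^{-1}\Delta_R f(Y,X)(r(X-Y))$, to conclude; the case of \eqref{eq:RightLagr'} and of the left-hand formulae is entirely parallel.
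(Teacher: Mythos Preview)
Your proof is correct and follows essentially the same approach as the paper: reduce to the similarity-invariant case via \eqref{eq:delta=delta_ext} and Proposition \ref{prop:adm-env}, then use the column intertwiner $\begin{bmatrix} I_n\\ I_n\end{bmatrix}$ with $\begin{bmatrix} Y & X-Y\\ 0 & X\end{bmatrix}$ and read off the top block. The paper actually states and proves the more general identity $Sf(X)-f(Y)S=\Delta_R f(Y,X)(SX-YS)$ (Theorem \ref{thm:gen-Lagrange}) via the intertwiner $\begin{bmatrix} S\\ I_n\end{bmatrix}$ and then specializes $S=I_n$, but your direct argument is the $S=I_n$ instance of exactly that computation; your separate row-intertwiner derivation of \eqref{eq:RightLagr'} and your alternative ``bare-hands'' version with invertible $r$ are both correct as well.
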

In fact, Theorem \ref{thm:Lagrange} is a special case of the following more general finite difference formulae.
\begin{thm}\label{thm:gen-Lagrange}
Let $f\colon \Omega \to\ncspace{\module{N}}$ be a nc function on a right admissible nc set $\Omega$. Then for all
$n,m\in\mathbb{N}$, and arbitrary $X\in\Omega_n$, $Y\in\Omega_m$ and $S \in \rmat{\ring}{m}{n}$,
\begin{equation}\label{eq:gen-RightLagr}
Sf(X)-f(Y)S = \Delta_Rf(Y,X)(SX-YS).
\end{equation}

Analogously, let $f\colon \Omega \to\ncspace{\module{N}}$ be a nc function on a left admissible nc set $\Omega$.
Then for all $n,m\in\mathbb{N}$, and arbitrary $X\in\Omega_n$, $Y\in\Omega_m$ and $S \in \rmat{\ring}{m}{n}$,
\begin{equation}\label{eq:gen-LeftLagr}
Sf(X)-f(Y)S = \Delta_Lf(X,Y)(SX-YS).
\end{equation}

\end{thm}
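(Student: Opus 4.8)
The plan is to mimic the block-matrix / intertwining trick that was used to establish additivity of $\Delta_R$ in Proposition \ref{prop:add}: we want to package the data $X \in \Omega_n$, $Y \in \Omega_m$, $S \in \rmat{\ring}{m}{n}$ into a single block-triangular matrix on which $f$ can be evaluated, and then read off \eqref{eq:gen-RightLagr} by comparing entries. Concretely, first I would invoke \eqref{eq:delta=delta_ext} and Remark \ref{rem:alternative} to reduce, without loss of generality, to the case where $\Omega$ is similarity invariant, so that by Proposition \ref{prop:adm-env} every block upper triangular matrix with entries drawn from $\Omega$ (and an arbitrary upper corner) again lies in $\Omega$; this removes all bookkeeping with invertible scalars $r$. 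In particular $\begin{bmatrix} Y & Z \\ 0 & X \end{bmatrix} \in \Omega_{m+n}$ for every $Z \in \rmat{\module{M}}{m}{n}$.

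The key computational step is to choose $Z = SX - YS \in \rmat{\module{M}}{m}{n}$ and to observe the intertwining relation
\begin{equation*}
\begin{bmatrix} Y & SX - YS \\ 0 & X \end{bmatrix}
\begin{bmatrix} I_m \\ 0 \end{bmatrix} S
\;+\;
\text{(something)}
\end{equation*}
— more precisely, I would use the identity
\begin{equation*}
\begin{bmatrix} Y & SX-YS \\ 0 & X \end{bmatrix}
\begin{bmatrix} S \\ I_n \end{bmatrix}
=
\begin{bmatrix} YS + SX - YS \\ X \end{bmatrix}
=
\begin{bmatrix} S \\ I_n \end{bmatrix} X,
\end{equation*}
i.e. the column $\begin{bmatrix} S \\ I_n \end{bmatrix} \in \rmat{\ring}{(m+n)}{n}$ intertwines $\begin{bmatrix} Y & SX-YS \\ 0 & X \end{bmatrix}$ with $X$. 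By Proposition \ref{prop:simint} (respecting intertwinings), applying $f$ gives
\begin{equation*}
f\!\left(\begin{bmatrix} Y & SX-YS \\ 0 & X \end{bmatrix}\right)
\begin{bmatrix} S \\ I_n \end{bmatrix}
=
\begin{bmatrix} S \\ I_n \end{bmatrix} f(X).
\end{equation*}
Now by \eqref{eq:uptr} the left-hand matrix is $\begin{bmatrix} f(Y) & \Delta_Rf(Y,X)(SX-YS) \\ 0 & f(X) \end{bmatrix}$, so multiplying out and reading the top block row yields $f(Y)S + \Delta_Rf(Y,X)(SX-YS) = S f(X)$, which rearranges to \eqref{eq:gen-RightLagr}. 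For the special cases in Theorem \ref{thm:Lagrange}: taking $m=n$, $S=I_n$ gives \eqref{eq:RightLagr} immediately, and \eqref{eq:RightLagr'} follows by swapping the roles of $X$ and $Y$ (equivalently, using the identity $\Delta_Rf(X,Y)(X-Y)=\Delta_Rf(Y,X)(X-Y)$, which is itself a consequence of the two displayed lines with $X,Y$ interchanged).

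The left-admissible statement \eqref{eq:gen-LeftLagr} I would obtain either by the transposition argument of Proposition \ref{prop:trans} (replacing $f$ by $f^\top$, $X,Y,S$ by their transposes, and using that transposition interchanges $\Delta_R$ and $\Delta_L$), or directly by the mirror-image computation: with $\begin{bmatrix} X & 0 \\ SX - YS & Y \end{bmatrix} \in \Omega$ one checks that the row $\begin{bmatrix} S & I_m \end{bmatrix}$ is intertwined by $Y$ from the left, applies Proposition \ref{prop:simint}, and reads off the relevant block. I do not anticipate a serious obstacle here; the only point requiring a little care is getting the intertwining vector and the side of multiplication exactly right so that the cross term $SX - YS$ appears with the correct sign, and making sure the reduction to similarity-invariant $\Omega$ (hence the validity of applying Proposition \ref{prop:adm-env}) is properly cited. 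The substance of the theorem is really just the one intertwining identity $\begin{bmatrix} Y & SX-YS \\ 0 & X \end{bmatrix}\begin{bmatrix} S \\ I_n \end{bmatrix} = \begin{bmatrix} S \\ I_n \end{bmatrix} X$ combined with \eqref{eq:uptr}.
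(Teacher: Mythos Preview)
Your proposal is correct and follows essentially the same argument as the paper: reduce to similarity-invariant $\Omega$, use the intertwining $\begin{bmatrix} Y & SX-YS \\ 0 & X \end{bmatrix}\begin{bmatrix} S \\ I_n \end{bmatrix} = \begin{bmatrix} S \\ I_n \end{bmatrix} X$, apply \eqref{eq:uptr}, and read off the top block. One small correction for your direct left-admissible computation: the intertwining row should be $\begin{bmatrix} S & -I_m \end{bmatrix}$ rather than $\begin{bmatrix} S & I_m \end{bmatrix}$ (as the paper also notes), but since you flagged this sign issue and offered the transposition argument as a fallback, there is no real gap.
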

\begin{proof}
We prove the ``right" statement first. By
\eqref{eq:delta=delta_ext} and Proposition \ref{prop:ncfun-ext},
we may assume, without loss of
 generality,
  that $\Omega$ is similarity
 invariant, i.e., $\widetilde{\Omega}=\Omega$; see Remark
 \ref{rem:alternative} and Appendix \ref{app}.
  Then, by Proposition \ref{prop:adm-env}, for
  arbitrary $n,m\in\mathbb{N}$, $X\in\Omega_n$, $Y\in\Omega_m$,
  and $S\in\rmat{\ring}{m}{n}$ one has
$\begin{bmatrix} Y & SX-YS\\
0 & X
\end{bmatrix}\in\Omega_{m+n}$. Clearly,
$$\begin{bmatrix} Y & SX-YS\\
0 & X
\end{bmatrix}\begin{bmatrix}
S\\
I_n
\end{bmatrix}=\begin{bmatrix}
S\\
I_n
\end{bmatrix}X.
$$
Therefore,
$$f\left(\begin{bmatrix} Y & SX-YS\\
0 & X
\end{bmatrix}\right)\begin{bmatrix}
S\\
I_n
\end{bmatrix}=\begin{bmatrix}
S\\
I_n
\end{bmatrix}f(X),
$$
i.e.,
$$\begin{bmatrix} f(Y) & \Delta_Rf(Y,X)(SX-YS)\\
0 & f(X)
\end{bmatrix}\begin{bmatrix}
S\\
I_n
\end{bmatrix}=\begin{bmatrix}
S\\
I_n
\end{bmatrix}f(X).
$$
Comparing the (1,1) block entries in the matrix products in the
right-hand side and in the left-hand side, we obtain
$$f(Y)S+\Delta_Rf(Y,X)(SX-YS))=Sf(X),$$
which yields \eqref{eq:gen-RightLagr}.

The ``left" statements can be proved analogously using the intertwining
$$\begin{bmatrix}
S & -I_m
\end{bmatrix}\begin{bmatrix}
X & 0\\
SX-YS & Y
\end{bmatrix}=Y\begin{bmatrix}
S & -I_m
\end{bmatrix},$$
 or by a transposition argument, or (in the case where $\Omega$ is both right and left admissible) by a symmetry
argument.
\end{proof}
\begin{rem}
\eqref{eq:RightLagr} and \eqref{eq:RightLagr'} in Theorem \ref{thm:Lagrange} are both obtained from
\eqref{eq:gen-RightLagr} by writing $X-Y=X\cdot I_n-I_n\cdot Y$ and $X-Y=I_n\cdot X-Y\cdot I_n$ respectively. We
note that \eqref{eq:RightLagr} and \eqref{eq:RightLagr'} are obtained from each other by interchanging $X$ and
$Y$. In particular, we have
$$\Delta_Rf(Y,X)(X-Y)=\Delta_Rf(X,Y)(X-Y).$$
This follows also directly from the intertwining
$$\begin{bmatrix} Y & X-Y\\
0 & X
\end{bmatrix}\begin{bmatrix}
I_n & 0\\
I_n & I_n
\end{bmatrix}=\begin{bmatrix}
I_n & 0\\
I_n & I_n
\end{bmatrix}\begin{bmatrix} X & X-Y\\
0 & Y
\end{bmatrix},$$
using, if necessary, the similarity envelope $\widetilde{\Omega}$
of the nc set $\Omega$ and the canonical extension $\widetilde{f}$
of the nc function $f$ by comparing the (1,1) block entries in the
matrix products. Notice that comparing the (1,2) block entries
yields \eqref{eq:RightLagr}, and comparing the (2,1) block entries
yields \eqref{eq:RightLagr'}.

An analogous remark applies to \eqref{eq:LeftLagr} and
\eqref{eq:LeftLagr'}, and the resulting equality
$$\Delta_Lf(Y,X)(X-Y)=\Delta_Lf(X,Y)(X-Y).$$
\end{rem}
\begin{rem}\label{rem:newdef_ncfun}
Clearly, each of the formulae \eqref{eq:gen-RightLagr} and \eqref{eq:gen-LeftLagr} generalizes the condition
\eqref{eq:simint}.
\end{rem}

\begin{rem}\label{rem:deriv}
Let $\ring=\mathbb{R}$ or $\ring=\mathbb{C}$. Setting $X=Y+tZ$
(with $t\in\mathbb{R}$ or $t\in\mathbb{C}$), we obtain from
Theorem \ref{thm:Lagrange} that
$$f(Y+tZ)-f(Y)=t\Delta_Rf(Y,Y+tZ)(Z).$$
Under appropriate continuity conditions, it follows that
$\Delta_Rf(Y,Y)(Z)$ is the directional derivative of $f$ at $Y$ in
the direction $Z$, i.e., $\Delta_Rf(Y,Y)$ is the differential of
$f$ at $Y$.
\end{rem}

\section{Properties of $\Delta_Rf(X,Y)$ and $\Delta_Lf(X,Y)$ as
functions of $X$ and $Y$}\label{subsec:proper} A nc function $f$
possesses certain key properties: it respects direct sums,
similarities, and intertwinings. We will show now that these
properties are inherited, in an appropriate form, by
$\Delta_Rf(X,Y)(\cdot)$ and by $\Delta_Lf(X,Y)(\cdot)$ as
functions of $X$ and $Y$.
\begin{prop}\label{prop:Rproper}
Let $f\colon\Omega\to\ncspace{\module{N}}$ be a nc function on a
right admissible nc set $\Omega\subseteq\ncspace{\module{M}}$.
Then:
\begin{equation}\tag{1X}\label{eq:R1X} \Delta_Rf\left(X'\oplus X'',Y\right)
\left(\col\left[Z',Z''\right]\right)
=\col\left[\Delta_Rf\left(X',Y\right)\left(Z'\right),
 \Delta_Rf\left(X'',Y\right)\left(Z''\right)\right]
\end{equation}
 for $n',n'',m\in\mathbb{N}$, $X'\in\Omega_{n'}$,
$X''\in\Omega_{n''}$, $Y\in\Omega_{m}$,
$Z'\in\rmat{\module{M}}{n'}{m}$,
$Z''\in\rmat{\module{M}}{n''}{m}$;
\begin{equation}\tag{1Y}\label{eq:R1Y}
\Delta_Rf\left(X,Y'\oplus
Y''\right)\left(\row\left[Z',Z''\right]\right)=\row\left[\Delta_Rf\left(X,Y'\right)\left(Z'\right),
 \Delta_Rf\left(X,Y''\right)\left(Z''\right)\right]
\end{equation}
 for $n,m',m''\in\mathbb{N}$, $X\in\Omega_{n}$,
$Y'\in\Omega_{m'}$, $Y''\in\Omega_{m''}$,
$Z'\in\rmat{\module{M}}{n}{m'}$,
$Z''\in\rmat{\module{M}}{n}{m''}$;
\begin{equation}\tag{2X}\label{eq:R2X}
\Delta_Rf\left(TXT^{-1},Y\right)(TZ)=T\,\Delta_Rf(X,Y)(Z)
\end{equation}
 for
$n,m\in\mathbb{N}$, $X\in\Omega_n$, $Y\in\Omega_m$,
$Z\in\rmat{\module{M}}{n}{m}$, and an invertible
$T\in\mat{\ring}{n}$ such $TXT^{-1}\in\Omega_n$;
\begin{equation}\tag{2Y}\label{eq:R2Y}
\Delta_Rf\left(X,SYS^{-1}\right)\left(ZS^{-1}\right)=\Delta_Rf(X,Y)(Z)\,S^{-1}
\end{equation}
 for
$n,m\in\mathbb{N}$, $X\in\Omega_n$, $Y\in\Omega_m$,
$Z\in\rmat{\module{M}}{n}{m}$, and an invertible
$S\in\mat{\ring}{m}$ such that $SYS^{-1}\in\Omega_m$;
\begin{equation}\tag{3X}\label{eq:R3X} \text{If}\quad
TX=\tilde{X}T\quad \text{then}\quad
T\,\Delta_Rf(X,Y)(Z)=\Delta_Rf(\tilde{X},Y)(TZ)
\end{equation}
 for
$n,\tilde{n},m\in\mathbb{N}$, $X\in\Omega_n$,
$\tilde{X}\in\Omega_{\tilde{n}}$, $Y\in\Omega_m$,
 $Z\in\rmat{\module{M}}{n}{m}$,
and $T\in\rmat{\ring}{\tilde{n}}{n}$;
\begin{equation}\tag{3Y}\label{eq:R3Y}
\text{If}\quad YS=S\tilde{Y}\quad \text{then}\quad
\Delta_Rf(X,Y)(Z)\,S=\Delta_Rf(X,\tilde{Y})(ZS)
\end{equation}
 for
$n,m,\tilde{m}\in\mathbb{N}$, $X\in\Omega_n$,
 $Y\in\Omega_m$,
$\tilde{Y}\in\Omega_{\tilde{m}}$, $Z\in\rmat{\module{M}}{n}{m}$,
and $S\in\rmat{\ring}{m}{\tilde{m}}$.
\end{prop}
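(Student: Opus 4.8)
The plan is to obtain all six identities by evaluating $f$ on block upper triangular matrices $\begin{bmatrix} X & Z\\ 0 & Y\end{bmatrix}$ and appealing to the intertwining characterization of nc functions (Proposition \ref{prop:simint}), exactly in the spirit of the calculus rules of Section \ref{subsec:calc1}. As in those proofs, by \eqref{eq:delta=delta_ext} and Remark \ref{rem:alternative} I would assume without loss of generality that $\Omega$ is similarity invariant, $\widetilde\Omega=\Omega$; then Proposition \ref{prop:adm-env} guarantees that $\begin{bmatrix} X & Z\\ 0 & Y\end{bmatrix}\in\Omega_{n+m}$ whenever $X\in\Omega_n$, $Y\in\Omega_m$, $Z\in\rmat{\module{M}}{n}{m}$, and by \eqref{eq:uptr} its $(1,2)$ block is precisely $\Delta_Rf(X,Y)(Z)$.

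The core of the argument is \eqref{eq:R3X} and \eqref{eq:R3Y}. For \eqref{eq:R3X}, given $TX=\tilde X T$ I would use
\[
\begin{bmatrix} \tilde X & TZ\\ 0 & Y\end{bmatrix}\begin{bmatrix} T & 0\\ 0 & I_m\end{bmatrix}=\begin{bmatrix} T & 0\\ 0 & I_m\end{bmatrix}\begin{bmatrix} X & Z\\ 0 & Y\end{bmatrix},
\]
apply Proposition \ref{prop:simint} (both block matrices lie in $\Omega$ after the reduction), substitute the values of $f$ on the two block triangular matrices via \eqref{eq:uptr}, and read off the $(1,2)$ block, which gives $\Delta_Rf(\tilde X,Y)(TZ)=T\,\Delta_Rf(X,Y)(Z)$. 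For \eqref{eq:R3Y}, given $YS=S\tilde Y$, the symmetric choice
\[
\begin{bmatrix} X & Z\\ 0 & Y\end{bmatrix}\begin{bmatrix} I_n & 0\\ 0 & S\end{bmatrix}=\begin{bmatrix} I_n & 0\\ 0 & S\end{bmatrix}\begin{bmatrix} X & ZS\\ 0 & \tilde Y\end{bmatrix}
\]
together with comparison of $(1,2)$ blocks yields $\Delta_Rf(X,Y)(Z)\,S=\Delta_Rf(X,\tilde Y)(ZS)$.

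The other four identities are then formal consequences. Taking $\tilde X=TXT^{-1}$ (so that $TX=\tilde X T$) in \eqref{eq:R3X} gives \eqref{eq:R2X}, and taking $\tilde Y=SYS^{-1}$ in \eqref{eq:R3Y} gives \eqref{eq:R2Y}. For \eqref{eq:R1X}, put $X=X'\oplus X''$ and apply \eqref{eq:R3X} with $T=\begin{bmatrix} I_{n'} & 0\end{bmatrix}$ and then with $T=\begin{bmatrix} 0 & I_{n''}\end{bmatrix}$; these intertwine $X'\oplus X''$ with $X'$ and with $X''$ respectively, and the outputs recover the top and bottom block rows of $\Delta_Rf(X'\oplus X'',Y)(\col[Z',Z''])$. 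Similarly \eqref{eq:R1Y} follows from \eqref{eq:R3Y} applied with $S=\begin{bmatrix} I_{m'}\\ 0\end{bmatrix}$ and $S=\begin{bmatrix} 0\\ I_{m''}\end{bmatrix}$, which intertwine $Y'\oplus Y''$ with $Y'$ and with $Y''$.

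I do not anticipate a substantive obstacle: the only thing requiring care is the bookkeeping, namely checking that each auxiliary block matrix actually belongs to $\Omega$ (which is exactly what the reduction to a similarity invariant $\Omega$ secures) and comparing the correct block entry in each matrix identity. One should also note that, since the matrices $T$ and $S$ appearing in \eqref{eq:R3X} and \eqref{eq:R3Y} need be neither square nor invertible, one must invoke the intertwining form (Proposition \ref{prop:simint}) rather than the similarity form \eqref{eq:simsim}.
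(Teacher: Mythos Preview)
Your argument is correct, and for \eqref{eq:R3X} and \eqref{eq:R3Y} it uses exactly the intertwinings the paper uses. The difference is in how you handle the remaining four identities: the paper proves each of \eqref{eq:R1X}, \eqref{eq:R1Y}, \eqref{eq:R2X}, \eqref{eq:R2Y} directly via its own ad hoc intertwining or similarity (for \eqref{eq:R1X}, a somewhat elaborate intertwining between a $3\times 3$ block matrix and a $4\times 4$ block matrix), whereas you first establish \eqref{eq:R3X}--\eqref{eq:R3Y} and then specialize. Your route is shorter and is in fact the direction the paper itself points to in Remark~\ref{rem:Rpropequiv} and Proposition~\ref{prop:simint_k}, where the equivalence $(\text{1X})\,\&\,(\text{2X})\Leftrightarrow(\text{3X})$ (and likewise for $Y$) is recorded. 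One small wording fix: to get \eqref{eq:R2Y} from \eqref{eq:R3Y} you must take the ``$S$'' of \eqref{eq:R3Y} to be $S^{-1}$ (so that the hypothesis $Y\cdot S^{-1}=S^{-1}\cdot(SYS^{-1})$ holds); your phrase ``taking $\tilde Y=SYS^{-1}$'' is right in spirit but hides this substitution.
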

\begin{proof}
By \eqref{eq:delta=delta_ext}, we may assume, without loss of
 generality,
  that $\Omega$ is similarity
 invariant, i.e., $\widetilde{\Omega}=\Omega$; see Remark
 \ref{rem:alternative} and Appendix \ref{app}.
  Then, by Proposition \ref{prop:adm-env}, for
  arbitrary $n',n'',m\in\mathbb{N}$, $X'\in\Omega_{n'}$, $X''\in\Omega_{n''}$, $Y\in\Omega_m$,
   $Z'\in\rmat{\ring}{n'}{m}$, and $Z''\in\rmat{\ring}{n''}{m}$,
$$\begin{bmatrix} X' & Z'\\
0 & Y
\end{bmatrix}\in\Omega_{n'+m},\ \begin{bmatrix} X'' & Z''\\
0 & Y
\end{bmatrix}\in\Omega_{n''+m},  \ \begin{bmatrix} X' & 0 & Z'\\
0 & X'' & Z''\\
0 & 0 & Y
\end{bmatrix}\in\Omega_{n'+n''+m}.$$
We have
\begin{equation*}
\begin{bmatrix}  I_{n'} & 0 & 0\\
0 & 0 & I_m\\
0 & I_{n''} & 0\\
0 & 0 & I_m
\end{bmatrix}
\begin{bmatrix} X' & 0 & Z'\\
0 & X'' & Z''\\
0 & 0 & Y
\end{bmatrix}
=\begin{bmatrix} X' & Z' & 0 & 0\\
0 & Y & 0 & 0\\
0 & 0 & X'' & Z''\\
0 & 0 & 0 & Y
\end{bmatrix}\begin{bmatrix}  I_{n'} & 0 & 0\\
0 & 0 & I_m\\
0 & I_{n''} & 0\\
0 & 0 & I_m
\end{bmatrix}.
\end{equation*}
By Proposition \ref{prop:simint},
\begin{multline*}
\begin{bmatrix}  I_{n'} & 0 & 0\\
0 & 0 & I_m\\
0 & I_{n''} & 0\\
0 & 0 & I_m
\end{bmatrix}
f\left(\begin{bmatrix} X' & 0 & Z'\\
0 & X'' & Z''\\
0 & 0 & Y
\end{bmatrix}\right)\\
=f\left(\begin{bmatrix} X' & Z' & 0 & 0\\
0 & Y & 0 & 0\\
0 & 0 & X'' & Z''\\
0 & 0 & 0 & Y
\end{bmatrix}\right)\begin{bmatrix}  I_{n'} & 0 & 0\\
0 & 0 & I_m\\
0 & I_{n''} & 0\\
0 & 0 & I_m
\end{bmatrix},
\end{multline*}
i.e.,
\begin{multline*}
\begin{bmatrix}  I_{n'} & 0 & 0\\
0 & 0 & I_m\\
0 & I_{n''} & 0\\
0 & 0 & I_m
\end{bmatrix}
\begin{bmatrix} \begin{matrix}
f(X')\\
0
\end{matrix} & \begin{matrix} 0\\
 f(X'')\end{matrix} & \Delta_Rf(X'\oplus
X'',Y)(\col[Z',Z''])\\
0 & 0  & f(Y)
\end{bmatrix}\\
=\begin{bmatrix} f(X') & \Delta_Rf(X',Y)(Z') & 0 & 0\\
0 & f(Y) & 0 & 0\\
0 & 0 & f(X'') & \Delta_Rf(X'',Y)(Z'')\\
0 & 0 & 0 & f(Y)
\end{bmatrix}
 \begin{bmatrix}  I_{n'} & 0 & 0\\
0 & 0 & I_m\\
0 & I_{n''} & 0\\
0 & 0 & I_m
\end{bmatrix}.
\end{multline*}
Comparing the columns formed by the (1,3) and the (3,3) block
entries in the matrix products in the left-hand side and in the
right-hand side, we obtain \eqref{eq:R1X}.

The statement \eqref{eq:R1Y} is proved analogously, using the
intertwining
\begin{equation*}
\begin{bmatrix} X & Z' & Z''\\
0 & Y' & 0\\
0 & 0 & Y''
\end{bmatrix}\!\!\begin{bmatrix} I_n  & 0 & I_n & 0\\
0 & I_{m'} & 0 & 0\\
0 & 0 & 0 & I_{m''} \\
\end{bmatrix}
=\begin{bmatrix} I_n  & 0 & I_n & 0\\
0 & I_{m'} & 0 & 0\\
0 & 0 & 0 & I_{m''} \\
\end{bmatrix}\!\!\begin{bmatrix} X & Z' & 0 & 0\\
0 & Y' & 0 & 0\\
0 & 0 & X & Z''\\
0 & 0 & 0 & Y''
\end{bmatrix}.
\end{equation*}

Next, for arbitrary $n,m\in\mathbb{N}$, $X\in\Omega_n$, $Y\in\Omega_m$, $Z\in\rmat{\module{M}}{n}{m}$, and an
invertible $T\in\mat{\ring}{n}$ we have  $\begin{bmatrix}  X & Z \\
0 & Y\end{bmatrix},\ \begin{bmatrix}  TXT^{-1} & TZ \\
0 & Y\end{bmatrix}\in\Omega_{n+m}$. Clearly,
$$\begin{bmatrix}  TXT^{-1} & TZ \\
0 & Y\end{bmatrix}=\begin{bmatrix} T & 0 \\
0 & I_m\end{bmatrix}\begin{bmatrix}  X & Z \\
0 & Y\end{bmatrix}\begin{bmatrix} T & 0 \\
0 & I_m\end{bmatrix}^{-1}.$$ Then
$$f\left(\begin{bmatrix}  TXT^{-1} & TZ \\
0 & Y\end{bmatrix}\right)=\begin{bmatrix}  T & 0 \\
0 & I_m\end{bmatrix}f\left(\begin{bmatrix}  X & Z \\
0 & Y\end{bmatrix}\right)\begin{bmatrix}  T & 0 \\
0 & I_m\end{bmatrix}^{-1},$$ i.e.,
\begin{multline*}\begin{bmatrix}  f\left(TXT^{-1}\right) & \Delta_Rf\left(TXT^{-1},Y\right)(TZ) \\
0 & f(Y)\end{bmatrix}\\
=\begin{bmatrix}  T & 0 \\
0 & I_m\end{bmatrix}\begin{bmatrix}  f(X) & \Delta_Rf(X,Y)(Z) \\
0 & f(Y)\end{bmatrix}\begin{bmatrix}  T & 0 \\
0 & I_m\end{bmatrix}^{-1}.
\end{multline*}
Comparing the (1,2) block entries in the matrix products in the
left-hand side and in the right-hand side,  we obtain
\eqref{eq:R2X}. The statement \eqref{eq:R2Y} is obtained
analogously, using the similarity
$$\begin{bmatrix}  X & ZS^{-1} \\
0 & SYS^{-1}\end{bmatrix}=\begin{bmatrix}  I_n & 0 \\
0 & S\end{bmatrix}\begin{bmatrix}  X & Z \\
0 & Y\end{bmatrix}\begin{bmatrix}  I_n & 0 \\
0 & S\end{bmatrix}^{-1}.$$

The statements \eqref{eq:R3X} and \eqref{eq:R3Y} are obtained
using the intertwinings
$$\begin{bmatrix}  T & 0 \\
0 & I_m\end{bmatrix}\begin{bmatrix}  X & Z \\
0 & Y\end{bmatrix}=\begin{bmatrix}  \tilde{X} & TZ \\
0 & Y\end{bmatrix}\begin{bmatrix}  T & 0 \\
0 & I_m\end{bmatrix}$$ and
$$\begin{bmatrix}  X & Z \\
0 & Y\end{bmatrix}\begin{bmatrix}  I_n & 0 \\
0 & S\end{bmatrix}=\begin{bmatrix}  I_n & 0 \\
0 & S\end{bmatrix}\begin{bmatrix}  X & ZS \\
0 & \tilde{Y}\end{bmatrix}.$$
\end{proof}
\begin{rem}\label{rem:Rpropequiv}
Much as in Proposition \ref{prop:simint}, the respect of
intertwinings (\eqref{eq:R3X} \& \eqref{eq:R3Y}) is equivalent to
the respect of direct sums (\eqref{eq:R1X} \& \eqref{eq:R1Y})
together with the respect of similarities (\eqref{eq:R2X} \&
\eqref{eq:R2Y}). We will show this equivalence later in
Proposition \ref{prop:simint_k}.
\end{rem}

Combining the X and  the Y properties in Proposition
\ref{prop:Rproper}, one obtains an equivalent joint formulation.
\begin{prop}\label{prop:Rproper'}
Let $f\colon\Omega\to\ncspace{\module{N}}$ be a nc function on a
right admissible nc set $\Omega\subseteq\ncspace{\module{M}}$.
Then:
\begin{multline}\label{eq:Rsum}
\Delta_Rf\left(X'\oplus X'',Y'\oplus
Y''\right)\left(\begin{bmatrix} Z^{\prime,\prime} &
Z^{\prime,\prime\prime}\\
Z^{\prime\prime,\prime} & Z^{\prime\prime,\prime\prime}
\end{bmatrix}\right)\\
=\begin{bmatrix}\Delta_Rf\left(X',Y'\right)\left(Z^{\prime,\prime}\right) &
\Delta_Rf\left(X',Y''\right)\left(Z^{\prime,\prime\prime}\right)\\
\Delta_Rf\left(X'',Y'\right)\left(Z^{\prime\prime,\prime}\right) &
\Delta_Rf\left(X'',Y''\right)\left(Z^{\prime\prime,\prime\prime}\right)
\end{bmatrix}
\end{multline}
for $n',m'\in\mathbb{N}$, $n'',m''\in\mathbb{Z}_+$,
$X'\in\Omega_{n'}$, $X''\in\Omega_{n''}$, $Y'\in\Omega_{m'}$,
$Y''\in\Omega_{m''}$, $\begin{bmatrix} Z^{\prime,\prime} &
Z^{\prime,\prime\prime}\\
Z^{\prime\prime,\prime} & Z^{\prime\prime,\prime\prime}
\end{bmatrix}\in\rmat{\module{M}}{(n'+n'')}{(m'+m'')}$, with block
entries of appropriate sizes, and if either $n''$ or $m''$ is $0$
then the corresponding block entry is void;
\begin{equation}\label{eq:Rsim}
\Delta_Rf\left(TXT^{-1},SYS^{-1}\right)\left(TZS^{-1}\right)=T\,\Delta_Rf(X,Y)(Z)\,S^{-1}
\end{equation}
for $n,m\in\mathbb{N}$, $X\in\Omega_n$, $Y\in\Omega_m$,
$Z\in\rmat{\module{M}}{n}{m}$, and invertible
$T\in\mat{\ring}{n}$, $S\in\mat{\ring}{m}$ such that
$TXT^{-1}\in\Omega_n$, $SYS^{-1}\in\Omega_m$;
\begin{equation}\label{eq:Rint}
TX=\tilde{X}T,\, YS=S\tilde{Y} \Longrightarrow
T\,\Delta_Rf(X,Y)(Z)\,S=\Delta_Rf\left(\tilde{X},\tilde{Y}\right)(TZS)
\end{equation}
for $n,\tilde{n},m,\tilde{m}\in\mathbb{N}$, $X\in\Omega_n$,
$\tilde{X}\in\Omega_{\tilde{n}}$, $Y\in\Omega_m$,
$\tilde{Y}\in\Omega_{\tilde{m}}$, $Z\in\rmat{\module{M}}{n}{m}$,
and $T\in\rmat{\ring}{\tilde{n}}{n}$,
$S\in\rmat{\ring}{m}{\tilde{m}}$.
\end{prop}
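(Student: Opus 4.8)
The plan is to derive the three joint identities directly from their one-sided counterparts in Proposition~\ref{prop:Rproper}, applying the $X$-version and the $Y$-version in succession; no fresh block-matrix manipulations will be needed. First I would record that every intermediate expression below is legitimate: whenever \eqref{eq:R1X}--\eqref{eq:R3Y} are invoked, the matrices occurring as arguments of $\Delta_Rf$ lie in $\Omega$ --- either because $\Omega$ is a nc set (so $X'\oplus X''\in\Omega$ and $Y'\oplus Y''\in\Omega$), or because the hypotheses of \eqref{eq:Rsim} and \eqref{eq:Rint} already include $TXT^{-1}\in\Omega_n$, $SYS^{-1}\in\Omega_m$, $\widetilde X\in\Omega_{\widetilde n}$, and $\widetilde Y\in\Omega_{\widetilde m}$.

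For \eqref{eq:Rsum} I would write the argument as a column of its two block-rows,
\[
\begin{bmatrix} Z^{\prime,\prime} & Z^{\prime,\prime\prime}\\ Z^{\prime\prime,\prime} & Z^{\prime\prime,\prime\prime}\end{bmatrix}=\col\big[\,\row[Z^{\prime,\prime},Z^{\prime,\prime\prime}]\,,\ \row[Z^{\prime\prime,\prime},Z^{\prime\prime,\prime\prime}]\,\big],
\]
and apply \eqref{eq:R1X} with $Y$ there taken to be $Y'\oplus Y''$; this produces a column whose entries are $\Delta_Rf(X',Y'\oplus Y'')(\row[Z^{\prime,\prime},Z^{\prime,\prime\prime}])$ and $\Delta_Rf(X'',Y'\oplus Y'')(\row[Z^{\prime\prime,\prime},Z^{\prime\prime,\prime\prime}])$. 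Applying \eqref{eq:R1Y} to each of these entries (with $X$ there taken to be $X'$, resp.\ $X''$) splits each block-row across $Y'\oplus Y''$, and reassembling the resulting $2\times 1$ array of $1\times 2$ rows into a $2\times 2$ block matrix gives \eqref{eq:Rsum}. The degenerate cases $n''=0$ or $m''=0$ follow from the convention $X'\oplus X''=X'$, resp.\ $Y'\oplus Y''=Y'$, under which \eqref{eq:Rsum} collapses to \eqref{eq:R1Y}, resp.\ \eqref{eq:R1X}, and $n''=m''=0$ is trivial.

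For \eqref{eq:Rsim} I would apply \eqref{eq:R2X} with its $Z$ replaced by $ZS^{-1}$, obtaining $\Delta_Rf(TXT^{-1},SYS^{-1})(TZS^{-1})=T\,\Delta_Rf(X,SYS^{-1})(ZS^{-1})$, and then apply \eqref{eq:R2Y} to rewrite $\Delta_Rf(X,SYS^{-1})(ZS^{-1})=\Delta_Rf(X,Y)(Z)\,S^{-1}$; composing the two yields \eqref{eq:Rsim}. For \eqref{eq:Rint} I would start from $TX=\widetilde X T$ and use \eqref{eq:R3X} to get $T\,\Delta_Rf(X,Y)(Z)=\Delta_Rf(\widetilde X,Y)(TZ)$, then multiply on the right by $S$ and apply \eqref{eq:R3Y} to $\Delta_Rf(\widetilde X,\cdot)$ (with its $Z$ replaced by $TZ$ and with $YS=S\widetilde Y$), obtaining $\Delta_Rf(\widetilde X,Y)(TZ)\,S=\Delta_Rf(\widetilde X,\widetilde Y)(TZS)$; combining gives \eqref{eq:Rint}.

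I do not expect a genuine obstacle here: once Proposition~\ref{prop:Rproper} is available the argument is pure bookkeeping, and the only points that will need a moment's care are the well-definedness remarks of the first paragraph and the handling of the void block entries in \eqref{eq:Rsum}. As an alternative I could prove \eqref{eq:Rsum}--\eqref{eq:Rint} from scratch by the method used for Proposition~\ref{prop:Rproper} --- reducing to $\widetilde\Omega=\Omega$ via \eqref{eq:delta=delta_ext} and Proposition~\ref{prop:adm-env}, evaluating $f$ on a single large block-triangular matrix that realizes the relevant intertwining, and reading off the appropriate corner block --- but deriving the joint statement from the already-established one-sided properties is the shorter route.
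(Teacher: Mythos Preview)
Your proposal is correct and matches the paper's approach: the paper does not give a standalone proof of Proposition~\ref{prop:Rproper'} but states that it is a special case of Proposition~\ref{prop:joint_k}, whose proof proceeds exactly as you outline --- successively applying the one-sided conditions (first in the $X^0$-argument, then in the remaining arguments) to obtain the joint formulation. Your handling of the degenerate cases and the well-definedness of intermediate expressions is fine.
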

In fact, Proposition \ref{prop:Rproper'} is a special case of
Proposition \ref{prop:joint_k}, which will be proved later.

The ``left" counterparts of Propositions \ref{prop:Rproper} and
\ref{prop:Rproper'} are as follows. They can either be proved
analogously or deduced from the corresponding ``right" statements
by a transposition argument, or (in the case where the nc set
$\Omega$ is both right and left admissible) by a symmetry
argument.
\begin{prop}\label{prop:Lproper}
Let $f\colon\Omega\to\ncspace{\module{N}}$ be a nc function on a
left admissible nc set $\Omega\subseteq\ncspace{\module{M}}$.
Then:
\begin{equation}\tag{1X}\label{eq:L1X}
\Delta_Lf(X'\oplus
X'',Y)(\row[Z',Z''])=\row[\Delta_Lf(X',Y)(Z'),
 \Delta_Lf(X'',Y)(Z'')]
\end{equation}
 for $n',n'',m\in\mathbb{N}$, $X'\in\Omega_{n'}$,
$X''\in\Omega_{n''}$, $Y\in\Omega_{m}$,
$Z'\in\rmat{\module{M}}{m}{n'}$,
$Z''\in\rmat{\module{M}}{m}{n''}$;
\begin{equation}\tag{1Y}\label{eq:L1Y}
\Delta_Lf(X,Y'\oplus
Y'')(\col[Z',Z''])=\col[\Delta_Lf(X,Y')(Z'),
 \Delta_Lf(X,Y'')(Z'')]
\end{equation}
 for $n,m',m''\in\mathbb{N}$, $X\in\Omega_{n}$,
$Y'\in\Omega_{m'}$, $Y''\in\Omega_{m''}$,
$Z'\in\rmat{\module{M}}{m'}{n}$,
$Z''\in\rmat{\module{M}}{m''}{n}$;
\begin{equation}\tag{2X}\label{eq:L2X}
\Delta_Lf\left(TXT^{-1},Y\right)\left(ZT^{-1}\right)=\Delta_Lf(X,Y)(Z)\,T^{-1}
\end{equation}
 for
$n,m\in\mathbb{N}$, $X\in\Omega_n$, $Y\in\Omega_m$,
$Z\in\rmat{\module{M}}{m}{n}$, and an invertible
$T\in\mat{\ring}{n}$ such $TXT^{-1}\in\Omega_n$;
\begin{equation}\tag{2Y}\label{eq:L2Y}
\Delta_Lf\left(X,SYS^{-1}\right)(SZ)=S\,\Delta_Lf(X,Y)(Z)
\end{equation}
 for
$n,m\in\mathbb{N}$, $X\in\Omega_n$, $Y\in\Omega_m$,
$Z\in\rmat{\module{M}}{m}{n}$, and an invertible
$S\in\mat{\ring}{m}$ such that $SYS^{-1}\in\Omega_m$;
\begin{equation}\tag{3X}\label{eq:L3X}
\text{If}\quad XT=T\tilde{X}\quad \text{then}\quad
\Delta_Lf(X,Y)(Z)\,T=\Delta_Lf(\tilde{X},Y)(ZT)
\end{equation}
 for
$n,\tilde{n},m\in\mathbb{N}$, $X\in\Omega_n$,
$\tilde{X}\in\Omega_{\tilde{n}}$, $Y\in\Omega_m$,
 $Z\in\rmat{\module{M}}{m}{n}$,
and $T\in\rmat{\ring}{n}{\tilde{n}}$;
\begin{equation}\tag{3Y}\label{eq:L3Y}
\text{If}\quad SY=\tilde{Y}S\quad \text{then}\quad
S\,\Delta_Lf(X,Y)(Z)=\Delta_Lf(X,\tilde{Y})(SZ)
\end{equation}
 for
$n,m,\tilde{m}\in\mathbb{N}$, $X\in\Omega_n$,
 $Y\in\Omega_m$,
$\tilde{Y}\in\Omega_{\tilde{m}}$, $Z\in\rmat{\module{M}}{m}{n}$,
and $S\in\rmat{\ring}{\tilde{m}}{m}$.
\end{prop}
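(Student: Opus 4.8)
The plan is to deduce Proposition \ref{prop:Lproper} from the already-established ``right'' version, Proposition \ref{prop:Rproper}, by means of the transposition device of Proposition \ref{prop:trans}, so that no new block-matrix computations are needed. Given a nc function $f\colon\Omega\to\ncspace{\module{N}}$ on a left admissible nc set $\Omega\subseteq\ncspace{\module{M}}$, Proposition \ref{prop:trans} guarantees that $\Omega^\top$ is right admissible, that $f^\top$ is a nc function on it, and --- using $(f^\top)^\top=f$ --- that
\[
\Delta_Lf(X,Y)(Z)=\left(\Delta_Rf^\top\left(X^\top,Y^\top\right)\left(Z^\top\right)\right)^\top .
\]
Then each of the six identities for $\Delta_Lf$ in Proposition \ref{prop:Lproper} is obtained by transposing both sides of the correspondingly labelled identity of Proposition \ref{prop:Rproper} applied to $f^\top$, after the substitutions $X\mapsto X^\top$, $Y\mapsto Y^\top$, $Z\mapsto Z^\top$, and --- in the similarity statements --- $T\mapsto(T^\top)^{-1}$, $S\mapsto(S^\top)^{-1}$. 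The point that makes this clean is that transposition converts a column into a row and left multiplication into right multiplication while leaving the roles of the first and second arguments untouched; consequently the (1X), (1Y), (2X), (2Y), (3X), (3Y) identities of Proposition \ref{prop:Rproper}, applied to $f^\top$, transpose respectively to the identically labelled identities of Proposition \ref{prop:Lproper}. For instance, for (2X) one uses $(TXT^{-1})^\top=(T^\top)^{-1}X^\top T^\top$ and $(ZT^{-1})^\top=(T^\top)^{-1}Z^\top$ with $\widehat T:=(T^\top)^{-1}$ and $\widehat T^\top=T^{-1}$; the direct-sum identities (1X), (1Y) come down to the transpose of a column splitting being a row splitting; and (3X), (3Y) follow after transposing the corresponding one-sided intertwining relations. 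Homogeneity and additivity of $\Delta_L$ (Propositions \ref{prop:homog} and \ref{prop:add}) guarantee that the scaled-versus-unscaled definition of $\Delta_L$ causes no trouble.

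Two alternative routes are worth recording. First, one can prove Proposition \ref{prop:Lproper} directly, imitating the proof of Proposition \ref{prop:Rproper}: pass to the similarity invariant envelope $\widetilde\Omega$ and the canonical extension $\widetilde f$ (using the left analogue of \eqref{eq:delta=delta_ext}; see Remark \ref{rem:alternative}), so that by Proposition \ref{prop:adm-env} every block \emph{lower} triangular matrix built from elements of $\Omega$ again lies in $\Omega$, and then read off each identity by comparing blocks after evaluating $f$ on a suitable $3\times 3$ block lower triangular matrix (for the direct-sum identities) or $2\times 2$ block lower triangular matrix (for the similarity and intertwining identities) and applying Proposition \ref{prop:simint} with the appropriate permutation, conjugation, or one-sided intertwining. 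Second, when $\Omega$ happens to be both right and left admissible, Proposition \ref{prop:RLsym} gives the quickest argument, converting the (1X), (2X), (3X) identities for $\Delta_L$ into the (1Y), (2Y), (3Y) identities for $\Delta_R$ and conversely.

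The one place where genuine care is required --- and the step I expect to be the main (though purely bookkeeping) obstacle --- is keeping the transposes, the row/column interchange, the inverses, and the matrix sizes mutually consistent in the similarity statements (2X) and (2Y), and in particular verifying that the transposition route preserves the ``$X$''/``$Y$'' labelling of the identities, in contrast to the symmetry route of Proposition \ref{prop:RLsym}, which interchanges those labels. Once this correspondence is pinned down, everything reduces to the block-matrix comparisons already carried out in the proof of Proposition \ref{prop:Rproper}.
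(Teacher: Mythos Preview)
Your proposal is correct and matches the paper's own treatment: the paper does not spell out a proof of Proposition \ref{prop:Lproper} but states that it ``can either be proved analogously or deduced from the corresponding `right' statements by a transposition argument, or (in the case where the nc set $\Omega$ is both right and left admissible) by a symmetry argument,'' and you have identified and correctly outlined all three of these routes, choosing transposition as the primary one.
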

\begin{prop}\label{prop:Lproper'}
Let $f\colon\Omega\to\ncspace{\module{N}}$ be a nc function on a
left admissible nc set $\Omega\subseteq\ncspace{\module{M}}$.
Then:
\begin{multline}\label{eq:Lsum}
\Delta_Lf(X'\oplus X'',Y'\oplus Y'')\left(\begin{bmatrix}
Z^{\prime,\prime} &
Z^{\prime,\prime\prime}\\
Z^{\prime\prime,\prime} & Z^{\prime\prime,\prime\prime}
\end{bmatrix}\right)\\
=\begin{bmatrix}\Delta_Lf(X',Y')(Z^{\prime,\prime}) & \Delta_Lf(X'',Y')(Z^{\prime,\prime\prime})\\
\Delta_Lf(X',Y'')(Z^{\prime\prime,\prime}) &
\Delta_Lf(X'',Y'')(Z^{\prime\prime,\prime\prime})
\end{bmatrix}
\end{multline}
for $n',m'\in\mathbb{N}$, $n'',m''\in\mathbb{Z}_+$,
$X'\in\Omega_{n'}$, $X''\in\Omega_{n''}$, $Y'\in\Omega_{m'}$,
$Y''\in\Omega_{m''}$, $\begin{bmatrix} Z^{\prime,\prime} &
Z^{\prime,\prime\prime}\\
Z^{\prime\prime,\prime} & Z^{\prime\prime,\prime\prime}
\end{bmatrix}\in\rmat{\module{M}}{(m'+m'')}{(n'+n'')}$, with block
entries of appropriate sizes, and if either $n''$ or $m''$ is $0$
then the corresponding block entry is void;
\begin{equation}\label{eq:Lsim}
\Delta_Lf\left(TXT^{-1},SYS^{-1}\right)\left(SZT^{-1}\right)=S\,\Delta_Lf(X,Y)(Z)\,T^{-1}
\end{equation}
for $n,m\in\mathbb{N}$, $X\in\Omega_n$, $Y\in\Omega_m$,
$Z\in\rmat{\module{M}}{m}{n}$, and invertible
$T\in\mat{\ring}{n}$, $S\in\mat{\ring}{m}$ such that
$TXT^{-1}\in\Omega_n$, $SYS^{-1}\in\Omega_m$;
\begin{equation}\label{eq:Lint}
XT=T\tilde{X},\, SY=\tilde{Y}S \Longrightarrow
S\,\Delta_Lf(X,Y)(Z)\,T=\Delta_Lf(\tilde{X},\tilde{Y})(SZT)
\end{equation}
for $n,\tilde{n},m,\tilde{m}\in\mathbb{N}$, $X\in\Omega_n$,
$\tilde{X}\in\Omega_{\tilde{n}}$, $Y\in\Omega_m$,
$\tilde{Y}\in\Omega_{\tilde{m}}$, $Z\in\rmat{\module{M}}{m}{n}$,
and $T\in\rmat{\ring}{n}{\tilde{n}}$,
$S\in\rmat{\ring}{\tilde{m}}{m}$.
\end{prop}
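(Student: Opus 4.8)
The most economical route is to deduce Proposition~\ref{prop:Lproper'} from Proposition~\ref{prop:Rproper'} by the transposition argument of Proposition~\ref{prop:trans}. Given a nc function $h\colon\Lambda\to\ncspace{\module{N}}$ on a left admissible nc set $\Lambda\subseteq\ncspace{\module{M}}$, Proposition~\ref{prop:trans} shows that $f:=h^\top$ is a nc function on the right admissible nc set $\Lambda^\top$, and $h=(h^\top)^\top=f^\top$. Applying part (3) of Proposition~\ref{prop:trans} to this $f$ gives the transfer rule
$$\Delta_L h(A,B)(C)=\bigl(\Delta_R f(A^\top,B^\top)(C^\top)\bigr)^\top$$
for all $A,B,C$ of appropriate sizes. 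The plan is to substitute this rule into every occurrence of $\Delta_L$ in \eqref{eq:Lsum}--\eqref{eq:Lint}, apply the already-established identities \eqref{eq:Rsum}--\eqref{eq:Rint} to $f$, and transpose back.

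For \eqref{eq:Lsum}, write $Z$ for the block matrix appearing on the left, with blocks $Z^{\prime,\prime}$, $Z^{\prime,\prime\prime}$, $Z^{\prime\prime,\prime}$, $Z^{\prime\prime,\prime\prime}$. Then $(X'\oplus X'')^\top={X'}^\top\oplus{X''}^\top$ (and likewise for $Y'\oplus Y''$), while block transposition turns $Z$ into the matrix whose $(1,2)$ block is $(Z^{\prime\prime,\prime})^\top$ and whose $(2,1)$ block is $(Z^{\prime,\prime\prime})^\top$ --- the off-diagonal blocks get interchanged. Applying \eqref{eq:Rsum} to $f$ at ${X'}^\top\oplus{X''}^\top$, ${Y'}^\top\oplus{Y''}^\top$ with this transposed argument, transposing the resulting $2\times2$ block identity, and rewriting each block via $\bigl(\Delta_R f(U^\top,V^\top)(W^\top)\bigr)^\top=\Delta_L h(U,V)(W)$ yields exactly \eqref{eq:Lsum}; the ``swapped'' pairings $\Delta_L h(X'',Y')(Z^{\prime,\prime\prime})$ and $\Delta_L h(X',Y'')(Z^{\prime\prime,\prime})$ in the off-diagonal positions are precisely the combined effect of the block transpose and of the interchange of $Z^{\prime,\prime\prime}$ with $Z^{\prime\prime,\prime}$. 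For \eqref{eq:Lsim}, put $\widehat T:=T^\top$, $\widehat S:=S^\top$ and note that $(TXT^{-1})^\top=\widehat T^{-1}X^\top\widehat T$, $(SYS^{-1})^\top=\widehat S^{-1}Y^\top\widehat S$, and $(SZT^{-1})^\top=\widehat T^{-1}Z^\top\widehat S$; feeding these into $\Delta_R f$ and applying \eqref{eq:Rsim} to $f$ with the similarities implemented by the invertible matrices $\widehat T^{-1}$ and $\widehat S^{-1}$ gives $\widehat T^{-1}\bigl(\Delta_R f(X^\top,Y^\top)(Z^\top)\bigr)\widehat S$, whose transpose equals $S\,\Delta_L h(X,Y)(Z)\,T^{-1}$ since $\widehat S^\top=S$ and $(\widehat T^{-1})^\top=T^{-1}$. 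Identity \eqref{eq:Lint} follows from \eqref{eq:Rint} in the same way, now with rectangular $T$ and $S$.

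Two alternatives are available. First, one can prove \eqref{eq:Lsum}--\eqref{eq:Lint} directly, repeating the computations behind Proposition~\ref{prop:Rproper} and their assembly into Proposition~\ref{prop:Rproper'} but with block lower triangular matrices and the corresponding intertwining relations, exactly as the ``left'' half of Theorem~\ref{thm:gen-Lagrange} was handled. Second, when $\Omega$ is both right and left admissible, Proposition~\ref{prop:RLsym} gives $\Delta_L f(X,Y)(Z)=\Delta_R f(Y,X)(Z)$, so that \eqref{eq:Lsum}--\eqref{eq:Lint} reduce to Proposition~\ref{prop:Rproper'} with the two arguments of $\Delta_R f$ interchanged; this covers the main examples but not the general case.

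There is no genuine obstacle here; the only content is bookkeeping. One must keep track that transposition (i) reverses the order of matrix products, so that a two-sided multiplication $T\,(\cdot)\,S^{-1}$ on the ``right'' side becomes $S\,(\cdot)\,T^{-1}$ on the ``left'' side; (ii) turns a similarity by $T$ into a similarity by $(T^\top)^{-1}$, which is why one invokes \eqref{eq:Rsim} (resp.\ \eqref{eq:Rint}) with $\widehat T^{-1}$ rather than with $\widehat T$; and (iii) transposes block rows into block columns, which is the source of the reversed index pattern in \eqref{eq:Lsum}, where the $(i,j)$ block of the conclusion pairs $X^{(j)}$ with $Y^{(i)}$ rather than $X^{(i)}$ with $Y^{(j)}$ as in \eqref{eq:Rsum}. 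Once these three effects are accounted for, the stated forms of \eqref{eq:Lsum}--\eqref{eq:Lint} follow with no further input.
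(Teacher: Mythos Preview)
Your proposal is correct and matches the paper's own treatment exactly: the paper does not give a detailed proof of Proposition~\ref{prop:Lproper'} but simply states (in the paragraph introducing Propositions~\ref{prop:Lproper} and~\ref{prop:Lproper'}) that the ``left'' counterparts can be proved analogously, or deduced from the ``right'' statements by the transposition argument of Proposition~\ref{prop:trans}, or (when $\Omega$ is both right and left admissible) by the symmetry argument of Proposition~\ref{prop:RLsym}. You have carried out the transposition route in detail and correctly flagged the other two alternatives, including the limitation of the symmetry argument to the two-sided admissible case.
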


\section{Directional nc difference-differential
operators}\label{subsec:dir_difdif} Alongside the full
differ\-ence-differential operators introduced above, we will also
consider their \emph{directional} versions. \index{directional nc
difference-differential operator} Let
$\Omega\subseteq\ncspace{\module{M}}$ be a right (resp., left)
admissible nc set, and let $f\colon\Omega\to\ncspace{\module{N}}$
be a nc function. Given $\mu\in\module{M}$, we define for all
$n,m\in\mathbb{N}$, $X\in\Omega_n$, $Y\in\Omega_m$ the linear
mapping
$\Delta_{R,\mu}f(X,Y)\colon\rmat{\ring}{n}{m}\to\rmat{\module{N}}{n}{m}$
(resp.,
$\Delta_{L,\mu}f(X,Y)\colon\rmat{\ring}{m}{n}\to\rmat{\module{N}}{m}{n}$)
by \index{$\Delta_{R,\mu}f(X,Y)$} \index{$\Delta_{L,\mu}f(X,Y)$}
\begin{equation}\label{eq:r_dir}
 \Delta_{R,\mu}f(X,Y)(A)=\Delta_Rf(X,Y)(A\mu)
 \end{equation}
 and, respectively,
\begin{equation}\label{eq:l_dir}
 \Delta_{L,\mu}f(X,Y)(A)=\Delta_Lf(X,Y)(A\mu).
 \end{equation}
Here for a matrix $A$ over $\ring$ and $\mu\in\module{M}$, the
product $A\mu$ is a matrix over  $\module{M}$ of the same size and
$(A\mu)_{ij}=A_{ij}\mu$ for all $i$ and $j$.

In the special case where $\module{M}=\ring^d$, we define the
\emph{$j$-th right (resp.,left) partial nc difference-differential
operator} \index{right partial nc difference-differential
operator} \index{left partial nc difference-differential operator}
by $\Delta_{R,j}=\Delta_{R,e_j}$ (resp.,
$\Delta_{L,j}=\Delta_{L,e_j}$) where $e_j$ \index{$e_j$} is the
$j$-th standard basis vector in $\ring^d$, $j=1,\ldots, d$.
\index{$\Delta_{R,j}$} \index{$\Delta_{L,j}$}
 By linearity, we have
\begin{equation}\label{eq:r_difdecomp}
\Delta_Rf(X,Y)(Z)=\sum_{j=1}^d\Delta_{R,j}f(X,Y)(Z_j)
\end{equation}
and
\begin{equation}\label{eq:l_difdecomp}
\Delta_Lf(X,Y)(Z)=\sum_{j=1}^d\Delta_{L,j}f(X,Y)(Z_j).
\end{equation}

For the basic rules of nc calculus, \ref{subsub:const},
\ref{subsub:sum}, \ref{subsub:prod}, and \ref{subsub:inv} have
exactly the same form for the directional nc
difference-differential operators. In the setting of
\ref{subsub:coord}, $$\Delta_{R,\mu}l(X,Y)(A)=Al(\mu)\quad
\text{and}\quad \Delta_{L,\mu}l(X,Y)(A)=Al(\mu);$$ in particular,
in the case where $\module{M}=\ring^d$ we have
$$\Delta_{R,i}l_j(X,Y)(A)=\delta_{ij}A\quad \text{and}\quad
\Delta_{L,i}l_j(X,Y)(A)=\delta_{ij}A.$$ A version of
\ref{subsub:compos} for the right and left partial nc
difference-differential operators in the case where
$\module{M}=\ring^d$ and $\module{N}=\ring^\ell$ is
$$\Delta_{R,i}(g\circ
f)(X,Y)(A)=\sum_{j=1}^\ell\Delta_{R,j}g(f(X),f(Y))(\Delta_{R,i}f_j(X,Y)(A))$$
and
$$\Delta_{L,i}(g\circ
f)(X,Y)(A)=\sum_{j=1}^\ell\Delta_{L,j}g(f(X),f(Y))(\Delta_{L,i}f_j(X,Y)(A))$$
for $i=1,\ldots,d$,
 where $f(X)=f_1(X)e_1+\cdots+f_\ell(X)e_\ell$ with the
component nc functions $f_j\colon\Omega\to\ncspace{\ring}$.

The first order difference formulae of Theorem \ref{thm:Lagrange}
in the case where $\module{M}=\ring^d$ can be written in terms of
partial nc difference-differential operators. It follows from
\eqref{eq:RightLagr}--\eqref{eq:RightLagr'} and
\eqref{eq:r_difdecomp} that
\begin{equation*}
f(X)-f(Y)=\sum_{j=1}^d\Delta_{R,j}f(X,Y)(X_j-Y_j)=\sum_{j=1}^d\Delta_{R,j}f(Y,X)(X_j-Y_j).
\end{equation*}
Similarly, it follows from
\eqref{eq:LeftLagr}--\eqref{eq:LeftLagr'} and
\eqref{eq:l_difdecomp} that
\begin{equation*}
f(X)-f(Y)=\sum_{j=1}^d\Delta_{L,j}f(X,Y)(X_j-Y_j)=\sum_{j=1}^d\Delta_{L,j}f(Y,X)(X_j-Y_j).
\end{equation*}

For the properties of $\Delta_{R,\mu}f(X,Y)$ and
$\Delta_{L,\mu}f(X,Y)$ as functions of $X$ and $Y$, all the
statements of Section \ref{subsec:proper} have exactly the same
form.

\chapter{Higher order nc functions and their difference-differential calculus}\label{sec:difdif_k}
Our next goal is to define higher order differentials of a nc
function (more precisely, higher order nc difference-differential
operators). As we saw in Chapter \ref{sec:ncfun}, for a nc
function $f$ on a nc set $\Omega\subseteq\ncspace{\module{M}}$ to
a nc space $\ncspace{\module{N}}$, $\Delta_Rf(X,Y)(\cdot)$ is a
function of two arguments $X\in\Omega_n$ and $Y\in\Omega_m$ (for
all $n$ and $m$) with values linear mappings
$\rmat{\module{M}}{n}{m}\to\rmat{\module{N}}{n}{m}$. We also saw
that $\Delta_Rf(X,Y)(\cdot)$ respects (in an appropriate way)
direct sums, similarities, and intertwinings. This motivates the
appearance of classes
$\tclass{k}=\tclass{k}(\Omega^{(0)},\ldots,\Omega^{(k)};\ncspacej{\module{N}}{0},\ldots,\ncspacej{\module{N}}{k})$
of nc functions of order $k$ ($k=0,1,\ldots$) together with the
right nc difference-differential operators
\begin{multline*}
\Delta_R\colon\tclass{k}(\Omega^{(0)},\ldots,\Omega^{(k)};\ncspacej{\module{N}}{0},\ldots,\ncspacej{\module{N}}{k})\\
\to\tclass{k+1}(\Omega^{(0)},\ldots,\Omega^{(k)},\Omega^{(k)};\ncspacej{\module{N}}{0},\ldots,
\ncspacej{\module{N}}{k},\ncspacej{\module{M}}{k}).
\end{multline*} A nc function of order $k$ is a function of $k+1$
arguments, in nc sets
$\Omega^{(0)}\subseteq\ncspacej{\module{M}}{0}$, \ldots,
$\Omega^{(k)}\subseteq\ncspacej{\module{M}}{k}$, with values
certain $k$-linear mappings, which respects (in an appropriate
way) direct sums, similarities, and intertwinings. Higher order nc
difference-differential operators will be obtained by iterating
$\Delta_R$.

(From now on, we will concentrate on right nc
difference-differential operators. Of course, one can construct an
analogous ``left" theory.)

\section{Higher order nc functions}\label{subsec:higher} Let
$\module{M}_0$, \ldots, $\module{M}_k$, $\module{N}_0$, \ldots,
$\module{N}_k$ be modules over ring $\ring$. Let
$\Omega^{(j)}\subseteq\ncspacej{\module{M}}{j}$ be a nc set,
$j=0,\ldots,k$.  We consider
 functions $f$ on $\Omega^{(0)}\times\cdots\times\Omega^{(k)}$ with
 $$f(\Omega^{(0)}_{n_0},\ldots,\Omega^{(k)}_{n_k})\subseteq\hom_\ring
 \left(\rmat{\module{N}_1}{n_0}{n_1}\otimes\cdots\otimes
 \rmat{\module{N}_k}{n_{k-1}}{n_k},\rmat{\module{N}_0}{n_0}{n_k}\right).$$
In other words, for $X^0\in\Omega^{(0)}_{n_0}$, \ldots,
$X^k\in\Omega^{(k)}_{n_k}$, $f\left(X^0,\ldots,X^k\right)$ is a
$k$-linear mapping over $\ring$ from
$\rmat{\module{N}_1}{n_0}{n_1}\times\cdots\times
 \rmat{\module{N}_k}{n_{k-1}}{n_k}$ to $\rmat{\module{N}_0}{n_0}{n_k}.$
We will call such a function $f$ a \emph{nc function of order $k$}
\index{nc function of order $k$} if $f$ satisfies the following
two conditions:
\begin{itemize}
\item $f$ \emph{respects direct sums}: \index{respecting direct
sums} if $n_0,\ldots,n_k\in\mathbb{N}$,
$X^0\in\Omega^{(0)}_{n_0}$, \ldots, $X^k\in\Omega^{(k)}_{n_k}$,
$Z^1\in\rmat{\module{N}_1}{n_0}{n_1}$, \ldots,
$Z^k\in\rmat{\module{N}_k}{n_{k-1}}{n_k}$, then
\begin{multline}\tag{1X$^0$}\label{eq:1X^0} f(X^{0\,\prime}\oplus
X^{0\,\prime\prime},X^1,\ldots,X^k)(\col\,[Z^{1\,\prime},
Z^{1\,\prime\prime}],Z^2,\ldots,Z^k)\\
=\col\,[f(X^{0\,\prime},X^1,\ldots,X^k)
(Z^{1\,\prime},Z^2,\ldots,Z^k),\\
f(X^{0\,\prime\prime},X^1,\ldots,X^k)
(Z^{1\,\prime\prime},Z^2,\ldots,Z^k)]
\end{multline}
for $n_{0}^{\prime},n_{0}^{\prime\prime}\in\mathbb{N}$,
$X^{0\,\prime}\!\in\!\Omega^{(0)}_{n_{0}^{\prime}}$,
$X^{0\,\prime\prime}\!\in\!\Omega^{(0)}_{n_{0}^{\prime\prime}}$,
$Z^{1\,\prime}\!\in\!\rmat{\module{N}_1}{n_{0}^{\prime}}{n_1}$,
$Z^{1\,\prime\prime}\!\in\!\rmat{\module{N}_1}{n_{0}^{\prime\prime}}{n_1}$,
 where $Z^2,\ldots,Z^k$ do
not show up when $k=1$;
\begin{multline}\tag{1X$^j$}\label{eq:1X^j} f(X^0,\ldots,
X^{j-1},X^{j\,\prime}\oplus
X^{j\,\prime\prime},X^{j+1},\ldots,X^k)\\
\hspace{1cm}(Z^1,\ldots,Z^{j-1},\row\,[Z^{j\,\prime},
Z^{j\,\prime\prime}],\col\,[Z^{(j+1)\,\prime},
Z^{(j+1)\,\prime\prime}], Z^{j+2},\ldots,Z^k)\\
=f(X^0,\ldots,X^{j-1},X^{j\,\prime},X^{j+1},\ldots,X^k)\\
\hfill(Z^1,\ldots,Z^{j-1},Z^{j\,\prime},
 Z^{(j+1)\,\prime}, Z^{j+2},\ldots,Z^k\\
+f(X^0,\ldots,X^{j-1},X^{j\,\prime\prime},X^{j+1},\ldots,X^k)\\
 (Z^1,\ldots,Z^{j-1},Z^{j\,\prime\prime},
 Z^{(j+1)\,\prime\prime}, Z^{j+2},\ldots,Z^k)
\end{multline}
for every $j\in\{ 1,\ldots,k-1\}$, and for
$n_{j}^{\prime},n_{j}^{\prime\prime}\in\mathbb{N}$,
$X^{j\,\prime}\in\Omega^{(j)}_{n_{j}^{\prime}}$,
$X^{j\,\prime\prime}\in\Omega^{(j)}_{n_{j}^{\prime\prime}}$,
$Z^{j\,\prime}\in\rmat{\module{N}_j}{n_{j-1}}{n_{j}^{\prime}}$,
$Z^{j\,\prime\prime}\in\rmat{\module{N}_j}{n_{j-1}}{n_{j}^{\prime\prime}}$,
$Z^{(j+1)\,\prime}\in\rmat{(\module{N}_{j+1})}{n_{j}^{\prime}}{n_{j+1}}$,
$Z^{(j+1)\,\prime\prime}\in\rmat{(\module{N}_{j+1})}{n_{j}^{\prime\prime}}{n_{j+1}}$,
 where
$Z^1$, \ldots, $Z^{j-1}$ do not show up for $j=1$, and
$Z^{j+2},\ldots,Z^k$ do not show up for $j=k-1$;
\begin{multline}\tag{1X$^k$}\label{eq:1X^k}
f(X^0,\ldots,X^{k-1},X^{k\,\prime}\oplus
X^{k\,\prime\prime})(Z^1,\ldots,Z^{k-1},\row\,[Z^{k\,\prime},
Z^{k\,\prime\prime}])\\
=\row\,[f(X^0,\ldots,X^{k-1},X^{k\,\prime})
(Z^1,\ldots,Z^{k-1},Z^{k\,\prime},\\
 f(X^0,\ldots,X^{k-1},X^{k\,\prime\prime})
(Z^1,\ldots,Z^{k-1},Z^{k\,\prime\prime})]
\end{multline}
for $n_{k}^{\prime},n_{k}^{\prime\prime}\in\mathbb{N}$,
$X^{k\,\prime}\in\Omega^{(k)}_{n_{k}^{\prime}}$,
$X^{k\,\prime\prime}\in\Omega^{(k)}_{n_{k}^{\prime\prime}}$,
$Z^{k\,\prime}\in\rmat{\module{N}_k}{n_{k-1}}{n_{k}^{\prime}}$,
$Z^{k\,\prime\prime}\in\rmat{\module{N}_k}{n_{k-1}}{n_{k}^{\prime\prime}}$,
where $Z^1,\ldots,Z^{k-1}$ do not show up when $k=1$. \vspace{2mm}

\item $f$ \emph{respects similarities}: \index{respecting
similarities} if $n_0,\ldots,n_k\in\mathbb{N}$,
$X^0\in\Omega^{(0)}_{n_0}$, \ldots, $X^k\in\Omega^{(k)}_{n_k}$,
$Z^1\in\rmat{\module{N}_1}{n_0}{n_1}$, \ldots,
$Z^k\in\rmat{\module{N}_k}{n_{k-1}}{n_k}$,  then
\begin{multline}\tag{2X$^0$}\label{eq:2X^0}
f(S_0X^0S_0^{-1},X^1,\ldots,X^k)(S_0Z^1,Z^2,\ldots,Z^k)\\
=S_0f(X^0,\ldots,X^k)(Z^1,\ldots,Z^k)
\end{multline}
for an invertible $S_0\in\mat{\ring}{n_0}$ such that
$S_0X^0S_0^{-1}\!\in\!\Omega^{(0)}_{n_0}$,
 where $Z^2,\ldots,Z^k$ do
not show up when $k=1$;
\begin{multline}\tag{2X$^j$}\label{eq:2X^j}
f(X^0,\ldots,X^{j-1},S_jX^jS_j^{-1},X^{j+1},\ldots,X^k)\\
(Z^1,\ldots,Z^{j-1},Z^jS_j^{-1},S_jZ^{j+1},Z^{j+2},\ldots,Z^k)\\
=f(X^0,\ldots,X^k)(Z^1,\ldots,Z^k)
\end{multline}
for every $j\in\{ 1,\ldots,k-1\}$ and an invertible
$S_j\in\mat{\ring}{n_j}$ such that
$S_jX^jS_j^{-1}\in\Omega^{(j)}_{n_j}$, where $Z^1,\ldots,Z^{j-1}$
do not show up for $j=1$, and $Z^{j+2},\ldots,Z^k$ do not show up
for $j=k-1$;
\begin{multline}\tag{2X$^k$}\label{eq:2X^k}
f(X^0,\ldots,X^{k-1},S_kX^kS_k^{-1})(Z^1,\ldots,Z^{k-1},Z^kS_k^{-1})\\
=f(X^0,\ldots,X^k)(Z^1,\ldots,Z^k)S_k^{-1}
\end{multline}
for an invertible $S_k\in\mat{\ring}{n_k}$ such that
$S_kX^kS_k^{-1}\in\Omega^{(k)}_{n_k}$, where $Z^1$, \ldots,
$Z^{k-1}$ do not show up when $k=1$.
\end{itemize}

We define \emph{nc functions of order zero} to be simply nc
functions as in Section \ref{subsec:ncfundef}. We denote by
$\tclass{k}=\tclass{k}(\Omega^{(0)},\ldots,\Omega^{(k)};\ncspacej{\module{N}}{0},\ldots,\ncspacej{\module{N}}{k})$
\index{$\tclass{k}(\Omega^{(0)},\ldots,\Omega^{(k)};\ncspacej{\module{N}}{0},\ldots,\ncspacej{\module{N}}{k})$}
the class of nc functions of order $k$, \index{nc function of
order $k$} $k=0,1,\ldots$. In the case where
$\module{M}_0=\cdots=\module{M}_k=:\module{M}$ and
$\Omega^{(0)}=\cdots=\Omega^{(k)}=:\Omega$, we will write
$\tclass{k}=\tclass{k}(\Omega;\ncspacej{\module{N}}{0},\ldots,\ncspacej{\module{N}}{k})$.
We will provide later a slightly different interpretation of nc
functions of order $k$ using tensor products rather than
multilinear mappings for their values, which shows why the
definition of higher order nc functions is natural and generalizes
nc functions of order zero.

 Similarly to the class $\tclass{0}$ (see Proposition
\ref{prop:simint}), the two conditions in the definition of a nc
function of order $k$ can be replaced by a single one.
\begin{prop}\label{prop:simint_k}
Let $\Omega^{(j)}\subseteq\ncspacej{\module{M}}{j}$,
$j=0,\ldots,k$, be nc sets.
 A function $f$ on
$\Omega^{(0)}\times\cdots\times\Omega^{(k)}$ with
 $$f(\Omega^{(0)}_{n_0},\ldots,\Omega^{(k)}_{n_k})\subseteq\hom_\ring
 (\rmat{\module{N}_1}{n_0}{n_1}\otimes\cdots\otimes
 \rmat{\module{N}_k}{n_{k-1}}{n_k},\rmat{\module{N}_0}{n_0}{n_k})$$
 respects direct sums and similarities, i.e.,
 $f\in\tclass{k}(\Omega^{(0)},\ldots,\Omega^{(k)};
 \ncspacej{\module{N}}{0},\ldots,\ncspacej{\module{N}}{k})$, if and only if $f$
 \emph{respects intertwinings}: \index{respecting intertwinings}
if $n_0,\ldots,n_k\in\mathbb{N}$, $X^0\in\Omega^{(0)}_{n_0}$,
 \ldots,
$X^k\in\Omega^{(k)}_{n_k}$, $Z^1\in\rmat{\module{N}_1}{n_0}{n_1}$,
\ldots, $Z^k\in\rmat{\module{N}_k}{n_{k-1}}{n_k}$,  then:
\begin{multline}\tag{3X$^0$}\label{eq:3X^0}
\text{if}\quad T_0X^0=\tilde{X}^0T_0,\quad \text{then}\\
T_0f(X^0,\ldots,X^k)(Z^1,\ldots,Z^k)
=f(\tilde{X}^0,X^1,\ldots,X^k)(T_0Z^1,Z^2,\ldots,Z^k)
\end{multline}
for $\tilde{n}_0\in\mathbb{N}$,
$\tilde{X}^0\in\Omega^{(0)}_{\tilde{n}_0}$,
 and
$T_0\in\rmat{\ring}{\tilde{n}_0}{n_0}$,
 where $Z^2,\ldots,Z^k$ do
not show up when $k=1$;
\begin{multline}\tag{3X$^j$}\label{eq:3X^j}
\text{if}\quad T_jX^j=\tilde{X}^jT_j,\quad \text{then}\\
f(X^0,\ldots,X^k)
(Z^1,\ldots,Z^{j-1},\tilde{Z}^jT_j,Z^{j+1},Z^{j+2},\ldots,Z^k)\\
=f(X^0,\ldots,X^{j-1},\tilde{X}^j,X^{j+1},\ldots,X^k)
(Z^1,\ldots,Z^{j-1},\tilde{Z}^{j},T_jZ^{j+1},Z^{j+2},\ldots,Z^k)
\end{multline}
for every $j\in\{ 1,\ldots,k-1\}$, and for
$\tilde{n}_j\in\mathbb{N}$,
$\tilde{X}^j\in\Omega^{(j)}_{\tilde{n}_j}$,
$\tilde{Z}^j\in\rmat{\module{N}_j}{n_{j-1}}{\tilde{n}_j}$,
$T_j\in\rmat{\ring}{\tilde{n}_j}{n_j}$,
 where $Z^1,\ldots,Z^{j-1}$ do
not show up for $j=1$, and $Z^{j+2},\ldots,Z^k$ do not show up for
$j=k-1$;
\begin{multline}\tag{3X$^k$}\label{eq:3X^k} \text{if}\quad X^kT_k=T_k\tilde{X}^k,\quad \text{then}\\
f(X^0,\ldots,X^k)(Z^1,\ldots,Z^k)T_k
=f(X^0,\ldots,X^{k-1},\tilde{X}^k)(Z^1,\ldots,Z^{k-1},Z^kT_k)
\end{multline}
for $\tilde{n}_k\in\mathbb{N}$,
$\tilde{X}^k\in\Omega^{(k)}_{\tilde{n}_k}$, and
$T_k\in\rmat{\ring}{n_k}{\tilde{n}_k}$,
 where $Z^1,\ldots,Z^{k-1}$ do
not show up when $k=1$.

Moreover, conditions \eqref{eq:1X^0} and \eqref{eq:2X^0} in the
definition of a nc function of order $k$ together are equivalent
to condition \eqref{eq:3X^0}, and similarly, \eqref{eq:1X^j} $\&$
\eqref{eq:2X^j} $\Longleftrightarrow$ \eqref{eq:3X^j}
($j=1,\ldots,k-1$), \eqref{eq:1X^k} $\&$ \eqref{eq:2X^k}
$\Longleftrightarrow$ \eqref{eq:3X^k}.
\end{prop}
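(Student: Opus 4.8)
The plan is to reduce the whole statement to the slotwise equivalences asserted in the ``moreover'' clause: for each fixed $j\in\{0,\ldots,k\}$ we show \eqref{eq:1X^0}\,\&\,\eqref{eq:2X^0}$\Longleftrightarrow$\eqref{eq:3X^0}, \eqref{eq:1X^j}\,\&\,\eqref{eq:2X^j}$\Longleftrightarrow$\eqref{eq:3X^j} for $1\le j\le k-1$, and \eqref{eq:1X^k}\,\&\,\eqref{eq:2X^k}$\Longleftrightarrow$\eqref{eq:3X^k}. Since ``$f$ respects direct sums and similarities'' means that all of \eqref{eq:1X^0}--\eqref{eq:2X^k} hold while ``$f$ respects intertwinings'' means that all of \eqref{eq:3X^0}--\eqref{eq:3X^k} hold, the main biconditional follows by conjoining the slotwise equivalences over $j$. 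Each slotwise equivalence will be proved by running the argument of Proposition \ref{prop:simint} in the $j$-th variable of $f$, with all other variables and all $Z$'s outside slots $j$ and $j+1$ frozen; the one extra bookkeeping point is that a decomposition or conjugation in slot $j$ affects both the $j$-th argument $Z^j$ (entering on the right) and the $(j+1)$-st argument $Z^{j+1}$ (entering on the left).

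The implication \eqref{eq:3X^j}$\Longrightarrow$\eqref{eq:2X^j} is immediate: \eqref{eq:2X^j} is the special case of \eqref{eq:3X^j} in which $T_j$ is a square invertible matrix $S_j$ with $\tilde X^j=S_jX^jS_j^{-1}$ (for $j=k$ one also multiplies by $S_k^{-1}$ on the right). For \eqref{eq:3X^j}$\Longrightarrow$\eqref{eq:1X^j} I feed into \eqref{eq:3X^j} the canonical block inclusions/projections, which intertwine a block-diagonal matrix with its direct summands: for $j=0$ the matrices $\col[I_{n_0'},0]$ and $\col[0,I_{n_0''}]$ (between $X^{0\prime}$, $X^{0\prime\prime}$ and $X^{0\prime}\oplus X^{0\prime\prime}$); for $1\le j\le k-1$ the matrices $\row[I_{n_j'},0]$ and $\row[0,I_{n_j''}]$, noting that right-multiplying $\tilde Z^j$ by such a $T_j$ turns it into a block row $\row[\tilde Z^j,0]$ (or $\row[0,\tilde Z^j]$) while left-multiplying $Z^{j+1}$ by $T_j$ selects a block $Z^{(j+1)\prime}$ (or $Z^{(j+1)\prime\prime}$) of the column $\col[Z^{(j+1)\prime},Z^{(j+1)\prime\prime}]$; for $j=k$ the matrices $\col[I_{n_k'},0]$ and $\col[0,I_{n_k''}]$. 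Adding the two resulting identities and using multilinearity of $f$ — so that $\row[Z^{j\prime},0]+\row[0,Z^{j\prime\prime}]=\row[Z^{j\prime},Z^{j\prime\prime}]$, the $(j+1)$-st argument being held fixed at $\col[Z^{(j+1)\prime},Z^{(j+1)\prime\prime}]$ — produces exactly \eqref{eq:1X^j}.

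The substantive direction is \eqref{eq:1X^j}\,\&\,\eqref{eq:2X^j}$\Longrightarrow$\eqref{eq:3X^j}. Given the intertwining $T_jX^j=\tilde X^jT_j$ (or $X^kT_k=T_k\tilde X^k$ when $j=k$), I form the block-diagonal matrix $D=\tilde X^j\oplus X^j\in\Omega^{(j)}$ and observe that $S_j=\begin{bmatrix}I&T_j\\0&I\end{bmatrix}$ is invertible, $S_j^{-1}=\begin{bmatrix}I&-T_j\\0&I\end{bmatrix}$, and commutes with $D$ — the $2\times2$ block identity underlying Proposition \ref{prop:simint}. Applying \eqref{eq:2X^j} to $f$ with $X^j$ replaced by $D$ and $S_j$ as the similarity, and using $S_jDS_j^{-1}=D$, I obtain an identity $f(\ldots,D,\ldots)(\ldots,W^jS_j^{-1},S_jW^{j+1},\ldots)=f(\ldots,D,\ldots)(\ldots,W^j,W^{j+1},\ldots)$ valid for all test matrices $W^j$, $W^{j+1}$ of the sizes matching $D$. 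I then specialize $W^j=\row[\tilde Z^j,0]$ and $W^{j+1}=\col[0,Z^{j+1}]$, so that $W^jS_j^{-1}=\row[\tilde Z^j,-\tilde Z^jT_j]$ and $S_jW^{j+1}=\col[T_jZ^{j+1},Z^{j+1}]$, and expand both sides by \eqref{eq:1X^j} relative to $D=\tilde X^j\oplus X^j$. On the right-hand side every term of the expansion contains a zero argument, so it vanishes; on the left-hand side the expansion is $f(\ldots,\tilde X^j,\ldots)(\ldots,\tilde Z^j,T_jZ^{j+1},\ldots)-f(\ldots,X^j,\ldots)(\ldots,\tilde Z^jT_j,Z^{j+1},\ldots)$, and equating it to $0$ yields \eqref{eq:3X^j}. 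The case $j=0$ is the same with $W^1=\col[0,Z^1]$, reading off the first block of the resulting column identity (the factor $S_0$ then sits on the left of the output, exactly as in \eqref{eq:3X^0}); the case $j=k$ uses $D=X^k\oplus\tilde X^k$, $W^k=\row[Z^k,0]$, and reads off the second block of a row identity.

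The main obstacle is organizational rather than conceptual: in an interior slot $1\le j\le k-1$ a decomposition of $X^j$ couples the two neighbouring arguments $Z^j$ and $Z^{j+1}$ — it splits $Z^j$ as a block row and $Z^{j+1}$ as a block column — so in both the ``$\Rightarrow$\eqref{eq:1X^j}'' and the ``$\Rightarrow$\eqref{eq:3X^j}'' steps one must manipulate two arguments simultaneously and choose the auxiliary matrices $W^j$, $W^{j+1}$ (or the inclusions/projections) so that the \eqref{eq:1X^j}-expansion collapses to exactly the two terms one wants, all cross terms carrying a vanishing argument. Once these choices are fixed, everything is forced by multilinearity of $f$ and the $2\times2$ block computations. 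I would write out the interior slot in full and merely indicate the (notationally lighter) modifications needed for the end slots $j=0$ and $j=k$.
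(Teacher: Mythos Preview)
Your proposal is correct and follows essentially the same approach as the paper: both use the unipotent similarity $\begin{bmatrix}I&T_j\\0&I\end{bmatrix}$ (or its lower-triangular variant) commuting with the block-diagonal $\tilde X^j\oplus X^j$, combined with the direct-sum expansion \eqref{eq:1X^j}, to pass between the direct-sum/similarity conditions and the intertwining condition slot by slot. The only differences are presentational---the paper writes out the endpoint case $j=0$ in full while you write out an interior slot, and in the forward direction you specialize the test arguments so that one side of the identity vanishes outright rather than computing both sides and comparing a single block entry as the paper does.
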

\begin{proof}
We will provide a detailed proof of \eqref{eq:1X^0} $\&$
\eqref{eq:2X^0} $\Longleftrightarrow$ \eqref{eq:3X^0}. Other
equivalences in the proposition can be proved analogously.

Assume first that conditions \eqref{eq:1X^0} and \eqref{eq:2X^0}
in the definition of a nc function of order $k$ are fulfilled. Let
$n_0,\tilde{n}_0,n_1,\ldots,n_k\in\mathbb{N}$,
$X^0\in\Omega^{(0)}_{n_0}$,
$\tilde{X}^0\in\Omega^{(0)}_{\tilde{n}_0}$,
$X^1\in\Omega^{(1)}_{n_1}$, \ldots, $X^k\in\Omega^{(k)}_{n_k}$,
$Z^1\in\rmat{\module{N}_1}{n_0}{n_1}$,
$\tilde{Z}^1\in\rmat{\module{N}_1}{\tilde{n}_0}{n_1}$,
$Z^2\in\rmat{\module{N}_2}{n_1}{n_2}$, \ldots,
$Z^k\in\rmat{\module{N}_k}{n_{k-1}}{n_k}$, and let
$T_0\in\rmat{\ring}{\tilde{n}_0}{n_0}$ be such that
$T_0X^0=\tilde{X}^0T_0$. By \eqref{eq:1X^0}, one has
\begin{multline*} f(X^{0}\oplus
\tilde{X}^{0},X^1,\ldots,X^k)(\col\,[Z^{1},
\tilde{Z}^{1}],Z^2,\ldots,Z^k)\\
=\col\,[f(X^{0},\ldots,X^k)(Z^{1},\ldots,Z^k),
f(\tilde{X}^{0},X^1,\ldots,X^k)(\tilde{Z}^{1},Z^2,\ldots,Z^k)].
\end{multline*}
On the other hand, using the identity
\begin{equation*}
\begin{bmatrix}
X^0 & 0\\
0 & \tilde{X}^0
\end{bmatrix}=\begin{bmatrix}
I_{n_0} & 0\\
T_0 & I_{\tilde{n}_0}
\end{bmatrix}\begin{bmatrix}
X^0 & 0\\
0 & \tilde{X}^0
\end{bmatrix}\begin{bmatrix}
I_{n_0} & 0\\
-T_0 & I_{\tilde{n}_0}
\end{bmatrix},
\end{equation*}
and conditions \eqref{eq:1X^0} and \eqref{eq:2X^0}, we obtain
\begin{multline*}
f(X^{0}\oplus \tilde{X}^{0},X^1,\ldots,X^k)(\col\,[Z^{1},
\tilde{Z}^{1}],Z^2,\ldots,Z^k)\\
=f\left(\begin{bmatrix}
I_{n_0} & 0\\
T_0 & I_{\tilde{n}_0}
\end{bmatrix}\begin{bmatrix}
X^0 & 0\\
0 & \tilde{X}^0
\end{bmatrix}\begin{bmatrix}
I_{n_0} & 0\\
-T_0 & I_{\tilde{n}_0}
\end{bmatrix},X^1,\ldots,X^k\right)\\
\hfill\left(\begin{bmatrix}
I_{n_0} & 0\\
T_0 & I_{\tilde{n}_0}
\end{bmatrix}\begin{bmatrix}
I_{n_0} & 0\\
-T_0 & I_{\tilde{n}_0}\end{bmatrix}\col[Z^{1},
\tilde{Z}^{1}],Z^2,\ldots,Z^k\right)\\
=\begin{bmatrix}
I_{n_0} & 0\\
T_0 & I_{\tilde{n}_0}
\end{bmatrix}f(X^{0}\oplus
\tilde{X}^{0},X^1,\ldots,X^k)(\col,[Z^{1},
-T_0Z^1+\tilde{Z}^{1}],Z^2,\ldots,Z^k)\\
=\begin{bmatrix}
I_{n_0} & 0\\
T_0 & I_{\tilde{n}_0}
\end{bmatrix}\col\,[f(X^0,\ldots,X^k)(Z^1,\ldots,Z^k),\hfill\\
\hfill f(\tilde{X}^0,X^1,\ldots,X^k)(-T_0Z^1+\tilde{Z}^1,Z^2,\ldots,Z^k)]\\
=\col\,[f(X^0,\ldots,X^k)(Z^1,\ldots,Z^k),\hfill\\
\hfill T_0f(X^0,\ldots,X^k)(Z^1,\ldots,Z^k)
+f(\tilde{X}^0,\ldots,X^k)(-T_0Z^1+\tilde{Z}^1,Z^2,\ldots,Z^k)].
\end{multline*}
The comparison of the (2,1) blocks in the two block column
expressions for
$$f(X^{0}\oplus \tilde{X}^{0},X^1,\ldots,X^k)(\col\,[Z^{1},
\tilde{Z}^{1}],Z^2,\ldots,Z^k)$$ gives
\begin{multline*}
f(\tilde{X}^0,X^1,\ldots,X^k)(\tilde{Z}^1,Z^2,\ldots,Z^k)\\
=T_0f(X^0,\ldots,X^k)(Z^1,\ldots,Z^k)
+f(\tilde{X}^0,X^1,\ldots,X^k)(-T_0Z^1+\tilde{Z}^1,Z^2,\ldots,Z^k).
\end{multline*}
Using linearity in $Z^1$ of $f(X^0,\ldots,X^k)(Z^1,\ldots,Z^k)$,
we obtain
\begin{equation*}
0=T_0f(X^0,\ldots,X^k)(Z^1,\ldots,Z^k)
-f(\tilde{X}^0,X^1,\ldots,X^k)(T_0Z^1,Z^2,\ldots,Z^k),
\end{equation*}
and \eqref{eq:3X^0} follows.

Conversely, assume that \eqref{eq:3X^0} holds. Let
$n^\prime_0,n^{\prime\prime}_0,n_1,\ldots,n_k\in\mathbb{N}$,
$X^{0\prime}\in\Omega^{(0)}_{n^\prime_0}$,
$X^{0\prime\prime}\in\Omega^{(0)}_{n^{\prime\prime}_0}$,
$X^1\in\Omega^{(1)}_{n_1}$, \ldots, $X^k\in\Omega^{(k)}_{n_k}$,
$Z^{1\prime}\in\rmat{\module{N}_1}{n^\prime_0}{n_1}$,
$Z^{1\prime\prime}\in\rmat{\module{N}_1}{n^{\prime\prime}_0}{n_1}$,
$Z^2\in\rmat{\module{N}_2}{n_1}{n_2}$, \ldots,
$Z^k\in\rmat{\module{N}_k}{n_{k-1}}{n_k}$. The intertwinings
\begin{equation*}
\col\,[I_{n^\prime_0},0]X^{0\prime}=(X^{0\prime}\oplus
X^{0\prime\prime})\col\,[I_{n^\prime_0},0]
\end{equation*}
and
\begin{equation*}
\col\,[0,I_{n^{\prime\prime}_0}]X^{0\prime\prime}=(X^{0\prime}\oplus
X^{0\prime\prime})\col\,[0,I_{n^{\prime\prime}_0}]
\end{equation*}
imply, respectively,
\begin{multline*}
\col\,[I_{n^\prime_0},0]f(X^{0\prime},X^1,\ldots,X^k)(Z^{1\prime},Z^2,\ldots,Z^k)\\
=f(X^{0\prime}\oplus
X^{0\prime\prime},X^1,\ldots,X^k)(\col\,[I_{n^\prime_0},0]Z^{1\prime},Z^2,\ldots,Z^k)
\end{multline*}
and
\begin{multline*}
\col\,[0,I_{n^{\prime\prime}_0}]f(X^{0\prime\prime},X^1,\ldots,X^k)
(Z^{1\prime\prime},Z^2,\ldots,Z^k)\\
=f(X^{0\prime}\oplus X^{0\prime\prime},X^1,\ldots,X^k)
(\col\,[0,I_{n^{\prime\prime}_0}]Z^{1\prime\prime},Z^2,\ldots,Z^k).
\end{multline*}
Summing up these two equalities, we obtain \eqref{eq:1X^0} by
linearity.

If $n_0\in\mathbb{N}$, $X^0\in\Omega^{(0)}_{n_0}$,
$Z^1\in\rmat{\module{N}_1}{n_0}{n_1}$, and
$S_0\in\mat{\ring}{n_0}$ is invertible, then, in virtue of
\eqref{eq:3X^0}, the intertwining
$S_0X^0=\left(S_0X^0S_0^{-1}\right)S_0$ implies \eqref{eq:2X^0}.
\end{proof}

Similarly to Proposition \ref{prop:Rproper'}, each of the
conditions of respecting direct sums, similarities and
intertwinings admits an equivalent joint formulation.
\begin{prop}\label{prop:joint_k}
\begin{enumerate} \item Conditions \eqref{eq:1X^0}, \eqref{eq:1X^j} for $j=1,\ldots,k-1$,
and \eqref{eq:1X^k} are equivalent to
\begin{multline}\label{eq:dirsums_k}
f(X^{0\prime}\oplus X^{0\prime\prime},\ldots,X^{k\prime}\oplus
X^{k\prime\prime})\left(\begin{bmatrix} Z^{1\prime,\prime} &
Z^{1\prime,\prime\prime}\\
Z^{1\prime\prime,\prime} & Z^{1\prime\prime,\prime\prime}
\end{bmatrix},\ldots,\begin{bmatrix} Z^{k\prime,\prime} &
Z^{k\prime,\prime\prime}\\
Z^{k\prime\prime,\prime} & Z^{k\prime\prime,\prime\prime}
\end{bmatrix}\right)\\
=\begin{bmatrix} f^{\prime,\prime} &
f^{\prime,\prime\prime}\\
f^{\prime\prime,\prime} & f^{\prime\prime,\prime\prime}
\end{bmatrix},
\end{multline}
where, for $\alpha,\beta\in\{^\prime,^{\prime\prime}\}$,
\begin{equation}\label{eq:f_entries}
f^{\alpha,\beta}=\sum_{
  \alpha_0,\ldots,\alpha_k\in \{^\prime,^{\prime\prime}\}\colon
  \alpha_0=\alpha, \alpha_k=\beta}
f(X^{0\alpha_0},\ldots,X^{k\alpha_k})
(Z^{1\alpha_0,\alpha_1},\ldots,Z^{k\alpha_{k-1},\alpha_k}).
\end{equation}
Here $n_j^\prime\in\mathbb{N}$,
$n_j^{\prime\prime}\in\mathbb{Z}_+$,
$X^{j\alpha}\in\Omega^{(j)}_{n_j^\alpha}$ for $j=0,\ldots,k$,
$\alpha\in\{^\prime,^{\prime\prime}\}$,
$Z^{j\alpha,\beta}\in\rmat{\module{N}_j}{n_{j-1}^\alpha}{n_j^\beta}$
for $j=1,\ldots,k$, $\alpha,\beta\in\{^\prime,^{\prime\prime}\}$,
the block entry $f^{\alpha,\beta}$ is void if either $n_0^\alpha$
or $n_k^\beta$ is $0$, and a summand in \eqref{eq:f_entries} is
$0$ if at least one of $n_j^{\alpha_j}$, $j=1,\ldots,k-1$, is $0$.
\item Conditions \eqref{eq:2X^0}, \eqref{eq:2X^j} for
$j=1,\ldots,k-1$, and \eqref{eq:2X^k} are equivalent to
\begin{multline}\label{eq:sim_k}
f(S_0X^0S_0^{-1},\ldots,S_kX^kS_k^{-1})(S_0Z^1S_1^{-1},\ldots,S_{k-1}Z^kS_k^{-1})\\
=S_0f(X^0,\ldots,X^k)(Z^1,\ldots,Z^k)S_k^{-1}
\end{multline}
for $n_j\in\mathbb{N}$, $X^j\in\Omega^{(j)}_{n_j}$, and invertible
$S_j\in\mat{\ring}{n_j}$ such that
$S_jX^jS_j^{-1}\in\Omega^{(j)}_{n_j}$, $j=0,\ldots,k$, and for
$Z^j\in\rmat{\module{N}_j}{n_{j-1}}{n_j}$, $j=1,\ldots,k$.
    \item Conditions \eqref{eq:3X^0}, \eqref{eq:3X^j} for
$j=1,\ldots,k-1$, and \eqref{eq:3X^k} are equivalent to
\begin{multline}\label{eq:int_k}
\text{If}\ \  T_jX^j=\tilde{X}^jT_j,\ j=0,\ldots,k,\ \text{then}\\
T_0f(X^0,\ldots,X^k)(Z^1T_1,\ldots,Z^kT_k)\\
=f(\tilde{X}^0,\ldots,\tilde{X}^k)(T_0Z^1,\ldots,T_{k-1}Z^k)T_k
\end{multline}
for $n_j,\tilde{n}_j\in\mathbb{N}$, $X^j\in\Omega^{(j)}_{n_j}$,
$\tilde{X}^j\in\Omega^{(j)}_{\tilde{n}_j}$,
$T_j\in\rmat{\ring}{\tilde{n}_j}{n_j}$, $j=0,\ldots,k$, and for
$Z^j\in\rmat{\module{N}_j}{n_{j-1}}{\tilde{n}_j}$, $j=1,\ldots,k$.
\end{enumerate}
\end{prop}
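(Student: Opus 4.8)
The plan is to establish the three equivalences one at a time; in each case the forward implication (the separate conditions imply the joint one) will be proved by a telescoping argument in which the arguments $X^0,\ldots,X^k$ are transformed one at a time, and the backward implication (the joint condition implies each separate one) by specializing the auxiliary matrix sizes to be $0$, very much in the spirit of the proof of Proposition~\ref{prop:simint_k}.

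Part~(2) is the simplest. Starting from $f(X^0,\ldots,X^k)(Z^1,\ldots,Z^k)$, I would apply \eqref{eq:2X^0}, then \eqref{eq:2X^j} for $j=1,\ldots,k-1$ in order, and finally \eqref{eq:2X^k}. The first step conjugates $X^0$ by $S_0$, replaces $Z^1$ by $S_0Z^1$, and produces a leading factor $S_0$; each subsequent step $j$ conjugates $X^j$ by $S_j$, right-multiplies the current $Z^j$ slot by $S_j^{-1}$ and left-multiplies $Z^{j+1}$ by $S_j$; the last step conjugates $X^k$ and produces a trailing factor $S_k^{-1}$. Tracking the accumulated factors on each $Z^j$ (namely $S_{j-1}$ on the left and $S_j^{-1}$ on the right) yields \eqref{eq:sim_k}. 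For the converse one takes all $S_i=I_{n_i}$ except one, which immediately gives each of \eqref{eq:2X^0}, \eqref{eq:2X^j}, \eqref{eq:2X^k}.

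Part~(3) runs along the same lines with intertwiners replacing similarities. Beginning with $f(X^0,\ldots,X^k)(Z^1T_1,\ldots,Z^kT_k)$, I would use \eqref{eq:3X^j} for $j=1,\ldots,k-1$ in turn to replace each $X^j$ by $\tilde{X}^j$, each such step moving the factor $T_j$ off the $Z^j$ slot and onto $Z^{j+1}$ from the left; then use \eqref{eq:3X^k} to replace $X^k$ by $\tilde{X}^k$ and split off the trailing $T_k$; and finally left-multiply by $T_0$ and apply \eqref{eq:3X^0}. Assembling these gives \eqref{eq:int_k}. The one place that needs a moment's attention is that \eqref{eq:3X^k} is phrased with the ``opposite'' convention ($X^kT_k=T_k\tilde{X}^k$, $T_k\in\rmat{\ring}{n_k}{\tilde{n}_k}$) from the one in \eqref{eq:int_k} ($T_kX^k=\tilde{X}^kT_k$, $T_k\in\rmat{\ring}{\tilde{n}_k}{n_k}$); but since the relation $X^kT_k=T_k\tilde{X}^k$ is symmetric in the pair $(X^k,\tilde{X}^k)$, one simply applies \eqref{eq:3X^k} with the roles of $X^k$ and $\tilde{X}^k$ interchanged. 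As before, the converse follows by setting all auxiliary data trivial except one, recovering \eqref{eq:3X^0}, \eqref{eq:3X^j}, and \eqref{eq:3X^k} respectively.

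Part~(1) is the combinatorial heart of the proposition and the step I expect to be the main obstacle. For the forward direction I would split the arguments one at a time: apply \eqref{eq:1X^0} to split $X^0$, which produces a block column with two entries; then, inside each term so obtained, apply \eqref{eq:1X^j} successively for $j=1,\ldots,k-1$ to split $X^j$, each such application replacing one term by a sum of two terms in which the appropriate block of $Z^j$ (column-selected) and of $Z^{j+1}$ (row-restricted) is determined by the newly chosen inner index; and finally apply \eqref{eq:1X^k} to split $X^k$, which produces block rows. The bookkeeping amounts to observing that, after all the splittings, the $(\alpha,\beta)$ block of the value is the sum over intermediate indices $\alpha_1,\ldots,\alpha_{k-1}$ of the terms $f(X^{0\alpha},X^{1\alpha_1},\ldots,X^{(k-1)\alpha_{k-1}},X^{k\beta})(Z^{1\alpha,\alpha_1},Z^{2\alpha_1,\alpha_2},\ldots,Z^{k\alpha_{k-1},\beta})$, since the surviving block of $Z^j$ is always the one whose row index is the index chosen for $X^{j-1}$ and whose column index is the index chosen for $X^j$; this is precisely \eqref{eq:f_entries}. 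Keeping this bookkeeping straight is the only real difficulty, and I expect it is cleanest to organize the argument as an induction on $k$, peeling off the last argument via \eqref{eq:1X^k} and \eqref{eq:1X^j} and then invoking the inductive hypothesis, while treating the degenerate cases $n_j''=0$ through the stated convention that the corresponding summands and block entries are void. For the converse, one recovers \eqref{eq:1X^0} (respectively \eqref{eq:1X^j}, respectively \eqref{eq:1X^k}) from \eqref{eq:dirsums_k} by taking $n_i''=0$ for every index $i\neq0$ (respectively $i\neq j$, respectively $i\neq k$): this collapses all the sums in \eqref{eq:f_entries} to a single term and makes every block entry with a zero dimension void.
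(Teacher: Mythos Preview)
Your proposal is correct and follows essentially the same route as the paper: in each part the forward direction is obtained by a telescoping application of the individual conditions (splitting off one argument at a time), and the converse by specializing all auxiliary data to be trivial except at one index. The only cosmetic differences are that the paper applies \eqref{eq:3X^0} first rather than last in part~(3), and in part~(1) it writes out the chain of splittings directly rather than organizing it as an induction on $k$; neither affects the substance.
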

\begin{proof}
(1). Suppose that \eqref{eq:1X^0}--\eqref{eq:1X^k} hold. Then
\begin{multline*}
f(X^{0\prime}\oplus X^{0\prime\prime},\ldots,X^{k\prime}\oplus
X^{k\prime\prime})\left(\begin{bmatrix} Z^{1\prime,\prime} &
Z^{1\prime,\prime\prime}\\
Z^{1\prime\prime,\prime} & Z^{1\prime\prime,\prime\prime}
\end{bmatrix},\ldots,\begin{bmatrix} Z^{k\prime,\prime} &
Z^{k\prime,\prime\prime}\\
Z^{k\prime\prime,\prime} & Z^{k\prime\prime,\prime\prime}\\
\end{bmatrix}\right)\\
=\col_{\alpha_0\in\{\prime,\prime\prime\}
}\Big[f(X^{0\alpha_0},X^{1\prime}\oplus
X^{1\prime\prime},\ldots,X^{k\prime}\oplus
X^{k\prime\prime})\hfill\\
\left(\row\,[Z^{1\alpha_0,\prime},Z^{1\alpha_0,\prime\prime}],\begin{bmatrix}
Z^{2\prime,\prime} &
Z^{2\prime,\prime\prime}\\
Z^{2\prime\prime,\prime} & Z^{2\prime\prime,\prime\prime}
\end{bmatrix},\ldots,\begin{bmatrix} Z^{k\prime,\prime} &
Z^{k\prime,\prime\prime}\\
Z^{k\prime\prime,\prime} & Z^{k\prime\prime,\prime\prime}
\end{bmatrix}\right)\Big]\\
=\col_{\alpha_0\in\{\prime,\prime\prime\}
}\Big[\sum_{\alpha_1\in\{\prime,\prime\prime\} }f(X^{0\alpha_0},
X^{1\alpha_1},X^{2\prime}\oplus
X^{2\prime\prime},\ldots,X^{k\prime}\oplus
X^{k\prime\prime})\hfill\\
\left(Z^{1\alpha_0,\alpha_1},
\row\,[Z^{2\alpha_1,\prime},Z^{2\alpha_1,\prime\prime}],\begin{bmatrix}
Z^{3\prime,\prime} &
Z^{3\prime,\prime\prime}\\
Z^{3\prime\prime,\prime} & Z^{3\prime\prime,\prime\prime}
\end{bmatrix},\ldots,\begin{bmatrix} Z^{k\prime,\prime} &
Z^{k\prime,\prime\prime}\\
Z^{k\prime\prime,\prime} & Z^{k\prime\prime,\prime\prime}
\end{bmatrix}\right)\Big]\\
=\col_{\alpha_0\in\{\prime,\prime\prime\}
}\Big[\sum_{\alpha_1,\alpha_2\in\{\prime,\prime\prime\}
}f(X^{0\alpha_0}, X^{1\alpha_1},X^{2\alpha_2},X^{3\prime}\oplus
X^{3\prime\prime}, \ldots,X^{k\prime}\oplus
X^{k\prime\prime})\hfill\\
\left(Z^{1\alpha_0,\alpha_1},Z^{2\alpha_1,\alpha_2},
\row[Z^{3\alpha_2,\prime},
Z^{3\alpha_2,\prime\prime}],\begin{bmatrix} Z^{4\prime,\prime} &
Z^{4\prime,\prime\prime}\\
Z^{4\prime\prime,\prime} & Z^{4\prime\prime,\prime\prime}
\end{bmatrix},\ldots,\begin{bmatrix} Z^{k\prime,\prime} &
Z^{k\prime,\prime\prime}\\
Z^{k\prime\prime,\prime} & Z^{k\prime\prime,\prime\prime}
\end{bmatrix}\right)\Big]\\
=\ldots =\col_{\alpha_0\in\{\prime,\prime\prime\}
}\Big[\sum_{\alpha_1,\ldots,\alpha_{k-1}\in\{\prime,\prime\prime\}
}f(X^{0\alpha_0},\ldots, X^{(k-1)\alpha_{k-1}},X^{k\prime}\oplus
X^{k\prime\prime})\\
\hfill(Z^{1\alpha_0,\alpha_1},\ldots,Z^{(k-1)\alpha_{k-2},\alpha_{k-1}},
\row\,[Z^{k\alpha_{k-1},\prime},
Z^{k\alpha_{k-1},\prime\prime}])\Big]\\
=\col_{\alpha_0\in\{\prime,\prime\prime\}
}\Big[\sum_{\alpha_1,\ldots,\alpha_{k-1}\in\{\prime,\prime\prime\}
}\row_{\alpha_k\in\{\prime,\prime\prime\}}
[f(X^{0\alpha_0},\ldots,
X^{k\alpha_k})(Z^{1\alpha_0,\alpha_1},\ldots,Z^{k\alpha_{k-1},\alpha_k})]
\Big]\\
=\begin{bmatrix} f^{\prime,\prime} &
f^{\prime,\prime\prime}\\
f^{\prime\prime,\prime} & f^{\prime\prime,\prime\prime}
\end{bmatrix},
\end{multline*}
where $f^{\alpha\beta}$ are as in \eqref{eq:f_entries}.

Conversely, if \eqref{eq:dirsums_k} holds, then each of
\eqref{eq:1X^0}--\eqref{eq:1X^k} is obtained as a special case
when all $n_j^{\prime\prime}$'s but one equal $0$.

(2). Suppose that \eqref{eq:2X^0}--\eqref{eq:2X^k} hold. Then
\begin{multline*}
f(S_0X^0S_0^{-1},\ldots,S_kX^kS_k^{-1})(S_0Z^1S_1^{-1},\ldots,S_{k-1}Z^kS_k^{-1})\\
=S_0f(X^0,S_1X^1S_1^{-1},\ldots,S_kX^kS_k^{-1})
(Z^1S_1^{-1},S_1Z^2S_2^{-1},\ldots,S_{k-1}Z^kS_k^{-1})\\
=S_0f(X^0,X^1,S_2X^2S_2^{-1},\ldots,S_kX^kS_k^{-1})
(Z^1,Z^2S_2^{-1},S_2Z^3S_3^{-1},\ldots,S_{k-1}Z^kS_k^{-1})\\
=\ldots =S_0f(X^0,\ldots,X^{k-1},S_kX^kS_k^{-1})(Z^1,\ldots, Z^{k-1},Z^kS_k^{-1})\\
=S_0f(X^0,\ldots,X^k)(Z^1,\ldots, Z^k)S_k^{-1}.
\end{multline*}

Conversely, if \eqref{eq:sim_k} holds, then each of
\eqref{eq:2X^0}--\eqref{eq:2X^k} is obtained as a special case
when $S_j=I_{n_j}$ for all $j$'s but one.

(3). Suppose that \eqref{eq:3X^0}--\eqref{eq:3X^k} hold, and
$T_jX^j=\tilde{X}^jT_j$, $j=0,\ldots,k$. Then
\begin{multline*}
T_0f(X^0,\ldots,X^k)(Z^1T_1,\ldots,Z^kT_k)\\
=f(\tilde{X}^0,X^1,\ldots,X^k)(T_0Z^1T_1,Z^2T_2,\ldots,Z^kT_k)\\
=f(\tilde{X}^0,\tilde{X}^1,X^2,\ldots,X^k)
(T_0Z^1,T_1Z^2T_2,Z^3T_3,\ldots,Z^kT_k)
\end{multline*}
\begin{multline*}
=\ldots =f(\tilde{X}^0,\ldots,\tilde{X}^{k-1},X^k)
(T_0Z^1,\ldots,T_{k-2}Z^{k-1},T_{k-1}Z^kT_k)\\
=f(\tilde{X}^0,\ldots,\tilde{X}^{k})(T_0Z^1,\ldots,T_{k-1}Z^k)T_k.\\
\end{multline*}

Conversely, if \eqref{eq:int_k} holds, then each of
\eqref{eq:3X^0}--\eqref{eq:3X^k} is obtained as a special case
when $\tilde{n}_j=n_j$ and $T_j=I_{n_j}$ for all $j$'s but one.
\end{proof}

Formulae \eqref{eq:dirsums_k}--\eqref{eq:f_entries} admit a
generalization to the case of direct sums of more than two
summands as follows. Let
$$X^j=\bigoplus_{\alpha=1}^{m_j}X^{j\alpha},\quad
Z^j=[Z^{j\alpha,\beta}]_{\alpha=1,\ldots,m_{j-1},\,\beta=1,\ldots,m_j},$$
where
 $n_j^\alpha\in\mathbb{Z}_+$, $X^{j\alpha}\in\Omega^{(j)}_{n_j^\alpha}$ for $j=0,\ldots,k$,
$\alpha=1,\ldots,m_j$;
$Z^{j\alpha,\beta}\in\rmat{\module{N}_j}{n_{j-1}^\alpha}{n_j^\beta}$
for $j=1,\ldots,k$,
$\alpha=1,\ldots,m_{j-1},\,\beta=1,\ldots,m_j$. Then
\begin{equation}\label{eq:dirsums_k-mult}
f(X^0,\ldots,X^k)(Z^1,\ldots,Z^k)=[f^{\alpha,\beta}]_{\alpha=1,\ldots,m_{0},\,\beta=1,\ldots,m_k},
\end{equation}
where
\begin{equation}\label{eq:f_entries-mult}
f^{\alpha,\beta}=\sum_{
  \alpha_j=1,\ldots,m_j\colon
  \alpha_0=\alpha, \alpha_k=\beta}
f(X^{0\alpha_0},\ldots,X^{k\alpha_k})
(Z^{1\alpha_0,\alpha_1},\ldots,Z^{k\alpha_{k-1},\alpha_k}).
\end{equation}
Here the block entry $f^{\alpha,\beta}$ is void if either
$n_0^\alpha$ or $n_k^\beta$ is $0$, and a summand in
\eqref{eq:f_entries-mult} is $0$ if at least one of
$n_j^{\alpha_j}$, $j=1,\ldots,k-1$, is $0$. The proof is similar
to the proof of Proposition \ref{prop:joint_k}(1).

The case where for each $j$ the diagonal blocks $X^{j\alpha}$ are
all equal will be of special importance (see Chapter
\ref{sec:TT}). Some notations are in order. For
$Z^j\in\rmat{\module{N}_j}{n_{j-1}}{n_j}$, $n_j=m_js_j$, $j=1,
\ldots, k$, we define \index{{$Z^1{}_{s_0,s_2}\odot_{s_1}Z^2$}}
$$Z^1{}_{s_0,s_2}\!\!\odot_{s_1}\cdots\,{}_{s_{k-2},s_k}\!\!\odot_{s_{k-1}}Z^k\in
\rmat{\left(\rmat{\module{N}_1}{s_0}{s_1}\otimes\cdots\otimes\rmat{\module{N}_k}{s_{k-1}}{s_k}\right)}{m_0}{m_k}$$
 viewing $Z^j$ as a
$m_{j-1}\times m_j$ matrix over
 $\rmat{\module{N}_j}{s_{j-1}}{s_j}$ and applying the standard rules of matrix
 multiplication, i.e.,
\begin{equation}\label{eq:tensa_prod}
(Z^1{}_{s_0,s_2}\!\!\odot_{s_1}\cdots\,{}_{s_{k-2},s_k}\!\!\odot_{s_{k-1}}Z^k)^{\alpha,\beta}=\sum_{
  \alpha_j=1,\ldots,m_j\colon
  \alpha_0=\alpha, \alpha_k=\beta}
Z^{1\alpha_0,\alpha_1}\otimes\cdots\otimes
Z^{k\alpha_{k-1},\alpha_k},
\end{equation}
where
$Z^j=[Z^{j\alpha,\beta}]_{\alpha=1,\ldots,m_{j-1},\,\beta=1,\ldots,m_j}$.
This is an iteration of a product of matrices over two modules
that are endowed with a product operation into a third module, as
in \ref{subsub:prod}. In case two factors have square blocks
(necessarily of the same size), we omit the left subscript
indices, i.e., we write $Z^1\odot_sZ^2$ \index{$Z^1\odot_sZ^2$}
instead of $Z^1{}_{s,s}\!\!\odot_{s}Z^2$.
  For $s=1$ we omit the right subscript,
i.e., we write $Z^1{}_{s_0,s_2}\!\!\odot Z^2$
\index{$Z^1{}_{s_0,s_2}\odot Z^2$} instead of
$Z^1{}_{s_0,s_2}\!\!\odot_1 Z^2$, and we write
$Z^1\odot\,\cdots\,\odot Z^k$ \index{$Z^1\odot Z^2$} instead of
$Z^1\odot_1\cdots\,\odot_1Z^k$; see, e.g., \cite[Page 86]{Pi} or
\cite[Page 240]{Pa}.
\begin{prop}\label{prop:diag_tensa_k}
Let
$f\in\tclass{k}(\Omega^{(0)},\ldots,\Omega^{(k)};\ncspacej{\module{N}}{0},\ldots,\ncspacej{\module{N}}{k})$
and $n_j=m_js_j$, $j=0,\ldots,k$.
\begin{enumerate}
\item For $Y^j\in\Omega^{(j)}_{s_j}$,
$X^j=\bigoplus_{\alpha=1}^{m_j}Y^j$, $j=0, \ldots, k$, and for
$Z^j\in\rmat{\module{N}_j}{n_{j-1}}{n_j}$, $j=1, \ldots, k$,
\begin{equation}\label{eq:diag_tensa_k}
f(X^0,\ldots,X^k)(Z^1,\ldots,Z^k)=Z^1{}_{s_0,s_2}\!\!\odot_{s_1}\cdots\,{}_{s_{k-2},s_k}\!\!\odot_{s_{k-1}}Z^k
f(Y^0,\ldots,Y^k).
\end{equation}
Here the linear mapping
$$f(Y^0,\ldots,Y^k)\colon\rmat{\module{N}_1}{s_0}{s_1}\otimes\cdots\otimes\rmat{\module{N}_k}{s_{k-1}}{s_k}
\longrightarrow \rmat{\module{N}_0}{s_0}{s_k}$$ is acting on the
matrix
$Z^1{}_{s_0,s_2}\!\!\odot_{s_1}\cdots\,{}_{s_{k-2},s_k}\!\!\odot_{s_{k-1}}Z^k$
entrywise. \item For $Y^j\in\Omega^{(j)}_{s_j}$,
$X^j=\bigoplus_{\alpha=1}^{m_j}Y^j$, $j=0, \ldots, k-1$,
$X^k\in\Omega^{(k)}_{n_k}$, and for
$Z^j\in\rmat{\module{N}_j}{n_{j-1}}{n_j}$, $j=1, \ldots, k$,
\begin{equation}\label{eq:diag_tensa_k'}
f(X^0,\ldots,X^k)(Z^1,\ldots,Z^k)=Z^1{}_{s_0,s_2}\!\!\odot_{s_1}\cdots\,{}_{s_{k-2},s_k}\!\!\odot_{s_{k-1}}Z^k
f(Y^0,\ldots,Y^{k-1},X^k).
\end{equation}
Here the linear mapping
\begin{multline*}
f(Y^0,\ldots,Y^{k-1},X^k)\colon\rmat{\module{N}_1}{s_0}{s_1}
\otimes\cdots\otimes\rmat{(\module{N}_{k-1})}{s_{k-2}}{s_{k-1}}\otimes
\rmat{\module{N}_k}{s_{k-1}}{m_ks_k}\\ \cong\,
\rmat{\left(\rmat{\module{N}_1}{s_0}{s_1}\otimes\cdots\otimes\rmat{\module{N}_k}{s_{k-1}}{s_k}\right)}{1}{m_k}
\longrightarrow
\rmat{\module{N}_0}{s_0}{m_ks_k}\,\cong\,\rmat{(\rmat{\module{N}_0}{s_0}{s_k})}{1}{m_k}
\end{multline*}
is acting on the rows of the matrix
$Z^1{}_{s_0,s_2}\!\!\odot_{s_1}\cdots\,{}_{s_{k-2},s_k}\!\!\odot_{s_{k-1}}Z^k$;
i.e., $f(Y^0,\ldots,Y^{k-1},X^k)$ is a $m_k\times m_k$ matrix of
linear mappings
$$\rmat{\module{N}_1}{s_0}{s_1}\otimes\cdots\otimes\rmat{\module{N}_k}{s_{k-1}}{s_k}\longrightarrow
\rmat{\module{N}_0}{s_0}{s_k}$$ that multiplies the matrix
$Z^1{}_{s_0,s_2}\!\!\odot_{s_1}\cdots\,{}_{s_{k-2},s_k}\!\!\odot_{s_{k-1}}Z^k$
on the right.
\end{enumerate}
\end{prop}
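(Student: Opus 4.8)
The plan is to derive both formulae directly from the several‑summand direct sum formula \eqref{eq:dirsums_k-mult}--\eqref{eq:f_entries-mult} (the multi‑block analogue of Proposition \ref{prop:joint_k}(1)), by specializing the diagonal blocks in each slot to repetitions of the given matrix $Y^j$ and then invoking the fact that, by definition, the value of a nc function of order $k$ at a fixed point is an $\ring$‑linear map on the tensor product $\rmat{\module{N}_1}{s_0}{s_1}\otimes\cdots\otimes\rmat{\module{N}_k}{s_{k-1}}{s_k}$, together with the definition \eqref{eq:tensa_prod} of the $\odot$‑product. No fresh evaluation of $f$ on block matrices is needed.

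For part (1), I would apply \eqref{eq:dirsums_k-mult}--\eqref{eq:f_entries-mult} with $m_j$ diagonal blocks in the $j$‑th slot, each of size $s_j$ and each equal to $Y^j$, so that $n_j=m_js_j$ and $X^j=\bigoplus_{\alpha=1}^{m_j}Y^j$ for $j=0,\ldots,k$. Every diagonal block $X^{j\alpha_j}$ occurring in \eqref{eq:f_entries-mult} is then literally $Y^j$, so each summand there equals $f(Y^0,\ldots,Y^k)(Z^{1\alpha_0,\alpha_1},\ldots,Z^{k\alpha_{k-1},\alpha_k})$; since $f(Y^0,\ldots,Y^k)$ is a linear map on $\rmat{\module{N}_1}{s_0}{s_1}\otimes\cdots\otimes\rmat{\module{N}_k}{s_{k-1}}{s_k}$, the sum over $\alpha_1,\ldots,\alpha_{k-1}$ (with $\alpha_0=\alpha$, $\alpha_k=\beta$ fixed) may be pulled inside it. The resulting argument is $\sum_{\alpha_0=\alpha,\,\alpha_k=\beta}Z^{1\alpha_0,\alpha_1}\otimes\cdots\otimes Z^{k\alpha_{k-1},\alpha_k}$, which by \eqref{eq:tensa_prod} is exactly the $(\alpha,\beta)$ entry of $Z^1{}_{s_0,s_2}\!\!\odot_{s_1}\cdots\,{}_{s_{k-2},s_k}\!\!\odot_{s_{k-1}}Z^k$. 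Hence the $(\alpha,\beta)$ block of $f(X^0,\ldots,X^k)(Z^1,\ldots,Z^k)$ is $f(Y^0,\ldots,Y^k)$ applied to that entry, i.e.\ \eqref{eq:diag_tensa_k} holds with the entrywise action as stated.

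For part (2), I would run the same computation, but take $m_k=1$ in \eqref{eq:dirsums_k-mult}, i.e.\ leave the last argument undecomposed (a legitimate special case, with $X^k\in\Omega^{(k)}_{n_k}$ arbitrary of size $n_k=m_ks_k$), while keeping $X^j=\bigoplus_{\alpha=1}^{m_j}Y^j$ of size $n_j=m_js_j$ for $j=0,\ldots,k-1$. Then \eqref{eq:dirsums_k-mult}--\eqref{eq:f_entries-mult} express $f(X^0,\ldots,X^k)(Z^1,\ldots,Z^k)$ as the block column with entries $f^{\alpha,1}=\sum_{\alpha_1,\ldots,\alpha_{k-1}}f(Y^0,\ldots,Y^{k-1},X^k)(Z^{1\alpha,\alpha_1},\ldots,Z^{(k-1)\alpha_{k-2},\alpha_{k-1}},Z^{k\alpha_{k-1},1})$, where $Z^{k\alpha_{k-1},1}$ is the $\alpha_{k-1}$‑th block row of $Z^k$ of full width $n_k$. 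Pulling the sum inside the linear map $f(Y^0,\ldots,Y^{k-1},X^k)$ on $\rmat{\module{N}_1}{s_0}{s_1}\otimes\cdots\otimes\rmat{(\module{N}_{k-1})}{s_{k-2}}{s_{k-1}}\otimes\rmat{\module{N}_k}{s_{k-1}}{n_k}$, and using the identification $\rmat{\module{N}_k}{s_{k-1}}{m_ks_k}\cong\rmat{(\rmat{\module{N}_k}{s_{k-1}}{s_k})}{1}{m_k}$ which turns the $\alpha_{k-1}$‑th block row of $Z^k$ into the row $[Z^{k\alpha_{k-1},1},\ldots,Z^{k\alpha_{k-1},m_k}]$ of $s_{k-1}\times s_k$ blocks, one checks via \eqref{eq:tensa_prod} that the argument becomes precisely the $\alpha$‑th row of $Z^1{}_{s_0,s_2}\!\!\odot_{s_1}\cdots\,{}_{s_{k-2},s_k}\!\!\odot_{s_{k-1}}Z^k$. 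This is the rowwise action asserted in \eqref{eq:diag_tensa_k'}; the equivalent description as right multiplication by an $m_k\times m_k$ matrix of linear mappings then follows from the standard block‑matrix bookkeeping.

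The argument carries no genuine difficulty beyond these combinatorial identifications. The one point I would treat with care is the bookkeeping in part (2): verifying that regrouping the columns of the last tensor factor into $m_k$ blocks of width $s_k$ indeed converts the linear map on $\rmat{\module{N}_k}{s_{k-1}}{m_ks_k}$ into "acting on the rows'' of $Z^1{}_{s_0,s_2}\!\!\odot_{s_1}\cdots\,{}_{s_{k-2},s_k}\!\!\odot_{s_{k-1}}Z^k$, and in particular that the index $\alpha_k$ in \eqref{eq:f_entries-mult} lines up with the column‑block index produced by this identification. Once the correspondence in \eqref{eq:tensa_prod} is tracked explicitly, this is routine.
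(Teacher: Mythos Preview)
Your proposal is correct and follows exactly the approach of the paper, which simply notes that both parts follow immediately from \eqref{eq:dirsums_k-mult}--\eqref{eq:f_entries-mult}. Your elaboration of the bookkeeping, including the rowwise interpretation in part (2), is precisely the unpacking implicit in that one-line proof.
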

\begin{proof}
Follows immediately from
\eqref{eq:dirsums_k-mult}--\eqref{eq:f_entries-mult}.
\end{proof}

\begin{rem}\label{rem:natur_map}
We notice that there is a natural mapping
\begin{multline}\label{eq:natural}
\mat{\module{N}_0}{n_0}\otimes\mat{\module{N}_1^*}{n_1}
\otimes\cdots\otimes\mat{\module{N}_k^*}{n_k}\\
\longrightarrow\hom_\ring\left(\rmat{\module{N}_1}{n_0}{n_1}
\otimes\cdots\otimes\rmat{\module{N}_k}{n_{k-1}}{n_k},
\rmat{\module{N}_0}{n_0}{n_k}\right)
\end{multline}
for $k$ a positive integer, defined on an elementary tensor
$Y^0\otimes Y^1\otimes\cdots\otimes Y^k$ by
\begin{equation}\label{eq:natur_map}
(Z^1,\ldots,Z^k)\longmapsto Y^0(Z^1Y^1)\cdots (Z^kY^k),
\end{equation}
with $Z^jY^j\in\rmat{\ring}{n_{j-1}}{n_j}$ for
$Z^j\in\rmat{\module{N}_j}{n_{j-1}}{n_j}$, $j=1,\ldots,k$.
 If the modules
$\module{N}_1$, \ldots, $\module{N}_k$ are free and of finite rank
 over $\ring$ and the module $\module{N}_0$ is free (in particular,
 when $\ring$ is a field and $\module{N}_1$, \ldots, $\module{N}_k$ are finite-dimensional
 vector spaces),  the natural mapping
 \eqref{eq:natural}--\eqref{eq:natur_map} is easily seen to be an
 isomorphism.
\end{rem}
\begin{rem}\label{rem:tensor_values}
In the case where the natural mapping
\eqref{eq:natural}--\eqref{eq:natur_map} is an isomorphism, we can
view a nc function of
 order $k$ as a function $f$ on $\Omega^{(0)}\times\cdots\times\Omega^{(k)}$ with
\begin{equation}\label{eq:tensor_values}
 f(\Omega^{(0)}_{n_0},\ldots,\Omega^{(k)}_{n_k})
 \subseteq\mat{\module{N}_0}{n_0}\otimes\mat{\module{N}_1^*}{n_1}
 \otimes\cdots\otimes\mat{\module{N}_k^*}{n_k}.
\end{equation}
Direct sums and similarities on $\mat{\module{N}_0}{n_0}$,
$\mat{\module{N}_1^*}{n_1}$, \ldots, $\mat{\module{N}_k^*}{n_k}$
induce $i$-th factor direct sums and similarities on
$\mat{\module{N}_0}{n_0}\otimes\mat{\module{N}_1^*}{n_1}\otimes\cdots\otimes\mat{\module{N}_k^*}{n_k}$
for the corresponding $i=0,\ldots, k$. With this interpretation,
 conditions \eqref{eq:1X^0}--\eqref{eq:1X^k} and
\eqref{eq:2X^0}--\eqref{eq:2X^k} mean that  direct sums and
similarities in each argument of $f$ result in direct sums and
similarities in the corresponding factor in the value of $f$. This
explains the form of the conditions of respecting direct sums and
similarities in the definition of nc functions of order $k$ and
also puts the cases $k=0$ and $k>0$ on an equal footing.
\end{rem}
\begin{rem}\label{rem:tensor_prod}
There is also a natural mapping
\begin{multline}\label{eq:tensor_prod}
\tclass{k}(\Omega^{(0)},\ldots,\Omega^{(k)};\ncspacej{\module{N}}{0},\ldots,\ncspacej{\module{N}}{k})\\
\otimes
\tclass{\ell}(\Omega^{(k+1)},\ldots,\Omega^{(k+\ell+1)};\ncspacej{\module{L}^*}{0},\ncspacej{\module{L}}{1},
\ldots,\ncspacej{\module{L}}{\ell})\\
\longrightarrow\tclass{k+\ell
+1}(\Omega^{(0)},\ldots,\Omega^{(k+\ell+1)};\ncspacej{\module{N}}{0},\ldots,\ncspacej{\module{N}}{k},
\ncspacej{\module{L}}{0},\ldots,\ncspacej{\module{L}}{\ell})
\end{multline}
defined on elementary tensors as
$$f\otimes g\longmapsto h,$$
where
\begin{equation}\label{eq:prod_tensor}
h(X^0,\ldots,X^k,U^0,\ldots,U^\ell )=f(X^0,\ldots,X^k)\otimes
g(U^0,\ldots,U^\ell),
\end{equation}
interpreting the values of higher order nc functions as elements
of tensor products (see Remark \ref{rem:tensor_values}), or
\begin{multline}\label{eq:prod_fun}
h(X^0,\ldots,X^k,U^0,\ldots,U^\ell
)(Z^1,\ldots,Z^k,W^0,W^1,\ldots,
W^\ell )\\
=f(X^0,\ldots,X^k)(Z^1,\ldots,Z^k)\Big(W^0 g(U^0,\ldots,U^\ell
)(W^1,\ldots, W^\ell)\Big),
\end{multline}
interpreting the values of higher order nc functions as
multilinear mappings. Here $X^0\in\Omega^{(0)}_{n_0}$, \ldots,
$X^k\in\Omega^{(k)}_{n_k}$, $U^0\in\Omega^{(k+1)}_{m_0}$, \ldots,
$U^\ell\in\Omega^{(k+\ell+1)}_{m_\ell}$,
$Z^1\in\rmat{\module{N}_1}{n_0}{n_1}$, \ldots,
$Z^k\in\rmat{\module{N}_k}{n_{k-1}}{n_k}$,
$W^0\in\rmat{\module{L}_0}{n_k}{m_0}$,
$W^1\in\rmat{\module{L}_1}{m_0}{m_1}$, \ldots,
$W^\ell\in\rmat{\module{L}_\ell}{m_{\ell -1}}{m_\ell}$.

Iterating this construction, we obtain a natural mapping
\begin{multline}\label{eq:natural_fun}
\tclass{0}(\Omega^{(0)};\ncspacej{\module{N}}{0})\otimes\tclass{0}(\Omega^{(1)};\ncspacej{\module{N}^*}{1})
\otimes\cdots\otimes
\tclass{0}(\Omega^{(k)};\ncspacej{\module{N}^*}{k})\\
\hfill\longrightarrow\tclass{k}(\Omega^{(0)},\ldots,\Omega^{(k)};\ncspacej{\module{N}}{0},\ldots,\ncspacej{\module{N}}{k}),
\end{multline}
defined on elementary tensors as
$$f^0\otimes \cdots\otimes f^k\longmapsto h,$$
where
\begin{equation}\label{eq:natur_map_fun_tensor}
h(X^0,X^1,\ldots,X^k)=f^0(X^0)\otimes f^1(X^1)\otimes\cdots\otimes
f^k(X^k),
\end{equation}
interpreting the values of higher order nc functions as elements
of tensor products, or
\begin{equation}\label{eq:natur_map_fun_forms}
h(X^0,X^1,\ldots,X^k)(Z^1,\ldots,Z^k)
=f^0(X^0)\Big(Z^1f^1(X^1)\Big)\cdots\Big(Z^kf^k(X^k)\Big),
\end{equation}
interpreting the values of higher order nc functions as
multilinear mappings. Here $X^0\in\Omega^{(0)}_{n_0}$, \ldots,
$X^k\in\Omega^{(k)}_{n_k}$, $Z^1\in\rmat{\module{N}_1}{n_0}{n_1}$,
\ldots, $Z^k\in\rmat{\module{N}_k}{n_{k-1}}{n_k}$.

It is clear that the natural mappings
\eqref{eq:tensor_prod}\&\eqref{eq:prod_tensor} (or
\eqref{eq:tensor_prod}\&\eqref{eq:prod_fun}) and, in particular,
\eqref{eq:natural_fun}\&\eqref{eq:natur_map_fun_tensor} (or
\eqref{eq:natural_fun}\&\eqref{eq:natur_map_fun_forms}) are
embeddings; we will sometimes abuse the notation writing
$h=f\otimes g$ and $h=f^0\otimes\cdots\otimes f^k$ for
\eqref{eq:prod_tensor} (or \eqref{eq:prod_fun}) and
\eqref{eq:natur_map_fun_tensor} (or
\eqref{eq:natur_map_fun_forms}), respectively.\label{EXTERMINATE}
\end{rem}

\section{Higher order nc difference-differential
operators}\label{subsec:difdif-k} Proposition \ref{prop:Rproper}
tells exactly that, for $f\in\tclass{0}$, one has
$\Delta_Rf\in\tclass{1}$. We will extend $\Delta_R$ to an operator
from $\tclass{k}$ to $\tclass{k+1}$ for all $k$. Similarly to the
case $k=0$, it will be done by evaluating a nc function of order
$k$ ($>0$) on a $(k+1)$-tuple of square matrices with one of the
arguments block upper triangular. We start with the following
analogue of Proposition \ref{prop:delta_r}. It will be shown
later, using the tensor product interpretation of the values of
higher order nc functions, that this is indeed a natural analogue.
\begin{prop}\label{prop:delta_r-k}
Let $\module{M}_j$, $\module{N}_j$ be modules, and let
$\Omega^{(j)}\subseteq\ncspacej{\module{M}}{j}$ be a nc set,
$j=0,\ldots,k$. Let
$f\in\tclass{k}(\Omega^{(0)},\ldots,\Omega^{(k)};\ncspacej{\module{N}}{0},\ldots,\ncspacej{\module{N}}{k})$.
Let $X^0\in\Omega^{(0)}_{n_0}$, \ldots,
$X^{k-1}\in\Omega^{(k-1)}_{n_{k-1}}$,
$X^{k\prime}\in\Omega^{(k)}_{n_{k}^{\prime}}$,
$X^{k\prime\prime}\in\Omega^{(k)}_{n_{k}^{\prime\prime}}$,
$Z^1\in\rmat{\module{N}_1}{n_0}{n_1}$, \ldots,
$Z^{k-1}\in\rmat{(\module{N}_{k-1})}{n_{k-2}}{n_{k-1}}$,
$Z^{k\prime}\in\rmat{\module{N}_k}{n_{k-1}}{n_{k}^\prime}$,
$Z^{k\prime\prime}\in\rmat{\module{N}_k}{n_{k-1}}{n_{k}^{\prime\prime}}$.
Let $Z\in\rmat{\module{M}_k}{n_k^\prime}{n_k^{\prime\prime}}$ be
such that $\begin{bmatrix} X^{k\prime} & Z\\
0 & X^{k\prime\prime}
\end{bmatrix}\in\Omega^{(k)}_{n_k^\prime+n_k^{\prime\prime}}$. Then
\begin{multline}\label{eq:uptr-k}
 f\left(X^0,\ldots,X^{k-1},\begin{bmatrix} X^{k\prime} & Z\\
0 & X^{k\prime\prime}
\end{bmatrix}\right)(Z^1,\ldots,Z^{k-1},\row
\,[Z^{k\prime},Z^{k\prime\prime}])\\
=\row\,\Big[f(X^0,\ldots,X^{k-1},X^{k\prime})(Z^1,\ldots,Z^{k-1},Z^{k\prime}),\\
\hfill \Delta_Rf(X^0,\ldots,X^{k-1},X^{k\prime},X^{k\prime\prime})
 (Z^1,\ldots,Z^{k-1},Z^{k\prime},Z)\\
  +
f(X^0,\ldots,X^{k-1},X^{k\prime\prime})
(Z^1,\ldots,Z^{k-1},Z^{k\prime\prime})\Big].
\end{multline}
Here $\Delta_Rf=(X^0,\ldots,X^{k-1},X^{k\prime},X^{k\prime\prime})
(Z^1,\ldots,Z^{k-1},Z^{k\prime},Z)$ is determined unique\-ly by
\eqref{eq:uptr-k}, is independent of $Z^{k\prime\prime}$, and has
the following property. If
$r\in\ring$ is such that $\begin{bmatrix} X^{k\prime} & rZ\\
0 & X^{k\prime\prime}
\end{bmatrix}\in\Omega^{(k)}_{n_k^\prime+n_k^{\prime\prime}}$, then
\begin{multline}\label{eq:homog_r-k}
\Delta_Rf(X^0,\ldots,X^{k-1},X^{k\prime},X^{k\prime\prime})
(Z^1,\ldots,Z^{k-1},Z^{k\prime},rZ)\\
= r\Delta_Rf(X^0,\ldots,X^{k-1},X^{k\prime},X^{k\prime\prime})
(Z^1,\ldots,Z^{k-1},Z^{k\prime},Z).
\end{multline}
For $k=1$, the matrices $Z^1,\ldots,Z^{k-1}$ do not show up in
\eqref{eq:uptr-k} and \eqref{eq:homog_r-k}.
\end{prop}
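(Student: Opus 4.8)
The plan is to mimic the proof of Proposition~\ref{prop:delta_r} (the $k=0$ case), now applying the higher-order intertwining property \eqref{eq:3X^k} from Proposition~\ref{prop:simint_k} instead of the plain intertwining condition \eqref{eq:simint}. First I would set
\begin{equation*}
f\left(X^0,\ldots,X^{k-1},\begin{bmatrix} X^{k\prime} & Z\\ 0 & X^{k\prime\prime}\end{bmatrix}\right)(Z^1,\ldots,Z^{k-1},\row\,[Z^{k\prime},Z^{k\prime\prime}])=\row\,[P,\,Q],
\end{equation*}
and apply \eqref{eq:3X^k} with the two natural intertwinings of the block upper triangular matrix $\left[\begin{smallmatrix} X^{k\prime} & Z\\ 0 & X^{k\prime\prime}\end{smallmatrix}\right]$ by the coordinate injections/projections $\col\,[I_{n_k^\prime},0]$ and $\row\,[0,I_{n_k^{\prime\prime}}]$. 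Namely, $\left[\begin{smallmatrix} X^{k\prime} & Z\\ 0 & X^{k\prime\prime}\end{smallmatrix}\right]\col\,[I_{n_k^\prime},0]=\col\,[I_{n_k^\prime},0]\,X^{k\prime}$ forces, via \eqref{eq:3X^k} and linearity, $P=f(X^0,\ldots,X^{k-1},X^{k\prime})(Z^1,\ldots,Z^{k-1},Z^{k\prime})$; this identifies the first row entry. The second row entry $Q$ is then \emph{defined} to be the ``$\Delta_R$'' term plus $f(X^0,\ldots,X^{k-1},X^{k\prime\prime})(Z^1,\ldots,Z^{k-1},Z^{k\prime\prime})$, which forces the definition of $\Delta_Rf(X^0,\ldots,X^{k-1},X^{k\prime},X^{k\prime\prime})(Z^1,\ldots,Z^{k-1},Z^{k\prime},Z)$; so \eqref{eq:uptr-k} holds and $\Delta_Rf$ is determined uniquely by it.

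Next I would show $\Delta_Rf$ is independent of $Z^{k\prime\prime}$. This follows from the intertwining $X^{k\prime\prime}\row\,[0,I_{n_k^{\prime\prime}}]=\row\,[0,I_{n_k^{\prime\prime}}]\left[\begin{smallmatrix} X^{k\prime} & Z\\ 0 & X^{k\prime\prime}\end{smallmatrix}\right]$ together with \eqref{eq:3X^k}: applying $f(X^0,\ldots,X^{k-1},X^{k\prime\prime})(Z^1,\ldots,Z^{k-1},-)$ on the right by $\row\,[0,I_{n_k^{\prime\prime}}]$, and comparing with \eqref{eq:uptr-k} after right-multiplying by $\row\,[0,I_{n_k^{\prime\prime}}]$, isolates $Q=\Delta_R\text{-term}+f(\ldots,X^{k\prime\prime})(\ldots,Z^{k\prime\prime})$ where $Q$ does not involve $Z^{k\prime}$ in its ``$X^{k\prime\prime}$-part''—more precisely, the value $Q$ is obtained by evaluating $f$ with the $\row$ of the $Z$'s, but since $f(X^0,\ldots,X^{k-1},X^{k\prime\prime})(\ldots,Z^{k\prime\prime})$ is already pinned down, the remaining ``$\Delta_R$'' term is what's left, and repeating the computation with a different $Z^{k\prime\prime}$ shows it is unchanged. (One can also observe this directly: the left side of \eqref{eq:uptr-k} is linear in $Z^{k\prime\prime}$ only through the second column, and the first-row/$X^{k\prime}$-part does not see it; the clean bookkeeping argument is exactly as in the $k=0$ case.)

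Finally, for the homogeneity \eqref{eq:homog_r-k}, I would use the similarity/intertwining
\begin{equation*}
\begin{bmatrix} rI_{n_k^\prime} & 0\\ 0 & I_{n_k^{\prime\prime}}\end{bmatrix}\begin{bmatrix} X^{k\prime} & Z\\ 0 & X^{k\prime\prime}\end{bmatrix}=\begin{bmatrix} X^{k\prime} & rZ\\ 0 & X^{k\prime\prime}\end{bmatrix}\begin{bmatrix} rI_{n_k^\prime} & 0\\ 0 & I_{n_k^{\prime\prime}}\end{bmatrix},
\end{equation*}
apply \eqref{eq:3X^k} to $f$ in its last argument with $T_k=\left[\begin{smallmatrix} rI_{n_k^\prime} & 0\\ 0 & I_{n_k^{\prime\prime}}\end{smallmatrix}\right]$ (noting that $\row\,[Z^{k\prime},Z^{k\prime\prime}]\,T_k=\row\,[rZ^{k\prime},Z^{k\prime\prime}]$), then substitute \eqref{eq:uptr-k} on both sides and read off the appropriate block entry of the resulting row, exactly as in the last paragraph of the proof of Proposition~\ref{prop:delta_r}. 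I expect the main obstacle to be purely notational: keeping careful track of which $Z^{k\prime}$-argument appears where (in the first-row term versus inside the $\Delta_R$ term) so that the linearity cancellations are applied to the correct slot; once the bookkeeping is organized as above, each step is a routine invocation of \eqref{eq:3X^k} and linearity. The case $k=1$ requires no separate argument since the $Z^1,\ldots,Z^{k-1}$ simply do not appear, and the equalities above degenerate correctly.
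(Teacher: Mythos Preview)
Your plan is correct and is essentially the paper's own proof: the same two intertwinings $\left[\begin{smallmatrix} X^{k\prime} & Z\\ 0 & X^{k\prime\prime}\end{smallmatrix}\right]\col[I,0]=\col[I,0]X^{k\prime}$ and $X^{k\prime\prime}\row[0,I]=\row[0,I]\left[\begin{smallmatrix} X^{k\prime} & Z\\ 0 & X^{k\prime\prime}\end{smallmatrix}\right]$ are fed into \eqref{eq:3X^k}, and the homogeneity step uses the same $\left[\begin{smallmatrix} rI & 0\\ 0 & I\end{smallmatrix}\right]$ intertwining. One organizational point worth adopting from the paper: rather than arguing independence of $Z^{k\prime\prime}$ separately, the second intertwining together with additivity of $f$ in its last $Z$-slot yields the closed formula
\[
\Delta_Rf(\ldots,Z^{k\prime},Z)=f\!\left(X^0,\ldots,X^{k-1},\left[\begin{smallmatrix} X^{k\prime} & Z\\ 0 & X^{k\prime\prime}\end{smallmatrix}\right]\right)\!(Z^1,\ldots,Z^{k-1},\row[Z^{k\prime},0])\left[\begin{smallmatrix}0\\ I_{n_k^{\prime\prime}}\end{smallmatrix}\right],
\]
from which independence of $Z^{k\prime\prime}$ and $\ring$-homogeneity in $Z^{k\prime}$ are immediate; the latter is needed in your last step, because comparing block entries after the $\left[\begin{smallmatrix} rI & 0\\ 0 & I\end{smallmatrix}\right]$ intertwining gives $\Delta_Rf(\ldots,Z^{k\prime},rZ)=\Delta_Rf(\ldots,rZ^{k\prime},Z)$ rather than \eqref{eq:homog_r-k} directly, so it is not quite ``exactly as in'' Proposition~\ref{prop:delta_r}.
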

\begin{proof}
Denote
$$f\left(X^0,\ldots,X^{k-1},\begin{bmatrix} X^{k\prime} & Z\\
0 & X^{k\prime\prime}
\end{bmatrix}\right)\left(Z^1,\ldots,Z^{k-1},\row
\left[Z^{k\prime},Z^{k\prime\prime}\right]\right)=:\row[A,B],$$
where $A\in\rmat{\module{N}_0}{n_0}{n_k^\prime}$,
$B\in\rmat{\module{N}_0}{n_0}{n_k^{\prime\prime}}$. We have
\begin{equation*}
\begin{bmatrix} X^{k\prime} & Z\\
0 & X^{k\prime\prime}
\end{bmatrix}\begin{bmatrix} I_{n_k^\prime}\\
0
\end{bmatrix}=\begin{bmatrix} I_{n_k^\prime}\\
0
\end{bmatrix}X^{k\prime}.
\end{equation*}
By \eqref{eq:3X^k},
\begin{equation*}
A=f(X^0,\ldots,X^{k-1},X^{k\prime})(Z^1,\ldots,Z^{k-1},Z^{k\prime}).
\end{equation*}
 We have also
\begin{equation*}
X^{k\prime\prime}\begin{bmatrix} 0 & I_{n_k^{\prime\prime}}
\end{bmatrix}=\begin{bmatrix} 0 & I_{n_k^{\prime\prime}}
\end{bmatrix}\begin{bmatrix} X^{k\prime} & Z\\
0 & X^{k\prime\prime}
\end{bmatrix}.
\end{equation*}
By \eqref{eq:3X^k} and the additivity of
$$f(X^0,\ldots,X^{k-1},X^{k\prime})(Z^1,\ldots,Z^{k-1},Z^{k})$$
as a function of $Z^k$,
\begin{multline*}
\row\,[0,f(X^0,\ldots,X^{k-1},X^{k\prime\prime})
(Z^1,\ldots,Z^{k-1},Z^{k\prime\prime})]\\
=f\left(X^0,\ldots,X^{k-1},\begin{bmatrix} X^{k\prime} & Z\\
0 & X^{k\prime\prime}\end{bmatrix}\right)(Z^1,\ldots,Z^{k-1},
\row\,[0,Z^{k\prime\prime}])\\
=\row\,[A,B]-f\left(X^0,\ldots,X^{k-1},\begin{bmatrix} X^{k\prime} & Z\\
0 &
X^{k\prime\prime}\end{bmatrix}\right)(Z^1,\ldots,Z^{k-1},\row\,[Z^{k\prime},0]),
\end{multline*}
which implies that
\begin{multline*}
B=f\left(X^0,\ldots,X^{k-1},\begin{bmatrix} X^{k\prime} & Z\\
0 & X^{k\prime\prime}\end{bmatrix}\right)(Z^1,\ldots,Z^{k-1},
\row\,[Z^{k\prime},0])\begin{bmatrix} 0\\
I_{n_k^{\prime\prime}}
\end{bmatrix}\\
+f(X^0,\ldots,X^{k-1},X^{k\prime\prime})(Z^1,\ldots,Z^{k-1},Z^{k\prime\prime}).
\end{multline*}
Therefore, we obtain \eqref{eq:uptr-k} with
\begin{multline}\label{eq:delta_r_k}
\Delta_Rf\left(X^0,\ldots,X^{k-1},X^{k\prime},X^{k\prime\prime}\right)
\left(Z^1,\ldots,Z^{k-1},Z^{k\prime},Z\right)\\
=f\left(X^0,\ldots,X^{k-1},\begin{bmatrix} X^{k\prime} & Z\\
0 &
X^{k\prime\prime}\end{bmatrix}\right)\left(Z^1,\ldots,Z^{k-1},
\row\left[Z^{k\prime},0\right]\right)\begin{bmatrix} 0\\
I_{n_k^{\prime\prime}}
\end{bmatrix}.
\end{multline}
We observe that \eqref{eq:delta_r_k} implies that for any
$r\in\ring$ one has
\begin{multline}\label{eq:delta_homog_k}
\Delta_Rf(X^0,\ldots,X^{k-1},X^{k\prime},X^{k\prime\prime})
(Z^1,\ldots,Z^{k-1},rZ^{k\prime},Z)\\
=r\Delta_Rf(X^0,\ldots,X^{k-1},X^{k\prime},X^{k\prime\prime})
(Z^1,\ldots,Z^{k-1},Z^{k\prime},Z).
\end{multline}

Next, for $r\in\ring$ such that $\begin{bmatrix} X^{k\prime} & rZ\\
0 & X^{k\prime\prime}
\end{bmatrix}\in\Omega^{(k)}_{n_k^\prime+n_k^{\prime\prime}}$  we have
\begin{equation*}
\begin{bmatrix} X^{k\prime} & rZ\\
0 & X^{k\prime\prime}
\end{bmatrix}\begin{bmatrix} rI_{n_k^\prime} & 0\\
0 & I_{n_k^{\prime\prime}}
\end{bmatrix}=\begin{bmatrix} rI_{n_k^\prime} & 0\\
0 & I_{n_k^{\prime\prime}}
\end{bmatrix}\begin{bmatrix} X^{k\prime} & Z\\
0 & X^{k\prime\prime}
\end{bmatrix}.
\end{equation*}
By \eqref{eq:3X^k},
\begin{multline*}
f\left(X^0,\ldots,X^{k-1},\begin{bmatrix} X^{k\prime} & rZ\\
0 & X^{k\prime\prime}\end{bmatrix}\right)(Z^1,\ldots,Z^{k-1},
\row\,[Z^{k\prime},Z^{k\prime\prime}])
\begin{bmatrix}
rI_{n_k^{\prime}} & 0\\
0 & I_{n_k^{\prime\prime}}
\end{bmatrix}\\
=f\left(X^0,\ldots,X^{k-1},\begin{bmatrix} X^{k\prime} & Z\\
0 & X^{k\prime\prime}\end{bmatrix}\right)(Z^1,\ldots,Z^{k-1},\row
\,[rZ^{k\prime},Z^{k\prime\prime}]).
\end{multline*}
Multiplying both parts on the right by $\begin{bmatrix}
 0\\
I_{n_k^{\prime\prime}}
\end{bmatrix}$ and subtracting
$$f(X^0,\ldots,X^{k-1},X^{k\prime\prime})(Z^1,\ldots,Z^{k-1},Z^{k\prime\prime}),$$
we obtain in virtue of \eqref{eq:uptr-k} that
\begin{multline*}
\Delta_Rf(X^0,\ldots,X^{k-1},X^{k\prime},X^{k\prime\prime})
(Z^1,\ldots,Z^{k-1},Z^{k\prime},rZ)\\
=\Delta_Rf(X^0,\ldots,X^{k-1},X^{k\prime},X^{k\prime\prime})
=(Z^1,\ldots,Z^{k-1},rZ^{k\prime},Z).
\end{multline*}
Then applying \eqref{eq:delta_homog_k} we obtain
\eqref{eq:homog_r-k}.
\end{proof}

Assume now that the nc set
$\Omega^{(k)}\subseteq\ncspacej{\module{M}}{k}$ is right
admissible, and let
$$f\in\tclass{k}(\Omega^{(0)},\ldots,\Omega^{(k)};
\ncspacej{\module{N}}{0},\ldots,\ncspacej{\module{N}}{k}).$$ Then
for every $X^0\in\Omega^{(0)}_{n_0}$, \ldots,
$X^{k-1}\in\Omega^{(k-1)}_{n_{k-1}}$,
$X^{k\prime}\in\Omega^{(k)}_{n_{k}^{\prime}}$,
$X^{k\prime\prime}\in\Omega^{(k)}_{n_{k}^{\prime\prime}}$,
$Z^1\in\rmat{\module{N}_1}{n_0}{n_1}$, \ldots,
$Z^{k-1}\in\rmat{(\module{N}_{k-1})}{n_{k-2}}{n_{k-1}}$,
$Z^{k\prime}\in\rmat{\module{N}_k}{n_{k-1}}{n_{k}^\prime}$,
$Z\in\rmat{\module{M}_k}{n_k^\prime}{n_k^{\prime\prime}}$, and for
an invertible $r\in\ring$
such that $\begin{bmatrix} X^{k\prime} & rZ\\
0 & X^{k\prime\prime}
\end{bmatrix}\in\Omega^{(k)}_{n_k^\prime+n_k^{\prime\prime}}$, we define
first
$$\Delta_Rf(X^0,\ldots,X^{k-1},X^{k\prime},X^{k\prime\prime})
(Z^1,\ldots,Z^{k-1},Z^{k\prime},rZ)$$ by \eqref{eq:uptr-k}, with
$Z$ replaced by $rZ$ (for an arbitrary choice of
$Z^{k\prime\prime}\in\rmat{\module{N}_k}{n_{k-1}}{n_{k}^{\prime\prime}}$),
and then define
\begin{multline}\label{eq:def_delta_r-k}
\Delta_Rf(X^0,\ldots,X^{k-1},X^{k\prime},X^{k\prime\prime})
(Z^1,\ldots,Z^{k-1},Z^{k\prime},Z)\\
=
r^{-1}\Delta_Rf(X^0,\ldots,X^{k-1},X^{k\prime},X^{k\prime\prime})
(Z^1,\ldots,Z^{k-1},Z^{k\prime},rZ).
\end{multline}
By Proposition \ref{prop:delta_r-k}, the right-hand side of
\eqref{eq:def_delta_r-k} is independent of $r$, hence the
left-hand side is defined unambiguously. Using Proposition
\ref{prop:higher-ncfun-ext}, we can  establish a higher order
version of \eqref{eq:delta=delta_ext},
\begin{multline}\label{eq:delta=delta_ext-k}
\Delta_Rf(X^0,\ldots,X^{k-1},X^{k\prime},X^{k\prime\prime})
(Z^1,\ldots,Z^{k-1},Z^{k\prime},Z)\\
=
\Delta_R\widetilde{f}(X^0,\ldots,X^{k-1},X^{k\prime},X^{k\prime\prime})
(Z^1,\ldots,Z^{k-1},Z^{k\prime},Z).
\end{multline}
 We have the
following analogue of Proposition \ref{prop:homog}, with
essentially the same proof.
\begin{prop}\label{prop:homog-k}
Let
$f\in\tclass{k}(\Omega^{(0)},\ldots,\Omega^{(k)};\ncspacej{\module{N}}{0},\ldots,\ncspacej{\module{N}}{k})$,
with $\Omega^{(k)}\subseteq\ncspacej{\module{M}}{k}$ a right
admissible nc set. The mapping
$$Z\longmapsto\Delta_Rf(X^0,\ldots,X^{k-1},X^{k\prime},X^{k\prime\prime})
(Z^1,\ldots,Z^{k-1},Z^{k\prime},Z)$$ is homogeneous as an operator
from $\rmat{\module{M}_k}{n_k^\prime}{n_k^{\prime\prime}}$ to
$\rmat{\module{N}_0}{n_0}{n_k^{\prime\prime}}$, i.e.,
\eqref{eq:homog_r-k} holds for every $X^0\in\Omega^{(0)}_{n_0}$,
\ldots, $X^{k-1}\in\Omega^{(k-1)}_{n_{k-1}}$,
$X^{k\prime}\in\Omega^{(k)}_{n_{k}^{\prime}}$,
$X^{k\prime\prime}\in\Omega^{(k)}_{n_{k}^{\prime\prime}}$,
$Z^1\in\rmat{\module{N}_1}{n_0}{n_1}$, \ldots,
$Z^{k-1}\in\rmat{(\module{N}_{k-1})}{n_{k-2}}{n_{k-1}}$,
$Z^{k\prime}\in\rmat{\module{N}_k}{n_{k-1}}{n_{k}^\prime}$,
$Z\in\rmat{\module{M}_k}{n_k^\prime}{n_k^{\prime\prime}}$, and
every $r\in\ring$.
\end{prop}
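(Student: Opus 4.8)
The plan is to follow verbatim the proof of Proposition~\ref{prop:homog} (the case $k=0$), replacing the appeal to Proposition~\ref{prop:delta_r} by its higher order analogue Proposition~\ref{prop:delta_r-k}. An equivalent but slightly cleaner route, in the spirit of Remark~\ref{rem:alternative}, is to pass to the similarity invariant envelope of $\Omega^{(k)}$ and the canonical extension $\widetilde{f}$ (Appendix~\ref{app}, Proposition~\ref{prop:higher-ncfun-ext}), use \eqref{eq:delta=delta_ext-k} to identify $\Delta_Rf$ with $\Delta_R\widetilde{f}$ on the original data, and then run the argument below with $r_1=r_2=1$, since by Proposition~\ref{prop:adm-env} every block triangular matrix over the envelope already lies in the envelope. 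I would present the direct version.

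Fix $X^0\in\Omega^{(0)}_{n_0}$, \ldots, $X^{k-1}\in\Omega^{(k-1)}_{n_{k-1}}$, $X^{k\prime}\in\Omega^{(k)}_{n_k^\prime}$, $X^{k\prime\prime}\in\Omega^{(k)}_{n_k^{\prime\prime}}$, matrices $Z^1,\ldots,Z^{k-1},Z^{k\prime}$ of the appropriate sizes, $Z\in\rmat{\module{M}_k}{n_k^\prime}{n_k^{\prime\prime}}$, and $r\in\ring$. For brevity write $\Phi(W):=\Delta_Rf(X^0,\ldots,X^{k-1},X^{k\prime},X^{k\prime\prime})(Z^1,\ldots,Z^{k-1},Z^{k\prime},W)$ whenever the right-hand side is defined. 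By right admissibility of $\Omega^{(k)}$ there exist invertible $r_1,r_2\in\ring$ with
\begin{equation*}
\begin{bmatrix} X^{k\prime} & r_1Z\\ 0 & X^{k\prime\prime}\end{bmatrix}\in\Omega^{(k)}_{n_k^\prime+n_k^{\prime\prime}},\qquad
\begin{bmatrix} X^{k\prime} & r_2rZ\\ 0 & X^{k\prime\prime}\end{bmatrix}\in\Omega^{(k)}_{n_k^\prime+n_k^{\prime\prime}}.
\end{equation*}
The defining formula \eqref{eq:def_delta_r-k} gives $\Phi(Z)=r_1^{-1}\Phi(r_1Z)$ and $\Phi(rZ)=r_2^{-1}\Phi(r_2rZ)$, where on the right $\Phi(r_1Z)$ and $\Phi(r_2rZ)$ are the objects produced directly by \eqref{eq:uptr-k}. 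Now apply Proposition~\ref{prop:delta_r-k} with the role of ``$Z$'' there played by $r_1Z$ (legitimate, since $\begin{bmatrix} X^{k\prime} & r_1Z\\ 0 & X^{k\prime\prime}\end{bmatrix}$ lies in $\Omega^{(k)}$) and the role of the scalar ``$r$'' played by $r_2rr_1^{-1}$; since $\begin{bmatrix} X^{k\prime} & (r_2rr_1^{-1})(r_1Z)\\ 0 & X^{k\prime\prime}\end{bmatrix}=\begin{bmatrix} X^{k\prime} & r_2rZ\\ 0 & X^{k\prime\prime}\end{bmatrix}\in\Omega^{(k)}$, formula \eqref{eq:homog_r-k} of that proposition yields $\Phi(r_2rZ)=r_2rr_1^{-1}\,\Phi(r_1Z)$ (using that $\ring$ is commutative). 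Substituting, $\Phi(rZ)=r_2^{-1}r_2rr_1^{-1}\Phi(r_1Z)=rr_1^{-1}\Phi(r_1Z)=r\,\Phi(Z)$, which is exactly \eqref{eq:homog_r-k} for the extended operator.

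There is essentially no obstacle here; the argument is pure bookkeeping with invertible scalars, as in the $k=0$ case. The only point requiring a little care is to keep straight which $\Delta_Rf$ is meant at each stage — the one defined directly by \eqref{eq:uptr-k} (to which Proposition~\ref{prop:delta_r-k} applies verbatim) versus the one extended by \eqref{eq:def_delta_r-k} — and to note that we use only homogeneity of $\Delta_Rf$ in its \emph{last} argument slot, which is precisely what Proposition~\ref{prop:delta_r-k} supplies.
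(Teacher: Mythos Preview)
Your proof is correct and follows exactly the approach the paper indicates: the paper does not spell out a proof for Proposition~\ref{prop:homog-k} but simply says it is the analogue of Proposition~\ref{prop:homog} ``with essentially the same proof,'' and your argument reproduces that proof verbatim with Proposition~\ref{prop:delta_r-k} in place of Proposition~\ref{prop:delta_r}. Your parenthetical remark about the envelope route via \eqref{eq:delta=delta_ext-k} also matches the paper's Remark~\ref{rem:alternative}.
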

We proceed with establishing an analogue of Proposition
\ref{prop:add}.
\begin{prop}\label{prop:add-k}
Let
$f\in\tclass{k}(\Omega^{(0)},\ldots,\Omega^{(k)};\ncspacej{\module{N}}{0},\ldots,\ncspacej{\module{N}}{k}),$
with $\Omega^{(k)}\subseteq\ncspacej{\module{M}}{k}$ a right
admissible nc set. The mapping
$$Z\longmapsto\Delta_Rf(X^0,\ldots,X^{k-1},X^{k\prime},X^{k\prime\prime})
(Z^1,\ldots,Z^{k-1},Z^{k\prime},Z)$$ is additive as an operator
from $\rmat{\module{M}_k}{n_k^\prime}{n_k^{\prime\prime}}$ to
$\rmat{\module{N}_0}{n_0}{n_k^{\prime\prime}}$, i.e.,
\begin{multline}\label{eq:add_r-k}
\Delta_Rf(X^0,\ldots,X^{k-1},X^{k\prime},X^{k\prime\prime})
(Z^1,\ldots,Z^{k-1},Z^{k\prime},Z_I+Z_{I\!
I})\\
=\Delta_Rf(X^0,\ldots,X^{k-1},X^{k\prime},X^{k\prime\prime})
(Z^1,\ldots,Z^{k-1},Z^{k\prime},Z_I)\\
+\Delta_Rf(X^0,\ldots,X^{k-1},X^{k\prime},X^{k\prime\prime})
(Z^1,\ldots,Z^{k-1},Z^{k\prime},Z_{I\! I})
\end{multline}
for every $X^0\in\Omega^{(0)}_{n_0}$, \ldots,
$X^{k-1}\in\Omega^{(k-1)}_{n_{k-1}}$,
$X^{k\prime}\in\Omega^{(k)}_{n_{k}^{\prime}}$,
$X^{k\prime\prime}\in\Omega^{(k)}_{n_{k}^{\prime\prime}}$,
$Z^1\in\rmat{\module{N}_1}{n_0}{n_1}$, \ldots,
$Z^{k-1}\in\rmat{(\module{N}_{k-1})}{n_{k-2}}{n_{k-1}}$,
$Z^{k\prime}\in\rmat{\module{N}_k}{n_{k-1}}{n_{k}^\prime}$, and
$Z_I,Z_{I\!
I}\in\rmat{\module{M}_k}{n_k^\prime}{n_k^{\prime\prime}}$.
\end{prop}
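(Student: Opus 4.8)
The plan is to mimic the proof of Proposition \ref{prop:add} step by step, but to carry out the $3\times 3$ block construction in the $k$-th argument of $f$ rather than in the single argument of an order-zero nc function. Exactly as in the order-zero case (Remark \ref{rem:alternative}), I would first use the higher-order identity \eqref{eq:delta=delta_ext-k} together with Proposition \ref{prop:higher-ncfun-ext} to reduce, without loss of generality, to the case in which $\Omega^{(k)}$ is similarity invariant; by Proposition \ref{prop:adm-env} this guarantees that every block triangular matrix in the $k$-th slot that I write down lies in $\Omega^{(k)}$. In particular, since $X^{k\prime}\in\Omega^{(k)}_{n_k^\prime}$ forces $X^{k\prime}\oplus X^{k\prime}\in\Omega^{(k)}_{2n_k^\prime}$, the matrix
\begin{equation*}
\widehat{X}^k:=\begin{bmatrix} X^{k\prime} & 0 & Z_I\\ 0 & X^{k\prime} & Z_{I\! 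I}\\ 0 & 0 & X^{k\prime\prime}\end{bmatrix}\in\Omega^{(k)}_{2n_k^\prime+n_k^{\prime\prime}},
\end{equation*}
and likewise $B_\bullet:=\bigl[\begin{smallmatrix} X^{k\prime} & Z_\bullet\\ 0 & X^{k\prime\prime}\end{smallmatrix}\bigr]\in\Omega^{(k)}_{n_k^\prime+n_k^{\prime\prime}}$ for $Z_\bullet$ ranging over $Z_I$, $Z_{I\! I}$, $Z_I+Z_{I\! I}$.

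Writing $\vec X=(X^0,\ldots,X^{k-1})$ and $\vec Z=(Z^1,\ldots,Z^{k-1})$ ($\vec Z$ empty when $k=1$), the main ingredient is formula \eqref{eq:delta_r_k} from the proof of Proposition \ref{prop:delta_r-k}, which for each $\bullet$ reads
\begin{equation*}
\Delta_Rf(\vec X,X^{k\prime},X^{k\prime\prime})(\vec Z,Z^{k\prime},Z_\bullet)=f(\vec X,B_\bullet)(\vec Z,\row[Z^{k\prime},0])\begin{bmatrix} 0\\ I_{n_k^{\prime\prime}}\end{bmatrix}.
\end{equation*}
Because $f\in\tclass{k}$ respects intertwinings (Proposition \ref{prop:simint_k}), I can apply \eqref{eq:3X^k} in the $k$-th slot with $T_k=T_\bullet$, where (with identity blocks of sizes $n_k^\prime,n_k^\prime,n_k^{\prime\prime}$ along the columns)
\begin{equation*}
T_I=\bigl[\begin{smallmatrix} I & 0 & 0\\ 0 & 0 & I\end{smallmatrix}\bigr],\qquad T_{I\! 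I}=\bigl[\begin{smallmatrix} 0 & I & 0\\ 0 & 0 & I\end{smallmatrix}\bigr],\qquad T_{I+I\! I}=\bigl[\begin{smallmatrix} I & I & 0\\ 0 & 0 & I\end{smallmatrix}\bigr];
\end{equation*}
a direct check gives $B_\bullet T_\bullet=T_\bullet\widehat X^k$ in all three cases. Moreover $\row[Z^{k\prime},0]\,T_I=\row[Z^{k\prime},0,0]$, $\row[Z^{k\prime},0]\,T_{I\! I}=\row[0,Z^{k\prime},0]$, $\row[Z^{k\prime},0]\,T_{I+I\! I}=\row[Z^{k\prime},Z^{k\prime},0]$, while $T_\bullet R=\col[0,I_{n_k^{\prime\prime}}]$ for the common matrix $R:=\col[0,0,I_{n_k^{\prime\prime}}]$. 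Substituting $\col[0,I_{n_k^{\prime\prime}}]=T_\bullet R$ into the displayed formula and then applying \eqref{eq:3X^k} yields
\begin{equation*}
\Delta_Rf(\vec X,X^{k\prime},X^{k\prime\prime})(\vec Z,Z^{k\prime},Z_\bullet)=f(\vec X,\widehat X^k)(\vec Z,W_\bullet)\,R,
\end{equation*}
where $W_I=\row[Z^{k\prime},0,0]$, $W_{I\! I}=\row[0,Z^{k\prime},0]$ and $W_{I+I\! I}=\row[Z^{k\prime},Z^{k\prime},0]=W_I+W_{I\! I}$.

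Finally, since $f(\vec X,\widehat X^k)$ is linear in its last matrix argument, $W_{I+I\! I}=W_I+W_{I\! I}$ gives $f(\vec X,\widehat X^k)(\vec Z,W_{I+I\! I})=f(\vec X,\widehat X^k)(\vec Z,W_I)+f(\vec X,\widehat X^k)(\vec Z,W_{I\! I})$, and multiplying on the right by $R$ produces exactly \eqref{eq:add_r-k}. I expect the only real difficulty to be bookkeeping: confirming that the reduction to a similarity-invariant $\Omega^{(k)}$ is legitimate in the higher-order setting via \eqref{eq:delta=delta_ext-k} and Proposition \ref{prop:higher-ncfun-ext}, and verifying the block intertwinings $B_\bullet T_\bullet=T_\bullet\widehat X^k$ and the two layers of right multiplications ($\row[Z^{k\prime},0]\,T_\bullet$ and $T_\bullet R$) with the correct sizes. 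Conceptually nothing new happens beyond the order-zero computation of Proposition \ref{prop:add}, since the intertwinings act only in the $k$-th slot and leave $Z^1,\ldots,Z^{k-1}$ and $Z^{k\prime}$ untouched.
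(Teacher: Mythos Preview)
Your proof is correct and follows essentially the same approach as the paper: the same reduction to a similarity-invariant $\Omega^{(k)}$ via \eqref{eq:delta=delta_ext-k}, the same $3\times 3$ block $\widehat X^k$, and the same three intertwinings $B_\bullet T_\bullet=T_\bullet\widehat X^k$ combined with \eqref{eq:3X^k} and \eqref{eq:delta_r_k}. The only difference is cosmetic: where the paper re-interprets the right-hand side as $\Delta_Rf(\vec X,X^{k\prime}\oplus X^{k\prime},X^{k\prime\prime})(\vec Z,W_\bullet,\col[Z_I,Z_{I\!I}])$ and then invokes additivity of $\Delta_Rf$ in the $Z^{k\prime}$-slot (itself an immediate consequence of \eqref{eq:delta_r_k}), you stay at the level of $f(\vec X,\widehat X^k)(\vec Z,W_\bullet)R$ and use directly the $k$-linearity of $f$---a slight streamlining of the same computation.
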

\begin{proof}
Let $X^0\in\Omega^{(0)}_{n_0}$, \ldots,
$X^{k-1}\in\Omega^{(k-1)}_{n_{k-1}}$,
$X^{k\prime}\in\Omega^{(k)}_{n_{k}^{\prime}}$,
$X^{k\prime\prime}\in\Omega^{(k)}_{n_{k}^{\prime\prime}}$,
$Z^1\in\rmat{\module{N}_1}{n_0}{n_1}$, \ldots,
$Z^{k-1}\in\rmat{(\module{N}_{k-1})}{n_{k-2}}{n_{k-1}}$,
$Z^{k\prime}\in\rmat{\module{N}_k}{n_{k-1}}{n_{k}^\prime}$, and
$Z_I,Z_{I\!
I}\in\rmat{\module{M}_k}{n_k^\prime}{n_k^{\prime\prime}}$ be
arbitrary.  By \eqref{eq:delta=delta_ext-k}, we may assume,
without loss of
 generality,
  that $\Omega^{(k)}$ is similarity
 invariant, i.e., $\widetilde{\Omega}^{(k)}=\Omega^{(k)}$; see Remark
 \ref{rem:alternative} and Appendix \ref{app}.
  Then by Proposition \ref{prop:adm-env},   $$\begin{bmatrix} X^{k\prime} & Z_I\\
0 & X^{k\prime\prime}
\end{bmatrix}\in\Omega^{(k)}_{n_{k}^\prime+n_k^{\prime\prime}},\ \begin{bmatrix} X^{k\prime} & Z_{I\! I}\\
0 & X^{k\prime\prime}
\end{bmatrix}\in\Omega^{(k)}_{n_{k}^\prime+n_k^{\prime\prime}},\
\begin{bmatrix} X^{k\prime} & (Z_I+Z_{I\! I})\\
0 & X^{k\prime\prime}
\end{bmatrix}\in\Omega^{(k)}_{n_{k}^\prime+n_k^{\prime\prime}},$$ and
$$\begin{bmatrix} X^{k\prime} & 0 & Z_I\\
0 & X^{k\prime} & Z_{I\! I}\\
0 & 0 & X^{k\prime\prime}
\end{bmatrix}\in\Omega^{(k)}_{2n_{k}^\prime+n_k^{\prime\prime}}$$ (for the latter, we use the fact that if
$X^{k\prime}\in\Omega^{(k)}_{n_{k}^\prime}$ then
$\begin{bmatrix} X^{k\prime} & 0\\
0 & X^{k\prime}
\end{bmatrix}\in\Omega^{(k)}_{2n_{k}^\prime}$).
We have
\begin{equation*}
\begin{bmatrix} X^{k\prime} & Z_I\\
0 & X^{k\prime\prime}
\end{bmatrix}\begin{bmatrix} I_{n_k^\prime} & 0 & 0\\
0 & 0 & I_{n_k^{\prime\prime}}
\end{bmatrix}=\begin{bmatrix}  I_{n_k^\prime} & 0 & 0\\
0 & 0 & I_{n_k^{\prime\prime}}
\end{bmatrix}
\begin{bmatrix} X^{k\prime} & 0 & Z_I\\
0 & X^{k\prime} & Z_{I\! I}\\
0 & 0 & X^{k\prime\prime}
\end{bmatrix}.
\end{equation*}
By \eqref{eq:3X^k},
\begin{multline*}
f\left(X^0,\ldots,X^{k-1},\begin{bmatrix} X^{k\prime} & Z_I\\
0 & X^{k\prime\prime}
\end{bmatrix}\right)(Z^1,\ldots,Z^{k-1},\row\,[Z^{k\prime},0])
\begin{bmatrix} I_{n_k^\prime} & 0 & 0\\
0 & 0 & I_{n_k^{\prime\prime}}
\end{bmatrix}
\\
=f\left(X^0,\ldots,X^{k-1},\begin{bmatrix} X^{k\prime} & 0 & Z_I\\
0 & X^{k\prime} & Z_{I\! I}\\
0 & 0 & X^{k\prime\prime}
\end{bmatrix}\right)(Z^1,\ldots,Z^{k-1},\row\,[Z^{k\prime},0,0]).
\end{multline*}
Multiplying both parts by $\col[0,0,I_{n_k^{\prime\prime}}]$ and
taking into account \eqref{eq:delta_r_k}, we obtain
\begin{multline}\label{eq:I}
\Delta_Rf(X^0,\ldots,X^{k-1},X^{k\prime},X^{k\prime\prime})
(Z^1,\ldots,Z^{k-1},Z^{k\prime},Z_I)\\
=\Delta_Rf(X^0,\ldots,X^{k-1},X^{k\prime}\oplus
X^{k\prime},X^{k\prime\prime})
(Z^1,\ldots,Z^{k-1},\row\,[Z^{k\prime},0],\col\,[Z_I,Z_{I\! I}]).
\end{multline}
Similarly, using the intertwining relation
\begin{equation*}
\begin{bmatrix} X^{k\prime} & Z_{I\! I}\\
0 & X^{k\prime\prime}
\end{bmatrix}\begin{bmatrix} 0 & I_{n_k^\prime} & 0\\
0 & 0 & I_{n_k^{\prime\prime}}
\end{bmatrix}=\begin{bmatrix} 0  & I_{n_k^\prime} & 0\\
0 & 0 & I_{n_k^{\prime\prime}}
\end{bmatrix}
\begin{bmatrix} X^{k\prime} & 0 & Z_I\\
0 & X^{k\prime} & Z_{I\! I}\\
0 & 0 & X^{k\prime\prime}
\end{bmatrix},
\end{equation*}
we obtain
\begin{multline}\label{eq:II}
\Delta_Rf(X^0,\ldots,X^{k-1},X^{k\prime},X^{k\prime\prime})
(Z^1,\ldots,Z^{k-1},Z^{k\prime},Z_{I\! I})\\
\!=\!\Delta_Rf(X^0,\ldots,X^{k-1},X^{k\prime}\oplus
X^{k\prime},X^{k\prime\prime})(Z^1,\ldots,Z^{k-1},\row\,[0,Z^{k\prime}],\col\,[Z_I,Z_{I\!
I}]).
\end{multline}
Next, using the intertwining relation
\begin{equation*}
\begin{bmatrix} X^{k\prime} & Z_I+Z_{I\! I}\\
0 & X^{k\prime\prime}
\end{bmatrix}\begin{bmatrix} I_{n_k^\prime} & I_{n_k^\prime} & 0\\
0 & 0 & I_{n_k^{\prime\prime}}
\end{bmatrix}
=\begin{bmatrix} I_{n_k^\prime}  & I_{n_k^\prime} & 0\\
0 & 0 & I_{n_k^{\prime\prime}}
\end{bmatrix}
\begin{bmatrix} X^{k\prime} & 0 & Z_I\\
0 & X^{k\prime} & Z_{I\! I}\\
0 & 0 & X^{k\prime\prime}
\end{bmatrix},
\end{equation*}
we obtain
\begin{multline}\label{eq:I+II}
\Delta_Rf(X^0,\ldots,X^{k-1},X^{k\prime},X^{k\prime\prime})
(Z^1,\ldots,Z^{k-1},Z^{k\prime},Z_I+Z_{I\! I})\\
\!=\!\Delta_Rf(X^0,\ldots,X^{k-1},X^{k\prime}\oplus
X^{k\prime},X^{k\prime\prime})
(Z^1,\ldots,Z^{k-1},\row\,[Z^{k\prime},Z^{k\prime}],\col\,[Z_I,Z_{I\!
I}]).
\end{multline}
We observe that \eqref{eq:delta_r_k} implies the additivity of
$$\Delta_Rf(X^0,\ldots,X^{k-1},X^{k\prime},X^{k\prime\prime})
(Z^1,\ldots,Z^{k-1},Z^{k\prime},Z)$$ as a function of
$Z^{k\prime}$. Therefore, the sum of the right-hand sides of
\eqref{eq:I} and \eqref{eq:II} equals the right-hand side of
\eqref{eq:I+II}, and \eqref{eq:add_r-k} follows.
\end{proof}
\begin{thm}\label{thm:dif-op}
Let
$f\in\tclass{k}(\Omega^{(0)},\ldots,\Omega^{(k)};\ncspacej{\module{N}}{0},\ldots,\ncspacej{\module{N}}{k})$,
where $\Omega^{(k)}\subseteq\ncspacej{\module{M}}{k}$ is a right
admissible nc set. For every $X^0\in\Omega^{(0)}_{n_0}$, \ldots,
$X^k\in\Omega^{(k)}_{n_k}$, $X^{k+1}\in\Omega^{(k)}_{n_{k+1}}$,
the mapping
\begin{equation*}
(Z^1,\ldots,Z^{k+1})\longmapsto
\Delta_Rf(X^0,\ldots,X^{k+1})(Z^1,\ldots,Z^{k+1})
\end{equation*}
 from
$\rmat{\module{N}_1}{n_0}{n_1}\times\cdots\times\rmat{\module{N}_k}{n_{k-1}}{n_k}\times
\rmat{\module{M}_k}{n_k}{n_{k+1}}$ to
$\rmat{\module{N}_0}{n_0}{n_{k+1}}$ is $(k+1)$-linear over
$\ring$. Furthermore,
$\Delta_Rf\in\tclass{k+1}(\Omega^{(0)},\ldots,\Omega^{(k)},\Omega^{(k)};
\ncspacej{\module{N}}{0},\ldots,\ncspacej{\module{N}}{k},
\ncspacej{\module{M}}{k})$.
\end{thm}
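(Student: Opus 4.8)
The plan is to establish first that the values of $\Delta_Rf$ are $(k+1)$-linear mappings, and then that $\Delta_Rf$ respects intertwinings in each of its $k+2$ arguments; by Proposition \ref{prop:simint_k}, applied now to $\Delta_Rf$, the latter will give $\Delta_Rf\in\tclass{k+1}$. The case $k=0$ is exactly Proposition \ref{prop:Rproper}, so I assume $k\ge 1$. At the outset I would use \eqref{eq:delta=delta_ext-k} (see Remark \ref{rem:alternative} and Appendix \ref{app}) to reduce, without loss of generality, to the case where $\Omega^{(k)}$ is similarity invariant, $\widetilde{\Omega}^{(k)}=\Omega^{(k)}$; then, by Proposition \ref{prop:adm-env}, every block upper triangular matrix over $\module{M}_k$ whose diagonal blocks lie in $\Omega^{(k)}$ again lies in $\Omega^{(k)}$, so the explicit formula \eqref{eq:delta_r_k} holds directly (with $Z$ itself, no scaling) for all relevant data:
\[
\Delta_Rf(X^0,\ldots,X^{k-1},X^{k\prime},X^{k\prime\prime})(Z^1,\ldots,Z^{k-1},Z^{k\prime},Z)=f\left(X^0,\ldots,X^{k-1},\begin{bmatrix} X^{k\prime} & Z\\ 0 & X^{k\prime\prime}\end{bmatrix}\right)(Z^1,\ldots,Z^{k-1},\row[Z^{k\prime},0])\begin{bmatrix} 0\\ I\end{bmatrix}.
\]
Every computation below takes place under this reduction.

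For the $(k+1)$-linearity: linearity of $\Delta_Rf$ in the final slot $Z$ is exactly the content of Propositions \ref{prop:homog-k} and \ref{prop:add-k}. Linearity in the remaining slots $Z^1,\ldots,Z^{k-1},Z^{k\prime}$ is read off the formula above: its right-hand side is obtained from the $k$-linear form $f(X^0,\ldots,X^{k-1},M)(\,\cdot\,,\ldots,\,\cdot\,)$, with $M$ the block matrix appearing there, by substituting $\row[Z^{k\prime},0]$ (which is linear in $Z^{k\prime}$) into the last argument and then right-multiplying by a fixed matrix --- operations that preserve multilinearity. Hence the values of $\Delta_Rf$ are $(k+1)$-linear, so Proposition \ref{prop:simint_k} applies to $\Delta_Rf$ and it remains to verify the intertwining conditions for $\Delta_Rf$ in each of its $k+2$ arguments.

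For the arguments $X^0,\ldots,X^{k-1}$ these conditions follow directly from the fact that $f$ respects intertwinings (Proposition \ref{prop:simint_k} for $f$): in the formula above these arguments occur only inside $f$, and an intertwining in any one of them commutes with the passage $Z^{k\prime}\mapsto\row[Z^{k\prime},0]$ and with right multiplication by $\col[0,I]$ (for $X^{k-1}$ one additionally uses $T\,\row[Z^{k\prime},0]=\row[TZ^{k\prime},0]$). For the argument $X^{k\prime}$, given an intertwining $T X^{k\prime}=\tilde{X}^{k\prime}T$, I would note that
\[
\begin{bmatrix} T & 0\\ 0 & I\end{bmatrix}\begin{bmatrix} X^{k\prime} & Z\\ 0 & X^{k\prime\prime}\end{bmatrix}=\begin{bmatrix} \tilde{X}^{k\prime} & TZ\\ 0 & X^{k\prime\prime}\end{bmatrix}\begin{bmatrix} T & 0\\ 0 & I\end{bmatrix}
\]
is an intertwining of two block matrices lying in $\Omega^{(k)}$, hence of two valid choices of the last argument of $f$; applying the last-argument intertwining property \eqref{eq:3X^k} of $f$ to it, with $\row[\tilde{Z}^{k\prime},0]$ placed in the final $Z$-slot, then right-multiplying by $\col[0,I]$ and re-reading each side through \eqref{eq:delta_r_k} (and using that $\col[0,I]$ is unchanged by left multiplication by $\begin{bmatrix} T & 0\\ 0 & I\end{bmatrix}$), one obtains the required intertwining relation for $\Delta_Rf$ in $X^{k\prime}$. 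For the last argument $X^{k\prime\prime}$, given $X^{k\prime\prime}T=T\tilde{X}^{k\prime\prime}$, the analogous identity
\[
\begin{bmatrix} X^{k\prime} & Z\\ 0 & X^{k\prime\prime}\end{bmatrix}\begin{bmatrix} I & 0\\ 0 & T\end{bmatrix}=\begin{bmatrix} I & 0\\ 0 & T\end{bmatrix}\begin{bmatrix} X^{k\prime} & ZT\\ 0 & \tilde{X}^{k\prime\prime}\end{bmatrix}
\]
together with \eqref{eq:3X^k} for $f$, right multiplication by $\col[0,I]$, and the identity $\begin{bmatrix} I & 0\\ 0 & T\end{bmatrix}\col[0,I]=\col[0,I]\,T$, yields the required relation for $\Delta_Rf$ in $X^{k\prime\prime}$.

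The hard part is purely organizational: one must keep careful track of block sizes and of the exact placement of the auxiliary zero and identity blocks when passing, via \eqref{eq:delta_r_k}, between the two presentations of a single matrix value. There is no conceptual difficulty once the reduction to similarity invariant $\Omega^{(k)}$ puts the explicit formula \eqref{eq:delta_r_k} at one's disposal, together with Propositions \ref{prop:simint_k}, \ref{prop:homog-k} and \ref{prop:add-k}. (Alternatively, the statement becomes transparent once higher order nc functions are interpreted via tensor products of order-zero nc functions, cf.\ Remark \ref{rem:tensor_prod}: then $\Delta_R$ affects only the last tensor factor and the whole statement reduces to Proposition \ref{prop:Rproper}.)
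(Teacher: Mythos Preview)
Your proposal is correct and follows essentially the same approach as the paper: reduce to similarity invariant $\Omega^{(k)}$ via \eqref{eq:delta=delta_ext-k}, read off linearity in $Z^1,\ldots,Z^k$ from the explicit formula \eqref{eq:delta_r_k} and in $Z^{k+1}$ from Propositions \ref{prop:homog-k} and \ref{prop:add-k}, deduce the intertwining conditions in $X^0,\ldots,X^{k-1}$ directly from \eqref{eq:delta_r_k}, and obtain the remaining two intertwining conditions by applying \eqref{eq:3X^k} for $f$ to block-diagonal intertwinings of the upper-triangular argument. The only organizational difference is that the paper handles positions $k$ and $k+1$ in a single stroke via the combined intertwiner $\begin{bmatrix} T_k & 0\\ 0 & T_{k+1}\end{bmatrix}$, whereas you treat the two positions separately with $\begin{bmatrix} T & 0\\ 0 & I\end{bmatrix}$ and $\begin{bmatrix} I & 0\\ 0 & T\end{bmatrix}$; the computations are otherwise identical.
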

\begin{proof}
The fact that
$\Delta_Rf\left(X^0,\ldots,X^{k+1}\right)\left(Z^1,\ldots,Z^{k+1}\right)$
is linear as a function of $Z^j$, for every $j=1,\ldots,k$,
follows from \eqref{eq:delta_r_k}. The linearity in $Z^{k+1}$
follows from Propositions \ref{prop:homog-k} and \ref{prop:add-k}.
We also observe that \eqref{eq:delta_r_k} implies that the
function
$\Delta_Rf\left(X^0,\ldots,X^{k+1}\right)\left(Z^1,\ldots,Z^{k+1}\right)$
respects direct sums and similarities in variables
$X^0,\ldots,X^{k-1}$. Thus, it only remains to show that  the
function
$\Delta_Rf\left(X^0,\ldots,X^{k+1}\right)\left(Z^1,\ldots,Z^{k+1}\right)$
respects direct sums and similarities, or equivalently (by
Proposition \ref{prop:simint_k}), respects intertwinings, in
variables $X^{k}$ and $X^{k+1}$.

Let $X^0\in\Omega^{(0)}_{n_0}$, \ldots,
$X^k\in\Omega^{(k)}_{n_k}$, $X^{k+1}\in\Omega^{(k)}_{n_{k+1}}$,
$\widetilde{X}^{k}\in\Omega^{(k)}_{\widetilde{n}_{k}}$,
$\widetilde{X}^{k+1}\in\Omega^{(k)}_{\widetilde{n}_{k+1}}$,
$Z^1\in\rmat{\module{N}_1}{n_0}{n_1}$, \ldots,
$Z^{k-1}\in\rmat{(\module{N}_{k-1})}{n_{k-2}}{n_{k-1}}$,
$Z^{k}\in\rmat{\module{N}_k}{n_{k-1}}{\widetilde{n}_{k}}$, and
$Z^{k+1}\in\rmat{\module{M}_k}{n_k}{\widetilde{n}_{k+1}}$ be
arbitrary, and let $T_k\in\rmat{\ring}{\widetilde{n}_k}{n_k}$ and
$T_{k+1}\in\rmat{\ring}{\widetilde{n}_{k+1}}{n_{k+1}}$ satisfy the
identities $T_kX^k=\widetilde{X}^kT_k$ and
$T_{k+1}X^{k+1}=\widetilde{X}^{k+1}T_{k+1}$.
 By
\eqref{eq:delta=delta_ext-k}, we may assume, without loss of
 generality,
  that $\Omega^{(k)}$ is similarity
 invariant, i.e., $\widetilde{\Omega}^{(k)}=\Omega^{(k)}$; see Remark
 \ref{rem:alternative} and Appendix \ref{app}.
  Then by Proposition \ref{prop:adm-env},   $$\begin{bmatrix} X^{k} & Z^{k+1}T_{k+1}\\
0 & X^{k+1}
\end{bmatrix}\in\Omega^{(k)}_{n_{k}+n_{k+1}},\ \begin{bmatrix} \widetilde{X}^{k+1} & T_kZ^{k+1}\\
0 & \widetilde{X}^{k+1}
\end{bmatrix}\in\Omega^{(k)}_{\widetilde{n}_{k}+\widetilde{n}_{k+1}}.$$
We have
$$\begin{bmatrix} T_k & 0\\
0 & T_{k+1}
\end{bmatrix}\begin{bmatrix} X^{k} & Z^{k+1}T_{k+1}\\
0 & X^{k+1}
\end{bmatrix}=\begin{bmatrix} \widetilde{X}^{k+1} & T_kZ^{k+1}\\
0 & \widetilde{X}^{k+1}
\end{bmatrix}\begin{bmatrix} T_k & 0\\
0 & T_{k+1}
\end{bmatrix}.$$
Using \eqref{eq:delta_r_k} and applying \eqref{eq:3X^k} to $f$, we
obtain
\begin{multline*}
\Delta_Rf(X^0,\ldots,X^{k-1},\widetilde{X}^{k},
\widetilde{X}^{k+1})(Z^1,\ldots,Z^{k},T_kZ^{k+1})T_{k+1}
\\
=f\left(X^0,\ldots,X^{k-1},\begin{bmatrix} \widetilde{X}^{k} & T_kZ^{k+1}\\
0 & \widetilde{X}^{k+1}
\end{bmatrix}\right)
(Z^1,\ldots,Z^{k-1},\row\,[Z^{k},0])\begin{bmatrix} 0\\
I_{n_{k+1}}
\end{bmatrix}T_{k+1}\\
=f\left(X^0,\ldots,X^{k-1},\begin{bmatrix} \widetilde{X}^{k} & T_kZ^{k+1}\\
0 & \widetilde{X}^{k+1}
\end{bmatrix}\right)
\left(Z^1,\ldots,Z^{k-1},\row[Z^{k},0]\right) \begin{bmatrix} T_k & 0\\
0 & T_{k+1}
\end{bmatrix}\\ \times\begin{bmatrix} 0\\
I_{n_{k+1}}
\end{bmatrix}\\
=f\left(X^0,\ldots,X^{k-1},\begin{bmatrix} {X}^{k} & Z^{k+1}T_{k+1}\\
0 & {X}^{k+1}
\end{bmatrix}\right)
\left(Z^1,\ldots,Z^{k-1},\row[Z^{k},0]\begin{bmatrix} T_k & 0\\
0 & T_{k+1}
\end{bmatrix}\right)\\ \times \begin{bmatrix} 0\\
I_{n_{k+1}}
\end{bmatrix}\\
=f(X^0,\ldots,X^{k+1})
(Z^1,\ldots,Z^{k-1},Z^{k}T_k,Z^{k+1}T_{k+1}),
\end{multline*}
which is a special case of  \eqref{eq:3X^k} applied to
$\Delta_Rf$.
\end{proof}

We have therefore the right nc difference-differential operator
\begin{multline*}
\Delta_R\colon\tclass{k}(\Omega^{(0)},\ldots,\Omega^{(k)};\ncspacej{\module{N}}{0},\ldots,\ncspacej{\module{N}}{k})\\
\longrightarrow\tclass{k+1}(\Omega^{(0)},\ldots,\Omega^{(k)},\Omega^{(k)};\ncspacej{\module{N}}{0},
\ldots,\ncspacej{\module{N}}{k},\ncspacej{\module{M}}{k}),
\end{multline*}
for $k=0,1,\ldots$. Iterating this operator $\ell$ times, we
obtain the \emph{$\ell$-th order right nc difference-differential
operator} \index{higher order right nc difference-differential
operator}  \begin{multline*} \Delta_R^\ell\colon
\tclass{k}(\Omega^{(0)},\ldots,\Omega^{(k)};\ncspacej{\module{N}}{0},\ldots,\ncspacej{\module{N}}{k})\\
\longrightarrow\tclass{k+\ell}(\Omega^{(0)},\ldots,\Omega^{(k)},\underset{\ell\
\text{times}}{\underbrace{\Omega^{(k)},\ldots,\Omega^{(k)}}};
\ncspacej{\module{N}}{0},\ldots,\ncspacej{\module{N}}{k},
\underset{\ell\
\text{times}}{\underbrace{\ncspacej{\module{M}}{k},\ldots,\ncspacej{\module{M}}{k}}}),
\end{multline*} for $ k=0,1,\ldots$. According to our definition,
$\Delta_R^\ell f$ is calculated iteratively by evaluating nc
functions of increasing orders on $2\times 2$ block upper
triangular matrices at each step. It turns out that $\Delta_R^\ell
f$ can also  be calculated in a single step by evaluating $f$ on
$(\ell +1)\times (\ell +1)$ block upper bidiagonal matrices. We
formulate the result separately for the case $k=0$ and for the
case $k>0$.

\begin{thm}\label{thm:bidiag}
Let $f\in\tclass{0}(\Omega;\ncspace{\module{N}})$, with
$\Omega\subseteq\ncspace{\module{M}}$ a right admissible nc set.
For every $X^0\in\Omega_{n_0}$, \ldots,
$X^{\ell}\in\Omega_{n_{\ell}}$,
$Z^1\in\rmat{\module{M}}{n_0}{n_1}$, \ldots,
$Z^\ell\in\rmat{\module{M}}{n_{\ell -1}}{n_\ell}$ such that
$$\begin{bmatrix}
X^0 & Z^1 & 0 & \cdots & 0\\
0   & X^1 & \ddots & \ddots & \vdots\\
\vdots & \ddots & \ddots & \ddots & 0 \\
\vdots &  & \ddots & X^{\ell  -  1} & Z^\ell\\
 0 & \cdots & \cdots & 0 & X^\ell
\end{bmatrix}\in\Omega_{n_0+\cdots +n_\ell},
$$
one has {\small\begin{multline}\label{eq:bidiag}
f\left(\begin{bmatrix}
X^0 & Z^1 & 0 & \cdots & 0\\
0   & X^1 & \ddots & \ddots & \vdots\\
\vdots & \ddots & \ddots & \ddots & 0 \\
\vdots &  & \ddots & X^{\ell  -  1} & Z^\ell\\
 0 & \cdots & \cdots & 0 & X^\ell
\end{bmatrix}\right)\\
\,=\!\!\begin{bmatrix} f(X^0) & \!\!\Delta_Rf(X^0,X^1)(Z^1) &
\cdots & \cdots &
\!\!\Delta_R^\ell f(X^0,\ldots,X^\ell)(Z^1,\ldots,Z^\ell)\\
0   & f(X^1) & \ddots & & \!\!\Delta_R^{\ell -1} f(X^1,\ldots,X^\ell)(Z^2,\ldots,Z^\ell)\\
\vdots & \ddots & \ddots & \ddots & \vdots \\
\vdots &  & \ddots &  f(X^{\ell  -  1}) & \!\!\Delta_Rf(X^{\ell -1},X^\ell)(Z^\ell)\\
 0 & \cdots & \cdots &  0 & \!\! f(X^\ell )
\end{bmatrix}.
\end{multline} }
\end{thm}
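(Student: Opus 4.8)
\emph{Overview.} The plan is to reduce to a similarity invariant nc set and then to compute the first block row of $f$ evaluated on a bidiagonal matrix by a cascade of $2\times2$ block reductions, feeding the higher order operators $\Delta_R^pf$ and formula \eqref{eq:uptr-k} into each step; the remaining entries of \eqref{eq:bidiag} will follow by applying the first-row computation to the contiguous sub-bidiagonals of $B$. Throughout, write $B_{[i:j]}$ for the contiguous sub-bidiagonal of $B$ with diagonal blocks $X^i,\ldots,X^j$ and superdiagonal blocks $Z^{i+1},\ldots,Z^j$, so $B=B_{[0:\ell]}$; we may assume $\ell\ge1$, the case $\ell=0$ being trivial.

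\emph{Reduction and triangular shape.} Passing to the similarity invariant envelope $\widetilde\Omega$ of $\Omega$ and the canonical extension $\widetilde f$ (Remark \ref{rem:alternative}, Propositions \ref{prop:adm-env} and \ref{prop:ncfun-ext}) and using \eqref{eq:delta=delta_ext} together with its higher order analogue \eqref{eq:delta=delta_ext-k}, neither side of \eqref{eq:bidiag} changes: $f(B)=\widetilde f(B)$ because $B\in\Omega$, and $\Delta_R^jf(X^0,\ldots,X^j)=\Delta_R^j\widetilde f(X^0,\ldots,X^j)$ because each $X^i$ lies in $\Omega$. So I may assume $\Omega$ is similarity invariant; by Proposition \ref{prop:adm-env} it is then closed under forming block upper triangular matrices, so all $B_{[i:j]}$ and all block matrices assembled below lie in $\Omega$. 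Since $B$ is block upper triangular, the inclusion $\col[I_{n_0+\cdots+n_j},0]$ intertwines $B_{[0:j]}$ with $B$ and the projection $\row[0,I_{n_i+\cdots+n_\ell}]$ intertwines $B_{[i:\ell]}$ with $B$; because $f$ respects intertwinings, $f(B)$ is block upper triangular and its principal submatrix on block rows and columns $i,\ldots,\ell$ equals $f(B_{[i:\ell]})$. Hence it suffices to prove, for an arbitrary bidiagonal $B$, that $f(B)_{0,j}=\Delta_R^jf(X^0,\ldots,X^j)(Z^1,\ldots,Z^j)$ for $j=0,\ldots,\ell$; applying this to each $B_{[i:\ell]}$ recovers all of \eqref{eq:bidiag}.

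\emph{The cascade.} I would prove the first-row formula via the following claim, by induction on $p$ from $1$ to $\ell$: the blocks of the first block row of $f(B)$ in columns $p,\ldots,\ell$ together equal $\Delta_R^pf(X^0,\ldots,X^{p-1},B_{[p:\ell]})(Z^1,\ldots,Z^{p-1},\row[Z^p,\mathbf 0])$, where $\mathbf 0$ is the zero block of the appropriate size. The base case $p=1$ is \eqref{eq:uptr} applied to $B=\begin{bmatrix}X^0 & \row[Z^1,\mathbf 0]\\ 0 & B_{[1:\ell]}\end{bmatrix}$, which in addition gives $f(B)_{0,0}=f(X^0)$. For the inductive step (needed for $p\le\ell-1$) I split $B_{[p:\ell]}=\begin{bmatrix}X^p & \row[Z^{p+1},\mathbf 0]\\ 0 & B_{[p+1:\ell]}\end{bmatrix}$ and apply \eqref{eq:uptr-k} to $g:=\Delta_R^pf$, which is a nc function of order $p$ by Theorem \ref{thm:dif-op}, with its last argument in this block upper triangular form and the outer off-diagonal argument (``$Z^{k''}$'' of \eqref{eq:uptr-k}) taken to be $\mathbf 0$; the term $g(\ldots,B_{[p+1:\ell]})(\ldots,\mathbf 0)$ appearing in \eqref{eq:uptr-k} vanishes by linearity of $g$ in its last argument. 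The first entry produced is $g(X^0,\ldots,X^{p-1},X^p)(Z^1,\ldots,Z^{p-1},Z^p)=\Delta_R^pf(X^0,\ldots,X^p)(Z^1,\ldots,Z^p)$, i.e. the block in column $p$, and the remainder is $\Delta_R^{p+1}f(X^0,\ldots,X^p,B_{[p+1:\ell]})(Z^1,\ldots,Z^p,\row[Z^{p+1},\mathbf 0])$, which is the claim at $p+1$. At $p=\ell$ the matrix $B_{[\ell:\ell]}$ is just $X^\ell$ and $\row[Z^\ell,\mathbf 0]$ is $Z^\ell$, so the last block of the first row is $\Delta_R^\ell f(X^0,\ldots,X^\ell)(Z^1,\ldots,Z^\ell)$, completing the induction and the proof.

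\emph{Main obstacle.} Conceptually there is little difficulty: the argument is a telescoping of \eqref{eq:uptr-k}. The delicate part is the bookkeeping in the cascade — keeping track of block sizes, of the zero padding $\row[Z^p,\mathbf 0]$, and of which argument of \eqref{eq:uptr-k} plays which role once $g$ is itself a high-order operator — and verifying that the envelope reduction legitimately transports both $f(B)$ and all the iterated operators $\Delta_R^jf$, which is exactly what \eqref{eq:delta=delta_ext} and \eqref{eq:delta=delta_ext-k} provide.
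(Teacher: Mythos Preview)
Your proof is correct and uses the same essential ingredients as the paper: reduction to the similarity invariant envelope and iterated application of \eqref{eq:uptr-k}. The paper organizes the induction differently---it inducts on $\ell$ and peels off the bottom-right $2\times2$ block (grouping $X^{\ell-1},X^\ell,Z^\ell$ into a single diagonal block, applying the hypothesis for $\ell-1$, then expanding via \eqref{eq:uptr-k})---whereas you first reduce to the top block row via intertwinings and then cascade \eqref{eq:uptr-k} from the left; this is the same telescoping, merely reversed, and is in fact closer to how the paper proves the higher-order analogue Theorem~\ref{thm:bidiag_k}. One small remark: the claim that every $B_{[i:j]}$ lies in $\Omega$ is not a single application of Proposition~\ref{prop:adm-env} but its iteration, which is exactly Lemma~\ref{lem:bidiag-homog}.
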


\begin{thm}\label{thm:bidiag_k}
Let
$f\in\tclass{k}(\Omega^{(0)},\ldots,\Omega^{(k)};\ncspacej{\module{N}}{0},\ldots,\ncspacej{\module{N}}{k})$,
where $\Omega^{(k)}\subseteq\ncspacej{\module{M}}{k}$ is a right
admissible nc set, $k>0$. For every $X^0\in\Omega^{(0)}_{n_0}$,
\ldots, $X^{k-1}\in\Omega^{(k-1)}_{n_{k-1}}$,
$X^{k,0}\in\Omega^{(k)}_{n_{k,0}}$, \ldots,
$X^{k,\ell}\in\Omega^{(k)}_{n_{k,\ell}}$,
$Z^{k+1,1}\in\rmat{\module{M}_k}{n_{k,0}}{n_{k,1}}$, \ldots,
$Z^{k+1,\ell}\in\rmat{\module{M}_k}{n_{k,\ell -1}}{n_{k,\ell}}$
such that
$$\begin{bmatrix}
X^{k,0} & Z^{k+1,1} & 0 & \cdots & 0\\
0   & X^{k,1} & \ddots & \ddots & \vdots\\
\vdots & \ddots & \ddots & \ddots & 0 \\
\vdots &  & \ddots & X^{k,\ell  -  1} & Z^{k+1,\ell}\\
 0 & \cdots & \cdots & 0 & X^{k,\ell}
\end{bmatrix}\in\Omega^{(k)}_{n_{k,0}+\cdots +n_{k,\ell}},
$$
one has
\begin{multline}\label{eq:bidiag_k}
f\left(X^0,\ldots,X^{k-1},\begin{bmatrix}
X^{k,0} & Z^{k+1,1} & 0 & \cdots & 0\\
0   & X^{k,1} & \ddots & \ddots & \vdots\\
\vdots & \ddots & \ddots & \ddots & 0 \\
\vdots &  & \ddots & X^{k,\ell  -  1} & Z^{k+1,\ell}\\
 0 & \cdots & \cdots & 0 & X^{k,\ell}
\end{bmatrix}\right)\\
(Z^1,\ldots,Z^{k-1},\row\,[Z^{k,0},\ldots,Z^{k,\ell}])\\
=\row_{0\le
j\le\ell}\Big[\sum_{i=0}^j\Delta_R^{j-i}f(X^0,\ldots,X^{k-1},X^{k,i},\ldots,X^{k,j})\\
(Z^1,\ldots,Z^{k-1},Z^{k,i},Z^{k+1,i+1},\ldots,Z^{k+1,j})\Big]
\end{multline}
for all $Z^1\in\rmat{\module{N}_1}{n_{0}}{n_{1}}$, \ldots,
$Z^{k-1}\in\rmat{(\module{N}_{k-1})}{n_{k-2}}{n_{k-1}}$,
$Z^{k,0}\in\rmat{\module{N}_k}{n_{k-1}}{n_{k,0}}$, \ldots,
$Z^{k,\ell}\in\rmat{\module{N}_k}{n_{k-1}}{n_{k,\ell}}$.
\end{thm}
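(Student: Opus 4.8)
The plan is to argue by induction on $\ell$, keeping $k>0$ arbitrary throughout, using the defining relation \eqref{eq:uptr-k} for $\Delta_R$ together with the fact (Theorem \ref{thm:dif-op}) that $\Delta_Rf\in\tclass{k+1}$. As in Remark \ref{rem:alternative}, by \eqref{eq:delta=delta_ext-k} and Propositions \ref{prop:adm-env} and \ref{prop:higher-ncfun-ext}, I would first reduce, without loss of generality, to the case where $\Omega^{(k)}$ is similarity invariant; then $\Omega^{(k)}$ is closed under formation of block upper triangular matrices, so every block bidiagonal matrix assembled from a contiguous range of the diagonal blocks $X^{k,i}$ and the corresponding superdiagonal blocks $Z^{k+1,i}$ also lies in $\Omega^{(k)}$, and the scaling element $r$ in \eqref{eq:uptr-k} may be taken to be $1$.

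The case $\ell=0$ is trivial (both sides equal $f(X^0,\ldots,X^{k-1},X^{k,0})(Z^1,\ldots,Z^{k-1},Z^{k,0})$), and $\ell=1$ is exactly \eqref{eq:uptr-k}. For the inductive step I would split the bidiagonal matrix along its first row and column, writing it as
\begin{equation*}
\begin{bmatrix} X^{k,0} & \widehat Z\\ 0 & B\end{bmatrix},
\end{equation*}
where $B$ is the block bidiagonal matrix with diagonal blocks $X^{k,1},\ldots,X^{k,\ell}$ and superdiagonal blocks $Z^{k+1,2},\ldots,Z^{k+1,\ell}$, and $\widehat Z=\row\,[Z^{k+1,1},0,\ldots,0]$. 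Applying \eqref{eq:uptr-k} with $X^{k\prime}=X^{k,0}$, $X^{k\prime\prime}=B$, $Z=\widehat Z$, $Z^{k\prime}=Z^{k,0}$ and $Z^{k\prime\prime}=\row\,[Z^{k,1},\ldots,Z^{k,\ell}]$ exhibits the left-hand side of \eqref{eq:bidiag_k} as a row with two super-blocks: the first is $f(X^0,\ldots,X^{k-1},X^{k,0})(Z^1,\ldots,Z^{k-1},Z^{k,0})$, which is the $j=0$ entry on the right-hand side, and the second equals
\begin{multline*}
\Delta_Rf(X^0,\ldots,X^{k-1},X^{k,0},B)(Z^1,\ldots,Z^{k-1},Z^{k,0},\widehat Z)\\
+f(X^0,\ldots,X^{k-1},B)(Z^1,\ldots,Z^{k-1},\row\,[Z^{k,1},\ldots,Z^{k,\ell}]).
\end{multline*}

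To the second summand I would apply the induction hypothesis for $f$ (order $k$, bidiagonal matrix $B$ of length $\ell-1$); after the evident re-indexing $j\leftrightarrow j-1$, $i\leftrightarrow i-1$, this produces exactly the part of the $j$-th entries ($1\le j\le\ell$) of the right-hand side of \eqref{eq:bidiag_k} with $i\ge1$. To the first summand I would apply the induction hypothesis to the nc function $\Delta_Rf\in\tclass{k+1}$ (order $k+1$), whose $(k+1)$-st argument is the same bidiagonal matrix $B$ of length $\ell-1$ and whose ``last increment row'' is $\widehat Z=\row\,[Z^{k+1,1},0,\ldots,0]$. Since all blocks of $\widehat Z$ but the first vanish, multilinearity of the values of the higher order nc functions $\Delta_R^{j'-i'}(\Delta_Rf)$ annihilates every summand in the induction hypothesis except the one with $i'=0$; using $\Delta_R^{j'}(\Delta_Rf)=\Delta_R^{j'+1}f$ one identifies the surviving terms with the $i=0$ terms of the $j$-th entries ($1\le j\le\ell$) on the right-hand side of \eqref{eq:bidiag_k}. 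Adding the $j=0$ entry together with the $i=0$ and $i\ge1$ contributions then gives \eqref{eq:bidiag_k}.

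The argument is conceptually routine — it is the exact higher order analogue of the proof of Theorem \ref{thm:bidiag}, which runs the same induction starting from \eqref{eq:uptr} — and I expect the only real work to be the bookkeeping: matching up orders, sizes and arguments when the induction hypothesis is invoked for $\Delta_Rf$ in place of $f$, and tracking which summands remain after the extra increment blocks are set to zero. Splitting off the last diagonal block instead of the first is also possible, but less convenient, since it produces $\Delta_Rf$ with a block bidiagonal next-to-last argument rather than last argument, which is not directly in the scope of \eqref{eq:uptr-k} or of the induction hypothesis.
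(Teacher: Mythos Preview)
Your proposal is correct and matches the paper's proof essentially line for line: the paper also reduces to similarity invariant $\Omega^{(k)}$, inducts on $\ell$, splits off the first diagonal block via \eqref{eq:uptr-k}, and applies the induction hypothesis to $f$ (order $k$) on the second summand and to $\Delta_Rf\in\tclass{k+1}$ on the first, invoking Remark \ref{rem:bidiag_k_special} to handle the row $\widehat Z=\row\,[Z^{k+1,1},0,\ldots,0]$ just as you describe. Your observation that splitting off the last block is less convenient is also consistent with the paper's choice.
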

\begin{rem}\label{rem:bidiag_k_special}
In the special case where $Z^{k,1}=0$, \ldots, $Z^{k,\ell}=0$, the
right-hand side of \eqref{eq:bidiag_k} becomes
\begin{multline*}
  \row\left[ \right.  f(X^0,\ldots,X^{k-1},X^{k,0})(Z^1,\ldots,Z^{k-1},Z^{k,0}),\\
 \Delta_Rf(X^0,\ldots,X^{k-1},X^{k,0},X^{k,1})
(Z^1,\ldots,Z^{k-1},Z^{k,0},Z^{k+1,1}), \ldots, \\
   \left. \Delta_R^\ell
 f(X^0,\ldots,X^{k-1},X^{k,0},\ldots,X^{k,\ell})
 (Z^1,\ldots,Z^{k-1},Z^{k,0},Z^{k+1,1},\ldots,Z^{k+1,\ell})\right].
\end{multline*}
\end{rem}
For the proofs of Theorems \ref{thm:bidiag} and
\ref{thm:bidiag_k}, we will need the following lemma.
\begin{lem}\label{lem:bidiag-homog}
Let $\Omega\subseteq\ncspace{\module{M}}$ be similarity invariant,
i.e., $\widetilde{\Omega}=\Omega$; see Appendix \ref{app}. Then
for all $n_0$, \ldots, $n_\ell$, $X^0\in\Omega_{n_0}$, \ldots,
$X^{\ell}\in\Omega_{n_{\ell}}$,
$Z^1\in\rmat{\module{M}}{n_0}{n_1}$, \ldots,
$Z^\ell\in\rmat{\module{M}}{n_{\ell -1}}{n_\ell}$ one has
$$\begin{bmatrix}
X^j & Z^{j+1} & 0 & \cdots & 0\\
0   & X^{j+1} & \ddots & \ddots & \vdots\\
\vdots & \ddots & \ddots & \ddots & 0 \\
\vdots &  & \ddots & X^{\ell  -  1} & Z^\ell\\
 0 & \cdots & \cdots & 0 & X^\ell
\end{bmatrix}\in\Omega_{n_j+\cdots +n_\ell},\quad
j=0,\ldots,\ell-1.
$$
\end{lem}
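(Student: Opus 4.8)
The plan is to reduce the claim to the single fact that a similarity invariant right admissible nc set is closed under formation of $2\times 2$ block upper triangular matrices with an \emph{arbitrary} upper corner, which is exactly Proposition \ref{prop:adm-env}. Observe first that it suffices to prove the statement for $j=0$; the cases $j=1,\ldots,\ell-1$ are literally the $j=0$ statement applied to the shorter data $X^j,\ldots,X^\ell$ and $Z^{j+1},\ldots,Z^\ell$. So fix $n_0,\ldots,n_\ell$, $X^i\in\Omega_{n_i}$, and $Z^i\in\rmat{\module{M}}{n_{i-1}}{n_i}$, and write $B_\ell$ for the $(\ell+1)\times(\ell+1)$ block upper bidiagonal matrix with diagonal blocks $X^0,\ldots,X^\ell$ and superdiagonal blocks $Z^1,\ldots,Z^\ell$.

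First I would argue by induction on $\ell$. The base case $\ell=0$ is just $X^0\in\Omega_{n_0}$, which holds by hypothesis. For the inductive step, assume the analogous statement holds for bidiagonal matrices of length $\ell$ (in particular, for every initial or terminal segment of our data). Set $X:=B_{\ell-1}$, the bidiagonal matrix with diagonal blocks $X^0,\ldots,X^{\ell-1}$ and superdiagonal $Z^1,\ldots,Z^{\ell-1}$; by the induction hypothesis $X\in\Omega_{n_0+\cdots+n_{\ell-1}}$. Set $Y:=X^\ell\in\Omega_{n_\ell}$, and let $Z\in\rmat{\module{M}}{(n_0+\cdots+n_{\ell-1})}{n_\ell}$ be the block column all of whose entries are zero except the bottom one, which equals $Z^\ell$. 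Then
\begin{equation*}
B_\ell=\begin{bmatrix} X & Z\\ 0 & Y\end{bmatrix}.
\end{equation*}
Since $\Omega$ is similarity invariant and right admissible, it is in particular right admissible and similarity invariant, so Proposition \ref{prop:adm-env} applies and gives $\begin{bmatrix} X & Z\\ 0 & Y\end{bmatrix}\in\Omega_{n_0+\cdots+n_\ell}$, i.e. $B_\ell\in\Omega_{n_0+\cdots+n_\ell}$. This completes the induction and proves the $j=0$ case, hence all cases.

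There is no real obstacle here: the only substantive input is Proposition \ref{prop:adm-env} (closure of the similarity invariant envelope under arbitrary block triangularization), which we are entitled to assume, and everything else is the bookkeeping of peeling off one diagonal block at a time. One should just be slightly careful in the inductive step to invoke the induction hypothesis for the correct truncated data — removing the \emph{last} diagonal block and its incoming superdiagonal block $Z^\ell$, leaving a genuine block bidiagonal matrix of length $\ell-1$ — and to note that the block column $Z$ having only its bottom block nonzero is irrelevant, since Proposition \ref{prop:adm-env} allows an arbitrary upper corner block.
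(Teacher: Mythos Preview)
Your proof is correct and is exactly the approach the paper takes: the paper's proof reads in its entirety ``Induction on $\ell$, using Proposition \ref{prop:adm-env},'' and you have simply written out that induction in detail. The only minor comment is that the lemma statement does not explicitly assume right admissibility, but (as in the paper) this is implicit from the surrounding context and is needed to invoke Proposition \ref{prop:adm-env}.
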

\begin{proof}
Induction on $\ell$, using Proposition \ref{prop:adm-env}.
\end{proof}
\begin{proof}[Proof of Theorem \ref{thm:bidiag}]
By \eqref{eq:delta=delta_ext-k}, we may assume, without loss of
 generality,
  that $\Omega$ is similarity
 invariant, i.e., $\widetilde{\Omega}=\Omega$; see Remark
 \ref{rem:alternative} and Appendix \ref{app}.
We will prove the statement by induction on $\ell$. For $\ell=0$,
the two sides of \eqref{eq:bidiag} are identical. For $\ell=1$,
\eqref{eq:bidiag} coincides with \eqref{eq:uptr} up to notation.
Suppose now that the statement is true for $\ell = m$, with some
$m\in\mathbb{N}$. Let $X^0\in\Omega_{n_0}$, \ldots,
$X^{m+1}\in\Omega_{n_{m+1}}$,
$Z^{1}\in\rmat{\module{M}}{n_{0}}{n_{1}}$, \ldots,
$Z^{m+1}\in\rmat{\module{M}}{n_{m}}{n_{m+1}}$. Using induction
hypothesis,  Lemma \ref{lem:bidiag-homog}, and \eqref{eq:uptr-k},
we obtain {\tiny\begin{multline*} f\left({\begin{bmatrix}
X^{0} & Z^{1} & 0 & \cdots & \cdots & 0\\
0   & X^{1} & \ddots & \ddots &  &\vdots\\
\vdots & \ddots & \ddots & \ddots & \ddots & \vdots \\
\vdots &  & \ddots & X^{m-1} & Z^{m} & 0\\
 0 & \cdots & \cdots & 0 & X^{m} & Z^{m+1}\\
0 & \cdots & \cdots & \cdots & 0 & X^{m+1}
\end{bmatrix}}\right)\\
={\!\!\begin{bmatrix} f(X^0) &  \cdots & \cdots &
\!\!\!\Delta_R^m f\Big(X^0,\ldots,X^{m-1},\begin{bmatrix} X^m & Z^{m+1}\\
0 & X^{m+1}
\end{bmatrix}
\Big)(Z^1,\ldots,Z^{m-1},\row[Z^m,0])\\
0   & \ddots & & \vdots \\
\vdots & \ddots & f(X^{m-1}) &  \vdots\\
 0 & \cdots &  0 & f\Big( \begin{bmatrix} X^m & Z^{m+1}\\
0 & X^{m+1}
\end{bmatrix}\Big)
\end{bmatrix}}\\
={\!\! \begin{bmatrix} f(X^0) &
\Delta_Rf(X^{0},X^{1})(Z^{1}) & \cdots &
\Delta_R^{m+1} f\Big(X^0,\ldots,X^{m+1}\Big)(Z^1,\ldots,Z^{m+1})\\
0   & \ddots &  \ddots & \vdots\\
\vdots & \ddots & f(X^m)  &  \Delta_Rf(X^{m},X^{m+1})(Z^{m+1})\\
 0 & \cdots & \ 0 & f(X^{m+1})
\end{bmatrix}}
\end{multline*}
}

Thus, we have \eqref{eq:bidiag} for $\ell=m+1$.
\end{proof}

\begin{proof}[Proof of Theorem \ref{thm:bidiag_k}]
By \eqref{eq:delta=delta_ext-k}, we may assume, without loss of
 generality,
  that $\Omega^{(k)}$ is similarity
 invariant, i.e., $\widetilde{\Omega}^{(k)}=\Omega^{(k)}$; see Remark
 \ref{rem:alternative} and Appendix \ref{app}.
We will prove the statement by induction on $\ell$. For $\ell=0$,
the two sides of \eqref{eq:bidiag_k} are identical. For $\ell=1$,
\eqref{eq:bidiag_k} coincides with \eqref{eq:uptr-k} up to
notation. Suppose now that the statement is true for all $k>0$ and
$\ell = m$, with some $m\in\mathbb{N}$. Let $k>0$ be fixed. Let
$X^0\in\Omega^{(0)}_{n_0}$, \ldots,
$X^{k-1}\in\Omega^{(k-1)}_{n_{k-1}}$,
$X^{k,0}\in\Omega^{(k)}_{n_{k,0}}$, \ldots,
$X^{k,m+1}\in\Omega^{(k)}_{n_{k,m+1}}$,
$Z^{k+1,1}\in\rmat{\module{M}_k}{n_{k,0}}{n_{k,1}}$, \ldots,
$Z^{k+1,m+1}\in\rmat{\module{M}_k}{n_{k,m}}{n_{k,m+1}}$,
$Z^1\in\rmat{\module{N}_1}{n_{0}}{n_{1}}$, \ldots,
$Z^{k-1}\in\rmat{(\module{N}_{k-1})}{n_{k-2}}{n_{k-1}}$,
$Z^{k,0}\in\rmat{\module{N}_k}{n_{k-1}}{n_{k,0}}$, \ldots,
$Z^{k,m+1}\in\rmat{\module{N}_k}{n_{k-1}}{n_{k,m+1}}$. By Lemma
\ref{lem:bidiag-homog},
$$\begin{bmatrix}
X^{k,j} & Z^{k+1,j+1} & 0 & \cdots & 0\\
0   & X^{k,j+1} & \ddots & \ddots & \vdots\\
\vdots & \ddots & \ddots & \ddots & 0 \\
\vdots &  & \ddots & X^{k,m} &  Z^{k+1,m+1}\\
 0 & \cdots & \cdots & 0 & X^{k,m+1}
\end{bmatrix}\in\Omega^{(k)}_{n_{k,j}+\cdots +n_{k,m+1}}
$$
for each $j=0,\ldots,m$. By \eqref{eq:uptr-k}, we have
\begin{multline*} f\left(X^0,\ldots,X^{k-1},\begin{bmatrix}
X^{k,0} & Z^{k+1,1} & 0 & \cdots & 0\\
0   & X^{k,1} & \ddots & \ddots & \vdots\\
\vdots & \ddots & \ddots & \ddots & 0 \\
\vdots &  & \ddots & X^{k,m} & Z^{k+1,m+1}\\
 0 & \cdots & \cdots & 0 & X^{k,m+1}
\end{bmatrix}\right)\\
\hfill(Z^1,\ldots,Z^{k-1}, \row\,[Z^{k,0},
\ldots,Z^{k,m+1}])\\
=\row\left[f(X^0,\ldots,X^{k-1},X^{k,0})
(Z^1,\ldots,Z^{k-1},Z^{k,0}),\right. \hfill \\
\Delta_Rf\left(X^0,\ldots,X^{k-1},\begin{bmatrix}
X^{k,1} & Z^{k+1,2} & 0 & \cdots & 0\\
0   & X^{k,2} & \ddots & \ddots & \vdots\\
\vdots & \ddots & \ddots & \ddots & 0 \\
\vdots &  & \ddots & X^{k,m} & Z^{k+1,m+1}\\
 0 & \cdots & \cdots & 0 & X^{k,m+1}
\end{bmatrix}\right)\\
\hfill(Z^1,\ldots,Z^{k-1},Z^{k,0},\row\,[Z^{k+1,1},0,
\ldots,0])\\
+f\left(X^0,\ldots,X^{k-1},\begin{bmatrix}
X^{k,1} & Z^{k+1,2} & 0 & \cdots & 0\\
0   & X^{k,2} & \ddots & \ddots & \vdots\\
\vdots & \ddots & \ddots & \ddots & 0 \\
\vdots &  & \ddots & X^{k,m} & Z^{k+1,m+1}\\
 0 & \cdots & \cdots & 0 & X^{k,m+1}
\end{bmatrix}\right)\\
\hfill\left.(Z^1,\ldots,Z^{k-1},\row\,[Z^{k,1},\ldots,Z^{k,m+1}])\right].
\end{multline*}
Applying the induction assumption to the right-hand side of the
latter (see also Remark \ref{rem:bidiag_k_special}), we obtain
\begin{multline*}
f\left(X^0,\ldots,X^{k-1},\begin{bmatrix}
X^{k,0} & Z^{k+1,1} & 0 & \cdots & 0\\
0   & X^{k,1} & \ddots & \ddots & \vdots\\
\vdots & \ddots & \ddots & \ddots & 0 \\
\vdots &  & \ddots & X^{k,m} & Z^{k+1,m+1}\\
 0 & \cdots & \cdots & 0 & X^{k,m+1}
\end{bmatrix}\right)\\
\hfill(Z^1,\ldots,Z^{k-1},\row\,[Z^{k,0},
\ldots,Z^{k,m+1}])\\
=\row\Big[f(X^0,\ldots,X^{k-1},X^{k,0})
(Z^1,\ldots,Z^{k-1},Z^{k,0}),\hfill\\
\Delta_Rf(X^0,\ldots,X^{k-1},X^{k,0},X^{k,1})
(Z^1,\ldots,Z^{k-1},Z^{k,0},Z^{k+1,1})\\
+f(X^0,\ldots,X^{k-1},X^{k,1}) (Z^1,\ldots,Z^{k-1},Z^{k,1}),\\
\Delta^2_Rf(X^0,\ldots,X^{k-1},X^{k,0},X^{k,1},X^{k,2})
(Z^1,\ldots,Z^{k-1},Z^{k,0},Z^{k+1,1},Z^{k+1,2})\\
+\Delta_Rf(X^0,\ldots,X^{k-1},X^{k,1},X^{k,2})
(Z^1,\ldots,Z^{k-1},Z^{k,1},Z^{k+1,2})\\
+f(X^0,\ldots,X^{k-1},X^{k,2}) (Z^1,\ldots,Z^{k-1},Z^{k,2}),
\end{multline*}
\begin{multline*}
\ldots,\\
\Delta^{m+1}_R(X^0,\ldots,X^{k-1},X^{k,0},\ldots,X^{k,m+1})
(Z^1,\ldots,Z^{k-1},Z^{k,0},Z^{k+1,1},\ldots,Z^{k+1,m+1})\\
+\sum\limits_{i=0}^m
\Delta^{m-i}_Rf(X^0,\ldots,X^{k-1},X^{k,i+1},\ldots,X^{k,m+1})\\
\hfill(Z^1,\ldots,Z^{k-1},Z^{k,i+1},Z^{k+1,i+2},\ldots,Z^{k+1,m+1})\Big]
\end{multline*}
\begin{multline*}
=\row_{0\le j\le
m+1}\Big[\sum_{i=0}^j\Delta_R^{j-i}f(X^0,\ldots,X^{k-1},X^{k,i},\ldots,X^{k,j})\hfill
\\
(Z^1,\ldots,Z^{k-1},Z^{k,i},Z^{k+1,i+1},\ldots,Z^{k+1,j})\Big].
\end{multline*}
Thus the statement is true also for $\ell=m+1$.
\end{proof}
\begin{rem}\label{rem:delta^ell_restrict}
It follows from Theorem \ref{thm:bidiag} (and Lemma
\ref{lem:bidiag-homog}) that if $\Omega'$ a right admissible nc
subset of $\Omega$ and $X^0$, \ldots, $X^\ell \in\Omega'$, then
$\Delta_R^\ell f(X^0,\ldots,X^\ell)$ is determined by
$f|_{\Omega'}$.
\end{rem}
\begin{rem}\label{rem:delta_interpret}
Using the tensor product interpretation of the values of a nc
function $f$ of order $k$ (see Remark \ref{rem:tensor_values}), we
can view the definition of $\Delta_Rf$ as follows. Let
$n_k=n_k^\prime + n_k^{\prime\prime}$. Then we have the $2\times
2$ block matrix decomposition of $n_k\times n_k$ matrices over
$\module{N}_k^*$:
$$\mat{\module{N}_k^*}{n_k}=\mat{\module{N}_k^*}{n_k^\prime}\oplus
\rmat{\module{N}_k^*}{n_k^\prime}{n_k^{\prime\prime}}\oplus
\rmat{\module{N}_k^*}{n_k^{\prime\prime}}{n_k^{\prime}}\oplus\mat{\module{N}_k^*}{n_k^{\prime\prime}}.$$
This decomposition induces the corresponding decomposition of
$$\mat{\module{N}_0}{n_0}\otimes\mat{\module{N}_1^*}{n_1}\otimes\cdots\otimes\mat{\module{N}_k^*}{n_k},$$
and
\begin{multline*}
f\left(X^0,\ldots,X^{k-1},\begin{bmatrix}
X^{k\prime} & Z\\
0 & X^{k\prime\prime}
\end{bmatrix}\right)
=f(X^0,\ldots,X^{k-1}, X^{k\prime})\\
+\Delta_Rf(X^0,\ldots,X^{k-1},
X^{k\prime},X^{k\prime\prime})(\cdot,\ldots,\cdot,Z)
+f(X^0,\ldots,X^{k-1}, X^{k\prime\prime}),
\end{multline*}
where
$$
f(X^0,\ldots,X^{k-1}, X^{k\prime})
\in\mat{\module{N}_0}{n_0}\otimes\mat{\module{N}_1^*}{n_1}\otimes\cdots\otimes
\mat{(\module{N}_{k-1}^*)}{n_{k-1}}\otimes\mat{\module{N}_k^*}{n_{k}^\prime},$$
\begin{multline*}
\Delta_Rf(X^0,\ldots,X^{k-1},
X^{k\prime},X^{k\prime\prime})(\cdot,\ldots,\cdot,Z)\\
\hfill\in \mat{\module{N}_0}{n_0}\otimes\mat{\module{N}_1^*}{n_1}
\otimes\cdots\otimes\mat{(\module{N}_{k-1}^*)}{n_{k-1}}\otimes
\rmat{\module{N}_k^*}{n_{k}^\prime}{n_k^{\prime\prime}},
\end{multline*}
$$f(X^0,\ldots,X^{k-1}, X^{k\prime\prime})
\in\mat{\module{N}_0}{n_0}\otimes\mat{\module{N}_1^*}{n_1}\otimes\cdots\otimes
\mat{(\module{N}_{k-1}^*)}{n_{k-1}}\otimes
\mat{\module{N}_k^*}{n_{k}^{\prime\prime}}
$$
(this is equivalent to \eqref{eq:uptr-k} when using the
multilinear mapping interpretation of the values of $f$).
Furthermore, the mapping
\begin{gather}\label{eq:natural-delta}
\rmat{\module{M}_k}{n_k^\prime}{n_k^{\prime\prime}}\longrightarrow
\mat{\module{N}_0}{n_0}\otimes\mat{\module{N}_1^*}{n_1}\otimes
\cdots\otimes\mat{(\module{N}_{k-1}^*)}{n_{k-1}}\otimes
\rmat{\module{N}_k^*}{n_{k}^\prime}{n_k^{\prime\prime}},\\
Z\longmapsto\Delta_Rf(X^0,\ldots,X^{k-1},
X^{k\prime},X^{k\prime\prime})(\cdot,\ldots,\cdot,Z)\label{eq:natur_map-delta}
\end{gather}
 is linear
(this is equivalent to Propositions \ref{prop:homog-k} and
\ref{prop:add-k}). We have a special case of the natural
isomorphism \eqref{eq:natural}--\eqref{eq:natur_map},
$$\mat{\module{N}_k^*}{n_k^\prime}\otimes\mat{\module{M}_k^*}{n_k^{\prime\prime}}
\overset{\sim}{\longrightarrow}\hom_\ring\left(\rmat{\module{M}_k}{n_k^\prime}{n_k^{\prime\prime}},
\rmat{\module{N}_k^*}{n_k^\prime}{n_k^{\prime\prime}}\right)$$ (we
assume here that the module $\module{M}_k$ is free and of finite
rank, in addition to our previous assumptions that the modules
$\module{N}_1$, \ldots, $\module{N}_k$ are free and of finite rank
and that the module $\module{N}_0$ is free). Tensoring with
$\mat{\module{N}_0}{n_0}\otimes\mat{\module{N}_1^*}{n_1}
\otimes\cdots\otimes\mat{(\module{N}_{k-1}^*)}{n_{k-1}}$ yields a
natural isomorphism
\begin{multline*}
\mat{\module{N}_0}{n_0}\otimes\mat{\module{N}_1^*}{n_1}\cdots
\otimes\mat{(\module{N}_{k-1}^*)}{n_{k-1}}\otimes
\mat{\module{N}_k^*}{n_k^\prime}\otimes\mat{\module{M}_k^*}{n_k^{\prime\prime}}\\
\overset{\sim}{\longrightarrow}\hom_\ring\!\!\Big(\rmat{\module{M}_k}{n_k^\prime}{n_k^{\prime\prime}},
\mat{\module{N}_0}{n_0}\otimes\mat{\module{N}_1^*}{n_1}\otimes
\cdots\otimes\mat{(\module{N}_{k-1}^*)}{n_{k-1}}\otimes
\rmat{\module{N}_k^*}{n_{k}^\prime}{n_k^{\prime\prime}}\Big).
\end{multline*}
It follows from
\eqref{eq:natural-delta}--\eqref{eq:natur_map-delta} that
\begin{multline*}
\Delta_Rf(X^0,\ldots,X^{k-1},
X^{k\prime},X^{k\prime\prime})\\
\in\mat{\module{N}_0}{n_0}\otimes\mat{\module{N}_1^*}{n_1}\otimes
\cdots\otimes\mat{(\module{N}_{k-1}^*)}{n_{k-1}}\otimes
\mat{\module{N}_k^*}{n_k^\prime}\otimes\mat{\module{M}_k^*}{n_k^{\prime\prime}},
\end{multline*}
which is equivalent to the first part of Theorem \ref{thm:dif-op}.
\end{rem}
\begin{rem}\label{rem:delta_tensor_prod}
It follows from the definition of $\Delta_R$ that its action on a
tensor product of (possibly, higher order) nc functions (see
Remark \ref{rem:tensor_prod}) boils down to its action on the last
factor:
\begin{equation}\label{eq:delta_tensor_prod}
\Delta_R(f\otimes g)=f\otimes\Delta_Rg,
\end{equation}
where
\begin{eqnarray*}
f\in\tclass{k}(\Omega^{(0)},\ldots,\Omega^{(k)};\ncspacej{\module{N}}{0},\ldots,\ncspacej{\module{N}}{k}),\\
g\in\tclass{\ell}(\Omega^{(k+1)},\ldots,\Omega^{(k+\ell)};\ncspacej{\module{L}^*}{0},
\ncspacej{\module{L}}{1},\ldots,\ncspacej{\module{L}}{\ell}),\end{eqnarray*}
and
\begin{equation}\label{eq:delta_tensor_prod_k}
\Delta_R(f^0\otimes\cdots\otimes f^{k-1}\otimes
f^k)=f^0\otimes\cdots\otimes f^{k-1}\otimes\Delta_Rf^k,
\end{equation}
where $f^0\in\tclass{0}(\Omega^{(0)};\ncspacej{\module{N}}{0})$,
$f^1\in\tclass{0}(\Omega^{(1)};\ncspacej{\module{N}^*}{1})$,
\ldots,
$f^k\in\tclass{0}(\Omega^{(k)};\ncspacej{\module{N}^*}{k})$.
\end{rem}
\begin{rem}\label{rem:j-delta}
More generally, we can define a difference-differential operator
$_j\Delta_R$ \index{$_j\Delta_R$}
 by evaluating an nc function of order $k$ on a $(k+1)$-tuple of
 square matrices with the $(j+1)$-th argument upper triangular, $j=0,\ldots, k$.
Hence, $\Delta_R={}_k\Delta_R$. To define ${}_0\Delta_Rf$ for
$f\in\tclass{k}(\Omega^{(0)},\ldots,\Omega^{(k)};\ncspacej{\module{N}}{0},\ldots,\ncspacej{\module{N}}{k})$,
where $\Omega^{(0)}\subseteq\ncspacej{\module{M}}{0}$ is a right
admissible nc set, let $X^{0\prime}\in\Omega^{(0)}_{n_0^\prime}$,
$X^{0\prime\prime}\in\Omega^{(0)}_{n_0^{\prime\prime}}$,
$X^1\in\Omega^{(1)}_{n_1}$, \ldots,
$X^{k}\in\Omega^{(k)}_{n_{k}}$,
$Z^{1\prime}\in\rmat{\module{N}_1}{n_0^\prime}{n_1}$,
$Z^{1\prime\prime}\in\rmat{\module{N}_1}{n_0^{\prime\prime}}{n_1}$,
$Z^2\in\rmat{\module{N}_{2}}{n_{1}}{n_{2}}$ \ldots,
$Z^{k}\in\rmat{\module{N}_{k}}{n_{k-1}}{n_{k}}$. Let
$Z\in\rmat{\module{M}_0}{n_0^\prime}{n_0^{\prime\prime}}$ be
such that $\begin{bmatrix} X^{0\prime} & Z\\
0 & X^{0\prime\prime}
\end{bmatrix}\in\Omega^{(0)}_{n_0^\prime+n_0^{\prime\prime}}$. Then,
similarly to Proposition \ref{prop:delta_r-k},
\begin{multline}\label{eq:uptr_left0-k}
 f\left(\begin{bmatrix} X^{0\prime} & Z\\
0 & X^{0\prime\prime}
\end{bmatrix},X^1,\ldots,X^k\right)(\col\,
[Z^{1\prime},Z^{1\prime\prime}],Z^2,\ldots,Z^k)\\
 =\col\Big
[f(X^{0\prime},X^1,\ldots,X^k)(Z^{1\prime},Z^2,\ldots,Z^k)\hfill\\
+ {}_0\Delta_Rf(X^{0\prime},X^{0\prime\prime},X^1,\ldots,X^k)
(Z,Z^{1\prime\prime},Z^2,\ldots,Z^k),\\
 \hfill
f(X^{0\prime\prime},X^1,\ldots,X^k)
(Z^{1\prime\prime},Z^2,\ldots,Z^k)\Big].
\end{multline}
Here
${}_0\Delta_Rf(X^{0\prime},X^{0\prime\prime},X^1,\ldots,X^{k})
(Z,Z^{1\prime\prime},Z^2,\ldots,Z^k)$ is determined uniquely by
\eqref{eq:uptr_left0-k}, is independent of
$Z^{1\prime}$ and is homogeneous in $Z$ whenever $\begin{bmatrix} X^{0\prime} & Z\\
0 & X^{0\prime\prime}
\end{bmatrix}\in\Omega^{(0)}_{n_0^\prime +n_0^{\prime\prime}}$.
We can define
${}_0\Delta_Rf(X^{0\prime},X^{0\prime\prime},X^1,\ldots,X^{k})
(Z,Z^{1\prime\prime},Z^2,\ldots,Z^k)$ for all
$Z\in\rmat{\module{M}_0}{n_0^\prime}{n_0^{\prime\prime}}$ by
homogeneity analogously to \eqref{eq:def_delta_r-k}. Similarly to
Proposition \ref{prop:add-k},
 it is linear in $Z$; similarly
to Theorem \ref{thm:dif-op},
$${}_0\Delta_Rf\in\tclass{k+1}(\Omega^{(0)},\Omega^{(0)},\Omega^{(1)},\ldots,\Omega^{(k)};
\ncspacej{\module{N}}{0},\ncspacej{\module{M}}{0},\ncspacej{\module{N}}{1},\ldots,\ncspacej{\module{N}}{k}).$$

To define ${}_j\Delta_Rf$, $1\le j\le k-1$, we assume now that
$\Omega^{(j)}$ is a right admissible nc set. Let
$X^0\in\Omega^{(0)}_{n_0}$, \ldots,
$X^{j-1}\in\Omega^{(j-1)}_{n_{j-1}}$,
$X^{j\prime}\in\Omega^{(j)}_{n_j^\prime}$,
$X^{j\prime\prime}\in\Omega^{(j)}_{n_j^{\prime\prime}}$,
$X^{j+1}\in\Omega^{(j+1)}_{n_{j+1}}$, \ldots,
$X^{k}\in\Omega^{(k)}_{n_{k}}$,
$Z^1\in\rmat{\module{N}_{1}}{n_{0}}{n_{1}}$, \ldots,
$Z^{j-1}\in\rmat{(\module{N}_{j-1})}{n_{j-2}}{n_{j-1}}$,
$Z^{j\prime}\in\rmat{\module{N}_j}{n_{j-1}}{n_j^\prime}$,
$Z^{j\prime\prime}\in\rmat{\module{N}_j}{n_{j-1}}{n_j^{\prime\prime}}$,
$Z^{j+1\prime}\in\rmat{(\module{N}_{j+1})}{n_{j}^\prime}{n_{j+1}}$,
$Z^{j+1\prime\prime}\in\rmat{(\module{N}_{j+1})}{n_{j}^{\prime\prime}}{n_{j+1}}$,
$Z^{j+2}\in\rmat{(\module{N}_{j+2})}{n_{j+1}}{n_{j+2}}$, \ldots,
$Z^{k}\in\rmat{\module{N}_{k}}{n_{k-1}}{n_{k}}$. Let
$Z\in\rmat{\module{M}_j}{n_j^\prime}{n_j^{\prime\prime}}$ be
such that $\begin{bmatrix} X^{j\prime} & Z\\
0 & X^{j\prime\prime}
\end{bmatrix}\in\Omega^{(j)}_{n_j^\prime+n_j^{\prime\prime}}$. Then,
similarly to Proposition \ref{prop:delta_r-k},
\begin{multline}\label{eq:uptr_leftj-k}
 f\left(X^0,\ldots,X^{j-1},\begin{bmatrix} X^{j\prime} & Z\\
0 & X^{j\prime\prime}
\end{bmatrix},X^{j+1},\ldots,X^k\right)\\
\hfill(Z^1,\ldots,Z^{j-1},\row\,
[Z^{j\prime},Z^{j\prime\prime}],\col
\,[Z^{j+1\prime},Z^{j+1\prime\prime}],Z^{j+2},\ldots,Z^k)\\
=
f(X^0,\ldots,X^{j-1},X^{j\prime},X^{j+1},\ldots,X^k)(Z^1,\ldots,Z^{j-1},
Z^{j\prime},Z^{j+1\prime},Z^{j+2},\ldots,Z^k)\\
+
{}_j\Delta_Rf(X^0,\ldots,X^{j-1},X^{j\prime},X^{j\prime\prime},X^{j+1},\ldots,X^k)\\
\hfill (Z^1,\ldots,Z^{j-1},Z^{j\prime},Z,Z^{j+1\prime\prime},Z^{j+2},\ldots,Z^k)\\
+ f(X^0,\ldots,X^{j-1},X^{j\prime\prime},X^{j+1},\ldots,X^k)
(Z^1,\ldots,Z^{j-1},Z^{j\prime\prime},Z^{j+1\prime\prime},Z^{j+2},\ldots,Z^k).
\end{multline}
Here
\begin{multline*}
{}_j\Delta_Rf(X^0,\ldots,X^{j-1},X^{j\prime},X^{j\prime\prime},X^{j+1},\ldots,X^k)\\
\hfill(Z^1,\ldots,Z^{j-1},Z^{j\prime},Z,Z^{j+1\prime\prime},Z^{j+2},\ldots,Z^k)
\end{multline*}
is determined uniquely by \eqref{eq:uptr_leftj-k}, is independent
of
$Z^{j\prime\prime}$ and $Z^{j+1\prime}$ and is homogeneous in $Z$ whenever $\begin{bmatrix} X^{j\prime} & Z\\
0 & X^{j\prime\prime}
\end{bmatrix}\in\Omega^{(j)}_{n_j^\prime +n_j^{\prime\prime}}$.
We can define
\begin{multline*}
{}_j\Delta_Rf(X^0,\ldots,X^{j-1},X^{j\prime},X^{j\prime\prime},X^{j+1},\ldots,X^k)\\
\hfill(Z^1,\ldots,Z^{j-1},Z^{j\prime},Z,Z^{j+1\prime\prime},Z^{j+2},\ldots,Z^k)
\end{multline*}
 for all
$Z\in\rmat{\module{M}_j}{n_j^\prime}{n_j^{\prime\prime}}$ by
homogeneity analogously to \eqref{eq:def_delta_r-k}. Similarly to
Proposition \ref{prop:add-k},
 it is linear in $Z$; similarly
to Theorem \ref{thm:dif-op},
\begin{multline*}
{}_j\Delta_Rf\in\tclass{k+1}(\Omega^{(0)},\ldots,\Omega^{(j-1)},\Omega^{(j)},\Omega^{(j)},\Omega^{(j+1)},\ldots,
\Omega^{(k)}
;\\
\ncspacej{\module{N}}{0},\ldots,\ncspacej{\module{N}}{j},
\ncspacej{\module{M}}{j}, \ncspacej{\module{N}}{j+1},
\ldots,\ncspacej{\module{N}}{k}).
\end{multline*}

Using Proposition \ref{prop:higher-ncfun-ext}, we can establish a
$j$-analogue of \eqref{eq:delta=delta_ext-k},
\begin{multline}\label{eq:j_delta=j_delta_ext-k}
{}_j\Delta_Rf(X^0,\ldots,X^{j-1},X^{j\prime},X^{j\prime\prime},X^{j+1},\ldots,X^k)\\
\hfill(Z^1,\ldots,Z^{j-1},Z^{j\prime},Z,Z^{j+1\prime\prime},Z^{j+2},\ldots,Z^k)\\
={}_j\Delta_R\widetilde{f}(X^0,\ldots,X^{j-1},X^{j\prime},X^{j\prime\prime},X^{j+1},\ldots,X^k)\\
\hfill(Z^1,\ldots,Z^{j-1},Z^{j\prime},Z,Z^{j+1\prime\prime},Z^{j+2},\ldots,Z^k)
\end{multline}

One can also formulate the definition of ${}_j\Delta_Rf$, $0\le
j\le k$, equivalently, using the tensor product interpretation of
the values of the nc function $f$ of order $k$, similarly to
Remark \ref{rem:delta_interpret}.

Analogously to Remark \ref{rem:delta_tensor_prod}, it follows from
the definition of ${}_j\Delta_R$ that its action on a tensor
product of (possibly, higher order) nc functions boils down to its
action on one of the factors:
\begin{equation}\label{eq:j-delta_tensor_prod}
{}_j\Delta_R (f \otimes g) = \begin{cases}
{}_j\Delta_R(f) \otimes g, & 0 \leq j \leq k,\\
f \otimes {}_{j-k-1}\Delta_R(g), & k+1 \leq j \leq k+\ell+1,
\end{cases}
\end{equation}
where
\begin{eqnarray*}
f\in\tclass{k}(\Omega^{(0)},\ldots,\Omega^{(k)};\ncspacej{\module{N}}{0},\ldots,\ncspacej{\module{N}}{k}),\\
g\in\tclass{\ell}(\Omega^{(k+1)},\ldots,\Omega^{(k+\ell)};
\ncspacej{\module{L}}{0}^*,\ldots,\ncspacej{\module{L}}{\ell}),
\end{eqnarray*}
and
\begin{equation}\label{eq:j-delta_tensor_prod_k}
{}_j\Delta_R (f^0\otimes\cdots\otimes f^{j-1}\otimes f^j\otimes
f^{j+1}\otimes\cdots\otimes f^k)=(f^0\otimes\cdots\otimes
f^{j-1}\otimes \Delta_R f^j\otimes f^{j+1}\otimes\cdots\otimes
f^k).
\end{equation}
\end{rem}

\section{First order difference formulae for higher order nc
functions}\label{subsec:Lagrange_k} We establish now the analogue
of Theorem \ref{thm:Lagrange} for higher order
 nc functions. An iterative use of this result will lead us in
 Chapter \ref{sec:TT} to a
 nc analogue of  Brook Taylor expansion.
\begin{thm}\label{thm:Lagrange_k}
Let
$f\in\tclass{k}(\Omega^{(0)},\ldots,\Omega^{(k)};\ncspacej{\module{N}}{0},\ldots,\ncspacej{\module{N}}{k})$,
where $\Omega^{(k)}\subseteq\ncspacej{\module M}{k}$ is a
 right admissible nc set. Then for
all $n_0,\ldots,n_k\in\mathbb{N}$, and arbitrary
$X^0\in\Omega^{(0)}_{n_0}$, \ldots,
$X^{k-1}\in\Omega^{(k-1)}_{n_{k-1}}$, $X,Y\in\Omega^{(k)}_{n_k}$,
$Z^1\in\rmat{\module{N}_1}{n_0}{n_1}$, \ldots,
$Z^k\in\rmat{\module{N}_k}{n_{k-1}}{n_k}$,
\begin{multline}\label{eq:RightLagr_k}
f(X^0,\ldots,X^{k-1},X)(Z^1,\ldots,Z^k)-f(X^0,\ldots,X^{k-1},Y)
(Z^1,\ldots,Z^k)\\
= \Delta_Rf(X^0,\ldots,X^{k-1},Y,X)(Z^1,\ldots,Z^k,X-Y)
\end{multline}
and
\begin{multline}\label{eq:RightLagr_k'}
 f(X^0,\ldots,X^{k-1},X)(Z^1,\ldots,Z^k)-f(X^0,\ldots,X^{k-1},Y)
 (Z^1,\ldots,Z^k)\\
  = \Delta_Rf(X^0,\ldots,X^{k-1},X,Y)(Z^1,\ldots,Z^k,X-Y).
\end{multline}
\end{thm}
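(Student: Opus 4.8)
The plan is to reduce Theorem~\ref{thm:Lagrange_k} to the already-established first order difference formula~\eqref{eq:gen-RightLagr} (or directly to~\eqref{eq:RightLagr}) for nc functions of order $0$, by exploiting the tensor product interpretation of higher order nc functions from Remark~\ref{rem:tensor_values} together with the compatibility~\eqref{eq:delta_tensor_prod_k} of $\Delta_R$ with tensor products. Concretely, since the natural mapping \eqref{eq:natural}--\eqref{eq:natur_map} is an isomorphism in the relevant (free, finite rank) situation, and since by Remark~\ref{rem:tensor_prod} every higher order nc function is a sum of elementary tensors $f^0\otimes\cdots\otimes f^k$ with $f^j\in\tclass{0}$, it suffices by linearity to prove both identities when $f=f^0\otimes\cdots\otimes f^k$. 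For such an $f$ we have, by \eqref{eq:natur_map_fun_forms},
\begin{equation*}
f(X^0,\ldots,X^{k-1},X)(Z^1,\ldots,Z^k)=f^0(X^0)\bigl(Z^1f^1(X^1)\bigr)\cdots\bigl(Z^{k-1}f^{k-1}(X^{k-1})\bigr)\bigl(Z^kf^k(X)\bigr),
\end{equation*}
so the left-hand side of \eqref{eq:RightLagr_k} equals $f^0(X^0)\bigl(Z^1f^1(X^1)\bigr)\cdots\bigl(Z^{k-1}f^{k-1}(X^{k-1})\bigr)\bigl(Z^k(f^k(X)-f^k(Y))\bigr)$, the only factor depending on the last slot being $f^k$.

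Next I would apply the order-$0$ finite difference formula~\eqref{eq:RightLagr} to $f^k$: $f^k(X)-f^k(Y)=\Delta_Rf^k(Y,X)(X-Y)$. By Remark~\ref{rem:delta_tensor_prod}, and in particular \eqref{eq:delta_tensor_prod_k}, we have $\Delta_R(f^0\otimes\cdots\otimes f^k)=f^0\otimes\cdots\otimes f^{k-1}\otimes\Delta_Rf^k$, and unwinding \eqref{eq:prod_fun}/\eqref{eq:natur_map_fun_forms} for this $(k+1)$-th order tensor shows exactly that
\begin{equation*}
\Delta_Rf(X^0,\ldots,X^{k-1},Y,X)(Z^1,\ldots,Z^k,X-Y)=f^0(X^0)\bigl(Z^1f^1(X^1)\bigr)\cdots\bigl(Z^k\,\Delta_Rf^k(Y,X)(X-Y)\bigr).
\end{equation*}
Combining the two displays gives \eqref{eq:RightLagr_k}. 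Formula~\eqref{eq:RightLagr_k'} is obtained the same way, using~\eqref{eq:RightLagr'} in place of~\eqref{eq:RightLagr}, i.e. $f^k(X)-f^k(Y)=\Delta_Rf^k(X,Y)(X-Y)$, together with $\Delta_Rf(X^0,\ldots,X^{k-1},X,Y)=(f^0\otimes\cdots\otimes f^{k-1}\otimes\Delta_Rf^k)(X^0,\ldots,X^{k-1},X,Y)$.

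An entirely parallel, "hands-on" alternative would be to mimic the proof of Theorem~\ref{thm:gen-Lagrange}: pass to the similarity invariant envelope so that, by Proposition~\ref{prop:adm-env}, the block triangular matrix $\begin{bmatrix} Y & X-Y\\ 0 & X\end{bmatrix}\in\Omega^{(k)}_{2n_k}$, then evaluate $f$ with its last argument equal to this block matrix, feed $\row[I_{n_k},I_{n_k}]$ (resp. the appropriate column intertwiner) into the $k$-th slot, and use property~\eqref{eq:3X^k} of higher order nc functions to read off \eqref{eq:RightLagr_k} from the block structure of $f\bigl(X^0,\ldots,X^{k-1},\begin{bmatrix} Y & X-Y\\0&X\end{bmatrix}\bigr)(\ldots,\row[I,I])$ via the defining relation~\eqref{eq:uptr-k} of $\Delta_R$ on $\tclass{k}$; the additivity of $f(\ldots,X^{k\prime})(\ldots,Z^k)$ in $Z^k$ is what lets the diagonal terms cancel. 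I expect the main obstacle to be bookkeeping rather than conceptual: in the tensor approach one must carefully justify the reduction to elementary tensors (the isomorphism \eqref{eq:natural} requires the freeness/finite-rank hypotheses, and one should note these are in force, or else argue by the universal property directly), and check that \eqref{eq:delta_tensor_prod_k} really produces the claimed evaluation with $Z^{k+1}:=X-Y$ inserted in the last argument; in the direct approach the obstacle is keeping the indices of the many auxiliary $Z^j$'s and the block matrices straight while invoking \eqref{eq:3X^k} and \eqref{eq:uptr-k}. Either way, once the $k=0$ case (Theorem~\ref{thm:Lagrange}) is granted, no genuinely new difficulty arises.
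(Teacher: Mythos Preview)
Your tensor-product approach has a real gap. Remark~\ref{rem:tensor_prod} only asserts that the map
\eqref{eq:natural_fun} from $\tclass{0}(\Omega^{(0)};\ncspacej{\module{N}}{0})\otimes\cdots\otimes\tclass{0}(\Omega^{(k)};\ncspacej{\module{N}^*}{k})$ into $\tclass{k}$ is an \emph{embedding}; it never claims surjectivity, and the paper does not establish that an arbitrary higher order nc function is a (finite) sum of elementary tensors $f^0\otimes\cdots\otimes f^k$. You are conflating two different isomorphism questions: the pointwise isomorphism \eqref{eq:natural}--\eqref{eq:natur_map} on \emph{values} (which, under freeness/finite-rank hypotheses that are not assumed in Theorem~\ref{thm:Lagrange_k}, lets you write each individual $f(X^0,\ldots,X^k)$ as a tensor) is not the same as a decomposition of $f$ itself into tensors of nc functions---the tensor factors at each point vary with $(X^0,\ldots,X^k)$ and have no reason to assemble into global nc functions. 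So the reduction ``it suffices by linearity to prove both identities when $f=f^0\otimes\cdots\otimes f^k$'' is unjustified.

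Your ``hands-on'' alternative, on the other hand, is exactly the paper's route. The paper proves the more general Theorem~\ref{thm:gen-Lagrange_k} (with an intertwiner $S$) and specializes; the mechanism is precisely what you describe: pass to the similarity invariant envelope so that $\begin{bmatrix} Y & X-Y\\ 0 & X\end{bmatrix}\in\Omega^{(k)}_{2n_k}$ (Proposition~\ref{prop:adm-env}), use the intertwining $\begin{bmatrix} Y & X-Y\\ 0 & X\end{bmatrix}\begin{bmatrix} I_{n_k}\\ I_{n_k}\end{bmatrix}=\begin{bmatrix} I_{n_k}\\ I_{n_k}\end{bmatrix}X$ together with \eqref{eq:3X^k}, and read off the identity from \eqref{eq:uptr-k} with $Z^{k\prime}=Z^k$, $Z^{k\prime\prime}=0$ (the term $f(\ldots,X)(\ldots,0)$ vanishes by linearity). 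One small correction: the intertwiner $\col[I_{n_k},I_{n_k}]$ is what you multiply by on the right via \eqref{eq:3X^k}; what gets fed into the $k$-th argument slot is $\row[Z^k,0]$, not $\row[I_{n_k},I_{n_k}]$. With that adjustment, your second approach is complete and coincides with the paper's.
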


As with Theorem \ref{thm:Lagrange}, it is a consequence of the
following more general result.

\begin{thm}\label{thm:gen-Lagrange_k}
Let
$f\in\tclass{k}(\Omega^{(0)},\ldots,\Omega^{(k)};\ncspacej{\module{N}}{0},\ldots,\ncspacej{\module{N}}{k})$,
where $\Omega^{(k)}\subseteq\ncspacej{\module M}{k}$ is a
 right admissible nc set. Then for
all $n_0,\ldots,n_{k-1}\in\mathbb{N}$ and arbitrary
$X^0\in\Omega^{(0)}_{n_0}$, \ldots,
$X^{k-1}\in\Omega^{(k-1)}_{n_{k-1}}$, $X\in\Omega^{(k)}_{n}$,
$Y\in\Omega^{(k)}_{m}$, $Z^1\in\rmat{\module{N}_1}{n_0}{n_1}$,
\ldots, $Z^{k-1}\in\rmat{(\module{N}_{k-1})}{n_{k-2}}{n_{k-1}}$,
$Z^k\in\rmat{\module{N}_{k}}{n_{k-1}}{m}$,
$S\in\rmat{\ring}{m}{n}$,
\begin{multline}\label{eq:gen-RightLagr_k}
f(X^0,\ldots,X^{k-1},X)(Z^1,\ldots,Z^kS)-f(X^0,\ldots,X^{k-1},Y)
(Z^1,\ldots,Z^k)S\\
= \Delta_Rf(X^0,\ldots,X^{k-1},Y,X)(Z^1,\ldots,Z^k,SX-YS).
\end{multline}
\end{thm}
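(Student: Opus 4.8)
The plan is to imitate the proof of Theorem~\ref{thm:gen-Lagrange} at ``level $k$'', using the characterization of higher order nc functions via respecting intertwinings (Proposition~\ref{prop:simint_k}, specifically condition \eqref{eq:3X^k}) together with the defining relation \eqref{eq:delta_r_k} for $\Delta_R$ on $\tclass{k}$. As in the earlier proofs, I would first invoke \eqref{eq:delta=delta_ext-k} and Proposition~\ref{prop:higher-ncfun-ext} to reduce, without loss of generality, to the case where $\Omega^{(k)}$ is similarity invariant, i.e.\ $\widetilde{\Omega}^{(k)}=\Omega^{(k)}$; then Proposition~\ref{prop:adm-env} guarantees that the relevant block upper triangular matrix over $\module{M}_k$ lies in $\Omega^{(k)}$, so we may work with the honest block matrix rather than a scaled one, with $r=1$.

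Concretely, set $n_k$ and consider, in the $k$-th slot, the matrix $\begin{bmatrix} Y & SX-YS \\ 0 & X \end{bmatrix}\in\Omega^{(k)}_{m+n}$, where $X\in\Omega^{(k)}_n$, $Y\in\Omega^{(k)}_m$, $S\in\rmat{\ring}{m}{n}$. The key elementary identity is the intertwining
\begin{equation*}
\begin{bmatrix} Y & SX-YS \\ 0 & X \end{bmatrix}\begin{bmatrix} S \\ I_n \end{bmatrix}=\begin{bmatrix} S \\ I_n \end{bmatrix}X,
\end{equation*}
exactly as in the proof of Theorem~\ref{thm:gen-Lagrange}. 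Applying condition \eqref{eq:3X^k} for the nc function $f$ of order $k$ (with $T_k=\begin{bmatrix} S \\ I_n\end{bmatrix}$, $\tilde{X}^k=X$, and $X^k=\begin{bmatrix} Y & SX-YS\\ 0 & X\end{bmatrix}$), and feeding in the arguments $Z^1,\dots,Z^{k-1}$ in slots $1,\dots,k-1$ and $\row\,[Z^k,0]$ in slot $k$, I would get
\begin{equation*}
f\Big(X^0,\dots,X^{k-1},\begin{bmatrix} Y & SX-YS\\ 0 & X\end{bmatrix}\Big)(Z^1,\dots,Z^{k-1},\row\,[Z^k,0])\begin{bmatrix} S\\ I_n\end{bmatrix}=f(X^0,\dots,X^{k-1},X)(Z^1,\dots,Z^{k-1},Z^kS).
\end{equation*}
Now I expand the left-hand side using \eqref{eq:uptr-k} (with $X^{k\prime}=Y$, $X^{k\prime\prime}=X$, $Z=SX-YS$, $Z^{k\prime}=Z^k$, $Z^{k\prime\prime}=0$): the row value is $\row\big[f(X^0,\dots,X^{k-1},Y)(Z^1,\dots,Z^k),\,\Delta_Rf(X^0,\dots,X^{k-1},Y,X)(Z^1,\dots,Z^k,SX-YS)\big]$. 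Multiplying this row on the right by $\col\,[S,I_n]$ yields $f(X^0,\dots,X^{k-1},Y)(Z^1,\dots,Z^k)S+\Delta_Rf(X^0,\dots,X^{k-1},Y,X)(Z^1,\dots,Z^k,SX-YS)$, and equating with the right-hand side gives precisely \eqref{eq:gen-RightLagr_k}.

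The main subtlety—and the step I expect to be the only place care is needed—is bookkeeping: making sure that the slots and the sizes of the ``dummy'' $Z^j$'s ($j<k$), the role of $\row\,[Z^k,0]$ versus the full row $\row\,[Z^k,Z^{k\prime\prime}]$, and the independence of $\Delta_Rf(\dots,Y,X)(\dots)$ from the last diagonal argument all match up, and invoking the already-established additivity/linearity of $\Delta_Rf$ in its last argument (Propositions~\ref{prop:homog-k} and~\ref{prop:add-k}, via Theorem~\ref{thm:dif-op}) so that \eqref{eq:uptr-k} applies with $Z^{k\prime\prime}=0$ cleanly. None of this is deep; it is the $k>0$ analogue of the one-line computation in the $k=0$ case. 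Finally, Theorem~\ref{thm:Lagrange_k} follows from Theorem~\ref{thm:gen-Lagrange_k} exactly as before: take $m=n_k=:n$ and $S=I_n$, writing $X-Y=I_n\cdot X-Y\cdot I_n$ to get \eqref{eq:RightLagr_k}; and applying the same argument with the roles of $X$ and $Y$ interchanged (using the intertwining $\begin{bmatrix} X & X-Y\\ 0 & Y\end{bmatrix}\begin{bmatrix} I_n\\ I_n\end{bmatrix}=\begin{bmatrix} I_n\\ I_n\end{bmatrix}X$, say) gives \eqref{eq:RightLagr_k'}. One could alternatively deduce \eqref{eq:RightLagr_k'} from \eqref{eq:RightLagr_k} by swapping $X\leftrightarrow Y$ together with the intertwining relation $\begin{bmatrix} Y & X-Y\\ 0 & X\end{bmatrix}\begin{bmatrix} I_n & 0\\ I_n & I_n\end{bmatrix}=\begin{bmatrix} I_n & 0\\ I_n & I_n\end{bmatrix}\begin{bmatrix} X & X-Y\\ 0 & Y\end{bmatrix}$, much as in the remark following Theorem~\ref{thm:gen-Lagrange}.
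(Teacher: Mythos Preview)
Your proposal is correct and follows essentially the same argument as the paper: reduce to similarity-invariant $\Omega^{(k)}$ via \eqref{eq:delta=delta_ext-k}, use the intertwining $\begin{bmatrix} Y & SX-YS\\ 0 & X\end{bmatrix}\begin{bmatrix} S\\ I_n\end{bmatrix}=\begin{bmatrix} S\\ I_n\end{bmatrix}X$ together with \eqref{eq:3X^k}, and then expand the left-hand side via \eqref{eq:uptr-k} (i.e., Proposition~\ref{prop:delta_r-k}) with $Z^{k\prime\prime}=0$. The extra bookkeeping worry you flag is harmless---Proposition~\ref{prop:delta_r-k} already records that the right block in \eqref{eq:uptr-k} is independent of $Z^{k\prime\prime}$, so no separate appeal to linearity is needed.
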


\begin{proof} By \eqref{eq:delta=delta_ext-k}, we may assume, without loss of
 generality,
  that $\Omega^{(k)}$ is similarity
 invariant, i.e., $\widetilde{\Omega}^{(k)}=\Omega^{(k)}$; see Remark
 \ref{rem:alternative} and Appendix \ref{app}. By Proposition
 \ref{prop:adm-env},
$$\begin{bmatrix} Y & SX-YS\\
0 & X
\end{bmatrix}\in\Omega^{(k)}_{m+n}.$$
We have
$$\begin{bmatrix} Y & SX-YS\\
0 & X
\end{bmatrix}\begin{bmatrix}
S\\
I_n
\end{bmatrix}=\begin{bmatrix}
S\\
I_n
\end{bmatrix}X.
$$
By \eqref{eq:3X^k},
\begin{multline*}
f\left(X^0,\ldots,X^{k-1},\begin{bmatrix} Y & SX-YS\\
0 & X
\end{bmatrix}\right)(Z^1,\ldots,Z^{k-1},\row\,[Z^k,0])\begin{bmatrix}
S\\
I_n
\end{bmatrix}\\
=f(X^0,\ldots,X^{k-1},X)(Z^1,\ldots,Z^{k-1},Z^kS).
\end{multline*}
By Proposition \ref{prop:delta_r-k}, the left-hand side is equal
to \begin{multline*}
\row\,[f(X^0,\ldots,X^{k-1},Y)(Z^1,\ldots,Z^k),\\
\Delta_Rf(X^0,\ldots,X^{k-1},Y,X)(Z^1,\ldots,Z^k, SX-YS)]
\col\,[S,I_n]\\
 =f(X^0,\ldots,X^{k-1},Y)(Z^1,\ldots,Z^k)S\hfill\\
+\Delta_Rf(X^0,\ldots,X^{k-1},Y,X)(Z^1,\ldots,Z^k,SX-YS),
\end{multline*}
hence \eqref{eq:gen-RightLagr_k} follows.
\end{proof}
\begin{rem}\label{rem:deriv_k}
Let $\ring=\mathbb{R}$ or $\ring=\mathbb{C}$. Setting $X=Y+tZ$
(with $t\in\mathbb{R}$ or $t\in\mathbb{C}$) in Theorem
\ref{thm:Lagrange_k}, we obtain
\begin{multline*}
f(X^0,\ldots,X^{k-1},Y+tZ)(Z^1,\ldots,Z^{k})-
f(X^0,\ldots,X^{k-1},Y)(Z^1,\ldots,Z^{k})\\
=t\Delta_Rf(X^0,\ldots,X^{k-1},Y,Y+tZ)(Z^1,\ldots,Z^{k},Z).
\end{multline*}
 Under appropriate continuity conditions,
it follows that
$$\Delta_Rf(X^0,\ldots,X^{k-1},Y,Y)(Z^1,\ldots,Z^{k},Z)$$
 is the directional derivative of
$$f(X^0,\ldots,X^{k-1},\cdot)(Z^1,\ldots,Z^{k})$$
  at $Y$ in
the direction $Z$, i.e.,
$$\Delta_Rf(X^0,\ldots,X^{k-1},Y,Y)(Z^1,\ldots,Z^{k},\cdot)$$
is the differential of
$$f(X^0,\ldots,X^{k-1},\cdot)(Z^1,\ldots,Z^{k})$$
  at $Y$.
\end{rem}
\begin{rem}\label{rem:j-Lagrange_k}
Using the difference-differential operators ${}_j\Delta_R$ (see Remark \ref{rem:j-delta}), we can establish first
order difference formulae for higher order nc functions (cf. Theorem \ref{thm:Lagrange_k}) with respect to the
$j$-th variable: for $j=0,\ldots,k$,
\begin{multline}\label{eq:j-RightLagr_k}
f(X^0,\ldots, X^{j-1},X,X^{j+1},\ldots,X^k)(Z^1,\ldots,Z^k)\\
-f(X^0,\ldots, X^{j-1},Y,X^{j+1},\ldots,X^k)(Z^1,\ldots,Z^k)\\
={}_j\Delta_Rf(X^0,\ldots,
X^{j-1},Y,X,X^{j+1},\ldots,X^k)(Z^1,\ldots,Z^j,X-Y,Z^{j+1},\ldots,Z^k)
\end{multline}
and
\begin{multline}\label{eq:j-RightLagr_k'}
f(X^0,\ldots, X^{j-1},X,X^{j+1},\ldots,X^k)(Z^1,\ldots,Z^k)\\
-f(X^0,\ldots, X^{j-1},Y,X^{j+1},\ldots,X^k)(Z^1,\ldots,Z^k)\\
={}_j\Delta_Rf(X^0,\ldots, X^{j-1},X,Y,X^{j+1},\ldots,X^k)(Z^1,\ldots,Z^j,X-Y,Z^{j+1},\ldots,Z^k).
\end{multline}

More generally (cf. \ref{thm:gen-Lagrange_k}), we have
\begin{multline}\label{eq:gen-0-RightLagr_k}
Sf(X,X^1,\ldots,X^k)(Z^1,\ldots,Z^k)
-f(Y,X^1,\ldots,X^k)(SZ^1,Z^2,\ldots,Z^k)\\
={}_0\Delta_Rf(Y,X,X^1,\ldots, X^k)(SX-YS,Z^1,\ldots,Z^k)
\end{multline}
and, for $0<j<k$,
\begin{multline}\label{eq:gen-j-RightLagr_k}
f(X^0,\ldots,
X^{j-1},X,X^{j+1},\ldots,X^k)(Z^1,\ldots,Z^{j-1},Z^jS,Z^{j+1},\ldots,Z^k)\\
-f(X^0,\ldots, X^{j-1},Y,X^{j+1},\ldots,X^k)(Z^1,\ldots,Z^j,SZ^{j+1},Z^{j+2},\ldots,Z^k)\\
\!=\!{}_j\Delta_Rf(X^0,\ldots,
X^{j-1},Y,X,X^{j+1},\ldots,X^k)(Z^1,\ldots,Z^{j},SX-YS,Z^{j+1},\ldots,Z^k).
\end{multline}
The proof is similar to the proof of Theorem
\ref{thm:gen-Lagrange_k}, except that we use \eqref{eq:3X^0}
together with \eqref{eq:uptr_left0-k} (in the case of
\eqref{eq:gen-0-RightLagr_k}) or \eqref{eq:3X^j} together with
\eqref{eq:uptr_leftj-k} (in the case of
\eqref{eq:gen-j-RightLagr_k}) instead of using \eqref{eq:3X^k}
together with \eqref{eq:uptr-k}. We can also use the similarity
invariant envelope $\widetilde{\Omega}^{(j)}$ of the corresponding
right admissible nc set $\Omega^{(j)}$, and the equality
\eqref{eq:j_delta=j_delta_ext-k}.
\end{rem}
\begin{rem}\label{rem:newdef_k-ncfun}
Clearly, the formulae \eqref{eq:gen-0-RightLagr_k}, \eqref{eq:gen-j-RightLagr_k}, and \eqref{eq:gen-RightLagr_k}
generalize \eqref{eq:3X^0}, \eqref{eq:3X^j}, and \eqref{eq:3X^k}, respectively.
\end{rem}

\section{NC integrability}\label{subsec:integra} We derive now
necessary conditions for a higher order nc function to be
integrable, i.e., to be the result of applying a higher order
difference-differential operator to a nc function (or to a higher
order nc function of a lower order).
\begin{thm}\label{thm:integra}
Let $f\in\tclass{k}(\Omega;\ncspace{\module{N}},\ncspace{\module{M}},\ldots,\ncspace{\module{M}})$, with
$\Omega\subseteq\ncspace{\module{M}}$ a right admissible nc set. If there exists a nc function
$g\in\tclass{0}(\Omega;\ncspace{\module{N}})$ such that $f=\Delta_R^kg$, then
\begin{equation}
{}_i\Delta_Rf={}_j\Delta_Rf, \qquad i,j=0,\ldots,k.
\end{equation}
\end{thm}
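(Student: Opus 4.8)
The plan is to establish the single identity ${}_i\Delta_R(\Delta_R^kg)=\Delta_R^{k+1}g$ for each $i\in\{0,\ldots,k\}$; since the right-hand side is manifestly independent of $i$, and since all the target modules past the zeroth one are equal to $\module M$ (so that the classes $\tclass{k+1}$ to which the various ${}_i\Delta_Rf$ belong are one and the same), this at once gives ${}_i\Delta_Rf={}_j\Delta_Rf$ for all $i,j$.

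First I would pass to the similarity invariant envelope, i.e.\ assume $\widetilde\Omega=\Omega$, using \eqref{eq:delta=delta_ext-k} and its analogue \eqref{eq:j_delta=j_delta_ext-k} (cf.\ Remark \ref{rem:alternative} and Appendix \ref{app}); by Proposition \ref{prop:adm-env} and Lemma \ref{lem:bidiag-homog} every block upper bidiagonal matrix with diagonal entries in $\Omega$ then again lies in $\Omega$, so Theorem \ref{thm:bidiag} is available without any scaling. Fix $i$, set $W=\begin{bmatrix}X^{i\prime}&Z\\0&X^{i\prime\prime}\end{bmatrix}\in\Omega$, and evaluate $f=\Delta_R^kg$ at $W$ placed in the $(i{+}1)$-st argument slot, choosing the direction entering that slot to be $\row[Z^{i\prime},0]$ and the direction leaving it to be $\col[0,Z^{i+1\prime\prime}]$ (the entering direction is absent when $i=0$, the leaving one when $i=k$). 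By Theorem \ref{thm:bidiag} this value is a corner block of $g$ evaluated on the $(k{+}1)$-block upper bidiagonal matrix whose $(i{+}1)$-st diagonal block equals $W$; refining that partition, i.e.\ splitting $W$ into its $2\times2$ form, yields the $(k{+}2)$-block matrix with diagonal $X^0,\ldots,X^{i-1},X^{i\prime},X^{i\prime\prime},X^{i+1},\ldots,X^k$ and superdiagonal $Z^1,\ldots,Z^{i-1},Z^{i\prime},Z,Z^{i+1\prime\prime},Z^{i+2},\ldots,Z^k$ (the $Z^{i\prime}$, resp.\ $Z^{i+1\prime\prime}$, term being omitted when $i=0$, resp.\ $i=k$), and --- precisely because of the zero blocks in $\row[Z^{i\prime},0]$ and $\col[0,Z^{i+1\prime\prime}]$ --- with all entries strictly above the superdiagonal equal to zero. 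So this matrix is genuinely block upper bidiagonal, and a second application of Theorem \ref{thm:bidiag} identifies the corner block with $\Delta_R^{k+1}g(X^0,\ldots,X^{i-1},X^{i\prime},X^{i\prime\prime},X^{i+1},\ldots,X^k)(Z^1,\ldots,Z^{i-1},Z^{i\prime},Z,Z^{i+1\prime\prime},Z^{i+2},\ldots,Z^k)$.

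On the other hand, the same value, read off directly from $f$ via the defining relation for ${}_i\Delta_R$ --- \eqref{eq:uptr-k} when $i=k$, \eqref{eq:uptr_left0-k} when $i=0$, \eqref{eq:uptr_leftj-k} when $0<i<k$ --- has, in the corresponding block, exactly ${}_i\Delta_Rf(X^0,\ldots,X^{i-1},X^{i\prime},X^{i\prime\prime},X^{i+1},\ldots,X^k)(Z^1,\ldots,Z^{i-1},Z^{i\prime},Z,Z^{i+1\prime\prime},Z^{i+2},\ldots,Z^k)$, once the unwanted summands are annihilated by the linearity of $f$ in each argument applied to the inserted zero blocks; the remaining (``diagonal'') blocks, which are the values $\Delta_R^kg$ of sub-tuples, match automatically with the corresponding blocks produced by Theorem \ref{thm:bidiag}. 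Equating the two descriptions, valid for all admissible choices of the arguments, gives ${}_i\Delta_Rf=\Delta_R^{k+1}g$, and hence the theorem.

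The conceptual core --- the only point that is not pure bookkeeping --- is the observation that refining a single diagonal block of a block upper bidiagonal matrix into its $2\times2$ triangular form again produces a (one block longer) block upper bidiagonal matrix, so that Theorem \ref{thm:bidiag} simultaneously encodes all the operators $\Delta_R^\ell$ and ${}_i\Delta_R$. The main obstacle in a full write-up will be the weight of block indexing, especially in handling the boundary cases $i=0$ and $i=k$, where $f$ is evaluated on a block column, respectively a block row, uniformly with the interior case.
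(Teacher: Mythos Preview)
Your proposal is correct and follows essentially the same route as the paper's proof: reduce to ${}_i\Delta_R(\Delta_R^k g)=\Delta_R^{k+1}g$, pass to the similarity invariant envelope, use the defining relation for ${}_i\Delta_R$ (with the zeros in $\row[Z^{i\prime},0]$ and $\col[0,Z^{i+1\prime\prime}]$ killing the extra summands) to identify the value with $\Delta_R^kg$ evaluated at a bidiagonal matrix having $W$ as a single block, and then observe that unbracketing $W$ yields a $(k{+}2)$-block bidiagonal matrix whose corner, by Theorem~\ref{thm:bidiag} again, is $\Delta_R^{k+1}g$. The paper carries this out explicitly for $0<j<k$ (the case $j=k$ being trivial since ${}_k\Delta_R=\Delta_R$) and remarks that $j=0$ is analogous; your plan to treat all $i$ uniformly is fine.
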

\begin{cor}\label{cor:j-delta_is_delta}
For every $g\in\tclass{0}(\Omega;\ncspace{\module{N}})$, $\ell\in\mathbb{N}$, and $j_s\in\mathbb{N}$ with $0\le
j_s\le s$, $s=1,\ldots,\ell$, one has
\begin{equation}\label{eq:equal_delta_words}
{}_{j_\ell}\Delta_R\cdots{}_{j_1}\Delta_Rg=\Delta_R^\ell g.
\end{equation}
\end{cor}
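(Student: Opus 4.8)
\textbf{Proof proposal for Corollary \ref{cor:j-delta_is_delta}.}

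The plan is to deduce the statement from Theorem \ref{thm:integra} by a straightforward induction on $\ell$, using the fact that each intermediate function $\Delta_R^s g$ is itself $s$-times integrable (it equals $\Delta_R^s$ applied to the order-zero function $g$), so Theorem \ref{thm:integra} applies to it. First I would record the base case $\ell = 1$: here $j_1 \in \{0,1\}$ with $0 \le j_1 \le 1$, and Theorem \ref{thm:integra} applied to $f = \Delta_R g \in \tclass{1}$ (which is integrable with primitive $g$) gives ${}_0\Delta_R f = {}_1\Delta_R f$; since ${}_1\Delta_R = \Delta_R$ by definition (see Remark \ref{rem:j-delta}), both choices of $j_1$ yield ${}_{j_1}\Delta_R \Delta_R g = \Delta_R^2 g$ — wait, for $\ell=1$ the claim is just ${}_{j_1}\Delta_R g = \Delta_R g$, which holds because for an order-zero function the only admissible index is $j_1 = 0 = k$ and ${}_0\Delta_R = \Delta_R = {}_k\Delta_R$ trivially. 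So the base case is immediate from the definitions.

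For the inductive step, suppose \eqref{eq:equal_delta_words} holds for $\ell - 1$, i.e.\ ${}_{j_{\ell-1}}\Delta_R \cdots {}_{j_1}\Delta_R g = \Delta_R^{\ell-1} g$ for all admissible index choices. Fix admissible $j_1, \ldots, j_\ell$. By the induction hypothesis the word of length $\ell - 1$ collapses, so
\begin{equation*}
{}_{j_\ell}\Delta_R \cdots {}_{j_1}\Delta_R g = {}_{j_\ell}\Delta_R \big(\Delta_R^{\ell-1} g\big).
\end{equation*}
Now $\Delta_R^{\ell-1} g \in \tclass{\ell-1}(\Omega;\ncspace{\module{N}},\ncspace{\module{M}},\ldots,\ncspace{\module{M}})$ is $(\ell-1)$-times integrable with primitive $g \in \tclass{0}$, so Theorem \ref{thm:integra} (applied with $k$ there equal to $\ell - 1$) gives ${}_i\Delta_R \big(\Delta_R^{\ell-1}g\big) = {}_{j}\Delta_R\big(\Delta_R^{\ell-1}g\big)$ for all $i, j \in \{0,\ldots,\ell-1\}$. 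Since $0 \le j_\ell \le \ell$, the index $j_\ell$ either lies in $\{0,\ldots,\ell-1\}$, in which case we may replace it by $\ell - 1$, or $j_\ell = \ell$; in either case
\begin{equation*}
{}_{j_\ell}\Delta_R\big(\Delta_R^{\ell-1}g\big) = {}_{\ell-1}\Delta_R\big(\Delta_R^{\ell-1}g\big) = \Delta_R\big(\Delta_R^{\ell-1}g\big) = \Delta_R^\ell g,
\end{equation*}
using ${}_{\ell-1}\Delta_R = \Delta_R$ on $\tclass{\ell-1}$ by definition. This completes the induction.

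The only subtlety — and the one step worth spelling out carefully — is the bookkeeping of which index ranges are admissible: ${}_j\Delta_R$ is defined on $\tclass{k}$ only for $0 \le j \le k$, and one must check that at each stage the index $j_s$ (with $0 \le j_s \le s$) is indeed admissible for the class $\tclass{s-1}$ on which it acts, after the earlier operators have been collapsed. Since $\Delta_R^{s-1} g \in \tclass{s-1}$ and $j_s \le s$, the index $j_s$ may exceed $s-1$ only when $j_s = s$, which is exactly the case ${}_j\Delta_R = \Delta_R$ handled separately above; for $j_s \le s - 1$ we are squarely within the hypotheses of Theorem \ref{thm:integra}. So there is no real obstacle beyond organizing this case distinction cleanly; the substance is entirely contained in Theorem \ref{thm:integra}.
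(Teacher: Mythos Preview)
Your inductive argument is correct and is exactly how the corollary is meant to follow from Theorem \ref{thm:integra} (the paper states the corollary without a separate proof, leaving this induction implicit). However, your handling of the index bookkeeping contains an error. On $\tclass{\ell-1}$ the operator ${}_j\Delta_R$ is defined only for $0 \le j \le \ell-1$ (see Remark \ref{rem:j-delta}), and it is ${}_{\ell-1}\Delta_R$, not ${}_\ell\Delta_R$, that coincides with $\Delta_R$ there. So the case $j_\ell = \ell$ that you attempt to treat does not fall under ``${}_j\Delta_R = \Delta_R$'' as you claim; in fact ${}_\ell\Delta_R$ applied to $\Delta_R^{\ell-1}g \in \tclass{\ell-1}$ is simply not defined.

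The resolution is not to patch this case but to observe that it never arises: for the composite ${}_{j_\ell}\Delta_R\cdots{}_{j_1}\Delta_R g$ to be well defined one needs $0 \le j_s \le s-1$ (since ${}_{j_s}\Delta_R$ acts on a function of order $s-1$), so the bound $j_s \le s$ in the statement appears to be an off-by-one slip. With the corrected constraint your induction goes through cleanly: the proof of Theorem \ref{thm:integra} establishes precisely ${}_j\Delta_R\Delta_R^{s-1}g = \Delta_R^s g$ for every admissible $j \in \{0,\ldots,s-1\}$, and nothing more is needed. You already noticed this yourself in the base case (where you correctly note that only $j_1 = 0$ is admissible on $\tclass{0}$); the same observation applies throughout.
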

\begin{proof}[Proof of Theorem \ref{thm:integra}]
It clearly suffices to show that
${}_j\Delta_R\Delta_R^kg=\Delta_R^{k+1}g$ for $0\le j<k$. We prove
this for $0<j<k$ (the proof for the case where $j=0$ is analogous,
using \eqref{eq:uptr_left0-k} instead of \eqref{eq:uptr_leftj-k}).
By \eqref{eq:j_delta=j_delta_ext-k}, we may assume, without loss
of
 generality,
  that $\Omega$ is similarity
 invariant, i.e., $\widetilde{\Omega}=\Omega$; see Remark
 \ref{rem:alternative} and Appendix \ref{app}.
 Let $X^0\in\Omega_{n_0}$, \ldots, $X^{j-1}\in\Omega_{n_{j-1}}$,
$X^{j\prime}\in\Omega_{n_{j}^\prime}$,
$X^{j\prime\prime}\in\Omega_{n_{j}^{\prime\prime}}$,
$X^{j+1}\in\Omega_{n_{j+1}}$, \ldots, $X^{k}\in\Omega_{n_{k}}$,
$Z^{1}\in\rmat{\module{M}}{n_0}{n_1}$, \ldots,
$Z^{j-1}\in\rmat{\module{M}}{n_{j-2}}{n_{j-1}}$,
$Z^{j\prime}\in\rmat{\module{M}}{n_{j-1}}{n_{j}^\prime}$,
$Z\in\rmat{\module{M}}{n_{j}^\prime}{n_{j}^{\prime\prime}}$,
$Z^{j+1\prime\prime}\in\rmat{\module{M}}{n_{j}^{\prime\prime}}{n_{j+1}}$,
$Z^{j+2}\in\rmat{\module{M}}{n_{j+1}}{n_{j+2}}$, \ldots,
$Z^k\in\rmat{\module{M}}{n_{k-1}}{n_k}$. By Proposition
\ref{prop:adm-env}, $\begin{bmatrix} X^{j\prime} & Z\\
0 &
X^{j\prime\prime}\end{bmatrix}\in\Omega_{n_j^\prime+n_j^{\prime\prime}}.$
According to \eqref{eq:uptr_leftj-k},
\begin{multline}\label{eq:animal}
{}_j\Delta_R\Delta_R^kg(X^0,\ldots,X^{j-1},X^{j\prime},X^{j\prime\prime},X^{j+1},\ldots,X^k)\\
(Z^1,\ldots,Z^{j-1},Z^{j\prime},Z,Z^{j+1\prime\prime},Z^{j+2},\ldots,Z^k)\\
=\Delta_R^kg(X^0,\ldots,X^{j-1},\begin{bmatrix} X^{j\prime} & Z\\
0 & X^{j\prime\prime}\end{bmatrix},X^{j+1},\ldots,X^k)\\
(Z^1,\ldots,Z^{j-1},\row[Z^{j\prime}, 0],\col[0,Z^{j+1\prime\prime}],Z^{j+2},\ldots,Z^k).
\end{multline}
By \eqref{eq:bidiag}, this is equal to the $(1,k+1)$ block entry of the $(k+1)\times (k+1)$ block matrix
$$f\left(\begin{bmatrix}
X^0     & Z^1    & 0       & \cdots  & \cdots                 & \cdots & \cdots & \cdots & 0\\
0       & X^1 & \ddots  & \ddots  &                        &        &        &        & \vdots\\
\vdots  & \ddots & \ddots  & Z^{j-1} & \ddots                 &        &        &        & \vdots \\
\vdots  &        &   \ddots      & X^{j-1} & \begin{bmatrix}
                                         Z^{j\prime} & 0
                                       \end{bmatrix}          &  \ddots       &       &        & \vdots\\
\vdots        &        &        &  \ddots        & \begin{bmatrix}
                                       X^{j\prime} & Z\\
                                       0      & X^{j\prime\prime}
                                       \end{bmatrix}          & \begin{bmatrix}
                                                                 0\\
                                                                 Z^{j+1\prime\prime}
                                                                 \end{bmatrix} & \ddots &        &  \vdots    \\
\vdots &         &     &       &                          \ddots   &   X^{j+1} &    Z^{j+2} & \ddots & \vdots \\
\vdots &         &     &       &                             &   \ddots   & \ddots & \ddots & 0\\
\vdots &         &     &       &                             &            &  \ddots     &   X^{k-1}     &   Z^k\\
0      & \cdots  & \cdots & \cdots & \cdots                  &  \cdots    & \cdots       & 0           & X^k
\end{bmatrix}
\right),$$ where the evaluation of $f$ at the block bidiagonal
matrix above is well defined, since by Lemma
\ref{lem:bidiag-homog} that matrix belongs to
$\Omega_{n_0+\cdots+n_{j-1}+n_j^\prime+n_j^{\prime\prime}+n_{j+1}+\cdots+n_k}.$
We consider $\begin{bmatrix}
                                         Z^{j\prime} & 0
                                       \end{bmatrix} $, $\begin{bmatrix}
                                       X^{j\prime} & Z\\
                                       0      & X^{j\prime\prime}
                                       \end{bmatrix}$, and $\begin{bmatrix}
                                                                 0\\
                                                                 Z^{j+1\prime\prime}
\end{bmatrix}$ as single blocks. If we view this block matrix as a $(k+2)\times (k+2)$ block matrix, removing the
brackets from those blocks, then the expression in \eqref{eq:animal} becomes the $(1,k+2)$ block entry, which by
\eqref{eq:bidiag} is equal to
\begin{multline*}
\Delta_R^{k+1}g(X^0,\ldots,X^{j-1},X^{j\prime},X^{j\prime\prime},X^{j+1},\ldots,X^k)\\
(Z^1,\ldots,Z^{j-1},Z^{j\prime},Z,Z^{j+1\prime\prime},Z^{j+2},\ldots,Z^k).
\end{multline*}

\end{proof}

\label{COMULT}

\section{Higher order directional nc difference-differential
operators} \label{subsec:dir_difdif-k} Similarly to Section
\ref{subsec:dir_difdif}, we will also consider a directional
version of higher order difference-differential operators. It is
introduced by iterating directional nc differ\-ence-differential
operators on higher order nc functions.

Let $\Omega^{(j)}\subseteq\ncspacej{\module{M}}{j}$,
$j=0,\ldots,k$, be nc sets,
 and let
$\Omega^{(k)}\subseteq\ncspacej{\module{M}}{k}$ be right
admissible. Let
$f\in\tclass{k}(\Omega^{(0)},\ldots,\Omega^{(k)};\ncspacej{\module{N}}{0},\ldots,\ncspacej{\module{N}}{k})$.
Given $\mu\in\module{M}_k$, we define for all $n_0$, \ldots,
$n_{k+1}\in\mathbb{N}$, $X^0\in\Omega^{(0)}_{n_0}$, \ldots,
$X^k\in\Omega^{(k)}$, $X^{k+1}\in\Omega^{(k)}_{n_{k+1}}$ the
$(k+1)$-linear mapping
$$\Delta_{R,\mu}f(X^0,\ldots,X^{k+1})\colon
\rmat{\module{N}_1}{n_0}{n_1}\times\cdots\times\rmat{\module{N}_k}{n_{k-1}}{n_k}
\times\rmat{\ring}{n_k}{n_{k+1}}
\longrightarrow\rmat{\module{N}_0}{n_0}{n_{k+1}}$$ by
\begin{multline}\label{eq:dir_difdif_k}
\Delta_{R,\mu}f(X^0,\ldots,X^{k+1})(Z^1,\ldots,Z^{k},A)\\
=\Delta_Rf(X^0,\ldots,X^{k+1})(Z^1,\ldots,Z^{k},A\mu).
\end{multline}
Furthermore,\index{$\Delta_{R,\mu}f(X^0,\ldots,X^{k+1})$}
$\Delta_{R,\mu}f\in\tclass{k+1}(\Omega^{(0)},\ldots,\Omega^{(k)},\Omega^{(k)};\ncspacej{\module{N}}{0},
\ldots,\ncspacej{\module{N}}{k},\ncspace{\ring})$.

In the special case of $\module{M}_k=\ring^d$, we define the
\emph{$j$-th right partial nc difference-differential operator}
\index{right partial nc difference-differential operator}  by
$\Delta_{R,j}=\Delta_{R,e_j}$. By linearity, we have
\begin{multline}\label{eq:r_difdecomp-k}
\Delta_Rf(X^0,\ldots,X^{k+1})(Z^1,\ldots,Z^{k+1})\\
=\sum_{j=1}^d
\Delta_{R,j}f(X^0,\ldots,X^{k+1})(Z^1,\ldots,Z^{k},Z_j^{k+1}).
\end{multline}
It follows that the first order difference formulae of Theorem
\ref{thm:Lagrange_k} can be written in this case in terms of
partial nc difference-differential operators:
\begin{align*}
& f(X^0,\ldots,X^{k-1},X)(Z^1,\ldots,Z^k)-f(X^0,\ldots,X^{k-1},Y)
(Z^1,\ldots,Z^k)\\
&=
\sum_{j=1}^d\Delta_{R,j}f(X^0,\ldots,X^{k-1},Y,X)(Z^1,\ldots,Z^k,X_j-Y_j)\\
&=\sum_{j=1}^d\Delta_{R,j}f(X^0,\ldots,X^{k-1},X,Y)(Z^1,\ldots,Z^k,X_j-Y_j).
\end{align*}

Given $\mu^1$, \ldots, $\mu^\ell\in\module{M}_k$, we define the
corresponding \emph{$\ell$-th order directional nc
difference-differential operators} \index{higher order directional
nc difference-differential operator}
\begin{multline*}
\Delta_{R,\mu^1,\ldots,\mu^\ell}^\ell=\Delta_{R,\mu^\ell}\cdots\Delta_{R,\mu^1}\colon\
\tclass{k}(\Omega^{(0)},\ldots,\Omega^{(k)};\ncspacej{\module{N}}{0},\ldots,\ncspacej{\module{N}}{k})\\
\longrightarrow
\tclass{k+\ell}(\Omega^{(0)},\ldots,\Omega^{(k)},\underset{\ell\
\text{times}}{\underbrace{\Omega^{(k)},\ldots,
\Omega^{(k)}}};\ncspacej{\module{N}}{0},\ldots,\ncspacej{\module{N}}{k},
\underset{\ell\ \text{times}}{\underbrace{\ncspace{\ring},\ldots,
\ncspace{\ring}}}).
\end{multline*}
\index{$\Delta_{R,\mu^1,\ldots,\mu^\ell}^\ell$}More explicitly, using \eqref{eq:delta_r_k} and induction on
$\ell$, one obtains
\begin{multline*}
\Delta_{R,\mu^1,\ldots,\mu^\ell}^\ell
f(X^0,\ldots,X^{k+\ell})(Z^1,\ldots,Z^k,A^1,\ldots,A^\ell)\\
=\Delta_{R}^\ell
f(X^0,\ldots,X^{k+\ell})(Z^1,\ldots,Z^k,A^1\mu^1,\ldots,A^\ell\mu^\ell).
\end{multline*}
In the tensor product interpretation of the values of nc functions
(see Remark \ref{rem:tensor_values}), we have
\begin{multline*}
\Delta_{R}^\ell
f(X^0,\ldots,X^{k+\ell})\\
\in\mat{\module{N}_0}{n_0}\otimes\mat{\module{N}_1^*}{n_1}\otimes\cdots\otimes
\mat{\module{N}_k^*}{n_k}\otimes\mat{\module{M}_k^*}{n_{k+1}}
\otimes\cdots\otimes\mat{\module{M}_k^*}{n_{k+\ell}},
\end{multline*}
\begin{multline*}
\Delta_{R,\mu^1,\ldots,\mu^\ell}^\ell
f(X^0,\ldots,X^{k+\ell})\\
\in\mat{\module{N}_0}{n_0}\otimes\mat{\module{N}_1^*}{n_1}\otimes\cdots\otimes
\mat{\module{N}_k^*}{n_k}\otimes\mat{\ring}{n_{k+1}}\otimes\cdots\otimes\mat{\ring}{n_{k+\ell}},
\end{multline*}
and \begin{multline*} \Delta_{R,\mu^1,\ldots,\mu^\ell}^\ell
f(X^0,\ldots,X^{k+\ell})\\
=(I_{n_0}\otimes\cdots\otimes I_{n_k}\otimes
I_{n_{k+1}}\mu^1\otimes\cdots\otimes I_{n_{k+\ell}}\mu^\ell)
 \Delta_{R}^\ell
f(X^0,\ldots,X^{k+\ell}).
\end{multline*}
Setting $Z^1=A^1\mu^1$, \ldots, $Z^\ell=A^\ell\mu^\ell$ in Theorem
\ref{thm:bidiag}, we can rewrite $\eqref{eq:bidiag}$ as
{\small \begin{multline}\label{eq:dir_bidiag}
f\left(\begin{bmatrix}
X^0 & A^1\mu^1 & 0 & \cdots & 0\\
0   & X^1 & \ddots & \ddots & \vdots\\
\vdots & \ddots & \ddots & \ddots & 0 \\
\vdots &  & \ddots & X^{\ell  -  1} & A^\ell\mu^\ell\\
 0 & \cdots & \cdots & 0 & X^\ell
\end{bmatrix}\right)\\
\!=\!\!\begin{bmatrix} f(X^0) & \Delta_{R,\mu^1}f(X^0,X^1)(A^1) &
\cdots & \cdots &
\Delta_{R,\mu^1,\ldots,\mu^\ell}^\ell f(X^0,\ldots,X^\ell)(A^1,\ldots,A^\ell)\\
0   & f(X^1) & \ddots & & \vdots\\
\vdots & \ddots & \ddots & \ddots & \vdots \\
\vdots &  & \ddots &  \ddots & \Delta_{R,\mu^\ell}f(X^{\ell -1},X^\ell)(A^\ell)\\
 0 & \cdots & \cdots &  0 & f(X^\ell )
\end{bmatrix}.
\end{multline}
}There is a similar version of Theorem \ref{thm:bidiag_k} in terms
of higher order directional nc difference-differential operators.

In the special case where $\module{M}=\ring^d$, we define the
\emph{$\ell$-th order partial nc difference-differential operator}
\index{higher order partial nc difference-differential operator}
corresponding to a word $w=g_{i_1}\cdots g_{i_\ell}\in\free_d$ by
$$\Delta_R^w=\Delta_{R,e_{i_1},\ldots,e_{i_\ell}}^\ell=\Delta_{R,i_\ell}\cdots\Delta_{R,i_1};$$
\index{$\Delta_R^w$}see Section \ref{subsec:ncpoly} for the
definition of the free monoid $\free_d$ and nc multipowers.
(Notice that we can interpret $\Delta_R^w$ as a nc multipower if
we abuse the notation by letting $\Delta_R$ denote the $d$-tuple
$(\Delta_{R,1},\ldots,\Delta_{R,d})$ of first order partial
difference-differential operators as well as the first order  full
difference-differential operator.)

 By multilinearity we
have, for $f\in\tclass{k}$,
\begin{multline}\label{eq:r_difdecomp-kl}
\Delta_R^\ell f(X^0,\ldots,X^{k+\ell})(Z^1,\ldots,Z^{k+\ell})\\
=\sum_{w=g_{i_1}\cdots
g_{i_\ell}}\Delta_R^{w^\top}f(X^0,\ldots,X^{k+\ell})
(Z^1,\ldots,Z^{k},Z^{k+1}_{i_1},\ldots,Z^{k+\ell}_{i_\ell} ),
\end{multline}
where the summation is over all the words of length $\ell$ (notice
the transposition of words: for $w=g_{i_1}\cdots g_{i_\ell}$ we
use the notation $w^\top=g_{i_\ell}\cdots g_{i_1}$).

Setting $Z^1=A^1e_{i_1}$, \ldots, $Z^\ell=A^\ell e_{i_\ell}$ in
Theorem \ref{thm:bidiag}, i.e., choosing $\mu^1=e_{i_1}$, \ldots,
$\mu^\ell=e_{i_\ell}$ in
 \eqref{eq:dir_bidiag}, we obtain
{\small \begin{multline}\label{eq:part_bidiag}
f\left(\begin{bmatrix}
X^0 & A^1e_{i_1} & 0 & \cdots & 0\\
0   & X^1 & \ddots & \ddots & \vdots\\
\vdots & \ddots & \ddots & \ddots & 0 \\
\vdots &  & \ddots & X^{\ell  -  1} & A^\ell e_{i_\ell}\\
 0 & \cdots & \cdots & 0 & X^\ell
\end{bmatrix}\right)\\
\!=\!\!\small{\begin{bmatrix} f(X^0) &
\Delta_R^{g_{i_1}}f(X^0,X^1)(A^1) & \cdots & \cdots &
\Delta_R^{g_{i_\ell}\cdots g_{i_1}} f(X^0,\ldots,X^\ell)(A^1,\ldots,A^\ell)\\
0   & f(X^1) & \ddots & & \vdots\\
\vdots & \ddots & \ddots & \ddots & \vdots \\
\vdots &  & \ddots &  \ddots & \Delta_R^{g_{i_\ell}}f(X^{\ell -1},X^\ell)(A^\ell)\\
 0 & \cdots & \cdots &  0 & f(X^\ell )
\end{bmatrix}}\!.
\end{multline}
}

\chapter{The Taylor--Taylor formula}\label{sec:TT}
 We use now the calculus of higher order nc difference-differential
 operators to derive a nc analogue of the Brook Taylor expansion, which we
call the TT (Taylor--Taylor) expansion in honour of Brook Taylor
and Joseph L. Taylor.
\begin{thm}\label{thm:TT}
Let $f\in\tclass{0}(\Omega;\ncspace{\module{N}})$ with
$\Omega\subseteq\ncspace{\module{M}}$ a right admissible nc set,
$n\in\mathbb{N}$, and  $Y\in\Omega_n$. Then for each
$N\in\mathbb{N}$ and arbitrary $X\in\Omega_n$,
\begin{multline}\label{eq:TT}
f(X)=\sum_{\ell=0}^N\Delta_R^\ell f(\underset{\ell+1\
\rm{times}}{\underbrace{Y,\ldots,Y}})(\underset{\ell\
\rm{times}}{\underbrace{X-Y,\ldots,X-Y}})\\
+\Delta_R^{N+1}f(\underset{N+1\
\rm{times}}{\underbrace{Y,\ldots,Y}},X)(\underset{N+1\
\rm{times}}{\underbrace{X-Y,\ldots,X-Y}}).
\end{multline}
\end{thm}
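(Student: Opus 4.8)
The natural approach is induction on $N$, using the first order difference formula for higher order nc functions (Theorem~\ref{thm:Lagrange_k}); this is exactly the ``iterative use'' of that result anticipated in the remark preceding it. Throughout, recall that $\Delta_R^{N}f$ is itself an honest nc function of order $N$ by $N$-fold application of Theorem~\ref{thm:dif-op}, namely $\Delta_R^{N}f\in\tclass{N}(\Omega;\ncspace{\module{N}},\ncspace{\module{M}},\ldots,\ncspace{\module{M}})$ ($N$ copies of $\module{M}$), so that $\Delta_R$ and Theorem~\ref{thm:Lagrange_k} apply to it.

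For the base case $N=0$ (reading $\Delta_R^{0}f=f$ with an empty list of difference directions), \eqref{eq:TT} reduces to $f(X)=f(Y)+\Delta_Rf(Y,X)(X-Y)$, which is precisely \eqref{eq:RightLagr} of Theorem~\ref{thm:Lagrange}. For the inductive step, I would assume \eqref{eq:TT} with $N-1$ in place of $N$; the right-hand side of the $(N-1)$-version and that of the $N$-version share the summands $\ell=0,\ldots,N-1$, so it suffices to rewrite the $(N-1)$-remainder term $\Delta_R^{N}f(\underbrace{Y,\ldots,Y}_{N},X)(\underbrace{X-Y,\ldots,X-Y}_{N})$. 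To that end I apply \eqref{eq:RightLagr_k} of Theorem~\ref{thm:Lagrange_k} to the order-$N$ nc function $g:=\Delta_R^{N}f$, taking $X^{0}=\cdots=X^{N-1}=Y$, taking its $N$-th argument equal to $X$ resp.\ $Y$, and taking all $Z^{j}$ equal to $X-Y$. Since $g(\underbrace{Y,\ldots,Y}_{N},Y)=\Delta_R^{N}f(\underbrace{Y,\ldots,Y}_{N+1})$ and $\Delta_R g=\Delta_R^{N+1}f$, this identity expresses the $(N-1)$-remainder as the sum of $\Delta_R^{N}f(\underbrace{Y,\ldots,Y}_{N+1})(\underbrace{X-Y,\ldots,X-Y}_{N})$ --- which is exactly the $\ell=N$ summand of \eqref{eq:TT} --- and $\Delta_R^{N+1}f(\underbrace{Y,\ldots,Y}_{N+1},X)(\underbrace{X-Y,\ldots,X-Y}_{N+1})$ --- the new remainder. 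Substituting back into the $(N-1)$-version then gives \eqref{eq:TT}, completing the induction.

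Alternatively, one can obtain \eqref{eq:TT} in a single step from Theorem~\ref{thm:bidiag}. Form the $(N+2)\times(N+2)$ block upper bidiagonal matrix $\Lambda$ of size $(N+2)n$ whose diagonal blocks are $Y,\ldots,Y$ ($N+1$ copies) followed by $X$, and all of whose superdiagonal blocks equal $X-Y$; after reducing to the similarity-invariant case as in Remark~\ref{rem:alternative} (using \eqref{eq:delta=delta_ext} and its higher order analogue so that the displayed right-hand sides are unchanged), Lemma~\ref{lem:bidiag-homog} gives $\Lambda\in\Omega_{(N+2)n}$. By \eqref{eq:bidiag} the $(1,j+1)$ block entry of $f(\Lambda)$ is the $\ell=j$ summand of \eqref{eq:TT} for $j=0,\ldots,N$, and its $(1,N+2)$ block entry is the remainder term. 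On the other hand every block row of $\Lambda$ sums to $X$ (namely $Y\cdot I_n+(X-Y)\cdot I_n=X$ for the first $N+1$ rows and $X\cdot I_n=X$ for the last), so $\Lambda\,\col[I_n,\ldots,I_n]=\col[I_n,\ldots,I_n]\,X$; since $f$ respects intertwinings (Proposition~\ref{prop:simint}), the first block row of $f(\Lambda)\,\col[I_n,\ldots,I_n]=\col[I_n,\ldots,I_n]\,f(X)$ reads $\sum_{j=1}^{N+2}\bigl(f(\Lambda)\bigr)_{1,j}=f(X)$, which is \eqref{eq:TT}.

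There is no serious obstacle: the analytic and combinatorial content is already packaged in Theorems~\ref{thm:Lagrange_k} and~\ref{thm:bidiag}. The only points needing care are the reduction to a similarity-invariant domain (so that the block matrices or the $2\times2$ blocks built from $X$ and $Y$ genuinely lie in $\Omega$) and the index bookkeeping --- in particular checking that flipping the last argument of $\Delta_R^{N}f$ in \eqref{eq:RightLagr_k} produces exactly the $\ell=N$ term of the sum together with a correctly-shaped new remainder, and, in the second approach, matching the entries of \eqref{eq:bidiag} with the summation in \eqref{eq:TT}.
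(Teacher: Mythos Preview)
Your first approach---induction on $N$ via Theorems~\ref{thm:Lagrange} and~\ref{thm:Lagrange_k}---is exactly the paper's proof, merely phrased as a formal induction rather than the chain of equalities the paper writes out. Your second approach is a genuinely different and correct route not taken in the paper: by packaging all the $\Delta_R^\ell f$ terms as the first block row of $f(\Lambda)$ via Theorem~\ref{thm:bidiag} and then using the single intertwining $\Lambda\,\col[I_n,\ldots,I_n]=\col[I_n,\ldots,I_n]\,X$, you obtain \eqref{eq:TT} in one stroke rather than peeling off one term at a time. The trade-off is that the bidiagonal approach requires the explicit reduction to the similarity-invariant envelope (so that $\Lambda\in\Omega$ by Lemma~\ref{lem:bidiag-homog}), whereas in the inductive approach that reduction is already absorbed into the proofs of Theorems~\ref{thm:Lagrange} and~\ref{thm:Lagrange_k}; you handled this correctly via \eqref{eq:delta=delta_ext} and its higher-order analogue \eqref{eq:delta=delta_ext-k}.
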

\begin{proof}
Using Theorems \ref{thm:Lagrange} and \ref{thm:Lagrange_k}, we
obtain
\begin{align*}
f(X) &=f(Y)+\Delta_Rf(Y,X)(X-Y)\\
&= f(Y)+\Delta_Rf(Y,Y)(X-Y)+\Delta_R^2f(Y,Y,X)(X-Y,X-Y)\\
 &=\ldots \\
 &= f(Y)+\Delta_Rf(Y,Y)(X-Y)+\cdots
+\Delta_R^Nf(\underset{N+1\
\text{times}}{\underbrace{Y,\ldots,Y}})(\underset{N\
\text{times}}{\underbrace{X-Y,\ldots,X-Y}})\\
& \hspace{1.2cm} +\Delta_R^{N+1}f(\underset{N+1\
\text{times}}{\underbrace{Y,\ldots,Y}},X)(\underset{N+1\
\text{times}}{\underbrace{X-Y,\ldots,X-Y}}).
\end{align*}
\end{proof}

It is possible to obtain a version of a power expansion centered
at a matrix $Y\in\Omega_s$ valid in matrix dimensions which are
multiples of $s$.
  For
$X\in\mat{\module{M}}{n}$ and $n=sm$ we denote
$$X^{\odot_s\ell}:=\underset{\ell\
\text{times}}{\underbrace{X\odot_s\cdots\,\odot_s X}},$$
\index{$X^{\odot_s\ell}$}where we use the notation introduced
prior to Proposition \ref{prop:diag_tensa_k}. Notice that
$X^{\odot_s\ell}$ is the $\ell$-th power of $X$ viewed as a
$m\times m$ matrix over $\tensa{\mat{\module{M}}{s}}$. Here
$$\tensa{\module{L}}:=\bigoplus_{\ell=0}^\infty\module{L}^{\otimes\ell}$$
\index{$\tensa{\module{L}}$}denotes the tensor algebra over a
module $\module{L}$ over $\ring$. We also write $X^{\odot \ell}$
\index{$X^{\odot \ell}$} instead of $X^{\odot_1\ell}$, thus
omitting the subscript in case $s=1$.

\begin{thm}\label{thm:tt-power-gen}
Let $f\in\tclass{0}(\Omega;\ncspace{\module{N}})$ with
$\Omega\subseteq\ncspace{\module{M}}$ a right admissible nc set,
 and $Y\in\Omega_s$. Then for each $N\in\mathbb{N}$ and arbitrary
 $m\in\mathbb{N}$ and $X\in\Omega_{ms}$,
\begin{multline}\label{eq:tt-power-gen}
f(X)=\sum_{\ell=0}^N\Big(X-
\bigoplus_{\alpha=1}^mY\Big)^{\odot_s\ell}\,\Delta_R^\ell
f(\underset{\ell+1\
\rm{times}}{\underbrace{Y,\ldots,Y}})\\
+\Big(X- \bigoplus_{\alpha=1}^mY\Big)^{\odot_s
N+1}\,\Delta_R^{N+1}f(\underset{N+1\
\rm{times}}{\underbrace{Y,\ldots,Y}},X).
\end{multline}
In particular, if  $\mu\in\Omega_1$ then for each $N\in\mathbb{N}$
and arbitrary $m\in\mathbb{N}$ and $X\in\Omega_m$,
\begin{multline}\label{eq:tt-power-gen'}
f(X)=\sum_{\ell=0}^N(X- I_m\mu)^{\odot\ell}\,\Delta_R^\ell
f(\underset{\ell+1\
\rm{times}}{\underbrace{\mu,\ldots,\mu}})\\
+(X- I_m\mu)^{\odot N+1}\,\Delta_R^{N+1}f(\underset{N+1\
\rm{times}}{\underbrace{\mu,\ldots,\mu}},X).
\end{multline}
\end{thm}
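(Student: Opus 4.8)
The plan is to reduce the matrix-center Taylor formula \eqref{eq:tt-power-gen} to the scalar-center version already established in Theorem \ref{thm:TT}, by passing to the tensor algebra $\tensa{\mat{\module{M}}{s}}$ and viewing $sm\times sm$ matrices over $\module{M}$ as $m\times m$ matrices over this algebra. First I would introduce the auxiliary module $\widehat{\module{M}}:=\mat{\module{M}}{s}$ and $\widehat{\module{N}}:=\mat{\module{N}}{s}$, and the nc set $\widehat{\Omega}\subseteq\ncspace{\widehat{\module{M}}}$ defined by $\widehat{\Omega}_m:=\Omega_{sm}$ under the identification $\mat{\widehat{\module{M}}}{m}\cong\mat{\module{M}}{sm}$. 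One checks that $\widehat{\Omega}$ is a nc set and is right admissible: closure under direct sums is immediate from that of $\Omega$, and right admissibility follows because a block upper-triangular matrix over $\widehat{\module{M}}$ (with upper corner $\widehat{Z}\in\rmat{\widehat{\module{M}}}{m}{m'}$, i.e.\ $Z\in\rmat{\module{M}}{sm}{sm'}$) is, after scaling by an invertible $r\in\ring$, precisely a block upper-triangular matrix over $\module{M}$ of the sort provided by right admissibility of $\Omega$.

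Next I would observe that $f$, restricted to $\widehat{\Omega}$ and regarded as a map into $\ncspace{\widehat{\module{N}}}$, is again a nc function: respecting direct sums and similarities over $\widehat{\module{M}}$ is a special case of respecting them over $\module{M}$ (block scalar matrices over $\ring$ acting on $s$-blocks are ordinary scalar matrices). Call this nc function $\widehat{f}\in\tclass{0}(\widehat{\Omega};\ncspace{\widehat{\module{N}}})$. Applying Theorem \ref{thm:TT} to $\widehat{f}$ at the center $\widehat{Y}:=Y\in\widehat{\Omega}_1$ (a $1\times1$ matrix over $\widehat{\module{M}}$) and the point $\widehat{X}:=X\in\widehat{\Omega}_m$ gives
\begin{multline*}
\widehat{f}(\widehat{X})=\sum_{\ell=0}^N\Delta_R^\ell\widehat{f}(\underset{\ell+1}{\underbrace{\widehat{Y},\ldots,\widehat{Y}}})(\widehat{X}-\widehat{Y},\ldots,\widehat{X}-\widehat{Y})\\
+\Delta_R^{N+1}\widehat{f}(\underset{N+1}{\underbrace{\widehat{Y},\ldots,\widehat{Y}}},\widehat{X})(\widehat{X}-\widehat{Y},\ldots,\widehat{X}-\widehat{Y}),
\end{multline*}
where each $\Delta_R^\ell\widehat{f}(\widehat{Y},\ldots,\widehat{Y})$ is a linear map $\widehat{\module{M}}^{\otimes\ell}\to\widehat{\module{N}}$, i.e.\ $\mattuple{\module{M}}{s}{\otimes\ell}\to\mat{\module{N}}{s}$, and $\widehat{X}-\widehat{Y}$ as an element of $\mat{\widehat{\module{M}}}{m}$ is exactly $X-\bigoplus_{\alpha=1}^mY$. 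The $\ell$-fold argument $(\widehat{X}-\widehat{Y},\ldots,\widehat{X}-\widehat{Y})$ fed to the $\ell$-linear map $\Delta_R^\ell\widehat{f}(\widehat Y,\ldots,\widehat Y)$, under the tensor-product interpretation of higher-order nc functions (Remark \ref{rem:tensor_values}) and the $\odot$ notation of \eqref{eq:tensa_prod}, is precisely $\bigl(X-\bigoplus_\alpha Y\bigr)^{\odot_s\ell}$ with $\Delta_R^\ell\widehat f(\widehat Y,\ldots,\widehat Y)$ applied entrywise — this is Proposition \ref{prop:diag_tensa_k}(1).

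The remaining task is bookkeeping: I must identify $\Delta_R^\ell\widehat{f}(\widehat{Y},\ldots,\widehat{Y})$ with $\Delta_R^\ell f(Y,\ldots,Y)$ (the latter being the genuine $\ell$-linear map on $\mattuple{\module{M}}{s}{\ell}$, whose restriction to elementary tensors gives the map on the tensor power). This follows from Theorem \ref{thm:bidiag}: both $\Delta_R^\ell\widehat f$ and $\Delta_R^\ell f$ are computed by evaluating the same function $f$ on the same block upper-bidiagonal matrix and reading off the $(1,\ell+1)$ corner — the only difference is whether the diagonal and superdiagonal entries are grouped into $s$-blocks or not, which does not change the value. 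Likewise the remainder term $\Delta_R^{N+1}\widehat f(\widehat Y,\ldots,\widehat Y,\widehat X)$ matches $\Delta_R^{N+1}f(Y,\ldots,Y,X)$ via Proposition \ref{prop:diag_tensa_k}(2), since now the last argument $\widehat X=X$ is a genuine $ms\times ms$ matrix that is not of diagonal form and is handled by acting on the rows. Substituting these identifications into the displayed formula yields \eqref{eq:tt-power-gen}. The special case \eqref{eq:tt-power-gen'} is simply $s=1$. The main obstacle, such as it is, will be stating the identifications of the two flavors of $\Delta_R^\ell$ carefully enough — matching the entrywise/row-wise action of the coefficient maps with the $\odot_s$-power notation — rather than any genuine difficulty; Theorem \ref{thm:bidiag}, Proposition \ref{prop:diag_tensa_k}, and Remark \ref{rem:delta^ell_restrict} do essentially all the work once the setup is in place.
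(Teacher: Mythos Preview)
Your approach is correct and is essentially the paper's, but with an unnecessary layer of abstraction. The paper applies Theorem~\ref{thm:TT} directly to $f$ with center $\bigoplus_{\alpha=1}^m Y\in\Omega_{sm}$ and point $X\in\Omega_{sm}$ (both of size $sm$, so the hypotheses are met), obtaining
\[
f(X)=\sum_{\ell=0}^N \Delta_R^\ell f\Big(\bigoplus_\alpha Y,\ldots,\bigoplus_\alpha Y\Big)\Big(X-\bigoplus_\alpha Y,\ldots,X-\bigoplus_\alpha Y\Big)+\text{remainder},
\]
and then invokes Proposition~\ref{prop:diag_tensa_k} (parts (1) and (2)) to rewrite each term as $(X-\bigoplus_\alpha Y)^{\odot_s\ell}\,\Delta_R^\ell f(Y,\ldots,Y)$. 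Your $\widehat{\module{M}}$, $\widehat{\Omega}$, $\widehat{f}$ are literally $\mat{\module{M}}{s}$, $\Omega_{s\,\cdot}$, $f$ under the tautological identification, so you are doing the same computation; the ``bookkeeping'' identification $\Delta_R^\ell\widehat f(\widehat Y,\ldots,\widehat Y)=\Delta_R^\ell f(Y,\ldots,Y)$ you worry about is precisely what Proposition~\ref{prop:diag_tensa_k} already delivers in one stroke, without passing through Theorem~\ref{thm:bidiag}. One small glitch: as written, you invoke Theorem~\ref{thm:TT} with center $\widehat Y\in\widehat\Omega_1$ and point $\widehat X\in\widehat\Omega_m$, but that theorem requires the center and the point to have the \emph{same} size; you must apply it at $\bigoplus_{\alpha=1}^m\widehat Y\in\widehat\Omega_m$ and then use Proposition~\ref{prop:diag_tensa_k} (which you do invoke) to descend to $\widehat Y$---at which point you have reproduced the paper's argument verbatim.
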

\begin{proof}
The first statement is immediate from Theorem \ref{thm:TT} and
Proposition \ref{prop:diag_tensa_k}. The second statement is a
special case of the first one when $s=1$.
\end{proof}
\begin{rem}\label{rem:lost_abbey}
It follows from \eqref{eq:gen-0-RightLagr_k},
\eqref{eq:gen-j-RightLagr_k}, \eqref{eq:gen-RightLagr_k}, and
\eqref{eq:equal_delta_words} that the multilinear mappings
$f_\ell:=\Delta_R^\ell f(Y,\ldots,Y)\colon
\left(\mat{\module{M}}{s}\right)^\ell\to\mat{\module{N}}{s}$,
satisfy
\begin{equation}\label{eq:ncfun_coef_0}
Sf_0-f_0S=f_1(SY-YS),
\end{equation}
and for $\ell=1,\ldots$,
\begin{equation}\label{eq:ncfun-coef-ell_0}
Sf_\ell(Z^1,\ldots,Z^\ell)-f_\ell(SZ^1,Z^2,\ldots,Z^\ell)=f_{\ell+1}(SY-YS,Z^1,\ldots,Z^\ell),
\end{equation}
\begin{multline}\label{eq:ncfun-coef_ell_j}
f_\ell(Z^1,\ldots,Z^{j-1},Z^jS,Z^{j+1},\ldots,Z^\ell)-f_\ell(Z^1,\ldots,Z^j,SZ^{j+1},Z^{j+2},\ldots,Z^\ell)\\
=f_{\ell+1}(Z^1,\ldots,Z^j,SY-YS,Z^{j+1},\ldots,Z^\ell),
\end{multline}
\begin{equation}\label{eq:ncfun-coef_ell_ell}
f_\ell(Z^1,\ldots,Z^{\ell-1},Z^\ell S)-f_\ell(Z^1,\ldots,Z^\ell)S
=f_{\ell+1}(Z^1,\ldots,Z^\ell,SY-YS),
\end{equation}
for every $S\in\mat{\ring}{s}$.
\end{rem}

We next specialize Theorem \ref{thm:tt-power-gen} to the case
where $\module{M}=\ring^d$, using higher order partial nc
difference-differential operators.  For
$X\in\mattuple{\ring}{n}{d}$, $n=sm$, and $w=g_{i_1}\cdots
g_{i_\ell}$ we denote
\begin{equation}\label{eq:faux-power-semigr}
X^{\odot_sw}:=X_{i_1}\odot_s\cdots\,\odot_s X_{i_\ell}.
\end{equation}
\index{$X^{\odot_sw}$}Notice
that $X^{\odot_sw}$ is the $w$-th nc power of $X$ viewed as a
$d$-tuple of $m\times m$ matrices over $\tensa{\mat{\ring}{s}}$.
We also write $X^{\odot w}$ \index{$X^{\odot w}$} instead of
$X^{\odot_1w}$, thus omitting the subscript in case $s=1$. Notice
that
\begin{equation}\label{eq:Z^l_vs_Z^w}
Z^{\odot_s\ell}=\sum_{w=g_1\cdots
g_{i_\ell}}(Z^{\odot_sw})e_{i_1}\otimes\cdots\otimes e_{i_\ell},
\end{equation}
for every $\ell, m\in\mathbb{N}$, and
$Z\in\mattuple{\ring}{sm}{d}$,
 where $e_i$,
$i=1,\ldots,d$, is the standard basis for $\ring^d$. One also has
\begin{equation}\label{eq:delta^l_vs_delta^w_m=1}
\Delta_R^\ell f(Y,\ldots,
Y)(Z^1,\ldots,Z^\ell)=\sum_{w=g_{i_1}\ldots
g_{i_\ell}}\Delta_R^{w^\top}f(Y,\ldots,Y)(Z^1_{i_1},\ldots,Z^\ell_{i_\ell})
\end{equation}
(see \eqref{eq:r_difdecomp-kl} with $k=0$) and, similarly,
\begin{equation}\label{eq:delta^l_vs_delta^w}
Z^{\odot_s\ell}\Delta_R^\ell f(Y,\ldots,Y)=\sum_{w=g_1\cdots
g_{i_\ell}}Z^{\odot_sw}\Delta_R^{w^\top}f(Y,\ldots,Y)
\end{equation}
for a nc function $f$ on a nc set
$\Omega\subseteq\ncspaced{\ring}{d}$ with values in
$\ncspace{\module{N}}$, and every $Y\in\Omega_{s}$, $Z^1$, \ldots,
$Z^\ell\in\mattuple{\ring}{s}{d}$, and
$Z\in\mattuple{\ring}{sm}{d}$. Theorem \ref{thm:tt-power-gen}
implies the following.
\begin{cor}\label{cor:part_TT}
Let $f\in\tclass{0}(\Omega;\ncspace{\module{N}})$ with
$\Omega\subseteq\ncspaced{\ring}{d}$ a right admissible nc set,
$m\in\mathbb{N}$, and $Y\in\Omega_s$. Then for each
$N\in\mathbb{N}$ and arbitrary $X\in\Omega_{sm}$,
\begin{multline}\label{eq:part_TT}
f(X)=\sum_{\ell=0}^N\sum_{|w|=\ell}\Big(X-\bigoplus_{\alpha=1}^mY\Big)^{\odot_sw}
\Delta_R^{w^\top}f(\underset{\ell+1\
\rm{times}}{\underbrace{Y,\ldots,Y}})\\
+\sum_{|w|=N+1}\Big(X-\bigoplus_{\alpha=1}^mY\Big)^{\odot_sw}\Delta_R^{w^\top}f(\underset{N+1\
\rm{times}}{\underbrace{Y,\ldots,Y}},X).
\end{multline}
In particular, for $s=1$ and $\mu\in\Omega_1$, we obtain a genuine
nc power expansion
\begin{multline}\label{eq:part_TT'}
f(X)=\sum_{\ell=0}^N\sum_{|w|=\ell}(X-I_m\mu)^{w}
\Delta_R^{w^\top}f(\underset{\ell+1\
\rm{times}}{\underbrace{\mu,\ldots,\mu}})\\
+\sum_{|w|=N+1}(X-I_m\mu)^{w}\Delta_R^{w^\top}f(\underset{N+1\
\rm{times}}{\underbrace{\mu,\ldots,\mu}},X).
\end{multline}
Here we identify the $\ell$-linear mapping
$\Delta_R^{w^\top}f(\mu,\ldots,\mu)\colon\ring\times\cdots\times\ring\to\module{N}$
with the vector
$\Delta_R^{w^\top}f(\mu,\ldots,\mu)(1,\ldots,1)\in\module{N}$, and
the $m\times m$ matrix $\Delta_R^{w^\top}f(\mu,\ldots,\mu,X)$ of
$(N+1)$-linear mappings
$\ring\times\cdots\times\ring\to\module{N}$ (see formula
\eqref{eq:diag_tensa_k'} in Proposition \ref{prop:diag_tensa_k})
with the $m\times m$ matrix over $\module{N}$ whose $i$-th row is
equal to $\Delta_R^{w^\top}f(\mu,\ldots,\mu,X)(1,\ldots,1,
e_i^\top)$ where $e_i$ is the $i$-th standard basis vector of
$\ring^m$.
\end{cor}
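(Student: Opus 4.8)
The plan is to derive Corollary~\ref{cor:part_TT} as a direct specialization of Theorem~\ref{thm:tt-power-gen} to the case $\module{M}=\ring^d$, converting the homogeneous ``faux-power'' terms $\bigl(X-\bigoplus_\alpha Y\bigr)^{\odot_s\ell}\,\Delta_R^\ell f(Y,\ldots,Y)$ into the sums over words $w\in\free_d$ of length $\ell$ that appear in \eqref{eq:part_TT}. First I would invoke \eqref{eq:tt-power-gen} verbatim for a right admissible nc set $\Omega\subseteq\ncspaced{\ring}{d}$, $Y\in\Omega_s$, $X\in\Omega_{sm}$, and a fixed $N$, writing $X-\bigoplus_{\alpha=1}^mY$ in place of the generic displacement. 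Then, term by term, I would apply the identity \eqref{eq:delta^l_vs_delta^w} (established just before the corollary) to each summand $\ell=0,\ldots,N$, which rewrites $Z^{\odot_s\ell}\Delta_R^\ell f(Y,\ldots,Y)$ as $\sum_{|w|=\ell}Z^{\odot_s w}\Delta_R^{w^\top}f(Y,\ldots,Y)$ with $Z=X-\bigoplus_\alpha Y$; and similarly apply the analogue of \eqref{eq:delta^l_vs_delta^w} to the remainder term, where the last argument is $X$ rather than $Y$ (this is the $k=0$ instance of the multilinearity decomposition \eqref{eq:r_difdecomp-kl}, together with \eqref{eq:Z^l_vs_Z^w} applied to the faux power). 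This produces exactly \eqref{eq:part_TT}. The special case \eqref{eq:part_TT'} is then immediate by setting $s=1$ and $Y=\mu\in\Omega_1$, for which $\bigoplus_{\alpha=1}^m\mu=I_m\mu$ and $X^{\odot_1 w}=X^w$ is the genuine nc monomial; here I would additionally recall from Proposition~\ref{prop:diag_tensa_k}, formulas \eqref{eq:diag_tensa_k} and \eqref{eq:diag_tensa_k'}, how the $\ell$-linear mappings over $\ring$ are identified with vectors in $\module{N}$ and with matrices over $\module{N}$, which justifies the concrete interpretation of the coefficients stated at the end of the corollary.

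The only genuinely substantive point—and the one I expect to require the most care—is verifying the decomposition \eqref{eq:delta^l_vs_delta^w} for the \emph{matrix-center} case $Y\in\Omega_s$ with $s>1$, and its counterpart for the remainder term. For $m=1$ this is \eqref{eq:delta^l_vs_delta^w_m=1}, which is itself \eqref{eq:r_difdecomp-kl} with $k=0$; for general $m$ one must combine this with \eqref{eq:Z^l_vs_Z^w}, noting that $Z^{\odot_s\ell}$, viewed as an $m\times m$ matrix over $\tensa{\mat{\ring}{s}}$, expands into $\sum_{|w|=\ell}(Z^{\odot_s w})\,e_{i_1}\otimes\cdots\otimes e_{i_\ell}$, and that the entrywise action of $\Delta_R^\ell f(Y,\ldots,Y)$ (as in Proposition~\ref{prop:diag_tensa_k}(1)) on each tensor slot picks out precisely $\Delta_R^{w^\top}f(Y,\ldots,Y)$. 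Since \eqref{eq:delta^l_vs_delta^w} is stated in the excerpt immediately before the corollary, I may simply cite it; the proof of the corollary proper is then a pure bookkeeping matter of substituting these word-expansions into the $N+2$ terms of \eqref{eq:tt-power-gen}.

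Concretely, the proof would read: \emph{By Theorem~\ref{thm:tt-power-gen}, \eqref{eq:tt-power-gen} holds with $\module{M}=\ring^d$. For each $\ell=0,\ldots,N$, apply \eqref{eq:delta^l_vs_delta^w} with $Z=X-\bigoplus_{\alpha=1}^mY$ to rewrite the $\ell$-th term as $\sum_{|w|=\ell}\bigl(X-\bigoplus_{\alpha=1}^mY\bigr)^{\odot_sw}\Delta_R^{w^\top}f(Y,\ldots,Y)$; apply the same identity, with the final $Y$ replaced by $X$, to the remainder term. This yields \eqref{eq:part_TT}. For \eqref{eq:part_TT'}, take $s=1$ and $Y=\mu\in\Omega_1$, so that $X^{\odot_1 w}=X^w$ and $\bigoplus_{\alpha=1}^m\mu=I_m\mu$; the stated identification of the coefficients as vectors in $\module{N}$, resp.\ matrices over $\module{N}$, follows from Proposition~\ref{prop:diag_tensa_k} applied with all module factors equal to $\ring$.} I do not anticipate any obstacle beyond careful index-matching.
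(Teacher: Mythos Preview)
Your proposal is correct and matches the paper's approach exactly: the paper presents the corollary as an immediate consequence of Theorem~\ref{thm:tt-power-gen} together with the identity \eqref{eq:delta^l_vs_delta^w} (and its remainder-term analogue via \eqref{eq:r_difdecomp-kl} and Proposition~\ref{prop:diag_tensa_k}(2)), with no further argument given. Your write-up is in fact more explicit than the paper's, which simply says ``Theorem~\ref{thm:tt-power-gen} implies the following'' after recording \eqref{eq:Z^l_vs_Z^w}--\eqref{eq:delta^l_vs_delta^w}.
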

\begin{rem}\label{rem:lost_abbey_semigr}
Specializing Remark \ref{rem:lost_abbey} to the case of
$\module{M}=\ring^d$ and using \eqref{eq:delta^l_vs_delta^w_m=1},
we obtain that the multilinear forms
$f_w=\Delta_R^{w^\top}f(Y,\ldots,Y)$, $w\in\free_d$, satisfy the
conditions
\begin{equation}\label{eq:ncfun_coef_empty}
S f_\emptyset-f_\emptyset S=\sum_{k=1}^d f_{g_k}(SY_k-Y_kS),
\end{equation}
and for $w=g_{i_1}\cdots g_{i_\ell} \neq \emptyset$,
\begin{equation}\label{eq:ncfun-coef-w_0}
Sf_w(A^1,\ldots,A^\ell)-f_w(SA^1,A^2,\ldots,A^\ell)=\sum_{k=1}^df_{g_kw}(SY_k-Y_kS,A^1,\ldots,A^\ell),
\end{equation}
\begin{multline}\label{eq:ncfun-coef_w_j}
f_w(A^1,\ldots,A^{j-1},A^jS,A^{j+1},\ldots,A^\ell)-f_w(A^1,\ldots,A^j,SA^{j+1},A^{j+2},\ldots,A^\ell)\\
=\sum_{k=1}^df_{g_{i_1}\cdots g_{i_j} g_k g_{i_{j+1}} \cdots
g_{i_\ell}}(A^1,\ldots,A^j,SY_k-Y_kS,A^{j+1},\ldots,A^\ell),
\end{multline}
\begin{multline}\label{eq:ncfun-coef_w_ell}
f_w(A^1,\ldots,A^{\ell-1},A^\ell S)-f_w(A^1,\ldots,A^\ell)S\\
= \sum_{k=1}^df_{wg_k}(A^1,\ldots,A^\ell,SY_k-Y_kS),
\end{multline}
for all $S,A^1,\ldots,A^\ell \in \mat{\ring}{s}$.
\end{rem}

We proceed to rewrite the TT formula \eqref{eq:part_TT} in a more
concrete fashion. Using the tensor product interpretation for the
values of higher order nc functions (see Remark
\ref{rem:tensor_values} --- we assume that the module $\module{N}$
is free),
$$\Delta_R^{w^\top}f(\underset{\ell+1\
\rm{times}}{\underbrace{Y,\ldots,Y}})\in\mat{\module{N}}{s}\otimes\underset{\ell\
\rm{times}}{\underbrace{\mat{\ring}{s}\otimes\cdots\otimes\mat{\ring}{s}}}$$
and therefore
\begin{equation}\label{eq:tt-coef}
\Delta_R^{w^\top}f(\underset{\ell+1\
\rm{times}}{\underbrace{Y,\ldots,Y}})=A_{w,(0)}\otimes
A_{w,(1)}\otimes\cdots\otimes A_{w,(\ell)}.
\end{equation}
\index{$A_{w,(0)}\otimes
A_{w,(1)}\otimes\cdots\otimes A_{w,(\ell)}$}Here we use an
analogue of the \emph{sumless Sweedler notation} \index{sumless
Sweedler notation} familiar in the theory of coalgebras
\cite{HSw},
\begin{equation}\label{eq:Sweedler}
A_{w,(0)}\otimes A_{w,(1)}\otimes\cdots\otimes
A_{w,(\ell)}=\sum_{j}A^{(j)}_{w,(0)}\otimes
A_{w,(1)}^{(j)}\otimes\cdots\otimes A^{(j)}_{w,(\ell)},
\end{equation}
with a finite sum (having at most ${(s^2)}^\ell$ terms) and
$A_{w,(0)}^{(j)}\in\mat{\module{N}}{s}$, $A_{w,(1)}^{(j)}$,
\ldots, $A_{w,(\ell)}^{(j)}\in\mat{\ring}{s}$. According to
\eqref{eq:natur_map},
\begin{multline}\label{eq:tt-term}
\Delta_R^{w^\top}f(\underset{\ell+1\
\rm{times}}{\underbrace{Y,\ldots,Y}})( X_{i_1}-Y_{i_1},\ldots,X_{i_\ell}-Y_{i_\ell})\\
=(A_{w,(0)}\otimes A_{w,(1)}\otimes\cdots\otimes
A_{w,(\ell)})\star(X-Y)^{[w]},
\end{multline}
where we introduce the notation
\begin{multline}\label{eq:Ramamurti}
(C_{w,(0)}\otimes C_{w,(1)}\otimes\cdots\otimes
C_{w,(\ell)})\star Z^{[w]}\\
=\sum_{j}C^{(j)}_{w,(0)}\left(Z_{i_1}C_{w,(1)}^{(j)}\right)\cdots
\left(Z_{i_\ell}C^{(j)}_{w,(\ell)}\right),
\end{multline} \index{$(C_{w,(0)}\otimes C_{w,(1)}\otimes\cdots\otimes
C_{w,(\ell)})\star Z^{[w]}$}where $C_{w,(0)}\otimes
C_{w,(1)}\otimes\cdots\otimes
C_{w,(\ell)}\in\mat{\module{N}}{s}\otimes\underset{\ell\
\rm{times}}{\underbrace{\mat{\ring}{s}\otimes\cdots\otimes\mat{\ring}{s}}}$
in the  sumless Sweedler notation, and
$Z\in\mattuple{\ring}{n}{d}$. Recall that for
$Z\in\mattuple{\ring}{sm}{d}$, $w=g_{i_1}\cdots
g_{i_\ell}\in\free_d$, and $1\le i,k\le m$ we have
$$(Z^{\odot_sw})_{ik}=\sum_{1\le j_1,\ldots,j_{\ell-1}\le
m}(Z_{i_1})_{ii_1}\otimes (Z_{i_2})_{j_1j_2}\otimes
\cdots\otimes(Z_{i_{\ell-1}})_{j_{\ell-2}j_{\ell-1}}\otimes
(Z_{i_\ell})_{j_{\ell-1}k}.$$ Therefore, \begin{multline*}
\Big(Z^{\odot_sw}\Delta_R^{w^\top}f(\underset{\ell+1\
\rm{times}}{\underbrace{Y,\ldots,Y}})\Big)_{ik}\\
=\sum_{1\le j_1,\ldots,j_{\ell-1}\le
m}\Delta_R^{w^\top}f(\underset{\ell+1\
\rm{times}}{\underbrace{Y,\ldots,Y}})\Big((Z_{i_1})_{ii_1},(Z_{i_2})_{j_1j_2},
\ldots,(Z_{i_{\ell-1}})_{j_{\ell-2}j_{\ell-1}},
(Z_{i_\ell})_{j_{\ell-1}k}\Big)\\
=\sum_j\sum_{1\le j_1,\ldots,j_{\ell-1}\le
m}A^{(j)}_{w,(0)}\Big((Z_{i_1})_{ii_1}A^{(j)}_{w,(1)}\Big)\Big((Z_{i_2})_{j_1j_2}A^{(j)}_{w,(2)}\Big)\\
\cdots
\Big((Z_{i_{\ell-1}})_{j_{\ell-2}j_{\ell-1}}A^{(j)}_{w,(\ell-1)}\Big)
\Big((Z_{i_\ell})_{j_{\ell-1}k}A^{(j)}_{w,(\ell)}\Big)\\
=\Bigg(\left(\Big(\bigoplus_{\alpha=1}^mA_{w,(0)}\Big)\otimes\cdots\otimes\Big(\bigoplus_{\alpha=1}^mA_{w,(\ell)}\Big)
\right) \star Z^{[w]}\Bigg)_{ik}
\end{multline*}
for every $Z\in\mattuple{\ring}{sm}{d}$,
 so that
\begin{multline}\label{eq:tt-term-gen}
\Big(X-\bigoplus_{\alpha=1}^mY\Big)^{\odot_sw}\Delta_R^{w^\top}f(\underset{\ell+1\
\rm{times}}{\underbrace{Y,\ldots,Y}})\\
=\Bigg(\Big(\bigoplus_{\alpha=1}^mA_{w,(0)}\Big)
\otimes\cdots\otimes\Big(\bigoplus_{\alpha=1}^mA_{w,(\ell)}\Big)\Bigg)
\star \Big(X-\bigoplus_{\alpha=1}^mY\Big)^{[w]}
\end{multline}
for every $X\in\mattuple{\ring}{sm}{d}$.

Analogously, for the remainder of the TT formula
\eqref{eq:part_TT} (the sum over words of length $N+1$) we have
$$\Delta_R^{w^\top}f(\underset{N+1\
\rm{times}}{\underbrace{Y,\ldots,Y}},X)\in\mat{\module{N}}{n}\otimes\underset{N+1\
\rm{times}}{\underbrace{\mat{\ring}{n}\otimes\cdots\otimes\mat{\ring}{n}}}$$
and therefore
\begin{equation}\label{eq:tt-coef-remain}
\Delta_R^{w^\top}f(\underset{N+1\
\rm{times}}{\underbrace{Y,\ldots,Y}},X)=B_{w,(0)}\otimes
B_{w,(1)}\otimes\cdots\otimes B_{w,(N+1)}
\end{equation}
and
\begin{multline}\label{eq:tt-term-remain-gen}
\Big(X-\bigoplus_{\alpha=1}^mY\Big)^{\odot_sw}\Delta_R^{w^\top}f(\underset{N+1\
\rm{times}}{\underbrace{Y,\ldots,Y}},X)\\
=\Bigg(\Big(\bigoplus_{\alpha=1}^mB_{w,(0)}\Big)
\otimes\cdots\otimes\Big(\bigoplus_{\alpha=1}^mB_{w,(N)}\Big)\otimes
B_{w,(N+1)}\Bigg) \star \Big(X-\bigoplus_{\alpha=1}^mY\Big)^{[w]}
\end{multline}
for every $X\in\mattuple{\ring}{sm}{d}$. Notice that while the
``TT coefficients" $A_{w,(0)}\otimes A_{w,(1)}\otimes\cdots\otimes
A_{w,(\ell)}$ depend only on the center point $Y$, the ``remainder
coefficients" $B_{w,(0)}\otimes B_{w,(1)}\otimes\cdots\otimes
B_{w,(N+1)}$ depend on both $Y$ and $X$.

 Thus,  we obtain the
TT formula in the form of ``nc pseudo-power" expansion,  which is
a generalization of nc power expansion \eqref{eq:part_TT'} for the
case of $s=1$ to the case of general s.
\begin{thm}\label{thm:tt-power}
Let $f\in\tclass{0}(\Omega;\ncspace{\module{N}})$ with
$\Omega\subseteq\ncspaced{\ring}{d}$ a right admissible nc set,
 and $Y\in\Omega_s$. Then for each
$N\in\mathbb{N}$, an arbitrary $m\in\mathbb{N}$, and
$X\in\Omega_{sm}$,
\begin{multline}\label{eq:tt-pseudopower-mult}
f(X)=\sum_{\ell=0}^N\sum_{|w|=\ell}
\left(\Big(\bigoplus_{\alpha=1}^mA_{w,(0)}\Big)\otimes\cdots\otimes
\Big(\bigoplus_{\alpha=1}^mA_{w,(\ell)}\Big)\right)\star
\Big(X-\bigoplus_{\alpha=1}^mY\Big)^{[w]}\\
+\sum_{|w|=N+1}\left(\Big(\bigoplus_{\alpha=1}^mB_{w,(0)}\Big)\otimes\cdots\otimes
\Big(\bigoplus_{\alpha=1}^mB_{w,(N)}\Big)\otimes
B_{w,(N+1)}\right)\star \Big(X-\bigoplus_{\alpha=1}^mY\Big)^{[w]}.
\end{multline}
 Here
$A_{w,(0)}\otimes \cdots\otimes
A_{w,(\ell)}=\Delta_R^{w^\top}f(\underset{\ell+1\
\rm{times}}{\underbrace{Y,\ldots,Y}})$ and $B_{w,(0)}\otimes
\cdots\otimes B_{w,(N)}\otimes
B_{w,(N+1)}=\Delta_R^{w^\top}f(\underset{N+1\
\rm{times}}{\underbrace{Y,\ldots,Y}},X)$
 with the tensor product
interpretation for the values of $\Delta_R^{w^\trans}f$ (Remark
\ref{rem:tensor_values}), the sumless Sweedler notation
\eqref{eq:Sweedler}, and the pseudo-power notation
\eqref{eq:Ramamurti}.
\end{thm}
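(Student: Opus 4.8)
The plan is to derive Theorem \ref{thm:tt-power} directly from Corollary \ref{cor:part_TT} by unwinding the tensor-product notation. Corollary \ref{cor:part_TT} already provides the expansion
\[
f(X)=\sum_{\ell=0}^N\sum_{|w|=\ell}\Big(X-\bigoplus_{\alpha=1}^mY\Big)^{\odot_sw}
\Delta_R^{w^\top}f(\underset{\ell+1\ \rm{times}}{\underbrace{Y,\ldots,Y}})
+\sum_{|w|=N+1}\Big(X-\bigoplus_{\alpha=1}^mY\Big)^{\odot_sw}\Delta_R^{w^\top}f(\underset{N+1\ \rm{times}}{\underbrace{Y,\ldots,Y}},X),
\]
so the only work is to show that each term on the right-hand side equals the corresponding term in \eqref{eq:tt-pseudopower-mult}. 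For the TT coefficient terms, this is exactly the content of formula \eqref{eq:tt-term-gen}, which was derived in the discussion preceding the theorem: writing $\Delta_R^{w^\top}f(Y,\ldots,Y)=A_{w,(0)}\otimes\cdots\otimes A_{w,(\ell)}$ in sumless Sweedler notation (legitimate since $\module{N}$ is free and the natural map \eqref{eq:natural}--\eqref{eq:natur_map} is then an isomorphism, as noted in Remark \ref{rem:natur_map}), one computes the $(i,k)$ block entry of $\big(X-\bigoplus_\alpha Y\big)^{\odot_sw}\Delta_R^{w^\top}f(Y,\ldots,Y)$ by applying the multilinear mapping entrywise to the matrix $\big(X-\bigoplus_\alpha Y\big)^{\odot_sw}$, whose entries are given by the explicit formula for $(Z^{\odot_sw})_{ik}$. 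Matching this against the definition \eqref{eq:Ramamurti} of the star operation applied to the direct-sum Sweedler tensor yields \eqref{eq:tt-term-gen}.

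Next I would handle the remainder term analogously. Here the relevant identity is \eqref{eq:tt-term-remain-gen}, derived in the same way: one writes $\Delta_R^{w^\top}f(Y,\ldots,Y,X)=B_{w,(0)}\otimes\cdots\otimes B_{w,(N+1)}$ in sumless Sweedler notation, now with the last factor $B_{w,(N+1)}\in\mat{\ring}{n}$ corresponding to the $X$-slot, and then applies the same entrywise computation. The only structural difference from the TT coefficient case is that the last tensor slot is not replicated into a direct sum $\bigoplus_{\alpha=1}^m$ — because the last argument of $\Delta_R^{w^\top}f$ is the full matrix $X$, not a direct sum of copies of $Y$ — which is precisely why in \eqref{eq:tt-term-remain-gen} and in the statement of the theorem the factor $B_{w,(N+1)}$ appears without the $\bigoplus_{\alpha=1}^m$. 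This matches the structure described in the final sentence of Corollary \ref{cor:part_TT} for how $\Delta_R^{w^\top}f(\mu,\ldots,\mu,X)$ acts on the rows (cf. Proposition \ref{prop:diag_tensa_k}(2), formula \eqref{eq:diag_tensa_k'}).

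I expect essentially no serious obstacle here: the theorem is a reformulation rather than a new result, and all the hard analytic and combinatorial content is already in Theorems \ref{thm:TT}, \ref{thm:tt-power-gen} and Corollary \ref{cor:part_TT}, together with Proposition \ref{prop:diag_tensa_k}. The one point requiring a little care is bookkeeping: one must track correctly which tensor slots get replicated under the direct sum $X=\bigoplus_{\alpha=1}^mY$ via Proposition \ref{prop:diag_tensa_k}(1) (all $\ell+1$ slots in the TT coefficient case, since all $\ell+1$ matrix arguments are $\bigoplus_\alpha Y$) versus Proposition \ref{prop:diag_tensa_k}(2) (the first $N+1$ slots, with the last slot kept intact as $X$, in the remainder case). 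Assembling the two displayed identities \eqref{eq:tt-term-gen} and \eqref{eq:tt-term-remain-gen} into the sum provided by Corollary \ref{cor:part_TT} immediately gives \eqref{eq:tt-pseudopower-mult}, completing the proof.

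\begin{proof}[Proof of Theorem \ref{thm:tt-power}]
This is a direct reformulation of Corollary \ref{cor:part_TT}. Since $\module{N}$ is free, the natural map \eqref{eq:natural}--\eqref{eq:natur_map} is an isomorphism (Remark \ref{rem:natur_map}), so we may use the tensor product interpretation of the values of $\Delta_R^{w^\top}f$ (Remark \ref{rem:tensor_values}) and write them in sumless Sweedler notation \eqref{eq:Sweedler} as in \eqref{eq:tt-coef} and \eqref{eq:tt-coef-remain}. For the terms with $|w|=\ell\le N$, all $\ell+1$ matrix arguments of $\Delta_R^{w^\top}f$ equal $\bigoplus_{\alpha=1}^mY$, so Proposition \ref{prop:diag_tensa_k}(1) and the entrywise computation preceding the theorem give \eqref{eq:tt-term-gen}, i.e.,
\[
\Big(X-\bigoplus_{\alpha=1}^mY\Big)^{\odot_sw}\Delta_R^{w^\top}f(\underset{\ell+1\ \rm{times}}{\underbrace{Y,\ldots,Y}})
=\Bigg(\Big(\bigoplus_{\alpha=1}^mA_{w,(0)}\Big)\otimes\cdots\otimes\Big(\bigoplus_{\alpha=1}^mA_{w,(\ell)}\Big)\Bigg)\star \Big(X-\bigoplus_{\alpha=1}^mY\Big)^{[w]}.
\]
For the terms with $|w|=N+1$, the last argument of $\Delta_R^{w^\top}f$ is $X$ rather than a direct sum of copies of $Y$, so by Proposition \ref{prop:diag_tensa_k}(2) only the first $N+1$ Sweedler factors are replicated under the direct sum, and the same computation gives \eqref{eq:tt-term-remain-gen}, i.e.,
\[
\Big(X-\bigoplus_{\alpha=1}^mY\Big)^{\odot_sw}\Delta_R^{w^\top}f(\underset{N+1\ \rm{times}}{\underbrace{Y,\ldots,Y}},X)
=\Bigg(\Big(\bigoplus_{\alpha=1}^mB_{w,(0)}\Big)\otimes\cdots\otimes\Big(\bigoplus_{\alpha=1}^mB_{w,(N)}\Big)\otimes B_{w,(N+1)}\Bigg)\star \Big(X-\bigoplus_{\alpha=1}^mY\Big)^{[w]}.
\]
Substituting these two identities into the expansion \eqref{eq:part_TT} of Corollary \ref{cor:part_TT} yields \eqref{eq:tt-pseudopower-mult}.
\end{proof}
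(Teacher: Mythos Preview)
Your proposal is correct and follows essentially the same approach as the paper: the theorem is stated there as the immediate outcome of the discussion preceding it, which starts from Corollary \ref{cor:part_TT} and derives the identities \eqref{eq:tt-term-gen} and \eqref{eq:tt-term-remain-gen} via the tensor-product interpretation (Remarks \ref{rem:natur_map}, \ref{rem:tensor_values}), the Sweedler notation, and Proposition \ref{prop:diag_tensa_k}(1)--(2). Your write-up reconstructs exactly this argument, including the correct bookkeeping distinction between the TT-coefficient terms (all slots replicated) and the remainder terms (last slot left as $X$).
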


\chapter{NC functions on nilpotent matrices}\label{sec:nilp}
As the first application of the TT formula, we will now describe
nc functions on nilpotent matrices as sums of their TT series.
Conversely, we will see that the sum of nc power series evaluated
on nilpotent matrices defines a nc function, provided the
coefficients of the series satisfy the conditions listed in
Remarks \ref{rem:lost_abbey} and \ref{rem:lost_abbey_semigr}.

\section{From nc functions to nc power series}\label{subsec:
from_ncfun_to_ncps} The TT formula \eqref{eq:tt-power-gen'} allows
us to describe nc functions on nilpotent matrices over
$\module{M}$ as sums of nc
 power series. Let $n,\kappa\in\mathbb{N}$. Define the set
$\text{Nilp}(\module{M};n,\kappa)$
\index{$\text{Nilp}(\module{M};n,\kappa)$} of  $n\times n$
matrices $X$ over $\module{M}$ which are \emph{nilpotent of rank
at most} $\kappa$, \index{nilpotent matrix of rank at most
$\kappa$} i.e., $X^{\odot\ell}=0$ for all $\ell\geq \kappa$.
Define also the set
$\text{Nilp}(\module{M};n)=\bigcup_{\kappa=1}^\infty\mbox{Nilp}(\module{M};n,\kappa)$
\index{$\text{Nilp}(\module{M};n)$} of nilpotent $n\times n$
matrices over $\module{M}$ and the set
$\text{Nilp}(\module{M})=\coprod_{n=1}^\infty\text{Nilp}(\module{M};n)$
\index{$\text{Nilp}(\module{M})$} of nilpotent matrices over
$\module{M}$. Notice that
$\Omega=\text{Nilp}(\module{M})\subseteq\ncspace{\module{M}}$ is a
right (as well as left) admissible nc set, and
$\Omega_n=\text{Nilp}(\module{M};n)$, $n=1,2,\ldots$.

In the special case of $\module{M}=\ring^d$, the set
  $\text{Nilp}_d(\ring;n,\kappa):=\text{Nilp}(\ring^d;n,\kappa)$
  \index{$\text{Nilp}_d(\ring;n,\kappa)$}
  consists
  of
$d$-tuples $X$ of $n\times n$ matrices over $\ring$ which are
\emph{jointly nilpotent of rank at most} $\kappa$, \index{jointly
nilpotent matrices of rank at most $\kappa$} i.e., $X^w=0$ for all
$w\in\free_d$ such that $|w|\geq \kappa$; see
\eqref{eq:Z^l_vs_Z^w}. We also have the set
$\text{Nilp}_d(\ring;n):=\text{Nilp}(\ring^d;n)$ of jointly
\index{$\text{Nilp}_d(\ring;n)$} nilpotent $d$-tuples of $n\times
n$ matrices over $\ring$ and the set
$\text{Nilp}_d(\ring):=\text{Nilp}(\ring^d)$ of
\index{$\text{Nilp}_d(\ring)$} jointly nilpotent $d$-tuples of
matrices over $\ring$.
\begin{rem}\label{rem:nilp-uptr}
In the case where $\ring=\field$ is a field, a matrix
$X\in\text{Nilp}(\module{M};n)$ is similar to a strictly upper
triangular matrix. Indeed, consider a finite-dimensional subspace
of $\module{M}$ spanned by the entries of $X$ and choose its basis
$\upsilon_1$, \ldots, $\upsilon_d$; we can then write
$X=A_1\upsilon_1+\cdots+A_d\upsilon_d$, where
$(A_1,\ldots,A_d)\in\text{Nilp}_d(\field;n)$. The Lie subalgebra
of $\mat{\field}{n}$ generated by $A_1$, \ldots, $A_d$ consists of
nilpotent matrices. By Engel's theorem (see, e.g., \cite[Sections
3.2, 3.3]{Hump}), the matrices  $A_1$, \ldots, $A_d$ are jointly
similar to strictly upper triangular matrices. Hence, $X$ is
similar to a strictly upper triangular matrix.
\end{rem}

\begin{thm}\label{thm:nc_nilp-gen}
Let $f\in\tclass{0}({\rm Nilp}(\module{M});\ncspace{\module{N}})$.
Then for all  $X\in{\rm Nilp}(\module{M})$
\begin{equation}\label{eq:tt-nilp-gen}
f(X)=\sum_{\ell=0}^\infty X^{\odot\ell}\Delta_R^\ell
f(\underset{\ell+1\ \rm{times}}{\underbrace{0,\ldots,0}}),
\end{equation}
where the sum has finitely many nonzero terms.
\end{thm}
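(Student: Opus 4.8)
The plan is to apply the Taylor--Taylor formula \eqref{eq:tt-power-gen'} with center $\mu = 0 \in {\rm Nilp}(\module{M};1)$ (so $s=1$). Indeed, for any fixed $n$, the zero matrix $0 \in \mat{\module{M}}{n}$ is nilpotent, so $\bigoplus_{\alpha=1}^n 0 = 0 \in {\rm Nilp}(\module{M};n)$ and $\mu = 0 \in \Omega_1$ in the notation of Theorem \ref{thm:tt-power-gen}. For $X \in {\rm Nilp}(\module{M};n)$ we have $X - I_n\mu = X$, so \eqref{eq:tt-power-gen'} reads
\begin{equation*}
f(X) = \sum_{\ell=0}^N X^{\odot\ell}\,\Delta_R^\ell f(\underset{\ell+1\ \rm{times}}{\underbrace{0,\ldots,0}}) + X^{\odot N+1}\,\Delta_R^{N+1}f(\underset{N+1\ \rm{times}}{\underbrace{0,\ldots,0}},X)
\end{equation*}
for every $N \in \mathbb{N}$.

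The key observation is that $X$ being nilpotent of rank at most $\kappa$ means precisely $X^{\odot \ell} = 0$ for all $\ell \geq \kappa$, by the definition of ${\rm Nilp}(\module{M};n,\kappa)$. Hence, choosing $N = \kappa - 1$ (or any $N \geq \kappa - 1$), the remainder term vanishes: $X^{\odot N+1}\,\Delta_R^{N+1}f(0,\ldots,0,X) = 0$ because $X^{\odot N+1} = 0$ (the higher-order difference-differential operator acts entrywise on this zero matrix over $\mattuple{\module{M}}{\cdot}{\otimes(N+1)}$, yielding zero). Similarly, every term with $\ell \geq \kappa$ in the finite sum vanishes since $X^{\odot \ell} = 0$. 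Therefore
\begin{equation*}
f(X) = \sum_{\ell=0}^{\kappa-1} X^{\odot\ell}\,\Delta_R^\ell f(\underset{\ell+1\ \rm{times}}{\underbrace{0,\ldots,0}}) = \sum_{\ell=0}^{\infty} X^{\odot\ell}\,\Delta_R^\ell f(\underset{\ell+1\ \rm{times}}{\underbrace{0,\ldots,0}}),
\end{equation*}
where the last equality simply adds back the zero terms, and the series has at most $\kappa$ nonzero terms. This is exactly \eqref{eq:tt-nilp-gen}.

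There is essentially no obstacle here: the content is entirely packaged in Theorem \ref{thm:tt-power-gen}, and the only point requiring (minor) care is confirming that ${\rm Nilp}(\module{M})$ is a right admissible nc set so that the hypothesis of Theorem \ref{thm:tt-power-gen} applies — but this is already asserted in the text immediately preceding the statement (it is closed under direct sums since $(X\oplus Y)^{\odot \ell} = X^{\odot \ell}\oplus Y^{\odot\ell}$, and right admissible because one may take $r=1$: the block upper-triangular matrix built from nilpotent blocks is again nilpotent). One should also note that $0\in\Omega_1$ and, more generally, $\bigoplus_{\alpha=1}^n 0 \in \Omega_n$, which is immediate. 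The cleanest writeup just invokes \eqref{eq:tt-power-gen'} with $\mu=0$ and $N$ large, then observes the vanishing of all terms of degree $\geq \kappa$.
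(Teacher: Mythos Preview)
Your proof is correct and follows essentially the same approach as the paper: apply the TT formula \eqref{eq:tt-power-gen'} with center $\mu=0$, then use nilpotency to kill the remainder and all high-order terms. The paper's version is even terser (it just says ``for $N>\kappa$'' the formula becomes the finite sum), but the content is identical.
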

\begin{proof} Let $X\in\text{Nilp}(\module{M};n,\kappa)$ for some
positive integers $n$ and $\kappa$. Then the TT formula
\eqref{eq:tt-power-gen'} centered at $\mu=0$ for $N>\kappa$
becomes
$$f(X)=\sum_{\ell=0}^{\kappa-1}X^{\odot\ell}\Delta_R^\ell f(\underset{\ell+1\
\rm{times}}{\underbrace{0,\ldots,0}}).$$
\end{proof}
\begin{rem}\label{rem:ncfun_nilp}
It follows from \eqref{eq:tt-nilp-gen} that
$f(X)-I_nf(0)\in\text{Nilp}(\module{N};n)$ for
$X\in\text{Nilp}(\module{M};n)$. Alternatively (cf. Remark
\ref{rem:nilp-uptr}), if $X$ is strictly upper triangular, then
$f(X)$ is upper triangular with all diagonal entries equal to
$f(0)$.
\end{rem}

In the special case of $\module{M}=\ring^d$, we can use
\eqref{eq:part_TT'} instead of \eqref{eq:tt-power-gen'} to obtain:
\begin{thm}\label{thm:nc_nilp}
Let $f\in\tclass{0}({\rm Nilp}_d(\ring);\ncspace{\module{N}})$.
Then for all  $X\in{\rm Nilp}_d(\ring)$
\begin{equation}\label{eq:tt-nilp}
f(X)=\sum_{w\in\free_d}X^w\Delta_R^{w^\top}\!\!f(\underset{|w|+1\
\rm{times}}{\underbrace{0,\ldots,0}}),
\end{equation}
where the sum has finitely many nonzero terms.
\end{thm}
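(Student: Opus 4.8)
The plan is to derive Theorem~\ref{thm:nc_nilp} as the special case $\module{M}=\ring^d$ of Theorem~\ref{thm:nc_nilp-gen}, using the partial nc difference-differential operators to repackage the degree-$\ell$ term of the TT series as a sum over words $w\in\free_d$ with $|w|=\ell$. First I would fix $X\in{\rm Nilp}_d(\ring;n,\kappa)$ for some positive integers $n,\kappa$, so that $X^w=0$ for all $w\in\free_d$ with $|w|\ge\kappa$; by the definition of joint nilpotence of rank at most $\kappa$ together with the identification \eqref{eq:Z^l_vs_Z^w}, this is exactly the statement that $X^{\odot\ell}=0$ in $\mattuple{\ring}{n}{d}$ viewed over $\tensa{\ring}$ for $\ell\ge\kappa$, hence $X\in{\rm Nilp}(\ring^d;n)$.

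Next I would apply Corollary~\ref{cor:part_TT} with $s=1$ and $\mu=0\in\Omega_1$ (note $0$ is a jointly nilpotent $1$-tuple of $1\times 1$ matrices, so $\mu\in\Omega_1$), choosing $N\ge\kappa$. This gives
\begin{equation*}
f(X)=\sum_{\ell=0}^N\sum_{|w|=\ell}X^w\,\Delta_R^{w^\top}f(\underset{\ell+1\ \rm{times}}{\underbrace{0,\ldots,0}})+\sum_{|w|=N+1}X^w\,\Delta_R^{w^\top}f(\underset{N+1\ \rm{times}}{\underbrace{0,\ldots,0}},X).
\end{equation*}
Since $|w|=N+1>\kappa$ forces $X^w=0$, the remainder term vanishes identically; since likewise $X^w=0$ whenever $|w|\ge\kappa$, all terms with $\ell\ge\kappa$ in the main sum vanish as well. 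Therefore $f(X)=\sum_{\ell=0}^{\kappa-1}\sum_{|w|=\ell}X^w\,\Delta_R^{w^\top}f(0,\ldots,0)$, which is precisely the right-hand side of \eqref{eq:tt-nilp} with only finitely many (namely, the words of length $<\kappa$) possibly nonzero terms. Since $X\in{\rm Nilp}_d(\ring)$ was arbitrary and every such $X$ lies in some ${\rm Nilp}_d(\ring;n,\kappa)$, this completes the proof.

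There is essentially no obstacle here: the only point requiring a moment's care is matching the two bookkeeping conventions --- the convergence-free ``homogeneous degree'' grouping of the TT formula and the word-indexed grouping --- but this is handled entirely by \eqref{eq:Z^l_vs_Z^w} and \eqref{eq:delta^l_vs_delta^w}, which were established precisely for this purpose, and which justify passing between $X^{\odot\ell}\Delta_R^\ell f(0,\ldots,0)$ and $\sum_{|w|=\ell}X^w\Delta_R^{w^\top}f(0,\ldots,0)$. Alternatively, one could avoid invoking Corollary~\ref{cor:part_TT} directly and instead start from Theorem~\ref{thm:nc_nilp-gen} applied to $\module{M}=\ring^d$, then expand $X^{\odot\ell}=\sum_{|w|=\ell}(X^{\odot w})\,e_{i_1}\otimes\cdots\otimes e_{i_\ell}$ via \eqref{eq:Z^l_vs_Z^w} and contract against $\Delta_R^\ell f(0,\ldots,0)$ using \eqref{eq:r_difdecomp-kl}; both routes give the same finite sum.
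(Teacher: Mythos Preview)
Your proof is correct and follows essentially the same approach as the paper: the paper derives Theorem~\ref{thm:nc_nilp} from the TT formula \eqref{eq:part_TT'} (Corollary~\ref{cor:part_TT} with $s=1$, $\mu=0$) in exactly the way you describe, as the $\module{M}=\ring^d$ specialization of the argument for Theorem~\ref{thm:nc_nilp-gen}. Your alternative route via Theorem~\ref{thm:nc_nilp-gen} and \eqref{eq:Z^l_vs_Z^w}, \eqref{eq:delta^l_vs_delta^w} is equally valid and amounts to the same computation.
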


We proceed now to generalize Theorems \ref{thm:nc_nilp-gen} and
\ref{thm:nc_nilp} to the case of an arbitrary center. For
$Y\in\mat{\module{M}}{s}$, we define the set
$\text{Nilp}(\module{M},Y;sm,\kappa)$
\index{$\text{Nilp}(\module{M},Y;sm,\kappa)$} of $sm\times sm$
matrices $X$ over $\module{M}$ which are \emph{nilpotent about $Y$
of rank at most} $\kappa$, \index{nilpotent matrix about $Y$ of
rank at most $\kappa$} i.e.,
$(X-\bigoplus_{\alpha=1}^mY)^{\odot_s\ell}=0$ for all $\ell\geq
\kappa$. We also define
 the set
$\text{Nilp}(\module{M},Y;sm)=\bigcup_{\kappa=1}^\infty\text{Nilp}(\module{M},Y;sm,\kappa)$
and the set \index{$\text{Nilp}(\module{M},Y;sm)$}
$\text{Nilp}(\module{M},Y)=\coprod_{m=1}^\infty\text{Nilp}(\module{M},Y;sm)$.
\index{$\text{Nilp}(\module{M},Y)$} Notice that
$\text{Nilp}(\module{M},Y)$ is a right (as well as left)
admissible nc set.

In the special case of $\module{M}=\ring^d$, the set
  $\text{Nilp}_d(\ring,Y;sm,\kappa):=\text{Nilp}(\ring^d;sm,\kappa)$
  consists \index{$\text{Nilp}_d(\ring,Y;sm,\kappa)$}
  of
$d$-tuples $X$ of $sm\times sm$ matrices over $\ring$ which are
\emph{jointly nilpotent about $Y$ of rank at most} $\kappa$,
\index{jointly nilpotent matrices about $Y$ of rank at most
$\kappa$} i.e., $(X-\bigoplus_{\alpha=1}^mY)^{\odot_sw}=0$ for all
$w\in\free_d$ such that $|w|\geq \kappa$. We also have the sets
$\text{Nilp}_d(\ring,Y;sm):=\text{Nilp}(\ring^d,Y;sm)$ and
\index{$\text{Nilp}_d(\ring,Y;sm)$}
$\text{Nilp}_d(\ring,Y):=\text{Nilp}(\ring^d,Y)$.
\index{$\text{Nilp}_d(\ring,Y)$}
\begin{rem}\label{rem:nilp-uptr-Y}
Clearly, we have $X\in\text{Nilp}(\module{M},Y;sm,\kappa)$ if and
only if
$X-\bigoplus_{\alpha=1}^mY\in\text{Nilp}(\mat{\module{M}}{s};m,\kappa)$.
It follows from Remark \ref{rem:nilp-uptr} that when
$\ring=\field$ is a field,
 $X\in\text{Nilp}(\module{M},Y;sm)$ is similar, with a similarity matrix of the form $S\otimes
I_s\in\mat{\field}{sm}$, to an $s\times s$-block upper  triangular
matrix with all diagonal blocks equal to $Y$.
\end{rem}
We have the following analogue of Theorems \ref{thm:nc_nilp-gen},
with exactly the same proof.
\begin{thm}\label{thm:nc_nilp-gen-Y}
Let $Y\in\mat{\module{M}}{s}$ and $f\in\tclass{0}({\rm
Nilp}(\module{M},Y);\ncspace{\module{N}})$. Then for all $X\in{\rm
Nilp}(\module{M},Y;sm)$
\begin{equation}\label{eq:tt-nilp-gen-Y}
f(X)=\sum_{\ell=0}^\infty
\Big(X-\bigoplus_{\alpha=1}^mY\Big)^{\odot_s\ell}\Delta_R^\ell
f(\underset{\ell+1\ \rm{times}}{\underbrace{Y,\ldots,Y}}),
\end{equation}
where the sum has finitely many nonzero terms.
\end{thm}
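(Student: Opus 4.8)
The plan is to derive \eqref{eq:tt-nilp-gen-Y} directly from the matrix-centered Taylor--Taylor formula of Theorem~\ref{thm:tt-power-gen}, in exactly the way Theorem~\ref{thm:nc_nilp-gen} was obtained from its scalar-centered special case \eqref{eq:tt-power-gen'}. Here the relevant nc set is $\Omega := \text{Nilp}(\module{M},Y)$, which is right admissible (as recorded just above the statement), and the center $Y$ does lie in $\Omega_s$: indeed $Y - \bigoplus_{\alpha=1}^{1} Y = 0$, so $Y$ is nilpotent about $Y$ of rank at most $1$, hence $Y \in \text{Nilp}(\module{M},Y;s,1)\subseteq\Omega_s$. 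Thus Theorem~\ref{thm:tt-power-gen} is applicable to $f$ at the center $Y$.

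First I would fix $X\in\text{Nilp}(\module{M},Y;sm)$ and, using the definition of this set, choose $\kappa\in\mathbb{N}$ with $X\in\text{Nilp}(\module{M},Y;sm,\kappa)$, i.e., $\big(X-\bigoplus_{\alpha=1}^m Y\big)^{\odot_s\,\ell}=0$ for all $\ell\ge\kappa$. Then I would invoke \eqref{eq:tt-power-gen} with an arbitrary $N\ge\kappa$. In that identity each summand with $\kappa\le\ell\le N$ carries the factor $\big(X-\bigoplus_{\alpha=1}^m Y\big)^{\odot_s\,\ell}=0$ and hence vanishes, while the remainder term carries the factor $\big(X-\bigoplus_{\alpha=1}^m Y\big)^{\odot_s\,N+1}=0$ (since $N+1>\kappa$) and so vanishes too, regardless of the value of $\Delta_R^{N+1}f(Y,\ldots,Y,X)$. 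What is left is
$$f(X)=\sum_{\ell=0}^{\kappa-1}\Big(X-\bigoplus_{\alpha=1}^m Y\Big)^{\odot_s\,\ell}\Delta_R^\ell f(\underset{\ell+1\ \rm{times}}{\underbrace{Y,\ldots,Y}}),$$
which is exactly \eqref{eq:tt-nilp-gen-Y}: every term of index $\ell\ge\kappa$ on the right-hand side of \eqref{eq:tt-nilp-gen-Y} is zero, so the displayed infinite series terminates and coincides with this finite sum. Since $\kappa$ depends only on $X$, the series has finitely many nonzero terms, which completes the argument.

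I expect no genuine obstacle, the proof being essentially mechanical once Theorem~\ref{thm:tt-power-gen} is in hand; the only points deserving a careful glance are that $Y\in\Omega_s$ (needed so that evaluating the TT formula at the center $Y$ is legitimate) and that the faux power $\big(X-\bigoplus_{\alpha=1}^m Y\big)^{\odot_s\,\ell}$ occurring here --- the $\ell$-th power of $X-\bigoplus_{\alpha=1}^m Y$ regarded as an $m\times m$ matrix over $\tensa{\mat{\module{M}}{s}}$ --- is literally the same expression that appears in \eqref{eq:tt-power-gen} and in the definition of $\text{Nilp}(\module{M},Y;sm,\kappa)$. Both are immediate from the definitions. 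In the case $\module{M}=\ring^d$, running the same argument from \eqref{eq:part_TT} instead of \eqref{eq:tt-power-gen} produces the word-indexed form of the expansion, with $X^{\odot_sw}$ in place of $X^{\odot_s\ell}$ and $\Delta_R^{w^\top}f$ in place of $\Delta_R^\ell f$.
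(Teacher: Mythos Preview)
Your proof is correct and follows exactly the route the paper takes: the paper simply says the proof is identical to that of Theorem~\ref{thm:nc_nilp-gen}, which in turn amounts to applying the TT formula \eqref{eq:tt-power-gen} (resp.\ \eqref{eq:tt-power-gen'}) with $N$ large enough so that the high-order terms and the remainder vanish by nilpotency. Your additional verification that $Y\in\text{Nilp}(\module{M},Y;s,1)\subseteq\Omega_s$, needed to legitimately invoke Theorem~\ref{thm:tt-power-gen}, is a welcome point of care.
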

\begin{rem}\label{rem:ncfun_nilp-gen-Y}
It follows from \eqref{eq:tt-nilp-gen-Y} that we have
$f(X)\in\text{Nilp}(\module{N},f(Y))$ for
$X\in\text{Nilp}(\module{M},Y)$. Alternatively (cf. Remark
\ref{rem:nilp-uptr-Y}), if $X$ is $s\times s$-block upper
triangular with all diagonal blocks equal to $Y$, then $f(X)$ is
$s\times s$-block upper triangular with all diagonal entries equal
to $f(Y)$.
\end{rem}

In the special case of $\module{M}=\ring^d$, we have the following
analogue of Theorem \ref{thm:nc_nilp}.
\begin{thm}\label{thm:nc_nilp-Y}
Let $Y\in\mattuple{\ring}{s}{d}$ and $f\in\tclass{0}({\rm
Nilp}_d(\ring,Y);\ncspace{\module{N}})$. Then for all  $X\in{\rm
Nilp}_d(\ring,Y;sm)$
\begin{multline}\label{eq:tt-nilp-Y}
f(X)=\sum_{w\in\free_d}\Big(X-\bigoplus_{\alpha=1}^mY\Big)^{\odot_sw}\Delta_R^{w^\top}\!\!f(\underset{|w|+1\
\rm{times}}{\underbrace{Y,\ldots,Y}})\\
\\
=\sum_{\ell=0}^N\sum_{|w|=\ell}
\left(\Big(\bigoplus_{\alpha=1}^mA_{w,(0)}\Big)\otimes\cdots\otimes
\Big(\bigoplus_{\alpha=1}^mA_{w,(\ell)}\Big)\right)\star
\Big(X-\bigoplus_{\alpha=1}^mY\Big)^{[w]},
\end{multline}
where $A_{w,(0)}\otimes \cdots\otimes
A_{w,(\ell)}=\Delta_R^{w^\top}f(\underset{\ell+1\
\rm{times}}{\underbrace{Y,\ldots,Y}})$
 with the tensor product
interpretation for the values of $\Delta_R^{w^\trans}f$ (Remark
\ref{rem:tensor_values}), the sumless Sweedler notation
\eqref{eq:Sweedler}, and the pseudo-power notation
\eqref{eq:Ramamurti}, and the sum has finitely many nonzero terms.
\end{thm}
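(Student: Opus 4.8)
\textbf{Proof proposal for Theorem \ref{thm:nc_nilp-Y}.}

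The plan is to derive this as a direct specialization of Theorem \ref{thm:nc_nilp-gen-Y} to the module $\module{M}=\ring^d$, followed by an application of the partial-difference-differential machinery developed in Section \ref{subsec:dir_difdif-k}. First I would note that $\Omega={\rm Nilp}_d(\ring,Y)$ is a right (and left) admissible nc set, as recorded just before the statement; moreover, since $X\in{\rm Nilp}_d(\ring,Y;sm,\kappa)$ means $(X-\bigoplus_{\alpha=1}^mY)^{\odot_sw}=0$ for all $w$ with $|w|\ge\kappa$, the power $(X-\bigoplus_{\alpha=1}^mY)^{\odot_s\ell}$ vanishes for $\ell\ge\kappa$ by \eqref{eq:Z^l_vs_Z^w}. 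Hence Theorem \ref{thm:nc_nilp-gen-Y} applies and gives the finite expansion
\[
f(X)=\sum_{\ell=0}^\infty\Big(X-\bigoplus_{\alpha=1}^mY\Big)^{\odot_s\ell}\Delta_R^\ell f(\underset{\ell+1\ \rm{times}}{\underbrace{Y,\ldots,Y}}),
\]
with only finitely many nonzero terms.

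Next I would rewrite each homogeneous term using the decomposition of $\Delta_R^\ell$ into partial difference-differential operators $\Delta_R^{w^\top}$. This is exactly the content of formula \eqref{eq:delta^l_vs_delta^w}, namely
\[
Z^{\odot_s\ell}\Delta_R^\ell f(Y,\ldots,Y)=\sum_{|w|=\ell}Z^{\odot_sw}\Delta_R^{w^\top}f(Y,\ldots,Y),
\]
valid for every $Z\in\mattuple{\ring}{sm}{d}$; applying it with $Z=X-\bigoplus_{\alpha=1}^mY$ and summing over $\ell$ converts the expansion into $\sum_{w\in\free_d}(X-\bigoplus_{\alpha=1}^mY)^{\odot_sw}\Delta_R^{w^\top}f(Y,\ldots,Y)$, which is the first displayed equality in \eqref{eq:tt-nilp-Y}. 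Finiteness of the sum is inherited since each term with $|w|=\ell\ge\kappa$ already vanishes, there being only finitely many words of length $<\kappa$.

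For the second equality in \eqref{eq:tt-nilp-Y}, I would invoke the tensor-product interpretation of the values of the higher order nc function $\Delta_R^{w^\top}f$ (Remark \ref{rem:tensor_values}, using that $\module{N}$ is free), which lets us write $\Delta_R^{w^\top}f(Y,\ldots,Y)=A_{w,(0)}\otimes\cdots\otimes A_{w,(|w|)}$ in sumless Sweedler notation \eqref{eq:Sweedler}. The passage from $(X-\bigoplus_{\alpha=1}^mY)^{\odot_sw}\,\Delta_R^{w^\top}f(Y,\ldots,Y)$ to the pseudo-power expression $\big((\bigoplus_\alpha A_{w,(0)})\otimes\cdots\otimes(\bigoplus_\alpha A_{w,(\ell)})\big)\star(X-\bigoplus_\alpha Y)^{[w]}$ is precisely the identity \eqref{eq:tt-term-gen}, already established in the discussion preceding Theorem \ref{thm:tt-power}; one simply substitutes it term by term. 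I do not expect any genuine obstacle here: the theorem is essentially a transcription of Theorem \ref{thm:nc_nilp-gen-Y} through the bookkeeping identities \eqref{eq:delta^l_vs_delta^w} and \eqref{eq:tt-term-gen}. The only point requiring a word of care is the truncation: one must observe that the sum over $\ell$ (equivalently over $w$) in the second line of \eqref{eq:tt-nilp-Y} should be read as running up to $N$ with $N$ any integer $\ge\kappa-1$, after which all further terms vanish, so the displayed ``$\sum_{\ell=0}^N$'' agrees with the full series; making this consistent with the finiteness assertion is the mildly delicate part of the write-up.
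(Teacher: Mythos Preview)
Your proposal is correct and follows essentially the same approach as the paper: the theorem is stated there without a separate proof, being the $\module{M}=\ring^d$ specialization of Theorem \ref{thm:nc_nilp-gen-Y} (just as Theorem \ref{thm:nc_nilp} is the specialization of Theorem \ref{thm:nc_nilp-gen}), and the passage to partial operators and to the pseudo-power form is precisely via \eqref{eq:delta^l_vs_delta^w} and \eqref{eq:tt-term-gen}, exactly as you describe. Your remark about reading the truncation $\sum_{\ell=0}^N$ with any $N\ge\kappa-1$ is the right way to reconcile the notation with the finiteness assertion.
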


We establish now the uniqueness of nc power expansions for nc
functions on nilpotent matrices.
\begin{thm}\label{thm:ncps-nilp-gen-unique}
Let $Y\in\mat{\module{M}}{s}$, $f\in\tclass{0}({\rm
Nilp}(\module{M},Y);\ncspace{\module{N}})$, and assume that
$$f(X)=\sum_{\ell=0}^\infty\Big(X-\bigoplus_{\alpha=1}^mY\Big)^{\odot_s\ell}f_\ell$$
for all $m\in\mathbb{N}$, $X\in{\rm Nilp}(\module{M},Y;sm)$, and
some $\ell$-linear mappings
$f_\ell\colon\mattuple{\module{M}}{s}{\ell}\to\mat{\module{N}}{s}$,
$\ell=0,1,\ldots$ (where the sum has finitely many nonzero terms).
Then $f_\ell=\Delta_R^\ell f(Y,\ldots,Y)$ for all $\ell$.
\end{thm}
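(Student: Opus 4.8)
The plan is to prove uniqueness by induction on $\ell$, extracting each coefficient $f_\ell$ from the power series by evaluating $f$ on suitable nilpotent matrices about $Y$ and comparing with the TT expansion from Theorem \ref{thm:nc_nilp-gen-Y}. The crucial point is that the difference $g(X) := f(X) - \sum_{\ell=0}^N (X - \bigoplus_\alpha Y)^{\odot_s\ell} f_\ell$ vanishes identically on $\text{Nilp}(\module{M},Y)$ for every $N$, so in fact the hypothesis says $\sum_{\ell=0}^\infty (X - \bigoplus_\alpha Y)^{\odot_s\ell}(f_\ell - \Delta_R^\ell f(Y,\ldots,Y)) = 0$ for all $X \in \text{Nilp}(\module{M},Y)$, using that the TT series in \eqref{eq:tt-nilp-gen-Y} also represents $f$. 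Setting $h_\ell := f_\ell - \Delta_R^\ell f(Y,\ldots,Y)$, it therefore suffices to show: if $\sum_{\ell=0}^\infty (X - \bigoplus_\alpha Y)^{\odot_s\ell} h_\ell = 0$ on all of $\text{Nilp}(\module{M},Y)$ (finite sum at each point), then every $h_\ell = 0$.

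For the inductive step, suppose $h_0 = \cdots = h_{\ell-1} = 0$ and aim to show $h_\ell = 0$, i.e., $h_\ell(Z^1,\ldots,Z^\ell) = 0$ for arbitrary $Z^1,\ldots,Z^\ell \in \mat{\module{M}}{s}$. First I would take $m = \ell+1$ and consider the block $(\ell+1)\times(\ell+1)$ matrix $X$ over $\mat{\module{M}}{s}$ equal to $\bigoplus_{\alpha=1}^{\ell+1} Y$ plus the strictly-upper-bidiagonal matrix with blocks $Z^1,\ldots,Z^\ell$ just above the diagonal; this lies in $\text{Nilp}(\mat{\module{M}}{s};\ell+1,\ell+1)$ so $X \in \text{Nilp}(\module{M},Y;s(\ell+1))$, with $(X - \bigoplus_\alpha Y)^{\odot_s j}$ the $j$-th power of the bidiagonal part. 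Under the vanishing hypothesis, the $(1,\ell+1)$ block entry of $\sum_{j \ge \ell} (X - \bigoplus_\alpha Y)^{\odot_s j} h_j$ must be zero. Since the bidiagonal nilpotent matrix $B$ satisfies $B^j = 0$ for $j \ge \ell+1$, only the $j = \ell$ term contributes, and $(B^{\odot_s\ell})_{1,\ell+1} = Z^1 \otimes \cdots \otimes Z^\ell$; applying $h_\ell$ entrywise gives exactly $h_\ell(Z^1,\ldots,Z^\ell) = 0$.

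The main obstacle is being careful about the bookkeeping of the $\odot_s$ (faux) power: I must verify that for this particular bidiagonal matrix $B$ over $\tensa{\mat{\module{M}}{s}}$ the only nonzero entry of $B^{\odot_s\ell}$ in the $(1,\ell+1)$ slot is the elementary tensor $Z^1 \otimes \cdots \otimes Z^\ell$ (there are no other paths of length $\ell$ from index $1$ to index $\ell+1$ in the bidiagonal pattern), and that $h_j$ applied entrywise to $(B^{\odot_s j})_{1,\ell+1}$ for $j < \ell$ vanishes by the induction hypothesis while $(B^{\odot_s j})_{1,\ell+1} = 0$ for $\ell < j \le $ (the finite cutoff), so the single surviving term is as claimed. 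This is precisely the mechanism already used in the proof of Theorem \ref{thm:bidiag}, so I would invoke that lemma-style computation (Lemma \ref{lem:bidiag-homog} and the bidiagonal evaluation) rather than redo it. Everything else—linearity of $h_\ell$, the reduction to $h_\ell = 0$, and assembling the induction—is routine, so the proof is short once the bidiagonal extraction is in place.

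\begin{proof}[Proof of Theorem \ref{thm:ncps-nilp-gen-unique}]
By Theorem \ref{thm:nc_nilp-gen-Y} we also have
$f(X)=\sum_{\ell=0}^\infty\big(X-\bigoplus_{\alpha=1}^mY\big)^{\odot_s\ell}\Delta_R^\ell f(Y,\ldots,Y)$
for all $m$ and $X\in\text{Nilp}(\module{M},Y;sm)$, with finitely many nonzero terms. Setting
$h_\ell:=f_\ell-\Delta_R^\ell f(Y,\ldots,Y)\colon\mattuple{\module{M}}{s}{\ell}\to\mat{\module{N}}{s}$,
subtraction yields
\begin{equation}\label{eq:uniq-nilp-vanish}
\sum_{\ell=0}^\infty\Big(X-\bigoplus_{\alpha=1}^mY\Big)^{\odot_s\ell}h_\ell=0
\end{equation}
for all $m\in\mathbb{N}$ and $X\in\text{Nilp}(\module{M},Y;sm)$, the sum being finite at each point. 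We show $h_\ell=0$ for all $\ell$ by induction on $\ell$.

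For $\ell=0$, take $m=1$ and $X=Y$ in \eqref{eq:uniq-nilp-vanish}; then $X-Y=0$, so all terms with $\ell\ge1$ vanish and $h_0=0$.

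Assume $h_0=\cdots=h_{\ell-1}=0$, and fix arbitrary $Z^1,\ldots,Z^\ell\in\mat{\module{M}}{s}$. Let $m=\ell+1$ and let $B\in\rmat{\left(\mat{\module{M}}{s}\right)}{(\ell+1)}{(\ell+1)}$ be the matrix with $B_{i,i+1}=Z^i$ for $i=1,\ldots,\ell$ and all other blocks zero; put $X=\bigoplus_{\alpha=1}^{\ell+1}Y+B$. Then $X-\bigoplus_{\alpha=1}^{\ell+1}Y=B$, and $B$, viewed as a matrix over $\tensa{\mat{\module{M}}{s}}$, satisfies $B^{\odot_sj}=0$ for $j\ge\ell+1$; hence $X\in\text{Nilp}(\module{M},Y;s(\ell+1))$. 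By the bidiagonal structure, the $(1,\ell+1)$ block entry of $B^{\odot_sj}$ is $0$ for $j<\ell$, equals $Z^1\otimes\cdots\otimes Z^\ell$ for $j=\ell$, and is $0$ for $j>\ell$ (as $B^{\odot_sj}=0$ there). Taking the $(1,\ell+1)$ block entry of \eqref{eq:uniq-nilp-vanish} and applying $h_j$ entrywise, the induction hypothesis kills the terms with $j<\ell$ and the remaining terms with $j>\ell$ vanish since $B^{\odot_sj}=0$; only $j=\ell$ survives, giving
\begin{equation*}
h_\ell(Z^1,\ldots,Z^\ell)=0.
\end{equation*}
As $Z^1,\ldots,Z^\ell$ were arbitrary, $h_\ell=0$. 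This completes the induction, so $f_\ell=\Delta_R^\ell f(Y,\ldots,Y)$ for all $\ell$.
\end{proof}
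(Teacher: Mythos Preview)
Your proof is correct and uses essentially the same idea as the paper: evaluate on the block bidiagonal matrix $X=\bigoplus_\alpha Y+B$ with $Z^1,\ldots,Z^\ell$ above the diagonal and read off the $(1,\ell+1)$ block to isolate the $\ell$-th coefficient. The paper does this in one stroke---observing that the $(1,\ell+1)$ block of $f(X)$ equals $f_\ell(Z^1,\ldots,Z^\ell)$, hence $f_\ell$ is determined by $f$ (and by \eqref{eq:bidiag} equals $\Delta_R^\ell f(Y,\ldots,Y)$)---whereas you first subtract the TT series and then induct; but since the $(1,\ell+1)$ block of $B^{\odot_s j}$ already vanishes for $j<\ell$, your induction hypothesis is never actually used, and the argument collapses to the paper's direct extraction.
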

\begin{proof}
By Theorem \ref{thm:nc_nilp-gen-Y}, it suffices to show that the
coefficients $f_\ell$ are uniquely determined by the nc function
$f$. For any fixed $\ell$ and any $Z^1$, \ldots,
$Z^\ell\in\mat{\module{M}}{s}$,
\begin{multline*}
f\left(\begin{bmatrix}
Y & Z^1 & 0 & \cdots & 0\\
\vdots   & \ddots & \ddots & \ddots & \vdots\\
\vdots &  & \ddots & \ddots & 0 \\
\vdots &  &  & \ddots & Z^\ell\\
 0 & \cdots & \cdots & \cdots & Y
\end{bmatrix} \right)
=\begin{bmatrix} f_0 & f_1(Z^1) & \cdots & \cdots  &
f_\ell(Z^1,\ldots, Z^\ell) \\
0   & f_0 & \ddots & & \vdots\\
\vdots & \ddots & \ddots & \ddots & \vdots \\
\vdots &  & \ddots &  \ddots & f_1(Z^\ell)\\
 0 & \cdots & \cdots &  0 & f_0
\end{bmatrix}.
\end{multline*}
The $(1,\ell+1)$ entry of the matrix on the right-hand side, i.e.,
the value of $f_\ell$ on arbitrary $\ell$ matrices in
$\mat{\module{M}}{s}$ is uniquely determined by $f$. (To argue
somewhat differently, we see directly that $f_\ell=\Delta_R^\ell
f(Y,\ldots,Y)$ because of \eqref{eq:bidiag}.)
\end{proof}
There is also a version of Theorem \ref{thm:ncps-nilp-gen-unique}
in the special case of $\module{M}=\ring^d$ for the coefficients
of nc power series along $\free_d$, as in \eqref{eq:tt-nilp-Y}.
\begin{thm}\label{thm:ncps-nilp-unique}
Let $Y\in\mat{\module{M}}{s}$, $f\in\tclass{0}({\rm
Nilp}_d(\ring,Y);\ncspace{\module{N}})$, and assume that
$$f(X)=\sum_{w\in\free_d}\Big(X-\bigoplus_{\alpha=1}^mY\Big)^{\odot_sw}f_w$$
for all $m\in\mathbb{N}$, $X\in{\rm Nilp}_d(\ring,Y;sm)$, and some
$\ell$-linear mappings
$f_w\colon\mattuple{\ring}{s}{|w|}\to\mat{\module{N}}{s}$,
$w\in\free_d$ (where the sum has finitely many nonzero terms).
Then $f_w=\Delta_R^{w^\top} f(Y,\ldots,Y)$ for all $w$.
\end{thm}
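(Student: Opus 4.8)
The plan is to mimic the proof of Theorem \ref{thm:ncps-nilp-gen-unique}, but working with the block upper-triangular evaluation in the $d$-tuple setting. By Theorem \ref{thm:nc_nilp-Y} we already know that $f$ has \emph{some} nc power series expansion along $\free_d$ with coefficients $\Delta_R^{w^\top}f(Y,\ldots,Y)$, so the content of the theorem is that the coefficients $f_w$ in any such expansion are forced to coincide with these; equivalently, it suffices to show that each $f_w$ is uniquely determined by $f$ alone (independently of the representation), and then identify it via \eqref{eq:part_bidiag} or \eqref{eq:Z^l_vs_Z^w}.

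First I would fix a word $w = g_{i_1}\cdots g_{i_\ell} \in \free_d$ of length $\ell = |w|$ and arbitrary matrices $A^1,\ldots,A^\ell \in \mat{\ring}{s}$. Consider the $s\times s$-block bidiagonal matrix $X \in \mattuple{\ring}{s(\ell+1)}{d}$ having $Y$ in each of the $\ell+1$ diagonal blocks (so $m = \ell+1$), and having, in the $j$-th superdiagonal block (for $j=1,\ldots,\ell$), the $d$-tuple $A^j e_{i_j}$ (i.e.\ $A^j$ in coordinate $i_j$ and $0$ in all other coordinates). Such an $X$ lies in $\text{Nilp}_d(\ring,Y;s(\ell+1))$ since $X - \bigoplus_{\alpha=1}^{\ell+1}Y$ is strictly $s\times s$-block upper bidiagonal, hence its $\odot_s$-powers vanish beyond order $\ell+1$. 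Plugging this $X$ into the assumed expansion $f(X) = \sum_{w'\in\free_d}(X-\bigoplus_\alpha Y)^{\odot_s w'}f_{w'}$, and using the fact (from \eqref{eq:Z^l_vs_Z^w} and the block structure, cf.\ the computation preceding Theorem \ref{thm:tt-power}) that for this particular bidiagonal $X$ the $(1,\ell+1)$ block entry of $(X-\bigoplus_\alpha Y)^{\odot_s w'}f_{w'}$ is nonzero only when $w' = w$, and then equals exactly $f_w(A^1,\ldots,A^\ell)$, we read off that the $(1,\ell+1)$ block of $f(X)$ equals $f_w(A^1,\ldots,A^\ell)$. Since the left-hand side depends only on $f$, this shows $f_w(A^1,\ldots,A^\ell)$ is uniquely determined by $f$; as $A^1,\ldots,A^\ell$ were arbitrary, $f_w$ itself is uniquely determined. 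On the other hand, applying Theorem \ref{thm:bidiag} (or directly \eqref{eq:part_bidiag}) to the same bidiagonal $X$ shows that this $(1,\ell+1)$ block equals $\Delta_R^{w^\top}f(Y,\ldots,Y)(A^1,\ldots,A^\ell)$, whence $f_w = \Delta_R^{w^\top}f(Y,\ldots,Y)$.

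The main point requiring care — and the only genuine obstacle — is the combinatorial bookkeeping in the previous paragraph: verifying that for the bidiagonal $X$ with superdiagonal blocks supported in the coordinates $i_1,\ldots,i_\ell$, the only word $w'$ contributing to the $(1,\ell+1)$ block of $\sum_{w'}(X-\bigoplus_\alpha Y)^{\odot_s w'}f_{w'}$ is $w' = g_{i_1}\cdots g_{i_\ell}$, with coefficient precisely $f_w(A^1,\ldots,A^\ell)$. This follows from expanding $(X-\bigoplus_\alpha Y)^{\odot_s w'}$ as an iterated product of the coordinate matrices of $X-\bigoplus_\alpha Y$ (each of which is a single strictly-upper superdiagonal $\ell+1$-by-$\ell+1$ block matrix supported on one superdiagonal slot), and noting that the $(1,\ell+1)$ entry of a product of $\ell$ such matrices is nonzero only if the successive coordinates used are exactly $i_1,\ldots,i_\ell$ in order, in which case the block contribution telescopes to $A^1 \otimes \cdots \otimes A^\ell$ acting through the $s$-dependence; this is exactly the content of the matrix-entry formula for $(Z^{\odot_s w'})_{ik}$ recorded just before Theorem \ref{thm:tt-power}. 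Everything else is a direct transcription of the proof of Theorem \ref{thm:ncps-nilp-gen-unique}. I would keep the write-up short, citing Theorems \ref{thm:nc_nilp-Y}, \ref{thm:bidiag}, and the entry formula preceding Theorem \ref{thm:tt-power}, and presenting the bidiagonal evaluation as the one computation to be displayed.
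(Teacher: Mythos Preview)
Your argument is correct. The paper, however, takes a shorter route: it packages the word-indexed coefficients into level-indexed ones via \eqref{eq:lw-forms}, namely $f_\ell(Z^1,\ldots,Z^\ell):=\sum_{w=g_{i_1}\cdots g_{i_\ell}}f_w(Z^1_{i_1},\ldots,Z^\ell_{i_\ell})$, so that \eqref{eq:lw-powers} gives $Z^{\odot_s\ell}f_\ell=\sum_{|w|=\ell}Z^{\odot_sw}f_w$ and the hypothesis becomes exactly that of Theorem \ref{thm:ncps-nilp-gen-unique}. That theorem yields $f_\ell=\Delta_R^\ell f(Y,\ldots,Y)$, and specializing via \eqref{eq:f_w_via_f_l} (i.e.\ setting $Z^j=A^je_{i_j}$) recovers $f_w=\Delta_R^{w^\top}f(Y,\ldots,Y)$ from the definition of $\Delta_R^{w^\top}$. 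Your direct bidiagonal evaluation with superdiagonal blocks $A^je_{i_j}$ is precisely what this specialization unwinds to at the matrix level, so the underlying computation is the same; the paper's reduction just avoids repeating the block bookkeeping already done in the proof of Theorem \ref{thm:ncps-nilp-gen-unique}. Your version has the advantage of being self-contained and of identifying $f_w$ via \eqref{eq:part_bidiag} in a single step, at the cost of a slightly longer write-up.
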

\begin{proof}
We define $\ell$-linear mappings
$f_\ell\colon\left(\mattuple{\ring}{s}{d}\right)^\ell\to\mat{\module{N}}{s}$
 by
\begin{equation}\label{eq:lw-forms}
f_{\ell}(Z^1,\ldots,Z^\ell)=\sum_{w=g_{i_1}\cdots
g_{i_\ell}\in\free_d}f_w(Z^1_{i_1},\ldots, Z^\ell_{i_\ell}),
\end{equation}
 so that for every $m\in\mathbb{N}$ and $Z\in\mattuple{\ring}{sm}{d}$,
\begin{equation}\label{eq:lw-powers}
Z^{\odot_s\ell}f_\ell =\sum_{|w|=\ell}Z^{\odot_sw}f_w.
\end{equation}
Clearly,
\begin{equation}\label{eq:f_w_via_f_l}
f_w(A^1,\ldots,A^\ell)=f_\ell(A^1e_{i_1},\ldots,A^\ell
e_{i_\ell})\quad {\rm for\ every}\ w=g_{i_1}\cdots g_{i_\ell}.
\end{equation}
 The result now follows from Theorem
\ref{thm:ncps-nilp-gen-unique} and the definition of
$\Delta_R^{w^\top}f$.
\end{proof}

\begin{rem} \label{rem:ncfun_1var}
A simple corollary of Theorem \ref{thm:nc_nilp-gen-Y} or \ref{thm:nc_nilp-Y} is a fairly explicit
description of nc functions from $\ncspace{\field}$ to $\ncspace{\vecspace{W}}$ for a field $\field$
and a vector space $\vecspace{W}$ over $\field$.
More precisely, let $\Lambda_n$ be the set of all $X \in \mat{\field}{n}$
such that all the roots of the characteristic polynomial of $X$ are in $\field$,
let $\Lambda=\coprod_{n=1}^\infty\Lambda_n$ (clearly, $\Lambda \subseteq \ncspace{\field}$ is a nc set),
and let $f \colon \Lambda \to \ncspace{\vecspace{W}}$ be a nc function.
For any $X \in \Lambda_n$, there exists an invertible matrix $T \in \mat{\field}{n}$
so that
\begin{equation} \label{eq:1var_decomp}
X  = T^{-1}\left(\bigoplus_\lambda X_\lambda\right)T,
\end{equation}
where the summation is over all the distinct eigenvalues $\lambda$ of $X$ with algebraic multiplicities $n_\lambda$
and $X_\lambda \in \text{Nilp}_1(\field,\lambda;n_\lambda)$
(if we want we can take $X_\lambda$ to be a direct sum of Jordan cells with eigenvalue $\lambda$).
It follows that
\begin{equation} \label{eq:1var_fun}
f(X) = T^{-1}\left(\bigoplus_\lambda \sum_{\ell=0}^{n_\lambda} (X_\lambda - I_{n_\lambda} \lambda)^\ell
\Delta_R^\ell f(\lambda,\ldots,\lambda)\right)T.
\end{equation}
Conversely, given any functions $f_\ell \colon \field \to \vecspace{W}$, $\ell=0,1,\ldots$, we can define
for any matrix $X \in \Lambda_n$ as above
\begin{equation} \label{eq:1var_ps}
f(X) = T^{-1}\left(\bigoplus_\lambda \sum_{\ell=0}^{n_\lambda} (X_\lambda - I_{n_\lambda} \lambda)^\ell
f_\ell(\lambda)\right)T.
\end{equation}
It is seen immediately that the right hand side is well defined
independently of the representation \eqref{eq:1var_decomp} of $X$,
and that if $XS=SY$ then $f(X)S=Sf(Y)$;
therefore \eqref{eq:1var_ps} defines a nc function $f \colon \Lambda \to \ncspace{\vecspace{W}}$,
and it follows from Theorem \ref{thm:ncps-nilp-gen-unique} or \ref{thm:ncps-nilp-unique}
that $\Delta_R^\ell f(\lambda,\ldots,\lambda)=f_\ell(\lambda)$.
\end{rem}

\section{From nc power series to nc
functions}\label{subsec:from_ncps_to_ncfun} In this chapter, we
consider the sum of power series of the form
\begin{equation}\label{eq:power-series-nilp}
\sum_{\ell=0}^\infty
\Big(X-\bigoplus_{\alpha=1}^mY\Big)^{\odot_s\ell}f_\ell
\end{equation}
on nilpotent matrices $X$ about $Y$ (so that the sum has finitely
many nonzero terms).
 Here $Y\in\mat{\module{M}}{s}$ is a given center; $X\in\text{Nilp}(\module{M},Y;sm)$,
$m=1,2,\ldots$;
$f_\ell\colon(\mat{\module{M}}{s})^{\ell}\to\mat{\module{N}}{s}$,
$\ell=0,1,\ldots$, is a given sequence of $\ell$-linear mappings,
i.e., a linear mapping
$f\colon\tensa{\mat{\module{M}}{s}}\to\mat{\module{N}}{s}$;
 and conditions
\eqref{eq:ncfun_coef_0}--\eqref{eq:ncfun-coef_ell_ell} are
satisfied. Notice that the latter conditions are trivially
satisfied for any sequence $f_\ell$ in the case of $s=1$.

 The necessity of conditions  \eqref{eq:ncfun_coef_0}--\eqref{eq:ncfun-coef_ell_ell} on the
coefficients $f_\ell$ for the sum of the series to be a nc
function on $\text{Nilp}(\module{M},Y)$ follows from Theorem
\ref{thm:nc_nilp-gen-Y} and Remark \ref{rem:lost_abbey}. It turns
out that these conditions are also sufficient; this will
essentially follow from the following lemma.

\begin{lem}\label{lem:intertw-poly}
Let $\module{M}$ and $\module{N}$ be modules over a ring $\ring$.
Let $Y\in\mat{\module{M}}{s}$, and let
$f_\ell\colon(\mat{\module{M}}{s})^{\ell}\to\mat{\module{N}}{s}$,
$\ell=0,1,\ldots$, be a sequence of $\ell$-linear mappings
satisfying \eqref{eq:ncfun_coef_0}--\eqref{eq:ncfun-coef_ell_ell}.
Let $X\in\mat{\module{M}}{sm}$,
$\widetilde{X}\in\mat{\module{M}}{s\widetilde{m}}$, and
$S\in\rmat{\ring}{s\widetilde{m}}{sm}$ satisfy
$SX=\widetilde{X}S$. Then
\begin{multline}\label{eq:intertw-poly}
S \Bigg(\sum_{\ell=0}^N \Big(X - \bigoplus_{\nu=1}^m
Y\Big)^{\odot_s \ell} f_\ell\Bigg)- \Bigg(\sum_{\ell=0}^N
\Big(\widetilde{X} - \bigoplus_{\nu=1}^{\widetilde{m}} Y\Big)^{\odot_s \ell} f_\ell\Bigg) S \\
\!=\! \sum_{k=0}^N \Bigg( \Big(\widetilde{X} -
\bigoplus_{\nu=1}^{\widetilde{m}} Y\Big)^{\odot_s k} \odot_s
\Big(S \bigoplus_{\nu=1}^{m} Y - \bigoplus_{\nu=1}^{\widetilde{m}}
Y S\Big) \odot_s \Big( X - \bigoplus_{\nu=1}^{m} Y\Big)^{\odot_s
(N-k)}\Bigg)\! f_{N+1}.
\end{multline}
\end{lem}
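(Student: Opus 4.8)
The plan is to prove the identity by induction on $N$, exploiting the finite-difference relations \eqref{eq:ncfun_coef_0}--\eqref{eq:ncfun-coef_ell_ell} that the $f_\ell$ satisfy, together with the algebra of the $\odot_s$-product. First I would set up the notation: write $Z := X - \bigoplus_{\nu=1}^m Y \in \mat{\module{M}}{sm}$ and $\widetilde{Z} := \widetilde{X} - \bigoplus_{\nu=1}^{\widetilde{m}} Y \in \mat{\module{M}}{s\widetilde{m}}$, viewed as $m\times m$ (resp.\ $\widetilde{m}\times\widetilde{m}$) matrices over $\mat{\module{M}}{s}$; also abbreviate $D := S \bigoplus_{\nu=1}^{m} Y - \bigoplus_{\nu=1}^{\widetilde{m}} Y \, S \in \rmat{\module{M}}{s\widetilde{m}}{sm}$, regarded as an $\widetilde{m}\times m$ matrix over $\mat{\module{M}}{s}$. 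The intertwining hypothesis $SX = \widetilde{X}S$ rewrites as $S Z - \widetilde{Z} S = D$ (using that $S$ intertwines the two direct sums of $Y$'s in the obvious block sense, i.e.\ $S(\bigoplus Y) - (\bigoplus Y)S$ is precisely $SZ - \widetilde{Z}S$ minus $SX - \widetilde{X}S = 0$ after rearranging). With these abbreviations the claimed identity \eqref{eq:intertw-poly} reads
\begin{equation*}
S\Bigl(\sum_{\ell=0}^N \widetilde{Z}^{\odot_s 0}\,\cdot\, \text{(wrong side)}\Bigr)\cdots
\end{equation*}
— more precisely, $S\bigl(\sum_{\ell=0}^N Z^{\odot_s\ell} f_\ell\bigr) - \bigl(\sum_{\ell=0}^N \widetilde{Z}^{\odot_s\ell} f_\ell\bigr) S = \sum_{k=0}^N \bigl(\widetilde{Z}^{\odot_s k} \odot_s D \odot_s Z^{\odot_s(N-k)}\bigr) f_{N+1}$.

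The base case $N=0$ is exactly condition \eqref{eq:ncfun_coef_0} applied entrywise: $S f_0 - f_0 S$ acting on the matrix level becomes $\sum$ of $(Sf_0 - f_0 S)$ contributions, which by \eqref{eq:ncfun_coef_0} equals $f_1$ applied to $S(\bigoplus Y) - (\bigoplus Y)S = D$, i.e.\ $D f_1$; here one must be careful that $f_0$, $f_1$ act entrywise on matrices over $\mat{\module{M}}{s}$ and that scalar (ring) matrix multiplication by $S$ commutes with this entrywise action in the appropriate sense, which is the content of the multilinearity plus the bimodule structure. For the inductive step, I would write $Z^{\odot_s(\ell)} = Z \odot_s Z^{\odot_s(\ell-1)}$, peel off the leftmost factor, and use $SZ = \widetilde{Z}S + D$ to push $S$ through: $S(Z \odot_s Z^{\odot_s(\ell-1)}) f_\ell = (\widetilde{Z}\odot_s (S Z^{\odot_s(\ell-1)}))f_\ell + (D \odot_s Z^{\odot_s(\ell-1)})f_\ell$. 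The first term is handled by re-associating and applying the induction hypothesis to $S Z^{\odot_s(\ell-1)}\cdots$; the second term, summed over $\ell$, must be reorganized using conditions \eqref{eq:ncfun-coef-ell_0}, \eqref{eq:ncfun-coef_ell_j}, \eqref{eq:ncfun-coef_ell_ell} — these say precisely that inserting a commutator $SY - YS$ (here the role is played by $D$'s entries) between consecutive tensor slots of $f_\ell$, or at the two ends, equals the difference of $f_\ell$ with the scalar moved across — so that a telescoping collapse occurs and produces $f_{N+1}$ with the commutator $D$ inserted in the $k$-th slot, summed over all positions $k=0,\dots,N$.

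The main obstacle I expect is the bookkeeping of \emph{where} the factor $S$ (or the commutator term $D$) sits among the $\odot_s$-slots, and marrying the matrix-level identity \eqref{eq:tensa_prod}-style expansion with the slot-wise relations \eqref{eq:ncfun-coef-ell_0}--\eqref{eq:ncfun-coef_ell_ell}, which are stated for the multilinear maps $f_\ell$ applied to individual arguments $Z^1,\dots,Z^\ell$ in $\mat{\module{M}}{s}$ rather than to $\odot_s$-products of block matrices. Concretely, one has to unravel $(\widetilde{Z}^{\odot_s k}\odot_s D \odot_s Z^{\odot_s(N-k)})f_{N+1}$ into its matrix entries via \eqref{eq:tensa_prod} and check that summing the telescoping contributions entry-by-entry reproduces exactly this expression. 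A clean way to manage this is to first prove the identity in the "universal" case where $\module{M} = \mat{\module{M}}{s}$ itself is treated as the ground module and $f$ is the tautological linear map $\tensa{\mat{\module{M}}{s}} \to \mat{\module{N}}{s}$; then the relations \eqref{eq:ncfun_coef_0}--\eqref{eq:ncfun-coef_ell_ell} become the only input and the computation is a formal manipulation in the tensor algebra. Once the slot-insertion telescoping is set up correctly, the induction closes routinely, and the finitely-many-nonzero-terms caveat is automatic since all matrices involved are nilpotent about $Y$, so high powers of $Z$, $\widetilde{Z}$ vanish and the statement for large $N$ is vacuous in the relevant regime (though the lemma as stated holds for every $N$ regardless).
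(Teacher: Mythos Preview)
Your approach is essentially that of the paper: work at the level of $s\times s$ block entries and iteratively push $S$ through the arguments of $f_\ell$ using conditions \eqref{eq:ncfun_coef_0}--\eqref{eq:ncfun-coef_ell_ell}, each push converting an $SZ$ into $\widetilde{Z}S$ plus a commutator and spawning an $f_{\ell+1}$ correction term with $SY-YS$ in the new slot; the paper organizes this as a single direct block-entry computation on the whole sum $\sum_{\ell=0}^N$ rather than as induction on $N$, but the mechanism is identical. One small slip: from $SX=\widetilde{X}S$ one gets $SZ-\widetilde{Z}S=-D$ (with your sign convention for $D$), not $+D$.
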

\begin{proof}
In what follows, we view $X$, $\widetilde{X}$, and $S$ as block
matrices of sizes $m\times m$, $\widetilde{m}\times\widetilde{m}$,
and $\widetilde{m}\times {m}$, respectively, with $s\times s$
blocks. Since $SX=\widetilde{X}S$, we have
$$\sum_{\beta=1}^mS_{\alpha\beta}X_{\beta\gamma}=
\sum_{\widetilde{\beta}=1}^{\widetilde{m}}\widetilde{X}_{\alpha\widetilde{\beta}}S_{\widetilde{\beta}\gamma},\quad
\alpha=1,\ldots,\widetilde{m},\ \gamma=1,\ldots,m.
$$
Denote
$$L=S\Bigg(\sum_{\ell=0}^N \Big(X - \bigoplus_{\nu=1}^m Y\Big)^{\odot_s \ell} f_\ell\Bigg),\qquad \widetilde{L}=
\Bigg(\sum_{\ell=0}^N \Big(\widetilde{X} -
\bigoplus_{\nu=1}^{\widetilde{m}} Y\Big)^{\odot_s \ell}
f_\ell\Bigg) S.$$
 Then
\begin{multline*}
L_{\alpha\gamma}
=\sum_{\beta_0=1}^mS_{\alpha\beta_0}\sum_{\ell=0}^N\Big[\Big
(X-\bigoplus_{\nu=1}^mY\Big)^{\odot_s\ell}f_\ell\Big]_{\beta_0\gamma}\\
 =S_{\alpha\gamma}f_0
+\sum_{\ell=1}^N\sum_{\beta_0,\beta_1,\ldots,\beta_{\ell-1}=1}^m
S_{\alpha\beta_0}f_\ell
(X_{\beta_0\beta_1}-\delta_{\beta_0\beta_1}Y,\ldots,X_{\beta_{\ell-2}\beta_{\ell-1}}-
\delta_{\beta_{\ell-2}\beta_{\ell-1}}Y,\\
X_{\beta_{\ell-1}\gamma}-\delta_{\beta_{\ell-1}\gamma}Y),
\end{multline*}
\begin{multline*}
\widetilde{L}_{\alpha\gamma}
=\sum_{\widetilde{\beta}_0=1}^{\widetilde{m}}\sum_{\ell=0}^N\Big[\Big
(\widetilde{X}-\bigoplus_{\nu=1}^{\widetilde{m}}Y\Big)^{\odot_s\ell}f_\ell\Big]_{\alpha\widetilde{\beta}_0}
S_{\widetilde{\beta}_0\gamma}\\
 =f_0S_{\alpha\gamma}
+\sum_{\ell=1}^N\sum_{\widetilde{\beta}_0,\widetilde{\beta}_1,\ldots,\widetilde{\beta}_{\ell-1}=1}^{\widetilde{m}}
f_\ell
(\widetilde{X}_{\alpha\widetilde{\beta}_0}-\delta_{\alpha\widetilde{\beta}_0}Y,
\widetilde{X}_{\widetilde{\beta}_0\widetilde{\beta}_1}-\delta_{\widetilde{\beta}_0\widetilde{\beta}_1}Y,\ldots,\\
\widetilde{X}_{\widetilde{\beta}_{\ell-2}\widetilde{\beta}_{\ell-1}}-
\delta_{\widetilde{\beta}_{\ell-2}\widetilde{\beta}_{\ell-1}}Y)S_{\widetilde{\beta}_{\ell-1}\gamma},
\end{multline*}
where $\delta_{ij}$ is the Kronecker symbol and
$f_0\in\mat{\vecspace{W}}{s}$. Applying \eqref{eq:ncfun_coef_0}
and \eqref{eq:ncfun-coef-ell_0}, we obtain that

\begin{multline*}
L_{\alpha\gamma}=
f_0S_{\alpha\gamma}+f_1(S_{\alpha\gamma}Y-YS_{\alpha\gamma})
+\sum_{\beta_0=1}^m f_1
(S_{\alpha\beta_0}X_{\beta_0\gamma}-S_{\alpha\beta_0}\delta_{\beta_0\gamma}Y)\\
+\sum_{\ell=2}^N\sum_{\beta_0,\beta_1,\ldots,\beta_{\ell-1}=1}^m
f_{\ell}
(S_{\alpha\beta_0}X_{\beta_0\beta_1}-S_{\alpha\beta_0}\delta_{\beta_0\beta_1}Y,
X_{\beta_1\beta_2}-\delta_{\beta_1\beta_2}Y,
 \ldots,\\
X_{\beta_{\ell-2}\beta_{\ell-1}}-
\delta_{\beta_{\ell-2}\beta_{\ell-1}}Y,
X_{\beta_{\ell-1}\gamma}-\delta_{\beta_{\ell-1}\gamma}Y)\\
+\sum_{\ell=1}^N\sum_{\beta_0,\beta_1,\ldots,\beta_{\ell-1}=1}^m
f_{\ell+1}
(S_{\alpha\beta_0}Y-YS_{\alpha\beta_0},X_{\beta_0\beta_1}-\delta_{\beta_0\beta_1}Y,
 \ldots,\\
X_{\beta_{\ell-2}\beta_{\ell-1}}-
\delta_{\beta_{\ell-2}\beta_{\ell-1}}Y,
X_{\beta_{\ell-1}\gamma}-\delta_{\beta_{\ell-1}\gamma}Y).
\end{multline*}
Using the assumption that $SX=\widetilde{X}S$ and the
multilinearity of the mappings $f_\ell$, we obtain that
\begin{multline*}
L_{\alpha\gamma}=
f_0S_{\alpha\gamma}+f_1(S_{\alpha\gamma}Y)-f_1(YS_{\alpha\gamma})+\sum_{\widetilde{\beta}_0=1}^{\widetilde{m}}
f_1(\widetilde{X}_{\alpha\widetilde{\beta}_0}S_{\widetilde{\beta}_0\gamma})-f_1(S_{\alpha\gamma}Y)\\
+\sum_{\ell=2}^N\sum_{\widetilde{\beta}_0=1}^{\widetilde{m}}\sum_{\beta_1,\ldots,\beta_{\ell-1}=1}^m
f_\ell
(\widetilde{X}_{\alpha\widetilde{\beta}_0}S_{\widetilde{\beta}_0\beta_1},
X_{\beta_1\beta_2}-\delta_{\beta_1\beta_2}Y,
\ldots, X_{\beta_{\ell-2}\beta_{\ell-1}}- \delta_{\beta_{\ell-2}\beta_{\ell-1}}Y,\\
X_{\beta_{\ell-1}\gamma}-\delta_{\beta_{\ell-1}\gamma}Y)\\
 - \sum_{\ell=2}^N\sum_{\beta_1,\ldots,\beta_{\ell-1}=1}^m f_\ell (S_{\alpha\beta_1}Y,
X_{\beta_1\beta_2}-\delta_{\beta_1\beta_2}Y, \ldots,
 X_{\beta_{\ell-2}\beta_{\ell-1}}-
\delta_{\beta_{\ell-2}\beta_{\ell-1}}Y,\\
X_{\beta_{\ell-1}\gamma}-\delta_{\beta_{\ell-1}\gamma}Y)\\
+\sum_{\ell=1}^N\sum_{\beta_0,\beta_1,\ldots,\beta_{\ell-1}=1}^m
f_{\ell+1}
(S_{\alpha\beta_0}Y,X_{\beta_0\beta_1}-\delta_{\beta_0\beta_1}Y,
 \ldots,
X_{\beta_{\ell-2}\beta_{\ell-1}}- \delta_{\beta_{\ell-2}\beta_{\ell-1}}Y,\\
X_{\beta_{\ell-1}\gamma}-\delta_{\beta_{\ell-1}\gamma}Y)\\
-\sum_{\ell=1}^N\sum_{\beta_0,\beta_1,\ldots,\beta_{\ell-1}=1}^m
f_{\ell+1}
(YS_{\alpha\beta_0},X_{\beta_0\beta_1}-\delta_{\beta_0\beta_1}Y,
 \ldots,
X_{\beta_{\ell-2}\beta_{\ell-1}}- \delta_{\beta_{\ell-2}\beta_{\ell-1}}Y,\\
X_{\beta_{\ell-1}\gamma}-\delta_{\beta_{\ell-1}\gamma}Y).
\end{multline*}
After obvious cancellations and using the multilinearity of
$f_\ell$ again, we obtain

\begin{multline*}
L_{\alpha\gamma}= f_0S_{\alpha\gamma}
+\sum_{\widetilde{\beta}_0=1}^{\widetilde{m}}f_1((\widetilde{X}_{\alpha\widetilde{\beta}_0}-
\delta_{\alpha\widetilde{\beta}_0}Y)S_{\widetilde{\beta}_0\gamma})
\\
+
\sum_{\ell=2}^N\sum_{\widetilde{\beta}_0=1}^{\widetilde{m}}\sum_{\beta_1,\ldots,\beta_{\ell-1}=1}^m
f_\ell
((\widetilde{X}_{\alpha\widetilde{\beta}_0}-\delta_{\alpha\widetilde{\beta}_0}Y)S_{\widetilde{\beta}_0\beta_1},
X_{\beta_1\beta_2}-\delta_{\beta_1\beta_2}Y, \ldots,\\
X_{\beta_{\ell-2}\beta_{\ell-1}}-
\delta_{\beta_{\ell-2}\beta_{\ell-1}}Y,
X_{\beta_{\ell-1}\gamma}-\delta_{\beta_{\ell-1}\gamma}Y)
\\
\end{multline*}
\begin{multline*}
+\sum_{\beta_0,\beta_1,\ldots,\beta_{N-1}=1}^m \!\! f_{N+1}
(S_{\alpha\beta_0}Y-YS_{\alpha\beta_0},X_{\beta_0\beta_1}-\delta_{\beta_0\beta_1}Y,
 \ldots,
X_{\beta_{N-2}\beta_{N-1}}- \delta_{\beta_{N-2}\beta_{N-1}}Y,\\
X_{\beta_{N-1}\gamma}-\delta_{\beta_{N-1}\gamma}Y).
\end{multline*}
Using \eqref{eq:ncfun-coef_ell_ell} and
\eqref{eq:ncfun-coef_ell_j}, we obtain
\begin{multline*}
L_{\alpha\gamma}= f_0S_{\alpha\gamma}\\
+\sum_{\widetilde{\beta}_0=1}^{\widetilde{m}}f_1(\widetilde{X}_{\alpha\widetilde{\beta}_0}-
\delta_{\alpha\widetilde{\beta}_0}Y)S_{\widetilde{\beta}_0\gamma}
+\sum_{\widetilde{\beta}_0=1}^{\widetilde{m}}f_2(\widetilde{X}_{\alpha\widetilde{\beta}_0}-
\delta_{\alpha\widetilde{\beta}_0}Y,S_{\widetilde{\beta}_0\gamma}Y-YS_{\widetilde{\beta}_0\gamma})\\
+\sum_{\widetilde{\beta}_0=1}^{\widetilde{m}}\sum_{\beta_1=1}^m
f_2(\widetilde{X}_{\alpha\widetilde{\beta}_0}-\delta_{\alpha\widetilde{\beta}_0}Y,S_{\widetilde{\beta}_0\beta_1}
(X_{\beta_1\gamma}-\delta_{\beta_1\gamma}Y))\\
+\sum_{\ell=3}^N\sum_{\widetilde{\beta}_0=1}^{\widetilde{m}}\sum_{\beta_1,\ldots,\beta_{\ell-1}=1}^m
\!\! f_\ell
(\widetilde{X}_{\alpha\widetilde{\beta}_0}-\delta_{\alpha\widetilde{\beta}_0}Y,S_{\widetilde{\beta}_0\beta_1}
(X_{\beta_1\beta_2}-\delta_{\beta_1\beta_2}Y),\\
X_{\beta_2\beta_3}-\delta_{\beta_2\beta_3}Y,
\ldots,X_{\beta_{\ell-2}\beta_{\ell-1}}-
\delta_{\beta_{\ell-2}\beta_{\ell-1}}Y,
X_{\beta_{\ell-1}\gamma}-\delta_{\beta_{\ell-1}\gamma}Y)\\
+\sum_{\ell=2}^N\sum_{\widetilde{\beta}_0=1}^{\widetilde{m}}\sum_{\beta_1,\ldots,\beta_{\ell-1}=1}^m
f_{\ell+1}(\widetilde{X}_{\alpha\widetilde{\beta}_0}-\delta_{\alpha\widetilde{\beta}_0}Y,
S_{\widetilde{\beta}_0\beta_1}Y-YS_{\widetilde{\beta}_0\beta_1},X_{\beta_1\beta_2}-\delta_{\beta_1\beta_2}Y,\ldots,\\
X_{\beta_{\ell-2}\beta_{\ell-1}}-
\delta_{\beta_{\ell-2}\beta_{\ell-1}}Y,
X_{\beta_{\ell-1}\gamma}-\delta_{\beta_{\ell-1}\gamma}Y)\\
+\sum_{\beta_0,\beta_1,\ldots,\beta_{N-1}=1}^m f_{N+1}
(S_{\alpha\beta_0}Y-YS_{\alpha\beta_0},X_{\beta_0\beta_1}-\delta_{\beta_0\beta_1}Y,
 \ldots,\\
X_{\beta_{N-2}\beta_{N-1}}- \delta_{\beta_{N-2}\beta_{N-1}}Y,
X_{\beta_{N-1}\gamma}-\delta_{\beta_{N-1}\gamma}Y).
\end{multline*}
Using $SX=\widetilde{X}S$ and the multilinearity of the mappings
$f_\ell$, we obtain that
\begin{multline*}
L_{\alpha\gamma}= f_0S_{\alpha\gamma}
+\sum_{\widetilde{\beta}_0=1}^{\widetilde{m}}f_1(\widetilde{X}_{\alpha\widetilde{\beta}_0}-
\delta_{\alpha\widetilde{\beta}_0}Y)S_{\widetilde{\beta}_0\gamma}\\
+\sum_{\widetilde{\beta}_0=1}^{\widetilde{m}}f_2(\widetilde{X}_{\alpha\widetilde{\beta}_0}-
\delta_{\alpha\widetilde{\beta}_0}Y,S_{\widetilde{\beta}_0\gamma}Y)-\sum_{\widetilde{\beta}_0=1}^{\widetilde{m}}
f_2(\widetilde{X}_{\alpha\widetilde{\beta}_0}-
\delta_{\alpha\widetilde{\beta}_0}Y,YS_{\widetilde{\beta}_0\gamma})\\
+\sum_{\widetilde{\beta}_0,\widetilde{\beta}_1=1}^{\widetilde{m}}
f_2(\widetilde{X}_{\alpha\widetilde{\beta}_0}-\delta_{\alpha\widetilde{\beta}_0}Y,
\widetilde{X}_{\widetilde{\beta}_0\widetilde{\beta}_1}
S_{\widetilde{\beta}_1\gamma})-
\sum_{\widetilde{\beta}_0=1}^{\widetilde{m}}f_2(
\widetilde{X}_{\alpha\widetilde{\beta}_0}-\delta_{\alpha\widetilde{\beta}_0}Y,S_{\widetilde{\beta}_0\gamma}Y)\\
+\sum_{\ell=3}^N\sum_{\widetilde{\beta}_0,\widetilde{\beta}_1=1}^{\widetilde{m}}
\sum_{\beta_2,\ldots,\beta_{\ell-1}=1}^m \!\! f_\ell
(\widetilde{X}_{\alpha\widetilde{\beta}_0}-\delta_{\alpha\widetilde{\beta}_0}Y,
\widetilde{X}_{\widetilde{\beta}_0\widetilde{\beta}_1}S_{\widetilde{\beta}_1\beta_2},
X_{\beta_2\beta_3}-\delta_{\beta_2\beta_3}Y,
\ldots,\\
X_{\beta_{\ell-2}\beta_{\ell-1}}-
\delta_{\beta_{\ell-2}\beta_{\ell-1}}Y,
X_{\beta_{\ell-1}\gamma}-\delta_{\beta_{\ell-1}\gamma}Y)\\
-\sum_{\ell=3}^N\sum_{\widetilde{\beta}_0=1}^{\widetilde{m}}\sum_{\beta_2,\ldots,\beta_{\ell-1}=1}^m
f_\ell
(\widetilde{X}_{\alpha\widetilde{\beta}_0}-\delta_{\alpha\widetilde{\beta}_0}Y,S_{\widetilde{\beta}_0\beta_2}
Y, X_{\beta_2\beta_3}-\delta_{\beta_2\beta_3}Y,
\ldots,\\
X_{\beta_{\ell-2}\beta_{\ell-1}}-
\delta_{\beta_{\ell-2}\beta_{\ell-1}}Y,
X_{\beta_{\ell-1}\gamma}-\delta_{\beta_{\ell-1}\gamma}Y)\\
\end{multline*}
\begin{multline*}
+\sum_{\ell=2}^N\sum_{\widetilde{\beta}_0=1}^{\widetilde{m}}\sum_{\beta_1,\ldots,\beta_{\ell-1}=1}^m
f_{\ell+1}(\widetilde{X}_{\alpha\widetilde{\beta}_0}-\delta_{\alpha\widetilde{\beta}_0}Y,
S_{\widetilde{\beta}_0\beta_1}Y,X_{\beta_1\beta_2}-\delta_{\beta_1\beta_2}Y,\ldots,\\
X_{\beta_{\ell-2}\beta_{\ell-1}}-
\delta_{\beta_{\ell-2}\beta_{\ell-1}}Y,
X_{\beta_{\ell-1}\gamma}-\delta_{\beta_{\ell-1}\gamma}Y)\\
-\sum_{\ell=2}^N\sum_{\widetilde{\beta}_0=1}^{\widetilde{m}}\sum_{\beta_1,\ldots,\beta_{\ell-1}=1}^m
f_{\ell+1}(\widetilde{X}_{\alpha\widetilde{\beta}_0}-\delta_{\alpha\widetilde{\beta}_0}Y,
YS_{\widetilde{\beta}_0\beta_1},X_{\beta_1\beta_2}-\delta_{\beta_1\beta_2}Y,\ldots,\\
X_{\beta_{\ell-2}\beta_{\ell-1}}-
\delta_{\beta_{\ell-2}\beta_{\ell-1}}Y,
X_{\beta_{\ell-1}\gamma}-\delta_{\beta_{\ell-1}\gamma}Y)\\
+\sum_{\beta_0,\beta_1,\ldots,\beta_{N-1}=1}^m f_{N+1}
(S_{\alpha\beta_0}Y-YS_{\alpha\beta_0},X_{\beta_0\beta_1}-\delta_{\beta_0\beta_1}Y,
 \ldots,\\
X_{\beta_{N-2}\beta_{N-1}}- \delta_{\beta_{N-2}\beta_{N-1}}Y,
X_{\beta_{N-1}\gamma}-\delta_{\beta_{N-1}\gamma}Y).
\end{multline*}
After obvious cancellations and using the multilinearity of
$f_\ell$ again, we obtain \begin{multline*} L_{\alpha\gamma}=
f_0S_{\alpha\gamma}
+\sum_{\widetilde{\beta}_0=1}^{\widetilde{m}}f_1(\widetilde{X}_{\alpha\widetilde{\beta}_0}-
\delta_{\alpha\widetilde{\beta}_0}Y)S_{\widetilde{\beta}_0\gamma}
\\
+\sum_{\widetilde{\beta}_0,\widetilde{\beta}_1=1}^{\widetilde{m}}
f_2(\widetilde{X}_{\alpha\widetilde{\beta}_0}-\delta_{\alpha\widetilde{\beta}_0}Y,
(\widetilde{X}_{\widetilde{\beta}_0\widetilde{\beta}_1}-\delta_{\widetilde{\beta}_0\widetilde{\beta}_1}Y)
S_{\widetilde{\beta}_1\gamma})\\
+\sum_{\ell=3}^N\sum_{\widetilde{\beta}_0,\widetilde{\beta}_1=1}^{\widetilde{m}}
\sum_{\beta_2,\ldots,\beta_{\ell-1}=1}^m f_\ell
(\widetilde{X}_{\alpha\widetilde{\beta}_0}-\delta_{\alpha\widetilde{\beta}_0}Y,
(\widetilde{X}_{\widetilde{\beta}_0\widetilde{\beta}_1}-\delta_{\widetilde{\beta}_0\widetilde{\beta}_1}Y)
S_{\widetilde{\beta}_1\beta_2},
X_{\beta_2\beta_3}-\delta_{\beta_2\beta_3}Y,
\\
\ldots,X_{\beta_{\ell-2}\beta_{\ell-1}}-
\delta_{\beta_{\ell-2}\beta_{\ell-1}}Y,
X_{\beta_{\ell-1}\gamma}-\delta_{\beta_{\ell-1}\gamma}Y) \\
+\sum_{\beta_0,\beta_1,\ldots,\beta_{N-1}=1}^m f_{N+1}
(S_{\alpha\beta_0}Y-YS_{\alpha\beta_0},X_{\beta_0\beta_1}-\delta_{\beta_0\beta_1}Y,
 \ldots,
X_{\beta_{N-2}\beta_{N-1}}- \delta_{\beta_{N-2}\beta_{N-1}}Y,\\
X_{\beta_{N-1}\gamma}-\delta_{\beta_{N-1}\gamma}Y)\\
+\sum_{\widetilde{\beta}_0=1}^{\widetilde{m}}\sum_{\beta_1,\ldots,\beta_{N-1}=1}^m
f_{N+1}(\widetilde{X}_{\alpha\widetilde{\beta}_0}-\delta_{\alpha\widetilde{\beta}_0}Y,
S_{\widetilde{\beta}_0\beta_1}Y-YS_{\widetilde{\beta}_0\beta_1},X_{\beta_1\beta_2}-\delta_{\beta_1\beta_2}Y,\ldots,\\
X_{\beta_{N-2}\beta_{N-1}}- \delta_{\beta_{N-2}\beta_{N-1}}Y,
X_{\beta_{N-1}\gamma}-\delta_{\beta_{N-1}\gamma}Y).
\end{multline*}
Continuing this argument, we obtain that
\begin{multline*}
L_{\alpha\gamma}=f_0S_{\alpha\gamma}
+\sum_{\ell=1}^N\sum_{\widetilde{\beta}_0,\widetilde{\beta}_1,\ldots,\widetilde{\beta}_{\ell-1}=1}^{\widetilde{m}}
f_\ell
(\widetilde{X}_{\alpha\widetilde{\beta}_0}-\delta_{\alpha\widetilde{\beta}_0}Y,
\widetilde{X}_{\widetilde{\beta}_0\widetilde{\beta}_1}-\delta_{\widetilde{\beta}_0\widetilde{\beta}_1}Y,\ldots,\\
\widetilde{X}_{\widetilde{\beta}_{\ell-2}\widetilde{\beta}_{\ell-1}}-
\delta_{\widetilde{\beta}_{\ell-2}\widetilde{\beta}_{\ell-1}}Y)S_{\widetilde{\beta}_{\ell-1}\gamma}\\
+\sum_{k=0}^N\sum_{\widetilde{\beta}_0,\ldots,\widetilde{\beta}_{k-1}=1}^{\widetilde{m}}
\sum_{\beta_k,\ldots,\beta_{N-1}=1}^m
f_{N+1}(\widetilde{X}_{\alpha\widetilde{\beta}_0}-\delta_{\alpha\widetilde{\beta}_0}Y,
\widetilde{X}_{\widetilde{\beta}_0\widetilde{\beta}_1}-
\delta_{\widetilde{\beta}_0\widetilde{\beta}_1}Y,\ldots,\\
\widetilde{X}_{\widetilde{\beta}_{k-2}\widetilde{\beta}_{k-1}}-
\delta_{\widetilde{\beta}_{k-2}\widetilde{\beta}_{k-1}}Y,
S_{\widetilde{\beta}_{k-1}\beta_k}Y-YS_{\widetilde{\beta}_{k-1}\beta_k},X_{\beta_k\beta_{k+1}}-
\delta_{\beta_k\beta_{k+1}}Y,\ldots,\\
X_{\beta_{N-2}\beta_{N-1}}- \delta_{\beta_{N-2}\beta_{N-1}}Y,
X_{\beta_{N-1}\gamma}-\delta_{\beta_{N-1}\gamma}Y)\\
\end{multline*}
\begin{multline*}
=\Big[\widetilde{L}+\sum_{k=0}^N \Big(\widetilde{X} -
\bigoplus_{\nu=1}^{\widetilde{m}} Y\Big)^{\odot_s k}\! \odot_s
\!\Big(S \bigoplus_{\nu=1}^{m} Y -
\bigoplus_{\nu=1}^{\widetilde{m}} Y S\Big) \odot_s \Big( X -
\bigoplus_{\nu=1}^{m} Y\Big)^{\odot_s N-k} \!\!
f_{N+1}\Big]_{\alpha\gamma}.
\end{multline*}
\end{proof}
\begin{thm}\label{thm:sufficiency_of_LAC}
The sum $f$ of the series \eqref{eq:power-series-nilp} is a nc
function on ${\rm Nilp}(\module{M},Y)$ with values in
$\ncspace{\module{N}}$.
\end{thm}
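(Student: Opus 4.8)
The plan is to verify directly that $f$, defined as the (finite) sum of the series \eqref{eq:power-series-nilp}, respects intertwinings; by Proposition \ref{prop:simint} this establishes that $f$ is a nc function. First I would note that $f$ is well defined on ${\rm Nilp}(\module{M},Y)$: for $X\in{\rm Nilp}(\module{M},Y;sm,\kappa)$ we have $(X-\bigoplus_{\alpha=1}^mY)^{\odot_s\ell}=0$ for $\ell\ge\kappa$, so only finitely many terms are nonzero, and $f(X)=\sum_{\ell=0}^{\kappa-1}(X-\bigoplus_{\alpha=1}^mY)^{\odot_s\ell}f_\ell\in\mat{\module{N}}{sm}$. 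I would also record that ${\rm Nilp}(\module{M},Y)$ is a nc set (as stated in the text), so the only thing to check is the intertwining condition.

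Next, let $X\in{\rm Nilp}(\module{M},Y;sm)$, $\widetilde X\in{\rm Nilp}(\module{M},Y;s\widetilde m)$, and $S\in\rmat{\ring}{s\widetilde m}{sm}$ with $SX=\widetilde XS$. Here is the crucial point: although $S$ need not have the block-tensor form $S_0\otimes I_s$, the intertwining relation $SX=\widetilde XS$ together with $X,\widetilde X$ being nilpotent about $Y$ forces $S(\bigoplus_{\alpha=1}^m Y)-(\bigoplus_{\alpha=1}^{\widetilde m}Y)S$ to be controlled. Indeed, writing $X'=X-\bigoplus_{\alpha=1}^mY$ and $\widetilde X'=\widetilde X-\bigoplus_{\alpha=1}^{\widetilde m}Y$, the relation $SX=\widetilde XS$ becomes $SX'-\widetilde X'S = (\bigoplus_{\alpha=1}^{\widetilde m}Y)S-S(\bigoplus_{\alpha=1}^mY)$; denote this common value by $-E$. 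Now I apply Lemma \ref{lem:intertw-poly} (whose hypotheses are exactly that the $f_\ell$ satisfy \eqref{eq:ncfun_coef_0}--\eqref{eq:ncfun-coef_ell_ell}, which we are assuming): for $N$ large enough that all terms of index $>N$ vanish on both $X$ and $\widetilde X$ (e.g. $N\ge\max(\kappa,\widetilde\kappa)$, where $\kappa,\widetilde\kappa$ are the nilpotency ranks), the left-hand side of \eqref{eq:intertw-poly} is precisely $Sf(X)-f(\widetilde X)S$, and the right-hand side is a sum of terms each containing a factor $(\widetilde X')^{\odot_s k}\odot_s E\odot_s (X')^{\odot_s(N-k)}$ applied to $f_{N+1}$.

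Then I would argue that, for $N$ chosen large enough, every term on the right-hand side of \eqref{eq:intertw-poly} vanishes, so that $Sf(X)=f(\widetilde X)S$, which is the intertwining condition. The reason is a degree count in the tensor algebra: the matrix $E=(\bigoplus_{\alpha=1}^{\widetilde m}Y)S - S(\bigoplus_{\alpha=1}^mY)$ has entries in $\mat{\module{M}}{s}$, i.e. sits in degree $1$ of $\tensa{\mat{\module{M}}{s}}$, just like the entries of $X'$ and $\widetilde X'$; and the product $(\widetilde X')^{\odot_s k}\odot_s E\odot_s(X')^{\odot_s(N-k)}$, viewed entrywise in $\tensa{\mat{\module{M}}{s}}$, lands in degree-$(N+1)$ components built as products of these $N+1$ degree-one factors. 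Since $X'$ is nilpotent about $Y$ — so $(X')^{\odot_s p}=0$ for $p$ large — and similarly for $\widetilde X'$, taking $N+1$ strictly larger than $\widetilde\kappa+\kappa$ guarantees that in $(\widetilde X')^{\odot_s k}\odot_s E\odot_s(X')^{\odot_s(N-k)}$ either $k\ge\widetilde\kappa$ (killing the left factor) or $N-k\ge\kappa$ (killing the right factor), so every summand is $0$. Hence $Sf(X)-f(\widetilde X)S=0$.

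The main obstacle — and the only genuinely nontrivial ingredient — is Lemma \ref{lem:intertw-poly} itself, which supplies the combinatorial identity \eqref{eq:intertw-poly} expressing the "defect" $Sf(X)-f(\widetilde X)S$ entirely in terms of the single higher coefficient $f_{N+1}$ and the commutator-type quantity $E$; the conditions \eqref{eq:ncfun_coef_0}--\eqref{eq:ncfun-coef_ell_ell} are exactly what make the telescoping in that lemma work. Once the lemma is granted, the remaining work is the bookkeeping above: checking $f$ is well defined, reducing to intertwinings via Proposition \ref{prop:simint}, and the elementary degree/nilpotency argument that kills the remainder for large $N$. (One should also note the special case $s=1$: then \eqref{eq:ncfun_coef_0}--\eqref{eq:ncfun-coef_ell_ell} hold vacuously, so the theorem applies to an arbitrary sequence $f_\ell$, recovering the converse direction promised after Theorem \ref{thm:nc_nilp}.)
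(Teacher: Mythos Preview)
Your proposal is correct and follows essentially the same approach as the paper: apply Lemma \ref{lem:intertw-poly} with $N\ge\kappa+\widetilde\kappa$ so that the left-hand side of \eqref{eq:intertw-poly} equals $Sf(X)-f(\widetilde X)S$, and observe that every summand on the right vanishes because either $k\ge\widetilde\kappa$ or $N-k\ge\kappa$. The extra discussion of $E$ and the ``degree count'' is harmless but unnecessary; the paper simply cites the nilpotency ranks directly.
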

\begin{rem}\label{rem:identify_f}
By Theorem \ref{thm:ncps-nilp-gen-unique}, the coefficients
$f_\ell$ (i.e., the linear mapping
$f\colon\tensa{\mat{\module{M}}{s}}\to\mat{\module{N}}{s}$) are
uniquely determined by the sum of the corresponding series, which
is why we use the same notation for both the nc function and the
linear mapping.
\end{rem}
\begin{proof}[Proof of Theorem \ref{thm:sufficiency_of_LAC}]
Let $$X\in\text{Nilp}(\module{M},Y;sm,\kappa),\quad
\widetilde{X}\in\text{Nilp}(\module{M},Y;s\widetilde{m},\widetilde{\kappa}),$$
and $S\in\rmat{\ring}{s\widetilde{m}}{sm}$. Let
$N\ge\kappa+\widetilde{\kappa}$. Then
$$f(X)=\sum_{\ell=0}^N\Big(X-\bigoplus_{\alpha=1}^mY\Big)^{\odot_s\ell}f_\ell,\quad
f(\widetilde{X})=\sum_{\ell=0}^N\Big(\widetilde{X}-\bigoplus_{\alpha=1}^{\widetilde{m}}Y\Big)^{\odot_s\ell}f_\ell.$$
By Lemma \ref{lem:intertw-poly}, $Sf(X)-f(X)S=0$, since for every
$k\colon 0\le k\le N$ either $k\ge\widetilde{\kappa}$ or
$N-k\ge\kappa$, hence the right-hand side of
\eqref{eq:intertw-poly} vanishes. Thus, $f$ respects
intertwinings, i.e., is a nc function.
\end{proof}

In the special case $\module{M}=\ring^d$, we can consider instead
of \eqref{eq:power-series-nilp} a nc power series along $\free_d$,
i.e., a series of the form
\begin{equation}\label{eq:power-series-nilp-semigr}
\sum_{w\in\free_d}
\Big(X-\bigoplus_{\alpha=1}^mY\Big)^{\odot_sw}f_w
\end{equation}
on jointly nilpotent $d$-tuples of matrices $X$ about $Y$ (so that
the sum has finitely many nonzero terms).
 Here $Y\in\mattuple{\ring}{s}{d}$ is a given center; $X\in\text{Nilp}_d(\ring,Y;sm)$,
$m=1,2,\ldots$;
$f_w\colon(\mat{\ring}{s})^{|w|}\to\mat{\module{N}}{s}$,
$w\in\free_d$, is a given sequence of $|w|$-linear mappings;
 and conditions
\eqref{eq:ncfun_coef_empty}--\eqref{eq:ncfun-coef_w_ell} are
satisfied.   The series \eqref{eq:power-series-nilp-semigr} can be
alternatively written as
\begin{equation*}
\sum_{w\in\free_d}\left(\Big(\bigoplus_{\alpha=1}^mA_{w,(0)}\Big)\otimes
\Big(\bigoplus_{\alpha=1}^mA_{w,(1)}\Big)
\otimes\cdots\otimes\Big(\bigoplus_{\alpha=1}^mA_{w,(|w|)}\Big)\right)\star
\Big(X-\bigoplus_{\alpha=1}^mY\Big)^{[w]}.
\end{equation*}
Here
$$f_w=A_{w,(0)}\otimes
A_{w,(1)}\otimes\cdots\otimes
A_{w,(|w|)}\in\mat{\module{N}}{s}\otimes\underset{|w|\ {\rm
times}}{\underbrace{\mat{\ring}{s}\otimes
\cdots\otimes\mat{\ring}{s}}},$$ with the tensor product
interpretation for the multilinear mappings $f_w$ (Remark
\ref{rem:natur_map}), the sumless Sweedler notation
\eqref{eq:Sweedler}, and the pseudo-power notation
\eqref{eq:Ramamurti}. Notice that conditions
\eqref{eq:ncfun_coef_empty}--\eqref{eq:ncfun-coef_w_ell} are
trivially satisfied for any sequence $f_w$ in the case of $s=1$.
In this case, \eqref{eq:power-series-nilp-semigr} is a genuine nc
power series
$$\sum_{w\in\free}(X-I_m\mu)^wf_w,$$
where $Y=\mu\in\ring^d$ and $f_w\in\module{N}$.

We define a sequence of $\ell$-linear mappings
$f_\ell\colon\left(\mattuple{\ring}{s}{d}\right)^\ell\to\mat{\module{N}}{s}$,
$\ell=0,1,\ldots$, by \eqref{eq:lw-forms}, so that
\eqref{eq:lw-powers} and \eqref{eq:f_w_via_f_l} hold. Hence, the
sum of a series of the form \eqref{eq:power-series-nilp-semigr}
coincides with the sum of the corresponding series of the form
\eqref{eq:power-series-nilp}. Similarly to Remark
\ref{rem:lost_abbey_semigr}, conditions
\eqref{eq:ncfun_coef_empty}--\eqref{eq:ncfun-coef_w_ell} on the
multilinear mappings $f_w$ are equivalent to conditions
\eqref{eq:ncfun_coef_0}--\eqref{eq:ncfun-coef_ell_ell} on the
multilinear mappings $f_\ell$. Therefore,  Theorem
\ref{thm:sufficiency_of_LAC} implies that:
\begin{thm}\label{thm:sufficiency_of_LAC-semigr}
The sum $f$ of the series \eqref{eq:power-series-nilp-semigr} is a
nc function on ${\rm Nilp}_d(\ring,Y)$ with values in
$\ncspace{\module{N}}$.
\end{thm}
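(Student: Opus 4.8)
\textbf{Proof proposal for Theorem \ref{thm:sufficiency_of_LAC-semigr}.}
The plan is to reduce the statement entirely to Theorem \ref{thm:sufficiency_of_LAC}, which is already available, by passing from the sequence of multilinear mappings $f_w$ indexed by words $w\in\free_d$ to the sequence of $\ell$-linear mappings $f_\ell$ indexed by $\ell=0,1,\ldots$. First I would recall the definition \eqref{eq:lw-forms} of $f_\ell$ in terms of the $f_w$, namely $f_\ell(Z^1,\ldots,Z^\ell)=\sum_{w=g_{i_1}\cdots g_{i_\ell}}f_w(Z^1_{i_1},\ldots,Z^\ell_{i_\ell})$, and observe (as already noted right after that formula) that this makes \eqref{eq:lw-powers} and \eqref{eq:f_w_via_f_l} hold. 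The key point here is \eqref{eq:lw-powers}: for every $m\in\mathbb{N}$ and $Z\in\mattuple{\ring}{sm}{d}$ one has $Z^{\odot_s\ell}f_\ell=\sum_{|w|=\ell}Z^{\odot_sw}f_w$, which, applied to $Z=X-\bigoplus_{\alpha=1}^mY$ and summed over $\ell$, shows that the sum of the series \eqref{eq:power-series-nilp-semigr} and the sum of the corresponding series \eqref{eq:power-series-nilp} are literally the same function on ${\rm Nilp}_d(\ring,Y)$ (both sums being finite at each point, since $X$ is jointly nilpotent about $Y$).

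Next I would verify that the hypotheses of Theorem \ref{thm:sufficiency_of_LAC} are met by the sequence $f_\ell$ constructed this way, i.e., that conditions \eqref{eq:ncfun_coef_0}--\eqref{eq:ncfun-coef_ell_ell} follow from conditions \eqref{eq:ncfun_coef_empty}--\eqref{eq:ncfun-coef_w_ell}. This is exactly the assertion indicated in the paragraph preceding the theorem statement (``Similarly to Remark \ref{rem:lost_abbey_semigr}, conditions \eqref{eq:ncfun_coef_empty}--\eqref{eq:ncfun-coef_w_ell} \ldots\ are equivalent to conditions \eqref{eq:ncfun_coef_0}--\eqref{eq:ncfun-coef_ell_ell}''). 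Concretely, I would substitute $Z^j=A^j e_{i_j}$ into each of \eqref{eq:ncfun_coef_0}--\eqref{eq:ncfun-coef_ell_ell} and use \eqref{eq:f_w_via_f_l} together with the decomposition $Y=\sum_k Y_k e_k$ (so that $SY-YS=\sum_k(SY_k-Y_kS)e_k$), which converts the word-indexed identities into the $\ell$-indexed ones and vice versa; the correspondence is bijective because a word $w=g_{i_1}\cdots g_{i_\ell}$ is recovered from the choice of slots $i_1,\ldots,i_\ell$, exactly the data singled out by plugging in standard basis vectors. In particular the single extra word $g_k$ (resp.\ $g_k w$, resp.\ the insertion of $g_k$, resp.\ $w g_k$) appearing in \eqref{eq:ncfun_coef_empty}--\eqref{eq:ncfun-coef_w_ell} corresponds precisely to the single extra $SY-YS$ slot in \eqref{eq:ncfun_coef_0}--\eqref{eq:ncfun-coef_ell_ell} once the sum over $k$ is reassembled into $SY-YS$.

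With these two reductions in place, Theorem \ref{thm:sufficiency_of_LAC} applies to $f=\sum_{\ell}(X-\bigoplus_{\alpha=1}^mY)^{\odot_s\ell}f_\ell$ and tells us that this is a nc function on ${\rm Nilp}(\module{M},Y)={\rm Nilp}(\ring^d,Y)={\rm Nilp}_d(\ring,Y)$ with values in $\ncspace{\module{N}}$; since this function coincides with the sum of \eqref{eq:power-series-nilp-semigr}, the latter is a nc function as well, proving the theorem. The only genuinely substantive piece of work is the equivalence of the two families of coefficient conditions; I expect this to be the main obstacle, though a routine one, amounting to the careful bookkeeping of how basis-vector substitutions and the decomposition of $Y$ along the coordinates interchange word-indexed and $\ell$-indexed data. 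Everything else is the formal observation that the two series have identical (finite) sums pointwise, together with a direct appeal to the already-established Theorem \ref{thm:sufficiency_of_LAC}. One can also remark, via Theorem \ref{thm:ncps-nilp-unique}, that the coefficients $f_w$ are then uniquely recovered as $\Delta_R^{w^\top}f(Y,\ldots,Y)$, which justifies using the same symbol $f$ for the nc function and the coefficient data, but this is not needed for the proof itself.
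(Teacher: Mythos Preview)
Your proposal is correct and follows essentially the same approach as the paper: define $f_\ell$ from the $f_w$ via \eqref{eq:lw-forms}, use \eqref{eq:lw-powers} to identify the two series pointwise, invoke the equivalence of conditions \eqref{eq:ncfun_coef_empty}--\eqref{eq:ncfun-coef_w_ell} and \eqref{eq:ncfun_coef_0}--\eqref{eq:ncfun-coef_ell_ell} (as in Remark \ref{rem:lost_abbey_semigr}), and then apply Theorem \ref{thm:sufficiency_of_LAC}. Your added detail on how the basis-vector substitution effects the equivalence of coefficient conditions is a helpful elaboration of what the paper leaves as ``similarly to Remark \ref{rem:lost_abbey_semigr}.''
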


\chapter{NC polynomials vs. polynomials in matrix
entries}\label{sec:alg}

Let \begin{equation}\label{eq:nc-poly-d} p=\sum_{w\in\free_d\colon
|w|\le L}p_wx^w\in\module{N}\langle x_1,\ldots,x_d\rangle
\end{equation}
\index{$\module{N}\langle x_1,\ldots,x_d\rangle$}be a nc
polynomial of degree $L$ in $x_1$, \ldots, $x_d$ with coefficients
in a module $\module{N}$ over a ring $\ring$. Evaluating $p$ on
$d$-tuples $X=(X_1,\ldots,X_d)\in(\mat{\ring}{n})^d$,
\begin{equation}\label{eq:ncfun-poly-d}
p(X)=\sum_{w\in\free_d\colon |w|\le
L}X^wp_w\in\mat{\module{N}}{n},\quad n=1,2,\ldots,
\end{equation}
 defines a nc
function on $\ncspaced{\ring}{d}$ with values in
$\ncspace{\module{N}}$. Notice that the nc polynomial $p$ is
determined by this nc function uniquely: at least when the module
$\module{N}$ is free, this follows from the absence of identities
for $n\times n$ matrices over $\ring$ (i.e., nc polynomials
vanishing on $n\times n$ matrices  over $\ring$) for all
$n=1,2,\ldots$ --- see \cite[Example 1.4.4 and Theorem 1.4.5, page
22]{Row80}, and also follows from Theorem
\ref{thm:ncps-nilp-unique} by restricting a nc polynomial to ${\rm
Nilp}_d(\ring)$.  We will often identify the two nc objects,
\eqref{eq:nc-poly-d} and \eqref{eq:ncfun-poly-d}, and use the same
notation for both. We observe that for every $n$, the nc function
$p$ is a polynomial in $dn^2$ commuting variables $(X_i)_{jk}$,
$i=1,\ldots,d$; $j,k=1,\ldots,n$, of degree at most $L$ with
values in $\mat{\module{N}}{n}$.
\begin{thm}\label{thm:king-poly}
Let $f$ be a nc function on $\ncspaced{\mathbb{K}}{d}$, where
$\mathbb{K}$ is an infinite field, with values in
$\ncspace{\module{N}}$ (so that $\module{N}$ is a vector space
over $\mathbb{K}$). Assume that for each $n$, $f(X_1,\ldots,X_d)$
is a polynomial function in $dn^2$ commuting variables
$(X_i)_{jk}$, $i=1,\ldots,d$; $j,k=1,\ldots,n$, with values in
$\mat{\module{N}}{n}$. Assume also that the degrees of these
polynomial functions of the commuting variables $(X_i)_{jk}$ are
bounded. Then $f$ is a nc polynomial with coefficients in
$\module{N}$.
\end{thm}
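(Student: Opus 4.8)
The plan is to exploit the TT formula centered at the scalar point $\mu = 0$, together with the fact that polynomiality of $f|_{\Omega_n}$ on matrices of all sizes forces the higher TT coefficients to vanish eventually. The key observation is this: fix $N$ larger than the common bound $M$ on the degrees of the polynomial functions $X \mapsto f(X)$ on $(\mathbb{K}^{n\times n})^d$ (here $M$ is independent of $n$ by hypothesis). I claim $\Delta_R^{w^\top} f(0,\ldots,0) = 0$ for every word $w$ with $|w| > M$. Indeed, by Theorem~\ref{thm:nc_nilp} applied to $f|_{\mathrm{Nilp}_d(\mathbb{K})}$, we have, for $X \in \mathrm{Nilp}_d(\mathbb{K};n)$,
\begin{equation*}
f(X) = \sum_{w \in \free_d} X^w\, \Delta_R^{w^\top} f(0,\ldots,0),
\end{equation*}
a finite sum; the summand indexed by $w$ is homogeneous of degree $|w|$ in the matrix entries $(X_i)_{jk}$. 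On the other hand, $f(X)$ is by hypothesis a polynomial of degree at most $M$ in these entries. Since nilpotent $d$-tuples of $n\times n$ matrices (e.g.\ strictly upper triangular ones, embedded in larger sizes) are Zariski-dense enough to detect homogeneous components --- more precisely, I will use that for $X$ strictly upper triangular, rescaling $X \mapsto tX$ gives $f(tX) = \sum_w t^{|w|} X^w \Delta_R^{w^\top}f(0)$, and comparing with the polynomial-of-degree-$\le M$ structure of $f(tX)$ in $t$ (for generic entries) kills the terms with $|w| > M$ --- we conclude that $\sum_{|w| = \ell} X^w \Delta_R^{w^\top} f(0,\ldots,0) = 0$ identically on $\mathrm{Nilp}_d(\mathbb{K};n)$ for every $\ell > M$ and every $n$.

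Next I would upgrade ``vanishes on all jointly nilpotent $d$-tuples of all sizes'' to ``the nc polynomial $\sum_{|w|=\ell} x^w \Delta_R^{w^\top}f(0,\ldots,0)$ is the zero nc polynomial.'' This is exactly the statement that a nc polynomial vanishing on $\mathrm{Nilp}_d(\mathbb{K})$ is zero, which the paper already invokes (via Theorem~\ref{thm:ncps-nilp-unique}, or via the absence of polynomial identities for $n\times n$ matrices over an infinite field for all $n$, cf.\ \cite[Theorem 1.4.5]{Row80}). Hence $\Delta_R^{w^\top}f(0,\ldots,0) = 0$ for all $w$ with $|w| > M$, and only finitely many coefficients $p_w := \Delta_R^{w^\top} f(0,\ldots,0) \in \module{N}$ (those with $|w| \le M$) are nonzero. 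Set $p := \sum_{|w|\le M} x^w p_w \in \module{N}\langle x_1,\ldots,x_d\rangle$.

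Finally I would show $f = p$ as nc functions on $\ncspaced{\mathbb{K}}{d}$. Both $f$ and $p$ are nc functions; by Corollary~\ref{cor:part_TT} (the TT formula with matrix center), applied at $Y$ a scalar point $\mu$, it suffices to show that $f$ and $p$ have the same TT expansion around every scalar point, or more directly: consider $g := f - p$, a nc function on $\ncspaced{\mathbb{K}}{d}$ which, for each $n$, is polynomial of bounded degree in the matrix entries, and whose TT coefficients at $0$ all vanish, i.e.\ $\Delta_R^{w^\top} g(0,\ldots,0) = 0$ for all $w$. Restricting $g$ to $\mathrm{Nilp}_d(\mathbb{K})$ and applying Theorem~\ref{thm:nc_nilp} again gives $g|_{\mathrm{Nilp}_d(\mathbb{K})} = 0$; but then for each $n$ the polynomial function $g|_{(\mathbb{K}^{n\times n})^d}$ vanishes on the Zariski-dense subset of jointly nilpotent (e.g.\ strictly-upper-triangularizable) $d$-tuples --- here I use that $\mathbb{K}$ is infinite so that a nonzero polynomial function cannot vanish on a Zariski-dense set --- hence $g|_{(\mathbb{K}^{n\times n})^d} = 0$ for all $n$, i.e.\ $g = 0$ and $f = p$. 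The main obstacle, and the step requiring the most care, is the passage from ``vanishing on nilpotents of every size'' to ``zero as a polynomial function / nc polynomial'': one must either cite the no-identities result for matrices over an infinite field in the right form, or argue the density of nilpotents directly (strictly upper triangular matrices are nilpotent, and for fixed $n$ the jointly-nilpotent locus, while not Zariski-dense in $(\mathbb{K}^{n\times n})^d$ for $n\ge 2$, is nonetheless large enough after passing to all $n$ simultaneously --- this is precisely where uniform degree boundedness is essential and where Example~\ref{ex:degrees_unbdd} shows the hypothesis cannot be dropped).
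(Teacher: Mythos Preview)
Your approach has a genuine gap in the final step. You correctly establish that $\Delta_R^{w^\top} f(0,\ldots,0) = 0$ for $|w| > M$ (your scaling argument on nilpotents, combined with Theorem~\ref{thm:ncps-nilp-unique}, is valid), but the passage from ``$g = f - p$ vanishes on $\mathrm{Nilp}_d(\mathbb{K})$'' to ``$g = 0$ on all of $\ncspaced{\mathbb{K}}{d}$'' is not justified. Jointly nilpotent $d$-tuples are \emph{not} Zariski-dense in $(\mat{\mathbb{K}}{n})^d$ for any $n \ge 1$ (already for $d=1$, $n=1$ the only nilpotent is $0$), and your suggestion that they become ``large enough after passing to all $n$ simultaneously'' is precisely the content of the theorem you are trying to prove: a bounded-degree nc function vanishing on all nilpotents must be zero. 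The argument is circular at this point --- you yourself flag this as ``the main obstacle,'' and it is not resolved.

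The paper's proof avoids this difficulty by never restricting to nilpotents. Instead, it applies the TT formula \emph{with remainder} (Theorem~\ref{thm:TT}) at $Y$ for arbitrary $X = Y + tZ$:
\[
f(Y+tZ) = \sum_{\ell=0}^N t^\ell \Delta_R^\ell f(Y,\ldots,Y)(Z,\ldots,Z) + t^{N+1} \Delta_R^{N+1} f(Y,\ldots,Y,Y+tZ)(Z,\ldots,Z).
\]
The remainder is itself polynomial in $t$, since $\Delta_R^{N+1} f(Y,\ldots,Y,Y+tZ)(Z,\ldots,Z)$ is read off from values of $f$ on block-bidiagonal matrices (formula~\eqref{eq:bidiag}), which are polynomial in the entries by hypothesis. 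Since $f(Y+tZ)$ has degree at most $L$ in $t$, taking $N = L$ forces the entire remainder $t^{L+1}(\cdots)$ to vanish identically as a polynomial in $t$; setting $t=1$ and then $Y=0$ gives $f(X) = \sum_{|w| \le L} X^w \Delta_R^{w^\top} f(0,\ldots,0)$ for \emph{every} $X$, not just nilpotent ones. Your scaling idea $X \mapsto tX$ is exactly the right one; the missing ingredient is to apply it with the TT remainder term present, so that the finite expansion is obtained at every $X$ in a single stroke rather than first on nilpotents and then hoping to extend.
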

\begin{proof}
First, we observe that for every $n\in\mathbb{N}$ and
$\ell\in\mathbb{N}$, the function
$$\Delta_R^\ell f(X^0,\ldots,X^\ell)(Z^1,\ldots,Z^\ell)$$
is a polynomial in the commuting variables
$$(X^\alpha_i)_{jk}, (Z^\beta_i)_{jk}\colon
\alpha=0,\ldots,\ell,\ \beta=1,\ldots,\ell;\ i=1,\ldots,d;\
j,k=1,\ldots,n,$$ with the coefficients in $\mat{\module{N}}{n}$.
This follows immediately from \eqref{eq:bidiag}.

Second, the TT formula \eqref{eq:TT} implies that
\begin{multline}\label{eq:TT'}
f(Y+tZ)=\sum_{\ell=0}^Nt^\ell\Delta_R^\ell
f(Y,\ldots,Y)(Z,\ldots,Z)\\
+t^{N+1}\Delta_R^{N+1}f(Y,\ldots,Y,Y+tZ)(Z,\ldots,Z)
\end{multline}
 for
$n\in\mathbb{N}$, $Y,Z\in (\mat{\mathbb{K}}{n})^d$,
$t\in\mathbb{K}$, and for $N=0,1,\ldots$. For fixed $n$, $Y$, and
$Z$, both $f_{Y,Z}(t):=f(Y+tZ)$ and
$\Delta_R^{N+1}(Y,\ldots,Y,Y+tZ)(Z,\ldots,Z)$ are polynomials in
$t$ with coefficients in $\mat{\module{N}}{n}$.

Let $L$ be the maximal degree of $f(X_1,\ldots,X_d)$ as a
polynomial function in $dn^2$ commuting variables $(X_i)_{jk}$,
$i=1,\ldots,d$; $j,k=1,\ldots,n$, as $n$ varies from $1$ to
$\infty$. Then for any fixed $n$, $Y$, and $Z$, the degree of the
polynomial $f_{Y,Z}(t)$ is at most $L$. Therefore, for $N=L$ the
remainder term in \eqref{eq:TT'} vanishes identically as a
function of $t$, and
$$f_{Y,Z}(t)=\sum_{\ell=0}^Lt^\ell\Delta_R^\ell
f(Y,\ldots,Y)(Z,\ldots,Z).$$ Setting $t=1$ and $X=Y+Z$ (so that
$f(X)=f_{Y,Z}(1)$), we obtain
\begin{equation}\label{eq:finite_TT}
f(X)=\sum_{\ell=0}^L\Delta_R^\ell f(\underset{\ell+1\
\rm{times}}{\underbrace{Y,\ldots,Y}})(\underset{\ell\
\rm{times}}{\underbrace{X-Y,\ldots,X-Y}}).
\end{equation}
Notice that this is a (uniformly in $n$) finite version of the TT
formula \eqref{eq:TT}. In particular, setting $Y=0$ and using
\eqref{eq:delta^l_vs_delta^w_m=1}, we obtain
\begin{equation}\label{eq:finite_TT-power}
f(X)=\sum_{|w|\le
L}X^w\,\Delta_R^{w^\top}\!\!f(\!\!\underset{|w|+1\
\rm{times}}{\underbrace{0,\ldots,0}}\!\!)
\end{equation}
(cf. \eqref{eq:tt-nilp}).
 Therefore, $f$ is a nc polynomial with coefficients in
$\module{N}$.
\end{proof}
\begin{rem}\label{rem:dir-der}
Notice that if $\field$ is a field of characteristic zero, then by
\eqref{eq:TT'} and the classical single-variable Taylor formula,
for all $\ell\colon  0\le\ell\le ML$ one has
\begin{equation}\label{eq:dir-der}
\Delta_R^\ell f(Y,\ldots,Y)(Z,\ldots,Z)=\frac{1}{\ell
!}\left.\frac{d^\ell f_{Y,Z}}{dt^\ell}\right|_{t=0}.
\end{equation}
\end{rem}

The following example shows that the assumption of boundedness of
degrees in Theorem \ref{thm:king-poly} cannot be omitted.
\begin{ex}\label{ex:degrees_unbdd}
Suppose that we have a sequence of homogeneous polynomials
$p_n\in\mathbb{K}\langle x_1,x_2\rangle$ of strictly increasing
degrees $\alpha_n$ such that $p_n$ vanishes on
$\left(\mat{\mathbb{K}}{n}\right)^2$. For example, we may take
$$p_n=\sum_{\pi\in
S_{n+1}}\operatorname{sign}(\pi)\,x_1^{\pi(1)-1}x_2\cdots
x_1^{\pi(n+1)-1}x_2$$ as in \cite{Row80} or \cite{Form}, where
$S_{n+1}$ \index{$S_{n}$} is the symmetric group on $n+1$
elements, and $\operatorname{sign}(\pi)$ is the sign of the
permutation $\pi$; clearly, $\alpha_n=\frac{(n+1)(n+2)}{2}$ in
this case. Define a nc function $f\colon
\ncspaced{\mathbb{K}}{2}\to\ncspace{\mathbb{K}}$ by
$$f(X_1,X_2)=\sum_{k=1}^{\infty}p_k(X_1,X_2),$$
where the sum is well defined: for $n\in\mathbb{N}$ and
$(X_1,X_2)\in\left(\mat{\mathbb{K}}{n}\right)^2$ the terms
$p_k(X_1,X_2)$, $k\ge n$, all vanish. Clearly, for all $n$,
$f(X_1,X_2)$ is a polynomial function in $2n^2$ commuting
variables $(X_1)_{jk}$, $(X_2)_{jk}$, $j,k=1,\ldots,n$, with
coefficients in $\mat{\mathbb{K}}{n}$. However, $f$ is not a nc
polynomial. Indeed, suppose that $f=q$, where $q\in
\mathbb{K}\langle x_1,x_2\rangle$. Let $\deg q=L$, and let
$$q=\sum_{j=0}^Lq_j,$$
with $q_j\in\mathbb{K}\langle x_1,x_2\rangle$ homogeneous of
degree $j$. Define a sequence of homogeneous polynomials
$$h_j=\begin{cases} p_k,& j=\alpha_k \text{ for some } k\in\mathbb{N},\\
0,& \text{otherwise,} \end{cases}$$ of degree $j$. Then we can
write
$$f(X_1,X_2)=\sum_{j=0}^\infty h_j(X_1,X_2)=\sum_{j=0}^{\alpha_{n-1}}
h_j(X_1,X_2)$$ for
$(X_1,X_2)\in\left(\mat{\mathbb{K}}{n}\right)^2$. Comparing
$$f(tX_1,tX_2)=\sum_{j=0}^{\infty}h_j(X_1,X_2)\,t^j$$ with
$$q(tX_1,tX_2)=\sum_{j=0}^Lq_j(X_1,X_2)\,t^j$$
 as polynomials in
$t\in\mathbb{K}$ for fixed $(X_1,X_2)$, we conclude that
$h_j(X_1,X_2)=0$ for all $j>L$. By the absence of polynomial
identities for matrices of all sizes, $h_j=0$ for all $j>L$. Since
the sequence $\{\alpha_n\}$ is strictly increasing, this is a
contradiction.
\end{ex}

We will show now that Example \ref{ex:degrees_unbdd} reflects the
general situation: any nc function on $\ncspaced{\mathbb{K}}{d}$
which, for each matrix size, is a polynomial in matrix entries,
can be written as a sum of a nc polynomial and an infinite series
of nc polynomials vanishing on matrices of increasing sizes.
\begin{thm}\label{thm:queen-poly}
Let $f$ be a nc function on $\ncspaced{\mathbb{K}}{d}$, where
$\mathbb{K}$ is an infinite field, with values in
$\ncspace{\module{N}}$ (so that $\module{N}$ is a vector space
over $\mathbb{K}$). Assume that for each $n$, $f(X_1,\ldots,X_d)$
is a polynomial function of degree $L_n$ in $dn^2$ commuting
variables $(X_i)_{jk}$, $i=1,\ldots,d$; $j,k=1,\ldots,n$, with
values in $\mat{\module{N}}{n}$. Then there exists a unique
sequence of homogeneous nc polynomials $f_j\in\module{N}\langle
x_1,\ldots,x_d\rangle$ of degree $j$, $j=0$, $1$, \ldots, such
that for all $n\in\mathbb{N}$,
\begin{itemize}
    \item $f_j$ vanishes on $\left(\mat{\mathbb{K}}{n}\right)^d$ for all
    $j>L_n$,
    \item if $L_n> -\infty$ (where we use the usual convention $\deg 0=-\infty$) then
     $f_{L_n}$ does not vanish identically on
    $\left(\mat{\mathbb{K}}{n}\right)^d$,
\end{itemize}
and for $X\in\left(\mat{\mathbb{K}}{n}\right)^d,$
\begin{equation}\label{eq:queen-expansion}
f(X)=\sum_{j=0}^\infty f_j(X)=\sum_{j=0}^{L_n} f_j(X).
\end{equation}
\end{thm}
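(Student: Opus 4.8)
The plan is to extract the homogeneous pieces $f_j$ from $f$ by a slicing/degree-filtration argument and then show each $f_j$ is a nc polynomial with the required vanishing property. First I would fix $n$ and, as in the proof of Theorem~\ref{thm:king-poly}, apply the TT formula \eqref{eq:TT} at $Y=0$ with remainder order $N=L_n$: since each $f|_{\Omega_n}$ is a polynomial of degree $L_n$ in the $dn^2$ entries, the function $t\mapsto f(tX)$ is a polynomial in $t$ of degree at most $L_n$ for every fixed $X\in(\mat{\mathbb K}{n})^d$, so the remainder term $t^{N+1}\Delta_R^{N+1}f(0,\ldots,0,tX)(X,\ldots,X)$ vanishes identically in $t$ when $N=L_n$. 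Using \eqref{eq:delta^l_vs_delta^w_m=1} (equivalently \eqref{eq:finite_TT-power}), this gives the uniformly-finite expansion
\begin{equation*}
f(X)=\sum_{|w|\le L_n}X^w\,\Delta_R^{w^\top}f(\underset{|w|+1\ \rm{times}}{\underbrace{0,\ldots,0}}),\qquad X\in(\mat{\mathbb K}{n})^d.
\end{equation*}

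Next I would define, for each $j\ge 0$, the homogeneous nc polynomial $f_j:=\sum_{|w|=j}x^w\,\Delta_R^{w^\top}f(0,\ldots,0)$, with coefficients in $\module N$; note these coefficients are size-independent by Remark~\ref{rem:delta^ell_restrict} (the derivatives $\Delta_R^{w^\top}f(0,\ldots,0)$ computed on $\text{Nilp}_d(\mathbb K)$ do not depend on $n$). The displayed identity then says precisely that $f(X)=\sum_{j=0}^{L_n}f_j(X)$ for $X\in(\mat{\mathbb K}{n})^d$, which is \eqref{eq:queen-expansion} once I show $f_j$ vanishes on $(\mat{\mathbb K}{n})^d$ for $j>L_n$. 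This vanishing I would get by a degree/homogeneity comparison in $t$: for fixed $X$ and $n$, compare $f(tX)=\sum_{j\ge 0}f_j(X)t^j$ (a genuinely polynomial identity in $t$ once we know only finitely many terms are nonzero on $(\mat{\mathbb K}{n})^d$, which the finite TT formula supplies) against the fact that $\deg_t f(tX)\le L_n$; hence $f_j(X)=0$ for all $j>L_n$, for all such $X$. To see that $f_{L_n}$ does not vanish identically on $(\mat{\mathbb K}{n})^d$ when $L_n>-\infty$, observe that if it did, then $f|_{\Omega_n}=\sum_{j\le L_n-1}f_j$ would have degree $\le L_n-1$ in the entries (each $f_j$ being of entry-degree exactly $j$ as a polynomial map), contradicting the definition of $L_n$ as the actual degree; here one uses that $x^w$ evaluated on generic $n\times n$ matrices is a polynomial of entry-degree exactly $|w|$, which follows from the absence of identities of low degree (\cite[Example 1.4.4 and Theorem 1.4.5]{Row80}) or from Theorem~\ref{thm:ncps-nilp-unique}.

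For uniqueness, suppose $\{f_j'\}$ is another such sequence. Then on each $(\mat{\mathbb K}{n})^d$ we have $\sum_j(f_j-f_j')(X)=0$ with only finitely many nonzero summands; replacing $X$ by $tX$ and comparing coefficients of powers of $t$ (valid since the sum is finite on each fixed matrix size) forces $(f_j-f_j')(X)=0$ on $(\mat{\mathbb K}{n})^d$ for every $n$ and every $j$, i.e.\ $f_j-f_j'$ is a nc polynomial vanishing on all matrices; by the absence of polynomial identities of all sizes (or again Theorem~\ref{thm:ncps-nilp-unique}) this gives $f_j=f_j'$. The main obstacle I anticipate is the bookkeeping needed to justify passing from ``$f(tX)$ is a polynomial in $t$ of bounded degree on each fixed matrix size'' to the global statement that the $f_j$ are honest homogeneous nc polynomials with $n$-independent coefficients and with the stated sharp vanishing/non-vanishing — this requires carefully invoking the uniformly-finite TT expansion (to know the relevant sums are finite before comparing $t$-coefficients) together with the no-identities input to pin down degrees exactly, rather than any single hard estimate.
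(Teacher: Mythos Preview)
Your proposal is correct and follows essentially the same route as the paper: apply the TT formula \eqref{eq:TT} at $Y=0$, use the polynomial-in-entries hypothesis to kill the remainder for $N\ge L_n$ (exactly as in the proof of Theorem~\ref{thm:king-poly}), define $f_j$ via \eqref{eq:f_j}, and read off the vanishing/non-vanishing and uniqueness from the homogeneity in $t$. One small simplification: your non-vanishing step does not actually need the absence-of-identities input---each $f_j(X)$ has entry-degree at most $j$, so if $f_{L_n}$ vanished on $(\mat{\mathbb{K}}{n})^d$ then $f|_{\Omega_n}=\sum_{j\le L_n-1}f_j$ would have entry-degree $\le L_n-1$, contradicting the definition of $L_n$; the reference to \cite{Row80} is only needed for the uniqueness clause (as in your last paragraph).
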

\begin{rem}\label{rem:degrees_increase}
Since $n\times n$ matrices can be embedded into $(n+1)\times(n+1)$
matrices by adding zero row and column, the sequence of degrees
$\{L_n\}$ is non-decreasing. It eventually stabilizes at some $L$
if and only if it is bounded, in which case we see that $f_j$,
$j>L$, vanish on $\left(\mat{\mathbb{K}}{n}\right)^d$ for all
  $n$; this means that $f$ is a nc polynomial, i.e., Theorem
  \ref{thm:king-poly} follows from Theorem \ref{thm:queen-poly}.
\end{rem}
\begin{rem}\label{rem:queen-expansion'}
The expansion \eqref{eq:queen-expansion} can be written in the
form
\begin{equation}\label{eq:queen-expansion'}
f(X)=\sum_{j=0}^{L_1}f_j(X)+\sum_{k=1}^{\infty}
\sum_{j=L_k+1}^{L_{k+1}}f_j(X).
\end{equation}
Here the $k$-th term in the infinite series is a nc polynomial of
degree $L_{k+1}$ which vanishes on
$\left(\mat{\mathbb{K}}{k}\right)^d$ and does not vanish
identically on $\left(\mat{\mathbb{K}}{(k+1)}\right)^d$.
\end{rem}
\begin{rem}\label{rem:cf_ex}
Applying Theorem \ref{thm:queen-poly} to the nc function $f$ in
Example \ref{ex:degrees_unbdd}, we see that $f_j=h_j$, $j=0$, $1$,
\ldots. If the polynomials $p_n$ are chosen so that for each $n$,
$p_n$ does not vanish identically on
$\left(\mat{\mathbb{K}}{(n+1)}\right)^d$ then $L_n=\alpha_{n-1}$,
$n=2$, $3$, \ldots (and $L_1=-\infty$).
\end{rem}
\begin{proof}[Proof of Theorem \ref{thm:queen-poly}]
The proof is similar to that of Theorem \ref{thm:king-poly}. We
first obtain, exactly as in the proof of Theorem
\ref{thm:king-poly}, that for any fixed $n$, $Y$, and $Z$, the
degree of the polynomial $f_{Y,Z}(t)$ is at most $L_n$. Therefore,
we have
\begin{equation*}
f_{Y,Z}(t)= \sum_{\ell=0}^{L_n}t^\ell \Delta_R^\ell
f(Y,\ldots,Y)(Z,\ldots,Z).
\end{equation*}
Setting $t=1$ and $X=Y+Z$ (so that $f(X)=f_{Y,Z}(1)$), we obtain
\begin{equation}\label{eq:locally_finite_TT}
f(X)=\sum_{\ell=0}^{L_n}\Delta_R^\ell f(\underset{\ell+1\
\rm{times}}{\underbrace{Y,\ldots,Y}})(\underset{\ell\
\rm{times}}{\underbrace{X-Y,\ldots,X-Y}}).
\end{equation}
Notice that for each $n$, this is a finite version of the TT
formula \eqref{eq:TT}. In particular, setting $Y=0$  and using
\eqref{eq:delta^l_vs_delta^w_m=1}, we obtain
 that
\begin{equation}\label{eq:locally_finite_TT-power}
f(X)=\sum_{\ell=0}^{L_n}\Delta_R^\ell f(\underset{\ell+1\
\rm{times}}{\underbrace{0,\ldots,0}})(\underset{\ell\
\rm{times}}{\underbrace{X,\ldots,X}})=\sum_{\ell=0}^{L_n}
\sum_{|w|=\ell}X^w\,\Delta_R^{w^\top}\!\!f(\underset{l+1\
\rm{times}}{\underbrace{0,\ldots,0}}).
\end{equation}
This yields the expansion \eqref{eq:queen-expansion} with
\begin{equation}\label{eq:f_j}
f_j=\sum_{|w|=j}\Delta_R^{w^\top}\!\!f(\underset{j+1\
\rm{times}}{\underbrace{0,\ldots,0}})\,x^w.
\end{equation}
 The fact that $f_j$
vanishes on $\left(\mat{\mathbb{K}}{n}\right)^d$ for all
    $j>L_n$ and the  fact that $f_{LM_n}$ does not
vanish identically on $\left(\mat{\mathbb{K}}{n}\right)^d$
     follow from the assumption on the degree of $f(X_1,\ldots, X_d)$ as a
    polynomial function of matrix entries $(X_i)_{ab}$, $i=1,\ldots,d$;\ $a,b=1,\ldots,n$.
   Finally, nc polynomials $f_j$ in the
expansion \eqref{eq:queen-expansion} are determined uniquely,
because their evaluations $f_j(X)$ are the homogeneous components
of degree $j$ of  $f(X)$ viewed as a polynomial in the entries of
the matrices $X_i$.
\end{proof}

A nc function $f\colon
\ncspaced{\mathbb{K}}{d}\to\ncspace{\module{N}}$, for $\module{N}$
a vector space over a field $\mathbb{K}$, will be called
\emph{locally polynomial} \index{locally polynomial nc function}
if for each $n$, $f(X_1,\ldots,X_d)$ is a polynomial function in
$dn^2$ commuting variables $(X_i)_{jk}$, $i=1,\ldots,d$;
$j,k=1,\ldots,n$, with values in $\mat{\module{N}}{n}$. We denote
the $\mathbb{K}$-vector space of locally polynomial nc functions
from $\ncspace{\mathbb{K}^d}$ to $\ncspace{\module{N}}$ by
$\module{N}_{\operatorname{loc}}\langle x_1,\ldots,x_d\rangle$;
\index{$\module{N}_{\operatorname{loc}}\langle
x_1,\ldots,x_d\rangle$} it is obviously a module over the
$\mathbb{K}$-algebra $\mathbb{K}_{\operatorname{loc}}\langle
x_1,\ldots,x_d\rangle$. For each $n$, we define a submodule
$\module{J}_\module{N}^n\subseteq\module{N}_{\operatorname{loc}}\langle
x_1,\ldots,x_d\rangle$ \index{$\module{J}_\module{N}^n$}
consisting of locally polynomial nc functions vanishing on
$\left(\mat{\mathbb{K}}{n}\right)^d$. The submodules
$\module{J}_\module{N}^n$, $n=1,2,\ldots$, form a decreasing
sequence with zero intersection. In particular, we obtain a
decreasing sequence of ideals $\module{J}_\mathbb{K}^n$,
$n=1,2,\ldots$, with zero intersection in the algebra
$\mathbb{K}_{\operatorname{loc}}\langle x_1,\ldots,x_d\rangle$.
Taking the ideals $\module{J}_\mathbb{K}^n$ as a fundamental
system of neighbourhoods of zero \label{ADIC} makes
$\mathbb{K}_{\operatorname{loc}}\langle x_1,\ldots,x_d\rangle$
into a topological ring. Analogously, taking the submodules
$\module{J}_\module{N}^n$ as a fundamental system of
neighbourhoods of zero  makes
$\module{N}_{\operatorname{loc}}\langle x_1,\ldots,x_d\rangle$
into a topological module over the topological ring
$\mathbb{K}_{\operatorname{loc}}\langle x_1,\ldots,x_d\rangle$.
Formulae \eqref{eq:queen-expansion} and \eqref{eq:f_j} mean that
the TT series of $f\in\module{N}_{\operatorname{loc}}\langle
x_1,\ldots,x_d\rangle$ converges to $f$ in this topology as a
series of homogeneous polynomials; therefore, the space
$\module{N}\langle x_1,\ldots,x_d\rangle$ of nc polynomials is
dense in the space $\module{N}_{\operatorname{loc}}\langle
x_1,\ldots,x_d\rangle$ of locally polynomial nc functions. In
fact, $\module{N}_{\operatorname{loc}}\langle
x_1,\ldots,x_d\rangle$ is complete as a topological module --- it
is a completion of $\module{N}\langle x_1,\ldots,x_d\rangle$; we
leave the proof to the reader as an easy exercise.

We can extend Theorem \ref{thm:king-poly} to the setting of
 arbitrary (possibly, infinite-dimensional) vector
spaces over an infinite field $\field$. Let $\module{M}$ and
$\module{N}$ be vector spaces over $\field$. A nc function
$f\colon\ncspace{\module{M}}\to\ncspace{\module{N}}$ is said to be
\emph{polynomial on slices} \index{polynomial on slices nc
function} if $f|_{\mat{\module{M}}{n}}$ is polynomial on slices of
some degree $L_n$ for every $n=1,2,\ldots$; i.e., for every
$Y,Z\in\mat{\module{M}}{n}$, $f_{Y,Z}(t)=f(Y+tZ)$ is a
single-variable polynomial with coefficients in
$\mat{\module{N}}{n}$ of degree at most $L_n$, and $\deg
f_{Y,Z}=L_n$ for some $Y,Z\in\mat{\module{M}}{n}$ (for the case
$\field=\mathbb{C}$, see \cite[Section 26.2]{HiPh} or
\cite[Section 1]{Mu}).

A nc polynomial over $\module{M}$ with coefficients in
$\module{N}$ is a finite formal sum
\begin{equation}\label{eq:nc-poly}
p=\sum_{\ell=0}^Lp_\ell,
\end{equation}
where $p_\ell\colon\module{M}\times\cdots\times
\module{M}\to\module{N}$ are $\ell$-linear mappings; we say that
$p$ is of degree $L$ if $p_L\neq 0$. We can evaluate $p$ on
$X\in\mat{\module{M}}{n}$ by
\begin{equation}\label{eq:ncfun-poly}
p(X)=\sum_{\ell=0}^LX^{\odot\ell}p_\ell\in\mat{\module{N}}{n},\quad
n=1,2,\ldots,
\end{equation}
yielding a nc function on $\ncspace{\module{M}}$ with values in
$\ncspace{\module{N}}$. Restricting this nc function to ${\rm
Nilp}(\module{M})$, we deduce from Theorem
\ref{thm:ncps-nilp-gen-unique} that the coefficients $p_\ell$ are
uniquely determined by $p$, so that we can identify the nc
polynomial \eqref{eq:nc-poly} with the nc function
\eqref{eq:ncfun-poly} and use the same notation for both. Notice
that a nc polynomial over $\field^d$ can be identified with a nc
polynomial \eqref{eq:nc-poly-d} in $d$ noncommuting
indeterminates, as in \eqref{eq:Z^l_vs_Z^w},
\eqref{eq:lw-forms}--\eqref{eq:f_w_via_f_l} (with $s=1$).

If $p$ is a nc polynomial over $\module{M}$ of degree $L$, then
the nc function $p$ on $\ncspace{\module{M}}$ is polynomial on
slices, with $\max_{n\in\mathbb{N}}L_n=L$. As in the special case
$\module{M}=\field^d$, the converse is also true; the proof is
identical, up to notations, to that of Theorem
\ref{thm:king-poly}.
\begin{thm}\label{thm:king-poly-gen}
Let $f\colon\ncspace{\module{M}}\to\ncspace{\module{N}}$ be a nc
function which is polynomial on slices. Assume that the degrees
$L_n$ of $f|_{\mat{\module{M}}{n}}$, $n=1,2,\ldots$, are bounded.
 Then $f$ is a nc polynomial over $\module{M}$ with coefficients in
$\module{N}$.
\end{thm}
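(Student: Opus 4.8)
The plan is to mimic, essentially verbatim, the proof of Theorem \ref{thm:king-poly}, replacing $d$-tuples of matrices and coordinate-wise polynomiality by the single module $\module{M}$ and polynomiality on slices, and replacing the free-monoid version of the TT formula by the power version \eqref{eq:tt-power-gen'} centered at $0$. First I would fix $n\in\mathbb{N}$ and recall that, by Theorem \ref{thm:bidiag}, for every $\ell$ the higher order difference-differential $\Delta_R^\ell f(X^0,\ldots,X^\ell)(Z^1,\ldots,Z^\ell)$ is obtained as a corner block of $f$ evaluated on a block upper bidiagonal matrix of size $(\ell+1)n$, whose entries are affine in the entries of $X^0,\ldots,X^\ell,Z^1,\ldots,Z^\ell$. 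Since $f|_{\mat{\module{M}}{(\ell+1)n}}$ is polynomial on slices, a routine reduction (restricting to a finite-dimensional subspace of $\module{M}$ containing the relevant entries and applying the definition of polynomial-on-slices together with the fact that $\field$ is infinite, so that polynomiality on all lines forces polynomiality) shows $\Delta_R^\ell f(X^0,\ldots,X^\ell)(Z^1,\ldots,Z^\ell)$ is itself polynomial on slices in its arguments, with degree bounded by a quantity depending only on $L_{(\ell+1)n}$; in particular all these functions have bounded degree when $\ell$ ranges over a finite set and $n$ is fixed. I will not belabor this bookkeeping.

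Next I would apply the TT formula \eqref{eq:TT} at a point $Y\in\mat{\module{M}}{n}$: for every $N$ and every $X\in\mat{\module{M}}{n}$,
\begin{equation*}
f(X)=\sum_{\ell=0}^N\Delta_R^\ell f(\underset{\ell+1\ \rm{times}}{\underbrace{Y,\ldots,Y}})(\underset{\ell\ \rm{times}}{\underbrace{X-Y,\ldots,X-Y}})+\Delta_R^{N+1}f(\underset{N+1\ \rm{times}}{\underbrace{Y,\ldots,Y}},X)(\underset{N+1\ \rm{times}}{\underbrace{X-Y,\ldots,X-Y}}).
\end{equation*}
Setting $X=Y+tZ$ with $t\in\field$ and using the homogeneity of $\Delta_R^\ell f$ in each of its $Z$-slots, the $\ell$-th term becomes $t^\ell\Delta_R^\ell f(Y,\ldots,Y)(Z,\ldots,Z)$, so that $f_{Y,Z}(t)=f(Y+tZ)$ is exhibited as a polynomial in $t$ of degree $\le L_n$ plus a remainder term which is $t^{N+1}$ times something polynomial in $t$. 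Since $\deg f_{Y,Z}\le L_n$ uniformly in $Y,Z\in\mat{\module{M}}{n}$, choosing $N=L_n$ forces the remainder to vanish identically in $t$ (comparing coefficients of a single-variable polynomial over the infinite field $\field$), whence
\begin{equation*}
f(X)=\sum_{\ell=0}^{L_n}\Delta_R^\ell f(\underset{\ell+1\ \rm{times}}{\underbrace{Y,\ldots,Y}})(\underset{\ell\ \rm{times}}{\underbrace{X-Y,\ldots,X-Y}})
\end{equation*}
for all $X\in\mat{\module{M}}{n}$. Taking $Y=0$ gives $f(X)=\sum_{\ell=0}^{L_n}X^{\odot\ell}\,\Delta_R^\ell f(0,\ldots,0)$ on $\mat{\module{M}}{n}$.

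Now I would use the hypothesis that the degrees $L_n$ are \emph{bounded}, say $L_n\le L$ for all $n$: then the identity $f(X)=\sum_{\ell=0}^{L}X^{\odot\ell}\,\Delta_R^\ell f(0,\ldots,0)$ holds on $\mat{\module{M}}{n}$ for every $n$, with the multilinear mappings $p_\ell:=\Delta_R^\ell f(0,\ldots,0)\colon\module{M}\times\cdots\times\module{M}\to\module{N}$ \emph{independent of $n$} (by Remark \ref{rem:delta^ell_restrict}, or directly because $\Delta_R^\ell f(0,\ldots,0)$ is computed from $f$ restricted to ${\rm Nilp}(\module{M})$ via \eqref{eq:bidiag}). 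Hence $f$ coincides, as a nc function on all of $\ncspace{\module{M}}$, with the evaluation \eqref{eq:ncfun-poly} of the nc polynomial $p=\sum_{\ell=0}^{L}p_\ell$ over $\module{M}$ with coefficients in $\module{N}$, which is exactly the assertion. The main obstacle I anticipate is purely the first step: making precise and careful the claim that $\Delta_R^\ell f$ inherits the ``polynomial on slices'' property with controlled degree from $f$ — this requires reducing to finitely many scalar variables via a finite-dimensional subspace of $\module{M}$ spanned by the relevant matrix entries and invoking that an infinite field detects polynomiality from restrictions to all affine lines; everything downstream is then just the single-variable Taylor argument already used for Theorem \ref{thm:king-poly}.
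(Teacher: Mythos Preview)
Your proposal is correct and follows essentially the same route as the paper, which explicitly says the proof ``is identical, up to notations, to that of Theorem \ref{thm:king-poly}.'' Your first paragraph is slightly more elaborate than necessary: all that is actually needed is that for fixed $Y,Z$ the remainder $\Delta_R^{N+1}f(Y,\ldots,Y,Y+tZ)(Z,\ldots,Z)$ is a polynomial in $t$, which follows immediately from \eqref{eq:bidiag} since it is a block of $f$ evaluated on a bidiagonal matrix whose entries are affine in $t$, and $f$ is polynomial on slices at that matrix size.
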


There is an analogous generalization of Theorem
\ref{thm:queen-poly} --- we leave the details to the reader.

\chapter{NC analyticity and convergence of TT series}\label{sec:conv}
In this section we discuss analyticity of nc functions and
convergence of TT series. It turns out that very mild boundedness
assumptions imply analyticity. There are two essentially different
settings: one where the boundedness assumptions hold in every
matrix dimension separately, and one where the boundedness
assumptions hold uniformly for all matrix dimensions. We assume in
this section that the ground ring $\ring$ is the field
$\mathbb{C}$; many results have analogues for the field
$\mathbb{R}$, see Remarks \ref{rem:real_queen},
\ref{rem:real_king}, and \ref{rem:real-higher}.

\section{Analytic nc functions}\label{subsec:analytic}
In this section we will use the theory of analytic functions from
vectors to vectors; our basic reference is \cite[Chapter III,
3.16--3.19 and Chapter XXVI]{HiPh}, see also \cite{Mu}.

Let $\vecspace{V}$ be a vector space over $\mathbb{C}$. A set
$\Omega\subseteq\ncspace{\vecspace{V}}$ is called \emph{finitely
open} \index{finitely open set} if for every $n\in\mathbb{N}$ the
set $\Omega_n\subseteq\mat{\vecspace{V}}{n}$ is finitely open,
i.e., the intersection of $\Omega_n$ with any finite-dimensional
subspace $\vecspace{U}$ of $\mat{\vecspace{V}}{n}$ is open in the
Euclidean  topology of $\vecspace{U}$. We notice that a finitely
open nc set $\Omega$ is right and left admissible. Indeed, if
$X\in\Omega_n$, $Y\in\Omega_m$, $Z\in\rmat{\vecspace{V}}{n}{m}$,
and $W\in\rmat{\vecspace{V}}{m}{n}$, then the sets
$$\Omega_{n+m}\cap\spa\left\{\begin{bmatrix}
X & 0\\
0 & Y
\end{bmatrix},\begin{bmatrix}
0 & Z\\
0 & 0
\end{bmatrix}\right\}$$
and
$$\Omega_{n+m}\cap\spa\left\{\begin{bmatrix}
X & 0\\
0 & Y
\end{bmatrix},\begin{bmatrix}
0 & 0\\
W & 0
\end{bmatrix}\right\}$$
are open in the corresponding two-dimensional vector spaces and
contain $\begin{bmatrix}
X & 0\\
0 & Y
\end{bmatrix}$, thus $\begin{bmatrix}
X & tZ\\
0 & Y
\end{bmatrix}$, $\begin{bmatrix}
X & 0\\
tW & Y
\end{bmatrix}\in\Omega_{n+m}$ for all sufficiently small $t\in\mathbb{C}$.

Let $\vecspace{W}$ be a Banach space over $\mathbb{C}$. An
\emph{admissible system of matrix norms over $\vecspace{W}$}
\index{admissible system of matrix norms} is a sequence of norms
$\|\cdot\|_n$ \index{$\Vert\cdot\Vert_n$} on
$\mat{\vecspace{W}}{n}$, $n=1,2,\ldots$, satisfying the following
two conditions:
\begin{itemize}
    \item For every $n,m\in\mathbb{N}$ there exist $C_1(n,m)$, $C_1'(n,m)>0$
    such that for all $X\in\mat{\vecspace{W}}{n}$ and
    $Y\in\mat{\vecspace{W}}{m}$,
    \begin{multline}\label{eq:dirsums-norms}
C_1(n,m)^{-1}\max\{\|X\|_n,\| Y\|_m\}\le\| X\oplus Y\|_{n+m}\\
\le
C_1'(n,m)\max\{\|X\|_n,\| Y\|_m\}.
    \end{multline}
    \item For every $n\in\mathbb{N}$ there exists $C_2(n)>0$ such that for all
    $X\in\mat{\vecspace{W}}{n}$ and
    $S,T\in\mat{\mathbb{C}}{n}$,
    \begin{equation}\label{eq:simprod-norms}
\| SXT\|_{n}\le C_2(n)\|S\|\,\|X\|_n\|T\|,
    \end{equation}
   where $\|\cdot\|$ denotes the operator norm of
   $\mat{\mathbb{C}}{n}$ with respect to the standard Euclidean
   norm of $\mathbb{C}^n$.
\end{itemize}
\begin{prop}\label{prop:inj-proj}
The conditions \eqref{eq:dirsums-norms}--\eqref{eq:simprod-norms}
are equivalent to the boundedness of the \emph{injections}
\index{injection} \index{$\iota_{ij}$}
\begin{equation}
\label{eq:inj-bd} \iota_{ij}\colon
\vecspace{W}\longrightarrow\mat{\vecspace{W}}{n},\quad
w\longmapsto E_{ij}w,
\end{equation}
and of the \emph{projections} \index{projection}
\begin{equation}\label{eq:proj-bd}
\pi_{ij}\colon\mat{\vecspace{W}}{n}\longrightarrow\vecspace{W},\quad
W=[w_{i'j'}]_{i',j'=1,\ldots,n}\longmapsto w_{ij},
\end{equation}
for all $n\in\mathbb{N}$ and all $i,j=1,\ldots,n$;  here
\index{$\pi_{ij}$} $E_{ij}\in\mat{\mathbb{C}}{n}$ \index{$E_{ij}$}
has the $(i,j)$-th entry $1$ and all the other entries $0$.
Moreover, the conditions
\eqref{eq:dirsums-norms}--\eqref{eq:simprod-norms} imply that for
any $s\in\mathbb{N}$, the \emph{block injections} \index{block
injection} \index{$\iota_{ij}^s$}
\begin{equation}\label{eq:block-inj-bd}
\iota_{ij}^s\colon
\mat{\vecspace{W}}{s}\longrightarrow\mat{\vecspace{W}}{ns}\cong
\mat{\left(\mat{\vecspace{W}}{s}\right)}{n},\quad
W\longmapsto E_{ij}\otimes W,
\end{equation}
and the block projections \index{block projection} \index{$
\pi_{ij}^s$}
\begin{gather}\label{eq:block-proj-bd} \pi_{ij}^s\colon
\mat{\vecspace{W}}{ns}\cong\mat{\left(\mat{\vecspace{W}}{s}\right)}{n}
\longrightarrow\mat{\vecspace{W}}{s},\\
W=[([w_{ab}]_{a,b=1,\ldots,s})_{i'j'}]_{i',j'=1,\ldots,n}\longmapsto
([w_{ab}]_{a,b=1,\ldots,s})_{ij}=W_{ij},\nonumber
\end{gather}
 are bounded.
\end{prop}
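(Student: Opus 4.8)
The plan is to prove the equivalence in the forward and backward directions and then derive the block-level consequences. First I would establish that \eqref{eq:dirsums-norms}--\eqref{eq:simprod-norms} imply the boundedness of the injections \eqref{eq:inj-bd} and projections \eqref{eq:proj-bd}. For the projection $\pi_{ij}$, note that $w_{ij}=\pi_{ij}(W)$ can be extracted by $w_{ij}I_1$-type manipulations: writing $E_{1i}\in\mat{\mathbb{C}}{n}$ and $E_{j1}\in\mat{\mathbb{C}}{n}$, the product $E_{1i}WE_{j1}$ is the $n\times n$ matrix with $w_{ij}$ in the $(1,1)$ slot and zeros elsewhere, i.e.\ $\iota_{11}(w_{ij})$ in the $n\times n$ picture; but $w_{ij}$ itself also sits as the $(1,1)$-entry of a $1\times 1$ block after an obvious compression, so one combines \eqref{eq:simprod-norms} with \eqref{eq:dirsums-norms} (using the convention that $\mat{\vecspace{W}}{1}=\vecspace{W}$ and that a single diagonal block can be split off with the constant $C_1(1,n-1)$) to bound $\|w_{ij}\|_1$ by a constant times $\|W\|_n$. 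For the injection $\iota_{ij}$, given $w\in\vecspace{W}$, form $\iota_{11}(w)\oplus 0_{n-1}\in\mat{\vecspace{W}}{n}$ — whose norm is controlled by $C_1'(1,n-1)\|w\|_1$ via \eqref{eq:dirsums-norms} (together with the fact, easily deduced from \eqref{eq:simprod-norms}, that $\|0_{n-1}\|_{n-1}$ contributes nothing) — and then apply \eqref{eq:simprod-norms} with $S=E_{i1}$, $T=E_{1j}$ to move the entry into position $(i,j)$, so $\|E_{ij}w\|_n\le C_2(n)\,C_1'(1,n-1)\|w\|_1$.

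Next I would prove the converse: boundedness of all $\iota_{ij}$ and all $\pi_{ij}$ implies \eqref{eq:dirsums-norms}--\eqref{eq:simprod-norms}. Here one defines a norm on $\mat{\vecspace{W}}{n}$ and argues the given one is equivalent to it via the entrywise decomposition $W=\sum_{i,j}\iota_{ij}(\pi_{ij}(W))=\sum_{i,j}E_{ij}\,w_{ij}$. From $\|W\|_n\le\sum_{i,j}\|\iota_{ij}\|\,\|\pi_{ij}\|$-type estimates applied to each summand, and conversely $\|w_{ij}\|_1=\|\pi_{ij}(W)\|_1\le\|\pi_{ij}\|\,\|W\|_n$, the norm $\|W\|_n$ is squeezed between constant multiples of $\max_{i,j}\|w_{ij}\|_1$ (with constants depending only on $n$ and the operator norms of the finitely many maps $\iota_{ij},\pi_{ij}$). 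For \eqref{eq:dirsums-norms}, the entries of $X\oplus Y$ are exactly the entries of $X$, the entries of $Y$, and zeros, so both inequalities follow from the two-sided entrywise estimate with $C_1(n,m),C_1'(n,m)$ expressed through the relevant operator norms. For \eqref{eq:simprod-norms}, writing $(SXT)_{ij}=\sum_{a,b}S_{ia}x_{ab}T_{bj}$ and estimating each scalar-times-vector term gives $\|(SXT)_{ij}\|_1\le \big(\sum_{a,b}|S_{ia}|\,|T_{bj}|\big)\max_{a,b}\|x_{ab}\|_1\le n\,\|S\|\,\|T\|\max_{a,b}\|x_{ab}\|_1$ (bounding the $\ell^1$ norms of rows/columns of $S,T$ by $\sqrt n$ times their $\ell^2$ norms, themselves $\le\|S\|,\|T\|$), and then converting back to $\|SXT\|_n$ via the entrywise equivalence yields the constant $C_2(n)$.

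Finally, for the block statements \eqref{eq:block-inj-bd}--\eqref{eq:block-proj-bd}: identifying $\mat{\vecspace{W}}{ns}\cong\mat{(\mat{\vecspace{W}}{s})}{n}$ entrywise, the block projection $\pi_{ij}^s$ picks out the $(i,j)$-th $s\times s$ sub-block $W_{ij}$, which in scalar coordinates means selecting a fixed set of $s^2$ scalar entries $(W)_{ab}$ with $a\in\{(i-1)s+1,\dots,is\}$, $b\in\{(j-1)s+1,\dots,js\}$. Since $W_{ij}=\sum_{a,b}\iota^1_{ab}\circ\pi^1_{(i-1)s+a,(j-1)s+b}(W)$ at the level of $\mat{\vecspace{W}}{s}$, composing finitely many bounded scalar projections and injections (whose boundedness we have just established for size $ns$ and for size $s$) shows $\pi_{ij}^s$ is bounded; equivalently one can write $\pi_{ij}^s(W)=(E_{1i}\otimes I_s)\,W\,(E_{j1}\otimes I_s)$ compressed, and apply \eqref{eq:simprod-norms} together with \eqref{eq:dirsums-norms} directly. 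Symmetrically, $\iota_{ij}^s(W)=(E_{i1}\otimes I_s)\,(\iota^s_{11}(W)\oplus 0)\,(E_{1j}\otimes I_s)$, bounded by the same two conditions. The main obstacle, such as it is, is purely bookkeeping: keeping track of which constants depend on $n$ (and $s$) versus which are absolute, and being careful that the conventions for $\mat{\vecspace{W}}{0}$ and $\mat{\vecspace{W}}{1}$ make the ``split off one diagonal block'' maneuver legitimate; no deep idea is needed beyond the entrywise decomposition $W=\sum E_{ij}w_{ij}$ and the elementary inequality $\|S\|_{\ell^1\to\ell^1}\le\sqrt{n}\,\|S\|$ for $n\times n$ matrices.
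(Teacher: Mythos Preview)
Your proposal is correct and follows essentially the same route as the paper: move entries to the $(1,1)$ corner via elementary matrices $E_{1i},E_{j1}$ and invoke \eqref{eq:simprod-norms}, then peel off or attach a diagonal block via \eqref{eq:dirsums-norms}; for the converse, use the entrywise decomposition $W=\sum_{i,j}\iota_{ij}(w_{ij})$ together with $w_{ij}=\pi_{ij}(W)$. The only cosmetic difference is that in the converse bound for \eqref{eq:simprod-norms} the paper uses the cruder estimate $\max_{i,k}|s_{ik}|\le\|S\|$ rather than your Cauchy--Schwarz/$\sqrt{n}$ argument, and the paper proves the block case $\iota_{ij}^s,\pi_{ij}^s$ first (so the scalar case is the specialization $s=1$) rather than deducing blocks from scalars; neither affects the substance.
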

\begin{proof}
Let $s,n\in\mathbb{N}$ and $W\in\mat{\vecspace{W}}{s}$. Using
\eqref{eq:dirsums-norms} and \eqref{eq:simprod-norms}, we obtain
\begin{multline*}
\|\iota_{ij}^s(W)\|_{ns}=\|E_{ij}\otimes W
\|_{ns}=\|(P_{i1}\otimes I_s)(E_{11}\otimes W)(P_{1j}\otimes
I_s\|_{ns}\\
\le C_2(ns)\|E_{11}\otimes W\|_{ns}=C_2(ns)\|W\oplus
0_{(ns-s)\times(ns-s)}\|_{ns}\\
\le C_2(ns)C_1'(s,ns-s)\|W\|_s.
\end{multline*}
Here
$P_{ij}=I_n-E_{ii}-E_{jj}+E_{ij}+E_{ji}\in\mat{\mathbb{C}}{n}$ is
a permutation matrix ($i\ne j$). \index{$P_{ij}$} Therefore the
block injections $\iota_{ij}^s$ (in particular, the injections
$\iota_{ij}$) are bounded.

Next, let $s,n\in\mathbb{N}$ and
$W\in\mat{\vecspace{W}}{ns}\cong\mat{(\mat{\vecspace{W}}{s})}{n}$.
Using \eqref{eq:dirsums-norms} and \eqref{eq:simprod-norms}, we
obtain
\begin{multline*}
\|(\pi_{ij}^s(W)\|_{s}=\|W_{ij}\|_{s}\le C_1(s,ns-s)\|W_{ij}\oplus
0_{(ns-s)\times(ns-s)}\|_{ns}\\
=C_1(s,ns-s)\|(E_{1i}\otimes
I_{\mat{\vecspace{W}}{s}})W(E_{j1}\otimes
I_{\mat{\vecspace{W}}{s}})\|_{ns}\\
\le
C_1(s,ns-s)C_2(ns)\|W\|_{ns}.
\end{multline*}
Therefore the block projections $\pi_{ij}^s$ (in particular, the
projections $\pi_{ij}$) are bounded.

Assume now that the injections \eqref{eq:inj-bd} and the
projections \eqref{eq:proj-bd} are all bounded. Let
$n,m\in\mathbb{N}$ and $X\in\mat{\vecspace{W}}{n}$,
$Y\in\mat{\vecspace{W}}{m}$. Then we obtain \begin{multline*}
\|X\oplus
Y\|_{n+m}=\Big\|\sum_{i,j=1}^n\iota_{ij}(x_{ij})+\sum_{i,j=1}^{m}\iota_{n+i,n+j}(y_{ij})\Big\|_{n+m}\\
\le
\sum_{i,j=1}^n\|\iota_{ij}(x_{ij})\|_{n+m}+\sum_{i,j=1}^{m}\|\iota_{n+i,n+j}(y_{ij})\|_{n+m}\\
\le \sum_{i,j=1}^n\|\iota_{ij}\|\cdot\|x_{ij}\|_{1}+
\sum_{i,j=1}^{m}\|\iota_{n+i,n+j}\|\cdot\|y_{ij}\|_{1}\\
= \sum_{i,j=1}^n\|\iota_{ij}\|\cdot\|\pi_{ij}(X)\|_{1}+
\sum_{i,j=1}^{m}\|\iota_{n+i,n+j}\|\cdot\|\pi_{ij}(Y)\|_{1}\\
\le\Big(\sum_{i,j=1}^n\|\iota_{ij}\|\cdot\|\pi_{ij}\|+
\sum_{i,j=1}^{m}\|\iota_{n+i,n+j}\|\cdot\|\pi_{ij}\|\Big)
\max\{\|X\|_n,\|Y\|_m\}
\end{multline*}
and
\begin{multline*}
\max\{\|X\|_n,\|Y\|_m\}\le\max\Big\{\Big\|\sum_{i,j=1}^n\iota_{ij}(x_{ij})\Big\|_n,
\Big\|\sum_{i,j=1}^{m}\iota_{ij}(y_{ij})\Big\|_{m}\Big\}\\
\le\max\Big\{\sum_{i,j=1}^n\|\iota_{ij}\|\cdot\|x_{ij}\|_1,
\sum_{i,j=1}^{m}\|\iota_{ij}\|\cdot\|y_{ij}\|_{1}\Big\}\\
=\max\Big\{\sum_{i,j=1}^n\|\iota_{ij}\|\cdot\|\pi_{ij}(X\oplus
Y)\|_1,
\sum_{i,j=1}^{m}\|\iota_{ij}\|\cdot\|\pi_{n+i,n+j}(X\oplus Y)\|_{1}\Big\}\\
\le\max\Big\{\sum_{i,j=1}^n\|\iota_{ij}\|\cdot\|\pi_{ij}\|,
\sum_{i,j=1}^{m}\|\iota_{ij}\|\cdot\|\pi_{n+i,n+j}\|\Big\}\|X\oplus
Y\|_{n+m}.
\end{multline*}
Therefore, \eqref{eq:dirsums-norms} follows.

Assume again that the injections \eqref{eq:inj-bd} and the
projections \eqref{eq:proj-bd} are all bounded. Let
$n\in\mathbb{N}$, $X\in\mat{\vecspace{W}}{n}$, and
$S,T\in\mat{\mathbb{C}}{n}$. Then
\begin{multline*}
\|SXT\|_n=\Big\|\sum_{i,j=1}^n\iota_{ij}((SXT)_{ij})\Big\|_n
=\Big\|\sum_{i,j=1}^n\iota_{ij}\Big(\sum_{k,\ell=1}^ns_{ik}x_{k\ell}t_{\ell
j}\Big)\Big\|_n\\
\le\sum_{i,j,k,\ell=1}^n\|\iota_{ij}\|\cdot
|s_{ik}|\cdot\|x_{k\ell}\|_1\cdot |t_{\ell
j}|=\sum_{i,j,k,\ell=1}^n\|\iota_{ij}\|\cdot
|s_{ik}|\cdot\|\pi_{k\ell}(X)\|_1\cdot |t_{\ell j}|\\
\le
\Big(\sum_{i,j=1}^n\|\iota_{ij}\|\Big)\Big(\sum_{k,\ell=1}^n\|\pi_{k\ell}\|\Big)
\Big(\max_{i,k=1,\ldots,n}|s_{ik}|\Big)\|X\|_n\Big(\max_{\ell,j=1,\ldots,n}|t_{\ell
j}|\Big)\\
\le
\Big(\sum_{i,j=1}^n\|\iota_{ij}\|\Big)\Big(\sum_{k,\ell=1}^n\|\pi_{k\ell}\|\Big)
\|S\|\|X\|_n\|T\|,
\end{multline*}
i.e., \eqref{eq:simprod-norms} holds.
\end{proof}

Proposition \ref{prop:inj-proj} means that a sequence of norms
$\|\cdot\|_n$ on $\mat{\vecspace{W}}{n}$ is an admissible system
of matrix norms over $\vecspace{W}$ if and only if
$\mat{\vecspace{W}}{n}$  is homeomorphic to the direct product of
$n^2$ copies of $\vecspace{W}$ for each $n$. In particular, any
two admissible systems of matrix norms over $\vecspace{W}$ yield
equivalent norms on $\mat{\vecspace{W}}{n}$ for each $n$, and for
any admissible system of matrix norms over $\vecspace{W}$ the
spaces $\mat{\vecspace{W}}{n}$ are all complete. \label{POLYBDD}

Let $\vecspace{V}$ be a vector space, let
$\Omega\subseteq\ncspace{\vecspace{V}}$ be a finitely open nc set,
and let $\vecspace{W}$ be a Banach space with an admissible system
of matrix norms over $\vecspace{W}$. A nc function
$f\colon\Omega\to\ncspace{\vecspace{W}}$ is called \emph{locally
bounded on slices} \index{locally bounded on slices nc function}
if for every $n\in\mathbb{N}$ the function $f|_{\Omega_n}$ is
locally bounded on slices, i.e., for every $X\in\Omega_n$ and
$Z\in\mat{\vecspace{V}}{n}$ there exists $\epsilon>0$ such that
$f(X+tZ)$ is bounded for $|t|<\epsilon$. A nc function
$f\colon\Omega\to\ncspace{\vecspace{W}}$ is called
\emph{G\^{a}teaux (G-) differentiable} \index{G\^{a}teaux (G-)
differentiable nc dunction} if for every $n\in\mathbb{N}$ the
function $f|_{\Omega_n}$ is G-differentiable, i.e., for every
$X\in\Omega_n$ and $Z\in\mat{\vecspace{V}}{n}$ the G-derivative of
$f$ at $X$ in direction $Z$, \index{$\delta f(X)(Z)$}
$$\delta f(X)(Z)=\lim_{t\to
0}\frac{f(X+tZ)-f(X)}{t}=\frac{d}{dt}f(X+tZ)\Big|_{t=0},$$ exists.
It follows that $f$ is \emph{analytic on slices}, \index{analytic
on slices nc function} i.e., for every $X\in\Omega_n$ and
$Z\in\mat{\vecspace{V}}{n}$, $f(X+tZ)$ is an analytic function of
$t$ in a neighbourhood of $0$. By Hartogs' theorem \cite[Page
28]{Sh},  $f$ is analytic on $\vecspace{U}\cap\Omega_n$ as a
function of several complex variables for every $n$ and every
finite-dimensional subspace $\vecspace{U}$ of
$\mat{\vecspace{V}}{n}$. We also note that $\delta
f(X)\colon\mat{\vecspace{V}}{n}\to\mat{\vecspace{W}}{n}$ is a
linear operator \cite[Theorem 26.3.2]{HiPh}.

Let $\vecspace{X}$ be a vector space, and let $Y\in\vecspace{X}$.
A set $\Upsilon\subseteq\vecspace{X}$ is called \emph{complete
circular} \index{complete circular set (or c-star) about $Y$} (or
a \emph{c-star about $Y$} in the terminology of \cite[Definition
3.16.1]{HiPh}) if for every $X\in\Upsilon$ we have that
$Y+t(X-Y)\in\Upsilon$ for all $t\in\mathbb{C}$ with $|t|\le 1$.
\begin{thm}\label{thm:g-queen}
Let a nc function $f\colon\Omega\to\ncspace{\vecspace{W}}$ be
locally bounded on slices. Then
\begin{enumerate}
    \item $f$ is G-differentiable.
    \item For every $n\in\mathbb{N}$, $Y\in\Omega_n$,
    $Z\in\mat{\vecspace{V}}{n}$, and each $N\in\mathbb{N}$,
    $$\frac{1}{N!}\frac{d^N}{dt^N}f(Y+tZ)\Big|_{t=0}=\Delta_R^Nf(\underset{N+1\
    {\rm times}}{\underbrace{Y,\ldots,Y}})(\underset{N\
    {\rm times}}{\underbrace{Z,\ldots,Z}}).$$
    \item For every $n\in\mathbb{N}$ and $Y\in\Omega_n$, let
    $$\Upsilon(Y)=\{X\in\Omega_n\colon
     Y+t(X-Y)\in\Omega_n\ \mathrm{for\ all\ } t\in\mathbb{C}\ \mathrm{with}\ |t|\le 1\}.$$
  Then $\Upsilon(Y)$ \index{$\Upsilon(Y)$} is the maximal complete circular set about
  $Y$ contained in $\Omega_n$ and is finitely open. For every
  $X\in\Upsilon(Y)$,
\begin{equation}\label{eq:tt-g-queen}
f(X)=\sum_{\ell=0}^\infty\Delta_R^\ell f(\underset{\ell+1\
\rm{times}}{\underbrace{Y,\ldots,Y}})(\underset{\ell\
\rm{times}}{\underbrace{X-Y,\ldots,X-Y}}),
\end{equation}
where the series converges absolutely. \item The series in
\eqref{eq:tt-g-queen} converges uniformly on compact subsets of
$\vecspace{U}\cap\Upsilon(Y)$ for every finite-dimensional
subspace $\vecspace{U}$ of $\mat{\vecspace{V}}{n}$. Moreover, for
every such  compact set $K$ there exists a finitely open complete
circular set $\Upsilon_K$ about $Y$,
$K\subseteq\Upsilon_K\subseteq\Upsilon(Y)$ such that
\begin{equation}\label{eq:tt-g-queen-normal}
\sum_{\ell=0}^\infty\sup_{X\in\Upsilon_K}\|\Delta_R^\ell
f(\underset{\ell+1\
\rm{times}}{\underbrace{Y,\ldots,Y}})(\underset{\ell\
\rm{times}}{\underbrace{X-Y,\ldots,X-Y}})\|_n<\infty.
\end{equation}
\end{enumerate}
\end{thm}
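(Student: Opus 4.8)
The plan is to reduce everything to the classical theory of analytic functions from a vector space to a Banach space, applied slice by slice, and then feed in the nc-specific identities (the TT formula and the bidiagonal evaluation formula) to identify the Taylor coefficients with the difference-differential operators.

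First I would prove (1) and (2) together. Fix $n$, $X\in\Omega_n$, $Z\in\mat{\vecspace{V}}{n}$, and choose $\epsilon>0$ with $f(X+tZ)$ bounded for $|t|<\epsilon$. Restricting to the two-dimensional slice through $X$ and $Z$ (which meets $\Omega_n$ in a finitely open, hence classically open, set), we are in the situation of a bounded vector-valued function on a disc, so by the classical theorems (\cite[Chapter III, 3.16--3.19 and Chapter XXVI]{HiPh}) $t\mapsto f(X+tZ)$ is analytic near $0$; this gives G-differentiability and analyticity on slices, and $\delta f(X)$ is linear by \cite[Theorem 26.3.2]{HiPh}. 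For (2), apply the TT formula (Theorem \ref{thm:TT}) with $Y$ replaced by $X$ and $X$ replaced by $X+tZ$, using homogeneity of $\Delta_R^\ell$ in each $Z$-slot (Proposition \ref{prop:homog-k}, iterated) to pull out the scalar $t^\ell$:
\begin{equation*}
f(X+tZ)=\sum_{\ell=0}^N t^\ell\,\Delta_R^\ell f(\underbrace{X,\ldots,X}_{\ell+1})(\underbrace{Z,\ldots,Z}_\ell)+t^{N+1}\Delta_R^{N+1}f(\underbrace{X,\ldots,X}_{N+1},X+tZ)(\underbrace{Z,\ldots,Z}_{N+1}).
\end{equation*}
The remainder term is, by Theorem \ref{thm:bidiag}, a matrix entry of $f$ evaluated on a block-bidiagonal matrix whose entries are affine in $t$, hence bounded locally in $t$; so it is $O(t^{N+1})$. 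Comparing with the classical Taylor expansion of the analytic function $t\mapsto f(X+tZ)$ identifies $\frac{1}{N!}\frac{d^N}{dt^N}f(X+tZ)|_{t=0}=\Delta_R^N f(X,\ldots,X)(Z,\ldots,Z)$.

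For (3), the assertion that $\Upsilon(Y)$ is the maximal complete circular set about $Y$ inside $\Omega_n$ and is finitely open is purely set-theoretic plus the observation that $\Omega_n$ finitely open forces, for each $X\in\Upsilon(Y)$ and each finite-dimensional $\vecspace{U}$, the scalar set $\{t:Y+t(X-Y)\in\Omega_n\}$ to be open and contain the closed unit disc; a standard Hartogs/two-disc argument upgrades this to finite-openness of $\Upsilon(Y)$. The convergence of the TT series \eqref{eq:tt-g-queen} on $\Upsilon(Y)$, together with (4), is then exactly the statement of the classical expansion theorem \cite[Theorem 3.17.1 and Chapter XXVI]{HiPh}: on a finitely open complete circular domain a G-differentiable (hence locally bounded, by (1) and local boundedness on slices) vector-valued function is the sum of its Taylor series, which converges absolutely, converges uniformly on compact subsets of finite-dimensional slices, and for each compact $K$ there is a finitely open complete circular $\Upsilon_K$ with $K\subseteq\Upsilon_K\subseteq\Upsilon(Y)$ on which the homogeneous polynomial terms are summable in norm --- this gives \eqref{eq:tt-g-queen-normal}. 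The only thing to check is that the classical Taylor coefficients \emph{are} the nc coefficients $\Delta_R^\ell f(Y,\ldots,Y)$: the $\ell$-th homogeneous polynomial term in the classical expansion, evaluated at $X$, is $\frac{1}{\ell!}\frac{d^\ell}{dt^\ell}f(Y+t(X-Y))|_{t=0}$, which by part (2) equals $\Delta_R^\ell f(Y,\ldots,Y)(X-Y,\ldots,X-Y)$.

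The main obstacle I anticipate is bookkeeping rather than a genuine difficulty: one must be careful that the ``admissible system of matrix norms'' hypothesis on $\vecspace{W}$ is actually used in the right place --- it is what makes $\mat{\vecspace{W}}{n}$ a \emph{Banach} space (completeness, via Proposition \ref{prop:inj-proj}) with norm behaving well under $\oplus$ and under multiplication by scalar matrices, so that (i) the classical HiPh theory applies to $f|_{\Omega_n}$ with values in $\mat{\vecspace{W}}{n}$, and (ii) the remainder term estimate via Theorem \ref{thm:bidiag} genuinely produces a bounded quantity in the relevant norm (the block-matrix entry extraction is continuous by \eqref{eq:proj-bd}, and the affine-in-$t$ bidiagonal matrix stays in a bounded set whose image under $f$ is bounded since $f$ is locally bounded on the two-dimensional slice through $\bigl[\begin{smallmatrix}Y&X-Y\\0&Y\end{smallmatrix}\bigr]$). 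Making the slice-compactness and the passage from ``bounded on each line through $0$'' to ``locally bounded, hence the classical theory applies'' fully rigorous --- i.e. invoking precisely the right statements of \cite[Chapter XXVI]{HiPh} --- is where the care is needed; conceptually there is no new ingredient beyond the TT formula and the bidiagonal formula already established.
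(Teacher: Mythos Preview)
Your overall route is essentially the paper's --- TT formula plus the bidiagonal evaluation to identify the Taylor coefficients, then classical vector-valued function theory for the convergence claims in (3) and (4). But your argument for (1) has the logic backwards. You assert that since $t\mapsto f(X+tZ)$ is bounded on a disc, ``by the classical theorems $t\mapsto f(X+tZ)$ is analytic near $0$.'' That implication is false in general (take $t\mapsto \bar t$ as a Banach-space-valued map); nothing in \cite{HiPh} says this. The whole content of part (1) is that the \emph{nc structure} upgrades local boundedness on slices to differentiability, and the mechanism is precisely the TT-with-remainder argument you wrote down for (2). The paper runs that argument first:
\[
f(Y+tZ)=f(Y)+t\,\Delta_R f(Y,Y)(Z)+t^{2}\,\Delta_R^{2}f(Y,Y,Y+tZ)(Z,Z),
\]
shows the last factor is bounded in $t$ (this is where local boundedness on slices is used --- applied at the block-bidiagonal point, not at $Y$), and reads off $\delta f(Y)(Z)=\Delta_R f(Y,Y)(Z)$. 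Only \emph{then} does G-differentiability yield analyticity on slices (by the remark preceding the theorem), after which the general-$N$ identity in (2) follows by comparing Taylor coefficients of an already-analytic scalar function. You have the ingredients; you just need to invoke the TT argument before, not after, claiming analyticity.

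Two smaller corrections. The bidiagonal matrix carrying the remainder $\Delta_R^{N+1}f(Y,\ldots,Y,Y+tZ)(Z,\ldots,Z)$ is $(N+2)\times(N+2)$ in $n\times n$ blocks, not the $2\times 2$ block you wrote, and there is no a priori reason it lies in $\Omega$ with superdiagonal $Z$: the paper first rescales $Z\to rZ$ (using that $\Omega$ is finitely open) so that the bidiagonal with diagonal $Y,\ldots,Y$ and superdiagonal $rZ$ is in $\Omega_{n(N+2)}$, and only then perturbs the last diagonal block by $t$ and applies local boundedness on that slice. Finally, the finite-openness of $\Upsilon(Y)$ is not really a Hartogs-type fact; the paper proves it by a direct compactness argument --- cover the compact arc $\{Y+t(X-Y):|t|\le 1\}$ inside a given finite-dimensional slice by finitely many balls contained in $\Omega_n$ and take the minimum available radius.
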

\begin{proof} (1),(2).
Let $n,N\in\mathbb{N}$, $Y\in\Omega_n$, and
$Z\in\mat{\vecspace{V}}{n}$ be fixed. Since the nc set $\Omega$ is
finitely open, $\underset{N+2\
    \rm{times}}{\underbrace{Y\oplus\ldots\oplus Y}}\in\Omega_{n(N+2)}$ and
there exists $r>0$ such that the $(N+2)\times (N+2)$ matrix over
$\mat{\vecspace{V}}{n}$,
$$\begin{bmatrix}
Y & rZ & 0 & \ldots & 0\\
0 & \ddots & \ddots & \ddots & \vdots\\
\vdots & \ddots  & \ddots & \ddots & 0 \\
\vdots &  & \ddots & \ddots & rZ\\
0 & \ldots & \ldots & 0 & Y
 \end{bmatrix},$$
belongs to $\Omega_{n(N+2)}$. Since $\Omega_{n(N+2)}$ is finitely
open and $f$ is locally bounded on slices, there exists
$\epsilon_1>0$ such that
$$\begin{bmatrix}
Y & rZ & 0 & \ldots & 0\\
0 & \ddots & \ddots & \ddots & \vdots\\
\vdots & \ddots  & \ddots & \ddots & 0 \\
\vdots &  & \ddots & Y & rZ\\
0 & \ldots & \ldots & 0 & Y+trZ
 \end{bmatrix}\in\Omega_{n(N+2)}$$
 and $$f\left(\begin{bmatrix}
Y & rZ & 0 & \ldots & 0\\
0 & \ddots & \ddots & \ddots & \vdots\\
\vdots & \ddots  & \ddots & \ddots & 0 \\
\vdots &  & \ddots & Y & rZ\\
0 & \ldots & \ldots & 0 & Y+trZ
 \end{bmatrix}\right)$$
 is bounded for $t\in\mathbb{C}\colon |t|<\epsilon_1$. Since the Banach space $\vecspace{W}$
 is equipped with
 an admissible system of matrix norms over $\vecspace{W}$, by
 Theorem \ref{thm:bidiag},
 $\Delta_R^{N+1}f(Y,\ldots,Y,Y+trZ)(rZ,\ldots,rZ)$ is bounded for  $t\in\mathbb{C}\colon
 |t|<\epsilon_1$. Next, since $\Omega_n$ is finitely open, there
 exists $\epsilon_2>0$ such that $Y+trZ\in\Omega_n$ for all
 $t\in\mathbb{C}\colon |t|<\epsilon_2$. According to the TT
 formula \eqref{eq:TT}, we have for
 $\epsilon=\min\{\epsilon_1,\epsilon_2\}$ and $|t|<\epsilon$ that
 \begin{multline*}
f(Y+trZ)=\sum_{\ell=0}^N\Delta_R^\ell
f(Y,\ldots,Y)(trZ,\ldots,trZ)\\
+\Delta_R^{N+1}f(Y,\ldots,Y,Y+trZ)(trZ,\ldots,trZ),
 \end{multline*}
 or, for $|t|<\epsilon r$, that
 \begin{multline}\label{eq:TT-t}
f(Y+tZ)=\sum_{\ell=0}^Nt^\ell\Delta_R^\ell
 f(Y,\ldots,Y)(Z,\ldots,Z)\\
+t^{N+1}\Delta_R^{N+1}f(Y,\ldots,Y,Y+tZ)(Z,\ldots,Z).
 \end{multline}
 Since $\Delta_R^{N+1}f(Y,\ldots,Y,Y+tZ)(Z,\ldots,Z)$ is bounded for $|t|<\epsilon
 r$,
 $f(Y+tZ)$ is $N$ times differentiable as a function of $t$ at
 $t=0$, and
 the equality in part (2) holds.
Clearly, part (1) follows as the
 special case of (2) where $N=1$.

 (3). The fact that $\Upsilon(Y)$ is the maximal complete circular
 set about $Y$ contained in $\Omega_n$ is immediate from the
 definition of $\Upsilon(Y)$.

 The fact that $\Upsilon(Y)$ is finitely open is obtained as
 follows. Let $\vecspace{U}$ be a finite-dimensional subspace of
 $\mat{\vecspace{V}}{n}$. Then
 $$\vecspace{U}\cap\Upsilon(Y)=
 \{X\in\vecspace{U}\cap\Omega_n\colon Y+t(X-Y)\in\Omega_n\ \mathrm{for\ all\ }
  t\in\mathbb{C}\ \mathrm{with}\ |t|\le 1\}.$$
 Suppose that we are given a norm $\|\cdot\|$ on the
 space $\spa\{Y\}+\vecspace{U}$. (Since this space is finite-dimensional,
 all norms on it are
 equivalent.) Let $X\in\vecspace{U}\cap\Upsilon(Y)$. Then for
 every $t\in\overline{\mathbb{D}}$ there is an open ball
 $B_t$  centered at $Y+t(X-Y)$ in the space
 $\spa\{Y\}+\vecspace{U}$ which is contained in the open set
 $(\spa\{Y\}+\vecspace{U})\cap\Omega_n$. Since the set
 $S:=\{Y+t(X-Y)\colon |t|\le 1\}$ is compact in
 $\spa\{Y\}+\vecspace{U}$,
 there exists a finite number of balls $B_{t_k}$,  $k=1,\ldots,m$,
 which covers
 $S$. Let $R_k$ be the radius of $B_{t_k}$. Since for every
 $t\in\overline{\mathbb{D}}$ there is a $k\colon 1\le k\le m$ such
 that $Y+t(X-Y)\in B_{t_k}$, we have that $$\phi(t):=\max_{1\le k\le
 m}\{R_k-|t-t_k|\|X-Y\|\}>0.$$
Clearly, the function $\phi$ is continuous on
$\overline{\mathbb{D}}$, hence
$$R:=\min_{t\in\overline{\mathbb{D}}}\phi(t)>0.$$
Then
 for every $t\in\overline{\mathbb{D}}$ there is
  $k_0\colon 1\le k_0\le m$ such that
  $$R_{k_0}-|t-t_{k_0}|\|X-Y\|=\phi(t).$$ Therefore, we have
 $$Y+t(X+Z-Y)=Y+t_{k_0}(X-Y)+(t-t_{k_0})(X-Y)+tZ\in(\spa\{Y\}+\vecspace{U})\cap\Omega_n$$
for every $Z\in\spa\{Y\}+\vecspace{U}$ with $\|Z\|<R$, since
$$\|(t-t_{k_0})(X-Y)+tZ\|\le |t-t_{k_0}|\|X-Y\|+|t|\|Z\|<R_{k_0}$$
and $Y+t(X+Z-Y)\in B_{t_{k_0}}$. In particular,
$Y+t(X+Z-Y)\in(\spa\{Y\}+\vecspace{U})\cap\Omega_n$ is
 true for every $t\in\overline{\mathbb{D}}$ and $Z\in\vecspace{U}$ with $\|Z\|<R$. (Clearly, the induced
 norm on $\vecspace{U}$ is equivalent to any other norm
 on $\vecspace{U}$.)
 Therefore, $X+Z\in\vecspace{U}\cap\Upsilon(Y)$. We conclude that the set
 $\vecspace{U}\cap\Upsilon(Y)$ is open in $\vecspace{U}$ and thus
 the set $\Upsilon(Y)$ is finitely open.

 It follows from parts (1) and (2) that for every
 $X\in\Upsilon(Y)$ the function $f(Y+t(X-Y))$ is analytic in $t$
 on the closed disk $\{t\in\mathbb{C}\colon |t|\le 1\}$ (see the
 remark on the analyticity on slices preceding this theorem), and
 $$f(Y+t(X-Y))=\sum_{\ell=0}^\infty t^\ell\Delta_R^\ell
f(Y,\ldots,Y)(X-Y,\ldots,X-Y),\quad |t|\le 1,$$  where the series
converges absolutely. In particular, this is true for $t=1$, which
yields the last statement in part (3).

(4).  Let $\vecspace{U}$ be a finite-dimensional subspace of
$\mat{\vecspace{V}}{n}$. Then, clearly,
$(\spa\{Y\}+\vecspace{U})\cap\Upsilon(Y)$ is a complete circular
set about $Y$, which is open in $\spa\{Y\}+\vecspace{U}$. By part
(1), $f$ is G-differentiable in $\Omega$, and in particular in
$(\spa\{Y\}+\vecspace{U})\cap\Upsilon(Y)$. By the remark preceding
this theorem, $f$ is analytic in
$(\spa\{Y\}+\vecspace{U})\cap\Upsilon(Y)$ as a function of several
complex variables. More precisely, if we pick up some basis
$\{E_j\}_{j=1,\ldots,d}$ in $\spa\{Y\}+\vecspace{U}$ and write
$X-Y=\sum_{j=1}^dz_jE_j$, then $g(z):=f(X)$ is an analytic
function in several complex variables, $z=(z_1, \ldots, z_d)$, in
a complete circular domain with center $z=0$. Since $\Delta_R^\ell
f(Y,\ldots,Y)$ is a $\ell$-linear form, the series in
\eqref{eq:tt-g-queen} in $z$-coordinates coincides with the series
of homogeneous polynomials in the Taylor expansion of $g(z)$. By
\cite[Theorem I.3.3]{Sh}, this series converges uniformly on any
compact subset $K_Y$ of $(\spa\{Y\}+\vecspace{U})\cap\Upsilon(Y)$.
In particular, it converges uniformly on the compact set
$K:=K_Y\cap\vecspace{U}$ (unless the latter set is empty). On the
other hand, given any compact subset $K$ of
$\vecspace{U}\cap\Upsilon(Y)$, there exists $\delta>0$ such that
the set $$K_Y:=\{X+sY\colon X\in K,\ s\in\mathbb{C},
|s|\le\delta\}$$ is contained in
$(\spa\{Y\}+\vecspace{U})\cap\Upsilon(Y)$. Clearly, $K_Y$ is
compact and $K=K_Y\cap\vecspace{U}$. Therefore we can conclude
that the series in \eqref{eq:tt-g-queen} converges uniformly on
compact subsets of $\vecspace{U}\cap\Upsilon(Y)$.

The second statement in part (4) follows from \cite[Theorem
26.3.8]{HiPh}.
\end{proof}

\begin{rem}\label{rem:polylin}
The fact that the \emph{G-differential} \index{G\^{a}teaux (G-)
differential} $\delta
f(X)\colon\mat{\vecspace{V}}{n}\to\mat{\vecspace{W}}{n}$ is a
linear operator follows also from Theorem \ref{thm:g-queen}(2) for
$N=1$, since $\delta f(X)\!=\!\Delta_Rf(X,X)$. More generally, one
defines the \emph{$N$-th G-differential} \index{higher order
G\^{a}teaux (G-) differential} $\delta^N
f(X)\colon\mat{\vecspace{V}}{n}\to\mat{\vecspace{W}}{n}$ by
\index{$\delta^N f(X)$}
$$\delta^Nf(X)(Z)=\frac{d^N}{dt^N}f(X+tZ)\Big|_{t=0}.$$
It is known \cite[Theorem 26.3.5]{HiPh} that $\delta^Nf(X)(Z)$ is
a homogeneous polynomial of degree $N$ in $Z$, that is,
$\delta^Nf(X)(\alpha Z+\beta W)$ is a homogeneous polynomial of
degree $N$ in two complex variables $\alpha$ and $\beta$ for any
$Z$ and $W$. We observe that this also follows from Theorem
\ref{thm:g-queen}(2), since
\begin{equation}\label{eq:N-deltas}
\frac{1}{N!}\delta^N f(X)=\Delta_R^Nf(\underset{N+1\
    \rm{times}}{\underbrace{X,\ldots,X}}).
\end{equation}
\end{rem}

Let $\vecspace{V}$ be a Banach space equipped with an admissible
system of matrix norms over $\vecspace{V}$.  We will say that a
set $\Omega\subseteq\ncspace{\vecspace{V}}$ is \emph{open}
\index{open set in a nc space} if for every $n\in\mathbb{N}$ and
$Y\in\Omega_n$ there exists $\delta_n>0$ such that the open ball
$$B(Y,\delta_n):=\{X\in\mat{\vecspace{V}}{n}\colon
\|X-Y\|_n<\delta_n\}$$ \index{$B(Y,\delta)$}is contained in
$\Omega_n$. Clearly, an open set is finitely open.

Let $\vecspace{V}$ and $\vecspace{W}$ be Banach spaces equipped
with admissible systems of matrix norms over $\vecspace{V}$ and
over $\vecspace{W}$, and let
$\Omega\subseteq\ncspace{\vecspace{V}}$ be an open nc set. A nc
function $f\colon\Omega\to\ncspace{\vecspace{W}}$ is called
\emph{locally bounded} \index{locally bounded nc function} if for
every $n\in\mathbb{N}$ the function $f|_{\Omega_n}$ is locally
bounded, i.e., for every $Y\in\Omega_n$ there exists $\delta_n>0$
such that $f$ is bounded on $B(Y,\delta_n)$. Clearly, a locally
bounded nc function is locally bounded on slices.

Let $\Omega\subseteq\ncspace{\vecspace{V}}$ be an open nc set. A
nc function $f\colon\Omega\to\ncspace{\vecspace{W}}$ is called
\emph{Fr\'{e}chet (F-) differentiable} \index{Fr\'{e}chet (F-)
differentiable nc function} if for every $n\in\mathbb{N}$ the
function $f|_{\Omega_n}$ is F-differentiable, i.e.,
$f|_{\Omega_n}$ is G-differentiable and for any $X\in\Omega_n$ the
linear operator $\delta f(X)\colon
\mat{\vecspace{V}}{n}\to\mat{\vecspace{W}}{n}$ is bounded. It was
shown by Zorn in \cite{Z} that in this case
$$\lim_{\|Z\|_n\to 0}\frac{\|f(X+Z)-f(X)-\delta
f(X)(Z)\|_n}{\|Z\|_n}=0$$ for all $X\in\Omega_n$. A nc function
$f\colon\Omega\to\ncspace{\vecspace{W}}$ is called \emph{analytic}
\index{analytic nc function} if for every $n\in\mathbb{N}$ the
function $f|_{\Omega_n}$ is analytic, i.e., $f|_{\Omega_n}$ is
locally bounded and G-differentiable. By \cite[Theorem
3.17.1]{HiPh}, in this case $f|_{\Omega_n}$ is also continuous and
F-differentiable. It follows that an analytic nc function
$f\colon\Omega\to\ncspace{\vecspace{W}}$ is continuous (i.e., all
its restrictions $f|_{\Omega_n}$ are continuous) and
F-differentiable.

\begin{thm}\label{thm:f-queen}
Let a nc function $f\colon\Omega\to\ncspace{\vecspace{W}}$ be
locally bounded. Then, in addition to the conclusions of Theorem
\ref{thm:g-queen}, $f$ is analytic. Let $n\in\mathbb{N}$,
$Y\in\Omega_n$, and let $\Upsilon$ be an open complete circular
set about $Y$ such that $\Upsilon\subseteq\Omega_n$ and $f$ is
bounded on $\Upsilon$. For every $\epsilon>0$, the TT series in
\eqref{eq:tt-g-queen} converges uniformly on the set
$$\Upsilon_\epsilon:=\{X\in\Upsilon\colon
Y+(1+\epsilon)(X-Y)\in\Upsilon\}.$$
\index{$\Upsilon_\epsilon$}Moreover,
\begin{equation}\label{eq:tt-f-queen-normal}
\sum_{\ell=0}^\infty\sup_{X\in \Upsilon_\epsilon}\|\Delta_R^\ell
f(\underset{\ell+1\
\rm{times}}{\underbrace{Y,\ldots,Y}})(\underset{\ell\
\rm{times}}{\underbrace{X-Y,\ldots,X-Y}})\|_n<\infty.
\end{equation}
\end{thm}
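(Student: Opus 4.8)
The strategy is to reduce everything to the single-variable analytic function $f_{Y,Z}(t) = f(Y + t(X-Y))$ and apply the classical Cauchy estimates, exploiting that $f$ is bounded on a slightly enlarged complete circular set. First I would note that analyticity of $f$ follows from Theorem \ref{thm:g-queen}(1) together with the definition: $f$ is locally bounded by hypothesis, and it is G-differentiable by Theorem \ref{thm:g-queen}(1), hence $f|_{\Omega_n}$ is analytic in the Banach-space sense by \cite[Theorem 3.17.1]{HiPh}. So the only new content is the uniform convergence statement \eqref{eq:tt-f-queen-normal} on $\Upsilon_\epsilon$.

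For the convergence, fix $n$, $Y \in \Omega_n$, the open complete circular set $\Upsilon$ about $Y$ with $\Upsilon \subseteq \Omega_n$ on which $f$ is bounded, say $\|f(X)\|_n \le M$ for all $X \in \Upsilon$, and fix $\epsilon > 0$. For $X \in \Upsilon_\epsilon$, the function $t \mapsto f(Y + t(X-Y))$ is defined and analytic on the disc $|t| < 1+\epsilon$ (because $Y + t(X-Y) = Y + \frac{t}{1+\epsilon}(1+\epsilon)(X-Y)$ lies in $\Upsilon$ by the complete-circular property applied to the point $Y+(1+\epsilon)(X-Y) \in \Upsilon$, for $|t| \le 1+\epsilon$), and it is bounded there by $M$ (for $|t|\le 1+\epsilon$; for the boundary one uses that $\Upsilon$ is open so one can shrink slightly, or argue with $1+\epsilon/2$ from the outset). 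By Theorem \ref{thm:g-queen}(2) the $\ell$-th Taylor coefficient of this scalar function at $t=0$ is exactly $\Delta_R^\ell f(\underset{\ell+1\ \rm{times}}{\underbrace{Y,\ldots,Y}})(\underset{\ell\ \rm{times}}{\underbrace{X-Y,\ldots,X-Y}})$, so by the Cauchy estimate (valid for $\mat{\vecspace{W}}{n}$-valued analytic functions of one variable, as $\mat{\vecspace{W}}{n}$ is a Banach space under the admissible matrix norm — see the completeness remark after Proposition \ref{prop:inj-proj})
\[
\big\|\Delta_R^\ell f(\underset{\ell+1}{\underbrace{Y,\ldots,Y}})(\underset{\ell}{\underbrace{X-Y,\ldots,X-Y}})\big\|_n \le \frac{M}{(1+\epsilon/2)^\ell}
\]
uniformly in $X \in \Upsilon_\epsilon$. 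Summing the geometric series in $\ell$ gives \eqref{eq:tt-f-queen-normal}, and normal convergence gives uniform convergence of the TT series \eqref{eq:tt-g-queen} on $\Upsilon_\epsilon$.

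\textbf{Main obstacle.} The delicate point is the geometric bookkeeping: one must check carefully that for $X \in \Upsilon_\epsilon$ and $|t| \le 1+\epsilon$ (or $1+\epsilon/2$) the point $Y + t(X-Y)$ genuinely lies in $\Upsilon$ so that the scalar function is analytic and bounded on a disc strictly larger than the closed unit disc — this is precisely what the definition of $\Upsilon_\epsilon$ is engineered to give, via the complete-circular property of $\Upsilon$ centered at $Y$, but it must be spelled out (including handling the boundary $|t| = 1+\epsilon$ by using openness of $\Upsilon$ to pass to a slightly smaller radius). A second, more routine point is to invoke the correct form of the one-variable Cauchy inequality for Banach-space-valued holomorphic functions; this is standard (e.g.\ \cite[Chapter III]{HiPh}) and requires only that $\mat{\vecspace{W}}{n}$ be complete, which is the content of the remark following Proposition \ref{prop:inj-proj}. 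Everything else — identifying the Taylor coefficients and summing the geometric series — is immediate from Theorem \ref{thm:g-queen}.
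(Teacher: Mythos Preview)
Your proposal is correct and follows essentially the same approach as the paper: both reduce to the one-variable Cauchy estimate via the identification of Taylor coefficients in Theorem \ref{thm:g-queen}(2). The paper's only cosmetic difference is that it applies the Cauchy inequality on the closed unit disk with $Z = (1+\epsilon)(X-Y)$ (so that $Y+Z\in\Upsilon$, which is open, and no boundary issue arises) and then invokes the degree-$N$ homogeneity of $\Delta_R^N f(Y,\ldots,Y)(Z,\ldots,Z)$ to extract the factor $(1+\epsilon)^{-N}$, thereby sidestepping your pass to $1+\epsilon/2$.
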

\begin{proof}
Since $f$ is locally bounded, $f$ is also locally bounded on
slices. By Theorem \ref{thm:g-queen}, $f$ is G-differentiable.
Therefore $f$ is analytic.

Assume that $\|f(X)\|_n\le M$ for some $M>0$ and all $X\in
\Upsilon$. Then for any $Z\in \mat{\vecspace{V}}{n}$ with
$Y+Z\in\Upsilon$ the function $f(Y+tZ)$ is analytic in $t$ on the
closed disk $\{t\in\mathbb{C}\colon |t|\le 1\}$, and the Cauchy
inequalities
$$\frac{1}{N!}\left\|\frac{d^N}{dt^N}f(Y+tZ)\Big|_{t=0}\right\|_n\le
M$$ hold for every $N\in\mathbb{N}$. By Theorem
\ref{thm:g-queen}(2) and the homogeneity of degree $N$ in $Z$ of
$\Delta_R^Nf(Y,\ldots,Y)(Z,\ldots,Z)$, we obtain the following
estimate for all $X\in \Upsilon_\epsilon$:
$$\|\Delta_R^Nf(Y,\ldots,Y)(X-Y,\ldots,X-Y)\|_n\le
\frac{M}{(1+\epsilon)^N}.$$ This implies the convergence of the
series in \eqref{eq:tt-f-queen-normal}, and hence the uniform
convergence of the TT series in \eqref{eq:tt-g-queen}.
\end{proof}
We notice an important special case of Theorem \ref{thm:f-queen}.
\begin{cor}\label{cor:balls}
Let a nc function $f\colon\Omega\to\ncspace{\vecspace{W}}$ be
locally bounded. For every $n\in\mathbb{N}$ and $Y\in\Omega_n$,
let $\delta_n=\sup\{r>0\colon f\ {\rm is\ bounded\ on\ }
{B}(Y,r)\}.$
 Then the TT
series in \eqref{eq:tt-g-queen} converges absolutely and uniformly
on every open ball ${B}(Y,r)\subsetneq B(Y,\delta_n)$. Moreover,
\begin{equation}\label{eq:tt-f-queen-normal'}
\sum_{\ell=0}^\infty\sup_{X\in B(Y,r)}\|\Delta_R^\ell
f(\underset{\ell+1\
\rm{times}}{\underbrace{Y,\ldots,Y}})(\underset{\ell\
\rm{times}}{\underbrace{X-Y,\ldots,X-Y}})\|_n<\infty.
\end{equation}
\end{cor}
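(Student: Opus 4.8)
\textbf{Proof proposal for Corollary \ref{cor:balls}.} The plan is to derive the corollary as a direct specialization of Theorem \ref{thm:f-queen} by observing that open balls are open complete circular sets and by choosing a slightly larger ball as the auxiliary set $\Upsilon$ in that theorem. First I would fix $n\in\mathbb{N}$ and $Y\in\Omega_n$, and note that the set $B(Y,\delta_n)$ is a legitimate candidate supremum: if $r<\delta_n$ then by definition of $\delta_n$ there exists $r'$ with $r<r'<\delta_n$ such that $f$ is bounded on $B(Y,r')$. The key structural observation is that for any $\rho>0$ the ball $B(Y,\rho)=\{X\in\mat{\vecspace{V}}{n}\colon \|X-Y\|_n<\rho\}$ is an open complete circular set about $Y$: indeed, if $\|X-Y\|_n<\rho$ and $|t|\le 1$, then by homogeneity of the norm $\|(Y+t(X-Y))-Y\|_n=|t|\,\|X-Y\|_n\le\|X-Y\|_n<\rho$, so $Y+t(X-Y)\in B(Y,\rho)$. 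Thus $\Upsilon:=B(Y,r')$ satisfies the hypotheses of Theorem \ref{thm:f-queen}: it is an open complete circular set about $Y$, contained in $\Omega_n$ (shrink $r'$ further if needed so that $B(Y,r')\subseteq\Omega_n$, which is possible since $\Omega$ is open and $\delta_n$ is in particular no larger than the radius of an ambient ball inside $\Omega_n$), and $f$ is bounded on it.

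Next I would produce the quantitative statement. Given $r<\delta_n$, pick $r'$ with $r<r'<\delta_n$ and $B(Y,r')\subseteq\Omega_n$ with $f$ bounded on $B(Y,r')$, and set $\epsilon:=\tfrac{r'}{r}-1>0$. Then for $X\in B(Y,r)$ we have $\|(Y+(1+\epsilon)(X-Y))-Y\|_n=(1+\epsilon)\|X-Y\|_n<(1+\epsilon)r=r'$, so $Y+(1+\epsilon)(X-Y)\in B(Y,r')=\Upsilon$; this shows $B(Y,r)\subseteq\Upsilon_\epsilon$ in the notation of Theorem \ref{thm:f-queen}. Consequently
\begin{equation*}
\sum_{\ell=0}^\infty\sup_{X\in B(Y,r)}\|\Delta_R^\ell f(\underset{\ell+1\ \rm{times}}{\underbrace{Y,\ldots,Y}})(\underset{\ell\ \rm{times}}{\underbrace{X-Y,\ldots,X-Y}})\|_n\le\sum_{\ell=0}^\infty\sup_{X\in \Upsilon_\epsilon}\|\Delta_R^\ell f(\underset{\ell+1\ \rm{times}}{\underbrace{Y,\ldots,Y}})(\underset{\ell\ \rm{times}}{\underbrace{X-Y,\ldots,X-Y}})\|_n<\infty
\end{equation*}
by the estimate \eqref{eq:tt-f-queen-normal} of Theorem \ref{thm:f-queen}. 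This is precisely \eqref{eq:tt-f-queen-normal'}, and it immediately yields both absolute convergence (pointwise the absolute series is dominated by this finite sum) and uniform convergence on $B(Y,r)$ of the TT series in \eqref{eq:tt-g-queen}, since the tails are bounded uniformly in $X\in B(Y,r)$ by the tails of a convergent numerical series.

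I do not anticipate a serious obstacle here, since the corollary is genuinely a corollary; the only point requiring mild care is the interplay of the three radii ($r<r'<\delta_n$) and confirming that $f$ being bounded on $B(Y,r')$ for some $r'>r$ is guaranteed by the definition of $\delta_n$ as a supremum — one must rule out the degenerate reading where $f$ is bounded only on $B(Y,\delta_n)$ itself but on no strictly smaller-but-still-large ball, which cannot happen because boundedness on $B(Y,\rho)$ implies boundedness on $B(Y,\rho')$ for every $\rho'<\rho$, so the set of admissible $r$ is an interval $[0,\delta_n)$ or $[0,\delta_n]$ and in either case contains some $r'\in(r,\delta_n]$ with $f$ bounded on $B(Y,r')$. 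One should also record, as in Theorem \ref{thm:f-queen}, that $f$ is analytic, which is inherited from local boundedness via Theorem \ref{thm:g-queen} and the remark preceding Theorem \ref{thm:f-queen}; but this is already contained in the cited theorems and needs no new argument.
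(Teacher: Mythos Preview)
Your proposal is correct and follows exactly the approach intended by the paper, which presents Corollary~\ref{cor:balls} simply as ``an important special case of Theorem~\ref{thm:f-queen}'' without a separate proof. Your choice of $\Upsilon=B(Y,r')$ with $r<r'<\delta_n$ and $\epsilon=\tfrac{r'}{r}-1$, yielding $B(Y,r)\subseteq\Upsilon_\epsilon$, is precisely the specialization the paper has in mind.
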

\begin{cor}\label{cor:nc-analytic}
Let $\Omega\subseteq\ncspace{\vecspace{V}}$ be an open nc set.
Then a nc function $f\!\colon\!\Omega\to\ncspace{\vecspace{W}}$ is
locally bounded if and only if $f$ is continuous if and only if
$f$ is F-differenti\-able if and only if $f$ is analytic.
\end{cor}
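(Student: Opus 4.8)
The plan is to establish the chain of equivalences by proving a cycle of implications:
analytic $\Rightarrow$ continuous $\Rightarrow$ locally bounded $\Rightarrow$ analytic, together with the auxiliary equivalence "locally bounded $\Leftrightarrow$ F-differentiable," which will fall out for free once the main cycle is in place. The key point is that everything happens fiberwise: for each fixed $n$, the restriction $f|_{\Omega_n}$ is a function between open subsets of the Banach spaces $\mat{\vecspace{V}}{n}$ and $\mat{\vecspace{W}}{n}$ (recall that by Proposition \ref{prop:inj-proj} and the discussion following it, an admissible system of matrix norms makes each $\mat{\vecspace{W}}{n}$ complete, hence a Banach space), so the classical theory of analytic functions between Banach spaces from \cite{HiPh} applies verbatim in each dimension.

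First I would observe that the implications analytic $\Rightarrow$ continuous $\Rightarrow$ locally bounded are immediate and do not use the nc structure at all: by definition an analytic nc function has each $f|_{\Omega_n}$ analytic, hence continuous and F-differentiable by \cite[Theorem 3.17.1]{HiPh}, as already noted in the paragraph preceding Theorem \ref{thm:f-queen}; and a continuous function on an open set is locally bounded (given $Y \in \Omega_n$, continuity at $Y$ produces a neighborhood on which $f$ is close to $f(Y)$, in particular bounded, and this neighborhood contains a ball $B(Y,\delta_n)$ since $\Omega_n$ is open). The substantive implication is locally bounded $\Rightarrow$ analytic, and this is exactly the content of Theorem \ref{thm:f-queen}: that theorem asserts that a locally bounded nc function is analytic. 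So the core of the argument is just to cite Theorem \ref{thm:f-queen} for that arrow, closing the cycle locally bounded $\Rightarrow$ analytic $\Rightarrow$ continuous $\Rightarrow$ locally bounded.

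It then remains to fold F-differentiability into the equivalence. Here I would argue: if $f$ is analytic then, by definition, $f|_{\Omega_n}$ is analytic, which by \cite[Theorem 3.17.1]{HiPh} implies $f|_{\Omega_n}$ is F-differentiable, so $f$ is F-differentiable as a nc function. Conversely, if $f$ is F-differentiable then each $f|_{\Omega_n}$ is G-differentiable with bounded differential; to conclude analyticity one needs local boundedness of $f|_{\Omega_n}$, which is part of the very definition of F-differentiability requiring $f|_{\Omega_n}$ to be defined and differentiable on the open set $\Omega_n$ — but more carefully, an F-differentiable function is in particular continuous (the estimate $\|f(X+Z)-f(X)-\delta f(X)(Z)\|_n/\|Z\|_n \to 0$ from Zorn's theorem \cite{Z} forces $f(X+Z) \to f(X)$), hence locally bounded, hence analytic by Theorem \ref{thm:f-queen}. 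Thus F-differentiable $\Rightarrow$ continuous $\Rightarrow$ locally bounded $\Rightarrow$ analytic $\Rightarrow$ F-differentiable, and all four properties coincide.

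I do not anticipate a genuine obstacle here, since the corollary is essentially a repackaging of Theorem \ref{thm:f-queen} together with standard Banach-space facts applied one dimension at a time; the only point requiring a little care is making sure the "fiberwise" reductions are legitimate, i.e. that openness of $\Omega$ as a nc set means exactly openness of each $\Omega_n$ in the norm topology of $\mat{\vecspace{V}}{n}$ (this is the definition given just before Theorem \ref{thm:f-queen}), and that the target spaces $\mat{\vecspace{W}}{n}$ are honest Banach spaces so that the cited results of \cite{HiPh} and \cite{Z} are applicable. Once those bookkeeping points are noted, the proof is a short cycle of implications.
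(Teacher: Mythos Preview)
Your proposal is correct and follows essentially the same approach as the paper, which states the corollary without proof as an immediate consequence of Theorem~\ref{thm:f-queen} together with the standard Banach-space facts from \cite{HiPh} and \cite{Z} already recorded in the paragraph preceding that theorem. Your argument simply unpacks this cycle of implications fiberwise, exactly as intended.
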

\begin{rem}\label{rem:polybdd}
The fact that $\delta
f(X)\colon\mat{\vecspace{V}}{n}\to\mat{\vecspace{W}}{n}$ is a
bounded linear operator for any $X\in\Omega_n$ (and thus $f$ is
F-differentiable) follows also from the equality $\delta
f(X)=\Delta_Rf(X,X)$. Indeed, $\Delta_Rf(X,X)(Z)$ is the $(1,2)$
block entry of the matrix
$f\left(\begin{bmatrix} X & Z\\
0 & X\end{bmatrix}\right)$. Since $f$ is locally bounded, there
exists $\delta>0$ such that $f\left(\begin{bmatrix} X & Z\\
0 & X\end{bmatrix}\right)$ is bounded when $\|Z\|_n<\delta$. But
then so is $\Delta_Rf(X,X)(Z)$, since the system of matrix norms
over $\vecspace{W}$ is admissible. More generally, it is known
\cite[Theorems 26.3.5 and 26.3.6]{HiPh}  that the $N$-th
G-differential
$\delta^Nf(X)\colon\mat{\vecspace{V}}{n}\to\mat{\vecspace{W}}{n}$
is a bounded homogeneous polynomial of degree $N$, i.e.,
$$\|\delta^Nf(X)(Z)\|_n\le C\|Z\|_n^N$$
for some $C>0$. This also follows from \eqref{eq:N-deltas}, since
$\Delta_R^Nf(X,\ldots,X)(Z,\ldots,Z)$ is the $(1,N+1)$ block entry
of the $(N+1)\times (N+1)$ matrix
$$f\left(\begin{bmatrix}
X & Z & 0 & \ldots & 0\\
0 & \ddots & \ddots & \ddots & \vdots\\
\vdots & \ddots  & \ddots & \ddots & 0 \\
\vdots &  & \ddots & \ddots & Z\\
0 & \ldots & \ldots & 0 & X
 \end{bmatrix}\right),$$
see Theorem \ref{thm:bidiag}. It follows from the proof of Theorem
\ref{thm:f-queen} that, moreover, there exist positive constants
$K$ and $\rho$ such that
\begin{equation}\label{eq:N-Delta-bdd}
\|\Delta_R^Nf(X,\ldots,X)(Z,\ldots,Z)\|_n\le K\rho^N\|Z\|_n^N.
\end{equation}
\end{rem}

The convergence results, Theorems \ref{thm:g-queen} and
\ref{thm:f-queen}, allow us to write infinite nc power expansions
centered at a matrix $Y\in\Omega_s$ valid in matrix dimensions
which are multiples of $s$, as in Theorem \ref{thm:tt-power-gen}.
\begin{thm}\label{thm:tt-series-gen}
Suppose that $\vecspace{V}$ is a vector space over $\mathbb{C}$,
 $\Omega\subseteq\ncspace{\vecspace{V}}$ is a finitely open nc set, and
$\vecspace{W}$ is a Banach space with an admissible system of
matrix norms over $\vecspace{W}$. Let
$f\colon\Omega\to\ncspace{\vecspace{W}}$ be a nc function which is
locally bounded on slices,
 and let $Y\in\Omega_s$. Then for an arbitrary
 $m\in\mathbb{N}$,
\begin{equation}\label{eq:tt-series-gen}
f(X)=\sum_{\ell=0}^\infty\Big(X-
\bigoplus_{\alpha=1}^mY\Big)^{\odot_s\ell}\,\Delta_R^\ell
f(\underset{\ell+1\ \rm{times}}{\underbrace{Y,\ldots,Y}}),
\end{equation}
where the series converges absolutely and uniformly on compact
subsets of $\vecspace{U}\cap\Upsilon(\bigoplus_{\alpha=1}^mY)$ for
any finite-dimensional subspace $\vecspace{U}$ of
$\mat{\vecspace{V}}{sm}$. Here $\Upsilon(\bigoplus_{\alpha=1}^mY)$
is the maximal complete circular set about
$\bigoplus_{\alpha=1}^mY$ contained in $\Omega_{sm}$, as in
Theorem \ref{thm:tt-power-gen}(3). (Notice that
$\coprod_{m=1}^\infty\Upsilon\Big(\bigoplus_{\alpha=1}^mY\Big)$ is
a nc set.)

If, furthermore, $\vecspace{V}$ is a Banach space with an
admissible system of matrix norms over $\vecspace{V}$, $\Omega$ is
an open nc set, and $f$ is locally bounded, then the TT series in
\eqref{eq:tt-series-gen} converges uniformly on
$\Upsilon_\epsilon$ for every $\epsilon>0$ and every $\Upsilon$ an
open complete circular set about $\bigoplus_{\alpha=1}^mY$
contained in $\Omega_{sm}$ and such that $f$ is bounded on
$\Upsilon$, where $\Upsilon_\epsilon$ is defined as in Theorem
\ref{thm:f-queen}. As a special case, the TT series converges
uniformly on every open ball
$B(\bigoplus_{\alpha=1}^mY,r)\subsetneq
B(\bigoplus_{\alpha=1}^mY,\delta_{sm})$, where $
B(\bigoplus_{\alpha=1}^mY,\delta_{sm})$ is defined as in Corollary
\ref{cor:balls}.

In particular, if  $\mu\in\Omega_1$ then \eqref{eq:tt-series-gen}
becomes
\begin{equation}\label{eq:tt-series-gen'}
f(X)=\sum_{\ell=0}^\infty (X- I_m\mu)^{\odot\ell}\,\Delta_R^\ell
f(\underset{\ell+1\ \rm{times}}{\underbrace{\mu,\ldots,\mu}}).
\end{equation}
\end{thm}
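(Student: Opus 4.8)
The plan is to reduce Theorem~\ref{thm:tt-series-gen} to the scalar-valued convergence results of Theorems~\ref{thm:g-queen} and~\ref{thm:f-queen} via the ``block-diagonal center'' device already used to prove the finite Taylor--Taylor formula in Theorem~\ref{thm:tt-power-gen}. First I would observe that everything reduces to Theorem~\ref{thm:TT}'s framework by writing $n=sm$ and regarding $X\in\Omega_{sm}$, together with the center $\bigoplus_{\alpha=1}^m Y\in\Omega_{sm}$, as a single pair of matrices of size $sm$. With this substitution, Theorem~\ref{thm:g-queen} applied at the center $\bigoplus_{\alpha=1}^m Y\in\Omega_{sm}$ gives at once the absolutely convergent expansion
\begin{equation*}
f(X)=\sum_{\ell=0}^\infty \Delta_R^\ell f(\underset{\ell+1}{\underbrace{\textstyle\bigoplus_\alpha Y,\ldots,\bigoplus_\alpha Y}})(\underset{\ell}{\underbrace{X-\textstyle\bigoplus_\alpha Y,\ldots,X-\bigoplus_\alpha Y}}),
\end{equation*}
valid for $X\in\Upsilon(\bigoplus_{\alpha=1}^m Y)$, with uniform convergence on compact subsets of $\vecspace{U}\cap\Upsilon(\bigoplus_{\alpha=1}^m Y)$ for every finite-dimensional subspace $\vecspace{U}$ of $\mat{\vecspace{V}}{sm}$.

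Next I would identify each term of this series with the corresponding term of \eqref{eq:tt-series-gen}. The key is Proposition~\ref{prop:diag_tensa_k}(1): since $\Delta_R^\ell f\in\tclass{\ell}$ and since the first $\ell+1$ arguments are all the block-diagonal matrix $\bigoplus_{\alpha=1}^m Y$ with diagonal block $Y\in\Omega_s$, formula \eqref{eq:diag_tensa_k} yields
\begin{equation*}
\Delta_R^\ell f(\underset{\ell+1}{\underbrace{\textstyle\bigoplus_\alpha Y,\ldots}})(Z^1,\ldots,Z^\ell)=Z^1\odot_s\cdots\odot_s Z^\ell\,\Delta_R^\ell f(\underset{\ell+1}{\underbrace{Y,\ldots,Y}})
\end{equation*}
for all $Z^1,\ldots,Z^\ell\in\rmat{\module{M}}{sm}{sm}$ (here $\module{M}=\vecspace{V}$). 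Taking $Z^1=\cdots=Z^\ell=X-\bigoplus_{\alpha=1}^m Y$ and recalling the definition $X^{\odot_s\ell}=X\odot_s\cdots\odot_s X$ from just before Theorem~\ref{thm:tt-power-gen}, each term becomes $\big(X-\bigoplus_{\alpha=1}^m Y\big)^{\odot_s\ell}\Delta_R^\ell f(Y,\ldots,Y)$, which is exactly the $\ell$-th term of \eqref{eq:tt-series-gen}. The remark that $\coprod_{m=1}^\infty\Upsilon(\bigoplus_{\alpha=1}^m Y)$ is a nc set follows because $\Upsilon$ is characterized by a complete-circularity condition that is visibly preserved under direct sums, given that $\Omega$ is a nc set.

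For the second and third assertions I would invoke Theorem~\ref{thm:f-queen} (respectively Corollary~\ref{cor:balls}) in place of Theorem~\ref{thm:g-queen}, again with the center taken to be $\bigoplus_{\alpha=1}^m Y\in\Omega_{sm}$: these give uniform convergence on $\Upsilon_\epsilon$ (resp.\ on open balls $B(\bigoplus_{\alpha=1}^m Y,r)\subsetneq B(\bigoplus_{\alpha=1}^m Y,\delta_{sm})$), and the term-by-term identification via Proposition~\ref{prop:diag_tensa_k}(1) is unchanged. The special case $s=1$, $Y=\mu\in\Omega_1$, where $\odot_s=\odot_1=\odot$, is immediate. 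I do not anticipate a serious obstacle here; the only point requiring a little care is checking that the admissibility of the system of matrix norms over $\vecspace{W}$ (needed so that $\Delta_R^\ell f(\cdots)$ has bounded block entries, invoked inside Theorems~\ref{thm:g-queen} and~\ref{thm:f-queen}) and over $\vecspace{V}$ (needed so that the norm topology on $\mat{\vecspace{V}}{sm}$ behaves well) are exactly the hypotheses assumed, which they are. Thus the main ``work'' is purely bookkeeping: translating between the $\ell$-linear-mapping picture and the $\odot_s$-power picture through Proposition~\ref{prop:diag_tensa_k}, and there is no new analytic content beyond Theorems~\ref{thm:g-queen} and~\ref{thm:f-queen}.
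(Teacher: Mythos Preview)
Your proposal is correct and follows essentially the same approach as the paper: the paper states the theorem without an explicit proof, noting just before it that ``The convergence results, Theorems~\ref{thm:g-queen} and~\ref{thm:f-queen}, allow us to write infinite nc power expansions centered at a matrix $Y\in\Omega_s$ valid in matrix dimensions which are multiples of $s$, as in Theorem~\ref{thm:tt-power-gen},'' and the proof of Theorem~\ref{thm:tt-power-gen} in turn reads ``immediate from Theorem~\ref{thm:TT} and Proposition~\ref{prop:diag_tensa_k}.'' Your reduction to Theorems~\ref{thm:g-queen}/\ref{thm:f-queen} at the center $\bigoplus_{\alpha=1}^m Y$ followed by the term-by-term identification via Proposition~\ref{prop:diag_tensa_k}(1) is exactly this.
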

We have the following uniqueness theorem for convergent nc power
expansion of a nc function.
\begin{thm}\label{thm:ncps-unique}
Suppose that $\vecspace{V}$ is a vector space over $\mathbb{C}$,
 $\Omega\subseteq\ncspace{\vecspace{V}}$ is a finitely open nc set, and
$\vecspace{W}$ is a Banach space with an admissible system of
matrix norms over $\vecspace{W}$. Let
$f\colon\Omega\to\ncspace{\vecspace{W}}$ be a G-differentiable nc
function
 and  $Y\in\Omega_s$. Suppose that
\begin{equation*}
f(X)=\sum_{\ell=0}^\infty\Big(X-
\bigoplus_{\alpha=1}^mY\Big)^{\odot_s\ell}f_\ell,
\end{equation*}
for $X\in\Gamma_{sm}$, $m=1,2,\ldots$, where $\Gamma$ is a
finitely open subset of $\Omega$ which contains
$\bigoplus_{\alpha=1}^mY$ for every $m=1,2,\ldots$, and
$f_\ell\colon\mattuple{\vecspace{V}}{s}{\ell}\to\mat{\vecspace{W}}{s}$
is a $\ell$-linear mapping, $\ell=0,1,\ldots$. Then
$$f_\ell=\Delta_R^\ell f(\underset{\ell+1\
\rm{times}}{\underbrace{Y,\ldots,Y}}).$$
\end{thm}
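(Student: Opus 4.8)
\textbf{Proof proposal for Theorem \ref{thm:ncps-unique}.}
The plan is to reduce the uniqueness of the coefficients $f_\ell$ to an evaluation of $f$ on a single block bidiagonal matrix, exactly as in the proof of Theorem \ref{thm:ncps-nilp-gen-unique}, the only difference being that here the series is genuinely infinite and convergence has to be controlled. First I would fix $\ell\in\mathbb{N}$ and arbitrary $Z^1,\ldots,Z^\ell\in\mat{\vecspace{V}}{s}$, and consider the $(\ell+1)\times(\ell+1)$ block upper bidiagonal matrix
\begin{equation*}
X(t)=\begin{bmatrix}
Y & tZ^1 & 0 & \cdots & 0\\
0 & Y & tZ^2 & \ddots & \vdots\\
\vdots & \ddots & \ddots & \ddots & 0\\
\vdots & & \ddots & Y & tZ^\ell\\
0 & \cdots & \cdots & 0 & Y
\end{bmatrix}\in\mat{\vecspace{V}}{s(\ell+1)}
\end{equation*}
for $t\in\mathbb{C}$. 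Since $\Gamma$ is finitely open and contains $\bigoplus_{\alpha=1}^{\ell+1}Y$, the matrix $X(t)$ lies in $\Gamma_{s(\ell+1)}$ for $|t|$ small, so the hypothesis applies: $f(X(t))=\sum_{k=0}^\infty\bigl(X(t)-\bigoplus_{\alpha=1}^{\ell+1}Y\bigr)^{\odot_s k}f_k$, and the $(1,\ell+1)$ block entry of the $k$-th term vanishes for $k>\ell$ because $X(t)-\bigoplus_{\alpha=1}^{\ell+1}Y$ is block strictly upper triangular of bandwidth one. Hence the $(1,\ell+1)$ block entry of $f(X(t))$ equals $t^\ell f_\ell(Z^1,\ldots,Z^\ell)$ plus terms of lower order in $t$; more precisely, reading off all block entries one gets that $f(X(t))$ is, for small $t$, the $(\ell+1)\times(\ell+1)$ block matrix with $(i,j)$ block equal to $t^{j-i}f_{j-i}(Z^i,\ldots,Z^{j-1})$ for $i\le j$ and $0$ below the diagonal, exactly as in the display in the proof of Theorem \ref{thm:ncps-nilp-gen-unique}.

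On the other hand, $f$ being G-differentiable, Theorem \ref{thm:g-queen}(1),(2) applies (with $\vecspace{V}$ playing its own role and $\vecspace{W}$ the Banach target), so $f(X(t))$ is a G-differentiable, hence analytic-on-slices, function of $t$ near $0$, and its $\ell$-th Taylor coefficient at $t=0$ is $\Delta_R^\ell f(Y^{(\ell+1)}, \text{matrix})$ evaluated appropriately; concretely, applying Theorem \ref{thm:bidiag} to the bidiagonal matrix $X(1)$ (which, whenever it lies in $\Omega_{s(\ell+1)}$ — and by finite openness it does for a whole neighbourhood of directions, in particular we may rescale to bring it in), the $(1,\ell+1)$ block entry of $f$ at the bidiagonal matrix with off-diagonal blocks $Z^1,\ldots,Z^\ell$ is precisely $\Delta_R^\ell f(\underbrace{Y,\ldots,Y}_{\ell+1})(Z^1,\ldots,Z^\ell)$, with all diagonal blocks $Y$. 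Comparing this with the computation of the previous paragraph at $t=1$ (using homogeneity in $t$ to absorb the scaling, via Proposition \ref{prop:homog-k} and the rescaling identity from Theorem \ref{thm:bidiag} / Lemma \ref{lem:bidiag-homog}) gives $f_\ell(Z^1,\ldots,Z^\ell)=\Delta_R^\ell f(\underbrace{Y,\ldots,Y}_{\ell+1})(Z^1,\ldots,Z^\ell)$ for all $Z^1,\ldots,Z^\ell$, which is the claim.

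The main obstacle — and the only place where real care is needed beyond the purely algebraic argument of Theorem \ref{thm:ncps-nilp-gen-unique} — is the interplay between the finitely open (rather than right admissible in the strong sense) hypothesis on $\Gamma$ and $\Omega$ and the need to evaluate $f$ on the bidiagonal matrix $X(t)$: I must first check that $X(t)\in\Gamma_{s(\ell+1)}$ for all small $t$ (so the series expansion is valid there) \emph{and} that the relevant bidiagonal matrix lies in $\Omega$ so that $\Delta_R^\ell f$ is well-defined on it. Both follow because finitely open nc sets are right admissible (noted in the excerpt just before Theorem \ref{thm:g-queen}) and because, restricting to the finite-dimensional span of $\bigoplus Y$ and the $Z^i$-perturbations, $\Gamma$ contains a Euclidean neighbourhood of $\bigoplus Y$; one then invokes Lemma \ref{lem:bidiag-homog} (after passing to the similarity invariant envelope if necessary, as in Remark \ref{rem:alternative}) to guarantee the needed block bidiagonal matrices are in the set. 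Once this bookkeeping is in place, the convergent series contributes only finitely many nonzero blocks to the $(1,\ell+1)$ position, and the identification of coefficients is immediate; no estimate on the tail of the series is actually needed, only the fact that it converges pointwise so that term-by-term extraction of the $(1,\ell+1)$ block is legitimate.
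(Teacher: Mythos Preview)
Your proposal is correct and follows essentially the same approach as the paper: evaluate $f$ on a block bidiagonal matrix with diagonal blocks $Y$ and scaled off-diagonal blocks $rZ^1,\ldots,rZ^\ell$ (your $t$ is the paper's $r$), then read off the $(1,\ell+1)$ block two ways---from the given series (where nilpotence truncates the contribution to $r^\ell f_\ell(Z^1,\ldots,Z^\ell)$, exactly as in Theorem~\ref{thm:ncps-nilp-gen-unique}) and from Theorem~\ref{thm:bidiag} (giving $\Delta_R^\ell f(Y,\ldots,Y)(rZ^1,\ldots,rZ^\ell)$)---and compare. Your detour through the $t$-Taylor coefficients via Theorem~\ref{thm:g-queen}(2) is not needed; a direct application of Theorem~\ref{thm:bidiag} to the bidiagonal matrix (which lies in $\Gamma\subseteq\Omega$ for small $r$) already gives the identification, and this is what the paper does.
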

\begin{proof}
Since $\Gamma$ is finitely open, for every $Z^1$, \ldots,
$Z^\ell\in\mat{\vecspace{V}}{s}$ there exists a real $r>0$ such
that $$X=\begin{bmatrix}
Y & rZ^1 & 0 & \ldots & 0\\
0 & \ddots & \ddots & \ddots & \vdots\\
\vdots & \ddots &\ddots & \ddots & 0\\
\vdots &        &\ddots & \ddots & rZ^\ell\\
0      & \ldots & \ldots & 0 & Y
\end{bmatrix}\in\Gamma_{\ell+1}.$$
As in the proof of Theorem \ref{thm:ncps-nilp-gen-unique}, we see
that $f_\ell(rZ^1,\ldots, rZ^\ell)$, and thus
$$f_\ell(Z^1,\ldots,Z^\ell)=r^{-\ell}f_\ell(rZ^1,\ldots,
rZ^\ell),$$ is uniquely determined by $f(X)$. On the other hand,
if $r>0$ is sufficiently small, we have
$X\in\Upsilon(\bigoplus_{\alpha=1}^{\ell+1}Y)$ and the series
\eqref{eq:tt-series-gen} converges---see Theorem \ref{thm:g-queen}
and Theorem \ref{thm:tt-series-gen}. Then we have by
\eqref{eq:bidiag} that
$$f_\ell(Z^1,\ldots,Z^\ell)=\Delta_R^\ell
f(Y,\ldots,Y)(Z^1,\ldots,Z^\ell).$$
\end{proof}

In the case where the space $\vecspace{V}$ is finite-dimensional,
so that we may assume without loss of generality that
$\vecspace{V}=\mathbb{C}^d$ (equipped with any admissible system
of matrix norms), we can expand each term of the convergent TT
series using higher order partial nc difference-differential
operators, as in Corollary \ref{cor:part_TT} and Theorem
\ref{thm:tt-power}. We notice that in this case a finitely open nc
set is the same as an open nc set, and a G-differentiable nc
function is the same as a F-differentiable nc function.
\begin{thm}\label{thm:tt-q-fin-dim}
Suppose that $\Omega\subseteq\ncspaced{\mathbb{C}}{d}$ is an open
nc set, and $\vecspace{W}$ is a Banach space with an admissible
system of matrix norms over $\vecspace{W}$. Let
$f\colon\Omega\to\ncspace{\vecspace{W}}$ be a nc function which is
locally bounded on slices. Then $f$ is analytic, and for any
$s\in\mathbb{N}$, $Y\in\Omega_s$, and arbitrary $m\in\mathbb{N}$,
\begin{multline}\label{eq:tt-pseudoseries}
f(X)= \sum_{\ell=0}^\infty\Big(X-
\bigoplus_{\alpha=1}^mY\Big)^{\odot_s\ell}\Delta_R^{\ell}f(\underset{\ell+1\ \rm{times}}{\underbrace{Y,\ldots,Y}})\\
=\sum_{\ell=0}^\infty\left(\sum_{|w|=\ell}\left(\bigoplus_{\alpha=1}^mA_{w,(0)}\otimes
\cdots\otimes \bigoplus_{\alpha=1}^mA_{w,(\ell)}\right)\star
\left(X-\bigoplus_{\alpha=1}^mY\right)^{[w]}\right),
\end{multline}
where the series converges absolutely and uniformly on compact
subsets of the set $\Upsilon\left(\bigoplus_{\alpha=1}^mY\right)$. Here
$\Upsilon\left(\bigoplus_{\alpha=1}^mY\right)$ is the maximal
complete circular set about $\bigoplus_{\alpha=1}^mY$ contained in
$\Omega_{sm}$, as in Theorem \ref{thm:tt-power-gen}(3),
$$A_{w,(0)}\otimes\cdots\otimes
A_{w,(\ell)}=\Delta_R^{w^\trans}f(\underset{\ell +1\
\rm{times}}{\underbrace{Y,\ldots,Y}}),$$ with the tensor product
interpretation for the values of $\Delta_R^{w^\trans}f$ (Remark
\ref{rem:tensor_values}), the sumless Sweedler notation
\eqref{eq:Sweedler}, and the pseudo-power notation
\eqref{eq:Ramamurti}.

In particular, if $\mu\in\Omega_1$ and $Y=\mu$, then
\eqref{eq:tt-pseudoseries} becomes
\begin{equation}\label{eq:tt-series}
f(X)=\sum_{\ell=0}^\infty\Big(\sum_{|w|=\ell}(X-
I_m\mu)^w\,\Delta_R^{w^\top}\!\!f(\underset{\ell+1\
\rm{times}}{\underbrace{\mu,\ldots,\mu}})\Big).
\end{equation}
\end{thm}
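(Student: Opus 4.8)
The plan is to assemble the statement from three ingredients already at hand: the analyticity criterion of Theorem~\ref{thm:g-queen}, the convergent power expansion of Theorem~\ref{thm:tt-series-gen} specialized to $\vecspace{V}=\mathbb{C}^d$, and the purely algebraic re-expansion of a homogeneous term recorded in \eqref{eq:delta^l_vs_delta^w} and \eqref{eq:tt-term-gen}. First I would note that in the present finite-dimensional setting a finitely open nc set is the same as an open nc set; since $f$ is locally bounded on slices, Theorem~\ref{thm:g-queen}(1) shows $f$ is G-differentiable, and then, by the discussion preceding Theorem~\ref{thm:g-queen} (analyticity on slices plus Hartogs' theorem, applied with $\vecspace{U}=\mat{\vecspace{V}}{n}$, which is finite-dimensional here), each restriction $f|_{\Omega_n}$ is an analytic function of its $dn^2$ matrix-entry variables. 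In particular $f|_{\Omega_n}$ is locally bounded, so $f$ is analytic in the sense of Section~\ref{subsec:analytic}.

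Next, since $Y\in\Omega_s$ and $f$ is locally bounded on slices, Theorem~\ref{thm:tt-series-gen} applies and gives the first equality in \eqref{eq:tt-pseudoseries},
\[
f(X)=\sum_{\ell=0}^\infty\Big(X-\bigoplus_{\alpha=1}^mY\Big)^{\odot_s\ell}\,\Delta_R^\ell f(Y,\ldots,Y),
\]
with the series converging absolutely and uniformly on compact subsets of $\vecspace{U}\cap\Upsilon(\bigoplus_{\alpha=1}^mY)$ for every finite-dimensional subspace $\vecspace{U}\subseteq\mat{\vecspace{V}}{sm}$. Since $\mat{\vecspace{V}}{sm}$ is finite-dimensional, this is exactly absolute and uniform convergence on compact subsets of $\Upsilon(\bigoplus_{\alpha=1}^mY)$ (and, under the stronger hypothesis, one also gets uniform convergence on the sets $\Upsilon_\epsilon$ of Theorem~\ref{thm:f-queen}, though this is not needed).

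It then remains to rewrite the $\ell$-th term. By the algebraic identity \eqref{eq:delta^l_vs_delta^w},
\[
\Big(X-\bigoplus_{\alpha=1}^mY\Big)^{\odot_s\ell}\Delta_R^\ell f(Y,\ldots,Y)=\sum_{|w|=\ell}\Big(X-\bigoplus_{\alpha=1}^mY\Big)^{\odot_sw}\Delta_R^{w^\top}f(Y,\ldots,Y),
\]
a finite sum. Using the tensor-product interpretation of the values of $\Delta_R^{w^\top}f$ (Remark~\ref{rem:tensor_values}; the natural map \eqref{eq:natural}--\eqref{eq:natur_map} is an isomorphism because $\module{N}=\vecspace{W}$ is a $\mathbb{C}$-vector space and the remaining modules are $\mathbb{C}$), write $\Delta_R^{w^\top}f(Y,\ldots,Y)=A_{w,(0)}\otimes\cdots\otimes A_{w,(\ell)}$ in the sumless Sweedler notation \eqref{eq:Sweedler} and invoke \eqref{eq:tt-term-gen} to get
\[
\Big(X-\bigoplus_{\alpha=1}^mY\Big)^{\odot_sw}\Delta_R^{w^\top}f(Y,\ldots,Y)=\Bigg(\Big(\bigoplus_{\alpha=1}^mA_{w,(0)}\Big)\otimes\cdots\otimes\Big(\bigoplus_{\alpha=1}^mA_{w,(\ell)}\Big)\Bigg)\star\Big(X-\bigoplus_{\alpha=1}^mY\Big)^{[w]}.
\]
Substituting back into the first equality produces the second equality in \eqref{eq:tt-pseudoseries}; the mode of convergence is unaffected, since on both sides the $\ell$-th ``term'' is the same finite sum over $|w|=\ell$. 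Finally, in the special case $s=1$ with $Y=\mu\in\Omega_1$, one has $\bigoplus_{\alpha=1}^m\mu=I_m\mu$, the faux power $\odot_1$ is the ordinary nc power $(\cdot)^w$, and each $A_{w,(j)}$ is a scalar, so the pseudo-power notation \eqref{eq:Ramamurti} reduces to ordinary monomial evaluation; identifying, as in Corollary~\ref{cor:part_TT}, the multilinear form $\Delta_R^{w^\top}f(\mu,\ldots,\mu)$ with its value on $(1,\ldots,1)$, the second equality becomes \eqref{eq:tt-series}.

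The analytic content here has already been supplied by Theorems~\ref{thm:g-queen}, \ref{thm:f-queen}, and \ref{thm:tt-series-gen}, and the re-expansion of a homogeneous term is the computation already performed in deriving \eqref{eq:tt-term-gen}, so there is no real obstacle. The one point deserving care is the bookkeeping that grouping the nc power series into homogeneous polynomials of degree $\ell$, each a finite sum over the words of length $\ell$, is compatible with the stated absolute and uniform convergence — but since no regrouping is ever performed across distinct values of $\ell$, this is immediate.
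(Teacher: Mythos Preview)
Your proposal is correct and follows the same approach as the paper, which treats this theorem as an immediate consequence of Theorem~\ref{thm:tt-series-gen} together with the termwise re-expansion from Corollary~\ref{cor:part_TT} and Theorem~\ref{thm:tt-power} (via \eqref{eq:delta^l_vs_delta^w} and \eqref{eq:tt-term-gen}); the paper does not give a separate proof beyond the paragraph preceding the statement, and your argument spells out exactly what that paragraph indicates, including the observation that in the finite-dimensional setting finitely open coincides with open and G-differentiable coincides with F-differentiable.
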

\begin{rem}\label{rem:real_queen}
The results of this section are of a mixed nature: noncommutative
and complex analytic. Therefore, they admit only partial analogues
in the real case. Let $\vecspace{V}$ be a vector space over
$\mathbb{R}$, let $\Omega\subseteq\ncspace{\vecspace{V}}$ be a
finitely open nc set, and let $\vecspace{W}$ be a Banach space
over $\mathbb{R}$ with an admissible system of matrix norms over
$\vecspace{W}$. (All the notions above are defined exactly as in
the complex case.) If a nc function
$f\colon\Omega\to\ncspace{\vecspace{W}}$ is locally bounded on
slices then, similar to Theorem \ref{thm:g-queen}(1), $f$ is
G-differentiable. Moreover, similar to Theorem
\ref{thm:g-queen}(2), $f(Y+tZ)$ is infinitely many times
differentiable at $0$ as a function of $t\in\mathbb{R}$ for every
$n\in\mathbb{N}$, $Y\in\Omega_n$, and $Z\in\mat{\vecspace{V}}{n}$,
and
$$\frac{1}{N!}\frac{d^N}{dt^N}f(Y+tZ)\Big|_{t=0}=\Delta_R^Nf(Y,\ldots,Y)(Z,\ldots,Z),
\qquad N=1,2,\ldots,$$ holds.

It does not follow that $f(Y+tZ)$ is analytic at $0$ as a function of $t\in\mathbb{R}$, i.e., one
cannot guarantee
the convergence of the TT
series of $f$ as in \eqref{eq:tt-g-queen}. It also does not follow that $f$ is
infinitely many times differentiable  on $\mathcal{U}\cap\Omega_n$ as a function of several real
variables, for a
 finite-dimensional subspace $\mathcal{U}$ of $\mat{\vecspace{V}}{n}$.

If, in addition, $f$ is \emph{locally bounded in $\Omega$ on
affine finite-dimensional subspaces}, i.e., if for every
$n,M\in\mathbb{N}$, $Y\in\Omega_n$,
 and $Z_1$,
\ldots,
 $Z_M\in\mat{\vecspace{V}}{n}$ there exists
 $\epsilon>0$ such that $f(Y+t_1Z_1+\cdots +t_MZ_M)$ is bounded
on the set $\{t\in\mathbb{R}\colon\max_{1\le j\le
M}|t_j|<\epsilon\}$, then $f(Y+t_1Z_1+\cdots
 +t_MZ_M)$ is infinitely many times
differentiable at $0$ as a function of
$t_1,\ldots,t_M\in\mathbb{R}$, and the identity
\begin{multline}\label{eq:partial}
\frac{1}{k!}\frac{\partial^{|k|}}{\partial t_1^{k_1}\cdots
\partial t_M^{k_M}}f(Y+t_1Z_1+\cdots
 +t_MZ_M)\Big|_{t_1=\cdots
 =t_M=0}\\
 =\sum_{\pi}\Delta_R^{|k|}f(Y,\ldots,Y)(Z_{\pi(1)},\ldots,Z_{\pi(N)})
 \end{multline}
 holds. Here $k=(k_1,\ldots,k_M)$ is an arbitrary $M$-tuple of nonnegative
 integers, $$|k|:=k_1+\cdots +k_M,\quad k!:=k_1!\cdots k_M!,$$ and
  $\pi$ runs over the set of all permutations with repetitions of the set
   $\{1,\ldots, M\}$, where the element
  $j$ appears exactly $k_j$ times. The identity \eqref{eq:partial}
  is of course also true in the complex case, and in
 both real and complex cases its proof is an obvious modification
 of the proof of part (2) of Theorem \ref{thm:g-queen}.

 If  $f$ is \emph{real analytic on slices}, \index{real analytic on slices nc function} i.e., if $f(Y+tZ)$
 is analytic at $0$ as a function of $t\in\mathbb{R}$ for every
$n\in\mathbb{N}$, $Y\in\Omega_n$, and $Z\in\mat{\vecspace{V}}{n}$
then the TT series of $f$, as in \eqref{eq:tt-g-queen} or as in
\eqref{eq:tt-series-gen} (in particular,
\eqref{eq:tt-series-gen'}), converges absolutely in some
neighborhood of $0$ in every slice. In the case where
$\vecspace{V}=\mathbb{R}^d$, if $f$ is \emph{real analytic},
\index{real analytic nc function} i.e., $f|_{\Omega_n}$ is real
analytic for each $n\in\mathbb{N}$, one can write the TT series
expansions of $f$ about $Y\in\Omega_n$, as in
\eqref{eq:tt-pseudoseries} (in particular, \eqref{eq:tt-series}),
with the series converging absolutely and uniformly on compact
subsets of $\Upsilon(Y)=\{X\in\Omega_n\colon Y+t(X-Y)\in\Omega_n\
{\rm for\ all\ } t\in [-1,1]\}$.
\end{rem}

\section{Uniformly-open topology over an operator
space}\label{subsec:unif-open-top} Recall that  a Banach space
$\vecspace{W}$  over $\mathbb{C}$ equipped with an admissible
system of matrix norms such that \eqref{eq:dirsums-norms} and
\eqref{eq:simprod-norms} hold with
 $C_1=C_1'=C_2=1$ independent of $n$ and $m$, is an
\emph{operator space}; see \cite{ER,Pi,Pa}. Let $\vecspace{W}$ be
an operator space. For $Y\in\mat{\vecspace{W}}{s}$ and $r>0$,
define a \emph{nc ball centered at $Y$ of radius $r$} \index{nc
ball} as \index{$B_{\mathrm{nc}}(Y,r)$}
\begin{equation*}
B_{\mathrm{nc}}(Y,r)=\coprod_{m=1}^\infty
B\Big(\bigoplus_{\alpha=1}^mY,r\Big) =\coprod_{m=1}^\infty\Big\{
X\in\mat{\vecspace{W}}{sm}\colon \Big\|
X-\bigoplus_{\alpha=1}^mY\Big\|_{sm}<r\Big\}.
\end{equation*}
\begin{prop}\label{prop:nc_top}
Let $Y\in\mat{\vecspace{W}}{s}$ and $r>0$. For any $X\in
B_{\mathrm{nc}}(Y,r)$ there is a $\rho>0$ such that
$B_{\mathrm{nc}}(X,\rho)\subseteq B_{\mathrm{nc}}(Y,r)$. Hence, nc
balls form a basis for a topology on $\ncspace{\vecspace{W}}$.
This topology will be called the \emph{uniformly-open topology}.
\index{uniformly-open topology}
\end{prop}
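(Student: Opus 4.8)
The plan is to unwind the definition of a nc ball and invoke only the direct-sum axiom \eqref{eq:os-dirsum-norms} of an operator space. First I would fix $X \in B_{\mathrm{nc}}(Y,r)$, which by definition means $X \in \mat{\vecspace{W}}{sm}$ for some $m \in \mathbb{N}$ with $\delta := \bigl\| X - \bigoplus_{\alpha=1}^{m} Y \bigr\|_{sm} < r$. I claim that $\rho := r - \delta > 0$ does the job.

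To verify this, take an arbitrary $Z \in B_{\mathrm{nc}}(X,\rho)$; since $X$ has size $sm$, this means $Z \in \mat{\vecspace{W}}{smk}$ for some $k \in \mathbb{N}$ with $\bigl\| Z - \bigoplus_{\beta=1}^{k} X \bigr\|_{smk} < \rho$. The key bookkeeping step is that, by associativity of the direct sum, $\bigoplus_{\beta=1}^{k} X$ and $\bigoplus_{\gamma=1}^{mk} Y$ are both $(smk)\times(smk)$ matrices over $\vecspace{W}$ and, blockwise, $\bigoplus_{\beta=1}^{k} X - \bigoplus_{\gamma=1}^{mk} Y = \bigoplus_{\beta=1}^{k} \bigl( X - \bigoplus_{\alpha=1}^{m} Y \bigr)$. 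Iterating \eqref{eq:os-dirsum-norms} gives $\bigl\| \bigoplus_{\beta=1}^{k} ( X - \bigoplus_{\alpha=1}^{m} Y ) \bigr\|_{smk} = \delta$, so the triangle inequality in $\mat{\vecspace{W}}{smk}$ yields $\bigl\| Z - \bigoplus_{\gamma=1}^{mk} Y \bigr\|_{smk} \le \bigl\| Z - \bigoplus_{\beta=1}^{k} X \bigr\|_{smk} + \delta < \rho + \delta = r$. Hence $Z \in B_{\mathrm{nc}}(Y,r)$, proving $B_{\mathrm{nc}}(X,\rho) \subseteq B_{\mathrm{nc}}(Y,r)$.

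Finally, I would deduce that the nc balls form a basis for a topology by checking the two standard conditions. They cover $\ncspace{\vecspace{W}}$: any $X \in \mat{\vecspace{W}}{n}$ lies in $B_{\mathrm{nc}}(X,1)$, taking $s = n$ and $m = 1$. And given $X \in B_{\mathrm{nc}}(Y_1,r_1) \cap B_{\mathrm{nc}}(Y_2,r_2)$, the first part of the statement supplies $\rho_1, \rho_2 > 0$ with $B_{\mathrm{nc}}(X,\rho_i) \subseteq B_{\mathrm{nc}}(Y_i,r_i)$, so $B_{\mathrm{nc}}(X, \min\{\rho_1,\rho_2\})$ sits inside the intersection. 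The standard basis criterion then produces the unique uniformly-open topology.

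The only delicate point is the size bookkeeping: one must keep track that the matrix $X$, of size $sm$, is itself used as a center, so that $B_{\mathrm{nc}}(X,\rho)$ ranges over all matrices of size $smk$ for $k \in \mathbb{N}$, and that $\bigoplus_{\beta=1}^{k} X$ and the $(smk)$-sized block-diagonal matrix with $mk$ copies of $Y$ on the diagonal differ by a block-diagonal matrix of norm $\delta$. Everything else is immediate; it is precisely the operator-space normalization $C_1 = C_1' = 1$ (as opposed to a general admissible system, where the constants depend on the matrix sizes) that lets one conclude the exact identity $\bigl\| \bigoplus_{\beta=1}^{k} ( X - \bigoplus_{\alpha=1}^{m} Y ) \bigr\|_{smk} = \delta$ rather than a size-dependent estimate, which is what makes $\rho$ work uniformly in $k$.
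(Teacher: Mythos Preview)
Your proof is correct and follows essentially the same route as the paper's: the same choice $\rho = r - \delta$, the same triangle-inequality estimate using the operator-space direct-sum axiom, and the same reduction for the basis axiom (the paper writes out the explicit $\rho$ for the intersection step but notes it is ``the same argument as in the preceding paragraph,'' which is exactly your black-box use of the first part). Your remark about the necessity of $C_1'=1$ is apt and matches the paper's later Remark~\ref{rem:uniform-const}.
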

\begin{proof}
Let $X\in B_{\rm nc}(Y,r)_{sm}=B(\bigoplus_{\alpha=1}^mY,r)$,
i.e., $\delta:=\| X-\bigoplus_{\alpha=1}^mY\|_{sm}<r$. Set
$\rho:=r-\delta$. Let $Z\in
B_{\mathrm{nc}}(X,\rho)_{smn}=B(\bigoplus_{\beta=1}^{n}X,\rho)$.
Then, by \eqref{eq:dirsums-norms} with $C_1'=1$, we have
\begin{multline*}
\Big\|Z-\bigoplus_{\alpha=1}^{mn}Y\Big\|_{smn}\le\Big\|Z-\bigoplus_{\beta=1}^{n}X\Big\|_{smn}+
\Big\|\bigoplus_{\beta=1}^{n}X-\bigoplus_{\alpha=1}^{mn}Y\Big\|_{smn}\\
=\Big\|Z-\bigoplus_{\beta=1}^{n}X\big\|_{smn}+
\Big\|\bigoplus_{\beta=1}^{n}\Big(X-\bigoplus_{\alpha=1}^{m}Y\Big)\Big\|_{smn}\\
\le\Big\|Z-\bigoplus_{\beta=1}^{n}X\Big\|_{smn}+\Big\|X-\bigoplus_{\alpha=1}^{m}Y\Big\|_{sm}
<\rho+\delta=r,
\end{multline*}
i.e., $Z\in B_{\mathrm{nc}}(Y,r)_{smn}$, which proves the first
statement.

Let $Y^1\in\mat{\vecspace{W}}{s_1}$,
$Y^2\in\mat{\vecspace{W}}{s_2}$, and $r_1,r_2>0$ be such that
$B_{\mathrm{nc}}(Y^1,r_1)\cap
B_{\mathrm{nc}}(Y^2,r_2)\neq\emptyset$. Let $m$ be the smallest
common multiple of $s_1$ and $s_2$ for which
$B_{\mathrm{nc}}(Y^1,r_1)_m\cap
B_{\mathrm{nc}}(Y^2,r_2)_m\neq\emptyset$. If $X\in
B_{\mathrm{nc}}(Y^1,r_1)_m\cap B_{\mathrm{nc}}(Y^2,r_2)_m$.
Setting
$$\rho:=\min\Big\{r_1-\Big\|X-\bigoplus_{\alpha=1}^{m/s_1}Y^1\Big\|_m,\
r_2-\Big\|X-\bigoplus_{\alpha=1}^{m/s_2}Y^2\Big\|_m\Big\}$$ and
using the same argument as in the preceding paragraph, we obtain
that
$$B_{\mathrm{nc}}(X,\rho)\subseteq B_{\mathrm{nc}}(Y^1,r_1)\cap
B_{\mathrm{nc}}(Y^2,r_2).$$ Thus, nc balls form a basis for a
topology on $\ncspace{\vecspace{W}}$.
\end{proof}
\begin{rem}\label{rem:unif-mult-cont}
Notice that multiplication by a scalar as a mapping
$\mathbb{C}\times\ncspace{\vecspace{W}}\to\ncspace{\vecspace{W}}$
is continuous in the uniformly-open topology .
\end{rem}
Open sets in the uniformly-open topology on
$\ncspace{\vecspace{W}}$ will be called \emph{uniformly open}.
\index{uniformly open set} The uniformly-open topology is never
Hausdorff: the points $\bigoplus_{\alpha=1}^mY$ and
$\bigoplus_{\beta=1}^{m'}Y$, $m\neq m'$, cannot be separated by nc
balls. However, the following Hausdorff-like property holds: if
$X\in\mat{\vecspace{W}}{n}$ and $X'\in\mat{\vecspace{W}}{n'}$ are
such that there does not exist $Y\in\ncspace{\vecspace{W}}$ for
which $X=\bigoplus_{\alpha=1}^mY$ and
$X'=\bigoplus_{\beta=1}^{m'}Y$ for some $m$ and $m'$, then there
exist $\delta$ and $\delta'$ such that
$B_{\mathrm{nc}}(X,\delta)\cap
B_{\mathrm{nc}}(X',\delta')=\emptyset$. One can choose $\delta$
and $\delta'$ such that
$\delta+\delta'<\Big\|\bigoplus_{\alpha=1}^MX-\bigoplus_{\beta=1}^{M'}X'\Big\|_{Mn},$
where $Mn=M'n'$ is the least common multiple of $n$ and $n'$. We
only have to show that the norm above is non-zero in this case.
\begin{prop}\label{prop:Hausdorff-like}
For $X\in\mat{\vecspace{W}}{n}$ and $X'\in\mat{\vecspace{W}}{n'}$,
one has
$$\Big\|\bigoplus_{\alpha=1}^MX-\bigoplus_{\beta=1}^{M'}X'\Big\|_{Mn}=0$$
if and only if there exists $Y\in\ncspace{\vecspace{W}}$ for which
$X=\bigoplus_{\alpha=1}^mY$ and $X'=\bigoplus_{\beta=1}^{m'}Y$.
\end{prop}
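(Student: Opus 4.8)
The plan is to treat the two implications separately. The ``if'' direction will be immediate: if $X=\bigoplus_{\alpha=1}^{m}Y$ with $Y\in\mat{\vecspace{W}}{s}$ (so $n=ms$) and $X'=\bigoplus_{\beta=1}^{m'}Y$ (so $n'=m's$), then since $s$ divides both $n$ and $n'$ it divides $N:=Mn=M'n'$, and both $\bigoplus_{\alpha=1}^{M}X$ and $\bigoplus_{\beta=1}^{M'}X'$ equal $\bigoplus_{\gamma=1}^{N/s}Y$, so their difference is the zero matrix and has norm $0$. For the ``only if'' direction I would first use that each $\|\cdot\|_{N}$ is a genuine norm, so the hypothesis already gives the \emph{equality} $A:=\bigoplus_{\alpha=1}^{M}X=\bigoplus_{\beta=1}^{M'}X'$ in $\mat{\vecspace{W}}{N}$; the problem thus reduces to the purely combinatorial statement that such an $A$ forces $X$ and $X'$ to be common inflations of one matrix. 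I would set $g=\gcd(n,n')$, so that $M=n'/g$, $M'=n/g$, $\gcd(M,M')=1$ and $N=gMM'$, and construct $Y\in\mat{\vecspace{W}}{g}$ with $X=\bigoplus_{\alpha=1}^{M'}Y$ and $X'=\bigoplus_{\beta=1}^{M}Y$, which is exactly the asserted conclusion (with $m=M'$, $m'=M$); the degenerate cases $M=1$ or $M'=1$ are trivial and can be set aside.

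The core device will be a single re-indexing of $\mat{\vecspace{W}}{N}$ exhibiting \emph{both} block structures at once. I would identify $\{1,\dots,N\}$ with $\{1,\dots,M'\}\times\{1,\dots,M\}\times\{1,\dots,g\}$ via $i\leftrightarrow(l,k,u)$, where $k$ is the index of the size-$n$ block of $A=\bigoplus_{\alpha=1}^{M}X$ containing $i$, $l$ the index of the size-$g$ sub-block of that size-$n$ block, and $u$ the position inside that size-$g$ block; then $A=\bigoplus_{\alpha=1}^{M}X$ reads $A_{(l,k,u),(l',k',u')}=\delta_{kk'}X_{(l,u),(l',u')}$. Writing $m=l+M'(k-1)\in\{1,\dots,MM'\}$ for the global size-$g$-block index of $i$ and putting $l_{0}=\lceil m/M\rceil$, $k_{0}=((m-1)\bmod M)+1$, the same $i$ has triple $(l_{0},k_{0},u)$ relative to the structure $A=\bigoplus_{\beta=1}^{M'}X'$ (now $l_{0}$ the size-$n'$ block, $k_{0}$ the size-$g$ sub-block, $u$ the position inside), so $A_{(l_{0},k_{0},u),(l_{0}',k_{0}',u')}=\delta_{l_{0}l_{0}'}X'_{(k_{0},u),(k_{0}',u')}$; crucially the last coordinate $u=((i-1)\bmod g)+1$ is literally the same in both descriptions. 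Equating the two formulas for $A_{ij}$ yields the master identity
\[
\delta_{kk'}\,X_{(l,u),(l',u')}=\delta_{l_{0}l_{0}'}\,X'_{(k_{0},u),(k_{0}',u')},
\]
valid for all $l,l'\in\{1,\dots,M'\}$, $k,k'\in\{1,\dots,M\}$, $u,u'\in\{1,\dots,g\}$, with $l_{0}=l_{0}(l,k)$, $k_{0}=k_{0}(l,k)$ and primes correspondingly.

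From the master identity I would extract $Y$ in two steps. First, taking $l=l'$ and $k=k'$ (so $l_{0}=l_{0}'$, $k_{0}=k_{0}'$) gives $X_{(l,u),(l,u')}=X'_{(k_{0}(l,k),u),(k_{0}(l,k),u')}$; as $(l,k)$ ranges over all pairs, $(l,k_{0}(l,k))$ sweeps out all of $\{1,\dots,M'\}\times\{1,\dots,M\}$ by the Chinese Remainder Theorem (here $\gcd(M,M')=1$), so the left side is independent of $l$, the right side independent of $k_{0}$, and they agree on a common $g\times g$ matrix $Y$; thus every diagonal size-$g$ block of $X$, and of $X'$, equals $Y$. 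Second, given $l\neq l'$, I would choose $\kappa\in\{1,\dots,M\}$ with $M'(\kappa-1)\equiv-\min(l,l')\pmod M$ (solvable since $M'$ is invertible mod $M$); then $\min(l,l')+M'(\kappa-1)$ is a multiple of $M$ while $\max(l,l')+M'(\kappa-1)$ differs from it by $|l-l'|\in\{1,\dots,M'-1\}$, so the two have different $M$-block indices, i.e.\ $l_{0}(l,\kappa)\neq l_{0}(l',\kappa)$; plugging $k=k'=\kappa$ into the master identity then forces $X_{(l,u),(l',u')}=0$ for all $u,u'$. Hence every off-diagonal size-$g$ block of $X$ vanishes, and combined with the first step $X=\bigoplus_{\alpha=1}^{M'}Y$. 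The symmetric argument (interchanging the roles of $M,M'$ and of $X,X'$, hence of the first and second coordinates) gives $X'=\bigoplus_{\beta=1}^{M}Y$, finishing the proof.

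The hardest part is expected to be organisational rather than conceptual: setting up the triple re-indexing so that the two block structures genuinely share the last coordinate, and checking the attendant arithmetic (the identity $\lceil\lceil x/p\rceil/q\rceil=\lceil x/(pq)\rceil$, and that $l+M'(k-1)\mapsto(l,k)$ and $\mapsto(k_{0},l_{0})$ are two mixed-radix readings of the same integer). The one genuinely substantive point is the second extraction step — that the off-diagonal size-$g$ blocks vanish — which uses $\gcd(M,M')=1$ through the elementary observation that translating an integer interval of length $\ge1$ by steps of $M'$ through a complete residue system modulo $M$ must eventually place it astride a multiple of $M$.
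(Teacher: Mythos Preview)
Your proof is correct, but it takes a genuinely different route from the paper's. The paper introduces the cyclic diagonal shift $S$ on $\overline{X}:=\bigoplus_{\alpha=1}^{M}X=\bigoplus_{\beta=1}^{M'}X'$, observes that $\overline{X}$ is fixed by both $S^{n}$ and $S^{n'}$, and then uses B\'ezout's identity to conclude $S^{d}\overline{X}=\overline{X}$ with $d=\gcd(n,n')$. This single invariance immediately forces all $d\times d$ blocks along the main diagonal to coincide (call the common value $Y$), and each off-diagonal cyclic $d$-block diagonal to vanish, since it must contain a zero block coming from the $n$-block structure of $\overline{X}$. Your approach, by contrast, sets up an explicit triple re-indexing that exhibits both block structures simultaneously, then uses the Chinese Remainder Theorem (via $\gcd(M,M')=1$) to identify the common diagonal block $Y$, and a separate residue argument to kill the off-diagonal $g$-blocks of $X$ one pair $(l,l')$ at a time. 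Both arguments rest on the same arithmetic fact (coprimality of $M$ and $M'$, equivalently B\'ezout), but the paper packages it as a group action and reads off the conclusion in two lines, whereas you unpack the combinatorics by hand. The paper's version is shorter and more conceptual; yours is more explicit and makes the block-matching visible, at the cost of the indexing bookkeeping you flag as the main organisational burden.
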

\begin{proof}
The ``if" part is obvious. Suppose
$$(\overline{X}:=)\bigoplus_{\alpha=1}^MX=\bigoplus_{\beta=1}^{M'}X'.$$
For the purpose of this proof, let us use the convention that the
rows and columns of a $N\times N$ matrix $Z$ over $\vecspace{W}$
are enumerated from $0$ to $N-1$. Define the diagonal shift  $S$
which acts on such matrices as follows:
$$(SZ)_{ij} = Z_{(i-1)\bmod N, (j-1)\bmod N}, \quad  i,j = 0, \ldots, N-1.$$
Clearly, the inverse shift is given by
$$ (S^{-1}Z)_{ij} = Z_{(i+1)\bmod N, (j+1)\bmod N}, \quad  i,j = 0, \ldots, N-1.$$
Clearly, we have $S^{\pm n}\overline{X} =\overline{X}$ and $S^{\pm
n'}\overline{X}=\overline{X}$, where $N=Mn$. Let $d={\rm
gcd}(n,n')$. Since there exist $k,k'\in \mathbb{Z}$ such that $ d=
kn + k'n'$ (see, e.g., \cite[Problem 1.1]{Vinograd}), we obtain
that $S^{d}\overline{X} = \overline{X}$. Therefore, all $d\times
d$ blocks on the main block diagonal of the matrix $\overline{X}$
are equal, say $Y$. Since every other $d\times d$-block NW-to-SE
diagonal of $\overline{X}$ has at least one zero block, it must
have all zero blocks. We conclude that
$\overline{X}=\bigoplus_{\gamma=1}^{Mn/d}Y$, and therefore
$X=\bigoplus_{\alpha=1}^mY$ and $X'=\bigoplus_{\beta=1}^{m'}Y$,
with $m=n/d$ and $m'=n'/d$. The proof is complete.
\end{proof}

Alternatively, the closure of $X\in\mat{\vecspace{W}}{n}$,
$\clos\{X\}$, consists of all $X'=\bigoplus_{\beta=1}^{m'}Y$,
$m'=1,2,\ldots$, where $Y\in\mat{\vecspace{W}}{s}$ is such that
$n=sm$ and $X=\bigoplus_{\alpha=1}^mY$, with $Y$ not representable
as a direct sum of matrices over $\vecspace{W}$ --- such $Y$ is
unique by Proposition \ref{prop:Hausdorff-like}; the quotient
topology on
$\widehat{\ncspace{\vecspace{W}}}:=\ncspace{\vecspace{W}}/\!\!\sim$
where $X\sim X'$ whenever $X=\bigoplus_{\alpha=1}^mY$ and
$X'=\bigoplus_{\beta=1}^{m'}Y$ for some
$Y\in\ncspace{\vecspace{W}}$ is Hausdorff.

Let $\module{M}$ be a module. For a set
$\Omega\subseteq\ncspace{\module{M}}$, the \emph{radical of
$\Omega$} \index{radical of a set} is the set
$$\rad\Omega:=\Big\{Y\in\ncspace{\module{M}}\colon
\bigoplus_{\alpha=1}^mY\in\Omega\ {\rm for\ some\ }
m\in\mathbb{N}\Big\}.$$ \index{$\rad\Omega$}We will say that
$\Omega\subseteq\ncspace{\module{M}}$ is a \emph{radical set}
\index{radical set} if $\rad\Omega=\Omega$.
\begin{prop}\label{prop:rad}
For a set $\Omega\subseteq\ncspace{\module{M}}$, one has
\begin{equation}\label{eq:rad-incl}
\widetilde{\rad\Omega}\subseteq\rad\widetilde{\Omega},
\end{equation}
 where
$\widetilde{\Omega}$ and $\widetilde{\rad\Omega}$ are defined by
\eqref{eq:sim-inv}. In particular, if $\Omega$ is similarity
invariant, then so is $\rad\Omega$. If $\Omega$ is furthermore a
nc set, then so is $\rad\Omega$. If $\Omega$ is furthermore right
admissible, then so is $\rad\Omega$.
\end{prop}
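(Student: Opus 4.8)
The plan is to prove the four assertions in sequence, deriving the last three from \eqref{eq:rad-incl} together with one elementary ``unscrambling'' permutation. Throughout I will use the description coming from \eqref{eq:sim-inv}: a matrix $W$ lies in $\widetilde{\Omega}$ precisely when $W=SXS^{-1}$ for some $X\in\Omega$ of the appropriate size and some invertible $S$ over $\ring$, and consequently $\Lambda\subseteq\widetilde{\Lambda}$ for every $\Lambda\subseteq\ncspace{\module{M}}$ (take $S=I$).

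For \eqref{eq:rad-incl}, I would take $W\in\widetilde{\rad\Omega}$ and write $W=SYS^{-1}$ with $Y\in\rad\Omega$, $S\in\mat{\ring}{s}$ invertible; by definition of $\rad$ there is $m$ with $\bigoplus_{\alpha=1}^{m}Y\in\Omega$. Then $\bigoplus_{\alpha=1}^{m}W=\bigl(\bigoplus_{\alpha=1}^{m}S\bigr)\bigl(\bigoplus_{\alpha=1}^{m}Y\bigr)\bigl(\bigoplus_{\alpha=1}^{m}S\bigr)^{-1}$, and $\bigoplus_{\alpha=1}^{m}S$ is invertible over $\ring$, so $\bigoplus_{\alpha=1}^{m}W\in\widetilde{\Omega}$, i.e. $W\in\rad\widetilde{\Omega}$. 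The ``in particular'' claim then follows: since $\rad\Omega\subseteq\widetilde{\rad\Omega}$ always, and $\widetilde{\Omega}=\Omega$ by hypothesis, \eqref{eq:rad-incl} gives $\widetilde{\rad\Omega}\subseteq\rad\widetilde{\Omega}=\rad\Omega$, hence $\rad\Omega$ is similarity invariant.

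The combinatorial ingredient I would isolate as a lemma is: for $A\in\mat{\module{M}}{n}$, $B\in\mat{\module{M}}{n'}$, $C\in\rmat{\module{M}}{n}{n'}$ and any $k\in\mathbb{N}$, the matrix $\bigoplus_{\alpha=1}^{k}\begin{bmatrix}A & C\\ 0 & B\end{bmatrix}$ is conjugate, by a permutation matrix in $\mat{\ring}{k(n+n')}$, to $\begin{bmatrix}\bigoplus_{\alpha=1}^{k}A & \bigoplus_{\alpha=1}^{k}C\\ 0 & \bigoplus_{\alpha=1}^{k}B\end{bmatrix}$; specializing $C=0$, the matrix $\bigoplus_{\alpha=1}^{k}(A\oplus B)$ is conjugate to $\bigl(\bigoplus_{\alpha=1}^{k}A\bigr)\oplus\bigl(\bigoplus_{\alpha=1}^{k}B\bigr)$. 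The conjugating permutation merely regroups the block rows and columns so that all ``$A$-slots'' precede all ``$B$-slots''; a permutation matrix has entries in $\{0,1\}\subseteq\ring$ and is invertible, so this is a genuine similarity over $\ring$. With this, assertion (3) runs as follows: assuming $\Omega$ is a similarity-invariant nc set, for $Y\in(\rad\Omega)_n$, $Y'\in(\rad\Omega)_{n'}$ pick $p,p'$ with $\bigoplus_{\alpha=1}^{p}Y,\ \bigoplus_{\alpha=1}^{p'}Y'\in\Omega$; closure under direct sums gives $\bigoplus_{\alpha=1}^{pp'}Y,\ \bigoplus_{\alpha=1}^{pp'}Y'\in\Omega$, hence $\bigl(\bigoplus_{\alpha=1}^{pp'}Y\bigr)\oplus\bigl(\bigoplus_{\alpha=1}^{pp'}Y'\bigr)\in\Omega$, and the lemma together with similarity invariance yields $\bigoplus_{\alpha=1}^{pp'}(Y\oplus Y')\in\Omega$, i.e. $Y\oplus Y'\in\rad\Omega$.

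Assertion (4) is the same argument upgraded. Assuming in addition that $\Omega$ is right admissible, we have $\widetilde{\Omega}=\Omega$, so by Proposition \ref{prop:adm-env} every block-upper-triangular matrix with diagonal blocks in $\Omega$ lies in $\Omega$. Given $X\in(\rad\Omega)_n$, $Y\in(\rad\Omega)_m$, $Z\in\rmat{\module{M}}{n}{m}$, choose $k$ with $\bigoplus_{\alpha=1}^{k}X,\ \bigoplus_{\alpha=1}^{k}Y\in\Omega$; Proposition \ref{prop:adm-env} puts $\begin{bmatrix}\bigoplus_{\alpha=1}^{k}X & \bigoplus_{\alpha=1}^{k}Z\\ 0 & \bigoplus_{\alpha=1}^{k}Y\end{bmatrix}$ in $\Omega$, and the lemma plus similarity invariance gives $\bigoplus_{\alpha=1}^{k}\begin{bmatrix}X & Z\\ 0 & Y\end{bmatrix}\in\Omega$, so $\begin{bmatrix}X & Z\\ 0 & Y\end{bmatrix}\in\rad\Omega$; here one may even take the scaling factor $r=1$. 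The only point needing genuine care is the bookkeeping for the permutation lemma and the verification that it is realized by an honest permutation matrix (hence invertible over the commutative ring $\ring$); once that is in place, the rest is immediate, and I expect the write-up to be brief.
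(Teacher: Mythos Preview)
Your proposal is correct and follows essentially the same route as the paper: the computation for \eqref{eq:rad-incl} is identical (up to the roles of $S$ and $S^{-1}$), the ``in particular'' deduction is the same, and for the nc-set and right-admissible claims the paper also amplifies to a common number of copies (it uses $\operatorname{lcm}(m,m')$ rather than $pp'$, which makes no difference), invokes Proposition~\ref{prop:adm-env} for the upper-triangular inclusion, and then conjugates by a block permutation to pass from $\bigl(\bigoplus^{k}A\bigr)\oplus\bigl(\bigoplus^{k}B\bigr)$ or the analogous upper-triangular form to $\bigoplus^{k}(A\oplus B)$ or $\bigoplus^{k}\begin{bmatrix}A & Z\\ 0 & B\end{bmatrix}$. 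Your only cosmetic addition is to isolate the permutation similarity as a standalone lemma, which the paper simply asserts inline as ``the permutation of blocks.''
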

\begin{proof}
If $Y\in\widetilde{\rad\Omega}$, then $SYS^{-1}\in\rad\Omega$ for
every invertible matrix over $\ring$ of the same size as $Y$,
i.e.,
$$\bigoplus_{\alpha=1}^mSYS^{-1}=
\Big(\bigoplus_{\alpha=1}^mS\Big)\Big(\bigoplus_{\alpha=1}^mY\Big)\Big(\bigoplus_{\alpha=1}^mS\Big)^{-1}\in\Omega$$
for some $m\in\mathbb{N}$. Therefore,
$\bigoplus_{\alpha=1}^mY\in\widetilde{\Omega}$ and thus
$Y\in\rad\widetilde{\Omega}$. The inclusion \eqref{eq:rad-incl}
follows.

In particular, if $\widetilde{\Omega}=\Omega$, we obtain
$\widetilde{\rad\Omega}\subseteq\rad\Omega$. Together with the
obvious inclusion $\widetilde{\rad\Omega}\supseteq\rad\Omega$,
this implies the equality $\widetilde{\rad\Omega}=\rad\Omega$,
i.e., $\rad\Omega$ is similarity invariant.

Let $Y,Y'\in\rad\Omega$. Then $\bigoplus_{\alpha=1}^mY\in\Omega$
and $\bigoplus_{\alpha=1}^{m'}Y'\in\Omega$ for some $m$ and $m'$.
If $\Omega$ is a nc set, then $\bigoplus_{\alpha=1}^{{\rm lcm}
(m,m')}Y\oplus\bigoplus_{\beta=1}^{{\rm lcm}(mm')}Y'\in\Omega$.
Since $\Omega$ is similarity invariant, the permutation of blocks
gives that $\bigoplus_{\alpha=1}^{{\rm lcm}(m,m')}(Y\oplus
Y')\in\Omega$, and hence $Y\oplus Y'\in\rad\Omega$. We conclude
that $\rad\Omega$ is a nc set in this case.

Suppose that $\Omega$ is a right admissible similarity invariant
nc set. Let $Y\in(\rad\Omega)_n$, $Y'\in(\rad\Omega)_{n'}$,
$Z\in\rmat{\ring}{n}{n'}$. Then
$\bigoplus_{\alpha=1}^mY\in\Omega_{nm}$ and
$\bigoplus_{\alpha=1}^{m'}Y'\in\Omega_{n'm'}$ for some $m$ and
$m'$. Moreover, $\bigoplus_{\alpha=1}^{{\rm
lcm}(m,m')}Y\in\Omega_{{\rm lcm}(m,m')n}$ and
$\bigoplus_{\alpha=1}^{{\rm lcm}(m,'m')'}Y'\in\Omega_{{\rm
lcm}(m,m')n'}$. By Proposition \ref{prop:adm-env},
$$\begin{bmatrix}
\bigoplus\limits_{\alpha=1}^{{\rm lcm}(m,m')}Y &
\bigoplus\limits_{\alpha=1}^{{\rm lcm}(m,m')}Z \\
 0 &
\bigoplus\limits_{\alpha=1}^{{\rm lcm}(m,m')}Y'
\end{bmatrix}\in\Omega_{(n+n'){\rm lcm}(m,m')}.
$$
Since $\Omega$ is similarity invariant, the permutation of blocks
gives that $$\bigoplus_{\alpha=1}^{{\rm lcm}(m,m')}\begin{bmatrix}
Y & Z\\
0 & Y'
\end{bmatrix}\in\Omega_{(n+n'){\rm lcm}(m,m')}.$$
Therefore, $\begin{bmatrix}
Y & Z\\
0 & Y'
\end{bmatrix}\in\rad\Omega$. This implies that the nc set $\rad\Omega$ is
right admissible.
\end{proof}

We note that inclusion \eqref{eq:rad-incl} can be proper, even in
the case of nc sets. This example also shows that
$\widetilde{\Omega}$ is not necessarily radical, even if $\Omega$
is.
\begin{ex}\label{ex:proper-rad-incl}
Let $\Omega\subseteq\ncspace{\mathbb{C}}$ be a nc set consisting
of matrices $\bigoplus_{\alpha=1}^mY$, $m=1,2,\ldots$, with
$Y=\diag[0,1,1,0]\in\mat{\mathbb{C}}{4}$. Clearly, the set
$\Omega$ is radical. Its similarity invariant envelope,
$\widetilde{\Omega}$, consists of diagonalizable square matrices
over $\mathbb{C}$ with eigenvalues $0$ and $1$ of the same even
multiplicity. In particular, $\widetilde{\Omega}$ contains the
matrix $\diag[0,1,0,1]\in\mat{\mathbb{C}}{4}$, so that the matrix
$\diag[0,1]\in\mat{\mathbb{C}}{2}$ belongs to
$\rad\widetilde{\Omega}$. Thus,
$\widetilde{\rad\Omega}=\widetilde{\Omega}$ is a proper subset of
$\rad\widetilde{\Omega}$.
\end{ex}

We also note that $\rad\Omega$ is not necessarily a nc set, even
if $\Omega$ is.
\begin{ex}\label{eq:rad-not-nc}
Let $\Omega\subseteq\ncspace{\mathbb{C}}$ be a nc set consisting
of matrices  $0_{2\times 2}, I_2 \in\Omega_2$ and all their direct
sums, in any possible order. Then $1\times 1$ matrices $0$ and $1$
belong to $\rad\Omega$, however their direct sum, $0\oplus
1=\diag[0,1]$ does not.
\end{ex}

Let $\vecspace{W}$ be an operator space. We now define another
topology on $\ncspace{\vecspace{W}}$ by means of the pseudometric
\index{pseudometric}
$\tau\colon\ncspace{\vecspace{W}}\times\ncspace{\vecspace{W}}\to\mathbb{R}_+$:
\begin{equation}\label{eq:tau}
\tau(X,X'):=\Big\|\bigoplus_{\alpha=1}^{{\rm
lcm}(n,n')/n}X-\bigoplus_{\beta=1}^{{\rm
lcm}(n,n')/n'}X'\Big\|_{{\rm lcm}(n,n')}\end{equation} for every
$n,n'\in\mathbb{N}$, $X\in\mat{\vecspace{W}}{n}$, and
$X'\in\mat{\vecspace{W}}{n'}$. We will call this topology a
\emph{$\tau$-topology}, \index{$\tau$-topology} and the
corresponding open sets \emph{$\tau$-open}. \index{$\tau$-open}
\emph{$\tau$-balls}\index{$\tau$-balls} centered at points
$Y\in\ncspace{\vecspace{W}}$ of radii $\delta>0$,
\index{$B_\tau(Y,\delta)$}
$$B_\tau(Y,\delta):=\{X\in\ncspace{\vecspace{W}}\colon\tau(X,Y)<\delta\},$$
form a base for the $\tau$-topology. Notice that $\tau$-balls, and
thus $\tau$-open sets, are radical.
\begin{prop}\label{prop:rad-properties} Let $\vecspace{W}$ be an
operator space.
\begin{enumerate}
    \item For every $s\in\mathbb{N}$,
 $Y\in\mat{\vecspace{W}}{s}$, and $\delta>0$, one has
\begin{equation}\label{eq:nc-and-tau-balls}
B_{\rm
nc}(Y,\delta)=B_{\tau}(Y,\delta)\cap\coprod_{m=1}^\infty\mat{\vecspace{W}}{sm}.
\end{equation}
 \item If $\Omega\subseteq\ncspace{\vecspace{W}}$ is a
 uniformly-open set, then $\rad\,\Omega$ is the smallest
 $\tau$-open set that contains $\Omega$.
    \item $\Omega\subseteq\ncspace{\vecspace{W}}$ is a $\tau$-open
    set if and only if $\Omega$ is uniformly-open and radical.
\end{enumerate}
\end{prop}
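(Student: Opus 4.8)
The plan is to establish the three assertions of Proposition~\ref{prop:rad-properties} in order, since (3) is a formal consequence of (1) and (2).

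First I would prove (1), which is a direct unwinding of definitions. Fix $s$, $Y\in\mat{\vecspace{W}}{s}$, $\delta>0$, and take $X\in\mat{\vecspace{W}}{sm}$. Since $\mathrm{lcm}(sm,s)=sm$, the defining formula \eqref{eq:tau} for $\tau$ reduces to $\tau(X,Y)=\|X-\bigoplus_{\alpha=1}^mY\|_{sm}$, which is exactly the condition appearing in the definition of $B_{\mathrm{nc}}(Y,\delta)$. Hence on matrices of a size divisible by $s$ the conditions $X\in B_{\mathrm{nc}}(Y,\delta)$ and $X\in B_\tau(Y,\delta)$ coincide, and as $B_{\mathrm{nc}}(Y,\delta)\subseteq\coprod_m\mat{\vecspace{W}}{sm}$ by construction, \eqref{eq:nc-and-tau-balls} follows.

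For (2): $\Omega\subseteq\rad\Omega$ is trivial (take $m=1$). To see $\rad\Omega$ is $\tau$-open, let $Y\in(\rad\Omega)_s$, so $P:=\bigoplus_{\alpha=1}^mY\in\Omega$ for some $m$; since $\Omega$ is uniformly-open, $P$ lies in a basic nc ball inside $\Omega$, hence by Proposition~\ref{prop:nc_top} there is $\rho>0$ with $B_{\mathrm{nc}}(P,\rho)\subseteq\Omega$. I claim $B_\tau(Y,\rho)\subseteq\rad\Omega$: given $X\in\mat{\vecspace{W}}{n}$ with $\tau(X,Y)<\rho$, put $L=\mathrm{lcm}(n,s)$ and $L'=\mathrm{lcm}(n,sm)$, so $L\mid L'$. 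Using the operator-space identity \eqref{eq:os-dirsum-norms} — which gives $\|\bigoplus_{\nu=1}^q A\|=\|A\|$, so that padding with extra identical direct summands does not change the norm — one checks that $\|\bigoplus_{\alpha=1}^{L'/n}X-\bigoplus_{\alpha=1}^{L'/s}Y\|_{L'}=\tau(X,Y)<\rho$; and since $\bigoplus_{\alpha=1}^{L'/s}Y=\bigoplus_{\alpha=1}^{L'/(sm)}P$, this says $\bigoplus_{\alpha=1}^{L'/n}X\in B_{\mathrm{nc}}(P,\rho)\subseteq\Omega$, whence $X\in\rad\Omega$. For minimality, let $U$ be any $\tau$-open set with $\Omega\subseteq U$ and let $Y\in\rad\Omega$; then $\bigoplus_{\alpha=1}^mY\in\Omega\subseteq U$, and $\tau(Y,\bigoplus_{\alpha=1}^mY)=0$ by Proposition~\ref{prop:Hausdorff-like}, so $Y$ lies in every $\tau$-ball about $\bigoplus_{\alpha=1}^mY$, in particular in one contained in $U$; thus $\rad\Omega\subseteq U$.

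For (3): if $\Omega$ is uniformly-open and radical, then $\Omega=\rad\Omega$ is $\tau$-open by (2). Conversely, if $\Omega$ is $\tau$-open it is a union of $\tau$-balls, each radical, hence radical; and for each $P\in\Omega_n$ we may choose $\delta>0$ with $B_\tau(P,\delta)\subseteq\Omega$, so by (1) (applied with $s=n$, $Y=P$) we get $B_{\mathrm{nc}}(P,\delta)\subseteq\Omega$ with $P\in B_{\mathrm{nc}}(P,\delta)$, showing $\Omega$ is a union of basic nc balls, i.e.\ uniformly-open. The only place requiring genuine care is step (2): keeping the least-common-multiple bookkeeping straight so that the two direct-sum decompositions over $L'$ line up, and noting that it is precisely the operator-space axiom $C_1'=1$ in \eqref{eq:os-dirsum-norms} — not merely an admissible system of matrix norms — that makes the norm invariant under such padding; everything else is routine.
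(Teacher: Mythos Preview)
Your proof is correct and follows essentially the same approach as the paper. The only organizational difference is in part (2): the paper first isolates the identity $\rad B_{\mathrm{nc}}(Y,\delta)=B_\tau(Y,\delta)$ (as a consequence of (1)) and then writes $\rad\Omega=\bigcup_{Y\in\Omega}\rad B_{\mathrm{nc}}(Y,\delta_Y)=\bigcup_{Y\in\Omega}B_\tau(Y,\delta_Y)$ using that $\rad$ commutes with unions, whereas you carry out the equivalent lcm computation directly around an arbitrary point of $\rad\Omega$; for minimality the paper simply invokes that $\tau$-open sets are radical, which is the same content as your $\tau(Y,\bigoplus_\alpha Y)=0$ argument.
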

\begin{proof}
(1) follows from the definition of $\tau$-balls.

(2) It follows from \eqref{eq:nc-and-tau-balls} that $\rad\,
B_{\rm nc}(Y,\delta)=B_\tau(Y,\delta)$. Then
$$\rad\,\Omega=\rad\,\bigcup_{Y\in\Omega}B_{\rm
nc}(Y,\delta_Y)=\bigcup_{Y\in\Omega}\rad\,B_{\rm
nc}(Y,\delta_Y)=\bigcup_{Y\in\Omega}B_\tau(Y,\delta_Y),$$ where
$\delta_Y$ is any positive real number such that $B_{\rm
nc}(Y,\delta_Y)\subseteq\Omega$. Therefore, $\rad\,\Omega$ is
$\tau$-open. Since any $\tau$-open set is radical, if such a set
contains $\Omega$, then it must contain $\rad\,\Omega$ as well. We
conclude that $\rad\,\Omega$ is the smallest $\tau$-open set that
contains $\Omega$.

(3) Part (2) implies that if $\Omega$ is uniformly-open and
radical, i.e., $\rad\,\Omega=\Omega$, then $\Omega$ is
$\tau$-open. The converse is obvious.
\end{proof}
By part (1) of Proposition \ref{prop:rad-properties} that the
$\tau$-topology on $\ncspace{\vecspace{W}}$ is weaker than the
uniformly-open topology. However, two points of
$\ncspace{\vecspace{W}}$ cannot be separated by two nc balls if
and only if they cannot be separated by two $\tau$-balls. Indeed,
 Proposition \ref{prop:Hausdorff-like} says
that $X\sim X'$ if and only if $\tau(X,X')=0$. Therefore, the
quotient spaces of $\ncspace{\vecspace{W}}$ with respect to the
equivalence $\sim$ and the $\tau$-equivalence coincide as sets.
Since the pre-image of any set in the quotient set
$\widehat{\ncspace{\vecspace{W}}}$ under the quotient map is
radical, the corresponding quotient topologies on
$\widehat{\ncspace{\vecspace{W}}}$ coincide. In fact,
 $\widehat{\ncspace{\vecspace{W}}}$ is a metric
space with respect to the quotient metric $\widehat{\tau}$ which
is defined by
$\widehat{\tau}(\widehat{X},\widehat{X'}):=\tau(X,X')$ for any
representatives $X$ and $X'$ of the cosets $\widehat{X}$ and
$\widehat{X'}$, respectively (the definition is correct due to the
properties of the system of norms $\|\cdot\|_n$ on matrices over
an operator space). The quotient topology on
$\widehat{\ncspace{\vecspace{W}}}$ is the metric topology with
respect to the metric $\widehat{\tau}$.

\section{Uniformly analytic nc
functions}\label{subsec:u-analytic}
 Let $\vecspace{V}$ and $\vecspace{W}$ be Banach
spaces equipped with admissible systems of matrix norms. A linear
mapping $\phi\colon\vecspace{V}\to\vecspace{W}$ is called
\emph{completely bounded} \index{completely bounded linear
mapping} if \index{$\Vert\phi\Vert_{\cb}$}
$$\|\phi\|_{\cb}:=\sup_{n\in\mathbb{N}}\|\id_{\mat{\mathbb{C}}{n}}\otimes\phi
\|_{\mat{\vecspace{V}}{n}\to\mat{\vecspace{W}}{n}}<\infty,$$
\emph{completely contractive} \index{completely contractive linear
mapping} if $\|\phi\|_{\cb}\le 1$, and \emph{completely isometric}
\index{completely isometric linear mapping} if
$\id_{\mat{\mathbb{C}}{n}}\otimes\phi$ is an isometry for every
$n\in\mathbb{N}$.

\begin{prop}\label{prop:ucb-inj-proj}
If $\vecspace{W}$ is a Banach space equipped with an admissible
system of matrix norms $\|\cdot\|_n$ such that
\eqref{eq:dirsums-norms} and \eqref{eq:simprod-norms} hold with
$C_1$, $C_1'$, and $C_2$ independent of $n$ and $m$, then the
injections \eqref{eq:inj-bd} and the projections
\eqref{eq:proj-bd} are uniformly completely bounded, i.e., for
every $n\in\mathbb{N}$ and $i,j=1,\ldots,n$, the mappings
$\iota_{ij}$ and $\pi_{ij}$ are completely bounded. Moreover,
$$\|\iota_{ij}\|_{\cb}\le C_1^\prime C_2^2,\quad \|\pi_{ij}\|_{\cb}\le C_1C_2^2.$$
If $\vecspace{W}$ is
 an operator space, then all $\iota_{ij}$ are complete isometries and all
$\pi_{ij}$ are complete coisometries.
\end{prop}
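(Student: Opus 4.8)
The plan is to reduce the whole statement to the norm estimates for the block injections $\iota^m_{ij}$ and block projections $\pi^m_{ij}$ of \eqref{eq:block-inj-bd}--\eqref{eq:block-proj-bd} that are already produced inside the proof of Proposition~\ref{prop:inj-proj}, namely
\begin{equation*}
\|\iota^s_{ij}(W)\|_{ns}\le C_2(ns)\,C_1'(s,ns-s)\,\|W\|_s,\qquad \|\pi^s_{ij}(W)\|_s\le C_1(s,ns-s)\,C_2(ns)\,\|W\|_{ns}.
\end{equation*}
Under the present hypothesis the constants do not depend on the indices, so these read $\|\iota^m_{ij}(W)\|_{nm}\le C_1'C_2\|W\|_m$ and $\|\pi^m_{ij}(W)\|_m\le C_1C_2\|W\|_{nm}$ for every $m$ (here $n$ is, as in \eqref{eq:inj-bd}--\eqref{eq:proj-bd}, the size of the target/source of $\iota_{ij}$, $\pi_{ij}$). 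Write $\id_m=\id_{\mat{\mathbb{C}}{m}}$ throughout.

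First I would match the amplifications of $\iota_{ij}$ and $\pi_{ij}$ with these block maps. Fix $i,j\le n$ and $m\in\mathbb{N}$. Under the canonical reordering‑of‑tensor‑factors identifications $\mat{(\mat{\vecspace{W}}{n})}{m}\cong\mat{\vecspace{W}}{mn}\cong\mat{(\mat{\vecspace{W}}{m})}{n}$, the map $\id_m\otimes\iota_{ij}$ sends $W$ to $\Sigma^{-1}\iota^m_{ij}(W)\Sigma$, and $\id_m\otimes\pi_{ij}$ sends $W$ to $\pi^m_{ij}(\Sigma W\Sigma^{-1})$, where $\Sigma\in\mat{\mathbb{C}}{mn}$ is the perfect‑shuffle permutation matrix interchanging the ``outer'' and ``inner'' matrix indices. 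This is the only genuinely fiddly step: one must keep careful track of which matrix index is outer and which is inner, and check that interchanging the two matrix tensor legs is exactly conjugation by $\Sigma$, the $\vecspace{W}$‑leg playing no role. I expect this bookkeeping to be the main obstacle; everything else is routine.

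Once the identification is in place the estimates follow mechanically. Since $\Sigma$ is a permutation matrix, $\|\Sigma\|=\|\Sigma^{-1}\|=1$, so \eqref{eq:simprod-norms} applied in $\mat{\vecspace{W}}{mn}$ together with the uniform block estimates gives
\begin{equation*}
\|(\id_m\otimes\iota_{ij})(W)\|_{mn}\le C_2\|\iota^m_{ij}(W)\|_{mn}\le C_1'C_2^2\,\|W\|_m,
\end{equation*}
\begin{equation*}
\|(\id_m\otimes\pi_{ij})(W)\|_m\le C_1C_2\|\Sigma W\Sigma^{-1}\|_{mn}\le C_1C_2^2\,\|W\|_{mn}.
\end{equation*}
Taking the supremum over $m$ yields $\|\iota_{ij}\|_{\cb}\le C_1'C_2^2$ and $\|\pi_{ij}\|_{\cb}\le C_1C_2^2$.

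In the operator space case $C_1=C_1'=C_2=1$, so both completely bounded norms are at most $1$. Because $\pi_{ij}\circ\iota_{ij}=\id_{\vecspace{W}}$, for every $m$ and every $W\in\mat{\vecspace{W}}{m}$ we get $\|W\|_m=\|(\id_m\otimes\pi_{ij})(\id_m\otimes\iota_{ij})(W)\|_m\le\|(\id_m\otimes\iota_{ij})(W)\|_{mn}\le\|W\|_m$, so all inequalities are equalities; hence each $\iota_{ij}$ is completely isometric. Then $\pi_{ij}$ is a completely contractive surjection admitting the completely isometric section $\iota_{ij}$, hence a complete metric surjection (complete coisometry): given $W$ with $\|W\|_m<1$, the element $(\id_m\otimes\iota_{ij})(W)$ has norm $<1$ and maps onto $W$, which is the required lifting of the open unit ball.
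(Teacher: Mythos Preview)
Your proposal is correct and follows essentially the same route as the paper: the paper's proof also conjugates by the perfect-shuffle permutation matrices (denoted there $P(s,n)$) to identify $\id_m\otimes\iota_{ij}$ with $\iota_{ij}^m$ and $\id_m\otimes\pi_{ij}$ with $\pi_{ij}^m$ up to that conjugation, then invokes the uniform block bounds from Proposition~\ref{prop:inj-proj} together with \eqref{eq:simprod-norms}, and finishes the operator-space case via $\pi_{ij}\iota_{ij}=\id_{\vecspace{W}}$. The only cosmetic difference is that the paper writes out the entrywise computation $\sum_{a,b}E_{ab}\otimes E_{ij}w_{ab}=P(s,n)(E_{ij}\otimes W)P(s,n)^{\top}$ explicitly rather than invoking the shuffle identification abstractly.
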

\begin{proof}
It follows from the proof of Proposition \ref{prop:inj-proj} that
$\|\iota_{ij}^s\|\le C_1'C_2$ and $\|\pi_{ij}^s\|\le C_1C_2$ for
all $n,i,j,s$. We have, for any $n,i,j,s$ and
$W\in\mat{\vecspace{W}}{s}$, that
\begin{multline*}
\|(\id_{\mat{\mathbb{C}}{s}}\otimes\iota_{ij})(W)\|_{sn}=
\Big\|\Big(\id_{\mat{\mathbb{C}}{s}}\otimes\iota_{ij}\Big)
\Big(\sum_{a,b=1}^sE_{ab}
w_{ab}\Big)\Big\|_{sn}\\
=\Big\|\sum_{a,b=1}^sE_{ab}\otimes
\iota_{ij}(w_{ab})\Big\|_{sn}=\Big\|\sum_{a,b=1}^sE_{ab}\otimes
(E_{ij}w_{ab})\Big\|_{sn}\\
=\Big\|\sum_{a,b=1}^s(E_{ab}\otimes
E_{ij})w_{ab}\Big\|_{sn}=\Big\|\sum_{a,b=1}^sP(s,n)(E_{ij}\otimes
E_{ab})P(s,n)^{\mtrans}w_{ab}\Big\|_{sn}\\
=\Big\|\sum_{a,b=1}^sP(s,n)(E_{ij}\otimes E_{ab}
w_{ab})P(s,n)^{\mtrans}\Big\|_{sn}=\|P(s,n)(
E_{ij}\otimes W)P(s,n)^{\mtrans}\|_{sn}\\
\le C_2\| E_{ij}\otimes W\|_{sn}=C_2\|\iota_{ij}^s(W)\|_{sn}\le
C_2\|\iota_{ij}^s\|\|W\|_s.
\end{multline*}
Here we use the $sn\times sn$ permutation matrices
$P(s,n)=[E_{ij}^T]_{i=1,\ldots,s;\,j=1,\ldots,n}$ which allow us
to change the order of factors in tensor products:
\begin{equation}\label{eq:perm}
A\otimes B=P(r,p) (B\otimes A)P(m,q)^{\mtrans}
\end{equation}
for any $A\in\rmat{\mathbb{C}}{r}{m}$ and
$B\in\rmat{\mathbb{C}}{p}{q}$. See \cite[Pages 259--261]{HJ}.  We
conclude from the calculation above that
$$\|\iota_{ij}\|_{\cb}\le C_2\sup_{s\in\mathbb{N}}\|\iota_{ij}^s\|\le
C_1'C_2^2.$$

Suppose now that
$$W=[([w_{i'j'}]_{i',j'=1,\ldots,n})_{ab}]_{a,b=1,\ldots,s}\in
\mat{\left(\mat{\vecspace{W}}{n}\right)}{s}\cong
\mat{\mathbb{C}}{s}\otimes \mat{\vecspace{W}}{n}.$$ Then
\begin{multline*}
\widetilde{W}:=P(n,s)WP(n,s)^{\mtrans}=
[([\widetilde{w}_{ab}]_{a,b=1,\ldots,s})_{i'j'}]_{i',j'=1,\ldots,n}\\
\in\mat{\left(\mat{\vecspace{W}}{s}\right)}{n}\cong
\mat{\mathbb{C}}{n}\otimes \mat{\vecspace{W}}{s},
\end{multline*}
where
$$(\widetilde{w}_{ab})_{i'j'}=(w_{i'j'})_{ab},\quad
a,b=1\ldots,s;\ i',j'=1,\ldots,n.$$ We have
\begin{multline*}
\|(\id_{\mat{\mathbb{C}}{s}}\otimes\pi_{ij})(W)\|_s=
\Big\|\Big(\id_{\mat{\mathbb{C}}{s}}\otimes\pi_{ij}\Big)
\Big(\sum_{a,b=1}^s\sum_{i',j'=1}^nE_{ab}\otimes
E_{i'j'}(w_{i'j'})_{ab}
\Big)\Big\|_s\\
=\Big\| \sum_{a,b=1}^sE_{ab}(w_{ij})_{ab} \Big\|_s=\Big\|
\sum_{a,b=1}^sE_{ab}(\widetilde{w}_{ab})_{ij}) \Big\|_s = \|
\widetilde{W}_{ij}\|_s
=\|\pi_{ij}^s(\widetilde{W})\|_s\\
\le \|\pi_{ij}^s\|\,\|\widetilde{W}\|_{sn}\le
C_2\|\pi_{ij}^s\|\,\|W\|_{sn}.
\end{multline*}
Therefore,
$$\|\pi_{ij}\|_{\cb}\le
C_2\sup_{s\in\mathbb{N}}\|\pi_{ij}^s\|\le C_1C_2^2.$$  In the case
of operator spaces, it follows that all the operators $\iota_{ij}$
and $\pi_{ij}$ are complete contractions. Moreover, since
$\pi_{ij}\iota_{ij}=I_{\vecspace{W}}$, we conclude that all
$\iota_{ij}$ are complete isometries, and all $\pi_{ij}$ are
complete coisometries.
\end{proof}

Let $\vecspace{V},\vecspace{W}$ be operator spaces, and let
$\Omega\subseteq\ncspace{\vecspace{V}}$ be a uniformly open nc
set. A nc function $f\colon\Omega\to\ncspace{W}$ is called
\emph{uniformly locally bounded} \index{uniformly locally bounded
nc function} if for any $s\in\mathbb{N}$ and $Y\in\Omega_s$ there
exists a $r>0$ such that $B_{\mathrm{nc}}(Y,r)\subseteq\Omega$ and
$f$ is bounded on $B_{\mathrm{nc}}(Y,r)$, i.e., there is a $M>0$
such that $\|f(X)\|_{sm}\le M$ for all $m\in\mathbb{N}$ and $X\in
B_{\mathrm{nc}}(Y,r)_{sm}$. A function
$f\colon\Omega\to\ncspace{W}$ is called \emph{uniformly analytic}
\index{uniformly analytic function in a nc space} if $f$ is
uniformly locally bounded and analytic, i.e., $f$ is uniformly
locally bounded and G-differentiable. Later on (see Example
\ref{ex:unif-normal_neq_unif}), we will present an example of
analytic (in fact, entire) nc function which is not uniformly
analytic in any neighborhood of $0_{1\times 1}$.
\begin{prop}\label{prop:cont-bdd}
Let $\Omega\subseteq\ncspace{\vecspace{V}}$ be a uniformly open nc
set. If a nc function $f\colon\Omega\to\ncspace{\vecspace{W}}$ is
continuous with respect to the uniformly-open topologies on
$\ncspace{\vecspace{V}}$ and $\ncspace{\vecspace{W}}$, then $f$ is
uniformly locally bounded.
\end{prop}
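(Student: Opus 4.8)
The plan is to deduce uniform local boundedness directly from continuity, exploiting that nc balls form a neighbourhood basis for the uniformly-open topology (Proposition \ref{prop:nc_top}) and that the operator-space axiom keeps the norm of a repeated direct sum constant. Fix $s \in \mathbb{N}$ and $Y \in \Omega_s$. I would apply continuity of $f$ at the single point $Y$: the set $B_{\mathrm{nc}}(f(Y),1) \subseteq \ncspace{\vecspace{W}}$ is a uniformly-open neighbourhood of $f(Y)$ (its $m=1$ slice is the ordinary ball $B(f(Y),1)$, which contains $f(Y)$), so $f^{-1}(B_{\mathrm{nc}}(f(Y),1))$ is a uniformly-open subset of $\Omega$ containing $Y$. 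Since nc balls form a basis of this topology, there exists $\delta > 0$ with $B_{\mathrm{nc}}(Y,\delta) \subseteq \Omega$ and $f\big(B_{\mathrm{nc}}(Y,\delta)\big) \subseteq B_{\mathrm{nc}}(f(Y),1)$.

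Next I would unwind these inclusions level by level. For $m \in \mathbb{N}$ and $X \in B_{\mathrm{nc}}(Y,\delta)_{sm} = B\big(\bigoplus_{\alpha=1}^m Y,\, \delta\big)$, the membership $f(X) \in B_{\mathrm{nc}}(f(Y),1)_{sm}$ reads $\big\| f(X) - \bigoplus_{\alpha=1}^m f(Y) \big\|_{sm} < 1$; here I use that $f$ respects direct sums, so that $\bigoplus_{\alpha=1}^m f(Y) = f\big(\bigoplus_{\alpha=1}^m Y\big)$ is indeed the centre of $B_{\mathrm{nc}}(f(Y),1)$ at level $sm$. Since $\vecspace{W}$ is an operator space, \eqref{eq:os-dirsum-norms} gives $\big\| \bigoplus_{\alpha=1}^m f(Y) \big\|_{sm} = \| f(Y) \|_s$, independently of $m$. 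The triangle inequality then yields $\| f(X) \|_{sm} \le \| f(Y) \|_s + 1$ for all $m$ and all $X \in B_{\mathrm{nc}}(Y,\delta)_{sm}$; that is, $f$ is bounded on $B_{\mathrm{nc}}(Y,\delta)$ by $M := \| f(Y) \|_s + 1$. As $Y \in \Omega_s$ and $s$ were arbitrary, $f$ is uniformly locally bounded.

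The only point requiring care — and the reason the statement is not completely trivial — is that we extract a bound valid simultaneously in all matrix dimensions $sm$ from continuity tested at the lone matrix $Y$. This works precisely because the uniformly-open topology is designed so that a single nc ball around $Y$ already controls all of $\bigoplus_{\alpha=1}^m Y$, and because the operator-space norm of $\bigoplus_{\alpha=1}^m f(Y)$ does not grow with $m$; once these two facts are in place, the argument is just the definitions plus one triangle inequality, and no further estimates are needed.
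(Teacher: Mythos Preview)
Your proof is correct and is essentially identical to the paper's: both pull back the nc ball $B_{\mathrm{nc}}(f(Y),1)$ by continuity to find $B_{\mathrm{nc}}(Y,\delta)\subseteq\Omega$ mapped into it, then combine the direct-sum property of $f$ with the operator-space norm identity $\big\|\bigoplus_{\alpha=1}^m f(Y)\big\|_{sm}=\|f(Y)\|_s$ and the triangle inequality to get the uniform bound $\|f(Y)\|_s+1$. The paper's version is simply more terse.
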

\begin{proof}
Given $s\in\mathbb{N}$ and $Y\in\Omega_s$, there exists a nc ball
$B_{\mathrm{nc}}(Y,r)\subseteq\Omega$ such that
$\|f(X)-f(\bigoplus_{\alpha=1}^mY)\|_{sm}<1$ for all
$m\in\mathbb{N}$ and $X\in B_{\mathrm{nc}}(Y,r)_{sm}$. Then
$$\|f(X)\|_{sm}<\Big\|f\Big(\bigoplus_{\alpha=1}^mY\Big)\Big\|_{sm}+1=
\Big\|\bigoplus_{\alpha=1}^mf(Y)\Big\|_{sm}+1\le \|f(Y)\|_s+1.$$
\end{proof}

Let $l\colon\vecspace{V}\to\vecspace{W}$ be a linear mapping of
operator spaces. We extend $l$ to a nc function
$\widetilde{l}\colon\ncspace{\vecspace{V}}\to\ncspace{\vecspace{W}}$
such that $l_n:=\widetilde{l}|_{\mat{\vecspace{V}}{n}}$ is the
linear mapping from $\mat{\vecspace{V}}{n}$ to
$\mat{\vecspace{W}}{n}$ defined by $l_n([v_{ij}])=[l(v_{ij})]$,
$n\in\mathbb{N}$, i.e., $l_n=\id_{\mat{\mathbb{C}}{n}}\otimes l$.
Then it is clear that $\widetilde{l}$ is uniformly locally bounded
if and only if $\widetilde{l}$ is continuous with respect to the
uniformly-open topologies if and only if $l$ is completely
bounded. We will see shortly (Corollary \ref{cor:nc-u-analytic})
that the first equivalence is true for arbitrary nc functions.

We will present two versions of the main result on the uniform
convergence of the TT series in the uniformly-open topology on two
 different types of domains. The proof of the first theorem (like that of Theorem
\ref{thm:f-queen}) is of a mixed nature: noncommutative and
complex analytic. The second theorem admits a purely
noncommutative proof; in particular, it admits an analogue in the
real case, see Remark \ref{rem:real_king} below.

Let $\vecspace{X}$ be a vector space over $\mathbb{C}$ and let
$Y\in\mat{\vecspace{X}}{s}$. A nc set
$\Upsilon_{\mathrm{nc}}\subseteq\ncspace{\vecspace{X}}$ is called
\emph{complete circular about $Y$} \index{complete circular set
about $Y$} if for every $m\in\mathbb{N}$ the set
$(\Upsilon_{\mathrm{nc}})_{sm}=\Upsilon_{\mathrm{nc}}\cap\mat{\vecspace{X}}{sm}$
is complete circular about $\bigoplus_{\alpha=1}^mY$ (see the
paragraph preceding Theorem \ref{thm:g-queen}). For any
$\epsilon>0$, we define
\begin{equation}\label{eq:ups-eps}
\Upsilon_{\mathrm{nc},\epsilon}:=\coprod_{m=1}^\infty\Big\{X\in(\Upsilon_{\mathrm{nc}})_{sm}\colon
\bigoplus_{\alpha=1}^mY+(1+\epsilon)\Big(X-\bigoplus_{\alpha=1}^mY\Big)
\in(\Upsilon_{\mathrm{nc}})_{sm} \Big\}.
\end{equation}
\index{$\Upsilon_{\mathrm{nc},\epsilon}$}It is easily seen that if $\Upsilon_{\mathrm{nc}}$ is a uniformly
open complete circular nc set about $Y$, then so is
$\Upsilon_{\mathrm{nc},\epsilon}$, and
$\bigcup_{\epsilon>0}\Upsilon_{\mathrm{nc},\epsilon}=\Upsilon_\mathrm{nc}$.
\begin{thm}\label{thm:king-conv}
Let a nc function $f\colon\Omega\to\ncspace{\vecspace{W}}$ be
uniformly locally bounded. Let $s\in\mathbb{N}$, $Y\in\Omega_s$,
and let $\Upsilon_{\mathrm{nc}}$ be a uniformly open complete
circular nc set about $Y$ such that
$\Upsilon_{\mathrm{nc}}\subseteq\Omega$ and $f$ is bounded
$\Upsilon_{\mathrm{nc}}$. Then, for every $\epsilon>0$,
$m\in\mathbb{N}$, and
$X\in(\Upsilon_{\mathrm{nc},\epsilon})_{sm}$,
\begin{equation}\label{eq:tt-king-absolute}
f(X)=\sum_{\ell=0}^\infty\Big(X-\bigoplus_{\alpha=1}^mY\Big)^{\odot_s\ell}\Delta_R^\ell
f \Big(\underset{\ell+1\
\rm{times}}{\underbrace{Y,\ldots,Y}}\Big),
\end{equation}
where the series converges absolutely and uniformly on
$\Upsilon_{\mathrm{nc},\epsilon}$. Moreover,
\begin{equation}\label{eq:tt-king-normal}
\sum_{\ell=0}^\infty\
\sup_{m\in\mathbb{N},\,X\in(\Upsilon_{\mathrm{nc},\epsilon})_{sm}}\Big\|
\Big(X-\bigoplus_{\alpha=1}^mY\Big)^{\odot_s\ell}\Delta_R^\ell
f\Big(\underset{\ell+1\
\rm{times}}{\underbrace{Y,\ldots,Y}}\Big)\Big\|_{sm}<\infty.
\end{equation}
\end{thm}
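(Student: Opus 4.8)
The plan is to reduce everything to a single-variable complex-analytic statement along "nc rays", exactly mirroring the proof of Theorem \ref{thm:f-queen}, but keeping careful track of uniformity in the block-multiplicity $m$. Assume $\|f(X)\|_{sn}\le M$ for all $X\in\Upsilon_{\mathrm{nc}}$ and all admissible $n$. Fix $\epsilon>0$, fix $m$, and fix $X\in(\Upsilon_{\mathrm{nc},\epsilon})_{sm}$; write $Y_m:=\bigoplus_{\alpha=1}^mY$ and $Z:=X-Y_m$. Since $\Upsilon_{\mathrm{nc}}$ is complete circular about $Y$, the function $t\mapsto f(Y_m+tZ)$ is defined for $|t|\le 1+\epsilon$ (using that $Y_m+(1+\epsilon)Z\in(\Upsilon_{\mathrm{nc}})_{sm}$ together with complete circularity), and by Theorem \ref{thm:g-queen}(1) (applicable because a uniformly open nc set is finitely open, hence right admissible, and a uniformly locally bounded nc function is locally bounded on slices) it is G-differentiable along this slice, hence analytic in $t$ on the closed disc $|t|\le 1+\epsilon$.

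\textbf{Second}, I would invoke the bidiagonal-evaluation identity of Theorem \ref{thm:bidiag} together with Theorem \ref{thm:g-queen}(2) to identify the Taylor coefficients of $t\mapsto f(Y_m+tZ)$ at $t=0$ as $\Delta_R^\ell f(Y_m,\ldots,Y_m)(Z,\ldots,Z)$, and then apply Proposition \ref{prop:diag_tensa_k} to rewrite this as $Z^{\odot_s\ell}\Delta_R^\ell f(Y,\ldots,Y)$ — which is the $\ell$-th term of \eqref{eq:tt-king-absolute}. This reduces \eqref{eq:tt-king-normal} to a uniform Cauchy estimate: the Cauchy inequalities on the disc $|t|\le 1+\epsilon$ give
\begin{equation*}
\Big\|Z^{\odot_s\ell}\Delta_R^\ell f(Y,\ldots,Y)\Big\|_{sm}=\Big\|\tfrac{1}{\ell!}\tfrac{d^\ell}{dt^\ell}f(Y_m+tZ)\big|_{t=0}\Big\|_{sm}\le\frac{M}{(1+\epsilon)^\ell},
\end{equation*}
using homogeneity of degree $\ell$ in $Z$ of $Z^{\odot_s\ell}\Delta_R^\ell f(Y,\ldots,Y)$ (Propositions \ref{prop:homog-k}, \ref{prop:add-k}, or directly the homogeneity of $\Delta_R^\ell$). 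The bound $M/(1+\epsilon)^\ell$ is \emph{independent of both $m$ and $X\in(\Upsilon_{\mathrm{nc},\epsilon})_{sm}$}, so summing over $\ell$ yields \eqref{eq:tt-king-normal}, whence the series in \eqref{eq:tt-king-absolute} converges absolutely and uniformly on $\Upsilon_{\mathrm{nc},\epsilon}$; evaluating the convergent power series $\sum_\ell t^\ell Z^{\odot_s\ell}\Delta_R^\ell f(Y,\ldots,Y)$ at $t=1$ and matching with the analytic function $f(Y_m+tZ)$ gives \eqref{eq:tt-king-absolute} itself.

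\textbf{The main obstacle} is making the Cauchy estimate genuinely uniform in $m$: one must verify that the boundedness hypothesis "$f$ is bounded on $\Upsilon_{\mathrm{nc}}$" really does supply a \emph{single} constant $M$ valid in all block sizes $sm$, and that the completion of the slice $\{Y_m+tZ:|t|\le 1+\epsilon\}$ to the closed disc lies inside $\Upsilon_{\mathrm{nc}}$ for every $m$ simultaneously — both of which are exactly what the definitions of "uniformly locally bounded", "complete circular nc set", and \eqref{eq:ups-eps} are designed to guarantee, but the bookkeeping needs care. A secondary point is that one should check the analyticity of $t\mapsto f(Y_m+tZ)$ on the \emph{closed} disc (not merely the open one), which follows because $Y_m+(1+\epsilon)Z\in(\Upsilon_{\mathrm{nc}})_{sm}$ and uniform openness gives a nc ball around it, so the slice function extends analytically slightly past $|t|=1+\epsilon$; alternatively one runs the argument on discs of radius $1+\epsilon'$ for every $\epsilon'<\epsilon$ and lets $\epsilon'\uparrow\epsilon$. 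Once these uniformity bookkeeping issues are dispatched, the rest is a verbatim adaptation of the proof of Theorem \ref{thm:f-queen}.
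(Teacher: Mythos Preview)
Your proposal is correct and follows essentially the same approach as the paper's proof: both reduce to the single-variable Cauchy estimate from the proof of Theorem \ref{thm:f-queen}, observe that the bound $M/(1+\epsilon)^\ell$ is now independent of $m$ because $f$ is bounded by a single constant $M$ on all of $\Upsilon_{\mathrm{nc}}$, and invoke Theorem \ref{thm:g-queen} to identify the sum with $f(X)$. Your write-up is somewhat more explicit about the bookkeeping (identifying Taylor coefficients via Theorem \ref{thm:g-queen}(2) and Proposition \ref{prop:diag_tensa_k}, and the closed-disc subtlety), but the argument is the same.
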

\begin{proof}
Let $s\in\mathbb{N}$ and $Y\in\Omega_s$ be fixed. Let
$\Upsilon_{\mathrm{nc}}$ be a uniformly open complete circular set
about $Y$ on which $f$ is bounded, say by $M$. Given $\epsilon>0$,
$m\in\mathbb{N}$, and
$X\in(\Upsilon_{\mathrm{nc},\epsilon})_{sm}$, as in the proof of
Theorem \ref{thm:f-queen} we have that
\begin{multline}\label{eq:delta-king-conv}
\Big\|
\Big(X-\bigoplus_{\alpha=1}^mY\Big)^{\odot_s\ell}\Delta_R^\ell
f\Big(\underset{\ell+1\
\rm{times}}{\underbrace{Y,\ldots,Y}}\Big)\Big\|_{sm}\\
=
\Big\|\Delta_R^\ell f\Big(\underset{\ell+1\
\rm{times}}{\underbrace{\bigoplus_{\alpha=1}^mY,\ldots,\bigoplus_{\alpha=1}^mY}}\Big)
\Big(\underset{\ell\
\rm{times}}{\underbrace{X-\bigoplus_{\alpha=1}^mY,\ldots,X-\bigoplus_{\alpha=1}^mY}}\Big)
\Big\|_{sm}\le \frac{M}{(1+\epsilon)^\ell},
\end{multline}
but in this case the constant $M$ is independent on $m$. Therefore
\eqref{eq:tt-king-normal} holds and the TT series in
\eqref{eq:tt-king-absolute} converges absolutely and uniformly on
$\Upsilon_{\mathrm{nc},\epsilon}$. The fact that the sum of the
series equals $f(X)$ follows from Theorem \ref{thm:g-queen}.
\end{proof}

Let $\vecspace{X}$ be a vector space over $\mathbb{C}$ and let
$Y\in\mat{\vecspace{X}}{s}$. A nc set
$\Upsilon_{\mathrm{nc}}\subseteq\ncspace{\vecspace{X}}$ is called
\emph{matrix circular about $Y$} \index{matrix circular set about
$Y$} if for every $m\in\mathbb{N}$, $X\in
(\Upsilon_{\mathrm{nc}})_{sm}$, and unitary matrices
$U,V\in\mat{\mathbb{C}}{m}$ we have
$$\bigoplus_{\alpha=1}^mY+(U\otimes
I_s)\Big(X-\bigoplus_{\alpha=1}^mY\Big)(V\otimes
I_s)\in(\Upsilon_{\mathrm{nc}})_{sm}.$$ For any $\epsilon>0$, we
define $\Upsilon_{\mathrm{nc},\epsilon}$ as in \eqref{eq:ups-eps}.
It is easily seen that if $\Upsilon_{\mathrm{nc}}$ is a uniformly
open matrix circular nc set about $Y$, then so is
$\Upsilon_{\mathrm{nc},\epsilon}$.
\begin{lem}\label{lem:hvacts}
Let $\Upsilon_{\mathrm{nc}}\subseteq \ncspace{\vecspace{X}}$ be a
matrix circular nc set about $Y\in\mat{\vecspace{X}}{s}$. Then,
for every $m,N\in\mathbb{N}$, $X\in(\Upsilon_{\mathrm{nc}})_{sm}$,
and $\zeta,\eta\in\mathbb{C}$ with $|\zeta|^2+|\eta|^2=1$,
\begin{equation}\label{eq:hvact_A}
A:=\begin{bmatrix}
\bigoplus\limits_{\alpha=1}^mY & X-\bigoplus\limits_{\alpha=1}^mY & 0 & \ldots & 0\\
0 & \ddots & \ddots & \ddots & \vdots\\
\vdots & \ddots  & \ddots & \ddots & 0 \\
\vdots &  & \ddots & \ddots & X-\bigoplus\limits_{\alpha=1}^mY\\
0 & \ldots & \ldots & 0 & \bigoplus\limits_{\alpha=1}^mY
 \end{bmatrix}\in(\Upsilon_{\mathrm{nc}})_{sm(N+1)}
 \end{equation}
and
{\small \begin{equation}\label{eq:hvact_B}
B:=\begin{bmatrix}
\bigoplus\limits_{\alpha=1}^mY & X-\bigoplus\limits_{\alpha=1}^mY & 0 & \ldots & 0\\
0 & \ddots & \ddots & \ddots & \vdots\\
\vdots & \ddots  & \ddots & X-\bigoplus\limits_{\alpha=1}^mY & 0 \\
\vdots &  & \ddots & \bigoplus\limits_{\alpha=1}^mY & \zeta
\left(X-\bigoplus\limits_{\alpha=1}^mY\right)\\
0 & \ldots & \ldots & 0 &
\bigoplus\limits_{\alpha=1}^mY+\eta\left(X-\bigoplus\limits_{\alpha=1}^mY\right)
 \end{bmatrix}\in(\Upsilon_{\mathrm{nc}})_{sm(N+2)}.
\end{equation}
}
\end{lem}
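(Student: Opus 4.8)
The plan is to realize each of $A$ and $B$ by applying one matrix‐circular conjugation to a suitable direct sum of copies of $X$ and of $\bigoplus_{\alpha=1}^m Y$, the conjugating unitaries being supplied by a singular value decomposition. Set $W:=X-\bigoplus_{\alpha=1}^m Y\in\mat{\vecspace{X}}{sm}$, and for $p\in\mathbb{N}$ and $C\in\mat{\mathbb{C}}{p}$ write $C\otimes W$ for the $p\times p$ block matrix with $sm\times sm$ blocks whose $(i,j)$ block is $C_{ij}W$. The two ingredients I would use are: (i) for a unitary $P\in\mat{\mathbb{C}}{p}$ the matrix $U:=P\otimes I_m\in\mat{\mathbb{C}}{pm}$ is unitary, and under the identification $\mathbb{C}^{s\cdot pm}\cong\mathbb{C}^{pm}\otimes\mathbb{C}^{s}$ built into the definition of a matrix circular set one has $(U\otimes I_s)\,(C\otimes W)\,(V\otimes I_s)=(PCP')\otimes W$ whenever $V=P'\otimes I_m$; and (ii) if $C_1,C_2\in\mat{\mathbb{C}}{p}$ have the same list of singular values, then $C_2=PC_1P'$ for some unitary $P,P'\in\mat{\mathbb{C}}{p}$ (singular value decomposition).

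For \eqref{eq:hvact_A}: since $\Upsilon_{\mathrm{nc}}$ is a nc set, $Z:=\bigl(\bigoplus_{\beta=1}^{N}X\bigr)\oplus\bigl(\bigoplus_{\alpha=1}^{m}Y\bigr)\in(\Upsilon_{\mathrm{nc}})_{sm(N+1)}$; writing $X=\bigoplus_{\alpha=1}^m Y+W$ one gets $Z=\bigl(\bigoplus_{\gamma=1}^{(N+1)m}Y\bigr)+D\otimes W$ with $D=\operatorname{diag}(1,\dots,1,0)\in\mat{\mathbb{C}}{(N+1)}$. On the other hand $A=\bigl(\bigoplus_{\gamma=1}^{(N+1)m}Y\bigr)+J\otimes W$ with $J=\sum_{\beta=1}^{N}E_{\beta,\beta+1}\in\mat{\mathbb{C}}{(N+1)}$. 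Since $J^{*}J=\operatorname{diag}(0,1,\dots,1)$, both $D$ and $J$ have singular values $1,\dots,1,0$, so by (ii) there are unitaries $P,P'$ with $PDP'=J$. Applying the matrix circular property about $Y$ to $Z$ at level $(N+1)m$ with $U=P\otimes I_m$, $V=P'\otimes I_m$, and using (i), we obtain $\bigl(\bigoplus_{\gamma=1}^{(N+1)m}Y\bigr)+J\otimes W=A\in(\Upsilon_{\mathrm{nc}})_{sm(N+1)}$.

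For \eqref{eq:hvact_B}: identically, $Z':=\bigl(\bigoplus_{\beta=1}^{N+1}X\bigr)\oplus\bigl(\bigoplus_{\alpha=1}^{m}Y\bigr)=\bigl(\bigoplus_{\gamma=1}^{(N+2)m}Y\bigr)+D'\otimes W$ with $D'=\operatorname{diag}(1,\dots,1,0)\in\mat{\mathbb{C}}{(N+2)}$ ($N+1$ ones), and $B=\bigl(\bigoplus_{\gamma=1}^{(N+2)m}Y\bigr)+K\otimes W$ with $K=\sum_{\beta=1}^{N}E_{\beta,\beta+1}+\zeta E_{N+1,N+2}+\eta E_{N+2,N+2}\in\mat{\mathbb{C}}{(N+2)}$. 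Here the hypothesis $|\zeta|^{2}+|\eta|^{2}=1$ is exactly what forces $K^{*}K=\operatorname{diag}(0,1,\dots,1)$: columns $2,\dots,N+1$ of $K$ are distinct standard basis vectors, column $N+2$ is a unit vector orthogonal to them, and column $1$ is zero. Hence $K$ has the same singular values as $D'$, (ii) yields unitaries $P,P'$ with $PD'P'=K$, and the matrix circular property at level $(N+2)m$ gives $B\in(\Upsilon_{\mathrm{nc}})_{sm(N+2)}$.

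The only real obstacle is the bookkeeping behind ingredient (i): one must spell out the identification $\mathbb{C}^{s\cdot pm}\cong\mathbb{C}^{pm}\otimes\mathbb{C}^{s}$ used in the definition of matrix circular sets, verify that $U\otimes I_s$ with $U=P\otimes I_m$ acts on the block matrix $C\otimes W$ by $P(\cdot)P'$ on the block indices while leaving the $\mat{\vecspace{X}}{s}$‑entries untouched, and carry out the (routine but index‑heavy) computation of $K^{*}K$. Everything else is a direct invocation of the nc‑set and matrix‑circular hypotheses; in particular the same argument works verbatim over $\mathbb{R}$.
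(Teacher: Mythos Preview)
Your argument is correct and follows the same route as the paper: write the target matrix as $\bigoplus Y + C\otimes W$ for a suitable scalar matrix $C$, observe that $C$ is unitarily equivalent (on both sides) to the diagonal $D=\mathrm{diag}(1,\dots,1,0)$, and then apply the matrix-circular property to the direct sum $X^{\oplus N}\oplus\bigl(\bigoplus_{\alpha}Y\bigr)$ (respectively $X^{\oplus(N+1)}\oplus\bigl(\bigoplus_{\alpha}Y\bigr)$). The only difference is cosmetic: the paper writes down the unitaries explicitly (a cyclic block shift for $A$, and a cyclic shift combined with a $2\times 2$ rotation block $\bigl[\begin{smallmatrix}-\eta&\zeta\\\bar\zeta&\bar\eta\end{smallmatrix}\bigr]$ for $B$, acting only on the left), whereas you invoke the singular value decomposition to infer their existence; your formulation has the virtue of making it transparent that $|\zeta|^2+|\eta|^2=1$ is precisely what forces the singular values of $K$ to match those of $D'$.

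One small point worth flagging: when you write ``since $\Upsilon_{\mathrm{nc}}$ is a nc set, $Z=\bigl(\bigoplus_{\beta=1}^{N}X\bigr)\oplus\bigl(\bigoplus_{\alpha=1}^{m}Y\bigr)\in\Upsilon_{\mathrm{nc}}$'', you are tacitly using that $\bigoplus_{\alpha=1}^m Y\in\Upsilon_{\mathrm{nc}}$, which is not part of the stated hypotheses. The paper's proof uses exactly the same unstated assumption (it needs $\bigoplus_{\alpha}Y\oplus X^{\oplus N}\in\Upsilon_{\mathrm{nc}}$), and in all the applications in the text (e.g., nc balls) it is automatically satisfied; so this is a shared imprecision rather than a defect in your argument.
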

\begin{proof}
We observe that \begin{multline*}\left( \begin{bmatrix}
0 & \ldots & \ldots & 0 & I_m\\
I_m & \ddots &  & \vdots & 0\\
0 & \ddots  & \ddots & \vdots & \vdots \\
\vdots & \ddots & \ddots & 0 & \vdots\\
0 & \ldots & 0 & I_m & 0
 \end{bmatrix}\otimes I_s\right)\left(A-\bigoplus\limits_{\alpha=1}^{m(N+1)}Y\right)\\
 =\begin{bmatrix}
0 & 0 & \ldots & 0 \\
0 & X-\bigoplus\limits_{\alpha=1}^mY & \ddots & \vdots \\
\vdots & \ddots & \ddots  & 0\\
0 & \ldots & 0 & X-\bigoplus\limits_{\alpha=1}^mY
 \end{bmatrix}
 \end{multline*}
 and
\begin{multline*}
\left(\begin{bmatrix}
0 & \ldots & \ldots & 0 & -\eta I_m& \zeta I_m\\
I_m & \ddots &  & \vdots & 0 & 0\\
0 & \ddots  & \ddots & \vdots & \vdots & \vdots \\
\vdots & \ddots & \ddots & 0 & \vdots & \vdots\\
\vdots &  & \ddots & I_m & 0 & 0\\
0 & \ldots & \ldots & 0 & \overline{\zeta}I_m & \overline{\eta}I_m
 \end{bmatrix}\otimes I_s\right)\left(B-\bigoplus\limits_{\alpha=1}^{m(N+2)}Y\right)\\
 =\begin{bmatrix}
0 & 0 & \ldots & 0 \\
0 & X-\bigoplus\limits_{\alpha=1}^mY & \ddots & \vdots \\
\vdots & \ddots & \ddots  & 0\\
0 & \ldots & 0 & X-\bigoplus\limits_{\alpha=1}^mY
 \end{bmatrix},
 \end{multline*}
 with square matrices of size $sm(N+1)$ in the first equality and
 $sm(N+2)$ in the second equality. Hence the result follows.
\end{proof}
\begin{thm}\label{thm:king-conv-nc}
Let a nc function $f\colon\Omega\to\ncspace{\vecspace{W}}$ be
uniformly locally bounded. Let $s\in\mathbb{N}$, $Y\in\Omega_s$,
and let $\Upsilon_{\mathrm{nc}}$ be a uniformly open matrix
circular nc set about $Y$ that is contained in $\Omega$ and on
which $f$ is bounded. Then, for every $\epsilon>0$,
\eqref{eq:tt-king-absolute} is true, with the TT series
converging absolutely and uniformly, on
$\Upsilon_{\mathrm{nc},\epsilon}$ and \eqref{eq:tt-king-normal}
holds.
\end{thm}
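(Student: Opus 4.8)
The plan is to follow the architecture of the proof of Theorem~\ref{thm:king-conv}, but to replace its one ingredient of genuinely complex-analytic nature — the single-variable Cauchy estimate on the scalar slice through $\bigoplus_{\alpha=1}^mY$ and $X$, which is unavailable here because matrix circularity (unlike complete circularity) does not force the scalar slice disc of radius $1+\epsilon$ to sit inside $\Upsilon_{\mathrm{nc}}$ — by a direct evaluation of $f$ on block upper bidiagonal matrices, keeping those matrices inside $\Upsilon_{\mathrm{nc}}$ by means of Lemma~\ref{lem:hvacts}. As in Theorem~\ref{thm:king-conv} we may assume $\|f(X)\|\le M$ for all $X\in\Upsilon_{\mathrm{nc}}$.

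First I would establish the coefficient estimate. Fix $\epsilon>0$, $m\in\mathbb{N}$, and $X\in(\Upsilon_{\mathrm{nc},\epsilon})_{sm}$; put $Z:=X-\bigoplus_{\alpha=1}^mY$ and $X':=\bigoplus_{\alpha=1}^mY+(1+\epsilon)Z$, so that $X'\in(\Upsilon_{\mathrm{nc}})_{sm}$ by the definition of $\Upsilon_{\mathrm{nc},\epsilon}$. Applying the first part of Lemma~\ref{lem:hvacts} with $X'$ in place of $X$ and $N=\ell$, the $(\ell+1)\times(\ell+1)$ block bidiagonal matrix $A'$ with diagonal blocks $\bigoplus_{\alpha=1}^mY$ and superdiagonal blocks $X'-\bigoplus_{\alpha=1}^mY=(1+\epsilon)Z$ lies in $(\Upsilon_{\mathrm{nc}})_{sm(\ell+1)}$, hence $\|f(A')\|_{sm(\ell+1)}\le M$. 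By Theorem~\ref{thm:bidiag}, the $(1,\ell+1)$ block entry of $f(A')$ is $\Delta_R^\ell f(\bigoplus_{\alpha=1}^mY,\ldots,\bigoplus_{\alpha=1}^mY)((1+\epsilon)Z,\ldots,(1+\epsilon)Z)$, which by multilinearity and Proposition~\ref{prop:diag_tensa_k} equals $(1+\epsilon)^\ell\bigl(X-\bigoplus_{\alpha=1}^mY\bigr)^{\odot_s\ell}\Delta_R^\ell f(Y,\ldots,Y)$. Since $\vecspace{W}$ is an operator space, passing to a block entry of a matrix does not increase its norm (by~\eqref{eq:os-dirsum-norms}--\eqref{eq:os-simprod-norms}; cf.\ Proposition~\ref{prop:ucb-inj-proj}), so $\bigl\|\bigl(X-\bigoplus_{\alpha=1}^mY\bigr)^{\odot_s\ell}\Delta_R^\ell f(Y,\ldots,Y)\bigr\|_{sm}\le M/(1+\epsilon)^\ell$, a bound independent of $m$ and of $X\in(\Upsilon_{\mathrm{nc},\epsilon})_{sm}$. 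Summing the geometric series $\sum_\ell M/(1+\epsilon)^\ell$ yields~\eqref{eq:tt-king-normal} and the absolute and uniform convergence of the TT series on $\Upsilon_{\mathrm{nc},\epsilon}$.

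It remains to identify the sum with $f(X)$, for which I would use the purely algebraic TT formula with remainder, Theorem~\ref{thm:tt-power-gen}: for every $N$, $f(X)=\sum_{\ell=0}^N\bigl(X-\bigoplus_{\alpha=1}^mY\bigr)^{\odot_s\ell}\Delta_R^\ell f(Y,\ldots,Y)+R_N$ with $R_N=\bigl(X-\bigoplus_{\alpha=1}^mY\bigr)^{\odot_s N+1}\Delta_R^{N+1}f(Y,\ldots,Y,X)$. To bound $R_N$, apply the second part of Lemma~\ref{lem:hvacts} with $X'$ in place of $X$ and with parameters $\eta=1/(1+\epsilon)$ and $\zeta=\sqrt{1-1/(1+\epsilon)^2}>0$: since $\eta(1+\epsilon)=1$, the last diagonal block $\bigoplus_{\alpha=1}^mY+\eta(X'-\bigoplus_{\alpha=1}^mY)$ equals $X$, and the resulting $(N+2)\times(N+2)$ block bidiagonal matrix $B'$ lies in $(\Upsilon_{\mathrm{nc}})_{sm(N+2)}$, so $\|f(B')\|_{sm(N+2)}\le M$. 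By Theorem~\ref{thm:bidiag}, multilinearity, and Proposition~\ref{prop:diag_tensa_k}, the $(1,N+2)$ block entry of $f(B')$ equals $\zeta(1+\epsilon)^{N+1}R_N$; the block-projection bound then gives $\|R_N\|_{sm}\le M/\bigl(\zeta(1+\epsilon)^{N+1}\bigr)\to 0$ as $N\to\infty$, uniformly over $m$ and $X\in(\Upsilon_{\mathrm{nc},\epsilon})_{sm}$. Together with the convergence of the partial sums this gives~\eqref{eq:tt-king-absolute}.

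The argument is essentially mechanical once Lemma~\ref{lem:hvacts} is available; the only delicate point is the bookkeeping in the two applications of that lemma — rescaling the center to $X'$ so as to produce the factor $(1+\epsilon)^\ell$ in the coefficient estimate, and choosing $\eta,\zeta$ with $\eta(1+\epsilon)=1$ so that $B'$ retains $X$ as its last diagonal block while the superdiagonal scalings still yield the decaying factor $\zeta(1+\epsilon)^{N+1}$ in the remainder — and the single use of the operator-space hypothesis, which enters only through the norm non-increase under passage to a block entry.
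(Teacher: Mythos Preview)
Your proof is correct and follows essentially the same approach as the paper's: both arguments use Lemma~\ref{lem:hvacts} (with the center shifted to $X'=\bigoplus_{\alpha=1}^mY+(1+\epsilon)Z$) together with Theorem~\ref{thm:bidiag} to extract $\Delta_R^\ell f$ and the remainder as block entries of $f$ evaluated on bidiagonal matrices lying in $\Upsilon_{\mathrm{nc}}$, then apply the contractive block projection available in operator spaces. Your parameters $\eta=1/(1+\epsilon)$, $\zeta=\sqrt{1-1/(1+\epsilon)^2}$ agree with the paper's choice $\delta=\sqrt{\epsilon^2+2\epsilon}=\zeta(1+\epsilon)$, so even the constants match.
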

\begin{proof}
 Let $\Upsilon_{\mathrm{nc}}$ be a uniformly open matrix circular nc set about
 $Y$, with the norm of
$f$ bounded on $\Upsilon_{\mathrm{nc}}$, say by $M>0$. For
$\epsilon>0$, $m\in\mathbb{N}$, and $X\in
(\Upsilon_{\mathrm{nc},\epsilon})_{sm}$, we have
{\small\begin{multline}\label{eq:delta-king-conv-nc}
\Big\|\Delta_R^\ell f\Big(\underset{\ell+1\
\rm{times}}{\underbrace{\bigoplus_{\alpha=1}^mY,\ldots,\bigoplus_{\alpha=1}^mY}}\Big)
\Big(\underset{\ell\
\rm{times}}{\underbrace{X-\bigoplus_{\alpha=1}^mY,\ldots,X-\bigoplus_{\alpha=1}^mY}}\Big)
\Big\|_{sm}\\
=\!\!(1+\epsilon)^{-\ell}\!\Big\|\Delta_R^\ell
f\Big(\underset{\ell+1\
\rm{times}}{\underbrace{\bigoplus_{\alpha=1}^mY,\ldots,\bigoplus_{\alpha=1}^mY}}\Big)
\Big(\underset{\ell\
\rm{times}}{\underbrace{(1+\epsilon)\Big(X-\bigoplus_{\alpha=1}^mY\Big),\ldots,
(1+\epsilon)\ \Big(X-\bigoplus_{\alpha=1}^mY\Big)}}\Big)\Big\|_{sm}\\
=\!\!(1+\epsilon)^{-\ell}\!\!\left\|\pi_{1,\ell+1}f\left(\begin{bmatrix}
\bigoplus\limits_{\alpha=1}^mY & (1+\epsilon)(X-\bigoplus\limits_{\alpha=1}^mY) & 0 & \ldots & 0\\
0 & \ddots & \ddots & \ddots & \vdots\\
\vdots & \ddots  & \ddots & \ddots & 0 \\
\vdots &  & \ddots & \ddots & (1+\epsilon)(X-\bigoplus\limits_{\alpha=1}^mY)\\
0 & \ldots & \ldots & 0 & \bigoplus\limits_{\alpha=1}^mY
 \end{bmatrix}\right)\right\|_{sm}\\
 \le \frac{M}{(1+\epsilon)^\ell},
\end{multline} }
which implies that \eqref{eq:tt-king-normal} holds and the TT
series in \eqref{eq:tt-king-absolute} converges absolutely and
uniformly on $\Upsilon_{\mathrm{nc},\epsilon}$. Here we used the
fact that
{\small \begin{equation}\label{eq:hvact_eps_A} \begin{bmatrix}
\bigoplus\limits_{\alpha=1}^mY & (1+\epsilon)(X-\bigoplus\limits_{\alpha=1}^mY) & 0 & \ldots & 0\\
0 & \ddots & \ddots & \ddots & \vdots\\
\vdots & \ddots  & \ddots & \ddots & 0 \\
\vdots &  & \ddots & \ddots & (1+\epsilon)(X-\bigoplus\limits_{\alpha=1}^mY)\\
0 & \ldots & \ldots & 0 & \bigoplus\limits_{\alpha=1}^mY
 \end{bmatrix}\in(\Upsilon_{\mathrm{nc}})_{sm(\ell+1)}
 \end{equation}
}
by  \eqref{eq:hvact_A}, with $X$ replaced by
$\bigoplus\limits_{\alpha=1}^mY+
(1+\epsilon)(X-\bigoplus\limits_{\alpha=1}^mY)$ and $N$ replaced
by $\ell$.

It remains to prove that the sum of the series equals $f(X)$,
i.e., that the remainder term in the TT formula (see \eqref{eq:TT}
and \eqref{eq:tt-power-gen}),
\begin{multline*} \Delta_R^{N+1}f\Big(\underset{N+1\
\rm{times}}{\underbrace{\bigoplus_{\alpha=1}^mY,\ldots,\bigoplus_{\alpha=1}^mY}},X\Big)
\Bigg(\underset{N+1\
\rm{times}}{\underbrace{\Big(X-\bigoplus_{\alpha=1}^mY\Big),\ldots,
\Big(X-\bigoplus_{\alpha=1}^mY\Big)}}\Bigg)\\
=\Big(X-\bigoplus_{\alpha=1}^mY\Big)^{\odot_s(N+1)}\Delta_R^{N+1}f\Big(\underset{N+1\
\rm{times}}{\underbrace{Y,\ldots,Y},X}\Big),
\end{multline*}
tends to $0$ as $N\to\infty$. We have {\small\begin{multline*}
\Big\|\Delta_R^{N+1}f\Big(\underset{N+1\
\rm{times}}{\underbrace{\bigoplus_{\alpha=1}^mY,\ldots,\bigoplus_{\alpha=1}^mY}},X\Big)
\Big(\underset{N+1\
\rm{times}}{\underbrace{\big(X-\bigoplus_{\alpha=1}^mY\big),\ldots,
\big(X-\bigoplus_{\alpha=1}^mY\big)}}\Big)\Big\|_{sm}\\
=\frac{1}{\delta(1+\epsilon)^N}
\Big\|\Delta_R^{N+1}f\Big(\underset{N+1\
\rm{times}}{\underbrace{\bigoplus_{\alpha=1}^mY,\ldots,\bigoplus_{\alpha=1}^mY}},X\Big)\Big. \hfill\\
\hfill\Big.\Big(\underset{N\
\rm{times}}{\underbrace{(1+\epsilon)\big(X-\bigoplus_{\alpha=1}^mY\big),\ldots,
(1+\epsilon)\big(X-\bigoplus_{\alpha=1}^mY\big)}},
\delta\big(X-\bigoplus_{\alpha=1}^mY\big)\Big)\Big\|_{sm}\\
=\frac{1}{\delta(1+\epsilon)^N}\hfill\\
\cdot\left\|\pi_{1,N+2}f\!\!\left(
\begin{bmatrix}
\bigoplus\limits_{\alpha=1}^mY & \!\!(1+\epsilon)(X-\bigoplus\limits_{\alpha=1}^mY) & 0 & \ldots & \!\! 0\\
0 & \ddots & \ddots & \ddots & \!\!\vdots\\
\vdots & \ddots  & \ddots & \!\!(1+\epsilon)(X-\bigoplus\limits_{\alpha=1}^mY) & \!\! 0 \\
\vdots &  & \ddots & \bigoplus\limits_{\alpha=1}^mY & \!\!\delta(X-\bigoplus\limits_{\alpha=1}^mY)\\
0 & \ldots & \ldots & 0 & \!\! X
 \end{bmatrix}\right)\right\|_{sm}\\
 \le \frac{M}{\delta(1+\epsilon)^N}\longrightarrow 0
 \end{multline*}
 }
 as $N\to \infty$. The last inequality holds with
 $\delta=\sqrt{\epsilon^2+2\epsilon}$; indeed, for this choice of
 $\delta$ we have
{\small\begin{equation}\label{eq:hvact_eps_B}
\begin{bmatrix}
\bigoplus\limits_{\alpha=1}^mY & (1+\epsilon)(X-\bigoplus\limits_{\alpha=1}^mY) & 0 & \ldots & 0\\
0 & \ddots & \ddots & \ddots & \vdots\\
\vdots & \ddots  & \ddots & (1+\epsilon)(X-\bigoplus\limits_{\alpha=1}^mY) & 0 \\
\vdots &  & \ddots & \bigoplus\limits_{\alpha=1}^mY & \delta(X-\bigoplus\limits_{\alpha=1}^mY)\\
0 & \ldots & \ldots & 0 & X
 \end{bmatrix}\in(\Upsilon_{\mathrm{nc}})_{sm(N+2)}
\end{equation}
} by  \eqref{eq:hvact_B}  with $X$ replaced by
$\bigoplus\limits_{\alpha=1}^mY+
(1+\epsilon)(X-\bigoplus\limits_{\alpha=1}^mY)$,
$\zeta=\frac{\delta}{1+\epsilon}$, $\eta=\frac{1}{1+\epsilon}$.
\end{proof}
\begin{rem}\label{rem:hvacts}
Notice that Theorem \ref{thm:king-conv-nc} holds under a weaker
assumption that $\Upsilon_{\mathrm{nc}}$ is a uniformly open nc
set satisfying \eqref{eq:hvact_A} and \eqref{eq:hvact_B}.
\end{rem}
\begin{rem}\label{rem:hvacts-eps}
Notice that in the proof of Theorem \ref{thm:king-conv-nc} we
actually established the following result. Let
$\Upsilon_{\mathrm{nc}}$ be a uniformly open nc set, which
contains $Y$, is contained in $\Omega$, and on which $f$ is
bounded. Then, for every $\epsilon>0$, \eqref{eq:tt-king-absolute}
holds, with the series converging absolutely and uniformly, on the
set of all $X\in\Omega$ such that
\begin{itemize}
\item \eqref{eq:hvact_eps_A} holds for all $\ell\in\mathbb{N}$;
\item there exists $\delta>0$ such that \eqref{eq:hvact_eps_B}
holds for all $N\in\mathbb{N}$.
\end{itemize}
This gives even weaker assumptions on a domain where the TT series
in \eqref{eq:tt-king-absolute} converges absolutely and uniformly.
\end{rem}

We notice an important special case of the two previous theorems
 for the uniform convergence on open nc balls --- clearly, nc
 balls are both complete circular and matrix circular.
\begin{cor}\label{cor:nc-balls}
Let a nc function $f\colon\Omega\to\ncspace{\vecspace{W}}$ be
uniformly locally bounded. For every $s\in\mathbb{N}$,
$Y\in\Omega_s$, let $\delta:=\sup\{r>0\colon f\ \mathrm{is\
bounded\ on\ } B_{\mathrm{nc}}(Y,r)\}$. Then
\eqref{eq:tt-king-absolute} holds, with the TT series converging
absolutely and uniformly, on every open nc ball
$B_{\mathrm{nc}}(Y,r)$ with $r<\delta$. Moreover,
\begin{equation}\label{eq:tt-king-normal'}
\sum_{\ell=0}^\infty\ \sup_{m\in\mathbb{N},\,X\in
B_{\mathrm{nc}}(Y,r)_{sm}}\Big\|
\Big(X-\bigoplus_{\alpha=1}^mY\Big)^{\odot_s\ell}\Delta_R^\ell
f\Big(\underset{\ell+1\
\rm{times}}{\underbrace{Y,\ldots,Y}}\Big)\Big\|_{sm}<\infty.
\end{equation}
\end{cor}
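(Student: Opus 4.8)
The plan is to deduce Corollary~\ref{cor:nc-balls} directly from Theorem~\ref{thm:king-conv} (equivalently, from Theorem~\ref{thm:king-conv-nc}), using the fact that an open nc ball is both a complete circular and a matrix circular nc set about its center. First I would fix $s\in\mathbb{N}$, $Y\in\Omega_s$, and $r<\delta$, and choose $r'$ with $r<r'<\delta$; by the definition of $\delta$ there is such an $r'$ with $B_{\mathrm{nc}}(Y,r')\subseteq\Omega$ and with $f$ bounded on $B_{\mathrm{nc}}(Y,r')$, say by $M>0$.

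Next I would verify that $\Upsilon_{\mathrm{nc}}:=B_{\mathrm{nc}}(Y,r')$ is a uniformly open complete circular nc set about $Y$. It is uniformly open by Proposition~\ref{prop:nc_top}. It is a nc set because, for $X\in B(\bigoplus_{\alpha=1}^mY,r')$ and $X''\in B(\bigoplus_{\alpha=1}^{m'}Y,r')$, the estimate \eqref{eq:dirsums-norms} with $C_1'=1$ (valid since $\vecspace{W}$ is an operator space) gives $\|X\oplus X''-\bigoplus_{\alpha=1}^{m+m'}Y\|_{s(m+m')}=\max\{\|X-\bigoplus_{\alpha=1}^mY\|_{sm},\|X''-\bigoplus_{\alpha=1}^{m'}Y\|_{sm'}\}<r'$. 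It is complete circular about $Y$ because each slice $B(\bigoplus_{\alpha=1}^mY,r')$ is an ordinary open ball centered at $\bigoplus_{\alpha=1}^mY$, hence a c-star about that point. Alternatively, each such slice is invariant under $X\mapsto\bigoplus_{\alpha=1}^mY+(U\otimes I_s)(X-\bigoplus_{\alpha=1}^mY)(V\otimes I_s)$ for unitary $U,V$, by \eqref{eq:simprod-norms} with $C_2=1$, so $\Upsilon_{\mathrm{nc}}$ is also matrix circular and Theorem~\ref{thm:king-conv-nc} applies equally well.

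Then I would pick $\epsilon>0$ small enough that $(1+\epsilon)r\le r'$, i.e.\ $\epsilon\le r'/r-1$, and check that $B_{\mathrm{nc}}(Y,r)\subseteq\Upsilon_{\mathrm{nc},\epsilon}$: if $X\in B(\bigoplus_{\alpha=1}^mY,r)_{sm}$, then $\|\bigoplus_{\alpha=1}^mY+(1+\epsilon)(X-\bigoplus_{\alpha=1}^mY)-\bigoplus_{\alpha=1}^mY\|_{sm}=(1+\epsilon)\|X-\bigoplus_{\alpha=1}^mY\|_{sm}<(1+\epsilon)r\le r'$, so $X\in(\Upsilon_{\mathrm{nc},\epsilon})_{sm}$. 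Applying Theorem~\ref{thm:king-conv} to this $\Upsilon_{\mathrm{nc}}$ and this $\epsilon$ yields \eqref{eq:tt-king-absolute} on $\Upsilon_{\mathrm{nc},\epsilon}$, with absolute and uniform convergence, and hence on the smaller set $B_{\mathrm{nc}}(Y,r)$; restricting the supremum in \eqref{eq:tt-king-normal} to $B_{\mathrm{nc}}(Y,r)_{sm}$ gives precisely \eqref{eq:tt-king-normal'}.

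The statement is genuinely a corollary, so there is no serious obstacle. The only points requiring care are: (a) confirming that nc balls over an operator space really are complete circular (and matrix circular) nc sets — which rests on the sharp operator-space axioms \eqref{eq:os-dirsum-norms}--\eqref{eq:os-simprod-norms} with unit constants; and (b) the elementary inclusion $B_{\mathrm{nc}}(Y,r)\subseteq\Upsilon_{\mathrm{nc},\epsilon}$ for an appropriate $\epsilon$. Everything else is a direct citation of Theorems~\ref{thm:king-conv} and \ref{thm:king-conv-nc}, together with the already-established TT formula \eqref{eq:tt-power-gen}.
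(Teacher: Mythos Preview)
Your proposal is correct and follows exactly the approach indicated in the paper, which simply notes that nc balls are both complete circular and matrix circular and states the corollary as an immediate special case of Theorems~\ref{thm:king-conv} and \ref{thm:king-conv-nc} without further proof. You have merely filled in the routine details (choice of $r'$ with $r<r'<\delta$, the inclusion $B_{\mathrm{nc}}(Y,r)\subseteq\Upsilon_{\mathrm{nc},\epsilon}$ for $\epsilon\le r'/r-1$) that the paper leaves implicit.
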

\begin{rem}\label{rem:king-tt-gen'}
In the case where $Y=\mu\in\Omega_1$, Theorems \ref{thm:king-conv}
and \ref{thm:king-conv-nc}, and Corollary \ref{cor:nc-balls}, give
results on the absolute and uniform convergence of the TT series
\eqref{eq:tt-series-gen'} centered at a scalar point.
\end{rem}
\begin{cor}\label{cor:nc-u-analytic}
Let $\Omega\subseteq\ncspace{\vecspace{V}}$ be a uniformly open nc
set. Then a nc function $f\colon\Omega\to\ncspace{\vecspace{W}}$
is uniformly locally bounded if and only if $f$ is continuous with
respect to the uniformly-open topologies on
$\ncspace{\vecspace{V}}$ and $\ncspace{\vecspace{W}}$ if and only
if $f$ is uniformly analytic.
\end{cor}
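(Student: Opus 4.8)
The plan is to establish the cycle of implications
\[
\text{uniformly locally bounded}\ \Longrightarrow\ \text{uniformly analytic}\ \Longrightarrow\ \text{continuous}\ \Longrightarrow\ \text{uniformly locally bounded},
\]
the uniformly-open topologies being used throughout; the last arrow is exactly Proposition~\ref{prop:cont-bdd}, so the real work lies in the first two.

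For the first arrow, suppose $f$ is uniformly locally bounded. Restricting each nc ball $B_{\mathrm{nc}}(Y,r)\subseteq\Omega$ on which $f$ is bounded to its $m=1$ component shows that $f|_{\Omega_n}$ is locally bounded in the norm topology of $\mat{\vecspace{V}}{n}$ for every $n$; hence $f$ is locally bounded on slices. A uniformly open nc set is open in the disjoint union topology, hence a finitely open nc set, so Theorem~\ref{thm:g-queen}(1) applies and $f$ is G-differentiable. Being uniformly locally bounded and G-differentiable, $f$ is uniformly analytic by definition.

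For the second arrow I would use the convergence of the TT series; this is the step I expect to require the most care. A uniformly analytic $f$ is in particular uniformly locally bounded, so Corollary~\ref{cor:nc-balls} is available. Fix $s$ and $Y\in\Omega_s$, put $\delta:=\sup\{\rho>0\colon f\text{ is bounded on }B_{\mathrm{nc}}(Y,\rho)\}$, and fix any $r$ with $0<r<\delta$; then $B_{\mathrm{nc}}(Y,r)\subseteq\Omega$, the series \eqref{eq:tt-king-absolute} converges to $f$ there, and the numbers
\[
C_\ell:=\sup_{m\in\mathbb{N},\,X\in B_{\mathrm{nc}}(Y,r)_{sm}}\Big\|\Big(X-\bigoplus_{\alpha=1}^mY\Big)^{\odot_s\ell}\Delta_R^\ell f(Y,\ldots,Y)\Big\|_{sm}
\]
satisfy $\sum_{\ell\ge0}C_\ell<\infty$. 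The $\ell=0$ summand of \eqref{eq:tt-king-absolute} equals $f(\bigoplus_{\alpha=1}^mY)=\bigoplus_{\alpha=1}^mf(Y)$ since $f$ respects direct sums, so for $0<\rho\le r$, $m\in\mathbb{N}$, and $X\in B_{\mathrm{nc}}(Y,\rho)_{sm}$ the point $X':=\bigoplus_{\alpha=1}^mY+\tfrac r\rho\big(X-\bigoplus_{\alpha=1}^mY\big)$ lies in $B_{\mathrm{nc}}(Y,r)_{sm}$, and homogeneity of the faux power $Z\mapsto Z^{\odot_s\ell}$ gives
\[
\Big\|f(X)-\bigoplus_{\alpha=1}^mf(Y)\Big\|_{sm}\le\sum_{\ell\ge1}\Big(\tfrac\rho r\Big)^\ell C_\ell\le\tfrac\rho r\sum_{\ell\ge1}C_\ell .
\]
Given $\varepsilon>0$, choosing $\rho$ small enough makes the right-hand side $<\varepsilon$ uniformly in $m$, i.e. $f\big(B_{\mathrm{nc}}(Y,\rho)\big)\subseteq B_{\mathrm{nc}}(f(Y),\varepsilon)$.

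It remains to deduce genuine continuity. For arbitrary $X_0\in\Omega_n$ and an arbitrary uniformly open set $V\ni f(X_0)$, the fact that nc balls form a basis (Proposition~\ref{prop:nc_top}) provides $\varepsilon>0$ with $B_{\mathrm{nc}}(f(X_0),\varepsilon)\subseteq V$; applying the previous paragraph with $s=n$, $Y=X_0$ yields $\rho>0$ with $f\big(B_{\mathrm{nc}}(X_0,\rho)\big)\subseteq B_{\mathrm{nc}}(f(X_0),\varepsilon)\subseteq V$, and $B_{\mathrm{nc}}(X_0,\rho)$ is a uniformly open neighborhood of $X_0$ contained in $\Omega$. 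Hence $f$ is continuous, which closes the cycle. The only genuine obstacle is the bookkeeping in the second arrow: extracting the geometric factor $(\rho/r)^\ell$ from the normal-convergence estimate of Corollary~\ref{cor:nc-balls} via homogeneity of the faux powers, and then matching the resulting nc-ball neighborhood of $f(Y)$ against an arbitrary basic neighborhood in the target using Proposition~\ref{prop:nc_top}.
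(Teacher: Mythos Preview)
Your proof is correct and follows essentially the same route as the paper: both establish that uniformly locally bounded implies G-differentiable (hence uniformly analytic) via Theorem~\ref{thm:g-queen}, invoke Proposition~\ref{prop:cont-bdd} for continuous $\Rightarrow$ uniformly locally bounded, and obtain continuity from uniform local boundedness by showing $\|f(X)-\bigoplus_\alpha f(Y)\|$ is controlled uniformly in $m$ by a geometric tail. The only difference is cosmetic: the paper uses the explicit Cauchy-type bound $M/(1+\epsilon)^\ell$ from \eqref{eq:delta-king-conv}/\eqref{eq:delta-king-conv-nc} directly to get $\|f(X)-\bigoplus_\alpha f(Y)\|\le Mr/(r_1-r)$, whereas you package this via Corollary~\ref{cor:nc-balls} and recover the geometric factor $(\rho/r)^\ell$ by a homogeneity-and-rescaling argument; the two estimates are equivalent.
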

\begin{proof}
If $f$ is uniformly locally bounded, then $f$ is also locally
bounded on slices, thus by Theorem \ref{thm:g-queen} $f$ is
G-differentiable. Therefore $f$ is uniformly analytic. Taking into
account Proposition \ref{prop:cont-bdd}, it remains to show that
if $f$ is uniformly locally bounded then $f$ is continuous with
respect to the uniformly-open topologies.

Let the norm of $f$ be bounded by $M>0$ on a nc ball
$B_{\mathrm{nc}}(Y,r_1)$ centered at $Y\in\mat{\vecspace{V}}{s}$.
Then, using the estimate \eqref{eq:delta-king-conv} or
\eqref{eq:delta-king-conv-nc} with $\epsilon=\frac{r_1-r}{r}$ for
$r<r_1$, we obtain for $X\in B_{\mathrm{nc}}(Y,r)_{sm}$,
$m\in\mathbb{N}$, that
$$\Big\|f(X)-f\Big(\bigoplus_{\alpha=1}^mY\Big)\Big\|_{sm}\le
M\sum_{\ell=1}^\infty\Big(\frac{r}{r_1}\Big)^\ell=\frac{Mr}{r_1-r}.$$
Hence, for any given $\eta>0$ there exists  $r<r_1$ such that for
every $m\in\mathbb{N}$ and $X\in B_{\mathrm{nc}}(Y,r)_{sm}$ we
have
$$\Big\|f(X)-f\Big(\bigoplus_{\alpha=1}^mY\Big)\Big\|_{sm}<\eta.$$
In other words, $f$ is continuous at $Y$ with respect to the
uniformly-open topologies.
\end{proof}

We consider now the case where the space $\vecspace{V}$ is finite
dimensional, so that we can assume without loss of generality that
$\vecspace{V}=\mathbb{C}^d$ with some operator space structure.
Notice that any two operator space structures on $\mathbb{C}^d$
are completely isomorphic, i.e., the identity mapping of
$\mathbb{C}^d$ is completely bounded both ways (see, e.g.,
\cite[Proposition 1.10(iii)]{Pi}, thus the uniformly-open topology
on $\ncspaced{\mathbb{C}}{d}$ is uniquely determined. We can
expand each term of the convergent TT series (of polylinear forms)
using higher order nc difference-differential operators as in
\eqref{eq:part_TT}, \eqref{eq:tt-pseudoseries}, and
\eqref{eq:tt-series}, and consider the question of the convergence
along the free monoid $\free_d$. This is a stronger notion of
convergence, since we do not group together all the terms
corresponding to the words of a given length. To define this
convergence, we need to specify an order on $\free_d$; however, we
will mostly deal with absolute convergence along $\free_d$, so
that the choice of the order does not matter.

Let $\Upsilon_{\mathrm{nc}}\subseteq\ncspaced{\mathbb{C}}{d}$ be a
matrix circular nc set about $Y=(Y_1,\ldots,
Y_d)\in\mattuple{\mathbb{C}}{s}{d}$, and let
$\brho=(\rho_1,\ldots,\rho_d)$ be a $d$-tuple of positive real
numbers with $\sum_{j=1}^d\rho_j<1$. We define
\begin{align}\label{eq:ups-brho}
\Upsilon_{\mathrm{nc},\brho}  := &\coprod_{m=1}^\infty \!\!\Big\{
X\in(\Upsilon_{\mathrm{nc}})_{sm}\colon
\!\bigoplus_{\alpha=1}^mY+\frac{1}{\rho_j}\Big(0,\ldots,0,\underset{j-\mathrm{th\
place}}{\underbrace{X_j-\bigoplus_{\alpha=1}^mY_j}},0,\ldots,0\Big)
\in(\Upsilon_{\mathrm{nc}})_{sm}\\
&  {\rm for\ all}\ j=1,\ldots,d\Big\}.
\end{align}
\index{$\Upsilon_{\mathrm{nc},\brho}$}If $\Upsilon_{\mathrm{nc}}$ is a uniformly open matrix circular nc
set about $Y$, then so is $\Upsilon_{\mathrm{nc},\brho}$; the fact
that $\Upsilon_{\mathrm{nc},\brho}$ is uniformly open follows from
the fact that the linear mapping $Z\mapsto Z_je_j$ on
$\mattuple{\mathbb{C}}{s}{d}$ is completely bounded as a
finite-rank mapping of operator spaces, see \cite[Proposition
1.10(iii)]{Pi}.
\begin{thm}\label{thm:king-semigr}
Suppose that $\Omega\subseteq \ncspaced{\mathbb{C}}{d}$ is a
uniformly open nc set, $\vecspace{W}$ is an operator space,
$f\colon\Omega\to\ncspace{\vecspace{W}}$ is a nc function which is
uniformly locally bounded, $s\in\mathbb{N}$, $Y\in\Omega_s$. Let
$\Upsilon_{\mathrm{nc}}$ be a uniformly open matrix circular nc
set about $Y$ that is contained in $\Omega$ and on which $f$ is
bounded. Then, for every
$\brho=(\rho_1,\ldots,\rho_d)\in(\mathbb{R}_+\setminus\{0\})^d$
with $\sum_{j=1}^d\rho_j<1$, $m\in\mathbb{N}$, and
$X\in(\Upsilon_{\mathrm{nc},\brho})_{sm}$,
\begin{multline}\label{eq:king-semigr}
f(X)=\sum_{w\in\free_d}\Big(X-\bigoplus_{\alpha=1}^mY\Big)\Delta_R^{w^\top}
f(\underset{|w|+1\ {\rm times}}{\underbrace{Y,\ldots,Y}})\\
=
\sum_{w\in\free_d}\Bigg(\Big(\bigoplus_{\alpha=1}^mA_{w,(0)}\Big)\otimes
\Big(\bigoplus_{\alpha=1}^mA_{w,(1)}\Big)\otimes\cdots\otimes
\Big(\bigoplus_{\alpha=1}^mA_{w,(|w|)}\Big)\Bigg)\star
\Big(X-\bigoplus_{\alpha=1}^mY\Big)^{[w]},
\end{multline}
where the series converges absolutely and uniformly on
$\Upsilon_{\mathrm{nc},\brho}$. Moreover,
\begin{multline}\label{eq:king-semigr-normal}
\sum_{w\in\free_d}\sup_{m\in\mathbb{N},\,X\in(\Upsilon_{\mathrm{nc},\brho})_{sm}}
\Bigg\|\Bigg(\Big(\bigoplus_{\alpha=1}^mA_{w,(0)}\Big)\otimes
\Big(\bigoplus_{\alpha=1}^mA_{w,(1)}\Big)\otimes\cdots\otimes
\Big(\bigoplus_{\alpha=1}^mA_{w,(|w|)}\Big)\Bigg)\\
\star
\Big(X-\bigoplus_{\alpha=1}^mY\Big)^{[w]}\Bigg\|_{sm}<\infty.
\end{multline}
 Here
$A_{w,(0)}\otimes A_{w,(1)}\otimes\cdots\otimes
A_{w,(|w|)}=\Delta_R^{w^\top}f(\underset{|w|+1\
\rm{times}}{\underbrace{Y,\ldots,Y}}),$ with the tensor product
interpretation for the values of $\Delta_R^{w^\trans}f$ (Remark
\ref{rem:tensor_values}), the sumless Sweedler notation
\eqref{eq:Sweedler}, and the pseudo-power notation
\eqref{eq:Ramamurti}.
\end{thm}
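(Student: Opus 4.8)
The plan is to derive Theorem \ref{thm:king-semigr} from the already-established Theorem \ref{thm:king-conv-nc} by refining the estimates so that they hold term-by-term along $\free_d$ rather than grouped by word length. First I would fix $s$, $Y\in\Omega_s$, a uniformly open matrix circular nc set $\Upsilon_{\mathrm{nc}}$ about $Y$ with $f$ bounded by some $M>0$ on it, and a tuple $\brho=(\rho_1,\dots,\rho_d)$ with $\sum_j\rho_j<1$. The key structural input is the decomposition \eqref{eq:r_difdecomp-kl} (with $k=0$), which expresses $\Delta_R^\ell f(Y,\dots,Y)(Z^1,\dots,Z^\ell)$ as $\sum_{|w|=\ell}\Delta_R^{w^\top}f(Y,\dots,Y)(Z^1_{i_1},\dots,Z^\ell_{i_\ell})$, together with \eqref{eq:delta^l_vs_delta^w} relating $Z^{\odot_s\ell}\Delta_R^\ell f(Y,\dots,Y)$ to $\sum_{|w|=\ell}Z^{\odot_sw}\Delta_R^{w^\top}f(Y,\dots,Y)$. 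Thus the grouped TT series from Theorem \ref{thm:king-conv-nc} already equals, after ungrouping, the $\free_d$-series appearing in \eqref{eq:king-semigr}; the real work is to show the ungrouped series converges \emph{absolutely}.

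The central estimate I would establish is that for $X\in(\Upsilon_{\mathrm{nc},\brho})_{sm}$ and each word $w=g_{i_1}\cdots g_{i_\ell}$,
\begin{equation*}
\Big\|\Big(X-\bigoplus_{\alpha=1}^mY\Big)^{\odot_sw}\Delta_R^{w^\top}f(\underset{\ell+1\ \rm{times}}{\underbrace{Y,\dots,Y}})\Big\|_{sm}\le M\,\rho_{i_1}\cdots\rho_{i_\ell}.
\end{equation*}
To get this I would run the argument of \eqref{eq:delta-king-conv-nc} but inserting the scaling $\rho_{i_k}^{-1}$ in the $k$-th slot: by the definition \eqref{eq:ups-brho} of $\Upsilon_{\mathrm{nc},\brho}$, replacing $X_j-\bigoplus Y_j$ by $\rho_j^{-1}(X_j-\bigoplus Y_j)$ in the $j$-th coordinate keeps us in $\Upsilon_{\mathrm{nc}}$, and then an analogue of Lemma \ref{lem:hvacts} — built from a bidiagonal matrix whose super-diagonal entries are the rescaled coordinate-$i_k$ parts $\rho_{i_k}^{-1}(X_{i_k}-\bigoplus Y_{i_k})e_{i_k}$ — lands in $\Upsilon_{\mathrm{nc}}$, using matrix circularity and the coordinate-projection structure. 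Applying $f$, taking the $(1,\ell+1)$ block entry via \eqref{eq:part_bidiag}, using boundedness of the block projection $\pi_{1,\ell+1}$ (which is a complete coisometry by Proposition \ref{prop:ucb-inj-proj}), and then undoing the scalings by homogeneity of $\Delta_R^{w^\top}$ in each argument, yields the bound $M\rho_{i_1}\cdots\rho_{i_\ell}$. Summing over all words gives $\sum_{w\in\free_d}M\rho_{i_1}\cdots\rho_{i_\ell}=M\sum_{\ell=0}^\infty(\rho_1+\cdots+\rho_d)^\ell=M/(1-\sum_j\rho_j)<\infty$, which is exactly \eqref{eq:king-semigr-normal}, and absolute/uniform convergence on $\Upsilon_{\mathrm{nc},\brho}$ follows; the sum equals $f(X)$ because the grouped subseries already does by Theorem \ref{thm:king-conv-nc}.

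Finally I would record the identification of the term as $\big(\bigoplus A_{w,(0)}\big)\otimes\cdots\otimes\big(\bigoplus A_{w,(|w|)}\big)\star(X-\bigoplus Y)^{[w]}$ using \eqref{eq:tt-term-gen} and the sumless Sweedler / pseudo-power notation, exactly as in Theorem \ref{thm:tt-power}, so the two displayed forms in \eqref{eq:king-semigr} agree. The main obstacle I anticipate is the bookkeeping in the coordinate-wise rescaled version of Lemma \ref{lem:hvacts}: one must check carefully that the bidiagonal matrix with entries $\rho_{i_k}^{-1}(X_{i_k}-\bigoplus Y_{i_k})e_{i_k}$ (different coordinates in different slots) is obtained from $X$ itself by a sequence of operations preserving $\Upsilon_{\mathrm{nc}}$ — matrix-circular moves, coordinate projections $Z\mapsto Z_je_j$, and the defining rescalings of $\Upsilon_{\mathrm{nc},\brho}$ — rather than just verifying this for a single uniform scaling as in the proof of Theorem \ref{thm:king-conv-nc}. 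Once that containment is in hand, the norm estimate and the summation are routine.
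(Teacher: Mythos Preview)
Your proposal is correct and follows essentially the same route as the paper's proof: the paper also obtains the termwise bound $M\brho^w$ by forming the block bidiagonal matrix with superdiagonal entries $\rho_{i_k}^{-1}(X_{i_k}-\bigoplus Y_{i_k})e_{i_k}$, verifying (via the cyclic-shift argument of Lemma \ref{lem:hvacts} combined with the defining property of $\Upsilon_{\mathrm{nc},\brho}$) that this matrix lies in $\Upsilon_{\mathrm{nc}}$, applying $f$, and reading off the $(1,\ell+1)$ block through \eqref{eq:part_bidiag}; summing gives \eqref{eq:king-semigr-normal}, and identifying the sum with $f(X)$ is done exactly as you describe, by regrouping and invoking Theorem \ref{thm:king-conv-nc}. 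The obstacle you anticipate is precisely what the paper records as \eqref{eq:hvact_rho_A}, and your sketch of how to handle it (matrix circularity plus the coordinate rescalings built into the definition of $\Upsilon_{\mathrm{nc},\brho}$) is the right one.
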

\begin{proof}
Let $\Upsilon_{\mathrm{nc}}$ be a uniformly open matrix circular
nc set about
 $Y$ such that the norm of
$f$ is bounded, say by $M>0$,  on $\Upsilon_{\mathrm{nc}}$. For
$\brho=(\rho_1,\ldots,\rho_d)\!\in\!(\mathbb{R}_+\setminus\{0\})^d$
with $\sum_{j=1}^d\rho_j<1$, $m\in\mathbb{N}$, and
$X\in(\Upsilon_{\mathrm{nc},\brho})_{sm}$, we obtain, using
\eqref{eq:part_bidiag}, that {\small\begin{multline*}
\sum_{|w|=\ell}\Big\|\Delta_R^{w^\top}f\Big(\underset{\ell+1\
\rm{times}}{\underbrace{\bigoplus_{\alpha=1}^mY,\ldots,\bigoplus_{\alpha=1}^mY}}\Big)\Big(
X_{i_1}-\bigoplus_{\alpha=1}^mY_{i_1},\ldots,X_{i_\ell}-\bigoplus_{\alpha=1}^mY_{i_\ell}
\Big)\Big\|_{sm} \\
=\sum_{|w|=\ell}\brho^w\Big\|\Delta_R^{w^\top}f\Big(\underset{\ell+1\
\rm{times}}{\underbrace{\bigoplus_{\alpha=1}^mY,\ldots,\bigoplus_{\alpha=1}^mY}}\Big)\Big(
\frac{1}{\rho_{i_1}}\Big(X_{i_1}-\bigoplus_{\alpha=1}^mY_{i_1}\Big),
\ldots,\frac{1}{\rho_{i_\ell}}\Big(X_{i_\ell}-\bigoplus_{\alpha=1}^mY_{i_\ell}\Big)\Big)
\Big\|_{sm} \\
=\sum_{|w|=\ell}\brho^w\\
\cdot\left\|\pi_{1,\ell+1}f\left(\begin{bmatrix}
\bigoplus\limits_{\alpha=1}^mY & \frac{1}{\rho_{i_1}}(X_{i_1}-
\bigoplus\limits_{\alpha=1}^mY_{i_1})e_{i_1}
& 0 & \ldots & 0\\
0 & \ddots & \ddots & \ddots & \vdots\\
\vdots & \ddots  & \ddots & \ddots & 0 \\
\vdots &  & \ddots & \ddots & \frac{1}{\rho_{i_\ell}}(X_{i_\ell}-
\bigoplus\limits_{\alpha=1}^mY_{i_\ell})e_{i_\ell}\\
0 & \ldots & \ldots & 0 & \bigoplus\limits_{\alpha=1}^mY
 \end{bmatrix}\right)\right\|_{sm} \\
 \le M\sum_{|w|=\ell}\brho^w=M\Big(\sum_{j=1}^d\rho_j\Big)^\ell,
\end{multline*}
}which implies that \eqref{eq:king-semigr-normal} holds and the
TT series in \eqref{eq:king-semigr} converges absolutely and
uniformly on $\Upsilon_{\mathrm{nc},\brho}$. Here we used the fact
that
\begin{equation}\label{eq:hvact_rho_A}
\begin{bmatrix}
\bigoplus\limits_{\alpha=1}^mY & \frac{1}{\rho_{i_1}}(X_{i_1}-
\bigoplus\limits_{\alpha=1}^mY_{i_1})e_{i_1} &
 0 & \ldots & 0\\
0 & \ddots & \ddots & \ddots & \vdots\\
\vdots & \ddots  & \ddots & \ddots & 0 \\
\vdots &  & \ddots & \ddots & \frac{1}{\rho_{i_\ell}}(X_{i_\ell}-
\bigoplus\limits_{\alpha=1}^mY_{i_\ell})e_{i_\ell}\\
0 & \ldots & \ldots & 0 & \bigoplus\limits_{\alpha=1}^mY
 \end{bmatrix}\in(\Upsilon_{\mathrm{nc}})_{sm(\ell+1)},
 \end{equation}
which is obtained using the same argument as in the proof of
\eqref{eq:hvact_A} in Lemma \ref{lem:hvacts}.

In order to show that the sum  of the (absolutely convergent) TT
series in \eqref{eq:king-semigr} equals $f(X)$, we observe that
its sum must coincide with the sum of the TT series in
\eqref{eq:tt-king-absolute} which is obtained by grouping together
the terms corresponding to the words of the same length. But the
remark made in the paragraph preceding Lemma \ref{lem:hvacts},
together with Theorem \ref{thm:king-conv-nc}, means that the TT
series \eqref{eq:tt-king-absolute} converges to $f(X)$ everywhere
in $\Upsilon_{\mathrm{nc}}$.
\end{proof}
\begin{rem}\label{rem:hvacts-rho}
Notice that in the proof of Theorem \ref{thm:king-semigr} we
actually established the following result. Let
$\Upsilon_{\mathrm{nc}}$ be a uniformly open nc set, which
contains $Y$, is contained in $\Omega$, and on which $f$ is
bounded. Then, for every
$\brho=(\rho_1,\ldots,\rho_d)\in(\mathbb{R}_+\setminus\{0\})^d$
with $\sum_{j=1}^d\rho_j<1$,  \eqref{eq:king-semigr} holds, with
the series converging absolutely and uniformly, on the set of all
$X\in\Omega$ such that \eqref{eq:hvact_rho_A} holds for all
$\ell\in\mathbb{N}$ and such that the TT series in
\eqref{eq:tt-king-absolute}, which is obtained by grouping
together the terms corresponding to the words of the same length,
converges to $f(X)$. This gives a weaker assumption on a domain
where the TT series in \eqref{eq:king-semigr} converges absolutely
and uniformly.
\end{rem}

Unlike in the setting of Theorem \ref{thm:king-conv} or of Theorem
\ref{thm:king-conv-nc}, it may happen that for a uniformly open
matrix circular nc set $\Upsilon_{\mathrm{nc}}$ we have
$\bigcup_{\brho}\Upsilon_{\mathrm{nc},\brho}\subsetneq\Upsilon_{\mathrm{nc}}$,
i.e., we cannot guarantee the absolute convergence along $\free_d$
of the TT series in \eqref{eq:king-semigr} on all of
$\Upsilon_{\mathrm{nc}}$. We elucidate the situation in the
special case of open nc balls.
\begin{cor}\label{cor:nc-balls-semigr}
Let a nc function $f\colon\Omega\to\ncspace{\vecspace{W}}$ be
uniformly locally bounded. For every $s\in\mathbb{N}$,
$Y\in\Omega_s$, let $\delta:=\sup\{r>0\colon f\ \mathrm{is\
bounded\ on\ } B_{\mathrm{nc}}(Y,r)\}$. Then
\eqref{eq:king-semigr} holds, with the TT series converging
absolutely and uniformly, on every \emph{open nc diamond about
$Y$} \index{nc diamond} \index{$\diamondsuit_{\mathrm{nc}}(Y,r)$}
\begin{equation}\label{eq:diamond}
\diamondsuit_{\mathrm{nc}}(Y,r):=\coprod_{m=1}^\infty\Big\{X\in
\Omega_{sm}\colon
\sum_{j=1}^d\|e_j\|_1\,\Big\|X_j-\bigoplus_{\alpha=1}^mY_j\Big\|<r\Big\}
\end{equation}
with $r<\delta$. Moreover,
\begin{multline}\label{eq:nc-balls-normal}
\sum_{w\in\free_d}\sup_{m\in\mathbb{N},\,X\in{\diamondsuit_{\mathrm{nc}}(Y,r)}_{sm}}
\Bigg\|\Bigg(\Big(\bigoplus_{\alpha=1}^mA_{w,(0)}\Big)\otimes
\Big(\bigoplus_{\alpha=1}^mA_{w,(1)}\Big)\otimes\cdots\otimes
\Big(\bigoplus_{\alpha=1}^mA_{w,(|w|)}\Big)\Bigg)\\
\star
\Big(X-\bigoplus_{\alpha=1}^mY\Big)^{[w]}\Bigg\|_{sm}<\infty.
\end{multline}
\end{cor}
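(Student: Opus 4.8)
The plan is to derive Corollary \ref{cor:nc-balls-semigr} from Theorem \ref{thm:king-semigr} by exhibiting, for each admissible radius, a uniformly open matrix circular nc set $\Upsilon_{\mathrm{nc}}$ and a suitable $\brho$ so that the open nc diamond $\diamondsuit_{\mathrm{nc}}(Y,r)$ is contained in $\Upsilon_{\mathrm{nc},\brho}$. Fix $s\in\mathbb{N}$, $Y\in\Omega_s$, and $r<\delta$; choose $r_1$ with $r<r_1<\delta$, so that $f$ is bounded on $B_{\mathrm{nc}}(Y,r_1)$. The set $B_{\mathrm{nc}}(Y,r_1)$ is itself matrix circular about $Y$ (for $X\in B_{\mathrm{nc}}(Y,r_1)_{sm}$ and unitaries $U,V\in\mat{\mathbb{C}}{m}$, conjugating $X-\bigoplus_\alpha Y$ by $U\otimes I_s$ and $V\otimes I_s$ preserves the norm $\|\cdot\|_{sm}$ since $\vecspace{V}$ is an operator space, so $U\otimes I_s$, $V\otimes I_s$ are isometries by \eqref{eq:os-simprod-norms}), and it is uniformly open by Proposition \ref{prop:nc_top}. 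So take $\Upsilon_{\mathrm{nc}}=B_{\mathrm{nc}}(Y,r_1)$, on which $f$ is bounded.

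Next I would pick the weights $\brho$. The obvious choice is $\rho_j=\tfrac{r}{r_1}\cdot\dfrac{\|e_j\|_1}{\sum_{k=1}^d\|e_k\|_1}$ for $j=1,\dots,d$, so that $\sum_{j=1}^d\rho_j=\tfrac{r}{r_1}<1$, as required by Theorem \ref{thm:king-semigr}. The key inclusion to verify is $\diamondsuit_{\mathrm{nc}}(Y,r)\subseteq(\Upsilon_{\mathrm{nc},\brho})_{}$. Given $X\in\diamondsuit_{\mathrm{nc}}(Y,r)_{sm}$, i.e.\ $\sum_{j=1}^d\|e_j\|_1\,\|X_j-\bigoplus_\alpha Y_j\|<r$ (here $\|\cdot\|$ is the operator norm on $\mat{\mathbb{C}}{m}$), I must check that for each $j$ the point obtained from $\bigoplus_\alpha Y$ by replacing only the $j$-th coordinate of $X-\bigoplus_\alpha Y$ and rescaling it by $1/\rho_j$ still lies in $B_{\mathrm{nc}}(Y,r_1)$; that is, $\tfrac{1}{\rho_j}\big\|(X_j-\bigoplus_\alpha Y_j)\otimes e_j\big\|_{sm}<r_1$. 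Using the operator space identity, $\|(X_j-\bigoplus_\alpha Y_j)\otimes e_j\|_{sm}=\|X_j-\bigoplus_\alpha Y_j\|\cdot\|e_j\|_1$ (more precisely this follows from \eqref{eq:os-simprod-norms} together with the identification of $\vecspace{V}=\mathbb{C}^d$ with its operator space structure, where $e_j$ denotes the $j$-th basis element and $\|e_j\|_1$ its norm in $\vecspace{V}$). Hence the left side equals $\tfrac{r_1}{r}\cdot\dfrac{\sum_k\|e_k\|_1}{\|e_j\|_1}\cdot\|e_j\|_1\cdot\|X_j-\bigoplus_\alpha Y_j\|=\tfrac{r_1}{r}\Big(\sum_k\|e_k\|_1\Big)\|X_j-\bigoplus_\alpha Y_j\|$. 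Since $\|e_j\|_1\,\|X_j-\bigoplus_\alpha Y_j\|\le\sum_k\|e_k\|_1\|X_k-\bigoplus_\alpha Y_k\|<r$, and this must be combined with the bound on the full diamond sum, one sees that $\|X_j-\bigoplus_\alpha Y_j\|<r/\|e_j\|_1$ is too weak; instead one uses that the whole weighted sum is $<r$, whence $\tfrac{1}{\rho_j}\|(X_j-\bigoplus_\alpha Y_j)\otimes e_j\|_{sm}<r_1$ follows from $\dfrac{\|e_j\|_1\|X_j-\bigoplus_\alpha Y_j\|}{\rho_j}\le\dfrac{\sum_k\|e_k\|_1\|X_k-\bigoplus_\alpha Y_k\|}{\sum_k\rho_k}\cdot\dfrac{\sum_k\rho_k}{?}$; I would work this out carefully so the chosen $\brho$ makes it a genuine inequality. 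The cleanest route: since $\rho_j/\|e_j\|_1$ is independent of $j$ (equal to $\tfrac{r}{r_1}\big/\sum_k\|e_k\|_1$), we get $\tfrac{1}{\rho_j}\|e_j\|_1\,\|X_j-\bigoplus_\alpha Y_j\|=\tfrac{r_1}{r}\cdot\dfrac{1}{\sum_k\|e_k\|_1}\cdot\sum_k\|e_k\|_1\cdot\tfrac{\|X_j-\bigoplus_\alpha Y_j\|\cdot\|e_j\|_1}{\|e_j\|_1}$, which is dominated by $\tfrac{r_1}{r}\cdot\sum_k\|e_k\|_1\|X_k-\bigoplus_\alpha Y_k\|\big/\big(\sum_k\|e_k\|_1\big)$? — no. The correct and honest statement: with $\rho_j=\tfrac{r}{r_1}\|e_j\|_1/S$ where $S=\sum_k\|e_k\|_1$, we have $\tfrac{1}{\rho_j}\|e_j\|_1\|X_j-\bigoplus_\alpha Y_j\|=\tfrac{r_1 S}{r}\|X_j-\bigoplus_\alpha Y_j\|$, and since $\|e_j\|_1\|X_j-\bigoplus_\alpha Y_j\|<r$ gives $\|X_j-\bigoplus_\alpha Y_j\|<r/\|e_j\|_1$, we obtain the bound $r_1 S/\|e_j\|_1$, which need not be $<r_1$. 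So the diamond as defined uses the \emph{sum} to control \emph{each} coordinate, and the right choice of $\brho$ is rather $\rho_j$ proportional to $\|e_j\|_1$ but with the verification using the sum constraint directly — this matching of constants is the step I expect to be the main obstacle and the one I would do with full care; I anticipate the definition \eqref{eq:diamond} is precisely engineered so that $\rho_j=c\|e_j\|_1$ with $c$ chosen from $r,r_1$ works.

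Once the inclusion $\diamondsuit_{\mathrm{nc}}(Y,r)\subseteq\Upsilon_{\mathrm{nc},\brho}$ is established for this choice, Theorem \ref{thm:king-semigr} applied to $\Upsilon_{\mathrm{nc}}=B_{\mathrm{nc}}(Y,r_1)$ and this $\brho$ immediately gives \eqref{eq:king-semigr} with absolute and uniform convergence on $\Upsilon_{\mathrm{nc},\brho}$, hence on $\diamondsuit_{\mathrm{nc}}(Y,r)$, together with the estimate \eqref{eq:king-semigr-normal}, which specializes to \eqref{eq:nc-balls-normal}. Since $r<\delta$ was arbitrary and every $\diamondsuit_{\mathrm{nc}}(Y,r)$ with $r<\delta$ is covered, the proof is complete. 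I would also remark that the key structural input making $B_{\mathrm{nc}}(Y,r_1)$ matrix circular is the operator-space axiom \eqref{eq:os-simprod-norms} with constant $1$; this is exactly why the result is stated for operator spaces. The only genuinely nontrivial bookkeeping is the constant-chasing for $\brho$ versus the diamond radius; everything else is a direct appeal to Theorem \ref{thm:king-semigr} and Proposition \ref{prop:nc_top}.
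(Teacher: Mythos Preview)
Your overall strategy---apply Theorem \ref{thm:king-semigr} with $\Upsilon_{\mathrm{nc}}=B_{\mathrm{nc}}(Y,r_1)$---is correct, but the specific plan of finding a \emph{single} $\brho$ with $\diamondsuit_{\mathrm{nc}}(Y,r)\subseteq\Upsilon_{\mathrm{nc},\brho}$ cannot work, and you in fact discovered this in your own computation. For $X\in\Upsilon_{\mathrm{nc},\brho}$ you need $\tfrac{1}{\rho_j}\|e_j\|_1\|X_j-\bigoplus_\alpha Y_j\|<r_1$ for each $j$. Since the diamond condition $\sum_k\|e_k\|_1\|X_k-\bigoplus_\alpha Y_k\|<r$ allows a single term $\|e_j\|_1\|X_j-\bigoplus_\alpha Y_j\|$ to be arbitrarily close to $r$ (by shrinking the others), you would need $\rho_j\ge r/r_1$ for every $j$, whence $\sum_j\rho_j\ge d\,r/r_1$; this is $<1$ only when $r<r_1/d$. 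So no fixed $\brho$ covers $\diamondsuit_{\mathrm{nc}}(Y,r)$ for general $r<\delta$.

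The paper's fix is to let $\brho$ depend on $X$: given $X\in\diamondsuit_{\mathrm{nc}}(Y,r)_{sm}$ and $r<r'<\delta$, the diamond inequality says precisely $\sum_j\tfrac{1}{r'}\|e_j\|_1\|X_j-\bigoplus_\alpha Y_j\|<r/r'$, so one can choose $\rho_j$ slightly larger than $\tfrac{1}{r'}\|e_j\|_1\|X_j-\bigoplus_\alpha Y_j\|$ with $\sum_j\rho_j<r/r'$, and then $X\in\Upsilon_{\mathrm{nc},\brho}$. The point that makes this yield a \emph{uniform} estimate is that the bound obtained in the proof of Theorem \ref{thm:king-semigr} is $M\bigl(\sum_j\rho_j\bigr)^\ell<M(r/r')^\ell$, which depends only on $r/r'$ and not on the particular $\brho$ or $X$. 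Summing over $\ell$ gives \eqref{eq:nc-balls-normal}. Thus the missing idea is not a cleverer constant but the realization that $\brho$ must be chosen pointwise and that uniformity is recovered from the $\sum_j\rho_j$-control built into the estimate of Theorem \ref{thm:king-semigr}.
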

\begin{proof}
We first observe that for any $n\in\mathbb{N}$,
$A\in\mat{\mathbb{C}}{n}$, and $j=1$, \ldots, $d$, we have
$\|Ae_j\|_n=\|e_j\|_1\|A\|$, since $\spa\{ e_j\}$ is a
one-dimensional subspace in $\mathbb{C}^d$ and therefore there is
only one operator space structure on $\spa{e_j}$, up to a constant
factor $\|e_j\|_1$; see \cite[Proposition 1.10(ii)]{Pi}.

Let $r$ be given such that $0<r<\delta$. Let $r'$ be a real number
with $r<r'<\delta$, and let the norm of $f$ be bounded by $M>0$ on
$B_{\mathrm{nc}}(Y,r')$. Given $X\in
{\diamondsuit_{\mathrm{nc}}(Y,r)}_{sm}$, one can find
$\brho=(\rho_1,\ldots,\rho_d)$ with
$$\rho_j>\frac{1}{r'}\|e_j\|_1\Big\|X_j-\bigoplus_{\alpha=1}^mY\Big\|,\quad j=1,\ldots,d,$$
such that $\sum_{j=1}^d\rho_j<\frac{r}{r'}$. Since open nc balls
are uniformly open and matrix circular nc sets, so is
$\Upsilon_{\mathrm{nc}}:=B_{\mathrm{nc}}(Y,r')$. For $X$ and
$\brho$ as above, we observe that
$X\in\Upsilon_{\mathrm{nc},\brho}$. Therefore,
\eqref{eq:king-semigr} holds with the series absolutely
convergent. Using the estimate in the proof of Theorem
\ref{thm:king-semigr} we obtain that
$$\sum_{|w|=\ell}\Big\|\Delta_R^{w^\top}f\Big(\underset{\ell+1\
\rm{times}}{\underbrace{\bigoplus_{\alpha=1}^mY,\ldots,\bigoplus_{\alpha=1}^mY}}\Big)\Big(
X_{i_1}-\bigoplus_{\alpha=1}^mY_{i_1},\ldots,X_{i_\ell}-
\bigoplus_{\alpha=1}^mY_{i_\ell}\Big)\Big\|_{sm}\le
M\Big(\frac{r}{r'}\Big)^\ell.$$ Since the right-hand side is
independent of $\brho$, $m$, and $X$, \eqref{eq:nc-balls-normal}
follows.
\end{proof}
\begin{rem}\label{rem:abs-conv-diamond}
Since
$\bigcup_{0<r<\delta}\diamondsuit_{\mathrm{nc}}(Y,r)=\diamondsuit_{\mathrm{nc}}(Y,\delta)$,
the TT series in \eqref{eq:king-semigr} converges absolutely along
$\free_d$ on the nc diamond
$\diamondsuit_{\mathrm{nc}}(Y,\delta)$.
\end{rem}
\begin{rem}\label{rem:exhaust}
It is clear that open nc diamonds are uniformly open and that, for
all $Y$ and $r$, $\diamondsuit_{\mathrm{nc}}(Y,r)\subseteq
B_{\mathrm{nc}}(Y,r)$. This inclusion is always proper. The
equality would occur if and only if for all $m\in\mathbb{N}$ and
$X\in\mattuple{\mathbb{C}}{sm}{d}$ one has
$\|X\|_{sm}=\sum_{j=1}^d\|e_j\|_1\,\|X_j\|$. Indeed, we have
$X=\sum_{j=1}^dX_je_j$ and
$\|X\|_{sm}\le\sum_{j=1}^d\|X_je_j\|_{sm}=\sum_{j=1}^d\|e_j\|_1\,\|X_j\|$
for all $m$ and $X$, so that the inequality is always an equality
if and only if $\diamondsuit_{\mathrm{nc}}(Y,r)=
B_{\mathrm{nc}}(Y,r)$ for some, and hence for all, $Y\in
(\mat{\mathbb{C}}{s})^d$ and $r$. However
$\sum_{j=1}^d\alpha_j\,\|X_j\|$ with some positive coefficients
$\alpha_j$ is not an operator space norm on
$\ncspace{(\mat{(\mathbb{C}^d)}{s})}$.
\end{rem}

\begin{rem}\label{rem:uniform-const}
Instead of considering operator spaces, we can consider Banach
spaces equipped with admissible systems of matrix norms such that
\eqref{eq:dirsums-norms} and \eqref{eq:simprod-norms} hold with
constants $C_1$, $C_1'$, and $C_2$ independent of $n$ and $m$, as
in Proposition \ref{prop:ucb-inj-proj}.  We have to require
$C_1'=1$ to ensure that Proposition \ref{prop:nc_top} holds, i.e.,
 the nc balls form a
base for a topology on the corresponding nc space. In this case,
we can still consider uniformly locally bounded nc functions and
Proposition \ref{prop:cont-bdd}, Theorem \ref{thm:king-conv},
Corollary \ref{cor:nc-balls}, and Corollary
\ref{cor:nc-u-analytic} remain true. The condition $C_1=1$ is
natural to ensure the Hausdorff-like property of the
uniformly-open topology, see Proposition \ref{prop:Hausdorff-like}
together with the paragraph that precedes it. The condition
$C_2=1$ is essential for the proofs of Theorem
\ref{thm:king-conv-nc}, Theorem \ref{thm:king-semigr}, and
Corollary \ref{cor:nc-balls-semigr} (in particular, it guarantees
that nc balls are matrix circular nc sets).
\end{rem}

\begin{rem}\label{rem:real_king}
The results of this section, unlike those of Section
\ref{subsec:analytic} --- see Remark \ref{rem:real_queen}, are of
a purely noncommutative nature (except for Theorem
\ref{thm:king-conv}). Therefore, they admit full analogues in the
real case. An operator space over $\mathbb{R}$ is a Banach space
over $\mathbb{R}$ equipped with an admissible system of matrix
norms such that the real analogues of \eqref{eq:dirsums-norms} and
\eqref{eq:simprod-norms} hold with $C_1=C_1'=C_2=1$ independent of
$n$ and $m$ (see \cite{Ru}). Theorem \ref{thm:king-conv-nc} and
Corollaries \ref{cor:nc-balls}, \ref{cor:nc-u-analytic} are true
for the case where $\vecspace{V}$ and $\vecspace{W}$ are operator
spaces over $\mathbb{R}$, with all the notions defined exactly as
in the complex case. Also, Theorem \ref{thm:king-semigr} and
Corollary \ref{cor:nc-balls-semigr} are true with $\mathbb{C}^d$
replaced by $\mathbb{R}^d$ and $\vecspace{W}$ an operator space
over $\mathbb{R}$.
\end{rem}

\section{Analytic and uniformly analytic higher order nc
functions}\label{subsec:analytic-higher} We will discuss now
analyticity of higher order nc functions in various settings. Our
main goal here is to prove analogues of Theorem \ref{thm:dif-op}
for the corresponding classes of higher order nc functions.

Let $\vecspace{W}$ be a Banach space over $\mathbb{C}$. An
\emph{admissible system of rectangular matrix norms over
$\vecspace{W}$} \index{admissible system of rectangular matrix
norms} is a doubly-indexed sequence of norms $\|\cdot\|_{n,m}$
\index{$\Vert\cdot\Vert_{n,m}$} on $\rmat{\vecspace{W}}{n}{m}$,
$n,m=1,2,\ldots$, satisfying the following condition: for every
$n,p,q,m\in\mathbb{N}$ there exists $C(n,p,q,m)>0$ such that for
all $X\in\rmat{\vecspace{W}}{p}{q}$,
$S\in\rmat{\mathbb{C}}{n}{p}$, and $T\in\rmat{\mathbb{C}}{q}{m}$,
\begin{equation}\label{eq:rect-simprod-norms}
\|SXT\|_{n,m}\le C(n,p,q,m)\|S\|\,\|X\|_{p,q}\|T\|,
\end{equation}
where $\|\cdot\|$ denotes the operator norm of rectangular
matrices over $\mathbb{C}$ with respect to the standard Euclidean
norms; cf. \eqref{eq:simprod-norms}. Next we show that
\eqref{eq:rect-simprod-norms} implies the analogue of
\eqref{eq:dirsums-norms} for rectangular matrices,
\begin{multline}\label{eq:rect-dirsums-norms}
C_1(n,m,r,s)^{-1}\max\{\|X\|_{n,m},\|Y\|_{r,s}\}\le\|X\oplus
Y\|_{n+r,m+s}\\
\le C_1'(n,m,r,s)\max\{\|X\|_{n,m},\|Y\|_{r,s}\},
\end{multline}
for every $n,m,r,s\in\mathbb{N}$, $X\in\rmat{\vecspace{W}}{n}{m}$,
$Y\in\rmat{\vecspace{W}}{r}{s}$ and some positive constants
$C_1(n,m,r,s)$ and $C_1'(n,m,r,s)$.
\begin{prop}\label{prop:rect-dirsums-norms}
Let $\vecspace{W}$ be a Banach space equipped with an admissible
system of rectangular matrix norms over $\vecspace{W}$, i.e.,
\eqref{eq:rect-simprod-norms} holds. Then
\eqref{eq:rect-dirsums-norms} holds for every
$n,m,r,s\in\mathbb{N}$, $X\in\rmat{\vecspace{W}}{n}{m}$, and
$Y\in\rmat{\vecspace{W}}{r}{s}$ with
$$C_1(n,m,r,s)=\max\{C(n,n+r,m+s,m),C(r,n+r,m+s,s)\},$$
$$C_1'(n,m,r,s)=C(n+r,n,m,m+s)+C(n+r,r,s,m+s).$$
\end{prop}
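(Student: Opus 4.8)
The plan is to derive the two-sided direct-sum estimate \eqref{eq:rect-dirsums-norms} from the single hypothesis \eqref{eq:rect-simprod-norms} by sandwiching $X$, $Y$, and $X\oplus Y$ between each other using the obvious rectangular coordinate embeddings and projections, exactly as in the square case treated in Proposition~\ref{prop:inj-proj}. The only ingredients are the standard block matrices over $\mathbb{C}$: for the left-hand (lower) bound I would write, for instance,
$$
X = \begin{bmatrix} I_n & 0\end{bmatrix}\,(X\oplus Y)\,\begin{bmatrix} I_m \\ 0\end{bmatrix},
\qquad
Y = \begin{bmatrix} 0 & I_r\end{bmatrix}\,(X\oplus Y)\,\begin{bmatrix} 0 \\ I_s\end{bmatrix},
$$
and for the right-hand (upper) bound,
$$
X\oplus Y = \begin{bmatrix} I_n\\ 0\end{bmatrix}X\begin{bmatrix} I_m & 0\end{bmatrix}
+ \begin{bmatrix} 0\\ I_r\end{bmatrix}Y\begin{bmatrix} 0 & I_s\end{bmatrix}.
$$
All the rectangular scalar matrices appearing here (partial identities) have operator norm $1$.

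First I would apply \eqref{eq:rect-simprod-norms} to each of the two displayed expressions for $X$ and for $Y$; since the norms of the flanking scalar matrices are $1$, this yields $\|X\|_{n,m}\le C(n,n+r,m+s,m)\,\|X\oplus Y\|_{n+r,m+s}$ and $\|Y\|_{r,s}\le C(r,n+r,m+s,s)\,\|X\oplus Y\|_{n+r,m+s}$; taking the maximum over the two gives the lower bound with $C_1(n,m,r,s)=\max\{C(n,n+r,m+s,m),\,C(r,n+r,m+s,s)\}$. Second, for the upper bound I would apply \eqref{eq:rect-simprod-norms} to each of the two summands in the expression for $X\oplus Y$ (again the flanking matrices have norm $1$) and then use the triangle inequality in $\rmat{\vecspace{W}}{(n+r)}{(m+s)}$, obtaining
$$
\|X\oplus Y\|_{n+r,m+s}\le C(n+r,n,m,m+s)\,\|X\|_{n,m}+C(n+r,r,s,m+s)\,\|Y\|_{r,s},
$$
which is at most $\big(C(n+r,n,m,m+s)+C(n+r,r,s,m+s)\big)\max\{\|X\|_{n,m},\|Y\|_{r,s}\}$, giving the upper bound with the stated $C_1'(n,m,r,s)$.

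There is essentially no obstacle here: the argument is a direct transcription of the square-matrix computation in Proposition~\ref{prop:inj-proj} to the rectangular setting, and the only thing to be careful about is bookkeeping of the index quadruples $(n,p,q,m)$ fed into $C(\cdot,\cdot,\cdot,\cdot)$ so that the final constants match the ones asserted in the statement. One minor point worth a sentence in the write-up is that the partial-identity matrices $\begin{bmatrix} I_n & 0\end{bmatrix}$, $\begin{bmatrix} 0 & I_r\end{bmatrix}$, etc., indeed have Euclidean operator norm exactly $1$, so no extra constants creep in; with that observed, the proof is complete.
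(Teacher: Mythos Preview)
Your proposal is correct and matches the paper's proof essentially line for line: the same block-matrix identities, the same applications of \eqref{eq:rect-simprod-norms} with unit-norm partial identities, and the same triangle-inequality/maximum steps yielding the stated constants. The only cosmetic difference is that the paper presents the upper bound before the lower bound.
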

\begin{proof}
Let $X\in\rmat{\vecspace{W}}{n}{m}$ and
$Y\in\rmat{\vecspace{W}}{r}{s}$. Then
\begin{multline*}
\|X\oplus Y\|_{n+r,m+s}=\|(X\oplus 0_{r\times s})+(0_{n\times
m}\oplus Y)\|_{n+r,m+s}\\
\le\Big\|\begin{bmatrix} I_n\\
0_{r\times n}
\end{bmatrix}X\begin{bmatrix}
I_m & 0_{m\times s}\end{bmatrix}+\begin{bmatrix} 0_{n\times r}\\
I_r \end{bmatrix}Y\begin{bmatrix} 0_{s\times m} & I_s
\end{bmatrix}\Big\|
\\
\le\Big(C(n+r,n,m,m+s)+C(n+r,r,s,m+s)\Big)\max\{\|X\|_{n,m},\|Y\|_{r,s}\}
\end{multline*}
and
\begin{multline*}
\max\{\|X\|_{n,m},\|Y\|_{r,s}\}\\
=\max\Big\{\Big\|\begin{bmatrix}
I_n & 0_{n\times r}\end{bmatrix}\begin{bmatrix} X & 0_{n\times
s}\\
0_{r\times m} & Y\end{bmatrix}\begin{bmatrix} I_m\\
 0_{s\times m}\end{bmatrix}\Big\|_{n,m},\\
 \Big\|\begin{bmatrix}
0_{r\times n} & I_r\end{bmatrix}\begin{bmatrix} X & 0_{n\times
s}\\
0_{r\times m} & Y\end{bmatrix}\begin{bmatrix} 0_{m\times s}\\
I_s \end{bmatrix}\Big\|_{r,s}\Big\}\\
\le\max\{C(n,n+r,m+s,m),C(r,n+r,m+s,s)\}\|X\oplus Y\|_{n+r,m+s}.
\end{multline*}
\end{proof}

 Clearly, an
admissible system of rectangular matrix norms over $\vecspace{W}$
induces an admissible system of (square) matrix norms over
$\vecspace{W}$. Notice that if $\vecspace{W}$ is an operator
space, then an admissible system of rectangular matrix norms over
$\vecspace{W}$ is unique and one can choose
$$C(n,p,q,m)=C_1(n,p,q,m)=C_1'(n,p,q,m)=1$$ for every
$n,p,q,m\in\mathbb{N}$ \cite[Exercises 13.1 and 13.2]{Pa}.

We can also introduce rectangular block injections and projections
as follows. Let  $\ring$ be a unital commutative ring, let
$\module{M}$ be a module over $\ring$, and let $s$, $s_1$, \ldots,
$s_m\in\mathbb{N}$ be such that $s=s_1+\cdots+s_m$. Set
\index{$E_{i}$}
$$E_{i}:=\col\,[0_{s_1\times s_i},\ldots, 0_{s_{i-1\,}\times
s_{i}},I_{s_i},0_{s_{i+1}\times s_{i+1}},\cdots,0_{s_m\times
s_i}]\in\rmat{\ring}{s}{s_i},\quad i=1,\ldots,m.$$ The
\emph{rectangular block injections} \index{rectangular block
injection} are defined by \index{$\iota_{ij}^{(s_1,\ldots,s_m)}$}
\begin{equation}\label{eq:rect-inj}
\iota_{ij}^{(s_1,\ldots,s_m)}\colon\rmat{\module{M}}{s_i}{s_j}\to\mat{\module{M}}{s},\quad
\iota_{ij}^{(s_1,\ldots,s_m)}\colon Z\mapsto E_{i}ZE_j^\top,\quad
i,j=1,\ldots,m,
\end{equation}
and the \emph{rectangular block projections} \index{rectangular
block projection} are defined by
\index{$\pi_{ij}^{(s_1,\ldots,s_m)}$}
\begin{equation}\label{eq:rect-proj}
\pi_{ij}^{(s_1,\ldots,s_m)}\colon\mat{\module{M}}{s}\to\rmat{\module{M}}{s_i}{s_j},\quad
\pi_{ij}^{(s_1,\ldots,s_m)}\colon Z\mapsto E_{i}^\top ZE_j,\quad
i,j=1,\ldots,m,
\end{equation}
The following analogue of Proposition \ref{prop:inj-proj}, in
conjunction with Proposition \ref{prop:rect-dirsums-norms}, holds.
\begin{prop}\label{prop:rect-inj-proj}
Condition \eqref{eq:rect-simprod-norms} (valid for all $n$, $p$,
$q$, $m\in\mathbb{N}$, $X\in\rmat{\vecspace{W}}{p}{q}$,
$S\in\rmat{\mathbb{C}}{n}{p}$, and $T\in\rmat{\mathbb{C}}{q}{m}$)
is equivalent to the boundedness of the rectangular block
injections $\iota^{(s_1,\ldots,s_m)}_{ij}$ and rectangular block
projections $\pi^{(s_1,\ldots,s_m)}_{ij}$, $m\in\mathbb{N}$,
$s_1$, \ldots, $s_m\in\mathbb{N}$, $i,j=1,\ldots,m$. In
particular,
\begin{equation}\label{eq:rect-inj-proj-estimates}
\|\iota^{(s_1,\ldots,s_m)}_{ij}\|\le C(s,s_i,s_j,s),\quad
\|\pi^{(s_1,\ldots,s_m)}_{ij}\|\le C(s_i,s,s,s_j).
\end{equation}
\end{prop}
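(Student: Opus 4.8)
The plan is to follow the proof of Proposition~\ref{prop:inj-proj} almost verbatim, using that each matrix $E_i\in\rmat{\ring}{s}{s_i}$ occurring in \eqref{eq:rect-inj}--\eqref{eq:rect-proj} has orthonormal columns, so that $\|E_i\|=\|E_i^\top\|=1$ for the Euclidean operator norm.

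For the implication \eqref{eq:rect-simprod-norms}$\Longrightarrow$ boundedness I would simply feed the matrices $E_i,E_j$ into \eqref{eq:rect-simprod-norms}: with $S=E_i$, $X=Z\in\rmat{\vecspace{W}}{s_i}{s_j}$, $T=E_j^\top$ one gets $\|\iota_{ij}^{(s_1,\ldots,s_m)}(Z)\|_{s,s}=\|E_iZE_j^\top\|_{s,s}\le C(s,s_i,s_j,s)\|Z\|_{s_i,s_j}$, and with $S=E_i^\top$, $X=Z\in\mat{\vecspace{W}}{s}$, $T=E_j$ one gets $\|\pi_{ij}^{(s_1,\ldots,s_m)}(Z)\|_{s_i,s_j}=\|E_i^\top ZE_j\|_{s_i,s_j}\le C(s_i,s,s,s_j)\|Z\|_{s,s}$. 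These are exactly the estimates \eqref{eq:rect-inj-proj-estimates}.

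For the converse I would first note that the special case $s_1=\cdots=s_m=1$ of \eqref{eq:rect-inj}--\eqref{eq:rect-proj} reproduces the entrywise injections \eqref{eq:inj-bd} and projections \eqref{eq:proj-bd} (there $E_i=e_i$, so $E_iZE_j^\top=E_{ij}Z$ and $E_i^\top ZE_j=Z_{ij}$); hence, once all rectangular block injections and projections are assumed bounded, Proposition~\ref{prop:inj-proj} already supplies the square inequality \eqref{eq:simprod-norms}, with $C_2(s)$ bounded by $\bigl(\sum_{i,j=1}^s\|\iota_{ij}\|\bigr)\bigl(\sum_{k,\ell=1}^s\|\pi_{k\ell}\|\bigr)$. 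Given $X\in\rmat{\vecspace{W}}{p}{q}$, $S\in\rmat{\mathbb{C}}{n}{p}$, $T\in\rmat{\mathbb{C}}{q}{m}$, I would then set $s:=n+p+q+m$ with block partition $(n,p,q,m)$, form the square matrix $\Xi:=\iota_{23}^{(n,p,q,m)}(X)\in\mat{\vecspace{W}}{s}$, and form the scalar square matrices $\widehat S,\widehat T\in\mat{\mathbb{C}}{s}$ supported in the $(1,2)$ and $(3,4)$ blocks, with those blocks equal to $S$ and $T$ respectively, so that $\|\widehat S\|=\|S\|$ and $\|\widehat T\|=\|T\|$. A direct check gives $\widehat S\,\Xi\,\widehat T=\iota_{14}^{(n,p,q,m)}(SXT)$; applying $\pi_{14}^{(n,p,q,m)}$ (which is a left inverse of $\iota_{14}^{(n,p,q,m)}$), then \eqref{eq:simprod-norms}, and then $\|\Xi\|_s\le\|\iota_{23}^{(n,p,q,m)}\|\,\|X\|_{p,q}$, yields \eqref{eq:rect-simprod-norms} with $C(n,p,q,m)=\|\pi_{14}^{(n,p,q,m)}\|\,C_2(n+p+q+m)\,\|\iota_{23}^{(n,p,q,m)}\|$. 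Alternatively one can bypass \eqref{eq:simprod-norms} and expand $SXT$ entrywise, writing each $w\otimes E_{ij}$ as $\pi_{14}^{(n,p,q,m)}$ of an entrywise square injection of $w$ and bounding the entries of $X$ through $\iota_{23}^{(n,p,q,m)}$; this gives the same conclusion with messier constants.

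The norm bookkeeping and the identity $\widehat S\,\Xi\,\widehat T=\iota_{14}^{(n,p,q,m)}(SXT)$ are routine. The only conceptual point to get right — and the reason the statement is phrased in terms of block injections/projections rather than just entrywise ones — is that the rectangular norm $\|\cdot\|_{p,q}$ must be sandwiched between constant multiples of the norm of one fixed corner block of a single square matrix, so that the square-matrix inequality \eqref{eq:simprod-norms} can be transported to rectangular matrices; this is precisely what boundedness of the complementary pair $\iota_{23}^{(n,p,q,m)}$, $\pi_{23}^{(n,p,q,m)}$ provides.
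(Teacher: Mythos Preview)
Your proposal is correct. The forward direction is identical to the paper's: the estimates \eqref{eq:rect-inj-proj-estimates} drop out of \eqref{eq:rect-simprod-norms} by plugging in $S=E_i$, $T=E_j^\top$ (respectively $S=E_i^\top$, $T=E_j$), using $\|E_i\|=\|E_i^\top\|=1$.

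For the converse, the paper simply says ``analogously to Proposition~\ref{prop:inj-proj}'' and leaves the details to the reader. Your block-embedding argument --- packaging $X$, $S$, $T$ as blocks of square matrices of size $s=n+p+q+m$, invoking the already-established square inequality \eqref{eq:simprod-norms}, and extracting $SXT$ via $\pi_{14}^{(n,p,q,m)}$ --- is a clean and correct way to carry this out; the identity $\widehat S\,\Xi\,\widehat T=\iota_{14}^{(n,p,q,m)}(SXT)$ follows from $E_i^\top E_i=I_{s_i}$, and the operator-norm equalities $\|\widehat S\|=\|S\|$, $\|\widehat T\|=\|T\|$ hold because the $E_i$ are isometries. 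Your alternative entrywise expansion is closer in spirit to the literal computation in the proof of Proposition~\ref{prop:inj-proj}, but the block route you chose is tidier and avoids re-doing that bookkeeping. Either way, you have supplied precisely the details the paper omits.
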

\begin{proof}
If \eqref{eq:rect-simprod-norms} holds for all $n$, $p$, $q$,
$m\in\mathbb{N}$, $X\in\rmat{\vecspace{W}}{p}{q}$,
$S\in\rmat{\mathbb{C}}{n}{p}$, and $T\in\rmat{\mathbb{C}}{q}{m}$,
then \eqref{eq:rect-inj-proj-estimates} follows immediately from
\eqref{eq:rect-inj} and \eqref{eq:rect-proj}, thus
$\iota^{(s_1,\ldots,s_m)}_{ij}$ and $\pi^{(s_1,\ldots,s_m)}_{ij}$
are bounded.

 The converse direction can be shown analogously to that of Proposition
\ref{prop:inj-proj}.
\end{proof}

We also have the following analogue of Proposition
\ref{prop:ucb-inj-proj}.
\begin{prop}\label{prop:rect-ucb-inj-proj}
If $\vecspace{W}$ is a Banach space equipped with an admissible
system of rectangular matrix norms over $\vecspace{W}$ such that
\eqref{eq:rect-simprod-norms} holds with $C$ independent of $n$,
$p$, $q$, and $m$, then the rectangular block injections
 \eqref{eq:rect-inj} and the rectangular block projections
\eqref{eq:rect-proj} are uniformly completely bounded, i.e., for
every $m\in\mathbb{N}$ and $s_1,\ldots,s_m\in\mathbb{N}$, the
mappings $\iota_{ij}^{(s_1,\ldots,s_m)}$ and
$\pi_{ij}^{(s_1,\ldots,s_m)}$ are completely bounded. Moreover,
\begin{equation}\label{eq:rect-inj-proj-cb}
\|\iota_{ij}^{(s_1,\ldots,s_m)}\|_{\cb}\le C,\quad
\|\pi_{ij}^{(s_1,\ldots,s_m)}\|_{\cb}\le C.
\end{equation}
If $\vecspace{W}$ is
 an operator space, then all $\iota_{ij}^{(s_1,\ldots,s_m)}$ are complete isometries and all
$\pi_{ij}^{(s_1,\ldots,s_m)}$ are complete coisometries.
\end{prop}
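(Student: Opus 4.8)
The plan is to mirror the proof of Proposition~\ref{prop:ucb-inj-proj}, while observing that the rectangular hypothesis \eqref{eq:rect-simprod-norms} makes the argument more direct: here one may sandwich directly by rectangular scalar matrices and thereby avoid the tensor-flip permutations $P(s,n)$ altogether.

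First I would fix $m\in\mathbb{N}$, $s_1,\ldots,s_m\in\mathbb{N}$ with $s=s_1+\cdots+s_m$, and $i,j\in\{1,\ldots,m\}$, and record that the matrix $E_i\in\rmat{\mathbb{C}}{s}{s_i}$ has orthonormal columns, so that $\|E_i\|=\|E_i^\top\|=1$ and $E_i^\top E_i=I_{s_i}$. The key observation is a blockwise identity: for every $k\in\mathbb{N}$ and every $W=[w_{ab}]_{a,b=1,\ldots,k}\in\mat{(\rmat{\vecspace{W}}{s_i}{s_j})}{k}$, viewed under the natural identification as an element of $\rmat{\vecspace{W}}{ks_i}{ks_j}$, one has
\[
(\id_{\mat{\mathbb{C}}{k}}\otimes\iota_{ij}^{(s_1,\ldots,s_m)})(W)=[E_iw_{ab}E_j^\top]_{a,b=1,\ldots,k}=(I_k\otimes E_i)\,W\,(I_k\otimes E_j^\top),
\]
since $I_k\otimes E_i$ and $I_k\otimes E_j^\top$ are block-diagonal with $E_i$, resp.\ $E_j^\top$, on the diagonal, so the middle product just multiplies each block $w_{ab}$ by $E_i$ on the left and $E_j^\top$ on the right. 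Applying \eqref{eq:rect-simprod-norms} with the constant $C$ that is independent of the sizes then gives
\[
\|(\id_{\mat{\mathbb{C}}{k}}\otimes\iota_{ij}^{(s_1,\ldots,s_m)})(W)\|_{ks,ks}\le C\,\|I_k\otimes E_i\|\,\|W\|_{ks_i,ks_j}\,\|I_k\otimes E_j^\top\|=C\,\|W\|_{ks_i,ks_j},
\]
whence $\|\iota_{ij}^{(s_1,\ldots,s_m)}\|_{\cb}\le C$. The projection case is entirely analogous: for $W\in\mat{(\mat{\vecspace{W}}{s})}{k}\cong\mat{\vecspace{W}}{ks}$ one gets $(\id_{\mat{\mathbb{C}}{k}}\otimes\pi_{ij}^{(s_1,\ldots,s_m)})(W)=(I_k\otimes E_i^\top)\,W\,(I_k\otimes E_j)$, and \eqref{eq:rect-simprod-norms} again yields $\|\pi_{ij}^{(s_1,\ldots,s_m)}\|_{\cb}\le C$.

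For the operator-space conclusion I would take $C=1$, so that all $\iota_{ij}^{(s_1,\ldots,s_m)}$ and $\pi_{ij}^{(s_1,\ldots,s_m)}$ are complete contractions, and then use $\pi_{ij}^{(s_1,\ldots,s_m)}\circ\iota_{ij}^{(s_1,\ldots,s_m)}=\id_{\rmat{\vecspace{W}}{s_i}{s_j}}$, which follows from $E_i^\top E_i=I_{s_i}$ and $E_j^\top E_j=I_{s_j}$. Exactly as in the proof of Proposition~\ref{prop:ucb-inj-proj}, a complete contraction that admits a one-sided inverse which is itself a complete contraction must be a complete isometry, and the retraction must then be a complete coisometry (complete quotient map). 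I expect no genuine obstacle here; the only step needing a little care is the blockwise reshuffling identity displayed above, together with bookkeeping of which rectangular format ($ks_i\times ks_j$ versus $ks\times ks$) each matrix lives in when \eqref{eq:rect-simprod-norms} is invoked.
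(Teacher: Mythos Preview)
Your proof is correct and essentially identical to the paper's: both identify $(\id_{\mat{\mathbb{C}}{k}}\otimes\iota_{ij}^{(s_1,\ldots,s_m)})(W)=(I_k\otimes E_i)W(I_k\otimes E_j^\top)$ (and the analogous formula for $\pi$), apply \eqref{eq:rect-simprod-norms} directly, and then use $\pi_{ij}^{(s_1,\ldots,s_m)}\iota_{ij}^{(s_1,\ldots,s_m)}=\id$ for the operator-space conclusion. Your remark that the rectangular setting dispenses with the permutations $P(s,n)$ from Proposition~\ref{prop:ucb-inj-proj} is exactly what the paper's proof exploits as well.
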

\begin{proof}
Let $n,m\in\mathbb{N}$, $1\le i,j\le m$,
$s_1,\ldots,s_m\in\mathbb{N}$, $W=(w_{ab})_{a,b=1,\ldots,n}\in
\mat{(\rmat{\vecspace{W}}{s_i}{s_j})}{n}\cong\rmat{\vecspace{W}}{s_{i}n}{s_{j}n}$.
For $a,b=1,\ldots,n$, let $E_{ab}\in\mat{\mathbb{C}}{n}$ be
defined by $(E_{ab})_{\alpha\beta}=\delta_{(a,b),(\alpha,\beta)}$,
$\alpha,\beta=1,\ldots,n$. Then
\begin{multline*}
\|(\id_{\mat{\mathbb{C}}{n}}\otimes\iota_{ij}^{(s_1,\ldots,s_m)})(W)\|_{(s_1+\cdots+s_m)n}\\
=
\Big\|(\id_{\mat{\mathbb{C}}{n}}\otimes\iota_{ij}^{(s_1,\ldots,s_m)})
\Big(\sum_{a,b=1}^nE_{ab}\otimes
w_{ab}\Big)\Big\|_{(s_1+\cdots+s_m)n}\\
=\Big\|\sum_{a,b=1}^nE_{ab}\otimes
\iota_{ij}^{(s_1,\ldots,s_m)}(w_{ab})\Big\|_{(s_1+\cdots+s_m)n}=\Big\|\sum_{a,b=1}^nE_{ab}\otimes
E_{i}w_{ab}E_j^\top\Big\|_{(s_1+\cdots+s_m)n}\\
=\|(I_n\otimes E_i)W(I_n\otimes E_j^\top)\|_{(s_1+\cdots+s_m)n}
\le C\|W\|_{s_{i}n,s_{j}n}.
\end{multline*}
This proves the first inequality in \eqref{eq:rect-inj-proj-cb}.

 Suppose now that
$$W\!=\!(w_{ab})_{a,b=1,\ldots,n}\in\!
\mat{\left(\mat{\vecspace{W}}{(s_1+\cdots+s_m)}\right)}{n}\!\cong
\mat{\vecspace{W}}{(s_1+\cdots+s_m)n}.$$ Then
\begin{multline*}
\|(\id_{\mat{\mathbb{C}}{n}}\otimes\pi_{ij}^{(s_1,\ldots,s_m)})(W)\|_{s_{i}n,s_{j}n}\\
=
\Big\|(\id_{\mat{\mathbb{C}}{n}}\otimes\pi_{ij}^{(s_1,\ldots,s_m)})
\Big(\sum_{a,b=1}^nE_{ab}\otimes w_{ab}
\Big)\Big\|_{s_{i}n,s_{j}n}\\
=\Big\|
\sum_{a,b=1}^nE_{ab}\otimes\pi_{ij}^{(s_1,\ldots,s_m)}(w_{ab})\Big\|_{s_{i}n,s_{j}n}=
\Big\| \sum_{a,b=1}^nE_{ab}\otimes E_i^\top
w_{ab}E_j\Big\|_{s_{i}n,s_{j}n}\\
=\|(I_n\otimes E_i^\top) W(I_n\otimes E_j)\|_{s_{i}n,s_{j}n}\le
C\|W\|_{(s_1+\cdots+s_m)n}.
\end{multline*}
This proves the second inequality in \eqref{eq:rect-inj-proj-cb}.

If $\vecspace{W}$ is an operator space, we obtain that the
operators $\iota_{ij}^{(s_1,\ldots,s_m)}$ and
$\pi_{ij}^{(s_1,\ldots,s_m)}$ are complete contractions. Moreover,
since
$\pi_{ij}^{(s_1,\ldots,s_m)}\iota_{ij}^{(s_1,\ldots,s_m)}=\id_{\rmat{\vecspace{W}}{s_i}{s_j}}$,
all $\iota_{ij}^{(s_1,\ldots,s_m)}$ are complete isometries and
all $\pi_{ij}^{(s_1,\ldots,s_m)}$ are complete coisometries.
\end{proof}

Let $\vecspace{V}_0$, \ldots, $\vecspace{V}_k$, $\vecspace{W}_1$,
\ldots, $\vecspace{W}_k$ be vector spaces over $\mathbb{C}$, let
$\Omega^{(j)}\subseteq\ncspacej{\vecspace{V}}{j}$, $j=0,\ldots,k$,
be finitely open nc sets, and let $\vecspace{W}_0$ be a Banach
space with an admissible system of rectangular matrix norms over
$\vecspace{W}_0$. A higher order nc function
$f\in\tclass{k}(\Omega^{(0)},\ldots,\Omega^{(k)};\ncspacej{\vecspace{W}}{0},\ldots,\ncspacej{\vecspace{W}}{k})$
is called \emph{$W$-locally bounded on slices} \index{W
@$W$-locally bounded on slices higher order nc function} if for
every $n_0,\ldots,n_k\in\mathbb{N}$, $Y^0\in\Omega^{(0)}_{n_0}$,
\ldots, $Y^k\in\Omega^{(k)}_{n_k}$,
$Z^0\in\mat{\vecspace{V}_0}{n_0}$, \ldots,
$Z^k\in\mat{\vecspace{V}_k}{n_k}$,
$W^1\in\rmat{\vecspace{W}_1}{n_0}{n_1}$, \ldots,
$W^k\in\rmat{\vecspace{W}_k}{n_{k-1}}{n_k}$ there exists
$\epsilon>0$ such that
$f(Y^0+tZ^0,\ldots,Y^k+tZ^k)(W^1,\ldots,W^k)$ is bounded in
$\rmat{\vecspace{W}_0}{n_0}{n_k}$ for $|t|<\epsilon$, i.e.,  the
function
$f|_{\Omega^{(0)}_{n_0}\times\cdots\times\Omega^{(k)}_{n_k}}$ is
locally bounded on slices for every fixed $W^1$, \ldots, $W^k$. A
higher order nc function
$f\in\tclass{k}(\Omega^{(0)},\ldots,\Omega^{(k)};\ncspacej{\vecspace{W}}{0},\ldots,\ncspacej{\vecspace{W}}{k})$
is called \emph{$W$-G\^{a}teaux (G$_W$-) differentiable} \index{W
@$W$-G\^{a}teaux (G$_W$-) differentiable higher order nc function}
if for every $n_0,\ldots,n_k\in\mathbb{N}$,
$Y^0\in\Omega^{(0)}_{n_0}$, \ldots, $Y^k\in\Omega^{(k)}_{n_k}$,
$Z^0\in\mat{\vecspace{V}_0}{n_0}$, \ldots,
$Z^k\in\mat{\vecspace{V}_k}{n_k}$,
$W^1\in\rmat{\vecspace{W}_1}{n_0}{n_1}$, \ldots,
$W^k\in\rmat{\vecspace{W}_k}{n_{k-1}}{n_k}$ the G-derivative,
\begin{multline}\label{eq:w-g-der-k}
\lim_{t\to 0}\frac{f(Y^0+tZ^0,\ldots,Y^k+tZ^k)(W^1,\ldots,W^k)-
f(Y^0,\ldots,Y^k)(W^1,\ldots,W^k)}{t}\\
=\frac{d}{dt}f(Y^0+tZ^0,\ldots,Y^k+tZ^k)(W^1,\ldots,W^k)\Big|_{t=0},
\end{multline}
exists, i.e., the function
$f|_{\Omega^{(0)}_{n_0}\times\cdots\times\Omega^{(k)}_{n_k}}$ is
G-differentiable for every fixed $W^1$, \ldots, $W^k$. It follows
that $f$ is \emph{$W$-analytic on slices}, \index{W @$W$-analytic
on slices higher order nc function} i.e., for every
$Y^0\in\Omega^{(0)}_{n_0}$, \ldots, $Y^k\in\Omega^{(k)}_{n_k}$,
$Z^0\in\mat{\vecspace{V}_0}{n_0}$, \ldots,
$Z^k\in\mat{\vecspace{V}_k}{n_k}$,
$W^1\in\rmat{\vecspace{W}_1}{n_0}{n_1}$, \ldots,
$W^k\in\rmat{\vecspace{W}_k}{n_{k-1}}{n_k}$,
$f(Y^0+tZ^0,\ldots,Y^k+tZ^k)(W^1,\ldots,W^k)$ is an analytic
function of $t$ in a neighbourhood of $0$. By Hartogs' theorem
\cite[Page 28]{Sh}, $f$ is analytic on
$$(\vecspace{U}_0\cap\Omega^{(0)}_{n_0})\times\cdots\times(\vecspace{U}_k\cap\Omega^{(k)}_{n_k})\times
\vecspace{Y}_1\times\cdots\times\vecspace{Y}_k$$ as a function of
several complex variables for every $n_0,\ldots,n_k\in\mathbb{N}$
and for all finite-dimensional subspaces
$\vecspace{U}_0\subseteq\mat{\vecspace{V}_0}{n_0}$, \ldots,
$\vecspace{U}_k\subseteq\mat{\vecspace{V}_k}{n_k}$,
$\vecspace{Y}_1\subseteq\rmat{\vecspace{W}_1}{n_0}{n_1}$, \ldots,
$\vecspace{Y}_k\subseteq\rmat{\vecspace{W}_k}{n_{k-1}}{n_k}$.
\begin{thm}\label{thm:lbs-dif-op}
Suppose that
$f\in\tclass{k}(\Omega^{(0)},\ldots,\Omega^{(k)};\ncspacej{\vecspace{W}}{0},\ldots,\ncspacej{\vecspace{W}}{k})$
is $W$-locally bounded on slices. Then so is
$$\Delta_Rf\in\tclass{k+1}(\Omega^{(0)},\ldots,\Omega^{(k)},\Omega^{(k)};
\ncspacej{\vecspace{W}}{0},\ldots,\ncspacej{\vecspace{W}}{k},
\ncspacej{\vecspace{V}}{k}).$$
\end{thm}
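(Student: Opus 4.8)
The plan is to use the explicit formula \eqref{eq:delta_r_k}, which realizes $\Delta_Rf$ (with increment $Z$ in its last slot) as a corner block of $f$ evaluated on a block upper triangular matrix, and to push the hypothesis on $f$ through it. Membership $\Delta_Rf\in\tclass{k+1}(\Omega^{(0)},\ldots,\Omega^{(k)},\Omega^{(k)};\ncspacej{\vecspace{W}}{0},\ldots,\ncspacej{\vecspace{W}}{k},\ncspacej{\vecspace{V}}{k})$ is already given by Theorem \ref{thm:dif-op} (a finitely open nc set being right admissible), so only $W$-local boundedness on slices of $\Delta_Rf$ has to be checked. Fix base points $Y^0\in\Omega^{(0)}_{n_0},\dots,Y^{k-1}\in\Omega^{(k-1)}_{n_{k-1}},\,Y^k\in\Omega^{(k)}_{n_k},\,Y^{k+1}\in\Omega^{(k)}_{n_{k+1}}$, directions $Z^0,\dots,Z^{k-1}\in\mat{\vecspace{V}_j}{n_j}$, $Z^k\in\mat{\vecspace{V}_k}{n_k}$, $Z^{k+1}\in\mat{\vecspace{V}_k}{n_{k+1}}$, and test matrices $W^1\in\rmat{\vecspace{W}_1}{n_0}{n_1},\dots,W^k\in\rmat{\vecspace{W}_k}{n_{k-1}}{n_k}$, $W^{k+1}\in\rmat{\vecspace{V}_k}{n_k}{n_{k+1}}$. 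We must produce $\epsilon>0$ such that $t\mapsto\Delta_Rf(Y^0+tZ^0,\dots,Y^{k+1}+tZ^{k+1})(W^1,\dots,W^k,W^{k+1})$ is bounded for $|t|<\epsilon$.

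The key preliminary is to trap a suitable line inside $\Omega^{(k)}$. Since $\Omega^{(k)}$ is a nc set, $Y^k\oplus Y^{k+1}\in\Omega^{(k)}_{n_k+n_{k+1}}$; since it is finitely open, its intersection with the (at most three-dimensional) subspace $\spa\{\,Y^k\oplus Y^{k+1},\ \Xi,\ Z^k\oplus Z^{k+1}\,\}$, where $\Xi$ denotes the $(n_k+n_{k+1})\times(n_k+n_{k+1})$ matrix whose $(1,2)$ block is $W^{k+1}$ and whose other blocks vanish, is open in that subspace and contains $Y^k\oplus Y^{k+1}$. Hence there exist $\delta>0$ and a nonzero scalar $s_0$ (any sufficiently small nonzero complex number) so that $M(t):=\begin{bmatrix} Y^k+tZ^k & s_0W^{k+1} \\ 0 & Y^{k+1}+tZ^{k+1} \end{bmatrix}$ lies in $\Omega^{(k)}_{n_k+n_{k+1}}$ for all $|t|<\delta$. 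Note that $M(t)=\widehat Y+t\widehat Z$ for the fixed data $\widehat Y:=\begin{bmatrix} Y^k & s_0W^{k+1} \\ 0 & Y^{k+1} \end{bmatrix}\in\Omega^{(k)}_{n_k+n_{k+1}}$ and $\widehat Z:=Z^k\oplus Z^{k+1}$, so $M(\cdot)$ sweeps out a line through a point of $\Omega^{(k)}$.

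Now apply the hypothesis to $f$ itself: since $f$ is $W$-locally bounded on slices, for the base points $Y^0,\dots,Y^{k-1},\widehat Y$, the directions $Z^0,\dots,Z^{k-1},\widehat Z$, and the test tuple $W^1,\dots,W^{k-1},\row[W^k,0]$ (with $\row[W^k,0]\in\rmat{\vecspace{W}_k}{n_{k-1}}{n_k+n_{k+1}}$, the appended block being the $n_{k-1}\times n_{k+1}$ zero matrix; for $k=1$ the list $W^1,\dots,W^{k-1}$ is empty) there is $\epsilon_1>0$ with $f(Y^0+tZ^0,\dots,Y^{k-1}+tZ^{k-1},M(t))(W^1,\dots,W^{k-1},\row[W^k,0])$ bounded in $\rmat{\vecspace{W}_0}{n_0}{n_k+n_{k+1}}$ for $|t|<\epsilon_1$. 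Put $\epsilon:=\min\{\delta,\epsilon_1\}$. For $|t|<\epsilon$, formula \eqref{eq:delta_r_k} applies (legitimate because $M(t)\in\Omega^{(k)}$) and identifies $\Delta_Rf(Y^0+tZ^0,\dots,Y^{k+1}+tZ^{k+1})(W^1,\dots,W^k,s_0W^{k+1})$ with the product of the bounded $f$-value above by $\begin{bmatrix} 0 \\ I_{n_{k+1}} \end{bmatrix}$ on the right; right multiplication by a fixed matrix is a bounded operator $\rmat{\vecspace{W}_0}{n_0}{n_k+n_{k+1}}\to\rmat{\vecspace{W}_0}{n_0}{n_{k+1}}$ because the system of rectangular matrix norms over $\vecspace{W}_0$ is admissible, i.e. \eqref{eq:rect-simprod-norms} holds. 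Dividing by $s_0$ and invoking homogeneity of $\Delta_Rf$ in its last argument (Proposition \ref{prop:homog-k}) shows $t\mapsto\Delta_Rf(Y^0+tZ^0,\dots,Y^{k+1}+tZ^{k+1})(W^1,\dots,W^k,W^{k+1})$ is bounded for $|t|<\epsilon$, which is exactly the required conclusion. The case $k=0$ is handled identically, with \eqref{eq:uptr} and \eqref{eq:def_delta_r} in place of \eqref{eq:delta_r_k}.

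The one genuinely delicate point, and the place where I would spend the care in writing, is the trapping step of the second paragraph: verifying that after a \emph{single fixed} rescaling $s_0$ of $W^{k+1}$ the matrix $M(t)$ remains in $\Omega^{(k)}$ for all small $|t|$. This uses both finite-openness (as opposed to mere openness) and the nc-set property, and is precisely the higher-order incarnation of the standard fact that finitely open nc sets are right admissible. Everything after it — the application of the hypothesis to a slice of $f$, the boundedness of the rectangular block extraction via \eqref{eq:rect-simprod-norms}, and the homogeneity rescaling — is routine.
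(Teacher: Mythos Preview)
Your proof is correct and follows essentially the same route as the paper's: fix a nonzero scalar so that the block upper-triangular matrix $\begin{bmatrix} Y^k+tZ^k & s_0W^{k+1} \\ 0 & Y^{k+1}+tZ^{k+1}\end{bmatrix}$ stays in $\Omega^{(k)}$ for small $|t|$, apply the $W$-local boundedness hypothesis to $f$ along that slice, and extract $\Delta_Rf$ via the corner-block formula \eqref{eq:delta_r_k} together with the boundedness of the block projection coming from \eqref{eq:rect-simprod-norms}. The only cosmetic difference is that the paper separates the trapping step into two moves (first right admissibility to obtain $s_0$, then finite openness to perturb in the $t$-direction), whereas you do it in one shot via a three-dimensional subspace; the content is the same.
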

\begin{proof}
Let $n_0,\ldots,n_{k+1}\in\mathbb{N}$, $Y^0\in\Omega^{(0)}_{n_0}$,
\ldots, $Y^k\in\Omega^{(k)}_{n_k}$,
$Y^{k+1}\in\Omega^{(k)}_{n_{k+1}}$,
$Z^0\in\mat{\vecspace{V}_0}{n_0}$, \ldots,
$Z^k\in\mat{\vecspace{V}_k}{n_k}$,
$Z^{k+1}\in\mat{\vecspace{V}_k}{n_{k+1}}$,
$W^1\in\rmat{\vecspace{W}_1}{n_0}{n_1}$, \ldots,
$W^{k+1}\in\rmat{(\vecspace{W}_{k+1})}{n_{k}}{n_{k+1}}$. Since
$\Omega^{(k)}\subseteq\ncspacej{\vecspace{V}}{k}$ is a right
admissible nc
set, there exists $r>0$ such that $\begin{bmatrix} Y^k & rW^{k+1}\\
0 & Y^{k+1}\end{bmatrix}\in\Omega^{(k)}_{n_k+n_{k+1}}$. Since
$\Omega^{(k)}$
is finitely open, there exists $\epsilon_1>0$ such that $\begin{bmatrix} Y^k+tZ^k & rW^{k+1}\\
0 & Y^{k+1}+tZ^{k+1}\end{bmatrix}\in\Omega_{n_k+n_{k+1}}$ for all
$t$ with $|t|<\epsilon_1$. Since $f$ is $W$-locally bounded on
slices, there exists $\epsilon_2>0$ such that
\begin{multline*}
f\left(Y^0+tZ^0,\ldots,Y^{k-1}+tZ^{k-1},\begin{bmatrix} Y^k+tZ^k & rW^{k+1}\\
0 &
Y^{k+1}+tZ^{k+1}\end{bmatrix}\right)\\
(W^1,\ldots,W^{k-1},\row[W^k, 0]) \end{multline*} is bounded for
$|t|<\epsilon_2$. Therefore, using \eqref{eq:uptr-k}, we obtain
that
\begin{multline*}
\Delta_Rf(Y^0+tZ^0,\ldots,Y^{k+1}+tZ^{k+1})(W^1,\ldots,W^{k+1})\\
=r^{-1}f\left(Y^0+tZ^0,\ldots,Y^{k-1}+tZ^{k-1},\begin{bmatrix} Y^k+tZ^k & rW^{k+1}\\
0 &
Y^{k+1}+tZ^{k+1}\end{bmatrix}\right)\\
(W^1,\ldots,W^{k-1},\row[W^k,
0])\begin{bmatrix} 0\\
I_{n_{k+1}}\end{bmatrix}
\end{multline*}
is bounded for $|t|<\min\{\epsilon_1,\epsilon_2\}$. We conclude
that $\Delta_Rf$ is $W$-locally bounded on slices. Notice that we
used here the fact that $\ncspacej{\vecspace{W}}{0}$ is equipped
with an admissible system of rectangular matrix norms (more
precisely, the fact that the block projection implemented by the
multiplication with $\begin{bmatrix} 0\\
I_{n_{k+1}}\end{bmatrix}$ on the right is bounded).
\end{proof}
\begin{rem}\label{rem:lbs-j-dif-op}
An analogue of Theorem \ref{thm:lbs-dif-op} also holds, with an
analogous proof, for ${}_j\Delta_Rf$, see Remark
\ref{rem:j-delta}.
\end{rem}
\begin{thm}\label{thm:g-queen-k}
Suppose that
$f\in\tclass{k}(\Omega^{(0)},\ldots,\Omega^{(k)};\ncspacej{\vecspace{W}}{0},\ldots,\ncspacej{\vecspace{W}}{k})$
is $W$-locally bounded on slices. Then $f$ is
G$_W$-differentiable.
\end{thm}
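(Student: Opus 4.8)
The plan is to follow the proof of the order-zero case, Theorem~\ref{thm:g-queen}(1)--(2): reduce G$_W$-differentiability of a diagonal slice of $f$ to the vanishing of a remainder term, by \emph{two} successive applications of the first-order difference formulae for higher order nc functions, combined with the fact (Theorem~\ref{thm:lbs-dif-op} and Remark~\ref{rem:lbs-j-dif-op}) that the operators $\Delta_R$ and ${}_j\Delta_R$ preserve $W$-local boundedness on slices. Fix $n_0,\ldots,n_k\in\mathbb{N}$, points $Y^j\in\Omega^{(j)}_{n_j}$ and directions $Z^j\in\mat{\vecspace{V}_j}{n_j}$ for $j=0,\ldots,k$, and matrices $W^i\in\rmat{\vecspace{W}_i}{n_{i-1}}{n_i}$ for $i=1,\ldots,k$. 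Put $g(t):=f(Y^0+tZ^0,\ldots,Y^k+tZ^k)(W^1,\ldots,W^k)$; since each $\Omega^{(j)}$ is finitely open, $g$ is defined for all complex $t$ in a neighbourhood of $0$, and we must show that $g$ is differentiable at $t=0$.

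First, telescope across the $k+1$ matrix arguments. For $-1\le j\le k$ set $P_j:=f(Y^0+tZ^0,\ldots,Y^j+tZ^j,Y^{j+1},\ldots,Y^k)(W^1,\ldots,W^k)$, so that $P_{-1}=g(0)$ and $P_k=g(t)$. Consecutive terms $P_j$ and $P_{j-1}$ differ only in the $j$-th matrix slot ($Y^j+tZ^j$ versus $Y^j$, both in $\Omega^{(j)}_{n_j}$ for small $t$ by finite openness); applying the first-order difference formula with respect to the $j$-th variable, i.e.\ equation~\eqref{eq:j-RightLagr_k} (which for $j=k$ is~\eqref{eq:RightLagr_k}, since $\Delta_R={}_k\Delta_R$), and pulling the factor $t$ out of the direction slot by homogeneity, we obtain
\begin{equation*}
g(t)-g(0)=t\sum_{j=0}^k h_j(t),
\end{equation*}
where $h_j(t)$ is the value of ${}_j\Delta_Rf\in\tclass{k+1}$ on the $(k+2)$-tuple of matrices $(Y^0+tZ^0,\ldots,Y^{j-1}+tZ^{j-1},Y^j,Y^j+tZ^j,Y^{j+1},\ldots,Y^k)$, evaluated on the covector tuple obtained by inserting $Z^j$ into $(W^1,\ldots,W^k)$ at the slot dictated by Remark~\ref{rem:j-delta} (at the front for $j=0$, at the end for $j=k$). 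Consequently it suffices to prove that each $h_j$ is continuous at $0$: then $\tfrac{g(t)-g(0)}{t}=\sum_{j=0}^k h_j(t)\to\sum_{j=0}^k h_j(0)$ as $t\to0$, so $g'(0)$ exists (and equals $\sum_{j=0}^k h_j(0)$).

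For the continuity of $h_j$, note that $h_j$ is again a diagonal slice: $h_j(t)=F_j(\widetilde Y+t\widetilde Z)(\widetilde W)$, where $F_j:={}_j\Delta_Rf$, $\widetilde Y=(Y^0,\ldots,Y^{j-1},Y^j,Y^j,Y^{j+1},\ldots,Y^k)$, $\widetilde Z=(Z^0,\ldots,Z^{j-1},0,Z^j,0,\ldots,0)$, and $\widetilde W$ is the covector tuple above. By Theorem~\ref{thm:lbs-dif-op} (and Remark~\ref{rem:lbs-j-dif-op}), $F_j$ is $W$-locally bounded on slices; the same telescoping as above, now applied to $F_j$, gives $h_j(t)-h_j(0)=t\sum_i h_{j,i}(t)$, where each $h_{j,i}(t)$ is a diagonal slice of ${}_i\Delta_RF_j={}_i\Delta_R{}_j\Delta_Rf\in\tclass{k+2}$. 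A second application of Theorem~\ref{thm:lbs-dif-op} (to $F_j$) shows that each ${}_i\Delta_RF_j$ is $W$-locally bounded on slices, so $h_{j,i}$ is bounded on some disk $\{|t|<\varepsilon_{j,i}\}$; hence $\sum_i h_{j,i}$ is bounded near $0$ and $h_j(t)-h_j(0)=t\cdot O(1)\to0$ as $t\to0$. This proves $h_j$ is continuous at $0$ and completes the argument.

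The only delicate points are bookkeeping --- the exact sizes and the position of the inserted direction argument $Z^j$ in the covector tuple of ${}_j\Delta_Rf$ (which shifts with $j$ and has boundary forms for $j=0$ and $j=k$), and checking that finite openness of all the $\Omega^{(j)}$ legitimizes every triangular/bidiagonal evaluation and every operator ${}_j\Delta_R$ used. The conceptual point, which avoids an illegitimate induction on the order $k$ (running in the wrong direction, since ${}_j\Delta_R$ raises the order), is that at the \emph{second} level we only need \emph{boundedness} of the slices of ${}_i\Delta_R{}_j\Delta_Rf$, not their continuity; one extra application of the difference formula converts ``bounded'' into ``$t\cdot O(1)\to0$'', which is exactly what is needed.
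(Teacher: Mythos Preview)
Your proof is correct and follows essentially the same approach as the paper. The paper organizes the argument slightly differently: it first isolates the intermediate statement that any $W$-locally bounded on slices higher order nc function is \emph{$W$-continuous on slices} (using one telescoping and boundedness of ${}_j\Delta_Rf$), and then applies this intermediate statement to ${}_j\Delta_Rf$ itself to get continuity of your $h_j$; you unroll this into two explicit telescoping levels, making explicit the observation that at the second level only boundedness is needed. The logic is identical.
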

\begin{proof}
 We will first show
that $f$ is \emph{$W$-continuous on slices}, \index{W
@$W$-continuous on slices higher order nc function} i.e.,
$$\lim_{t\to
0}f(Y^0+tZ^0,\ldots,Y^{k}+tZ^{k})(W^1,\ldots,W^{k})=f(Y^0,\ldots,Y^{k})(W^1,\ldots,W^{k}),$$
for all $n_0,\ldots,n_k\in\mathbb{N}$, $Y^0\in\Omega^{(0)}_{n_0}$,
\ldots, $Y^k\in\Omega^{(k)}_{n_k}$,
$Z^0\in\mat{\vecspace{V}_0}{n_0}$, \ldots,
$Z^k\in\mat{\vecspace{V}_k}{n_k}$,
$W^1\in\rmat{\vecspace{W}_1}{n_0}{n_1}$, \ldots,
$W^k\in\rmat{\vecspace{W}_k}{n_{k-1}}{n_k}$. We have, by
\eqref{eq:j-RightLagr_k'},
\begin{multline}\label{eq:slice-Lagrange}
f(Y^0+tZ^0,\ldots,Y^{k}+tZ^{k})(W^1,\ldots,W^{k})-f(Y^0,\ldots,Y^{k})(W^1,\ldots,W^{k})\\
=\sum_{j=0}^k\left(f(Y^0+tZ^0,\ldots,Y^{j}+tZ^{j},Y^{j+1},\ldots,Y^k)(W^1,\ldots,W^{k})\right.\\
\left.-
f(Y^0+tZ^0,\ldots,Y^{j-1}+tZ^{j-1},Y^j,\ldots,Y^{k})(W^1,\ldots,W^{k})\right)\\
=t\sum_{j=0}^k {}_j\Delta_Rf(Y^0+tZ^0,\ldots,Y^{j-1}+tZ^{j-1},Y^{j}+tZ^{j},Y^j, Y^{j+1},\ldots,Y^k)\\
(W^1,\ldots,W^{j-1},Z^j,W^j,W^{j+1},\ldots,W^k)\to 0
\end{multline}
as $t\to 0$ because the latter sum is bounded for small $t$, see
Remark \ref{rem:lbs-j-dif-op}.

Since ${}_j\Delta_Rf$ is $W$-continuous on slices by Remark
\ref{rem:lbs-j-dif-op} again and the paragraph above, it follows
from \eqref{eq:slice-Lagrange} that $f$ is G$_W$-differentiable
and
\begin{multline}\label{eq:slice-der-k}
\frac{d}{dt}f(Y^0+tZ^0,\ldots,Y^{k}+tZ^{k})(W^1,\ldots,W^{k})\Big|_{t=0}\\
=\sum_{j=0}^k{}_j\Delta_Rf
(Y^0,\ldots,Y^{j-1},Y^j,Y^j,Y^{j+1},\ldots,Y^k)\\
(W^1,\ldots,W^{j-1},W^j,Z^j,W^{j+1},\ldots,W^k).
\end{multline}
\end{proof}
The converse of Theorem \ref{thm:g-queen-k} is clear. We denote by
$$\tclass{k}_{G_W}=\tclass{k}_{G_W}(\Omega^{(0)},\ldots,\Omega^{(k)};
\ncspacej{\vecspace{W}}{0},\ldots,\ncspacej{\vecspace{W}}{k})$$
the subclass of
$\tclass{k}(\Omega^{(0)},\ldots,\Omega^{(k)};\ncspacej{\vecspace{W}}{0},\ldots,\ncspacej{\vecspace{W}}{k})$
consisting of nc functions of order $k$ that are
G$_W$-differentiable.

The following statement is an obvious corollary of Theorems
\ref{thm:lbs-dif-op} and \ref{thm:g-queen-k}.
\begin{cor}\label{cor:g-dif-op}
Let
$f\in\tclass{k}_{G_W}(\Omega^{(0)},\ldots,\Omega^{(k)};\ncspacej{\vecspace{W}}{0},\ldots,\ncspacej{\vecspace{W}}{k})$.
Then
$$\Delta_Rf\in\tclass{k+1}_{G_W}(\Omega^{(0)},\ldots,\Omega^{(k)},\Omega^{(k)};\ncspacej{\vecspace{W}}{0},\ldots,
\ncspacej{\vecspace{W}}{k},\ncspacej{\vecspace{V}}{k}).$$
\end{cor}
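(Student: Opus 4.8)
The plan is to chain together the two preceding theorems with one elementary observation. By definition, $f\in\tclass{k}_{G_W}(\Omega^{(0)},\ldots,\Omega^{(k)};\ncspacej{\vecspace{W}}{0},\ldots,\ncspacej{\vecspace{W}}{k})$ means that $f$ is a nc function of order $k$ (so all the direct-sum/similarity, equivalently intertwining, axioms hold) which is in addition $G_W$-differentiable. The target is to show the same two properties for $\Delta_Rf$.

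The membership $\Delta_Rf\in\tclass{k+1}(\Omega^{(0)},\ldots,\Omega^{(k)},\Omega^{(k)};\ncspacej{\vecspace{W}}{0},\ldots,\ncspacej{\vecspace{W}}{k},\ncspacej{\vecspace{V}}{k})$ is immediate from Theorem \ref{thm:dif-op} (here one just has to keep track that the module playing the role of the last ``$\vecspace{W}$-slot'' is $\vecspace{V}_k$). For the differentiability, I would use the converse direction of Theorem \ref{thm:g-queen-k}, noted there to be clear: a $G_W$-differentiable higher order nc function is $W$-locally bounded on slices. Concretely, fixing the matrix sizes and the auxiliary matrices $W^1,\ldots,W^k$, the map $t\mapsto f(Y^0+tZ^0,\ldots,Y^k+tZ^k)(W^1,\ldots,W^k)$ is $\mathbb{C}$-differentiable, hence $W$-analytic, hence continuous in a neighbourhood of $0$, and a continuous vector-valued function of one complex variable is bounded on a compact disc --- this is exactly $W$-local boundedness on slices. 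Consequently $f$ is $W$-locally bounded on slices.

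Now Theorem \ref{thm:lbs-dif-op} applies and gives that $\Delta_Rf$ is again $W$-locally bounded on slices; combined with the membership in $\tclass{k+1}$ already established, Theorem \ref{thm:g-queen-k} yields that $\Delta_Rf$ is $G_W$-differentiable, i.e. $\Delta_Rf\in\tclass{k+1}_{G_W}(\Omega^{(0)},\ldots,\Omega^{(k)},\Omega^{(k)};\ncspacej{\vecspace{W}}{0},\ldots,\ncspacej{\vecspace{W}}{k},\ncspacej{\vecspace{V}}{k})$, as claimed. Since each step is a direct citation, there is essentially no obstacle here; the only point that needs a sentence of its own is the passage from $G_W$-differentiability to $W$-local boundedness on slices, and even that is just continuity plus compactness. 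I would also remark (or could equally invoke Remark \ref{rem:lbs-j-dif-op}) that the identical argument works verbatim for the operators ${}_j\Delta_R$.
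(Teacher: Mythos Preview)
Your proof is correct and follows exactly the approach the paper intends: the corollary is stated as an obvious consequence of Theorems \ref{thm:lbs-dif-op} and \ref{thm:g-queen-k}, and you have spelled out precisely that chain (together with the elementary observation that $G_W$-differentiability implies $W$-local boundedness on slices, which the paper notes is clear).
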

\begin{rem}\label{rem:g-j-dif-op}
An analogue of Corollary \ref{cor:g-dif-op} also holds for
${}_j\Delta_Rf$, see Remarks \ref{rem:j-delta} and
\ref{rem:lbs-j-dif-op}.
\end{rem}

We recall that a $k$-linear mapping
$\omega\colon\vecspace{W}_1\times\cdots\times\vecspace{W}_k\to\vecspace{W}_0$,
where $\vecspace{W}_0$, \ldots, $\vecspace{W}_k$ are Banach
spaces, is called \emph{bounded} \index{bounded $k$-linear
mapping} if \index{$\Vert\omega\Vert_{\mathcal{L}^k}$}
$$\|\omega\|_{\mathcal{L}^k}:=\sup_{\|w^1\|=\ldots=\|w^k\|=1}\|\omega(w^1,\ldots,w^k)\|<\infty;$$
we denote by
$\mathcal{L}^k(\vecspace{W}_1\times\cdots\times\vecspace{W}_k,\vecspace{W}_0)$
the Banach space of such bounded $k$-linear mappings. Let
$\vecspace{V}_0$, \ldots, $\vecspace{V}_k$, $\vecspace{W}_0$,
\ldots, $\vecspace{W}_k$ be Banach spaces over $\mathbb{C}$
equipped with admissible systems of rectangular matrix norms, and
let $\Omega^{(j)}\subseteq\ncspacej{\vecspace{V}}{j}$,
$j=0,\ldots,k$, be open nc sets. A higher order nc function
$f\in\tclass{k}(\Omega^{(0)},\ldots,\Omega^{(k)};\ncspacej{\vecspace{W}}{0},\ldots,\ncspacej{\vecspace{W}}{k})$
is called \emph{locally bounded} \index{locally bounded higher
order nc function} if for every $n_0,\ldots,n_k\in\mathbb{N}$ the
function
$f|_{\Omega^{(0)}_{n_0}\times\cdots\times\Omega^{(k)}_{n_k}}$ has
values in
$\mathcal{L}^k(\rmat{\vecspace{W}_1}{n_0}{n_1}\times\cdots\times\rmat{\vecspace{W}_k}{n_{k-1}}{n_k},
\rmat{\vecspace{W}_0}{n_0}{n_k})$ and is locally bounded, i.e.,
 for every
$Y^0\in\Omega^{(0)}_{n_0}$, \ldots, $Y^k\in\Omega^{(k)}_{n_k}$
there exists $\delta>0$ such that
$\|f(X^0,\ldots,X^{k})\|_{\mathcal{L}^k}$ is bounded for $X^0\in
B(Y^0,\delta)$, \ldots, $X^k\in B(Y^k,\delta)$. Clearly, a locally
bounded higher order nc function is $W$-locally bounded on slices.

 A higher order nc function
$f\in\tclass{k}(\Omega^{(0)},\ldots,\Omega^{(k)};\ncspacej{\vecspace{W}}{0},\ldots,\ncspacej{\vecspace{W}}{k})$
is called \emph{G-differentiable} \index{G\^{a}teaux (G-)
differentiable higher order nc function} if for every
$n_0,\ldots,n_k\in\mathbb{N}$ the function
$f|_{\Omega^{(0)}_{n_0}\times\cdots\times\Omega^{(k)}_{n_k}}$ has
values in
$\mathcal{L}^k(\rmat{\vecspace{W}_1}{n_0}{n_1}\times\cdots\times\rmat{\vecspace{W}_k}{n_{k-1}}{n_k},
\rmat{\vecspace{W}_0}{n_0}{n_k})$ and is G-differentiable, i.e.,
for every $Y^0\in\Omega^{(0)}_{n_0}$, \ldots,
$Y^k\in\Omega^{(k)}_{n_k}$, $Z^0\in\mat{\vecspace{V}_0}{n_0}$,
\ldots, $Z^k\in\mat{\vecspace{V})k}{n_k}$, the G-derivative
\begin{multline}\label{eq:g-der-k}
\delta f(Y^0,\ldots,Y^k)(Z^0,\ldots,Z^k)=\lim_{t\to
0}\frac{f(Y^0+tZ^0,\ldots,Y^k+tZ^k)-f(Y^0,\ldots,Y^k)}{t}\\
=\frac{d}{dt}f(Y^0+tZ^0,\ldots,Y^k+tZ^k)\Big|_{t=0}
\end{multline}
exists in the norm of
$\mathcal{L}^k(\rmat{\vecspace{W}_1}{n_0}{n_1}\times\cdots\times\rmat{\vecspace{W}_k}{n_{k-1}}{n_k},
\rmat{\vecspace{W}_0}{n_0}{n_k})$. Clearly, a G-differentiable
higher order nc function is G$_W$-differentiable --- compare
\eqref{eq:g-der-k} and \eqref{eq:w-g-der-k}.

A higher order nc function
$f\in\tclass{k}(\Omega^{(0)},\ldots,\Omega^{(k)};\ncspacej{\vecspace{W}}{0},\ldots,\ncspacej{\vecspace{W}}{k})$
is called \emph{Fr\'{e}chet (F-) differentiable}
\index{Fr\'{e}chet (F-) differentiable higher order nc function}
if for every $n_0,\ldots,n_k\in\mathbb{N}$ the function
$f|_{\Omega^{(0)}_{n_0}\times\cdots\times\Omega^{(k)}_{n_k}}$ has
values in
$\mathcal{L}^k(\rmat{\vecspace{W}_1}{n_0}{n_1}\times\cdots\times\rmat{\vecspace{W}_k}{n_{k-1}}{n_k},
\rmat{\vecspace{W}_0}{n_0}{n_k})$ and  is F-differentiable, i.e.,
$f|_{\Omega^{(0)}_{n_0}\times\cdots\times\Omega^{(k)}_{n_k}}$ is
G-differentiable and for any $Y^0\in\Omega^{(0)}_{n_0}$, \ldots,
$Y^k\in\Omega^{(k)}_{n_k}$ the linear operator
\begin{multline*}\delta f(Y^0,\ldots,Y^k)\colon
\mat{\vecspace{V}_0}{n_0}\oplus\cdots\oplus\mat{\vecspace{V}_k}{n_k}\\
\to
\mathcal{L}^k(\rmat{\vecspace{W}_1}{n_0}{n_1}\times\cdots\times\rmat{\vecspace{W}_k}{n_{k-1}}{n_k},
\rmat{\vecspace{W}_0}{n_0}{n_k})\end{multline*} is bounded.

A higher order nc function
$f\in\tclass{k}(\Omega^{(0)},\ldots,\Omega^{(k)};\ncspacej{\vecspace{W}}{0},\ldots,\ncspacej{\vecspace{W}}{k})$
is called \emph{analytic} \index{analytic higher order nc
function} if $f$ is locally bounded and G-differentiable. Clearly,
an analytic higher order nc function is $W$-analytic. If
$f\in\tclass{k}(\Omega^{(0)},\ldots,\Omega^{(k)};\ncspacej{\vecspace{W}}{0},\ldots,\ncspacej{\vecspace{W}}{k})$
is analytic, then by \cite[Theorem 3.17.1]{HiPh}
$f|_{\Omega^{(0)}_{n_0}\times\cdots\times\Omega^{(k)}_{n_k}}$ is
also continuous and F-differentiable. It follows that an analytic
higher order nc function $f$ is continuous (i.e., all its
restrictions
$f|_{\Omega^{(0)}_{n_0}\times\cdots\times\Omega^{(k)}_{n_k}}$ are
continuous) and F-differentiable.

\begin{thm}\label{thm:lb-dif-op}
If
$f\in\tclass{k}(\Omega^{(0)},\ldots,\Omega^{(k)};\ncspacej{\vecspace{W}}{0},\ldots,\ncspacej{\vecspace{W}}{k})$
is locally bounded, then so is
$\Delta_Rf\in\tclass{k+1}(\Omega^{(0)},\ldots,\Omega^{(k)},\Omega^{(k)};\ncspacej{\vecspace{W}}{0},\ldots,
\ncspacej{\vecspace{W}}{k},\ncspacej{\vecspace{V}}{k})$.
\end{thm}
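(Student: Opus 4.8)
The plan is to reduce local boundedness of $\Delta_R f$ to that of $f$ through the explicit formula \eqref{eq:delta_r_k}, in the same spirit as the proof of Theorem \ref{thm:lbs-dif-op}, but tracking the $\mathcal{L}^k$-norms instead of only boundedness on slices. Note first that, since open nc sets are right admissible (Section \ref{subsub:queen}), Theorem \ref{thm:dif-op} already tells us that $\Delta_R f$ is a higher order nc function of the asserted class and that $\Delta_R f(X^0,\ldots,X^{k+1})$ is $(k+1)$-linear; so the only thing left to prove is local boundedness.

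First I would fix $n_0,\ldots,n_{k+1}$ and a point $Y^0\in\Omega^{(0)}_{n_0},\ldots,Y^k\in\Omega^{(k)}_{n_k}$, $Y^{k+1}\in\Omega^{(k)}_{n_{k+1}}$. Since $\Omega^{(k)}$ is a nc set, $Y^k\oplus Y^{k+1}\in\Omega^{(k)}_{n_k+n_{k+1}}$, and since $\Omega^{(k)}$ is open there is $\delta>0$ with $B(Y^k\oplus Y^{k+1},\delta)\subseteq\Omega^{(k)}$; shrinking $\delta$ and invoking the local boundedness of $f$, I may also assume $\|f(X^0,\ldots,X^{k-1},W)\|_{\mathcal{L}^k}\le M$ whenever $X^j\in B(Y^j,\delta)$ for $j=0,\ldots,k-1$ and $W\in B(Y^k\oplus Y^{k+1},\delta)$. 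Writing $\begin{bmatrix}A&B\\0&C\end{bmatrix}$, $[\,Z\ 0\,]$ and $V\begin{bmatrix}0\\ I_{n_{k+1}}\end{bmatrix}$ as images of rectangular block injections/projections (Propositions \ref{prop:rect-dirsums-norms} and \ref{prop:rect-inj-proj}), their boundedness yields a constant $K\ge 1$, depending only on $n_k,n_{k+1}$, such that $\bigl\|\begin{bmatrix}A&B\\0&C\end{bmatrix}\bigr\|\le K(\|A\|+\|B\|+\|C\|)$, $\|[\,Z\ 0\,]\|\le K\|Z\|$ and $\bigl\|V\begin{bmatrix}0\\ I_{n_{k+1}}\end{bmatrix}\bigr\|\le K\|V\|$. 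Set $\delta':=\delta/(3K)$ and fix the scaling $t_0:=\delta/(3K)$.

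Then, for $X^j\in B(Y^j,\delta')$ ($j=0,\ldots,k+1$) and any $Z^{k+1}$ with $\|Z^{k+1}\|\le 1$, the block matrix $\begin{bmatrix}X^k & t_0 Z^{k+1}\\ 0 & X^{k+1}\end{bmatrix}$ lies in $B(Y^k\oplus Y^{k+1},\delta)\subseteq\Omega^{(k)}$, so \eqref{eq:delta_r_k} applies, and together with homogeneity (Propositions \ref{prop:delta_r-k} and \ref{prop:homog-k}) it expresses $\Delta_R f(X^0,\ldots,X^{k+1})(Z^1,\ldots,Z^{k+1})$ as $t_0^{-1}$ times $f\bigl(X^0,\ldots,X^{k-1},\begin{bmatrix}X^k & t_0 Z^{k+1}\\ 0 & X^{k+1}\end{bmatrix}\bigr)(Z^1,\ldots,Z^{k-1},[\,Z^k\ 0\,])$ right-multiplied by $\begin{bmatrix}0\\ I_{n_{k+1}}\end{bmatrix}$. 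Estimating this for unit $Z^1,\ldots,Z^{k+1}$ with the bound $M$ and the constant $K$ gives $\|\Delta_R f(X^0,\ldots,X^{k+1})(Z^1,\ldots,Z^{k+1})\|\le t_0^{-1}K^2 M$; by $(k+1)$-linearity this means $\|\Delta_R f(X^0,\ldots,X^{k+1})\|_{\mathcal{L}^{k+1}}\le t_0^{-1}K^2 M$ on the polydisc $\prod_{j=0}^{k+1}B(Y^j,\delta')$, in particular the values of $\Delta_R f$ are genuine bounded multilinear maps. This is exactly the required local boundedness.

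The step I expect to require the most care is purely bookkeeping: ensuring the scaling factor $t_0$ can be chosen independently of $Z^{k+1}$ (which is why one first normalizes $\|Z^{k+1}\|\le 1$ and only afterwards uses multilinearity) and checking that each auxiliary operation in \eqref{eq:delta_r_k} — forming the $2\times 2$ block triangular matrix in $\ncspace{\vecspace{V}_k}$, appending a zero block to $Z^k$ in $\ncspace{\vecspace{W}_k}$, and extracting the lower block column of a matrix over $\vecspace{W}_0$ — is uniformly bounded, which is precisely the content of the admissibility of the three relevant systems of rectangular matrix norms. No conceptual obstacle arises beyond what already appears in Theorem \ref{thm:lbs-dif-op}.
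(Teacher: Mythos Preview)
Your proposal is correct and follows essentially the same route as the paper's proof: fix the scaling factor once (the paper uses $\frac{\delta}{2}$, you use $t_0=\delta/(3K)$), push the block upper-triangular matrix into $B(Y^k\oplus Y^{k+1},\delta)$, and then invoke \eqref{eq:delta_r_k} together with boundedness of the block injection $Z^k\mapsto[\,Z^k\ 0\,]$ and the block projection given by right multiplication by $\begin{bmatrix}0\\ I_{n_{k+1}}\end{bmatrix}$. If anything, your bookkeeping of the admissibility constants is a bit more explicit than the paper's.
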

\begin{proof}
Let $n_0,\ldots,n_{k+1}\in\mathbb{N}$, and let
$Y^0\in\Omega^{(0)}_{n_0}$, \ldots, $Y^k\in\Omega^{(k)}_{n_k}$,
$Y^{k+1}\in\Omega^{(k)}_{n_{k+1}}$. Since $f$ is locally bounded,
there exists $\delta>0$ such that
\begin{multline*}
\|f(X^0,\ldots,X^{k-1},X)\|_{\mathcal{L}^k}\\
=\sup_{\|W^1\|_{n_0,n_1}=\ldots
=\|W^{k-1}\|_{n_{k-2},n_{k-1}}=\|W\|_{n_{k-1},n_k+n_{k+1}}=1}
\|f(X^0,\ldots,X^{k-1},X)\\
(W^1,\ldots,W^{k-1},W)\|_{n_0,n_k+n_{k+1}}
\end{multline*}
is bounded for all $X^0\in B(Y^0,\delta)$, \ldots, $X^{k-1}\in
B(Y^{k-1},\delta)$, $X\in B(Y^k\oplus Y^{k+1},\delta)$. Since
$$\begin{bmatrix} X^k & \frac{\delta}{2}W^{k+1}\\
0 & X^{k+1}
\end{bmatrix}\in B(Y^k\oplus Y^{k+1},\delta)$$ whenever
$X^k\in B(Y^k,\frac{\delta}{2})$, $X^{k+1}\in
B(Y^{k+1},\frac{\delta}{2})$, and
$W^{k+1}\in\rmat{\vecspace{V}_k}{n_k}{n_{k+1}}$ with
$\|W^{k+1}\|_{n_k,n_{k+1}}=1$, we obtain that
$$\left\|f\left(X^0,\ldots,X^{k-1},\begin{bmatrix} X^k & \frac{\delta}{2}W^{k+1}\\
0 & X^{k+1}
\end{bmatrix}\right)(W^1,\ldots,W^{k-1},\row[W^k,0])\right\|_{n_0,n_k+n_{k+1}}$$
is bounded for $X^0\in B(Y^0,\frac{\delta}{2})$, \ldots,
$X^{k+1}\in B(Y^{k+1},\frac{\delta}{2})$,
$\|W^1\|_{n_0,n_1}=\ldots=\|W^{k+1}\|_{n_k,n_{k+1}}=1$. Here we
used the boundedness of the block injection
$$W^k\in\rmat{\vecspace{W}_k}{n_{k-1}}{n_k}\longmapsto\row[W^k,0]\in
\rmat{\vecspace{W}_k}{n_{k-1}}{(n_k+n_{k+1})}.$$
Therefore,
\begin{multline*}
\|\Delta_Rf(X^0,\ldots,X^{k+1})\|_{\mathcal{L}^{k+1}}\\
=\sup_{\|W^1\|_{n_0,n_1}=\ldots =\|W^{k+1}\|_{n_k,n_{k+1}}=1}
\|\Delta_Rf(X^0,\ldots,X^{k+1})(W^1,\ldots,W^{k+1})\|_{n_0,n_{k+1}}\\
=\frac{2}{\delta}\sup_{\|W^1\|_{n_0,n_1}=\ldots
=\|W^{k+1}\|_{n_k,n_{k+1}}=1}
\Big\|f\left(X^0,\ldots,X^{k-1},\begin{bmatrix} X^k & \frac{\delta}{2}W^{k+1}\\
0 & X^{k+1}
\end{bmatrix}\right)\\
(W^1,\ldots,W^{k-1},\row[W^k,0])
\begin{bmatrix}
0\\
I_{n_{k+1}}
\end{bmatrix}\Big\|_{n_0,n_{k+1}}
\end{multline*}
is bounded for $X^0\in B(Y^0,\frac{\delta}{2})$, \ldots,
$X^{k+1}\in B(Y^{k+1},\frac{\delta}{2})$. Here we used the
boundedness of the block projection implemented by the
multiplication with $\begin{bmatrix} 0\\
I_{n_{k+1}}\end{bmatrix}$ on the right.
\end{proof}
\begin{rem}\label{rem:lb-j-dif-op}
An analogue of Theorem \ref{thm:lb-dif-op} also holds, with an
analogous proof, for ${}_j\Delta_Rf$, see Remark
\ref{rem:j-delta}.
\end{rem}
\begin{thm}\label{thm:f-queen-k}
If
$f\in\tclass{k}(\Omega^{(0)},\ldots,\Omega^{(k)};\ncspacej{\vecspace{W}}{0},\ldots,\ncspacej{\vecspace{W}}{k})$
is locally bounded, then $f$ is analytic.
\end{thm}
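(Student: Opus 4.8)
The plan is to run the proof of Theorem~\ref{thm:g-queen-k} essentially verbatim, but with norm estimates in the Banach space of multilinear maps replacing the pointwise‑in‑$(W^1,\ldots,W^k)$ estimates used there, so that one obtains a genuine $\mathcal{L}^k$‑valued $G$‑derivative; together with the hypothesis of local boundedness this is exactly what ``analytic'' means for a higher order nc function. Write $\mathcal{L}^k_{\mathbf{n}}$ for the Banach space $\mathcal{L}^k(\rmat{\vecspace{W}_1}{n_0}{n_1}\times\cdots\times\rmat{\vecspace{W}_k}{n_{k-1}}{n_k},\rmat{\vecspace{W}_0}{n_0}{n_k})$ from the definition of $G$‑differentiability. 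Since a locally bounded higher order nc function is in particular $W$‑locally bounded on slices, $f$ is $W$‑analytic on slices and, by Theorem~\ref{thm:g-queen-k}, $G_W$‑differentiable with slice derivative computed by \eqref{eq:slice-der-k}; the only thing missing is that the convergence in \eqref{eq:slice-Lagrange}--\eqref{eq:slice-der-k} takes place in the norm of $\mathcal{L}^k_{\mathbf{n}}$, not merely after evaluating on fixed $W^1,\ldots,W^k$.

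The key auxiliary statement I would establish is: \emph{every locally bounded higher order nc function $g\in\tclass{m}(\ldots)$ is continuous along slices}, i.e. for all $n_0,\ldots,n_m$, all $Y^i\in\Omega^{(i)}_{n_i}$ and $Z^i\in\mat{\vecspace{V}_i}{n_i}$, the map $t\mapsto g(Y^0+tZ^0,\ldots,Y^m+tZ^m)$ is continuous at $t=0$ in the norm of $\mathcal{L}^m_{\mathbf{n}}$. To prove this, telescope the difference $g(Y^0+tZ^0,\ldots,Y^m+tZ^m)-g(Y^0,\ldots,Y^m)$ over the $m+1$ coordinates exactly as in \eqref{eq:slice-Lagrange}, and in each coordinate apply the first order difference formula \eqref{eq:j-RightLagr_k'}; this expresses the difference as $t$ times a sum of values of ${}_j\Delta_R g$, $j=0,\ldots,m$, with the displacement inserted into a fixed slot. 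By Theorem~\ref{thm:lb-dif-op} and Remark~\ref{rem:lb-j-dif-op} each ${}_j\Delta_R g$ is again a \emph{locally bounded} nc function (of order $m+1$), so $\|{}_j\Delta_R g(\mathbf{X}(t))\|_{\mathcal{L}^{m+1}}$ stays bounded for $t$ near $0$; using the multilinear bound $\|{}_j\Delta_R g(\mathbf{X}(t))(W^1,\ldots,Z^j,\ldots,W^m)\|\le \|{}_j\Delta_R g(\mathbf{X}(t))\|_{\mathcal{L}^{m+1}}\,\|Z^j\|_{n_j}\prod_{i}\|W^i\|$ (here the admissible system of rectangular matrix norms on $\vecspace{W}_0$ keeps the relevant block injection/projection bounded, as in the proof of Theorem~\ref{thm:lb-dif-op}) shows the whole telescoped difference has $\mathcal{L}^m_{\mathbf{n}}$‑norm $O(|t|)$, hence tends to $0$. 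This argument uses local boundedness of $g$ at order $m$ and of ${}_j\Delta_R g$ at order $m+1$ only, so there is no induction to run.

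With the lemma in hand the theorem follows quickly. By Theorem~\ref{thm:lb-dif-op} and Remark~\ref{rem:lb-j-dif-op}, each ${}_j\Delta_R f$ is a locally bounded nc function of order $k+1$, hence, by the lemma, continuous along slices. Telescoping as in \eqref{eq:slice-Lagrange} and applying \eqref{eq:j-RightLagr_k'} gives, as an identity in $\mathcal{L}^k_{\mathbf{n}}$,
\[
\frac{f(Y^0+tZ^0,\ldots,Y^k+tZ^k)-f(Y^0,\ldots,Y^k)}{t}=\sum_{j=0}^{k}{}_j\Delta_R f(\mathbf{X}_j(t))(\,\cdot\,,\ldots,Z^j,\ldots,\cdot\,),
\]
where $\mathbf{X}_j(t)=(Y^0+tZ^0,\ldots,Y^{j-1}+tZ^{j-1},Y^j+tZ^j,Y^j,Y^{j+1},\ldots,Y^k)$ and ``$(\,\cdot\,,\ldots,Z^j,\ldots,\cdot\,)$'' denotes the element of $\mathcal{L}^k_{\mathbf{n}}$ obtained by plugging $Z^j$ into the distinguished slot and leaving the remaining slots free. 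Each summand is the image of ${}_j\Delta_R f(\mathbf{X}_j(t))\in\mathcal{L}^{k+1}_{\mathbf{n}}$ under the bounded ``insert $Z^j$'' linear map into $\mathcal{L}^k_{\mathbf{n}}$, and ${}_j\Delta_R f(\mathbf{X}_j(t))\to {}_j\Delta_R f(\mathbf{X}_j(0))$ in $\mathcal{L}^{k+1}_{\mathbf{n}}$‑norm since ${}_j\Delta_R f$ is continuous along slices ($\mathbf{X}_j(t)$ being an affine slice through $\mathbf{X}_j(0)$). Hence the difference quotient converges in $\mathcal{L}^k_{\mathbf{n}}$ to the right‑hand side of \eqref{eq:slice-der-k}; this means $f$ is $G$‑differentiable, and being also locally bounded, $f$ is analytic.

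The step I expect to be fiddliest is the lemma — the bookkeeping of the telescoping and of the position of the inserted displacement in ${}_j\Delta_R g$, and the verification that the evaluation/insertion maps $\mathcal{L}^{m+1}_{\mathbf{n}}\to\mathcal{L}^m_{\mathbf{n}}$ are bounded — but this is exactly the estimate already carried out in the proof of Theorem~\ref{thm:lb-dif-op}, so no new idea is needed; everything else is a direct transcription of the $W$‑locally bounded argument with norms in place of pointwise values. As an alternative to the lemma, one could argue that for fixed $\mathbf{n}$ the slice $t\mapsto f(Y^0+tZ^0,\ldots,Y^k+tZ^k)$ is a locally bounded $\mathcal{L}^k_{\mathbf{n}}$‑valued function on a disc about $0$ which is weakly analytic with respect to the separating family of functionals $\psi\circ\mathrm{ev}_{(W^1,\ldots,W^k)}$ on $\mathcal{L}^k_{\mathbf{n}}$ ($W$‑analyticity on slices composed with $\psi\in(\rmat{\vecspace{W}_0}{n_0}{n_k})^*$), hence analytic by the standard criterion for vector‑valued analyticity (see, e.g., \cite[Chapter III]{HiPh}); then $\delta f$ exists automatically in norm.
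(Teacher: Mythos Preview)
Your main argument is correct and is precisely the paper's own proof: first show continuity on slices in the $\mathcal{L}^k$-norm via the telescoping \eqref{eq:slice-Lagrange} together with the local boundedness of ${}_j\Delta_R f$ (Remark~\ref{rem:lb-j-dif-op}), then apply the same continuity to ${}_j\Delta_R f$ to pass to the $\mathcal{L}^k$-limit in the difference quotient. Your alternative route at the end---locally bounded plus weak analyticity with respect to the separating family $\psi\circ\mathrm{ev}_{(W^1,\ldots,W^k)}$ implies norm analyticity---is a genuinely different and somewhat slicker argument that the paper does not use; it trades the explicit nc telescoping for a classical Banach-space principle, at the cost of relying on the $W$-analyticity already established in Theorem~\ref{thm:g-queen-k}.
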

\begin{proof}
 We will first show
that $f$ is \emph{continuous on slices}, \index{continuous on
slices higher order nc function} i.e.,
$$\lim_{t\to
0}\|f(Y^0+tZ^0,\ldots,Y^{k}+tZ^{k})-f(Y^0,\ldots,Y^{k})\|_{\mathcal{L}^k}=0$$
for all $n_0,\ldots,n_k\in\mathbb{N}$, $Y^0\in\Omega^{(0)}_{n_0}$,
\ldots, $Y^k\in\Omega^{(k)}_{n_k}$,
$Z^0\in\mat{\vecspace{V}_0}{n_0}$, \ldots,
$Z^k\in\mat{\vecspace{V}_k}{n_k}$. Indeed, the convergence in
\eqref{eq:slice-Lagrange} is uniform on the set of
$W^1\in\rmat{\vecspace{W}_1}{n_0}{n_1}$, \ldots,
$W^k\in\rmat{\vecspace{W}_k}{n_{k-1}}{n_k}$ satisfying
$\|W^1\|_{n_0,n_1}=\ldots =\|W^k\|_{n_{k-1},n_k}=1$ because the
last sum in \eqref{eq:slice-Lagrange}  is uniformly bounded on
this set for small $t$, see Remark \ref{rem:lb-j-dif-op}.

Since ${}_j\Delta_Rf$ is continuous on slices by Remark
\ref{rem:lb-j-dif-op} again and the paragraph above,
\eqref{eq:slice-Lagrange} implies that $f$ is G-differentiable and
$\delta f(Y^0,\ldots,Y^k)(Z^0,\ldots,Z^k)$ is calculated by
\eqref{eq:slice-der-k} where the limit is uniform on the set of
$W^1\in\rmat{\vecspace{W}_1}{n_0}{n_1}$, \ldots,
$W^k\in\rmat{\vecspace{W}_k}{n_{k-1}}{n_k}$ satisfying
$\|W^1\|_{n_0,n_1}=\ldots =\|W^k\|_{n_{k-1},n_k}=1$. We conclude
that $f$ is analytic.
\end{proof}

It follows from Theorem \ref{thm:f-queen-k} that a higher order nc
function $f$ is locally bounded if and only if $f$ is continuous
if and only if $f$ is F-differentiable if and only if $f$ is
analytic. We denote by $\tclass{k}_{\rm an}=\tclass{k}_{\rm
an}(\Omega^{(0)},\ldots,\Omega^{(k)};\ncspacej{\vecspace{W}}{0},\ldots,\ncspacej{\vecspace{W}}{k})$
the subclass of \index{$\tclass{k}_{\rm
an}(\Omega^{(0)},\ldots,\Omega^{(k)};\ncspacej{\vecspace{W}}{0},\ldots,\ncspacej{\vecspace{W}}{k})$}
$\tclass{k}(\Omega^{(0)},\ldots,\Omega^{(k)};\ncspacej{\vecspace{W}}{0},\ldots,\ncspacej{\vecspace{W}}{k})$
consisting of nc functions of order $k$ that are analytic.

The following statement is an obvious corollary of Theorems
\ref{thm:lb-dif-op} and \ref{thm:f-queen-k}.
\begin{cor}\label{cor:an-dif-op}
Let $f\in\tclass{k}_{\rm
an}(\Omega^{(0)},\ldots,\Omega^{(k)};\ncspacej{\vecspace{W}}{0},\ldots,\ncspacej{\vecspace{W}}{k})$.
Then
$$\Delta_Rf\in\tclass{k+1}_{\rm an}(\Omega^{(0)},\ldots,\Omega^{(k)},\Omega^{(k)};\ncspacej{\vecspace{W}}{0},\ldots,
\ncspacej{\vecspace{W}}{k},\ncspacej{\vecspace{V}}{k}).$$
\end{cor}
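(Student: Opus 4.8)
The plan is to deduce Corollary \ref{cor:an-dif-op} directly from the two preceding results, exactly as the parenthetical remark in the excerpt suggests. The statement asserts that if $f \in \tclass{k}_{\rm an}$ — that is, $f$ is an analytic higher order nc function — then $\Delta_R f \in \tclass{k+1}_{\rm an}$. There are really only two things to check: first, that $\Delta_R f$ is a nc function of order $k+1$ (with the claimed domains and target spaces), and second, that it is analytic in the sense just defined (locally bounded and G-differentiable, equivalently locally bounded and F-differentiable, equivalently continuous).

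First I would invoke Theorem \ref{thm:dif-op}, which already establishes that for $f \in \tclass{k}$ one has $\Delta_R f \in \tclass{k+1}(\Omega^{(0)},\ldots,\Omega^{(k)},\Omega^{(k)};\ncspacej{\module{N}}{0},\ldots,\ncspacej{\module{N}}{k},\ncspacej{\module{M}}{k})$; specializing the modules to the Banach spaces $\vecspace{W}_0,\ldots,\vecspace{W}_k$ and $\vecspace{V}_k$ gives the membership in $\tclass{k+1}$ in the present setting. This takes care of the purely algebraic/combinatorial part — that $\Delta_R f$ respects direct sums, similarities, and intertwinings, and is multilinear in its $(k+1)$ linear arguments. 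Then, since $f$ is assumed analytic, Theorem \ref{thm:lb-dif-op} applies: it says precisely that local boundedness of $f$ as a higher order nc function is inherited by $\Delta_R f$. So $\Delta_R f$ is locally bounded. Finally, by Theorem \ref{thm:f-queen-k}, a locally bounded higher order nc function is analytic; applying this to $\Delta_R f$ gives $\Delta_R f \in \tclass{k+1}_{\rm an}$, which is the desired conclusion.

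There is essentially no obstacle here beyond bookkeeping: the corollary is a formal consequence of Theorem \ref{thm:lb-dif-op} (local boundedness is preserved) together with Theorem \ref{thm:f-queen-k} (local boundedness implies analyticity), and the fact that $\Delta_R$ maps $\tclass{k}$ into $\tclass{k+1}$ by Theorem \ref{thm:dif-op}. The one point to be slightly careful about is that ``analytic'' for higher order nc functions was characterized in the excerpt as the common equivalent of locally bounded / continuous / F-differentiable / (locally bounded and G-differentiable); since Theorem \ref{thm:lb-dif-op} delivers local boundedness of $\Delta_R f$ verbatim, and Theorem \ref{thm:f-queen-k} upgrades that to full analyticity, no further argument is needed. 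One could alternatively route through $W$-local boundedness using Theorems \ref{thm:lbs-dif-op} and \ref{thm:g-queen-k} and Corollary \ref{cor:g-dif-op}, but the direct route via the locally bounded versions is shorter and yields exactly the target class $\tclass{k+1}_{\rm an}$.

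\begin{proof}
By Theorem \ref{thm:dif-op}, $\Delta_R f$ belongs to
$\tclass{k+1}(\Omega^{(0)},\ldots,\Omega^{(k)},\Omega^{(k)};\ncspacej{\vecspace{W}}{0},\ldots,\ncspacej{\vecspace{W}}{k},\ncspacej{\vecspace{V}}{k})$.
Since $f\in\tclass{k}_{\rm an}$ is locally bounded, Theorem \ref{thm:lb-dif-op} shows that $\Delta_R f$ is locally bounded as a higher order nc function. By Theorem \ref{thm:f-queen-k}, a locally bounded higher order nc function is analytic; hence
$$\Delta_R f\in\tclass{k+1}_{\rm an}(\Omega^{(0)},\ldots,\Omega^{(k)},\Omega^{(k)};\ncspacej{\vecspace{W}}{0},\ldots,\ncspacej{\vecspace{W}}{k},\ncspacej{\vecspace{V}}{k}).$$
\end{proof}
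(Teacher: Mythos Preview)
Your proof is correct and follows exactly the approach the paper takes: the paper states that the corollary is an obvious consequence of Theorems \ref{thm:lb-dif-op} and \ref{thm:f-queen-k}, which is precisely the route you use (your additional citation of Theorem \ref{thm:dif-op} is harmless but redundant, since Theorem \ref{thm:lb-dif-op} already records that $\Delta_R f\in\tclass{k+1}$).
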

\begin{rem}\label{rem:an-j-dif-op}
An analogue of Corollary \ref{cor:an-dif-op} also holds for
${}_j\Delta_Rf$, see Remarks \ref{rem:j-delta} and
\ref{rem:lb-j-dif-op}.
\end{rem}

Let $\vecspace{W}_0$, \ldots, $\vecspace{W}_k$ be Banach spaces
equipped with admissible systems of rectangular matrix norms, and
let
$\omega\colon\vecspace{W}_1\times\cdots\times\vecspace{W}_k\to\vecspace{W}_0$
be a $k$-linear mapping. For every $n_0$, $n_1$, \ldots,
$n_k\in\mathbb{N}$ we define a $k$-linear mapping
$\omega^{(n_0,\ldots,n_k)}\colon\rmat{\vecspace{W}_1}{n_0}{n_1}\times\cdots
\times\rmat{\vecspace{W}_k}{n_{k-1}}{n_k}\to\rmat{\vecspace{W}_0}{n_0}{n_k}$
by
\begin{equation}\label{eq:k-CS}
\omega^{(n_0,\ldots,n_k)}(W^1,\ldots,W^k)=(W^1\odot\cdots\odot
W^k)\omega,
\end{equation}
where we use the notation \eqref{eq:tensa_prod} and $\omega$ acts
on the matrix $$W^1\odot\cdots\odot
W^k\in\rmat{(\vecspace{W}_1\otimes\cdots\otimes\vecspace{W}_k)}{n_0}{n_k}$$
on the right entrywise, cf. \eqref{eq:diag_tensa_k}. The
$k$-linear mapping $\omega$ is called \emph{completely bounded}
\index{completely bounded $k$-linear mapping} (in the sense of
Christensen and Sinclair, see, e.g., \cite[Chapter 17]{Pa}) if
\index{$\Vert\omega\Vert_{\mathcal{L}^k_\mathrm{cb}}$}
\begin{equation*}
\|\omega\|_{\mathcal{L}^k_\mathrm{cb}}:=
\sup_{n_0,\ldots,n_k\in\mathbb{N}}\|\omega^{(n_0,\ldots,n_k)}\|_{\mathcal{L}^k}<\infty.
\end{equation*}
  We denote by $\mathcal{L}^k_{\mathrm
cb}(\vecspace{W}_1\times\cdots\times\vecspace{W}_k,\vecspace{W}_0)$
\index{$\mathcal{L}^k_{\mathrm
cb}(\vecspace{W}_1\times\cdots\times\vecspace{W}_k,\vecspace{W}_0)$}
the Banach space of such completely bounded $k$-linear mappings.

Recall \cite[Page 185, Exercise 13.2]{Pa} that an operator space
 $\ncspace{\vecspace{W}}$ is equipped with a
unique admissible system of rectangular matrix norms such that the
analogues for rectangular matrices of \eqref{eq:dirsums-norms} and
of \eqref{eq:simprod-norms} (i.e., \eqref{eq:rect-simprod-norms})
hold with all the constants equal to $1$ independently of matrix
sizes. We notice  (see, e.g., \cite[Chapter 17]{Pa} or
\cite[Chapter 5]{Pi}) that if $\vecspace{W}_0$, \ldots,
$\vecspace{W}_k$ are operator spaces, then
$\|\omega\|_{\mathcal{L}^k_\mathrm{cb}}$ coincides with the
completely bounded norm of $\omega$ viewed as a linear mapping
$\vecspace{W}_1\otimes\cdots\otimes\vecspace{W}_k\to\vecspace{W}_0$,
where the vector spaces
$\rmat{(\vecspace{W}_1\otimes\cdots\otimes\vecspace{W}_k)}{n_0}{n_k}$,
$n_0$, $n_k\in\mathbb{N}$,  are endowed with the \emph{Haagerup
norm, $\|\cdot\|_{H,n_0,n_k}$}: \index{Haagerup norm}
\index{$\Vert W\Vert_{H,n_0,n_k}$}
\begin{multline}\label{eq:Haagerup}
\|W\|_{H,n_0,n_k}\\
=\inf\{\|W^1\|_{n_0,n_1}\cdots\|W^k\|_{n_{k-1},n_k}\colon
n_1,\ldots,n_{k-1}\in\mathbb{N},\ W=W^1\odot\cdots\odot W^k\}.
\end{multline}
\begin{prop}\label{prop:k-cb-norm}
Let $\vecspace{V}$ and $\vecspace{W}$ be operator spaces, and let
$\omega\colon\vecspace{V}^k\to\vecspace{W}$ be a $k$-linear
mapping. Then
\begin{equation}\label{eq:k-cb-norm}
\|\omega\|_{\mathcal{L}^k_\mathrm{cb}}=\sup\{\|W^{\odot
k}\omega\|_n \colon n\in\mathbb{N}, W\in\mat{\vecspace{V}}{n},\
\|W\|_{n}=1\}.
\end{equation}
\end{prop}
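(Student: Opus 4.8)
The plan is to prove the identity by a two-sided inequality, after first noting that the supremum on the right-hand side is unchanged if we replace the condition $\|W\|_n=1$ by $\|W\|_n\le 1$: since $\|\cdot\|_n$ is a genuine norm on $\mat{\vecspace{V}}{n}$ and $W\mapsto W^{\odot k}$ is homogeneous of degree $k$, a nonzero $W$ with $\|W\|_n\le 1$ can be rescaled to norm $1$, which only multiplies $\|W^{\odot k}\omega\|_n$ by $\|W\|_n^{-k}\ge 1$, while $W=0$ contributes $0$. Denote by $M$ this common supremum.

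The inequality $M\le\|\omega\|_{\mathcal{L}^k_{\mathrm{cb}}}$ is immediate: for $W\in\mat{\vecspace{V}}{n}$ with $\|W\|_n\le 1$, specializing \eqref{eq:k-CS} to all dimensions equal to $n$ and all arguments equal to $W$ gives $W^{\odot k}\omega=\omega^{(n,\ldots,n)}(W,\ldots,W)$, so $\|W^{\odot k}\omega\|_n\le\|\omega^{(n,\ldots,n)}\|_{\mathcal{L}^k}\|W\|_n^{k}\le\|\omega^{(n,\ldots,n)}\|_{\mathcal{L}^k}\le\|\omega\|_{\mathcal{L}^k_{\mathrm{cb}}}$; taking the supremum over $W$ yields the claim.

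For the reverse inequality, fix $n_0,\ldots,n_k\in\mathbb{N}$ and $W^j\in\rmat{\vecspace{V}}{n_{j-1}}{n_j}$ with $\|W^j\|_{n_{j-1},n_j}\le 1$, and set $n=n_0+\cdots+n_k$. I would form the $n\times n$ matrix $W$ over $\vecspace{V}$ which, relative to the block partition $\mathbb{C}^n=\mathbb{C}^{n_0}\oplus\cdots\oplus\mathbb{C}^{n_k}$, has $W^j$ in the $(j,j+1)$-th block and $0$ elsewhere; concretely $W=A\,\diag(W^1,\ldots,W^k)\,B$, where $A\in\rmat{\mathbb{C}}{n}{(n_0+\cdots+n_{k-1})}$ is the isometry embedding onto the first $k$ blocks and $B\in\rmat{\mathbb{C}}{(n_1+\cdots+n_k)}{n}$ is the coisometry projecting onto the last $k$ blocks. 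Since $\vecspace{V}$ is an operator space, \eqref{eq:rect-simprod-norms} with all constants equal to $1$ together with the rectangular direct-sum identity (Proposition \ref{prop:rect-dirsums-norms} with constants $1$) give $\|W\|_n\le\|A\|\,\|\diag(W^1,\ldots,W^k)\|\,\|B\|=\max_j\|W^j\|_{n_{j-1},n_j}\le 1$.

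Next I would compute $W^{\odot k}=\underbrace{W\odot_1\cdots\odot_1 W}_{k}$ from \eqref{eq:tensa_prod}: because $W$ carries a single superdiagonal of nonzero blocks, the only nonzero block of $W^{\odot k}$ is the $(1,k+1)$-th one, and it equals the iterated product $W^1\odot\cdots\odot W^k$, an $n_0\times n_k$ matrix over $\vecspace{V}^{\otimes k}$; hence $W^{\odot k}\omega$ has $\omega^{(n_0,\ldots,n_k)}(W^1,\ldots,W^k)=(W^1\odot\cdots\odot W^k)\omega$ in its $(1,k+1)$-th block and $0$ elsewhere. As $\vecspace{W}$ is an operator space, the rectangular block injection placing a block in position $(1,k+1)$ is a complete isometry (Proposition \ref{prop:rect-ucb-inj-proj}), so $\|W^{\odot k}\omega\|_n=\|\omega^{(n_0,\ldots,n_k)}(W^1,\ldots,W^k)\|_{n_0,n_k}$. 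Therefore $\|\omega^{(n_0,\ldots,n_k)}(W^1,\ldots,W^k)\|_{n_0,n_k}\le M$; taking the supremum over the $W^j$ in the unit ball gives $\|\omega^{(n_0,\ldots,n_k)}\|_{\mathcal{L}^k}\le M$, and the supremum over $n_0,\ldots,n_k$ gives $\|\omega\|_{\mathcal{L}^k_{\mathrm{cb}}}\le M$, completing the proof. The only delicate point is the bookkeeping in this last direction — choosing the ambient size $n$ and the block pattern so that $W^{\odot k}$ isolates exactly $W^1\odot\cdots\odot W^k$ in one corner, and then checking both $\|W\|_n\le\max_j\|W^j\|$ (via the factorization through a block-diagonal matrix) and the isometric nature of extracting that corner block — but all of these follow directly from the operator-space axioms and the rectangular injection/projection results already proved, so no new estimate is required.
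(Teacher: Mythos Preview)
Your proof is correct and follows essentially the same approach as the paper's: embed the rectangular matrices $W^1,\ldots,W^k$ as the superdiagonal blocks of a square matrix $W$ of size $n_0+\cdots+n_k$, observe that $W^{\odot k}$ has $(W^1\odot\cdots\odot W^k)$ as its single nonzero block, and use the operator-space axioms to pass between the norm of that block and the norm of the whole matrix. You are more explicit than the paper about the bookkeeping (the factorization $W=A\,\diag(W^1,\ldots,W^k)\,B$ to bound $\|W\|_n$, and the reduction from $\|W\|_n=1$ to $\|W\|_n\le 1$), but the argument is the same; one small citation nitpick is that Proposition~\ref{prop:rect-dirsums-norms} literally yields $C_1'=2$ when $C=1$, so for the sharp constant $C_1'=1$ you should instead invoke the remark immediately following it about operator spaces.
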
 \index{$\Vert\omega\Vert_{\mathcal{L}^k_\mathrm{cb}}$}
\begin{proof}
 The inequality ``$\ge$" is obvious. In order to show that
``$\le$" holds as well, let $n_0$, \ldots, $n_k\in\mathbb{N}$,
$W^1\in\rmat{\vecspace{V}}{n_0}{n_1}$, \ldots,
$W^k\in\rmat{\vecspace{V}}{n_{k-1}}{n_k}$ be arbitrary. Set
$$\widetilde{W}=\begin{bmatrix}
0_{n_0\times n_0} & W^1     & 0_{n_0\times n_2}& \ldots & 0_{n_0\times n_k}\\
\vdots            & \ddots  & \ddots  & \ddots &  \vdots \\
\vdots            &         & \ddots  & \ddots & 0_{n_{k-2}\times n_k}\\
\vdots            &         &         & \ddots &   W^k\\
0_{n_k\times n_0} & \ldots  & \ldots  & \ldots & 0_{n_k\times n_k}
\end{bmatrix}\in\mat{\vecspace{V}}{(n_0+\cdots +n_k)}.$$
Then
\begin{multline*}
\|(W^1\odot\cdots\odot W^k)\omega\|_{n_0\times n_k}\\
=\left\|\begin{bmatrix}
0_{n_0\times n_0} & \ldots  & 0_{n_0\times n_{k-1}} & (W^1\odot\cdots\odot W^k)\omega \\
\vdots            & \ddots  &                       &  0_{n_1\times n_k} \\
\vdots            &         & \ddots              & \vdots \\
0_{n_k\times n_0} & \ldots  & \ldots  &      0_{n_k\times n_k}\\
\end{bmatrix}\right\|_{n_0+\cdots +n_k}\\
=\|\widetilde{W}^{\odot k}\omega \|_{n_0+\cdots +n_k}
\end{multline*}
is not greater than the right-hand side of \eqref{eq:k-cb-norm}.
Taking the supremum of $\|(W^1\odot\cdots\odot
W^k)\omega\|_{n_0\times n_k}$ over all $n_0$, \ldots,
$n_k\in\mathbb{N}$, and all $W^1\in\rmat{\vecspace{V}}{n_0}{n_1}$,
\ldots, $W^k\in\rmat{\vecspace{V}}{n_{k-1}}{n_k}$ of norm 1, we
obtain the desired inequality ``$\le$" in \eqref{eq:k-cb-norm}.
\end{proof}
\begin{rem}\label{rem:k-cb-norm}
Proposition \ref{prop:k-cb-norm} tells us that the evaluation on
the diagonal is an isometric isomorphism from the space
$\mathcal{L}^k_\mathrm{cb}(\vecspace{V}\times\cdots\times\vecspace{V},\vecspace{W})$
to the space of bounded nc homogeneous polynomials of degree $k$
on $\ncspace{\vecspace{V}}$ with values in
$\ncspace{\vecspace{W}}$ and the supremum norm on $B_{\rm
nc}(0,1)$, see the comments in  the end of Chapter \ref{sec:alg}.
This is a nc analogue of the well-known relation between bounded
homogeneous polynomials and bounded symmetric multilinear forms on
Banach spaces, where the isomorphism however is not isometric, see
\cite[Section 26.2]{HiPh} and \cite[Section 2]{Mu}.
\end{rem}

Let $\vecspace{V}_0, \ldots, \vecspace{V}_k, \vecspace{W}_0,
\ldots, \vecspace{W}_k$ be operator spaces, and let
$\Omega^{(j)}\subseteq\ncspacej{\vecspace{V}}{j}$, $j=0,\ldots,k$,
be uniformly open nc sets. We call a higher order nc function
$f\in\tclass{k}(\Omega^{(0)},\ldots,\Omega^{(k)};\ncspacej{\vecspace{W}}{0},\ldots,\ncspacej{\vecspace{W}}{k})$
 \emph{uniformly locally completely bounded}
\index{uniformly locally completely bounded higher order nc
function} if for every $n_0,\ldots,n_k\in\mathbb{N}$ the function
$f|_{\Omega^{(0)}_{n_0}\times\cdots\times\Omega^{(k)}_{n_k}}$ has
values in
$$\mathcal{L}^k_{\mathrm
cb}(\rmat{\vecspace{W}_1}{n_0}{n_1}\times\cdots\times\rmat{\vecspace{W}_k}{n_{k-1}}{n_k},
\rmat{\vecspace{W}_0}{n_0}{n_k})$$ and $f$ is locally bounded in
the uniformly open topology, i.e.,
 for every
$Y^0\in\Omega^{(0)}_{n_0}$, \ldots, $Y^k\in\Omega^{(k)}_{n_k}$
there exists $\delta>0$ such that
$\|f(X^0,\ldots,X^{k})\|_{\mathcal{L}^k_{\mathrm cb}}$ is bounded
for $X^0\in B_{\mathrm nc}(Y^0,\delta)$, \ldots, $X^k\in
B_{\mathrm nc}(Y^k,\delta)$. (Notice that $p\times q$ matrices
over an operator space $\vecspace{W}$ form an operator space in
its own right.)

For $\phi\in\mathcal{L}^k_{\mathrm
cb}(\rmat{\vecspace{W}_1}{s_0}{s_1}\times\cdots\times\rmat{\vecspace{W}_k}{s_{k-1}}{s_k},
\rmat{\vecspace{W}_0}{s_0}{s_k})$ and $r>0$, define a \emph{nc
ball centered at $\phi$ of radius $r$} \index{nc ball} as
\index{$B_{\mathrm nc}(\phi,r)$}
\begin{multline*}
 B_{\mathrm nc}(\phi,r)\\
=\coprod_{m_0,\ldots,m_k=1}^\infty \Big\{
\psi\in\mathcal{L}^k_{\mathrm
cb}(\rmat{\vecspace{W}_1}{s_0m_0}{s_1m_1}\times\cdots\times\rmat{\vecspace{W}_k}{s_{k-1}m_{k-1}}{s_km_k},
\rmat{\vecspace{W}_0}{s_0m_0}{s_km_k}) \colon \\
\|\psi-\phi^{(m_0,\ldots,m_k)}\|_{\mathcal{L}^k_{\mathrm
cb}}<r\Big\}.
\end{multline*}
Notice that here, for
$W^1\in\rmat{\vecspace{W}_1}{s_0m_0}{s_1m_1}$, \ldots,
$W^k\in\rmat{\vecspace{W}_k}{s_{k-1}m_{k-1}}{s_km_k}$,
$$\phi^{(m_0,\ldots,m_k)}(W^1,\ldots,W^k):=(W^1_{s_0,s_2}\odot_{s_1}\cdots_{s_{k-2},s_k}
\odot_{s_{k-1}}W^k)\phi,$$
\index{$\phi^{(m_0,\ldots,m_k)}(W^1,\ldots,W^k)$}see
\eqref{eq:tensa_prod} and \eqref{eq:k-CS}.
  Analogously to
Proposition \ref{prop:nc_top}, nc balls form a basis for a
topology on
$$\coprod_{n_0,\ldots,n_k=1}^\infty\mathcal{L}^k_{\mathrm
cb}(\rmat{\vecspace{W}_1}{n_0}{n_1}\times\cdots\times\rmat{\vecspace{W}_k}{n_{k-1}}{n_k},
\rmat{\vecspace{W}_0}{n_0}{n_k}),$$ which will be called the
\emph{uniformly-open topology}. \index{uniformly-open topology}
Analogously to Proposition \ref{prop:cont-bdd}, if a higher order
nc function
$f\in\tclass{k}(\Omega^{(0)},\ldots,\Omega^{(k)};\ncspacej{\vecspace{W}}{0},\ldots,\ncspacej{\vecspace{W}}{k})$
is continuous with respect to the product topology on
$$\underset{k+1\
\mathrm{times}}{\underbrace{\ncspacej{\vecspace{V}}{0}\times\cdots\times\ncspacej{\vecspace{V}}{k}}}$$
(of the uniformly-open topologies on $\ncspacej{\vecspace{V}}{j}$)
and the uniformly-open topology on
$$\coprod_{n_0,\ldots,n_k=1}^\infty\mathcal{L}^k_{\mathrm
cb}(\rmat{\vecspace{W}_1}{n_0}{n_1}\times\cdots\times\rmat{\vecspace{W}_k}{n_{k-1}}{n_k},
\rmat{\vecspace{W}_0}{n_0}{n_k}),$$ then $f$ is uniformly locally
completely bounded.

 A higher order nc function
 $f\in\tclass{k}(\Omega^{(0)},\ldots,\Omega^{(k)};\ncspacej{\vecspace{W}}{0},\ldots,\ncspacej{\vecspace{W}}{k})$
 is called \emph{completely bounded G\^{a}teaux}
 \emph{(G$_{\mathrm{cb}}$-) differentiable} \index{completely bounded G\^{a}teaux
(G$_{\mathrm{cb}}$-) differentiable higher order nc function} if
for every
 $n_0,\ldots,n_k\in\mathbb{N}$ the function
 $f|_{\Omega^{(0)}_{n_0}\times\cdots\times\Omega^{(k)}_{n_k}}$ has values in
 $\mathcal{L}^k_{\mathrm
 cb}(\rmat{\vecspace{W}_1}{n_0}{n_1}\times\cdots\times\rmat{\vecspace{W}_k}{n_{k-1}}{n_k},
 \rmat{\vecspace{W}_0}{n_0}{n_k})$ and is G-differentiable, i.e.,
 for every $Y^0\in\Omega^{(0)}_{n_0}$, \ldots, $Y^k\in\Omega^{(k)}_{n_k}$,
 $Z^0\in\mat{\vecspace{V}_0}{n_0}$, \ldots,
 $Z^k\in\mat{\vecspace{V}_k}{n_k}$, the G-derivative
 \eqref{eq:g-der-k} exists in the norm of
 $$\mathcal{L}^k_{\mathrm{cb}}(\rmat{\vecspace{W}_1}{n_0}{n_1}\times
 \cdots\times\rmat{\vecspace{W}_k}{n_{k-1}}{n_k},
 \rmat{\vecspace{W}_0}{n_0}{n_k}).$$


A higher order nc function
$f\in\tclass{k}(\Omega;\ncspacej{\vecspace{W}}{0},\ldots,\ncspacej{\vecspace{W}}{k})$
is called \emph{uniformly completely bounded (uniformly cb-)
analytic} \index{uniformly completely bounded analytic nc
function} if $f$ is uniformly locally completely bounded and
G$_\mathrm{cb}$-differentiable.


\begin{thm}\label{thm:ulcb-dif-op}
If
$f\in\tclass{k}(\Omega^{(0)},\ldots,\Omega^{(k)};\ncspacej{\vecspace{W}}{0},\ldots,\ncspacej{\vecspace{W}}{k})$
is uniformly locally completely bounded, then so is
$$\Delta_Rf\in\tclass{k+1}(\Omega^{(0)},\ldots,\Omega^{(k)},\Omega^{(k)};\ncspacej{\vecspace{W}}{0},\ldots,
\ncspacej{\vecspace{W}}{k},\ncspacej{\vecspace{V}}{k}).$$
\end{thm}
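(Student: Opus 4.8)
The plan is to follow the template of the proofs of Theorems \ref{thm:lb-dif-op} and \ref{thm:lbs-dif-op}: express $\Delta_R f$ near a point as a block entry of $f$ evaluated on a $2\times2$ block‑upper‑triangular matrix, and push the uniform completely bounded estimate across. By Theorem \ref{thm:dif-op} we already know $\Delta_R f\in\tclass{k+1}(\Omega^{(0)},\ldots,\Omega^{(k)},\Omega^{(k)};\ncspacej{\vecspace{W}}{0},\ldots,\ncspacej{\vecspace{W}}{k},\ncspacej{\vecspace{V}}{k})$, so it remains only to bound its completely bounded norm on nc balls. The first reduction I would make is the identity
\[
\|\Delta_R f(X^0,\ldots,X^{k+1})\|_{\mathcal{L}^{k+1}_{\mathrm{cb}}}=\sup_{m_0,\ldots,m_{k+1}\in\mathbb{N}}\Big\|\Delta_R f\Big(\bigoplus_{\alpha=1}^{m_0}X^0,\ldots,\bigoplus_{\alpha=1}^{m_{k+1}}X^{k+1}\Big)\Big\|_{\mathcal{L}^{k+1}},
\]
which follows from Proposition \ref{prop:diag_tensa_k}(1) together with \eqref{eq:k-CS}: evaluating a higher order nc function on diagonal amplifications is exactly the matrix amplification of its value, so the right‑hand side is precisely the definition of the cb‑norm. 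Hence it suffices to bound the \emph{plain} $\mathcal{L}^{k+1}$‑norm of $\Delta_R f$ at arbitrary direct‑sum amplifications of the $X^j$.

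Fix $n_0,\ldots,n_{k+1}$ and $Y^0\in\Omega^{(0)}_{n_0},\ldots,Y^k\in\Omega^{(k)}_{n_k}$, $Y^{k+1}\in\Omega^{(k)}_{n_{k+1}}$. Since $Y^k\oplus Y^{k+1}\in\Omega^{(k)}_{n_k+n_{k+1}}$ and $f$ is uniformly locally completely bounded, there are $\delta>0$, $M>0$ with $B_{\mathrm{nc}}(Y^j,\delta)\subseteq\Omega^{(j)}$ and $\|f(X^0,\ldots,X^{k-1},X)\|_{\mathcal{L}^k_{\mathrm{cb}}}\le M$ for $X^j\in B_{\mathrm{nc}}(Y^j,\delta)$ $(j<k)$ and $X\in B_{\mathrm{nc}}(Y^k\oplus Y^{k+1},\delta)$. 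I claim $\|\Delta_R f(X^0,\ldots,X^{k+1})\|_{\mathcal{L}^{k+1}_{\mathrm{cb}}}\le 2M/\delta$ whenever $X^j\in B_{\mathrm{nc}}(Y^j,\delta/2)$ for $j\le k$ and $X^{k+1}\in B_{\mathrm{nc}}(Y^{k+1},\delta/2)$. By the identity above it is enough to bound $\|\Delta_R f(\tilde X^0,\ldots,\tilde X^{k+1})\|_{\mathcal{L}^{k+1}}$ with $\tilde X^j=\bigoplus_{\alpha=1}^{m_j}X^j$ for arbitrary tuples $(m_0,\ldots,m_{k+1})$. Here I would further amplify only the last two arguments, replacing $\tilde X^k,\tilde X^{k+1}$ by $\hat X^k,\hat X^{k+1}$ chosen so that $\hat X^k$ is norm‑close (within $\delta/2$, using $C_1'=1$) to a direct sum $\bigoplus^{N}Y^k$ and $\hat X^{k+1}$ to $\bigoplus^{N}Y^{k+1}$ with the \emph{same} number $N$ of summands; since passing from a multilinear map to one of its matrix amplifications never decreases the $\mathcal{L}^{k+1}$‑norm, $\|\Delta_R f(\tilde X^0,\ldots,\tilde X^{k+1})\|_{\mathcal{L}^{k+1}}\le\|\Delta_R f(\tilde X^0,\ldots,\tilde X^{k-1},\hat X^k,\hat X^{k+1})\|_{\mathcal{L}^{k+1}}$.

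Working with the similarity invariant envelope $\widetilde\Omega^{(k)}$ and the canonical extension $\tilde f$ (Propositions \ref{prop:adm-env} and \ref{prop:higher-ncfun-ext}) together with \eqref{eq:delta=delta_ext-k}, formula \eqref{eq:delta_r_k} expresses $\Delta_R f(\tilde X^0,\ldots,\tilde X^{k-1},\hat X^k,\hat X^{k+1})(W^1,\ldots,W^{k+1})$ as $\tfrac2\delta$ times the block entry of $\tilde f$ evaluated at $(\tilde X^0,\ldots,\tilde X^{k-1},Z)$ with $Z=\begin{bmatrix}\hat X^k & \tfrac\delta2 W^{k+1}\\ 0 & \hat X^{k+1}\end{bmatrix}$, sandwiched by the block injection $\row[\,\cdot\,,0]$ and the block projection given by right multiplication by $\begin{bmatrix}0\\ I\end{bmatrix}$. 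The decisive point is that $Z$, while not itself of the ``interleaved direct sum'' form seen by $B_{\mathrm{nc}}(Y^k\oplus Y^{k+1},\delta)$, is conjugate by a permutation matrix $\Pi$ to $\hat Z=\Pi Z\Pi^\top\in B(\bigoplus^{N}(Y^k\oplus Y^{k+1}),\delta)\subseteq\Omega^{(k)}$: the off‑diagonal term $\tfrac\delta2 W^{k+1}$ contributes at most $\delta/2$ in norm and the permutation is an isometry (here $C_1'=C_2=1$ is doing the work). Since $\tilde f$ and $f$ respect similarities (condition \eqref{eq:2X^k}) and $\Pi$ is orthogonal, $\tilde f(\ldots,Z)(\ldots)$ equals $f(\ldots,\hat Z)(\ldots\,\Pi^\top)\Pi$ up to norm‑preserving factors, and the block injection/projection are norm‑nonincreasing; taking the supremum over $\|W^j\|=1$ and using $\|f(\tilde X^0,\ldots,\tilde X^{k-1},\hat Z)\|_{\mathcal{L}^k}\le\|f(\ldots,\hat Z)\|_{\mathcal{L}^k_{\mathrm{cb}}}\le M$ (valid because $\hat Z\in B_{\mathrm{nc}}(Y^k\oplus Y^{k+1},\delta)$ and $\tilde X^j\in B_{\mathrm{nc}}(Y^j,\delta)$) yields the bound $2M/\delta$, which is independent of all the amplification parameters. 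This proves $\Delta_R f$ is uniformly locally completely bounded (finiteness of the cb‑norms being part of the conclusion).

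The main obstacle — and the only place where the argument genuinely departs from the locally bounded case of Theorem \ref{thm:lb-dif-op} — is exactly this mismatch between the $2\times2$‑block structure produced by $\Delta_R$ and the interleaved‑direct‑sum structure built into the nc balls $B_{\mathrm{nc}}(Y^k\oplus Y^{k+1},\cdot)$, aggravated by the fact that $Y^k$ and $Y^{k+1}$ may have different sizes so that the copy counts in $\hat X^k,\hat X^{k+1}$ must first be balanced. I expect most of the care to go into the bookkeeping of this amplification and the permutation $\Pi$; once that is set up, everything else — invoking Theorem \ref{thm:dif-op}, Proposition \ref{prop:diag_tensa_k}, \eqref{eq:delta_r_k}, and the operator‑space constants $C_1'=C_2=1$ that keep all the auxiliary maps (block injections/projections, permutation conjugations, diagonal amplifications, the scaling $\tfrac\delta2$) norm‑nonincreasing — is a routine transcription of the locally bounded proof.
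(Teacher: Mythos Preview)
Your proposal is correct and follows essentially the same approach as the paper's proof. Both arguments reduce the cb-norm to a supremum over diagonal amplifications via Proposition \ref{prop:diag_tensa_k}, deal with the size mismatch between the amplified $X^k$ and $X^{k+1}$ (you by further amplifying to a common copy count $N$; the paper by padding with $\bigoplus^{|m_{k+1}n_{k+1}-m_kn_k|}Y$, $Y=Y^k$ or $Y^{k+1}$, into a $3\times3$ block), apply a permutation to realize the resulting block upper-triangular matrix as an element of $B_{\mathrm{nc}}(Y^k\oplus Y^{k+1},\delta)$, and finish with the contractivity of block injections/projections to extract the $\Delta_Rf$ term and obtain the bound $2M/\delta$. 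Your explicit invocation of the similarity invariant envelope and \eqref{eq:delta=delta_ext-k} is in fact what the paper is using implicitly when it evaluates $f$ at the unpermuted block matrix (which is only in $\widetilde{\Omega}^{(k)}$, not a priori in $\Omega^{(k)}$); your write-up is if anything cleaner on that point.
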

\begin{proof}
Let $s_0,\ldots,s_{k+1}\in\mathbb{N}$, and let
$Y^0\in\Omega^{(0)}_{s_0}$, \ldots, $Y^k\in\Omega^{(k)}_{s_k}$,
$Y^{k+1}\in\Omega^{(k)}_{s_{k+1}}$. Since $f$ is uniformly locally
completely bounded, there exists $\delta>0$ such that
\begin{equation}\label{eq:zveryuga}\|
(f(X^0,\ldots,X^{k-1},X))^{(n_0,\ldots,n_{k-1},n)}
(W^1,\ldots,W^{k-1},W)\|_{s_0m_0n_0,(s_k+s_{k+1})mn}
\end{equation}
is bounded for all $m_0$, \ldots, $m_{k-1}$, $m$, $n_0$, \ldots,
$n_{k-1}$, $n\in\mathbb{N}$, $X^0\in B_{\mathrm
nc}(Y^0,\delta)_{s_0m_0}$, \ldots, $X^{k-1}\in B_{\mathrm
nc}(Y^{k-1},\delta)_{s_{k-1}m_{k-1}}$, $X\in B_{\mathrm
nc}(Y^k\oplus Y^{k+1},\delta)_{(s_k+s_{k+1})m}$,
$W^1\in\rmat{\vecspace{W}_1}{s_0m_0n_0}{s_1m_1n_1}$, \ldots,
$W^{k-1}\in\rmat{(\vecspace{W}_{k-1})}{s_{k-2}m_{k-2}n_{k-2}}{s_{k-1}m_{k-1}n_{k-1}}$,
and\\
$W\in\rmat{\vecspace{W}_k}{s_{k-1}m_{k-1}n_{k-1}}{(s_k+s_{k+1})mn}$,
with
\begin{multline*}
\|W^1\|_{s_0m_0n_0,s_1m_1n_1}=\ldots =
\|W^{k-1}\|_{s_{k-2}m_{k-2}n_{k-2},s_{k-1}m_{k-1}n_{k-1}}\\
=
\|W\|_{s_{k-1}m_{k-1}n_{k-1},(s_k+s_{k+1})mn}=1.
\end{multline*}
By definition, \eqref{eq:zveryuga} is equal to
\begin{multline*}
\|(W^1_{s_0m_0,s_2m_2}\odot_{s_1m_1}\cdots_{s_{k-3}m_{k-3},s_{k-1}m_{k-1}}
\odot_{s_{k-2}m_{k-2}}W^{k-1}\\
{}_{s_{k-2}m_{k-2},(s_k+s_{k+1})m}
\odot_{s_{k-1}m_{k-1}}W)f(X^0,\ldots,
X^{k-1},X)\|_{s_0m_0n_0,(s_k+s_{k+1})mn}.
\end{multline*}
Using \eqref{eq:diag_tensa_k}, we rewrite this as
\begin{equation*}
\Big\|f\Big(\bigoplus_{\alpha=1}^{n_0}X^0,\ldots,
\bigoplus_{\alpha=1}^{n_{k-1}}X^{k-1},\bigoplus_{\alpha=1}^{n}X\Big)(W^1,\ldots,W^{k-1},W)
\Big\|_{s_0m_0n_0,(s_k+s_{k+1})mn}.
\end{equation*}

 Since
\begin{multline*}
\begin{bmatrix} \bigoplus\limits_{\alpha=1}^{n_k}X^k & \frac{\delta}{2}W^{k+1} & 0\\
0 & \bigoplus\limits_{\alpha=1}^{n_{k+1}}X^{k+1} & 0\\
0 & 0 & \bigoplus\limits_{\alpha=1}^{|m_{k+1}n_{k+1}-m_kn_k|}Y
\end{bmatrix}\\
\in B_\mathrm{nc}(Y^k\oplus Y^{k+1},\delta)_{(s_k+s_{k+1})\max\{
m_kn_k,m_{k+1}n_{k+1}\}}
\end{multline*}
after a permutation of block rows and columns
 whenever
$X^k\in B_\mathrm{nc}(Y^k,\frac{\delta}{2})_{s_km_k}$, $X^{k+1}\in
B_{\mathrm nc}(Y^{k+1},\frac{\delta}{2})_{s_{k+1}m_{k+1}}$, and
$W^{k+1}\in\rmat{\vecspace{V}}{s_km_kn_k}{s_{k+1}m_{k+1}n_{k+1}}$
with \\ $\|W^{k+1}\|_{s_km_kn_k,s_{k+1}m_{k+1}n_{k+1}}=1$, where
\begin{equation}\label{eq:Y}
Y=\left\{\begin{matrix}
Y^k & \mathrm{if\ \ } m_kn_k<m_{k+1}n_{k+1},\\
Y^{k+1} & \mathrm{if\ \ } m_kn_k>m_{k+1}n_{k+1},\\
\end{matrix}\right.
\end{equation}
 we obtain that
\begin{multline*}
\Big\|f\left(\bigoplus\limits_{\alpha=1}^{n_0}X^0,\ldots,\bigoplus\limits_{\alpha=1}^{n_{k-1}}X^{k-1},
\begin{bmatrix} \bigoplus\limits_{\alpha=1}^{n_k}X^k & \frac{\delta}{2}W^{k+1} & 0\\
0 & \bigoplus\limits_{\alpha=1}^{n_{k+1}}X^{k+1} & 0\\
0 & 0 & \bigoplus\limits_{\alpha=1}^{|m_{k+1}n_{k+1}-m_kn_k|}Y
\end{bmatrix}\right)\\
(W^1,\ldots,W^{k-1},\row[W^k,0,0])\Big\|_{s_0m_0n_0,(s_k+s_{k+1})\max\{
m_kn_k,m_{k+1}n_{k+1}\}}
\end{multline*}
is bounded for all $m_0$, \ldots, $m_{k+1}$, $n_0$, \ldots,
$n_{k+1}\in\mathbb{N}$,  $X^0\in
B_\mathrm{nc}(Y^0,\frac{\delta}{2})_{s_0m_0}$, \ldots, $X^{k+1}\in
B_{\mathrm nc}(Y^{k+1},\frac{\delta}{2})_{s_{k+1}m_{k+1}}$, and
$W^1\in\rmat{\vecspace{W}_1}{s_0m_0n_0}{s_1m_1n_1}$, \ldots,
$W^{k+1}\in\rmat{(\vecspace{W}_{k+1})}{s_{k}m_{k}n_{k}}{s_{k+1}m_{k+1}n_{k+1}}$,
with
$$\|W^1\|_{s_0m_0n_0,s_1m_1n_1}=\ldots =
\|W^{k+1}\|_{s_{k}m_{k}n_{k},s_{k+1}m_{k+1}n_{k+1}}=1.
$$
Here we used the isometricity of the block injection
\begin{multline*}
W^k\in\rmat{\vecspace{W}_k}{s_{k-1}m_{k-1}n_{k-1}}{s_km_kn_k}\longmapsto
\row[W^k,0,0]\\
\in\rmat{\vecspace{W}_k}{s_{k-1}m_{k-1}n_{k-1}}{(s_k+s_{k+1})\max\{
m_kn_k,m_{k+1}n_{k+1}\}}.
\end{multline*}
 Therefore,
\begin{multline*}
\Big\|\Delta_Rf\Big(\bigoplus_{\alpha=1}^{n_0}X^0,\ldots,\bigoplus_{\alpha=1}^{n_{k+1}}X^{k+1}\Big)
(W^1,\ldots,W^{k+1})\Big\|_{s_0m_0n_0,s_{k+1}m_{k+1}n_{k+1}}\\
=\frac{2}{\delta}\Big\|f\left(\bigoplus\limits_{\alpha=1}^{n_0}X^0,\ldots,
\bigoplus\limits_{\alpha=1}^{n_{k-1}}X^{k-1},
\begin{bmatrix} \bigoplus\limits_{\alpha=1}^{n_k}X^k & \frac{\delta}{2}W^{k+1} & 0\\
0 & \bigoplus\limits_{\alpha=1}^{n_{k+1}}X^{k+1} & 0\\
0 & 0 & \bigoplus\limits_{\alpha=1}^{|m_{k+1}n_{k+1}-m_kn_k|}Y
\end{bmatrix}\right)\\
(W^1,\ldots,W^{k-1},\row[W^k,0,0])
\begin{bmatrix}
0\\
I_{s_{k+1}m_{k+1}n_{k+1}}\\
0
\end{bmatrix}\Big\|_{s_0m_0n_0,s_{k+1}m_{k+1}n_{k+1}}
\end{multline*}
is bounded for all $m_0$, \ldots, $m_{k+1}$, $n_0$, \ldots,
$n_{k+1}\in\mathbb{N}$,  $X^0\in
B_\mathrm{nc}(Y^0,\frac{\delta}{2})_{s_0m_0}$, \ldots, $X^{k+1}\in
B_{\mathrm nc}(Y^{k+1},\frac{\delta}{2})_{s_{k+1}m_{k+1}}$, and
$W^1\in\rmat{\vecspace{W}_1}{s_0m_0n_0}{s_1m_1n_1}$, \ldots,
$W^{k+1}\in\rmat{(\vecspace{W}_{k+1})}{s_{k}m_{k}n_{k}}{s_{k+1}m_{k+1}n_{k+1}}$,
with
$$\|W^1\|_{s_0m_0n_0,s_1m_1n_1}=\ldots =
\|W^{k+1}\|_{s_{k}m_{k}n_{k},s_{k+1}m_{k+1}n_{k+1}}=1.
$$ Here we used the contractivity of the block projection
implemented by the multiplication with $$\begin{bmatrix}
0\\
I_{s_{k+1}m_{k+1}n_{k+1}}\\
0
\end{bmatrix}$$ on the right.

Applying the same argument as in the beginning of the proof, for
$\Delta_Rf$ in the place of $f$, we obtain
\begin{multline*}
\|(\Delta_Rf(X^0,\ldots,X^{k+1}))^{(n_0,\ldots,n_{k+1})}
(W^1,\ldots,W^{k+1})\|_{s_0m_0n_0,s_{k+1}m_{k+1}n_{k+1}}\\
=\Big\|\Delta_Rf\Big(\bigoplus_{\alpha=1}^{n_0}X^0,\ldots,\bigoplus_{\alpha=1}^{n_{k+1}}X^{k+1}\Big)
(W^1,\ldots,W^{k+1})\Big\|_{s_0m_0n_0,s_{k+1}m_{k+1}n_{k+1}},
\end{multline*}
which by the result of the preceding paragraph implies the uniform
local complete boundedness of $\Delta_Rf$.
\end{proof}
\begin{rem}\label{rem:ulcb-j-dif-op}
An analogue of Theorem \ref{thm:ulcb-dif-op} also holds, with an
analogous proof, for ${}_j\Delta_Rf$, see Remark
\ref{rem:j-delta}.
\end{rem}
\begin{thm}\label{thm:king-k}
If
$f\in\tclass{k}(\Omega^{(0)},\ldots,\Omega^{(k)};\ncspacej{\vecspace{W}}{0},\ldots,\ncspacej{\vecspace{W}}{k})$
is uniformly locally completely bounded, then $f$ is continuous
with respect to the uniformly-open topologies and is uniformly cb
analytic.
\end{thm}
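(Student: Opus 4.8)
The plan is to follow the same two-pronged strategy used throughout Section~\ref{subsec:u-analytic} and the present Section~\ref{subsec:analytic-higher}: first establish uniform cb analyticity via the ``mixed'' noncommutative/complex-analytic route, and then deduce continuity in the uniformly-open topology as a corollary. The key observation is that Theorem~\ref{thm:king-k} is the higher-order analogue of Corollary~\ref{cor:nc-u-analytic}, and the proofs should parallel each other almost verbatim once the bookkeeping with rectangular matrix norms and Christensen--Sinclair cb norms is in place.

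First I would prove that a uniformly locally completely bounded higher order nc function $f$ is G$_{\mathrm{cb}}$-differentiable. The point is that uniform local complete boundedness implies in particular that $f$ is $W$-locally bounded on slices (taking $n_0=\cdots=n_k=1$ and using that $\|\cdot\|_{\mathcal{L}^k_{\mathrm{cb}}}\ge\|\cdot\|_{\mathcal{L}^k}$), so Theorem~\ref{thm:g-queen-k} already gives G$_W$-differentiability with the slice derivative computed by \eqref{eq:slice-der-k}. To upgrade G$_W$ to G$_{\mathrm{cb}}$, I would rerun the argument of Theorem~\ref{thm:g-queen-k}: by the first-order difference formula \eqref{eq:j-RightLagr_k'} and its telescoping as in \eqref{eq:slice-Lagrange}, the difference quotient is a sum of terms ${}_j\Delta_Rf$ evaluated at nearby points, and by Theorem~\ref{thm:ulcb-dif-op} together with Remark~\ref{rem:ulcb-j-dif-op} each ${}_j\Delta_Rf$ is again uniformly locally completely bounded, hence uniformly locally cb bounded on a fixed neighborhood of the base point. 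This uniform cb boundedness is exactly what is needed to pass to the limit $t\to0$ in the $\mathcal{L}^k_{\mathrm{cb}}$-norm rather than just pointwise, giving G$_{\mathrm{cb}}$-differentiability. Thus $f$ is uniformly cb analytic.

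Next I would prove continuity in the uniformly-open topologies, mimicking the proof of Corollary~\ref{cor:nc-u-analytic}. Fix $s_0,\ldots,s_k$ and $Y^j\in\Omega^{(j)}_{s_j}$, and pick $r_1>0$ so that $\|f\|_{\mathcal{L}^k_{\mathrm{cb}}}$ is bounded by some $M$ on the product of nc balls $B_{\mathrm{nc}}(Y^j,r_1)$. Using the telescoping identity \eqref{eq:slice-Lagrange} applied along the segments joining $\bigoplus_\alpha Y^j$ to $X^j$ (at appropriate common multiples), together with the Christensen--Sinclair estimate \eqref{eq:delta-king-conv-nc}-style bound for the ${}_j\Delta_Rf$ terms --- here I would invoke Proposition~\ref{prop:k-cb-norm} to rephrase the cb norm in terms of evaluations on the diagonal and then the same block-bidiagonal / compression trick as in Theorems~\ref{thm:king-conv-nc} and \ref{thm:ulcb-dif-op} --- I would obtain, for $X^j\in B_{\mathrm{nc}}(Y^j,r)$ with $r<r_1$, an estimate of the form $\|f(X^0,\ldots,X^k)^{(m_0,\ldots,m_k)} - f(Y^0,\ldots,Y^k)^{(\cdots)}\|_{\mathcal{L}^k_{\mathrm{cb}}}\le CM\,r/(r_1-r)$, with $C$ independent of the multiplicities $m_j$. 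Letting $r\to0$ gives continuity at $(Y^0,\ldots,Y^k)$. The converse direction --- continuity implies uniform local complete boundedness --- is the higher-order analogue of Proposition~\ref{prop:cont-bdd} and is immediate: on a small enough product of nc balls $\|f(X^0,\ldots,X^k)-f(\bigoplus Y^0,\ldots,\bigoplus Y^k)\|_{\mathcal{L}^k_{\mathrm{cb}}}<1$, and $\|f(\bigoplus Y^0,\ldots,\bigoplus Y^k)\|_{\mathcal{L}^k_{\mathrm{cb}}}=\|f(Y^0,\ldots,Y^k)^{(m_0,\ldots,m_k)}\|_{\mathcal{L}^k_{\mathrm{cb}}}\le\|f(Y^0,\ldots,Y^k)\|_{\mathcal{L}^k_{\mathrm{cb}}}$ by definition of the cb norm via compressions.

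The main obstacle I anticipate is the bookkeeping in the continuity estimate: unlike the order-zero case, here one is comparing $k$-linear maps of different matrix sizes, and the telescoping \eqref{eq:slice-Lagrange} introduces ${}_j\Delta_Rf$ for every $j=0,\ldots,k$, each needing its own block-triangular representation (via \eqref{eq:uptr_left0-k}, \eqref{eq:uptr_leftj-k}, \eqref{eq:uptr-k}) and its own uniform cb bound from Theorem~\ref{thm:ulcb-dif-op}/Remark~\ref{rem:ulcb-j-dif-op}. Keeping the constants uniform over all multiplicities $m_0,\ldots,m_k$ requires care with the permutation-of-blocks arguments and the isometricity of the rectangular block injections and projections (Proposition~\ref{prop:rect-ucb-inj-proj}); this is routine given the operator-space hypothesis ($C_1=C_1'=C_2=1$) but is the place where the proof is genuinely longer than its order-zero counterpart. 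Everything else is a direct transcription of the arguments already carried out for $\tclass{0}$ and for the analytic (non-uniform) higher-order case.
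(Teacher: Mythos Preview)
Your proposal is essentially correct and uses the same ingredients as the paper: the telescoping identity \eqref{eq:slice-Lagrange} (via \eqref{eq:j-RightLagr_k}), the uniform local complete boundedness of each ${}_j\Delta_Rf$ from Remark~\ref{rem:ulcb-j-dif-op}, and the operator-space isometricity of block injections/projections. The paper, however, reverses your order: it first proves continuity in the uniformly-open topologies and then uses that result, applied to ${}_j\Delta_Rf$, to obtain G$_{\mathrm{cb}}$-differentiability. This ordering is slightly cleaner because the continuity statement is self-contained and can be fed back into itself, whereas your route to G$_{\mathrm{cb}}$ implicitly requires cb-continuity on slices of ${}_j\Delta_Rf$, which in turn requires one more layer of Remark~\ref{rem:ulcb-j-dif-op}.

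There is also a small mismatch in your continuity estimate. You propose a geometric-series bound $CM\,r/(r_1-r)$ in the style of Corollary~\ref{cor:nc-u-analytic}, but that corollary sums an infinite TT series. Here only a single $(k+1)$-term telescoping is used, and the paper obtains the direct Lipschitz-type bound $C\sum_{j=0}^k\|X^j-\bigoplus_\alpha Y^j\|$ (with $C$ the common cb bound on the ${}_j\Delta_Rf$), which is both simpler and sharper. Your sketch would still go through, but the $r/(r_1-r)$ form is an artifact of transplanting the order-zero argument too literally.
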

\begin{proof}
Let $s_0,\ldots,s_{k+1}\in\mathbb{N}$, and let
$Y^0\in\Omega^{(0)}_{s_0}$, \ldots, $Y^k\in\Omega^{(k)}_{s_k}$,
$Y^{k+1}\in\Omega^{(k)}_{s_{k+1}}$. Since $f$ is uniformly locally
completely bounded, according to Remark \ref{rem:ulcb-j-dif-op} so
are ${}_j\Delta_Rf$, $j=0$, \ldots, $k$. Therefore, there exist
$\delta>0$ and $C>0$ such that
\begin{multline*}
\|({}_j\Delta_Rf(X^0,\ldots,X^{k+1}))^{(n_0,\ldots,n_{k+1})}
(W^1,\ldots,W^{k+1})\|_{s_0m_0n_0,s_{k+1}m_{k+1}n_{k+1}}\\
=\Big\|{}_j\Delta_Rf\Big(\bigoplus_{\alpha=1}^{n_0}X^0,\ldots,
\bigoplus_{\alpha=1}^{n_{k+1}}X^{k+1}\Big)
(W^1,\ldots,W^{k+1})\Big\|_{s_0m_0n_0,s_{k+1}m_{k+1}n_{k+1}}\le C
\end{multline*}
for $j=0$, \ldots, $k$, and for all  $m_0$, \ldots, $m_{k+1}$,
$n_0$, \ldots, $n_{k+1}\in\mathbb{N}$,  $X^0\in
B_\mathrm{nc}(Y^0,\delta)_{s_0m_0}$, \ldots, $X^{k+1}\in
B_{\mathrm nc}(Y^{k+1},\delta)_{s_{k+1}m_{k+1}}$, and
$W^1\in\rmat{\vecspace{W}_1}{s_0m_0n_0}{s_1m_1n_1}$, \ldots,
$W^{k+1}\in\rmat{(\vecspace{W}_{k+1})}{s_{k}m_{k}n_{k}}{s_{k+1}m_{k+1}n_{k+1}}$,
with
$$\|W^1\|_{s_0m_0n_0,s_1m_1n_1}=\ldots =
\|W^{k+1}\|_{s_{k}m_{k}n_{k},s_{k+1}m_{k+1}n_{k+1}}=1.
$$
We have by \eqref{eq:j-RightLagr_k} (cf. \eqref{eq:slice-Lagrange})
\begin{multline*}
\Big\|f\Big(\bigoplus_{\alpha=1}^{n_0}X^0,\ldots,\bigoplus_{\alpha=1}^{n_k}X^{k}\Big)
(W^1,\ldots,W^{k})\\
-f\Big(\bigoplus_{\alpha=1}^{m_0n_0}Y^0,\ldots,\bigoplus_{\alpha=1}^{m_kn_k}
Y^{k}\Big)(W^1,\ldots,W^{k})\Big\|_{s_0m_0n_0,s_km_kn_k}\\
=\Big\|\sum_{j=0}^k\Bigg(f\Big(\bigoplus_{\alpha=1}^{n_0}X^0,\ldots,\bigoplus_{\alpha=1}^{n_j}X^{j},
\bigoplus_{\alpha=1}^{m_{j+1}n_{j+1}}Y^{j+1},\ldots,
\bigoplus_{\alpha=1}^{m_kn_k}Y^k\Big)(W^1,\ldots,W^{k})\hfill\\
-
f\Big(\bigoplus_{\alpha=1}^{n_0}X^0,\ldots,\bigoplus_{\alpha=1}^{n_{j-1}}X^{j-1},
\bigoplus_{\alpha=1}^{m_jn_j}Y^j,\ldots,\bigoplus_{\alpha=1}^{m_kn_k}Y^{k}\Big)
(W^1,\ldots,W^{k})\Bigg)\Big\|_{s_0m_0n_0,s_km_kn_k}\\
=\Big\|\sum_{j=0}^k
{}_j\Delta_Rf\Big(\bigoplus_{\alpha=1}^{n_0}X^0,\ldots,
\bigoplus_{\alpha=1}^{n_j}X^{j},\bigoplus_{\alpha=1}^{m_jn_j}Y^j,
\ldots,\bigoplus_{\alpha=1}^{m_kn_k}Y^k\Big)\hfill\\
\Big(W^1,\ldots,W^{j-1},\bigoplus_{\alpha=1}^{n_j}X^j-\bigoplus_{\alpha=1}^{m_jn_j}Y^j,W^j,
\ldots,W^k\Big)\Big\|_{s_0m_0n_0,s_km_kn_k}\\
\le
C\sum_{j=0}^k\Big\|X^j-\bigoplus_{\alpha=1}^{m_j}Y^j\|_{s_jm_j}
\end{multline*}
for all  $m_0$, \ldots, $m_{k}$, $n_0$, \ldots,
$n_{k}\in\mathbb{N}$,  $X^0\in
B_\mathrm{nc}(Y^0,\delta)_{s_0m_0}$, \ldots, $X^{k}\!\in\!
B_{\mathrm nc}(Y^{k},\delta)_{s_{k}m_{k}}$, and
$W^1\in\rmat{\vecspace{W}_1}{s_0m_0n_0}{s_1m_1n_1}$, \ldots,
$W^{k}\in\rmat{(\vecspace{W}_{k})}{s_{k-1}m_{k-1}n_{k-1}}{s_{k}m_{k}n_{k}}$,
with
$$\|W^1\|_{s_0m_0n_0,s_1m_1n_1}=\ldots =
\|W^{k}\|_{s_{k-1}m_{k-1}n_{k-1},s_{k}m_{k}n_{k}}=1.
$$
Given $\epsilon>0$, we set
$\eta=\min\{\delta,\frac{\epsilon}{C(k+1)}\}$ and obtain that
\begin{multline*}
\Big\|f\Big(\bigoplus_{\alpha=1}^{n_0}X^0,\ldots,\bigoplus_{\alpha=1}^{n_k}X^{k}\Big)
(W^1,\ldots,W^{k})\\
-f\Big(\bigoplus_{\alpha=1}^{m_0n_0}Y^0,\ldots,\bigoplus_{\alpha=1}^{m_kn_k}
Y^{k}\Big)(W^1,\ldots,W^{k})\Big\|_{s_0m_0n_0,s_km_kn_k}<\epsilon
\end{multline*}
for all $m_0$, \ldots, $m_{k}$, $n_0$, \ldots,
$n_{k}\in\mathbb{N}$,  $X^0\in B_\mathrm{nc}(Y^0,\eta)_{s_0m_0}$,
\ldots, $X^{k}\!\in\! B_{\mathrm nc}(Y^{k},\eta)_{s_{k}m_{k}}$,
and $W^1\in\rmat{\vecspace{W}_1}{s_0m_0n_0}{s_1m_1n_1}$, \ldots,
$W^{k}\in\rmat{\vecspace{W}_{k}}{s_{k-1}m_{k-1}n_{k-1}}{s_{k}m_{k}n_{k}}$,
with
$$\|W^1\|_{s_0m_0n_0,s_1m_1n_1}=\ldots =
\|W^{k}\|_{s_{k-1}m_{k-1}n_{k-1},s_{k}m_{k}n_{k}}=1.
$$
Thus $f$ is continuous with respect to the uniformly-open
topologies.

By Remark \ref{rem:ulcb-j-dif-op}, ${}_j\Delta_Rf$ is uniformly
locally completely bounded for $j=0$, \ldots, $k$. Applying the
first part of this theorem to ${}_j\Delta_Rf$, we obtain that
${}_j\Delta_Rf$ is continuous with respect to the uniformly-open
topologies. Let $s_0,\ldots,s_{k+1}\in\mathbb{N}$, and let
$Y^0\in\Omega^{(0)}_{s_0}$, \ldots,
$Y^{k}\in\Omega^{(k)}_{s_{k}}$. By \eqref{eq:slice-Lagrange} we
have
\begin{multline}\label{eq:cb-Lagrange}
f\Big(\bigoplus_{\alpha=1}^{n_0}Y^0+t\bigoplus_{\alpha=1}^{n_0}Z^0,\ldots,\bigoplus_{\alpha=1}^{n_k}Y^{k}
+t\bigoplus_{\alpha=1}^{n_k}Z^{k}\Big)(W^1,\ldots,W^{k})\\
\hfill -f\Big(\bigoplus_{\alpha=1}^{n_0}Y^0,\ldots,
\bigoplus_{\alpha=1}^{n_k}Y^{k}\Big)(W^1,\ldots,W^{k})\\
=t\sum_{j=0}^k
{}_j\Delta_Rf\Big(\bigoplus_{\alpha=1}^{n_0}Y^0+t\bigoplus_{\alpha=1}^{n_0}Z^0,
\ldots,
\bigoplus_{\alpha=1}^{n_j}Y^{j}+t\bigoplus_{\alpha=1}^{n_j}Z^{j},\bigoplus_{\alpha=1}^{n_j}Y^j,
\ldots,\bigoplus_{\alpha=1}^{n_k}Y^k\Big)\\
\Big(W^1,\ldots,W^j,\bigoplus_{\alpha=1}^{n_j}Z^j,W^{j+1},\ldots,W^k\Big)
\end{multline}
for any $n_0$, \ldots, $n_k\in\mathbb{N}$,
$Z^0\in\mat{\vecspace{V}_0}{s_0}$ \ldots,
$Z^k\in\mat{\vecspace{V}_k}{s_k}$, and
$W^1\in\rmat{\vecspace{W}_1}{s_0n_0}{s_1n_1}$, \ldots,
$W^{k}\in\rmat{\vecspace{W}_{k}}{s_{k-1}n_{k-1}}{s_{k}n_{k}}$,
with
$$\|W^1\|_{s_0n_0,s_1n_1}=\ldots =
\|W^{k}\|_{s_{k-1}n_{k-1},s_{k}n_{k}}=1,
$$
and $t$ small enough. Dividing both parts of
\eqref{eq:cb-Lagrange} by $t$ and letting $t\to 0$, we obtain (cf.
\eqref{eq:slice-der-k})
\begin{multline*}
\frac{d}{dt}f\Big(\bigoplus_{\alpha=1}^{n_0}Y^0+t\bigoplus_{\alpha=1}^{n_0}Z^0,\ldots,
\bigoplus_{\alpha=1}^{n_k}Y^{k}+t\bigoplus_{\alpha=1}^{n_k}Z^{k}\Big)(W^1,\ldots,W^{k})\Big|_{t=0}\\
=\sum_{j=0}^k{}_j\Delta_Rf
\Big(\bigoplus_{\alpha=1}^{n_0}Y^0,\ldots,\bigoplus_{\alpha=1}^{n_{j-1}}Y^{j-1},
\bigoplus_{\alpha=1}^{n_j}Y^j,
\bigoplus_{\alpha=1}^{n_j}Y^j,\bigoplus_{\alpha=1}^{n_{j+1}}Y^{j+1},\ldots,
\bigoplus_{\alpha=1}^{n_k}Y^k\Big)\\
\Big(W^1,\ldots,W^{j-1},\bigoplus_{\alpha=1}^{n_j}Z^j,W^{j+1},\ldots,W^k\Big)
\end{multline*}
where the limit is uniform in $n_0$, \ldots, $n_k\in\mathbb{N}$,
and in $W^1\in\rmat{\vecspace{W}_1}{s_0n_0}{s_1n_1}$, \ldots,
$W^{k}\in\rmat{\vecspace{W}_{k}}{s_{k-1}n_{k-1}}{s_{k}n_{k}}$,
with
$$\|W^1\|_{s_0n_0,s_1n_1}=\ldots =
\|W^{k}\|_{s_{k-1}n_{k-1},s_{k}n_{k}}=1.
$$
Therefore, the G-derivative
$$\delta f(Y^0,\ldots, Y^k)(Z^0,\ldots, Z^k)=\frac{d}{dt}f\left(Y^0+tZ^0,\ldots,
Y^{k}+tZ^{k}\right)\Big|_{t=0}$$ (see \eqref{eq:g-der-k}) exists
in the norm of
$$\mathcal{L}^k_{\mathrm{cb}}(\rmat{\vecspace{W}_1}{s_0}{s_1}
\times\cdots\times\rmat{\vecspace{W}_k}{s_{k-1}}{s_k},
 \rmat{\vecspace{W}_0}{s_0}{s_k}),$$
 i.e., $f$ is G$_\mathrm{cb}$-differentiable.
Since $f$ is also uniformly locally completely bounded, we
conclude that $f$ is uniformly cb analytic.
\end{proof}

It follows from Theorem \ref{thm:king-k} that
 a higher order nc function
$f$ is uniformly locally completely bounded if and only if $f$ is
continuous with respect to the uniformly-open topologies if and
only if $f$ is uniformly cb analytic. We denote by
$$\tclass{k}_{\mathrm{uan}}=\tclass{k}_{\mathrm{uan}}(\Omega^{(0)},\ldots,\Omega^{(k)};\ncspacej{\vecspace{W}}{0},
\ldots,\ncspacej{\vecspace{W}}{k})$$
\index{$\tclass{k}_{\mathrm{uan}}(\Omega^{(0)},\ldots,\Omega^{(k)};\ncspacej{\vecspace{W}}{0},
\ldots,\ncspacej{\vecspace{W}}{k})$}the subclass of
$\tclass{k}(\Omega^{(0)},\ldots,\Omega^{(k)};\ncspacej{\vecspace{W}}{0},\ldots,\ncspacej{\vecspace{W}}{k})$
consisting of nc functions of order $k$ that are uniformly cb
analytic.

The following statement is an obvious corollary of Theorems
\ref{thm:ulcb-dif-op} and \ref{thm:king-k}.
\begin{cor}\label{cor:ucb-an-dif-op}
Let
$f\in\tclass{k}_{\mathrm{uan}}(\Omega^{(0)},\ldots,\Omega^{(k)};\ncspacej{\vecspace{W}}{0},\ldots,
\ncspacej{\vecspace{W}}{k})$. Then
$$\Delta_Rf\in\tclass{k+1}_{\mathrm{uan}}(\Omega^{(0)},\ldots,\Omega^{(k)},\Omega^{(k)};\ncspacej{\vecspace{W}}{0},
\ldots,\ncspacej{\vecspace{W}}{k},\ncspacej{\vecspace{V}}{k}).$$
\end{cor}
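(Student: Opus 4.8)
The plan is to deduce Corollary \ref{cor:ucb-an-dif-op} directly from Theorems \ref{thm:ulcb-dif-op} and \ref{thm:king-k}, exactly as the word ``corollary'' advertises; no genuinely new analysis is needed, only an unwinding of the definitions.

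First I would recall that $\tclass{k}_{\mathrm{uan}}=\tclass{k}_{\mathrm{uan}}(\Omega^{(0)},\ldots,\Omega^{(k)};\ncspacej{\vecspace{W}}{0},\ldots,\ncspacej{\vecspace{W}}{k})$ is by definition the subclass of $\tclass{k}(\Omega^{(0)},\ldots,\Omega^{(k)};\ncspacej{\vecspace{W}}{0},\ldots,\ncspacej{\vecspace{W}}{k})$ consisting of those $f$ which are uniformly cb analytic, and that a uniformly cb analytic higher order nc function is, by definition, in particular uniformly locally completely bounded. So the hypothesis $f\in\tclass{k}_{\mathrm{uan}}$ already supplies that $f$ is uniformly locally completely bounded. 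Since the $\Omega^{(j)}$ are uniformly open nc sets, they are right admissible (every uniformly-open nc set is right admissible, see Section \ref{subsubsec:os}), so $\Delta_R f$ is defined and, by Theorem \ref{thm:dif-op} and the paragraph following it establishing $\Delta_R\colon\tclass{k}\to\tclass{k+1}$, belongs to $\tclass{k+1}(\Omega^{(0)},\ldots,\Omega^{(k)},\Omega^{(k)};\ncspacej{\vecspace{W}}{0},\ldots,\ncspacej{\vecspace{W}}{k},\ncspacej{\vecspace{V}}{k})$; note that $\vecspace{V}_k$ is an operator space by the standing hypotheses of this subsection, so that the notions of uniform local complete boundedness and uniform cb analyticity are meaningful for $\Delta_R f$.

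With this bookkeeping in place, the argument is two invocations. Theorem \ref{thm:ulcb-dif-op} applied to $f$ gives that $\Delta_R f$ is uniformly locally completely bounded. Then Theorem \ref{thm:king-k} applied to $\Delta_R f$ (a legitimate higher order nc function of order $k+1$ over the operator spaces just listed) yields that $\Delta_R f$ is uniformly cb analytic, i.e.\ $\Delta_R f\in\tclass{k+1}_{\mathrm{uan}}(\Omega^{(0)},\ldots,\Omega^{(k)},\Omega^{(k)};\ncspacej{\vecspace{W}}{0},\ldots,\ncspacej{\vecspace{W}}{k},\ncspacej{\vecspace{V}}{k})$, which is precisely the assertion.

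I do not expect any real obstacle: all the substance lives in Theorems \ref{thm:ulcb-dif-op} and \ref{thm:king-k}. The only points requiring a moment's attention are the bookkeeping ones flagged above --- confirming that the range of $\Delta_R$ lands in a class of higher order nc functions over operator spaces (so that ``uniformly cb analytic'' applies) and that the tuple of domains and target spaces is the correct one --- and these are immediate from the construction of $\Delta_R$ and from Theorem \ref{thm:dif-op}. An analogous statement for ${}_j\Delta_R$ follows in the same way using Remark \ref{rem:ulcb-j-dif-op}.
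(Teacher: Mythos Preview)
Your proposal is correct and matches the paper's approach exactly: the paper states that this is ``an obvious corollary of Theorems \ref{thm:ulcb-dif-op} and \ref{thm:king-k},'' and your argument is precisely the two invocations you describe (uniformly cb analytic $\Rightarrow$ uniformly locally completely bounded, then Theorem \ref{thm:ulcb-dif-op}, then Theorem \ref{thm:king-k}). Your closing remark about ${}_j\Delta_R$ also anticipates the paper's Remark \ref{rem:ucb-an-j-dif-op}.
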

\begin{rem}\label{rem:ucb-an-j-dif-op}
An analogue of Corollary \ref{cor:ucb-an-dif-op} also holds for
${}_j\Delta_Rf$, see Remarks \ref{rem:j-delta} and
\ref{rem:ulcb-j-dif-op}.
\end{rem}

\begin{rem}\label{rem:real-higher}
The results of this section admit analogues in the real case ---
compare Remarks \ref{rem:real_queen} and \ref{rem:real_king}.
Theorem \ref{thm:lbs-dif-op}, Remark \ref{rem:lbs-j-dif-op},
Theorem \ref{thm:g-queen-k}, Corollary \ref{cor:g-dif-op}, and
Remark \ref{rem:g-j-dif-op} are true for the case where
$\vecspace{V}_0$, \ldots, $\vecspace{V}_k$, $\vecspace{W}_1$,
\ldots, $\vecspace{W}_k$ are vector spaces over $\mathbb{R}$ and
$\vecspace{W}_0$ is a Banach space over $\mathbb{R}$ equipped with
an admissible system of rectangular matrix norms (all the notions
are defined exactly as in the complex case). If
$f\in\tclass{k}(\Omega^{(0)},\ldots,\Omega^{(k)};\ncspacej{\vecspace{W}}{0},\ldots,\ncspacej{\vecspace{W}}{k})$
is G$_W$-differentiable, it does not necessarily follow that $f$
is $W$-analytic on slices, nor that $f$ is infinitely many times
differentiable on
\begin{equation}\label{eq:affine-higher}
(\vecspace{U}_0\cap\Omega^{(0)}_{n_0})\times\cdots\times(\vecspace{U}_k\cap\Omega^{(k)}_{n_k})\times
\vecspace{Y}_1\times\cdots\times\vecspace{Y}_k
\end{equation}
 as a
function of several real variables for every
$n_0,\ldots,n_k\in\mathbb{N}$ and for all finite-dimensional
subspaces $\vecspace{U}_0\subseteq\mat{\vecspace{V}_0}{n_0}$,
\ldots, $\vecspace{U}_k\subseteq\mat{\vecspace{V}_k}{n_k}$,
$\vecspace{Y}_1\subseteq\rmat{\vecspace{W}}{n_0}{n_1}$, \ldots,
$\vecspace{Y}_k\subseteq\rmat{\vecspace{W}}{n_{k-1}}{n_k}$. Assume
in addition that $f$ is \emph{$W$-locally bounded on affine
finite-dimensional subspaces}, \index{W @$W$-locally bounded on
affine finite-dimensional subspaces higher order nc function}
i.e., that
$f|_{\Omega^{(0)}_{n_0}\times\cdots\times\Omega^{(k)}_{n_k}}$ is
locally bounded on affine finite-dimensional subspaces, similarly
to Remark \ref{rem:real_queen}, for every fixed $W_1$, \ldots,
$W_k$. Then
$\Delta_Rf\in\tclass{k+1}(\Omega^{(0)},\ldots,\Omega^{(k)},\Omega^{(k)};\ncspacej{\vecspace{W}}{0},
\ldots,\ncspacej{\vecspace{W}}{k},\ncspacej{\vecspace{V}}{k})$ is
also $W$-locally bounded on affine finite-dimensi\-onal subspaces
and $f$ is infinitely many times differentiable on the sets
\eqref{eq:affine-higher} --- the proof is similar to the proofs of
Theorems \ref{thm:lbs-dif-op} and \ref{thm:g-queen-k}.

Theorem \ref{thm:lb-dif-op} and Remark \ref{rem:lb-j-dif-op} are
true for the case where $\vecspace{V}_0$, \ldots,
$\vecspace{V}_k$, $\vecspace{W}_0$, \ldots, $\vecspace{W}_k$ are
Banach spaces over $\mathbb{R}$ equipped with an admissible system
of rectangular matrix norms. The analogue of Theorem
\ref{thm:g-queen-k} is the following: if
$f\in\tclass{k}(\Omega^{(0)},\ldots,\Omega^{(k)};\ncspacej{\vecspace{W}}{0},\ldots,\ncspacej{\vecspace{W}}{k})$
is locally bounded, then $f$ is $G$-differentiable, and
furthermore, $f$ is infinitely many times differentiable on
$(\vecspace{U}_0\cap\Omega^{(0)}_{n_0})\times\cdots\times(\vecspace{U}_k\cap\Omega^{(k)}_{n_k})$
 as a
function of several real variables for every
$n_0,\ldots,n_k\in\mathbb{N}$ and for all finite-dimensional
subspaces $\vecspace{U}_0\subseteq\mat{\vecspace{V}_0}{n_0}$,
\ldots, $\vecspace{U}_k\subseteq\mat{\vecspace{V}_k}{n_k}$, the
derivative being taken in the norm of
$\mathcal{L}^k(\rmat{\vecspace{W}_1}{n_0}{n_1}\times\cdots\times\rmat{\vecspace{W}_k}{n_{k-1}}{n_k},
\rmat{\vecspace{W}_0}{n_0}{n_k})$.

Theorem \ref{thm:ulcb-dif-op}, Remark \ref{rem:ulcb-j-dif-op},
Theorem \ref{thm:king-k}, Corollary \ref{cor:ucb-an-dif-op}, and
Remark \ref{rem:ucb-an-j-dif-op} are true for the case where
$\vecspace{V}_0$, \ldots, $\vecspace{V}_k$, $\vecspace{W}_0$,
\ldots, $\vecspace{W}_k$ are operator spaces over $\mathbb{R}$.
\end{rem}

\label{K-TOPOLOGIES}

\chapter{Convergence of nc power series}\label{sec:nc-ps}
In this chapter, we shall study the convergence of power series of
the form
\begin{equation}\label{eq:power-series}
\sum_{\ell=0}^\infty
\Big(X-\bigoplus_{\alpha=1}^mY\Big)^{\odot_s\ell}f_\ell
\end{equation}
to a nc function. Here $\vecspace{V}$ and $\vecspace{W}$ are
vector spaces over $\mathbb{C}$; $Y\in\mat{\vecspace{V}}{s}$ is a
given center; $X\in\mat{\vecspace{V}}{sm}$, $m=1,2,\ldots$;
$f_\ell\colon(\mat{\vecspace{V}}{s})^{\ell}\to\mat{\vecspace{W}}{s}$,
$\ell=0,1,\ldots$, is a given sequence of $\ell$-linear mappings,
i.e., a linear mapping
$f\colon\tensa{\mat{\vecspace{V}}{s}}\to\mat{\vecspace{W}}{s}$;
 and conditions
\eqref{eq:ncfun_coef_0}--\eqref{eq:ncfun-coef_ell_ell} are
satisfied. Notice that for $s=1$ these conditions are trivially
satisfied for any sequence $f_\ell$ of linear mappings. We shall
be interested in the growth conditions on the coefficients
$f_\ell$ under which the sum in \eqref{eq:power-series} is an
analytic or a uniformly analytic nc function. The results of this
section are the converse of the results of Sections
\ref{subsec:analytic} and \ref{subsec:u-analytic}, much as the
results of Section \ref{subsec:from_ncps_to_ncfun} is the converse
of the results of Section \ref{subsec: from_ncfun_to_ncps} where
we evaluate power series on nilpotent matrices, so that no
questions of convergence arise.

\section{Finitely open topology}\label{subsec:conv-fin-open}
Let $\vecspace{V}$ be a vector space over $\mathbb{C}$, let
$\vecspace{W}$ be a Banach space over $\mathbb{C}$ equipped with
an admissible system of matrix norms (see Section
\ref{subsec:analytic}), and let
$f\colon\tensa{\mat{\vecspace{V}}{s}}\to\mat{\vecspace{W}}{s}$ be
a linear mapping. Notice that, for $Z\in\mat{\vecspace{V}}{sm}$,
$$Z^{\odot_s\ell}f_\ell=f_\ell^{(m,\ldots,m)}(Z,\ldots,Z)$$
(see \eqref{eq:k-CS}) is a homogeneous polynomial of degree $\ell$ on $\mat{\vecspace{V}}{sm}$ with values in
$\mat{\vecspace{W}}{sm}$ and the series $\eqref{eq:power-series}$ is a power series on $\mat{\vecspace{V}}{sm}$
 centered at
$\bigoplus_{\alpha=1}^mY$  with values in $\mat{\vecspace{W}}{sm}$ as in \cite[Chapter 26]{HiPh} and
\cite[Chapter 1]{Mu}.

If the series \eqref{eq:power-series} converges on a finitely open
nc set containing $Y$, then it follows from Theorem
\ref{thm:ncps-unique} and Remark \ref{rem:lost_abbey} that
conditions \eqref{eq:ncfun_coef_0}--\eqref{eq:ncfun-coef_ell_ell}
are necessary for the sum of the series \eqref{eq:power-series} to
be a nc function. We will show in Theorem \ref{thm:g-series} below
that these conditions are also sufficient, using the following
lemma.

\begin{lem}\label{lem:triangle-poly}
Let $\module{M}$ be a module over a ring $\ring$,
$A\in\mat{\module{M}}{m}$,
$B\in\rmat{\module{M}}{m}{\widetilde{m}}$,
$\widetilde{A}\in\mat{\module{M}}{\widetilde{m}}$. Then for any
$\ell\in\mathbb{N}$,
\begin{equation}\label{eq:triangle-power}
\begin{bmatrix} A & B \\ 0 & \widetilde{A} \end{bmatrix}^{\odot \ell} = \begin{bmatrix} A^{\odot \ell} &
\sum_{k=0}^{\ell-1} A^{\odot k}\odot B\odot C^{\odot (\ell-1-k)} \\
0 & C^{\odot \ell} \end{bmatrix}.
\end{equation}
\end{lem}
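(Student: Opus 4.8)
The statement is a purely algebraic identity about the power of an upper-triangular block matrix under the ``faux'' (tensor-algebra) product $\odot$ on matrices over a module $\module{M}$. The plan is to prove \eqref{eq:triangle-power} by induction on $\ell$. The base case $\ell=0$ reads $I_{m+\widetilde m} = I_{m+\widetilde m}$ (interpreting the empty sum as $0$ and using that the $\odot$-zeroth power is the identity matrix over $\tensa{\module M}$), and the case $\ell=1$ is trivial. For the inductive step, I would assume \eqref{eq:triangle-power} for some $\ell$ and compute
\[
\begin{bmatrix} A & B \\ 0 & \widetilde{A} \end{bmatrix}^{\odot (\ell+1)}
= \begin{bmatrix} A & B \\ 0 & \widetilde{A} \end{bmatrix}^{\odot \ell}
\odot \begin{bmatrix} A & B \\ 0 & \widetilde{A} \end{bmatrix},
\]
multiplying the two $2\times 2$ block matrices by the standard rules of matrix multiplication, where the role of scalar multiplication is played by the $\odot$-product of matrices over $\module M$ (which is associative and bi-additive, as noted in Section \ref{subsub:prod} and in the discussion preceding Proposition \ref{prop:diag_tensa_k}).

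Carrying out the block multiplication, the $(1,1)$ entry is $A^{\odot\ell}\odot A = A^{\odot(\ell+1)}$, the $(2,1)$ entry is $0$, and the $(2,2)$ entry is $C^{\odot\ell}\odot C = C^{\odot(\ell+1)}$ (here I am writing $C:=\widetilde A$ to match the notation of the statement). The only entry requiring care is the $(1,2)$ entry, which equals
\[
A^{\odot\ell}\odot B + \Big(\sum_{k=0}^{\ell-1} A^{\odot k}\odot B\odot C^{\odot(\ell-1-k)}\Big)\odot C.
\]
Using right-distributivity of $\odot$ over addition and associativity, the second summand becomes $\sum_{k=0}^{\ell-1} A^{\odot k}\odot B\odot C^{\odot(\ell-k)}$, i.e. the terms of $\sum_{k=0}^{\ell} A^{\odot k}\odot B\odot C^{\odot(\ell-k)}$ with $k\le\ell-1$; adding the first summand $A^{\odot\ell}\odot B = A^{\odot\ell}\odot B\odot C^{\odot 0}$ supplies the missing $k=\ell$ term, yielding $\sum_{k=0}^{\ell} A^{\odot k}\odot B\odot C^{\odot(\ell-k)}$, which is exactly the claimed $(1,2)$ entry for exponent $\ell+1$. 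This completes the induction.

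There is no real obstacle here; the proof is a routine bookkeeping argument. The one point worth being explicit about is that all the formal manipulations (associativity, left and right distributivity, the interaction of block matrix multiplication with the $\odot$-product) are legitimate because $\odot$ on $\mattuple{\module M}{}{}$-matrices is precisely the iterated product operation of type $\module M\otimes_\ring\module M\to \module M^{\otimes 2}$ extended to matrices as in \ref{subsub:prod}, so matrices over the tensor algebra $\tensa{\module M}$ form a (non-unital-until-degree-$0$, but associative) matrix ring in which block decomposition and multiplication behave as usual. I would state this once at the start and then proceed with the induction as above.
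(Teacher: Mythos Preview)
Your proof is correct and is exactly the ``trivial induction'' that the paper indicates; the block multiplication and the reindexing of the $(1,2)$ entry are carried out correctly. There is nothing to add.
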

\begin{proof}
A trivial induction.
\end{proof}

For all $m=1,2,\ldots$ and all $Z\in\mat{\vecspace{V}}{sm}$, we
introduce the quantities \index{$\mu(Z)$} \index{$\rho(Z)$}
\begin{equation}\label{eq:z-mu-rho}
\mu(Z)=\limsup_{\ell\to\infty}\sqrt[\ell]{\|Z^{\odot_s\ell}f_\ell\|_{sm}},\qquad
\rho(Z)=\mu(Z)^{-1},
\end{equation}
and \index{$\rho_{\rm fin}(Z)$}
\begin{equation}\label{eq:z-rho-fin}
\rho_{\rm fin}(Z)=\liminf_{W\overset{\rm fin}{\to} Z}\rho(W),
\end{equation}
where $W\overset{\rm fin}{\to} Z$ \index{$W\overset{\rm fin}{\to}
Z$} means that the lower limit is taken in the finitely open
topology on $\mat{\vecspace{V}}{sm}$. It is clear that the
function $\rho(\cdot)$ is \emph{positive-homogeneous of degree
$-1$}, \index{positive-homogeneous function} i.e., satisfies
$\rho(\lambda Z)=\frac{1}{|\lambda|}\rho(Z)$ for every nonzero
$\lambda\in\mathbb{C}$. Since the multiplication by $\lambda$ is
continuous in the finitely open topology, $\rho_{\rm fin}(\cdot)$
is positive-homogeneous of degree $-1$ as well.

\begin{thm}\label{thm:g-series} 1. The convergence set, $\Upsilon_{\rm nc}\subseteq\ncspace{\vecspace{V}}$, for
the series \eqref{eq:power-series} is a nc set which is complete circular about $Y$, so that the sum of the
series \eqref{eq:power-series}, $f\colon\Upsilon_{\rm nc}\to\ncspace{\vecspace{W}}$ is a well defined mapping. We
have
\begin{multline}\label{eq:conv_set_incl}
\coprod_{m=1}^\infty\Big\{X\in\mat{\vecspace{V}}{sm}\colon\rho\Big(X-\bigoplus_{\alpha=1}^mY\Big)>1\Big\}\subseteq
\Upsilon_{\rm
nc}\\
\subseteq\coprod_{m=1}^\infty\Big\{X\in\mat{\vecspace{V}}{sm}\colon\rho\Big(X-\bigoplus_{\alpha=1}^mY\Big)\ge
1\Big\}.
\end{multline}
 The set
\begin{equation}\label{eq:conv-set-int-fin}
\Upsilon_{\rm nc,
fin}=\coprod_{m=1}^\infty\Big\{X\in\mat{\vecspace{V}}{sm}\colon\rho_{\rm
fin}\Big(X-\bigoplus_{\alpha=1}^mY\Big)>1\Big\}
\end{equation}
\index{$\Upsilon_{\rm nc, fin}$}is the interior of $\Upsilon_{\rm nc}$ in the finitely open
topology on $\ncspace{\vecspace{V}}$.

2. For each $m$, ${(\Upsilon_{\rm nc, fin})}_{sm}$ is nonempty if
and only if it contains
 $\bigoplus_{\alpha=1}^mY$. In this case,
\begin{itemize}
    \item $\rho_{\rm  fin}(0_{sm\times sm})=\infty$ and $\rho_{\rm
    fin}(Z)>0$ for every $Z\in\mat{\vecspace{V}}{sm}$,
    \item $(\Upsilon_{\rm nc, fin})_{sm}$ is
 a complete circular set about $\bigoplus_{\alpha=1}^mY$,
    \item the series \eqref{eq:power-series} converges absolutely on
$(\Upsilon_{\rm nc, fin})_{sm}$, and \item $f|_{(\Upsilon_{\rm nc,
fin})_{sm}}$ is G-differentiable.
\end{itemize}

3. For every $m\in\mathbb{N}$ and $X\in(\Upsilon_{\rm nc, fin})_{sm}$ there exists a finitely open complete
circular set $\Upsilon_{{\rm fin}, X}$ about $\bigoplus_{\alpha=1}^mY$ which contains $X$ and such that
\begin{equation}\label{eq:normal-g-conv}
\sum_{\ell=0}^\infty\sup_{W\in\Upsilon_{{\rm fin},
X}}\Big\|\Big(W-\bigoplus_{\alpha=1}^mY\Big)^{\odot_s\ell}
f_\ell\Big\|_{sm}<\infty.
\end{equation}

4. For any finitely open nc set $\Gamma\subseteq\Upsilon_{\rm nc, fin}$, $f|_{\Gamma}$ is
 a G-differentiable nc
function. Let \index{$\Upsilon_{\rm nc, fin-G}$}
\begin{equation}\label{eq:ups-nc-fin-g}
\Upsilon_{\rm nc, fin-G}:=\{X\in\Upsilon_{\rm nc, fin}\colon\exists\Gamma,\ {\rm a\ finitely\ open\ nc\ set,\
such\ that\ } X\in\Gamma\subseteq\Upsilon_{\rm nc, fin}\}.
\end{equation}
Then $\Upsilon_{\rm nc, fin-G}$ is nonempty if and only if there exists a nonempty finitely open nc set
$U\subseteq\ncspace{\vecspace{V}}$ such that $\inf\limits_{Z\in U}\rho(Z)>0$. We have $Y\in\Upsilon_{\rm nc,
fin-G}$ if and only if there exists a finitely open nc neighborhood $U$ of $Y$ such that $\inf\limits_{X\in
U}\rho(X-\bigoplus_{\alpha=1}^{m_X}Y)>0$; here $m_X$ is the size of a block matrix $X$ with $s\times s$ blocks.
\end{thm}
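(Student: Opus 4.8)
\textbf{Proof plan for Theorem \ref{thm:g-series}.}

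The plan is to establish the four parts roughly in order, since each relies on the preceding ones and on the algebraic machinery of the earlier chapters together with the analytic facts recorded in Section \ref{subsec:analytic}. For part~1, I would first observe that each term $(X-\bigoplus_{\alpha=1}^mY)^{\odot_s\ell}f_\ell$ is a homogeneous polynomial of degree $\ell$ in $X-\bigoplus_{\alpha=1}^mY$, so the convergence set at level $sm$ is a complete circular set about $\bigoplus_{\alpha=1}^mY$ and the Cauchy--Hadamard inclusions \eqref{eq:conv_set_incl} follow from the classical root test applied to the scalar series $\sum_\ell t^\ell\|(X-\bigoplus_{\alpha=1}^mY)^{\odot_s\ell}f_\ell\|_{sm}$ using \eqref{eq:z-mu-rho}. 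To see that $\Upsilon_{\rm nc}$ is a nc set I would use Lemma \ref{lem:triangle-poly}: if $X\in(\Upsilon_{\rm nc})_{sm}$ and $X'\in(\Upsilon_{\rm nc})_{sm'}$, then $X\oplus X'-\bigoplus_{\alpha=1}^{m+m'}Y=(X-\bigoplus Y)\oplus(X'-\bigoplus Y)$ has a block-diagonal $\odot_s$-power, so the series for $X\oplus X'$ converges entrywise; here one needs \eqref{eq:dirsums-norms} to pass from the norms of the diagonal blocks to the norm of the direct sum. The characterization of the finitely open interior $\Upsilon_{\rm nc, fin}$ via $\rho_{\rm fin}$ is then a direct unwinding of the definition \eqref{eq:z-rho-fin} of $\rho_{\rm fin}$ as a $\liminf$ in the finitely open topology, together with the first inclusion of \eqref{eq:conv_set_incl}.

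For part~2, I would fix $m$ and suppose $({\Upsilon_{\rm nc, fin}})_{sm}\neq\emptyset$, say it contains some $X$. Applying Lemma \ref{lem:triangle-poly} to the $2\times 2$ block matrix with corner $X-\bigoplus Y$ in the upper right and $0$ on the diagonal, and using the fact that the series converges on a finitely open neighborhood of $X$, one sees that $\bigoplus_{\alpha=1}^mY$ itself lies in $({\Upsilon_{\rm nc, fin}})_{sm}$ and that $\rho_{\rm fin}(Z)>0$ for all $Z$ (and $=\infty$ at $Z=0$), by positive-homogeneity of $\rho_{\rm fin}$ of degree $-1$ together with the openness in the finitely open topology. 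Complete circularity of $({\Upsilon_{\rm nc, fin}})_{sm}$ about $\bigoplus_{\alpha=1}^mY$ follows since $\rho_{\rm fin}$ is positive-homogeneous and the finitely open interior of a complete circular set is complete circular (this last fact is essentially the argument already used in the proof of Theorem \ref{thm:g-queen}(3)). Absolute convergence on $({\Upsilon_{\rm nc, fin}})_{sm}$ is immediate from $\rho_{\rm fin}(X-\bigoplus Y)>1$ and the root test. For G-differentiability of $f|_{({\Upsilon_{\rm nc, fin}})_{sm}}$ I would invoke the standard theory of power series between Banach spaces: normal convergence on slices (part~3) forces $f$ to be the locally uniform limit of polynomials, hence analytic on finite-dimensional slices by Hartogs, hence G-differentiable --- precisely as in \cite[Chapter 26]{HiPh}.

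Part~3 is the normal-convergence estimate. Given $X\in({\Upsilon_{\rm nc, fin}})_{sm}$, pick $r>1$ with $\rho_{\rm fin}(X-\bigoplus Y)>r$; then on a finitely open complete circular neighborhood $\Upsilon_{{\rm fin},X}$ of $\bigoplus_{\alpha=1}^mY$ containing $X$ one has, by definition of $\rho_{\rm fin}$ and a compactness argument over the closed disc (exactly as in the proof that $\Upsilon(Y)$ is finitely open in Theorem \ref{thm:g-queen}(3)), a uniform bound $\|(W-\bigoplus Y)^{\odot_s\ell}f_\ell\|_{sm}\le M r^{-\ell}$; summing a geometric series gives \eqref{eq:normal-g-conv}. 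For part~4, that $f|_\Gamma$ is a nc function on any finitely open nc $\Gamma\subseteq\Upsilon_{\rm nc, fin}$ is where conditions \eqref{eq:ncfun_coef_0}--\eqref{eq:ncfun-coef_ell_ell} enter: by Lemma \ref{lem:intertw-poly} every partial sum $\sum_{\ell=0}^N(\,\cdot\,)^{\odot_s\ell}f_\ell$ differs from its intertwined counterpart by a remainder supported in degree $N+1$, and by part~3 this remainder tends to $0$, so $f$ respects intertwinings on $\Gamma$; G-differentiability comes from part~2. The descriptions of $\Upsilon_{\rm nc, fin-G}$ and of the condition $Y\in\Upsilon_{\rm nc, fin-G}$ then reduce to the question of when a point has a \emph{finitely open nc} neighborhood inside $\Upsilon_{\rm nc, fin}$; for the ``only if'' direction one takes $U$ to be such a neighborhood and uses $\inf_{Z\in U}\rho(Z)>0$ (which holds since the defining condition of $\Upsilon_{\rm nc,fin}$ forces $\rho>1$ on $U$ after translating by $Y$, and $\rho_{\rm fin}\le\rho$), while for ``if'' one checks that such a $U$ is contained in $\Upsilon_{\rm nc, fin}$ by the root test. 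The main obstacle I anticipate is the bookkeeping in part~4 to produce the nc set $\Gamma$ and to verify that $\inf_{Z\in U}\rho(Z)>0$ is genuinely equivalent to the existence of a finitely open nc set on which the series converges normally --- this requires carefully combining the compactness argument for finite openness, Lemma \ref{lem:triangle-poly} for the nc (direct-sum) structure, and Lemma \ref{lem:intertw-poly} for respecting intertwinings, all while tracking the constants $C_1,C_1'$ from \eqref{eq:dirsums-norms} which depend on $m$.
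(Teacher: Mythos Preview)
Your overall plan is sound and identifies the right ingredients, but there are two concrete gaps in part~4.

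First, the claim ``by part~3 this remainder tends to $0$'' does not work as stated. Part~3 gives normal convergence on a neighborhood of a single point at a fixed level $sm$, whereas the remainder in Lemma~\ref{lem:intertw-poly} mixes $X$ at level $sm$, $\widetilde{X}$ at level $s\widetilde{m}$, and the rectangular off-diagonal $S\bigoplus Y-\bigoplus YS$. The paper's argument is to recognize, via Lemma~\ref{lem:triangle-poly}, that this remainder is precisely the $(1,2)$ block of the $(N+1)$-th term of the series \eqref{eq:power-series} evaluated at $\left[\begin{smallmatrix} \widetilde{X} & \epsilon(S\bigoplus Y-\bigoplus YS)\\ 0 & X\end{smallmatrix}\right]$. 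Since $\Gamma$ is a \emph{finitely open nc} set, it contains $\widetilde{X}\oplus X$, and hence this upper triangular point for small $\epsilon$; thus the series converges there and its general term goes to $0$. You list Lemma~\ref{lem:triangle-poly} among the anticipated obstacles, but the plan must make explicit that it is the finite openness \emph{and} the nc property of $\Gamma$ together that place the upper triangular point inside $\Gamma$ --- part~3 by itself does not see the off-diagonal arguments.

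Second, in the ``if'' direction of the characterization of $\Upsilon_{\rm nc, fin-G}$, the hypothesis is only $\rho_0:=\inf_{Z\in U}\rho(Z)>0$, not $>1$, so $U$ need not lie in $\Upsilon_{\rm nc, fin}$ and your ``root test'' check fails. The paper's fix is to rescale: by positive-homogeneity of degree $-1$, on $(\rho_0-\epsilon)U$ one has $\rho\ge\rho_0/(\rho_0-\epsilon)>1$, hence $\rho_{\rm fin}>1$ there, and $(\rho_0-\epsilon)U$ is still a finitely open nc set contained in $\Upsilon_{\rm nc, fin}$. The same scaling is needed for the statement about $Y$.

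A minor remark on part~2: your proposed use of Lemma~\ref{lem:triangle-poly} (a $2\times 2$ block matrix with $X-\bigoplus Y$ in the corner and $0$ on the diagonal) to show that $\bigoplus_{\alpha=1}^m Y\in(\Upsilon_{\rm nc,fin})_{sm}$ is unclear and unnecessary. At each fixed level this is a purely classical fact about power series of homogeneous polynomials between Banach spaces --- the interior of the convergence set, if nonempty, contains the center --- and the paper simply cites \cite[Theorem~26.5.9]{HiPh}.
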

\begin{rem}\label{rem:series-uniqueness}
It follows that if $Y\in\Upsilon_{\rm nc,fin-G}$, then the
restriction of $f$ to a finitely open nc neighborhood of $Y$ is a
G-differentiable nc function, and according to Theorem
\ref{thm:ncps-unique}
 one has $$f_\ell=\Delta_R^\ell f(\underset{\ell+1\
\mbox{times}}{\underbrace{Y,\ldots, Y}}),\quad \ell=0,1,\ldots.$$
In particular, the coefficients $f_\ell$ (i.e., the original
linear mapping $f$) are uniquely determined by the corresponding
nc function, which is why we used the same notation for both the
nc function and the linear mapping.
\end{rem}

\begin{proof}[Proof of Theorem \ref{thm:g-series}]
1. The statement on $\Upsilon_{\rm nc}$ is straightforward. The inclusions in \eqref{eq:conv_set_incl} follow
from the classical Cauchy--Hadamard theorem  since, for a fixed $Z$, $\rho(Z)$ is the radius of convergence of
the series
\begin{equation}\label{eq:lambda-power-series}
\sum_{\ell=0}^\infty Z^{\odot_s\ell}f_\ell\lambda^\ell
\end{equation}
as a series in $\lambda\in\mathbb{C}$. Next we observe that, by
the construction, $\rho_{\rm fin}(Z)$ is a lower semicontinuous
function of $Z$ in the finitely open topology on
$\ncspace{\vecspace{V}}$. This implies that $\Upsilon_{\rm nc,
fin}$ is a finitely open set. Clearly, $\rho_{\rm
fin}(Z)\le\rho(Z)$, therefore, again by the classical
Cauchy--Hadamard theorem, the series \eqref{eq:power-series}
converges absolutely on $\Upsilon_{\rm nc, fin}$. On the other
hand, if $Z\notin\Upsilon_{\rm nc, fin}$, i.e., if $\rho_{\rm
fin}(Z)\le 1$, then by the positive-homogeneity of degree $-1$ of
$\rho_{\rm fin}(\cdot )$ we have that $\rho_{\rm
fin}((1+\epsilon)Z)\le\frac{1}{1+\epsilon}$ for an arbitrary
$\epsilon>0$. Therefore, in every finitely open neighborhood of
$(1+\epsilon)Z$ there is a point $W$ such that $\rho(W)< 1$. This
means that $(1+\epsilon)Z$ is not in the interior in the finitely
open topology on $\ncspace{\vecspace{V}}$ of the convergence set
for the series \eqref{eq:power-series}, and then so is $Z$. Thus
we obtain that $\Upsilon_{\rm nc, fin}$ is
 the interior of $\Upsilon_{\rm nc}$ in the finitely open topology on
$\ncspace{\vecspace{V}}$.

2. Let $m\in\mathbb{N}$ be fixed. If
$\bigoplus_{\alpha=1}^mY\in(\Upsilon_{\rm nc, fin})_{sm}$, then
clearly $(\Upsilon_{\rm nc, fin})_{sm}\neq\emptyset$. Conversely,
if $(\Upsilon_{\rm nc, fin})_{sm}$, which is the interior of
$(\Upsilon_{\rm nc})_{sm}$ in the finitely open topology on
$\mat{\vecspace{V}}{sm}$, is nonempty, then by \cite[Theorem
26.5.9]{HiPh} (see also \cite[Theorem 26.5.2]{HiPh})
$\bigoplus_{\alpha=1}^mY$ is an interior point of $(\Upsilon_{\rm
nc})_{sm}$ in this topology, i.e., $\rho_{\rm fin}(0_{sm\times
sm})>1$. Moreover, by the positive-homogeneity of degree $-1$ and
lower semicontinuity of $\rho_{\rm fin}(\cdot)$ we have that
$\rho_{\rm fin}(Z)>0$ for every $Z\in\mat{\vecspace{V}}{sm}$ and
that $\rho_{\rm fin}(0_{sm\times sm})=\infty$. From now on, assume
$(\Upsilon_{\rm nc, fin})_{sm}\neq \emptyset$. By the
positive-homogeneity of $\rho_{\rm fin}(\cdot)$ the set
${(\Upsilon_{\rm nc, fin})}_{sm}$ is a complete circular set about
$\bigoplus_{\alpha=1}^mY$, for every $m\in\mathbb{N}$ (see also
\cite[Theorem 26.5.9]{HiPh}). By \cite[Theorem 26.5.6]{HiPh} (see
also \cite[Theorem 26.5.3 and Theorem 26.5.4]{HiPh}) the sum of
the series \eqref{eq:power-series} is G-differentiable on
$(\Upsilon_{\rm nc, fin})_{sm}$. The absolute convergence of the
series \eqref{eq:power-series} on $(\Upsilon_{\rm nc, fin})_{sm}$
has been proved in the preceding paragraph. We also can prove this
using Hartogs' theorem on the convergence in homogeneous
polynomials \cite[Theorem I.3.3]{Sh}.

3. This statement for any fixed $m$ and $X$ follows from \cite[Theorem 26.3.8]{HiPh} (cf. Theorem
\ref{thm:g-queen}(4)).

4. It is clear that $f((\Upsilon_{\rm nc})_{sm})\subseteq \mat{\vecspace{W}}{sm}$. Let $X\in(\Upsilon_{\rm
nc})_{sm}$, $\widetilde{X}\in(\Upsilon_{\rm nc})_{s\widetilde{m}}$, and
$S\in\rmat{\mathbb{C}}{s\widetilde{m}}{sm}$ be such that $SX=\widetilde{X}S$. Taking the limit as $N\to \infty$
in \eqref{eq:intertw-poly} (see Lemma \ref{lem:intertw-poly}) and using Lemma \ref{lem:triangle-poly}, we see
that $Sf(X)=f(\widetilde{X})S$ if and only if the general term of the series \eqref{eq:power-series} evaluated at
$\begin{bmatrix} \widetilde{X} & S\bigoplus_{\nu=1}^mY-\bigoplus_{\widetilde{\nu}=1}^{\widetilde{m}}YS\\
0 & X
\end{bmatrix}$ converges to $0$. For $X,\widetilde{X}\in\Gamma$ where $\Gamma\subseteq
\Upsilon_{\rm nc, fin}$ is a finitely open nc set, we have that
$\begin{bmatrix} \widetilde{X} & \epsilon (S\bigoplus_{\nu=1}^mY-\bigoplus_{\widetilde{\nu}=1}^{\widetilde{m}}YS)\\
0 & X
\end{bmatrix}\in\Gamma$  for $\epsilon>0$  small enough, so that the series \eqref{eq:power-series} converges
at this point and a fortiori its general term converges to $0$; therefore $Sf(X)=f(\widetilde{X})S$. We conclude
that $f|_{\Gamma}$ is a nc function. It is a G-differentiable nc function by part 2 of the theorem.

 Next, if $\Upsilon_{\rm nc, fin-G}\neq\emptyset$, then
there exists a nonempty finitely open nc set $\Gamma\subseteq\Upsilon_{\rm nc, fin}$, i.e.,  $\rho_{\rm
fin}(W)>1$ for every $W\in\Gamma$. We have
$$\inf_{W\in\Gamma}\rho(W)\ge\inf_{W\in\Gamma}\rho_{\rm
fin}(W)\ge 1>0.$$  Conversely, if $U\subseteq
\ncspace{\vecspace{V}}$ is a nonempty finitely open nc set so that
$\rho_0:=\inf\limits_{W\in U}\rho(W)>0$, then for any positive
$\epsilon<\rho_0$ the set $(\rho_0-\epsilon) U\subseteq
\ncspace{\vecspace{V}}$ is also a nonempty finitely open nc set
and $\inf\limits_{W\in (\rho_0-\epsilon)
U}\rho(W)=\frac{\rho_0}{\rho_0-\epsilon}$. Therefore for any
$Z\in(\rho_0-\epsilon) U$ we have
$$\rho_{\rm fin}(Z)=\liminf_{\underset{W\to Z}{\rm fin}}\rho(W)\ge\frac{\rho_0}{\rho_0-\epsilon}>1,$$ and thus the
finitely open nc set $(\rho_0-\epsilon) U$ is contained in $\Upsilon_{\rm nc, fin}$. We conclude that
$\Upsilon_{\rm nc, fin-G}\neq\emptyset$.

If $Y\in\Upsilon_{\rm nc, fin-G}$, then there exists a finitely open nc neighborhood $\Gamma$ of $Y$ such that
$\Gamma\subseteq\Upsilon_{\rm nc, fin}$. Then, as we have shown in the preceding paragraph,
$\inf\limits_{X\in\Gamma}\rho(X-\bigoplus_{\alpha=1}^{m_X}Y)>0.$ Conversely, if there exists a finitely open nc
neighborhood $U$ of $Y$ such that $\rho_0:=\inf\limits_{X\in U}\rho(X-\bigoplus_{\alpha=1}^{m_X}Y)>0$, then
arguing as in the preceding paragraph we obtain that, for $\epsilon>0$ small enough, the set $\Gamma$ consisting
of $X\in\Upsilon_{\rm nc}$ for which there is a $\widetilde{X}\in U$ satisfying
$X-\bigoplus_{\alpha=1}^{m_X}Y=(\rho_0-\epsilon)(\widetilde{X}-\bigoplus_{\alpha=1}^{m_X}Y)$, is a subset of
  $\Upsilon_{\rm nc, fin}$. Since $\Gamma$ is a finitely open nc neighborhood of $Y$, we conclude that
$Y\in\Upsilon_{\rm nc, fin-G}$.
\end{proof}
\begin{rem}\label{rem:s=1}
In the case $s=1$, $f$ is a nc function on $\Upsilon_{\rm nc}$. This is clear from the form of the series
\eqref{eq:power-series}. It also follows from the proof of part 4 of Theorem \ref{thm:g-series} since in this
case $S\bigoplus_{\nu=1}^mY-\bigoplus_{\widetilde{\nu}=1}^{\widetilde{m}}YS=0$.
\end{rem}

 \label{QUESTIONS}

The following example demonstrates the possibility that
$Y=0_{1\times 1}\!\in\!\Upsilon_{\rm nc,
fin-G}\!\subsetneq\!\Upsilon_{\rm nc, fin}$.
\begin{ex}\label{ex:nc-conv-set}  Consider the
series
$$\sum_{\ell=0}^\infty Z_1^\ell(Z_1Z_2-Z_2Z_1).$$
Here $\vecspace{V}=\mathbb{C}^2$, $\vecspace{W}=\mathbb{C}$, with
the Euclidean topologies on
$\mat{\vecspace{V}}{n}=\mat{(\mathbb{C}^2)}{n}\cong\mattuple{\mathbb{C}}{n}{2}$
and $\mat{\vecspace{W}}{n}=\mat{\mathbb{C}}{n}$. If
$Z=(Z_1,Z_2)\in\mattuple{\mathbb{C}}{n}{2}$ satisfies
$Z_1Z_2=Z_2Z_1$ (in particular, it is always the case when $n=1$),
then $\rho(Z)=\infty$. If
$z=(z_1,z_2)\in\mat{\vecspace{V}}{1}=\mathbb{C}^2$, then we have
$\rho_{\rm fin}(z)=\rho(z)=\infty$ and therefore $(\Upsilon_{\rm
nc, fin})_1=\mathbb{C}^2$. If $Z=z\oplus
z\in\mat{\vecspace{V}}{2}$ and $z_1\neq 0$, then of course we have
$\rho(Z)=\infty$, however $\rho_{\rm fin}(Z)=\frac{1}{|z_1|}$. To
show the latter, we first calculate, for any fixed $\epsilon>0$,
\begin{multline*}
\mu\left(\begin{bmatrix} z_1 & \epsilon\\
0 & z_1
\end{bmatrix},\begin{bmatrix} z_2 & 0\\
\epsilon & z_2
\end{bmatrix}\right)=\limsup_{\ell\to\infty}\sqrt[\ell+2]{\left\|\begin{bmatrix} z_1 & \epsilon\\
0 & z_1
\end{bmatrix}^\ell\begin{bmatrix} \epsilon^2 & 0\\
0 & -\epsilon^2
\end{bmatrix}\right\|}\\
=\limsup_{\ell\to\infty}\sqrt[\ell+2]{\left\|\begin{bmatrix} \epsilon^2z_1^\ell &
 -\ell\epsilon^3z_1^{\ell-1}\\
0 & -\epsilon^2z_1^\ell
\end{bmatrix}\right\|}
=\limsup_{\ell\to\infty}\sqrt[\ell+2]{\epsilon^2|z_1|^{\ell-1}\left\|\begin{bmatrix} z_1 & -\ell\epsilon\\
0 & -z_1
\end{bmatrix}\right\|}=|z_1|.
\end{multline*}
Hence $$\rho_{\rm fin}(Z)\le\lim_{\epsilon\to 0}\rho\left(\begin{bmatrix} z_1 & \epsilon\\
0 & z_1
\end{bmatrix},\begin{bmatrix} z_2 & 0\\
\epsilon & z_2
\end{bmatrix}\right)=\frac{1}{|z_1|}.$$
On the other hand, for any $M,N\in\mat{\mathbb{C}}{2}$ we have
\begin{multline*}
\mu(z_1I_2+M,z_2I_2+N)=\limsup_{\ell\to\infty}\sqrt[\ell+2]{\|(z_1I_2+M)^\ell
(MN-NM)\|}\\
\le \|z_1I_2+M\|\lim_{\ell\to 0}\sqrt[\ell+2]{\|MN-NM\|} \le
\|z_1I_2+M\|,
\end{multline*}
and for $M$ small enough we have
$\rho(z_1I_2+M,z_2I_2+N)\ge\frac{1}{\|z_1I_2+M\|}$. Hence
 $$\rho_{\rm fin}(Z)\ge\frac{1}{|z_1|}.$$
We conclude that $\rho_{\rm fin}(Z)=\frac{1}{|z_1|}$. If
$|z_1|>1$, we have that $z=(z_1,z_2)\in\Upsilon_{\rm nc, fin}$,
however $Z=z\oplus z\notin\Upsilon_{\rm nc, fin}$. Thus the set
$\Upsilon_{\rm nc, fin}$ is not a nc set. On the other hand, the
subset $\Gamma\subsetneq\Upsilon_{\rm nc, fin}$ consisting of
pairs of matrices $Z=(Z_1,Z_2)$ satisfying $\|Z_1\|<1$ (with the
operator norm $\|\cdot\|$) is clearly a uniformly-open (and
therefore finitely open) nc set, the series $\sum_{\ell=0}^\infty
Z_1^\ell(Z_1Z_2-Z_2Z_1)$ converges absolutely on $\Gamma$, and its
sum $f\colon\Gamma\to\ncspace{\mathbb{C}}$ is a G-differentiable
nc function.
\end{ex}

The next example shows that it is also possible that $(\Upsilon_{\rm nc, fin})_{1\cdot m}\neq\emptyset$, i.e.,
$\bigoplus_{\alpha=1}^mY=0_{1\cdot m\times 1\cdot m}\in\Upsilon_{\rm nc, fin}$ for all $m\in\mathbb{N}$, however
$Y=0_{1\times 1}\notin\Upsilon_{\rm nc, fin-G}$ (in fact, we have $0_{1\cdot m\times 1\cdot m}\notin\Upsilon_{\rm
nc, fin-G}$ for all $m\in\mathbb{N}$).
\begin{ex}\label{ex:no-nc-conv-set}
Let $p_n$ be the homogeneous polynomial in two noncommuting
variables of degree $\alpha_n=\frac{(n+1)(n+2)}{2}$,
$n=1,2,\ldots$, from Example \ref{ex:degrees_unbdd}. We remind the
reader that $p_n$ vanishes on all pairs of $n\times n$ matrices.
 Consider the series
$$\sum_{j=2}^\infty\sum_{k=1}^{j-1} (kZ_1)^{\alpha_j-\alpha_k-j+k}p_k(Z_1,Z_2).$$
We note that the total degrees of the homogeneous polynomials in
this series strictly increase. Indeed, for any fixed $j$ we have
$$\deg \left(
(kx_1)^{\alpha_j-\alpha_k-j+k}p_k\right)=\alpha_j-j+k$$ increasing
as $k$ changes from $1$ to $j-1$. On the other hand, for any fixed
$j$ we have
$$\alpha_{j}-j+(j-1)=\alpha_j-1<\alpha_{j+1}-j=\alpha_{j+1}-(j+1)+1,$$
i.e., the degree increases when we pass from the last term in the
$j$-th sum to the first term in the $j+1$-th sum. Similarly to
Example \ref{ex:nc-conv-set}, we have $\vecspace{V}=\mathbb{C}^2$,
$\vecspace{W}=\mathbb{C}$, with the Euclidean topologies on
$\mat{\vecspace{V}}{n}=\mat{(\mathbb{C}^2)}{n}\cong\mattuple{\mathbb{C}}{n}{2}$
and $\mat{\vecspace{W}}{n}=\mat{\mathbb{C}}{n}$, and
$(\Upsilon_{\rm nc, fin})_1=\mathbb{C}^2$. Let
$z=(z_1,z_2)\in\mat{\vecspace{V}}{1}=\mathbb{C}^2$ be such that
$z_1\neq 0$. Define
$Z(k)=\bigoplus_{j=1}^kz\in\mat{\vecspace{V}}{k}$. Arguing as in
Example \ref{ex:nc-conv-set} and using the subsequence of
homogeneous polynomials $(kx_1)^{\alpha_j-\alpha_k-j+k}p_k$,
$j=2,3,\ldots$, we can easily show that $\rho_{\rm
fin}(Z(k))=\frac{1}{k|z_1|}$. Therefore, there is no finitely open
nc neighborhood of $0_{m\times m}$, for any $m$, where the series
converges (otherwise $|z_1|<\frac{1}{k}$ for all $k$, which is
impossible).
\end{ex}

\section{Norm topology}\label{subsec:conv-norm} Let
$\vecspace{V}$ and $\vecspace{W}$ be Banach spaces over
$\mathbb{C}$ equipped with admissible systems of matrix norms, and
let
$f_\ell\colon\left(\mat{\vecspace{V}}{s}\right)^\ell\to\mat{\vecspace{W}}{s}$,
$\ell=0,1,\ldots$, be a sequence of bounded $\ell$-linear
mappings. It follows that, for all $m_0$, \ldots,
$m_k\in\mathbb{N}$, $f_\ell^{(m_0,\ldots,m_k)}$ is also a sequence
of bounded $\ell$-linear mappings. Therefore, for
$Z\in\mat{\vecspace{V}}{sm}$,
$$Z^{\odot_s\ell}f_\ell=f_\ell^{(m,\ldots,m)}(Z,\ldots,Z)$$
 is a bounded homogeneous polynomial of degree $\ell$ on
$\mat{\vecspace{V}}{sm}$ with values in $\mat{\vecspace{W}}{sm}$ and the series $\eqref{eq:power-series}$ is a
F-power series (a power series of bounded homogeneous polynomials) on $\mat{\vecspace{V}}{sm}$ centered at
$\bigoplus_{\alpha=1}^mY$ with values in $\mat{\vecspace{W}}{sm}$ as in \cite[Chapter 26]{HiPh} and \cite[Chapter
1]{Mu}.

We introduce the quantities \index{$\rho_{\rm norm}(Z)$}
\begin{equation}\label{eq:z-rho-norm}
\rho_{\rm norm}(Z)=\liminf_{W\to Z}\rho(W)
\end{equation}
(cf. \eqref{eq:z-rho-fin}), \index{$\mu_m$} \index{$\rho_m$}
\begin{equation}\label{eq:mu-rho}
\mu_m=\limsup_{\ell\to\infty}\sqrt[\ell]{\sup_{\|Z\|_{sm}=1}\|
Z^{\odot_s\ell}f_\ell\|_{sm}},\qquad \rho_m=\mu_m^{-1}.
\end{equation}
Analogously to $\rho_{\rm fin}(\cdot)$, the function $\rho_{\rm
norm}(\cdot)$ is positive-homogeneous of degree $-1$. We notice
that the expression under the $\ell$-th root is the norm of the
homogeneous polynomial $Z^{\odot_s\ell}f_\ell$, see
\cite[Definition 26.2.4]{HiPh} and \cite[Definition 2.1]{Mu}.
\begin{prop}\label{prop:mu_m-vs-coef-norms}
The following inequalities hold:
\begin{equation}\label{eq:mu_norm-vs-coef-norms}
\frac{1}{e}\limsup_{\ell\to\infty}\sqrt[\ell]{\|f_\ell\|}\le\mu_1\le
K_1\mu_m\le mK_1K_2\limsup_{\ell\to\infty}\sqrt[\ell]{\|f_\ell\|},
\end{equation}
where $$K_1=\min_{1\le i\le m}\|\iota^s_{ii}\|,\quad
K_2=\max_{1\le i,j\le m}\|\pi^s_{ij}\|;$$  block injections,
$\iota^s_{ij}$, and block projections, $\pi^s_{ij}$, are defined
in \eqref{eq:block-inj-bd} and \eqref{eq:block-proj-bd}.
\end{prop}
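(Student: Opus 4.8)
The plan is to prove the chain of inequalities in \eqref{eq:mu_norm-vs-coef-norms} by relating the polynomial norm $\sup_{\|Z\|_{sm}=1}\|Z^{\odot_s\ell}f_\ell\|_{sm}$ (whose $\ell$-th root governs $\mu_m$) to the multilinear norm $\|f_\ell\|$ (the norm of $f_\ell$ as a bounded $\ell$-linear mapping $(\mat{\vecspace{V}}{s})^\ell \to \mat{\vecspace{W}}{s}$), and to the corresponding polynomial norm for $m=1$ (governing $\mu_1$). Each of the four inequalities will follow by taking $\limsup_{\ell\to\infty}\sqrt[\ell]{\cdot}$ of a corresponding inequality between the quantities inside the roots, using that $\sqrt[\ell]{c^\ell}\to 1$ for any fixed constant $c$ absorbs the polynomial-in-$\ell$ and constant factors.

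For the innermost inequality $\mu_1 \le K_1\mu_m$: I would fix a block index $i$ with $\|\iota^s_{ii}\|=K_1$ and, given $Z\in\mat{\vecspace{V}}{s}$ with $\|Z\|_s=1$, consider $\widetilde Z := \iota^s_{ii}(Z) = E_{ii}\otimes Z \in \mat{\vecspace{V}}{sm}$, so $\|\widetilde Z\|_{sm}\le K_1$. Since $\widetilde Z$ is supported in the $(i,i)$ block, $\widetilde Z^{\odot_s\ell}$ is supported in the $(i,i)$ block with that block entry equal to $Z^{\odot_s\ell}$ (by the block structure of $\odot_s$-multiplication), hence $\|Z^{\odot_s\ell}f_\ell\|_s = \|\pi^s_{ii}(\widetilde Z^{\odot_s\ell}f_\ell)\|_s \cdot$ --- more carefully, $\widetilde Z^{\odot_s\ell}f_\ell = \iota^s_{ii}(Z^{\odot_s\ell}f_\ell)$, so $\|Z^{\odot_s\ell}f_\ell\|_s \le \|\pi^s_{ii}\|\,\|\widetilde Z^{\odot_s\ell}f_\ell\|_{sm}$; combining with $\|\widetilde Z\|_{sm}\le K_1$ and taking suprema and $\ell$-th roots gives $\mu_1 \le K_1^{1/\ell}\cdot(\text{const})^{1/\ell}\mu_m \to \mu_1 \le K_1\mu_m$ after a rescaling argument. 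Actually the cleanest route: $\sup_{\|Z\|_s=1}\|Z^{\odot_s\ell}f_\ell\|_s \le \|\pi^s_{ii}\| K_1^\ell \sup_{\|W\|_{sm}=1}\|W^{\odot_s\ell}f_\ell\|_{sm}$ — wait, this needs $\|\widetilde Z/K_1\|_{sm}\le 1$ and homogeneity of degree $\ell$, giving the factor $K_1^\ell$; then the $\ell$-th root yields $K_1$ in the limit and the constant $\|\pi^s_{ii}\|$ disappears. For the leftmost inequality $\frac1e\limsup\sqrt[\ell]{\|f_\ell\|}\le\mu_1$, I would invoke the classical polarization estimate for homogeneous polynomials on Banach spaces (\cite[Section 26.2]{HiPh} or \cite[Section 2]{Mu}): the norm of the associated symmetric $\ell$-linear form is at most $\frac{\ell^\ell}{\ell!}$ times the polynomial norm, and $\sqrt[\ell]{\ell^\ell/\ell!}\to e$; since $f_\ell$ restricted to the diagonal gives the polynomial $Z^{\odot_s\ell}f_\ell$ (for $m=1$ this is literally $f_\ell(Z,\dots,Z)$ as $\odot_s$ with $s$ arbitrary still has the diagonal evaluation property — here one should be careful that $f_\ell$ need not be symmetric, but the polarization bound still controls $\|f_\ell\|$ by the norm of the polynomial up to the symmetrized version; I will use that $\|f_\ell\|$ and the norm of its symmetrization differ by a constant, or directly quote the needed estimate).

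For the two outer inequalities involving $\|f_\ell\|$ directly: $\mu_m \le m K_1 K_2 \limsup\sqrt[\ell]{\|f_\ell\|}$ — no wait, re-reading, it is $K_1\mu_m \le mK_1K_2\limsup\sqrt[\ell]{\|f_\ell\|}$, i.e. $\mu_m \le mK_2\limsup\sqrt[\ell]{\|f_\ell\|}$ roughly. For this I would expand $Z^{\odot_s\ell}f_\ell$ for $Z\in\mat{\vecspace{V}}{sm}$ using the block-entry formula \eqref{eq:tensa_prod}: $(Z^{\odot_s\ell}f_\ell)_{\alpha\beta} = \sum_{\alpha_1,\dots,\alpha_{\ell-1}} f_\ell(Z^{\alpha,\alpha_1},\dots,Z^{\alpha_{\ell-1},\beta})$, where $Z^{\gamma\delta}=\pi^s_{\gamma\delta}(Z)$ are the $s\times s$ blocks of $Z$. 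Each summand is bounded by $\|f_\ell\|\prod\|Z^{\cdot\cdot}\|_s \le \|f_\ell\| (K_2\|Z\|_{sm})^\ell$, and there are $m^{\ell-1}$ summands and $m^2$ block entries, so $\|Z^{\odot_s\ell}f_\ell\|_{sm}\le (\text{const depending on }m)\cdot m^{\ell-1}\|f_\ell\| K_2^\ell \|Z\|_{sm}^\ell$; taking $\ell$-th roots the $m^{\ell-1}$ contributes $m$, $K_2^\ell$ contributes $K_2$, and $\|f_\ell\|^{1/\ell}$ gives the stated bound. For the remaining inequality $\mu_1\le \cdots$ chained appropriately, note $m=1$ of the previous display gives $\mu_1 \le K_2\limsup\sqrt[\ell]{\|f_\ell\|}$ which combined with the others closes the loop. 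I expect the \textbf{main obstacle} to be bookkeeping the constants precisely so that, after extracting $\ell$-th roots, exactly the factors $K_1$, $K_2$, $m$, and $e$ survive and all genuinely bounded (in $\ell$) prefactors vanish in the limit; in particular getting the factor $e$ right requires care with the non-symmetry of $f_\ell$ versus the symmetric multilinear form produced by polarization, and one must either symmetrize $f_\ell$ at the cost of an $\ell!/\ell^\ell$-type factor or argue that the relevant $\limsup$ is unaffected. The pure block-matrix estimates are routine given \eqref{eq:tensa_prod} and Proposition \ref{prop:inj-proj}.
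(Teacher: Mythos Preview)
Your approach for all three inequalities matches the paper's exactly: the block injection/projection trick $\iota^s_{ii},\pi^s_{ii}$ for $\mu_1\le K_1\mu_m$ (the paper writes $\sup_{\|Z\|_s=1}\|Z^{\otimes\ell}f_\ell\|_s \le \|\pi^s_{ii}\|\,\|\iota^s_{ii}\|^\ell \sup_{\|W\|_{sm}=1}\|W^{\odot_s\ell}f_\ell\|_{sm}$ and then takes $\ell$-th roots and $\min_i$); the block expansion via \eqref{eq:tensa_prod} for $K_1\mu_m \le mK_1K_2\limsup\sqrt[\ell]{\|f_\ell\|}$ (the paper gets $\sup_{\|Z\|_{sm}=1}\|Z^{\odot_s\ell}f_\ell\|_{sm} \le m^{\ell+1} K_2^\ell \sup_{i,j}\|\iota^s_{ij}\|\,\|f_\ell\|$); and the polarization estimate, which the paper cites as \cite[Theorem~2.2]{Mu}, for the leftmost inequality together with Stirling's formula.

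Your concern about non-symmetry in the leftmost step is legitimate, but your proposed workarounds do not resolve it. The norm of $f_\ell$ and of its symmetrization need \emph{not} be comparable: if $f_\ell$ is antisymmetric in two of its arguments then $f_\ell(Z,\ldots,Z)\equiv 0$ while $\|f_\ell\|$ can be positive, so neither ``differ by a constant'' nor ``the $\limsup$ is unaffected'' is true in general (for instance, with $s=1$, $\vecspace{V}=\mathbb{C}^2$, $\vecspace{W}=\mathbb{C}$, $Y=0$, and $f_\ell(z^1,\ldots,z^\ell)=(z^1_1 z^2_2-z^1_2 z^2_1)z^3_1\cdots z^\ell_1$, one gets $\mu_1=0$ but $\limsup\sqrt[\ell]{\|f_\ell\|}\ge 1$). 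The paper does not address this either --- it simply invokes the cited estimate without comment on symmetry --- so on this point your argument is exactly as complete as the paper's, and the subtlety you flagged is real rather than merely a bookkeeping nuisance.
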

Notice that we also have estimates for $K_1$ and $K_2$ from the
proof of Proposition  \ref{prop:inj-proj} via the constants in
\eqref{eq:dirsums-norms} and \eqref{eq:simprod-norms} which
determine the admissible system of norms on
$\mat{\vecspace{V}}{n}$ and $\mat{\vecspace{W}}{n}$,
$n=1,2,\ldots$:
$$K_1\le C_2(sm)C_1^\prime(s,sm-s),\quad K_2\le
C_1(s,sm-s)C_2(sm),$$ and
 that in the case where $\vecspace{V}$ and $\vecspace{W}$
are operator spaces one has $K_1=K_2=1$.
\begin{proof}[Proof of
Proposition \ref{prop:mu_m-vs-coef-norms}] The leftmost inequality
in \eqref{eq:mu_norm-vs-coef-norms} follows from the estimate
$$\|f_\ell\|=\sup_{\|Z_1\|_s=\cdots=\|Z_\ell\|_s=1}\|(Z_1\otimes\cdots\otimes Z_\ell)f_\ell\|_s\le
\frac{\ell^\ell}{\ell!}\sup_{\|Z\|_{s}=1}\|Z^{\otimes\ell}f_\ell\|_s$$
(see \cite[Theorem 2.2]{Mu}) by taking the $\ell$-th root of both
parts, then $\limsup_{\ell\to\infty}$, and then applying the
Stirling formula. The second inequality in
\eqref{eq:mu_norm-vs-coef-norms} follows from the estimate
$$\sup_{\|Z\|_{s}=1}\|Z^{\otimes\ell}f_\ell\|_s=
\sup_{\|Z\|_{s}=1}\|\pi^s_{ii}\left((\iota^s_{ii}Z)^{\odot_s\ell}f_\ell\right)\|_s
\le\|\pi^s_{ii}\|\|\iota^s_{ii}\|^\ell\sup_{\|W\|_{sm}=1}\|W^{\odot_s\ell}f_\ell\|_{sm}
$$
by taking the $\ell$-th root of both parts, then
$\limsup_{\ell\to\infty}$, and then $\min_{1\le i\le m}$. The last
inequality in \eqref{eq:mu_norm-vs-coef-norms} follows from the
estimate
\begin{multline*}
\sup_{\|Z\|_{sm}=1}\|Z^{\odot_s\ell}f_\ell\|_{sm}=\sup_{\|Z\|_{sm}=1}\Big\|\sum_{1\le
i,j\le m}\iota^s_{ij}(Z^{\odot_s\ell}f_\ell)_{ij}\Big\|_{sm}\\
\le\sum_{1\le
i,j\le m}\|\iota^s_{ij}\|\sup_{\|Z\|_{sm}=1}\|(Z^{\odot_s\ell}f_\ell)_{ij}\|_{s}\\
=\sum_{1\le i,k_1,\ldots,k_{\ell-1},j\le
m}\|\iota^s_{ij}\|\sup_{\|Z\|_{sm}=1}\|(\pi^s_{ik_1}Z\otimes\pi^s_{k_1k_2}Z\otimes\cdots\otimes
\pi^s_{k_{\ell-2}k_{\ell-1}}Z\otimes \pi^s_{k_{\ell-1}j}Z) f_\ell\|_{s}\\
\le\sum_{1\le i,k_1,\ldots,k_{\ell-1},j\le
m}\|\iota^s_{ij}\|\|\pi^s_{ik_1}\|\|\pi^s_{k_1k_2}\|\cdots\|\pi^s_{k_{\ell-2}k_{\ell-1}}\|\|\pi^s_{k_{\ell-1}j}\|
\|f_\ell\|_{s}\\
\le m^{\ell+1}K_2^\ell\sup_{1\le i,j\le
m}\|\iota^s_{ij}\|\,\|f_\ell\|
\end{multline*}
by taking the $\ell$-th root, then $\limsup_{\ell\to\infty}$, and
then multiplying by $K_1$.
\end{proof}

We recall that $\Upsilon_{\rm nc}\subseteq\ncspace{\vecspace{V}}$
denotes the convergence set for the series
\eqref{eq:power-series}, and $f$ denote the sum of the series.
\begin{thm}\label{thm:f-series} 1.  The set \index{$\Upsilon_{\rm nc,
norm}$}
\begin{equation}\label{eq:conv-set-int-norm}
\Upsilon_{\rm nc,
norm}=\coprod_{m=1}^\infty\Big\{X\in\mat{\vecspace{V}}{sm}\colon\rho_{\rm
norm}\Big(X-\bigoplus_{\alpha=1}^mY\Big)>1\Big\}
\end{equation}
is contained in $\Upsilon_{\rm nc, fin}$ and is the interior of
$\Upsilon_{\rm nc}$ in the norm topology on
$\ncspace{\vecspace{V}}$, i.e., in the topology on
$\ncspace{\vecspace{V}}$ defined in the paragraph next to Remark
\ref{rem:polylin}.

2. For each $m$, ${(\Upsilon_{\rm nc, norm})}_{sm}$ is nonempty if
and only if it contains
 $\bigoplus_{\alpha=1}^mY$. In this case,
\begin{itemize}
    \item $\rho_{\rm norm}(0_{sm\times sm})=\infty$ and $\rho_{\rm
 norm}(Z)>0$ for every $Z\in\mat{\vecspace{V}}{sm}$,
    \item $(\Upsilon_{\rm nc, norm})_{sm}$ is
 a complete circular set about $\bigoplus_{\alpha=1}^mY$, and
    \item  $f|_{(\Upsilon_{\rm
nc, norm})_{sm}}$ is analytic.
\end{itemize}

3. For every $m\in\mathbb{N}$ and $X\in(\Upsilon_{\rm nc, norm})_{sm}$ there exists an open complete circular set
$\Upsilon_{{\rm norm}, X}$ about $\bigoplus_{\alpha=1}^mY$ which contains $X$ and such that
\begin{equation}\label{eq:normal-f-conv}
\sum_{\ell=0}^\infty\sup_{W\in\Upsilon_{{\rm norm},
X}}\Big\|\Big(W-\bigoplus_{\alpha=1}^mY\Big)^{\odot_s\ell}
f_\ell\Big\|_{sm}<\infty.
\end{equation}

4. For any open nc set $\Gamma\subseteq\Upsilon_{\rm nc, norm}$, $f|_\Gamma$ is an analytic nc function. Let
\begin{equation}\label{eq:ups-nc-norm-an}
\Upsilon_{\rm nc, norm-an}:=\{X\in\Upsilon_{\rm nc, norm}\colon\exists\Gamma,\ {\rm an\ open\ nc\ set,\ such\
that\ } X\in\Gamma\subseteq\Upsilon_{\rm nc, norm}\}.
\end{equation}
\index{$\Upsilon_{\rm nc, norm-an}$}Then $\Upsilon_{\rm nc, norm-an}$ is nonempty if and only if there
exists a nonempty open nc set $U\subseteq\ncspace{\vecspace{V}}$
such that $\inf\limits_{Z\in U}\rho(Z)>0$. We have
$Y\in\Upsilon_{\rm nc, norm-an}$ if and only if there exists an
open nc neighborhood of $Y$ such that $\inf\limits_{X\in
U}\rho(X-\bigoplus_{\alpha=1}^{m_X}Y)>0$; here $m_X$ is the size
of a block matrix $X$ with $s\times s$ blocks.

5. The series \eqref{eq:power-series} converges uniformly on every ball $B(\bigoplus_{\alpha=1}^{m}Y,\delta)$
with $\delta<\rho_m$, moreover
\begin{equation}\label{eq:normal-f-conv-balls}
\sum_{\ell=0}^\infty\sup_{X\in
B(\bigoplus_{\alpha=1}^{m}Y,\delta)}\Big\|\Big(X-\bigoplus_{\alpha=1}^{m}Y\Big)^{\odot_s\ell}
f_\ell\Big\|_{sm}<\infty.
\end{equation}
The series \eqref{eq:power-series} fails to converge uniformly on every ball
$B(\bigoplus_{\alpha=1}^{m}Y,\delta)$ with $\delta>\rho_m$. The set $(\Upsilon_{\rm nc, norm})_{sm}$ is nonempty
if and only if $\rho_m>0$ if and only if $\inf\{ \rho(W)\colon W\in\mat{\vecspace{V}}{sm}, \|W\|_{sm}=1\}>0$.
\end{thm}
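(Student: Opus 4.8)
The plan is to run the proof of Theorem~\ref{thm:g-series} with the finitely open topology replaced throughout by the norm topology, systematically using the theory of F-power series (power series of bounded homogeneous polynomials between Banach spaces) from \cite[Chapter 26]{HiPh} (see also \cite{Mu}). The point is that, with $m$ fixed, each $f_\ell^{(m,\dots,m)}$ is a bounded $\ell$-linear map, so $X\mapsto(X-\bigoplus_{\alpha=1}^mY)^{\odot_s\ell}f_\ell$ is a bounded homogeneous polynomial of degree $\ell$ on the Banach space $\mat{\vecspace{V}}{sm}$; hence \eqref{eq:power-series} is an F-power series centered at $\bigoplus_{\alpha=1}^mY$, whose $\ell$-th term has norm $\sup_{\|Z\|_{sm}=1}\|Z^{\odot_s\ell}f_\ell\|_{sm}$, so that $\rho_m=\mu_m^{-1}$ of \eqref{eq:mu-rho} is exactly its Cauchy--Hadamard radius of convergence.

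Parts 1--3 then go as in Theorem~\ref{thm:g-series}. The function $\rho_{\rm norm}$ of \eqref{eq:z-rho-norm} is norm-lower-semicontinuous and positive-homogeneous of degree $-1$, so $\Upsilon_{\rm nc,norm}$ is norm-open; since $\rho_{\rm norm}\le\rho$, the Cauchy--Hadamard inclusions \eqref{eq:conv_set_incl} place it inside $\Upsilon_{\rm nc}$, and the same positive-homogeneity argument as in Theorem~\ref{thm:g-series} (if $\rho_{\rm norm}(Z)\le1$ then $\rho_{\rm norm}((1+\epsilon)Z)<1$, so $(1+\epsilon)Z$ --- hence $Z$ --- fails to be a norm-interior point of $\Upsilon_{\rm nc}$) identifies it with the norm-interior of $\Upsilon_{\rm nc}$; as every norm-open set is finitely open one has $\rho_{\rm norm}\le\rho_{\rm fin}$, whence $\Upsilon_{\rm nc,norm}\subseteq\Upsilon_{\rm nc,fin}$. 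For Part 2, the interior of the convergence set of an F-power series, when nonempty, contains the center and is complete circular about it \cite[Section 26.5]{HiPh}; positive-homogeneity then yields $\rho_{\rm norm}(0_{sm\times sm})=\infty$ and $\rho_{\rm norm}(Z)>0$ for all $Z$, while analyticity of $f$ on $(\Upsilon_{\rm nc,norm})_{sm}$ follows because the series there is locally normally convergent (so $f$ is locally bounded) and $G$-differentiable by termwise differentiation \cite[Section 26.5]{HiPh}. Part 3 is \cite[Theorem 26.3.8]{HiPh}, which supplies the open complete circular set and the normal convergence estimate \eqref{eq:normal-f-conv}.

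Part 4 is transcribed from Part 4 of Theorem~\ref{thm:g-series}. Given $SX=\widetilde XS$ with $X\in(\Upsilon_{\rm nc})_{sm}$, $\widetilde X\in(\Upsilon_{\rm nc})_{s\widetilde m}$, combining Lemma~\ref{lem:intertw-poly} with the block-triangular power identity \eqref{eq:triangle-power} of Lemma~\ref{lem:triangle-poly} and letting $N\to\infty$ shows that $Sf(X)=f(\widetilde X)S$ is equivalent to the general term of \eqref{eq:power-series}, evaluated at the block matrix with diagonal $\widetilde X,X$ and corner $S\bigoplus_{\alpha=1}^mY-\bigoplus_{\alpha=1}^{\widetilde m}YS$, tending to $0$; for $X,\widetilde X$ in an open nc set $\Gamma\subseteq\Upsilon_{\rm nc,norm}$, scaling that corner by a sufficiently small $\epsilon>0$ keeps the block matrix in $\Gamma$, so \eqref{eq:power-series} converges there and the general term indeed vanishes, and $f|_\Gamma$ is a $G$-differentiable nc function by Part 2. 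The characterizations of $\Upsilon_{\rm nc, norm-an}$ of \eqref{eq:ups-nc-norm-an} and of when $Y$ belongs to it follow, as in Theorem~\ref{thm:g-series}, from applying the positive-homogeneity of $\rho$ to rescaled open nc neighborhoods.

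Part 5 contains the new content. The uniform (in fact normal) convergence on $B(\bigoplus_{\alpha=1}^mY,\delta)$ for $\delta<\rho_m$ with the estimate \eqref{eq:normal-f-conv-balls}, and the failure of uniform convergence for $\delta>\rho_m$, are the Cauchy--Hadamard theorem and the Cauchy estimates applied to the F-power series in dimension $sm$. For the two positivity equivalences: $\rho_m\le\inf\{\rho(W):W\in\mat{\vecspace{V}}{sm},\ \|W\|_{sm}=1\}$ since $\mu_m\ge\mu(W)$ for every unit $W$; conversely, writing $W=\|W\|_{sm}\,(W/\|W\|_{sm})$ and using $\rho(\lambda Z)=|\lambda|^{-1}\rho(Z)$ gives $\inf_{0<\|W\|_{sm}<\delta}\rho(W)=\delta^{-1}\inf_{\|W\|_{sm}=1}\rho(W)$, so $\rho_{\rm norm}(0_{sm\times sm})=\liminf_{W\to0}\rho(W)$ is $+\infty$ precisely when $\inf_{\|W\|_{sm}=1}\rho(W)>0$; combined with Part 2 this yields $(\Upsilon_{\rm nc,norm})_{sm}\neq\emptyset\iff\inf_{\|W\|_{sm}=1}\rho(W)>0$. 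Finally, if $\rho_m>0$ then $B(\bigoplus_{\alpha=1}^mY,\delta)$ with $0<\delta<\rho_m$ is a norm-open subset of $\Upsilon_{\rm nc}$, hence of $\Upsilon_{\rm nc,norm}$, which is therefore nonempty; and if $(\Upsilon_{\rm nc,norm})_{sm}\neq\emptyset$, then, being the norm-interior of the convergence set, it is such that the F-power series converges uniformly on a ball about $\bigoplus_{\alpha=1}^mY$ \cite[Section 26.5]{HiPh}, forcing $\rho_m>0$. The main obstacle is not a single hard estimate but organizing this correctly: invoking the Hille--Phillips F-power-series results in the right fixed dimension $sm$ and arranging the positive-homogeneity identities for $\rho$ near the origin so that the two positivity equivalences of Part 5 come out cleanly (note that $\rho_m$ is in general strictly smaller than $\inf_{\|W\|_{sm}=1}\rho(W)$, so only positivity is equivalent); the nc-function content of Part 4, though conceptually the most substantive, is a routine copy of the corresponding step for Theorem~\ref{thm:g-series}.
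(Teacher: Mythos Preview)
Your proposal is correct and follows essentially the same approach as the paper's own proof: both run the argument of Theorem~\ref{thm:g-series} with the norm topology in place of the finitely open topology, invoking the Hille--Phillips theory of F-power series at each fixed matrix size $sm$, and both handle Part~4 by reducing to Theorem~\ref{thm:g-series} (since open implies finitely open) and Part~5 by a Cauchy--Hadamard argument on $\rho_m$ together with positive-homogeneity of $\rho$ near the origin. Two small points: the relevant Hille--Phillips references in Parts~2 and~3 are in Section~26.6 (Theorems 26.6.1, 26.6.4, 26.6.5), which treats F-power series, rather than 26.5 or 26.3.8, which were the G-power-series citations used for Theorem~\ref{thm:g-series}; and in Part~4 you should conclude that $f|_\Gamma$ is an \emph{analytic} nc function (not merely G-differentiable), which is exactly what your Part~2 provides.
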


\begin{proof}
1. We first observe that the finitely open topology on
$\ncspace{\vecspace{V}}$ is stronger than the norm topology, hence
 $\rho_{\rm norm}(Z)\le\rho_{\rm fin}(Z)$ for any $Z$, and
 $\Upsilon_{\rm nc,
norm}\subseteq\Upsilon_{\rm nc,fin}$. Next we observe that the
function $\rho_{\rm norm}(\cdot)$ is lower semicontinuous in the
norm topology and recall that $\rho_{\rm norm}(\cdot)$ is also
positive-homogeneous of degree $-1$. The rest of the proof of part
1 is analogous to the proof of part 1 of Theorem
\ref{thm:g-series}.

2. Let $m\in\mathbb{N}$ be fixed. If $\bigoplus_{\alpha=1}^mY\in(\Upsilon_{\rm nc, norm})_{sm}$, then clearly
$(\Upsilon_{\rm nc, norm})_{sm}\neq\emptyset$. Conversely, if $(\Upsilon_{\rm nc, norm})_{sm}$, which is the
interior of $(\Upsilon_{\rm nc})_{sm}$ in the norm topology on $\mat{\vecspace{V}}{sm}$, is nonempty, then by
\cite[Theorem 26.6.1]{HiPh} $\bigoplus_{\alpha=1}^mY$ is an interior point of $(\Upsilon_{\rm nc})_{sm}$ in this
topology and $(\Upsilon_{\rm nc, norm})_{sm}$ is a complete circular set about $\bigoplus_{\alpha=1}^mY$. Then
$\rho_{\rm norm}(0_{sm\times sm})>1$. Moreover, by the positive-homogeneity of degree $-1$ and lower
semicontinuiuty of $\rho_{\rm norm}(\cdot)$ we have that $\rho_{\rm norm}(Z)>0$ for every
$Z\in\mat{\vecspace{V}}{sm}$ and that $\rho_{\rm norm}(0_{sm\times sm})=\infty$. The analyticity of
$f|(\Upsilon_{\rm norm})_{sm}$ follows from \cite[Theorem 26.6.4]{HiPh}.

3. This statement for any fixed $m$ and $X$ follows from \cite[Theorem 26.6.5]{HiPh} (cf. Theorem
\ref{thm:f-queen}).

4. Since an open set is finitely open, $f|\Gamma$ is a nc function by Theorem \ref{thm:g-series}, part 4. It is
an analytic nc function by part 2 of the present theorem. The remaining statements can be proved analogously to
the proof of part 4 of Theorem \ref{thm:g-series}.

5. Let $\delta<\rho_{m}$. Then for any $\epsilon>0$ there exists
$L>0$ such that $$\sup_{\|W\|_{sm}=1}\|
W^{\odot_s\ell}f_\ell\|_{sm}<(\mu_m+\epsilon)^\ell$$ for all $\ell
> L$. Choosing $\epsilon<\delta^{-1}-\mu_m$, we obtain that,
for all $\ell>L$,
$$\sup_{W\in B(0_{sm\times sm},\delta)}\|
W^{\odot_s\ell}f_\ell\|_{sm}<\delta^\ell(\mu_m+\epsilon)^\ell.$$
Since $\delta(\mu_m+\epsilon)<1$, the series
\eqref{eq:power-series} converges absolutely and uniformly on the
ball $ B(0_{sm\times sm},\delta)$, and
\begin{multline*}
\sum_{\ell=0}^\infty\sup_{W\in B(0_{sm\times
sm},\delta)}\|W^{\odot_s\ell}
f_\ell\|_{sm}\\
<\sum_{\ell=0}^L\sup_{W\in B(0_{sm\times
sm},\delta)}\|W^{\odot_s\ell} f_\ell\|_{sm}+
\sum_{\ell=L+1}^\infty\delta^\ell(\mu_m+\epsilon)^\ell<\infty,
\end{multline*}
i.e., \eqref{eq:normal-f-conv-balls} holds by setting $X=W+\bigoplus_{\alpha=1}^mY$.

Now let $\delta>\rho_m$ be fixed. Choose any $\epsilon$ with
$0<\epsilon<\mu_m-\delta^{-1}$. Then for any $L>0$ there exists
$\ell>L$ such that
$$\sup_{\|W\|_{sm}=1}\|
W^{\odot_s\ell}f_\ell\|_{sm}>(\mu_m-\epsilon)^\ell.$$ For any such
$\ell$ there exists $Z\in\mat{\vecspace{V}}{sm}$ with
$\|Z\|_{sm}=(\mu_m-\epsilon)^{-1}<\delta$ such that
$$\|Z^{\odot_s\ell}f_\ell\|_{sm}>(\mu_m-\epsilon)^\ell\|Z\|_{sm}^\ell=1.$$
Hence the sequence $Z^{\odot_s\ell}f_\ell$, $\ell=1,2,\ldots$,
does not converge to $0_{sm\times sm}$ uniformly on the ball
$B(0_{sm\times sm},\delta)$. We conclude that the series
\eqref{eq:power-series} does not converge uniformly on
 $B(\bigoplus_{\alpha=1}^mY,\delta)$.

If $(\Upsilon_{\rm nc,norm})_{sm}\neq\emptyset$, i.e., the interior of $\Upsilon_{\rm nc}$ in the norm topology
is nonempty, then by \cite[Theorem 26.6.2]{HiPh} the series \eqref{eq:power-series} converges uniformly on
$B(\bigoplus_{\alpha=1}^mY,\delta)$ for sufficiently small $\delta$, hence $\rho_m>0$. Since $\rho_m\le\rho(W)$
for every $W\in\mat{\vecspace{V}}{sm}$ with $\|W\|_{sm}=1$, we have that $\rho_m>0$ implies $\inf\{ \rho(W)\colon
W\in\mat{\vecspace{V}}{sm}, \|W\|_{sm}=1\}>0$. Finally, if $\alpha=\inf\{ \rho(W)\colon
W\in\mat{\vecspace{V}}{sm}, \|W\|_{sm}=1\}>0$, then by positive-homogeneity of degree $-1$ of both $\rho(\cdot)$
and $\rho_{\rm norm}(\cdot)$ we have that $\rho_{\rm norm}(Z)\ge\alpha$ for any $Z\in\mat{\vecspace{V}}{sm}$ with
$\|Z\|_{sm}=1$ and that $\rho_{\rm norm}(0_{sm\times sm})=\infty$. The latter implies by part 2 that
$(\Upsilon_{\rm nc,norm})_{sm}\neq\emptyset$. This completes the proof of the last statement of the theorem.
\end{proof}

\begin{rem}\label{rem:nc-conv-open-nbhd} Since the finitely open and the norm topologies
coincide for finite-dimensional spaces, Examples \ref{ex:nc-conv-set} and \ref{ex:no-nc-conv-set} also show that
$\Upsilon_{\rm nc, norm}$ is not necessarily a nc set, and that a (norm-open) nc neighborhood of $Y$, which is
contained in $\Upsilon_{\rm nc, norm}$, may or may not exist.
\end{rem}

\label{RHOS}

\section{Uniformly-open topology}\label{subsec:conv-unif-open}
Let  $\vecspace{V}$ and $\vecspace{W}$ be operator spaces over
$\mathbb{C}$, and let
$f_\ell\colon\left(\mat{\vecspace{V}}{s}\right)^\ell\to\mat{\vecspace{W}}{s}$,
$\ell=0,1,\ldots$, be a sequence of completely bounded
$\ell$-linear mappings. We introduce the quantities
\index{$\rho_{\rm unif}(Z)$}
\begin{equation}\label{eq:z-rho-unif}
\rho_{\rm unif}(Z)=\liminf_{W\overset{\rm unif}{\to} Z}\rho(W)
\end{equation}
(cf. \eqref{eq:z-rho-fin} and \eqref{eq:z-rho-norm}), where
$W\overset{\rm unif}{\to} Z$ \index{$W\overset{\rm unif}{\to} Z$}
means that the lower limit is taken in the uniformly-open topology
on $\ncspace{\vecspace{V}}$,
\begin{equation}\label{eq:z-rho-normal-unif}
\begin{split}
\mu_{\rm unif-normal}(Z) &=\lim_{\delta\to
0}\limsup_{\ell\to\infty}\sqrt[\ell]{\sup_{W\in B_{\rm
nc}(Z,\delta)}\|W^{\odot_s\ell}f_\ell\|},\\
 \rho_{\rm
unif-normal}(Z) &=\mu_{\rm unif-normal}(Z)^{-1},
\end{split}
\end{equation}
\index{$\mu_{\rm unif-normal}(Z)$}\index{$\rho_{\rm
unif-normal}(Z)$}where
$\|W^{\odot_s\ell}f_\ell\|=\|W^{\odot_s\ell}f_\ell\|_{sm}$ for
$W\in\mat{\vecspace{V}}{sm}$, and \index{$\mu_{\rm cb}$}
\index{$\rho_{\rm cb}$}
\begin{equation}\label{eq:mu-rho-cb}
\mu_{\rm
cb}=\limsup_{\ell\to\infty}\sqrt[\ell]{\|f_\ell\|_{\mathcal{L}^\ell_{\rm
cb}}},\qquad \rho_{\rm cb}=\mu_{\rm cb}^{-1}.
\end{equation}
We notice that the limit $\lim\limits_{\delta\to 0}$ in
\eqref{eq:z-rho-normal-unif} exists as a limit of a monotone
function (decreasing as $\delta\downarrow 0$) bounded from below
by $0$. Analogously to $\rho_{\rm fin}(\cdot)$ and $\rho_{\rm
norm}(\cdot)$, the functions $\rho_{\rm unif}(\cdot)$ and
$\rho_{\rm unif-normal}(\cdot)$ are positive-homogeneous of degree
$-1$, cf. Remark \ref{rem:unif-mult-cont}.

We will need the following lemma \cite[Lemma 26.5.1]{HiPh}; we
provide its proof for completeness.
\begin{lem}\label{lem:HiPh_lemma}
Let $\vecspace{V}$ be a vector space over $\mathbb{C}$,
$\vecspace{W}$ a Banach space over $\mathbb{C}$, and let
$\Gamma\subseteq\vecspace{V}$ be a complete circular set about
$0$. Suppose that $\omega\colon\vecspace{V}^k\to\vecspace{W}$ is a
$k$-linear mapping and that $X\in\vecspace{V}$. Then
 $$\sup_{W\in \bigcup\limits_{\zeta\in\overline{\mathbb{D}}}(\zeta X+\Gamma)}\|\omega(W,\ldots,W)\|
 =\sup_{W\in X+\Gamma}\|\omega(W,\ldots,W)\|,$$
where $\overline{\mathbb{D}}=\{\zeta\in\mathbb{C}\colon|\zeta|\le
1\}$ is the closed unit disk, $X+\Gamma:=\{X+Z\colon Z\in\Gamma\}$
is a complete circular set about $X$ and
$\bigcup\limits_{\zeta\in\overline{\mathbb{D}}}(\zeta X+\Gamma)$
is a complete circular set about $0$.
\end{lem}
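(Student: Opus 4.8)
<br>

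The plan is to prove the identity by reducing to the classical single-variable maximum modulus principle. First I would observe that both supremum expressions are well-defined (possibly infinite), and that the inequality ``$\ge$'' is immediate since $X+\Gamma\subseteq\bigcup_{\zeta\in\overline{\mathbb{D}}}(\zeta X+\Gamma)$ (take $\zeta=0$). So the entire content is the reverse inequality, namely that enlarging the set over which we take the supremum by translating $\Gamma$ along the segment $\{\zeta X\colon|\zeta|\le 1\}$ does not increase the supremum of $\|\omega(W,\ldots,W)\|$.

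The key idea is to fix $\zeta_0\in\overline{\mathbb{D}}$ and $Z_0\in\Gamma$, and consider $W_0=\zeta_0 X+Z_0$; the goal is to show $\|\omega(W_0,\ldots,W_0)\|\le\sup_{W\in X+\Gamma}\|\omega(W,\ldots,W)\|$. To do this I would introduce the auxiliary function $g\colon\overline{\mathbb{D}}\to\vecspace{W}$ defined by $g(\lambda)=\omega(\lambda X+Z_0,\ldots,\lambda X+Z_0)$. Expanding by multilinearity, $g$ is a $\vecspace{W}$-valued polynomial in $\lambda$ of degree at most $k$, hence analytic on a neighborhood of $\overline{\mathbb{D}}$. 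For every $\lambda$ with $|\lambda|=1$, since $\Gamma$ is complete circular about $0$ we have $\lambda^{-1}Z_0\in\Gamma$ (indeed $\Gamma$ being complete circular about $0$ means $tZ\in\Gamma$ for all $Z\in\Gamma$, $|t|\le 1$; applied with $Z=Z_0$ and $t=\lambda^{-1}$, noting $|\lambda^{-1}|=1$), so $\lambda X+Z_0=\lambda(X+\lambda^{-1}Z_0)\in\lambda(X+\Gamma)$, and by $k$-homogeneity $g(\lambda)=\lambda^k\,\omega(X+\lambda^{-1}Z_0,\ldots,X+\lambda^{-1}Z_0)$, whence $\|g(\lambda)\|=\|\omega(X+\lambda^{-1}Z_0,\ldots,X+\lambda^{-1}Z_0)\|\le\sup_{W\in X+\Gamma}\|\omega(W,\ldots,W)\|$. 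Thus $\|g\|$ is bounded on the unit circle by the right-hand side of the claimed identity.

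Next I would apply the maximum modulus principle for vector-valued analytic functions: for a $\vecspace{W}$-valued function analytic on a neighborhood of $\overline{\mathbb{D}}$, $\sup_{|\lambda|\le 1}\|g(\lambda)\|=\sup_{|\lambda|=1}\|g(\lambda)\|$ (this follows from the scalar maximum modulus principle applied to $\langle g(\cdot),\phi\rangle$ for each functional $\phi$ in the unit ball of $\vecspace{W}^*$, together with Hahn--Banach; or directly from the mean-value property). In particular $\|g(\zeta_0)\|\le\sup_{|\lambda|=1}\|g(\lambda)\|\le\sup_{W\in X+\Gamma}\|\omega(W,\ldots,W)\|$, which is exactly $\|\omega(W_0,\ldots,W_0)\|\le\sup_{W\in X+\Gamma}\|\omega(W,\ldots,W)\|$. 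Taking the supremum over all $\zeta_0\in\overline{\mathbb{D}}$ and $Z_0\in\Gamma$ gives the reverse inequality, completing the proof. Finally I would remark on the side claims that $X+\Gamma$ is complete circular about $X$ (immediate from the definition of complete circular about $X$ together with $\Gamma$ being complete circular about $0$) and that $\bigcup_{\zeta\in\overline{\mathbb{D}}}(\zeta X+\Gamma)$ is complete circular about $0$, which follows since for $W=\zeta X+Z$ in the union and $|t|\le 1$ one has $tW=(t\zeta)X+tZ$ with $|t\zeta|\le 1$ and $tZ\in\Gamma$.

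The main obstacle is essentially bookkeeping rather than depth: making sure the complete-circularity of $\Gamma$ about $0$ is invoked correctly (it gives $\lambda^{-1}Z_0\in\Gamma$ only because $|\lambda^{-1}|=1\le 1$, so the argument genuinely uses $|\lambda|=1$ and would fail for interior $\lambda$), and citing the vector-valued maximum modulus principle in the form needed. Since this lemma is quoted verbatim from Hille--Phillips, I expect the proof there to follow exactly this route, and no real surprises should arise.
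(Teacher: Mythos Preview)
Your proof is correct and follows essentially the same route as the paper's: fix $Z_0\in\Gamma$, observe that $\lambda\mapsto\omega(\lambda X+Z_0,\ldots,\lambda X+Z_0)$ is a $\vecspace{W}$-valued polynomial, apply the maximum modulus principle to push $\lambda$ to the unit circle, and then use $k$-homogeneity together with the complete circularity of $\Gamma$ about $0$ to land in $X+\Gamma$. One trivial slip: the inclusion $X+\Gamma\subseteq\bigcup_{\zeta\in\overline{\mathbb{D}}}(\zeta X+\Gamma)$ comes from taking $\zeta=1$, not $\zeta=0$.
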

\begin{proof}
Fix an arbitrary $Z_0\in\Gamma$. Consider the function
$$\phi\colon\overline{\mathbb{D}}\to\vecspace{W},\quad
\phi\colon \zeta\mapsto\omega(\zeta X+Z_0,\ldots,\zeta X+Z_0).$$
 Clearly, $\phi$ is a polynomial and hence $\phi$ is
analytic. By the maximum principle, there exists $\zeta_0$ with
$|\zeta_0|=1$ such that
$\|\phi(\zeta_0)\|=\max_{\zeta\in\overline{\mathbb{D}}}\|\phi(\zeta)\|$.
By the homogeneity (of degree $k$) of $\omega$,
\begin{multline*}
\|\omega(\zeta_0 X+Z_0,\ldots,\zeta_0 X+Z_0)\|=\|\omega(
X+\zeta_0^{-1}Z_0,\ldots,
X+\zeta_0^{-1}Z_0)\|\\
\le\sup_{Z\in\Gamma}\|\omega(X+Z,\ldots,X+Z)\|
\end{multline*}
since $\Gamma$ is completely circular about $0$. This proves the
lemma.
\end{proof}

\begin{thm}\label{thm:king-series}
1. The set \index{$\Upsilon_{\rm nc, unif}$}
\begin{equation}\label{eq:conv-set-int-unif}
\Upsilon_{\rm nc,
unif}=\coprod_{m=1}^\infty\Big\{X\in\mat{\vecspace{V}}{sm}\colon\rho_{\rm
unif}\Big(X-\bigoplus_{\alpha=1}^mY\Big)>1\Big\}
\end{equation}
is contained in $\Upsilon_{\rm nc, norm}\subseteq\Upsilon_{\rm nc, fin}$ and is the interior of $\Upsilon_{\rm
nc}$ in the uniformly-open topology on $\ncspace{\vecspace{V}}$.

2. The set \begin{equation}\label{eq:conv-set-normal-unif}
\Upsilon_{\rm nc,
unif-normal}=\coprod_{m=1}^\infty\Big\{X\in\mat{\vecspace{V}}{sm}\colon\rho_{\rm
unif-normal}\Big(X-\bigoplus_{\alpha=1}^mY\Big)>1\Big\}
\end{equation} \index{$\Upsilon_{\rm nc,
unif-normal}$}is a uniformly-open subset of $\Upsilon_{\rm nc,
unif}\subseteq\Upsilon_{\rm nc, norm}\subseteq\Upsilon_{\rm nc,
fin}$ and
\begin{align}
\nonumber \Upsilon_{\rm nc, unif-normal} &=\Big\{X\in\Upsilon_{\rm
nc,
unif}\colon \exists\delta>0\\
 &  \label{eq:ups-unif-bdd} {\rm such\ that\ } \sup_{\ell\in\mathbb{N}}\sup_{W\in
B_{\rm nc}(X,\delta)}
\Big\|\Big(W-\bigoplus_{\alpha=1}^{m_W}Y\Big)^{\odot_s\ell}f_\ell\Big\|_{sm_W}<\infty\Big\}\\
\nonumber &=\Big\{X\in\Upsilon_{\rm nc, unif}\colon
\exists\delta>0\\
& \label{eq:ups-unif-normal} {\rm such\ that\ }
\sum_{\ell=0}^\infty \sup_{W\in B_{\rm nc}(X,\delta)}
\Big\|\Big(W-\bigoplus_{\alpha=1}^{m_W}Y\Big)^{\odot_s\ell}f_\ell\Big\|_{sm_W}<\infty\Big\};
\end{align}
here $m_W$ \index{$m_W$} is the size of a block matrix $W$ with $s
\times s$ blocks. $\Upsilon_{\rm nc, unif-normal}$ is nonempty if
and only if
 it contains $Y$. In this
case,
\begin{itemize}
    \item $\rho_{\rm unif-normal}(0_{s\times
 s})=\infty$ and $\rho_{\rm unif-normal}(Z)>0$ for every
 $Z\in\ncspace{(\mat{\vecspace{V}}{s})}$,
    \item $\Upsilon_{\rm nc, unif-normal}$ is complete
 circular about $Y$ (i.e.,
$(\Upsilon_{\rm nc, unif-normal})_{sm}$ is
 a complete circular set about $\bigoplus_{\alpha=1}^mY$ for every
 $m\in\mathbb{N}$).
    \end{itemize}

3. For every $m\in\mathbb{N}$ and $X\in(\Upsilon_{\rm nc, unif-normal})_{sm}$ there exists a uniformly-open
complete circular set $\Upsilon_{{\rm nc}, X}$ about $\bigoplus_{\alpha=1}^mY$, which contains $X$ and such that
\begin{equation}\label{eq:normal-king-conv}
\sum_{\ell=0}^\infty \sup_{W\in \Upsilon_{{\rm nc}, X}}
\Big\|\Big(W-\bigoplus_{\alpha=1}^{m_W}Y\Big)^{\odot_s\ell}f_\ell\Big\|_{sm_W}<\infty.
\end{equation}

4. For any uniformly-open nc set $\Gamma \subseteq \Upsilon_{\rm nc, unif-normal}$, $f|_\Gamma$ is a uniformly
analytic nc function.

5. The series \eqref{eq:power-series} converges uniformly on every nc ball $B_{\rm nc}(Y,\delta)$ with
$\delta<\rho_{\rm cb}$, moreover
\begin{equation}\label{eq:normal-king-conv-balls}
\sum_{\ell=0}^\infty \sup_{W\in B_{\rm nc}(Y,\delta)}
\Big\|\Big(W-\bigoplus_{\alpha=1}^{m_W}Y\Big)^{\odot_s\ell}f_\ell\Big\|_{sm_W}<\infty.
\end{equation}
The series \eqref{eq:power-series} fails to converge uniformly on every nc ball $B_{\rm nc}(Y,\delta)$ with
$\delta>\rho_{\rm cb}$. The set $\Upsilon_{\rm nc, unif-normal}$ is nonempty if and only if $\rho_{\rm cb}>0$.
\end{thm}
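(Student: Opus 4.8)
The plan is to reduce all three claims to a single norm identity and then run a Cauchy--Hadamard argument that is uniform in the matrix size. The $f_\ell$ are $\ell$-linear mappings between the operator spaces $\mat{\vecspace{V}}{s}$ and $\mat{\vecspace{W}}{s}$, so Proposition \ref{prop:k-cb-norm}, applied with these operator spaces in place of $\vecspace{V}$ and $\vecspace{W}$, gives
\begin{equation*}
\|f_\ell\|_{\mathcal{L}^\ell_{\rm cb}}=\sup_{m\in\mathbb{N}}\ \sup_{W\in\mat{\vecspace{V}}{sm},\,\|W\|_{sm}=1}\|W^{\odot_s\ell}f_\ell\|_{sm},\qquad \ell=0,1,\ldots;
\end{equation*}
since $W\mapsto W^{\odot_s\ell}f_\ell$ is positive-homogeneous of degree $\ell$, this is equivalent to the estimate $\|W^{\odot_s\ell}f_\ell\|_{sm}\le\|f_\ell\|_{\mathcal{L}^\ell_{\rm cb}}\|W\|_{sm}^\ell$, valid for every $m$ and every $W\in\mat{\vecspace{V}}{sm}$. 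Recognizing that Proposition \ref{prop:k-cb-norm} applies here --- i.e.\ that the cb-norm of $f_\ell$ in \eqref{eq:mu-rho-cb} is computed over all amplifications and therefore controls the general term uniformly in $m$ --- is the conceptual crux; once it is in place, everything else is bookkeeping of exactly the kind already used for Theorems \ref{thm:f-series}(5) and \ref{thm:king-conv-nc}.

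For the first statement I would fix $\delta<\rho_{\rm cb}$ and pick $\epsilon>0$ with $(\mu_{\rm cb}+\epsilon)\delta<1$; by the definition of $\mu_{\rm cb}$ there is an $L$ with $\|f_\ell\|_{\mathcal{L}^\ell_{\rm cb}}<(\mu_{\rm cb}+\epsilon)^\ell$ for $\ell>L$. For any $m$ and any $W\in B_{\rm nc}(Y,\delta)_{sm}$, put $Z=W-\bigoplus_{\alpha=1}^mY$, so $\|Z\|_{sm}<\delta$; the estimate above gives $\|Z^{\odot_s\ell}f_\ell\|_{sm}<\bigl((\mu_{\rm cb}+\epsilon)\delta\bigr)^\ell$ for $\ell>L$, a geometric bound independent of $m$ and $W$, while the finitely many terms with $\ell\le L$ are each at most $\|f_\ell\|_{\mathcal{L}^\ell_{\rm cb}}\delta^\ell<\infty$ because the $f_\ell$ are completely bounded. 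Summing over $\ell$ yields \eqref{eq:normal-king-conv-balls}, hence the uniform convergence of \eqref{eq:power-series} on $B_{\rm nc}(Y,\delta)$.

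For the second statement I would fix $\delta>\rho_{\rm cb}$ and choose a real $K$ with $\delta^{-1}<K<\mu_{\rm cb}$ (possible whether $\mu_{\rm cb}$ is finite or $+\infty$). Then $\|f_\ell\|_{\mathcal{L}^\ell_{\rm cb}}>K^\ell$ for infinitely many $\ell$, and for each such $\ell$ the identity above furnishes an $m$ and a $W\in\mat{\vecspace{V}}{sm}$ with $\|W\|_{sm}=1$ and $\|W^{\odot_s\ell}f_\ell\|_{sm}>K^\ell$. Taking $Z=K^{-1}W$ (so $\|Z\|_{sm}=K^{-1}<\delta$) and $X=Z+\bigoplus_{\alpha=1}^mY\in B_{\rm nc}(Y,\delta)_{sm}$ gives $\|(X-\bigoplus_{\alpha=1}^mY)^{\odot_s\ell}f_\ell\|_{sm}>1$. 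Hence the supremum over $B_{\rm nc}(Y,\delta)$ of the norm of the $\ell$-th term of \eqref{eq:power-series} exceeds $1$ for infinitely many $\ell$, so it does not tend to $0$ uniformly there and the series cannot converge uniformly on $B_{\rm nc}(Y,\delta)$.

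Finally, for the equivalence: if $\rho_{\rm cb}>0$ I would choose $0<\delta<\rho_{\rm cb}$; the first statement gives $\sum_\ell\sup_{W\in B_{\rm nc}(Y,\delta)}\|(W-\bigoplus_{\alpha=1}^{m_W}Y)^{\odot_s\ell}f_\ell\|_{sm_W}<\infty$, so $B_{\rm nc}(Y,\delta)\subseteq\Upsilon_{\rm nc}$ and $Y$ lies in the uniformly-open interior of $\Upsilon_{\rm nc}$, i.e.\ $Y\in\Upsilon_{\rm nc,unif}$ by part 1; together with the finiteness just displayed this places $Y$ in the set \eqref{eq:ups-unif-normal}, so $\Upsilon_{\rm nc,unif-normal}\neq\emptyset$ by part 2. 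Conversely, if $\Upsilon_{\rm nc,unif-normal}\neq\emptyset$, then by part 2 it contains $Y$, hence by \eqref{eq:ups-unif-bdd} there are $\delta>0$ and $M<\infty$ with $\|(W-\bigoplus_{\alpha=1}^{m_W}Y)^{\odot_s\ell}f_\ell\|_{sm_W}\le M$ for all $\ell$ and all $W\in B_{\rm nc}(Y,\delta)$; applying this with $W=V+\bigoplus_{\alpha=1}^mY$ for $\|V\|_{sm}=\delta/2$ and using homogeneity gives $\|U^{\odot_s\ell}f_\ell\|_{sm}\le M(2/\delta)^\ell$ for every unit $U$, whence $\|f_\ell\|_{\mathcal{L}^\ell_{\rm cb}}\le M(2/\delta)^\ell$ by the identity above and $\mu_{\rm cb}\le 2/\delta<\infty$, i.e.\ $\rho_{\rm cb}\ge\delta/2>0$. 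Beyond routine estimates, the only point needing a little care is this last extraction of a geometric bound on the cb-norms from boundedness on a single nc ball, where one must again exploit that an nc ball probes all amplification sizes $m$.
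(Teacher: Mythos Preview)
Your proposal addresses only part 5 of the theorem; parts 1--4 are not touched. For part 5, your argument is correct and essentially the same as the paper's: both hinge on the identity from Proposition \ref{prop:k-cb-norm} (applied with $\mat{\vecspace{V}}{s}$ and $\mat{\vecspace{W}}{s}$ in place of $\vecspace{V}$ and $\vecspace{W}$) that $\|f_\ell\|_{\mathcal{L}^\ell_{\rm cb}}=\sup_{m,\,\|W\|_{sm}=1}\|W^{\odot_s\ell}f_\ell\|_{sm}$, and then run a Cauchy--Hadamard argument uniform in $m$, exactly as in the paper's proof of part 5.

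The only noticeable difference is in the converse direction of the equivalence. The paper argues: $\Upsilon_{\rm nc,unif-normal}\neq\emptyset$ implies (by part 2) $Y\in\Upsilon_{\rm nc,unif-normal}$, hence the series converges uniformly on some $B_{\rm nc}(Y,\delta)$, and then the \emph{second statement} (failure of uniform convergence for $\delta>\rho_{\rm cb}$) forces $\rho_{\rm cb}\ge\delta$. You instead extract a direct bound $\|f_\ell\|_{\mathcal{L}^\ell_{\rm cb}}\le M(2/\delta)^\ell$ from the boundedness in \eqref{eq:ups-unif-bdd}, obtaining $\rho_{\rm cb}\ge\delta/2$. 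Both work; the paper's route is slightly shorter and gives the sharper constant, while yours is self-contained and does not invoke the divergence statement.
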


\begin{rem}\label{rem:unif-question}
It is an open question whether $\Upsilon_{\rm nc, unif}$ is
nonempty if and only if
 $\rho_{\rm unif}(0_{s\times s})>0$. If $\rho_{\rm unif}(0_{s\times s})>0$, then similarly to
 Theorems \ref{thm:g-series} and \ref{thm:f-series}, $\rho_{\rm unif}(0_{s\times
 s})=\infty$ and
$\Upsilon_{\rm nc, unif}$ is
 a complete circular set about $Y$.
\end{rem}
\begin{proof}[Proof of Theorem \ref{thm:king-series}]
1. We first observe that the norm topology on
$\ncspace{\vecspace{V}}$ is stronger than the uniformly-open
topology, hence $\rho_{\rm unif}(Z)\le\rho_{\rm norm}(Z)$ for any
$Z$ and $\Upsilon_{\rm nc, unif}\subseteq\Upsilon_{\rm nc, norm}$.
The proof of part 1 is analogous to the proof of part 1 of
Theorems \ref{thm:g-series} and \ref{thm:f-series}.

$2\& 3$. We first show that $\rho_{\rm unif-normal}(\cdot)$ is
lower semicontinuous in the uniformly-open topology on
$\ncspace{\vecspace{V}}$. Let
$$\kappa:=\limsup_{W\overset{\rm unif}{\to} Z}\mu_{\rm unif-normal}(W)=\limsup_{W\overset{\rm
unif}{\to} Z}\lim_{\delta\to
0}\limsup_{\ell\to\infty}\sqrt[\ell]{\sup_{\widetilde{W}\in B_{\rm
nc}(W,\delta)}\|\widetilde{W}^{\odot_s\ell}f_\ell\|}.$$ Then for
an arbitrary $\epsilon>0$ there is a sequence $W_n\overset{\rm
unif}{\to} Z$ such that
$$\lim_{\delta\to
0}\limsup_{\ell\to\infty}\sqrt[\ell]{\sup_{\widetilde{W}\in B_{\rm
nc}(W_n,\delta)}\|\widetilde{W}^{\odot_s\ell}f_\ell\|}>\kappa-\epsilon,\quad n=1,2,\ldots .$$ Clearly, there
exists $N\in\mathbb{N}$ such that $W_n\in B_{\rm nc}(Z,\delta)$ for all $n\ge N$. Therefore,
\begin{multline*}
\mu_{\rm unif-normal}(Z)=\lim_{\delta\to
0}\limsup_{\ell\to\infty}\sqrt[\ell]{\sup_{\widetilde{W}\in B_{\rm
nc}(Z,2\delta)}\|\widetilde{W}^{\odot_s\ell}f_\ell\|}\\
\ge\lim_{\delta\to 0}\limsup_{\ell\to\infty}\sqrt[\ell]{\sup_{\widetilde{W}\in B_{\rm
nc}(W_n,\delta)}\|\widetilde{W}^{\odot_s\ell}f_\ell\|}>\kappa-\epsilon,\quad n\ge N.
\end{multline*}
Since $\epsilon$ can be chosen arbitrarily small, we obtain that
$$\mu_{\rm
unif-normal}(Z)\ge\kappa=\limsup_{W\overset{\rm unif}{\to} Z}\mu_{\rm unif-normal}(W),$$ i.e., $\mu_{\rm
unif-normal}(\cdot)$ is upper semicontinuous, and thus $\rho_{\rm unif-normal}(\cdot)$ is lower semicontinuous,
in the uniformly-open topology on $\ncspace{\vecspace{V}}$. This implies also that the set $\Upsilon_{\rm nc,
unif-normal}$ is uniformly-open.

Since clearly $\rho_{\rm unif-normal}(Z)\le \rho(Z)$ for any $Z$,
by the lower semicontinuity of $\rho_{\rm unif-normal}(\cdot)$ we
obtain that $\rho_{\rm unif-normal}(Z)\le \rho_{\rm unif}(Z)$ for
any $Z$ and hence $\Upsilon_{\rm nc,
unif-normal}\subseteq\Upsilon_{\rm nc, unif}$.

Let $X\in\Upsilon_{\rm nc, unif-normal}$. Then $$\mu_{\rm
unif-normal}\Big(X-\bigoplus_{\alpha=1}^{m_X}Y\Big)\!=\!\lim_{\delta\to
0}\limsup_{\ell\to\infty}\sqrt[\ell]{\sup_{W\in B_{\rm
nc}(X,\delta)}\Big\|\Big(W-\bigoplus_{\alpha=1}^{m_W}Y\Big)^{\odot_s\ell}f_\ell\Big\|_{sm_W}}<1.$$
Hence, there exists $\delta>0$ such that
$$\limsup_{\ell\to\infty}\sqrt[\ell]{\sup_{W\in B_{\rm
nc}(X,\delta)}\Big\|\Big(W-\bigoplus_{\alpha=1}^{m_W}Y\Big)^{\odot_s\ell}f_\ell\Big\|_{sm_W}}<1.$$
This implies that the sequence
$$\sup_{W\in B_{\rm nc}(X,\delta)}\Big\|\Big(W-\bigoplus_{\alpha=1}^{m_W}Y\Big)^{\odot_s\ell}f_\ell\Big\|_{sm_W},
\quad \ell=1,2,\ldots,$$ is bounded, moreover there
exists $C>0$ and $\eta\colon 0<\eta<1$ such that
$$\sup_{W\in B_{\rm
nc}(X,\delta)}\Big\|\Big(W-\bigoplus_{\alpha=1}^{m_W}Y\Big)^{\odot_s\ell}f_\ell\Big\|_{sm_W}\le
C\eta^\ell,\quad \ell=1,2,\ldots.$$ Consequently,
$$\sum_{\ell=0}^\infty\sup_{W\in B_{\rm
nc}(X,\delta)}\Big\|\Big(W-\bigoplus_{\alpha=1}^{m_W}Y\Big)^{\odot_s\ell}f_\ell\Big\|_{sm_W}<\infty.$$
Conversely, if $X\in\Upsilon_{\rm nc, unif}$ is such that for some
$\delta>0$ the series
$$\sum_{\ell=0}^\infty\sup_{W\in B_{\rm
nc}(X,\delta)}\left\|\left(W-\bigoplus_{\alpha=1}^{m_W}Y\right)^{\odot_s\ell}f_\ell\right\|_{sm_W}$$ converges,
then the series
$$\sum_{\ell=0}^\infty\lambda^\ell\sup_{W\in B_{\rm
nc}(X,\delta)}\Big\|\Big(W-\bigoplus_{\alpha=1}^{m_W}Y\Big)^{\odot_s\ell}f_\ell\Big\|_{sm_W}$$
as a series in $\lambda$ converges uniformly and absolutely on the
closed unit disk $\overline{\mathbb{D}}$, and thus its radius of
convergence
$$\left(\limsup_{\ell\to\infty}\sqrt[\ell]{\sup_{W\in B_{\rm
nc}(X,\delta)}\Big\|\Big(W-\bigoplus_{\alpha=1}^{m_W}Y\Big)^{\odot_s\ell}f_\ell\Big\|_{sm_W}}\right)^{-1}>1.$$
Since the left-hand side is increasing as $\delta\downarrow 0$, we
have that \begin{multline*} \rho_{\rm
unif-normal}\Big(X-\bigoplus_{\alpha=1}^{m_X}Y\Big)\\
= \left(\lim_{\delta\to
0}\limsup_{\ell\to\infty}\sqrt[\ell]{\sup_{W\in B_{\rm
nc}(X,\delta)}\Big\|\Big(W-\bigoplus_{\alpha=1}^{m_W}Y\Big)^{\odot_s\ell}f_\ell\Big\|_{sm_W}}\right)^{-1}>1,
\end{multline*}
i.e., $X\in\Upsilon_{\rm nc, unif-normal}$. This proves
equalities \eqref{eq:ups-unif-bdd} and \eqref{eq:ups-unif-normal}.

Next, if $Y\in\Upsilon_{\rm nc, unif-normal}$, then clearly
$\Upsilon_{\rm nc, unif-normal}\neq\emptyset$. Conversely, assume
that  $X\in(\Upsilon_{\rm nc, unif-normal})_{sm}$ for some
$m\in\mathbb{N}$. Then by \eqref{eq:ups-unif-normal} there exists
a nc ball $B_{\rm nc}(X,\delta)$ such that
$$\sum_{\ell=0}^\infty \sup_{W\in B_{\rm
nc}(X,\delta)}\Big\|\Big(W-\bigoplus_{\alpha=1}^{m_W}Y\Big)^{\odot_s\ell}f_\ell\Big\|_{sm_W}<\infty.$$
By Lemma \ref{lem:HiPh_lemma} this inequality holds with the
suprema over $B_{\rm nc}(X,\delta)$ replaced by the suprema over
$$\Upsilon_{{\rm
nc},X}:=\bigcup_{\zeta\in\overline{\mathbb{D}}}B_{\rm
nc}\left(\bigoplus_{\alpha=1}^mY+\zeta\Big(X-\bigoplus_{\alpha=1}^mY\Big),\delta\right).$$
This implies part 3 of the theorem, with $\Upsilon_{{\rm nc}, X}$
as above. In particular, we obtain that
$\bigoplus_{\alpha=1}^mY\in\Upsilon_{\rm nc, unif-normal}$. We
claim $\rho_{\rm unif-normal}(\bigoplus_{j=1}^mW)=\rho_{\rm
unif-normal}(W)$ for every $k\in\mathbb{N}$ and
$W\in\mat{\vecspace{V}}{sk}$ which yields $Y\in\Upsilon_{\rm nc,
unif-normal}$. Indeed, this follows from the equality
$$\Big\|\Big(\bigoplus_{j=1}^mW\Big)^{\odot_s\ell}f_\ell\Big\|_{skm}=\|W^{\odot_s\ell}f_\ell\|_{sk}$$
and the definition of $\rho_{\rm unif-normal}(\cdot)$.

The remaining statements of part 2 (i.e., the ones under the bullets) follow from the positive-homogeneity and
lower semicontinuity of $\rho_{\rm unif-normal}(\cdot)$.

4. Since a uniformly-open nc set is a finitely open nc set, $f|_\Gamma$ is a G-differentiable nc function by
Theorem \ref{thm:g-series}, part 4. It follows from \eqref{eq:ups-unif-normal} that $f|_\Gamma$ is locally
uniformly bounded. We conclude that $f|_\Gamma$ is a uniformly analytic nc function.

5. If $\rho_{\rm cb}>\delta>0$, then we have $$\mu_{\rm
cb}=\limsup_{\ell\to\infty}\sqrt[\ell]{\|f_\ell\|_{\mathcal{L}_{\rm cb}^\ell}}<\infty$$ and there exists $C>0$
such that
$$\|f_\ell\|_{\mathcal{L}_{\rm
cb}^\ell}\le C\mu_{\rm cb}^\ell,\quad \ell=0,1,\ldots.$$ Therefore
\begin{multline*}
\sum_{\ell=0}^\infty\sup_{W\in B_{\rm nc}(0_{s\times
s},\delta)}\|W^{\odot_s\ell}f_\ell\|=\sum_{\ell=0}^\infty\sup_{\|W\|=\delta}\|W^{\odot_s\ell}f_\ell\|\\
=\sum_{\ell=0}^\infty\delta^\ell\sup_{\|W\|=1}\|W^{\odot_s\ell}f_\ell\|=\sum_{\ell=0}^\infty\delta^\ell
\|f_\ell\|_{\mathcal{L}_{\rm cb}^\ell} \le
C\sum_{\ell=0}^\infty\delta^\ell\mu_{\rm cb}^\ell<\infty,
\end{multline*}
which is equivalent to \eqref{eq:normal-king-conv-balls}; here we used Proposition \ref{prop:k-cb-norm} and the
inequality $\delta\mu_{\rm cb}<1$. In particular, the series \eqref{eq:power-series} converges absolutely and
uniformly on $B_{\rm nc}(Y,\delta)$, so that $Y\in\Upsilon_{\rm nc, unif-normal}$ and thus $\Upsilon_{\rm nc,
unif-normal}\neq\emptyset$.

Now suppose $\delta>\rho_{\rm cb}$. Then for any $\epsilon\colon
0<\epsilon<\mu_{\rm cb}-\delta^{-1}$ and $L>0$ there exists
$\ell>L$ such that $\|f_\ell\|_{\mathcal{L}_{\rm
cb}^\ell}>(\mu_{\rm cb}-\epsilon)^\ell$. For any such $\ell$ there
exists $W\in\ncspace{(\mat{\vecspace{V}}{s})}$ with
$\|W\|=(\mu_{\rm cb}-\epsilon)^{-1}<\delta$ such that
$$\|W^{\odot_s\ell} f_\ell\|>(\mu_{\rm
cb}-\epsilon)^\ell\|W\|=1.$$ Hence the sequence $\|W^{\odot_s\ell} f_\ell\|$, $\ell=0,1,\ldots$, does not
converge to $0$ uniformly on $B_{\rm nc}(0_{s\times s},\delta)$, and the series \eqref{eq:power-series} does not
converge uniformly on $B_{\rm nc}(Y,\delta)$.

Finally, if $\Upsilon_{\rm nc, unif-normal}\neq\emptyset$, then by part 2 we have that $Y\in\Upsilon_{\rm nc,
unif-normal}$. Therefore, again by part 2, there exists $\delta>0$ such that the series \eqref{eq:power-series}
converges absolutely and uniformly on $B_{\rm nc}(Y,\delta)$. This implies that $\rho_{\rm cb}\ge\delta>0$. The
proof is complete.
\end{proof}

The following example demonstrates a possibility that $0=\rho_{\rm
unif}(Z)<\rho_{\rm norm}(Z)$ for some $Z$ and thus $\Upsilon_{\rm
nc, unif}\subsetneq\Upsilon_{\rm nc,norm}$. We also have in this
example that $0=\rho_{\rm cb}<\rho_m$ for every fixed
$m\in\mathbb{N}$.
\begin{ex}\label{ex:unif_neq_norm}
It was shown in \cite{Pa1} (see also \cite[Chapter 3]{Pi}) that
for every $p=3,4,\ldots$ there exist $k_p\in\mathbb{N}$ (with
$k_p$ increasing as $p$ increases) and a linear mapping
$\phi_p\colon\mathbb{C}^p\to\mat{\mathbb{C}}{k_p}$ such that
$\max_{\zeta\in\overline{\mathbb{D}^p}}\|\phi_p(\zeta)\|=1$ and
\begin{multline}
\sup_{n\in\mathbb{N},Z_{p1},\ldots,Z_{pp}\in\mat{\mathbb{C}}{n}\colon\|Z_{p1}\|=\ldots=\|Z_{pp}\|=1}
\|(\id_{\mat{\mathbb{C}}{n}}\otimes\phi_p)(Z_{p1},\ldots,Z_{pp})\|\\
=
\|(\id_{\mat{\mathbb{C}}{k_p}}\otimes\phi_p)(Z^0_{p1},\ldots,Z^0_{pp})\|=\sqrt{\frac{p}{2}}
\label{eq:pau1}
\end{multline}
for some $Z^0_{p1}$, \ldots, $Z^0_{pp}\in\mat{\mathbb{C}}{k_p}$ of
norm 1, where $\|\cdot\|$ denotes the operator norm with respect
to the standard Euclidean norms. Moreover,
\begin{equation}\label{eq:pau2}
\|(\id_{\mat{\mathbb{C}}{k_p}}\otimes\phi_p)(Z^0_{p1},\ldots,Z^0_{pp})^\ell\|=
\left(\sqrt{\frac{p}{2}}\right)^\ell,
\quad \ell=1,2,\ldots.
\end{equation}
 Let $\vecspace{V}$ be a Banach space
consisting of bounded double-indexed sequences of complex numbers
$z=(z_{pq})_{p=3,4,\ldots;\, q=1,\ldots,p}$ with the norm
$\|z\|_{\infty}=\sup_{p,q}|z_{pq}|$. One can identify $n\times n$
matrices over $\vecspace{V}$ with sequences of $n\times n$
matrices over $\mathbb{C}$, $Z=(Z_{pq})_{p=3,4,\ldots;\,
q=1,\ldots,p}$. Then multiplication of such a $Z$ on the right (on
the left) with a $n\times n$ matrix over $\mathbb{C}$ is defined
as simultaneous right (left) multiplication with this matrix for
all components $Z_{pq}$. It is easy to see that the norms on
$\mat{\vecspace{V}}{n}$ defined by
$\|Z\|_{\infty,n}=\sup_{p,q}\|Z_{pq}\|$ for $n=1,2,\ldots$
determine an operator space structure on $\vecspace{V}$. Let
$\vecspace{W}$ be a Banach space consisting of block-diagonal
infinite matrices (infinite direct sums) $W=\bigoplus_{p=3}^\infty
W_p$ with $W_p\in\mat{\mathbb{C}}{k_p}$ and with the norm
$\|W\|_\infty\!=\!\sup_p\|W_p\|$. One can identify $n\times n$
matrices over $\vecspace{W}$ with direct sums of $n\times n$
matrices over $\mat{\mathbb{C}}{k_p}$, $W=\bigoplus_{p=3}^\infty
W_{p}$. Multiplication of such a $W$ on the right (on the left)
with a $n\times n$ matrix $S$ over $\mathbb{C}$ is then reduced to
the simultaneous right (left) multiplication with this matrix for
all diagonal (block) entries $W_{p}$ as follows:
$(WS)_p=W_p(S\otimes I_{k_p})$ (resp., $(SW)_p=(S\otimes
I_{k_p})W_p$. It is easy to see that the norms on
$\mat{\vecspace{W}}{n}$ defined by
$\|W\|_{\infty,n}=\sup_{p}\|W_{p}\|$ for $n=1,2,\ldots$ determine
an operator space structure on $\vecspace{W}$. For each
$\ell=3,4,\ldots$ we define a linear mapping
$\phi^{(\ell)}\colon\vecspace{V}\to\vecspace{W}$ by
$$\phi^{(\ell)}(z)=\phi_3(z_{31},z_{32},z_{33})\oplus\cdots\oplus\phi_\ell(z_{\ell
1},\ldots, z_{\ell\ell})\oplus 0_{k_{\ell+1}\times
k_{\ell+1}}\oplus 0_{k_{\ell+2}\times k_{\ell+2}}\oplus\cdots$$
 and a $\ell$-linear mapping
 $f_\ell\colon\vecspace{V}^\ell\to\vecspace{W}$ by
 $$f_\ell(z^1,\ldots,z^\ell)=\phi^{(\ell)}(z^1)\cdots\phi^{(\ell)}(z^\ell),$$
 so that
 $$f_\ell(z,\ldots,z)=\phi^{(\ell)}(z)^\ell.$$
 The mappings $\phi^{(\ell)}$ and $f_\ell$ are extended to
 $n\times n$ matrices over $\vecspace{V}$ in the standard way.
 Then by Proposition \ref{prop:k-cb-norm} and the equalities
 \eqref{eq:pau1}, \eqref{eq:pau2} we have
 \begin{multline*}
\|f_\ell\|_{\mathcal{L}_{\rm
cb}^\ell}=\sup_{n\in\mathbb{N},Z\in\mat{\vecspace{V}}{n}\colon
\|Z\|_{\infty,n}=1}\|Z^{\odot\ell}f_\ell\|_{\infty,n}\\
=\sup_{n\in\mathbb{N},Z\in\mat{\vecspace{V}}{n}\colon\|Z\|_{\infty,n}=1}\|f_\ell(Z,\ldots,
Z)\|_{\infty,n}
\end{multline*}
\begin{multline*}
 =
\sup_{n\in\mathbb{N},Z\in\mat{\vecspace{V}}{n}\colon\|Z\|_{\infty,n}=1}
\|\phi^{(\ell)}(Z)^\ell\|_{\infty,n}\\
=\sup_{n\in\mathbb{N},Z_{\ell 1},\ldots,Z_{\ell
\ell}\in\mat{\mathbb{C}}{n}\colon\|Z_{\ell
1}\|=\ldots=\|Z_{\ell\ell}\|=1}
\|(\id_{\mat{\mathbb{C}}{n}}\otimes\phi_\ell)(Z_{\ell
1},\ldots,Z_{\ell\ell})^\ell\|\\
=\left(\sqrt{\frac{\ell}{2}}\right)^\ell,
 \end{multline*}
hence the $\ell$-linear mappings $f_\ell$, $\ell=3,4,\ldots$, are
completely bounded. Now, consider the power series
$$\sum_{\ell=3}^\infty Z^{\odot\ell}f_\ell.$$
We have for every fixed $m\in\mathbb{N}$ and
$Z=(Z_{pq})_{p=3,4,\ldots;q=1,\ldots,p}\in\mat{\vecspace{V}}{m}$
with $\|Z\|_{\infty,m}=1$ that
\begin{multline*}
\mu(Z)=\limsup_{\ell\to\infty}\sqrt[\ell]{\| Z^{\odot\ell}
f_\ell\|_{\infty,m}}\le\limsup_{\ell\to\infty}\|\phi^{(\ell)}(Z)\|\\
\le m^2\limsup_{\ell\to\infty}\max_{p=3,\ldots,\ell}\max_{1\le
i,j\le m}\|\phi_p(Z_{p 1}^{ij},\ldots,Z_{pp}^{ij})\|\le m^2,
\end{multline*}
where $Z_{pq}^{ij}$ is the $(i,j)$-th entry of the matrix
$Z_{pq}$. Therefore $\rho(Z)\ge m^{-2}$ for every
$Z\in\mat{\vecspace{V}}{m}$ with $\|Z\|_{\infty,m}=1$. It is easy
to see that by the homogeneity of degree $-1$ we also have
$\rho_{\rm norm}(Z)\ge m^{-2}$ for every
$Z\in\mat{\vecspace{V}}{m}$ with $\|Z\|_{\infty,m}=1$. This
implies that $\rho_{\rm norm}(Z)>0$ for every
$Z\in\ncspace{\vecspace{V}}$ and hence $\rho_{\rm norm}(0_{m\times
m})=\infty$ for every $m\in\mathbb{N}$. We also obtain that
$\rho_m\ge m^{-2}$ for every $m\in\mathbb{N}$.

Define the sequence
$Z^j=(Z^j_{pq})_{p=3,4,\ldots;q=1,\ldots,p}\in\mat{\vecspace{V}}{k_j}$,
$j=3,4,\ldots$, by
$$Z^j_{pq}=\left\{\begin{array}{lcl}
0_{k_j\times k_j} & {\rm for} & p\neq j,\\
\frac{1}{\sqrt[4]{j}}Z^0_{jq} & {\rm for} & p=j,\ 1\le q\le j.
\end{array}\right.$$
Then
$$\lim_{j\to\infty}\|Z^j\|_{\infty,k_j}=\lim_{j\to\infty}\frac{1}{\sqrt[4]{j}}=0,$$
however
\begin{multline*}
\lim_{j\to\infty}\mu(Z^j)=\lim_{j\to\infty}\limsup_{\ell\to\infty}
\sqrt[\ell]{\|(Z^j)^{\odot\ell}f_\ell\|_{\infty,k_j}}\\
=\lim_{j\to\infty}\frac{1}{\sqrt[4]{j}}\limsup_{\ell\to\infty}
\sqrt[\ell]{\|(\id_{\mat{\mathbb{C}}{k_j}}\otimes\phi_j)
(Z^0_{j1},\ldots,Z^0_{jj})^\ell\|_{\infty,k_j}}=\lim_{j\to\infty}
\frac{1}{\sqrt[4]{j}}\sqrt{\frac{j}{2}}=\infty.
\end{multline*}
Thus $$\rho_{\rm unif}(0_{1\times
1})=\lim_{j\to\infty}\rho(Z^j)=0.$$ Note that we can show
analogously that $\rho_{\rm unif}(Z)=0$ when $Z\neq 0$. E.g., for
$z=(z_{pq})_{p=3,4,\ldots;q=1,\ldots,p}$ with any $z_{3,q}\neq 0$,
$q=1,2,3$, and $z_{pq}=0$ for all $p>3$ and all $q$, we can define
a sequence $z\otimes I_{k_j}+Z^j$ with $Z^j$ as above and verify
that $$\rho_{\rm unif}(z)=\lim_{j\to\infty}\rho(z\otimes
I_{k_j}+Z^j)=0.$$

Finally, we observe that $$\rho_{\rm
cb}=\left(\limsup_{\ell\to\infty}\sqrt[\ell]{\|f_\ell\|_{\mathcal{L}_{\rm
cb}^\ell}}\right)^{-1}=\left(\limsup_{\ell\to\infty}\sqrt{\frac{\ell}{2}}\right)^{-1}=0.$$
Thus all the statements formulated in the paragraph preceding this
example are true.
\end{ex}

In the next exampl we demonstrate the possibility that
$0=\rho_{\rm unif-normal}(0_{1\times 1})\!<\!\rho_{\rm
unif}(0_{1\times 1})$ and thus $\emptyset=\Upsilon_{\rm nc,
unif-normal}\subsetneq\Upsilon_{\rm nc, unif}$. The sum of the nc
power series in this example is an entire (i.e., analytic
everywhere and bounded on every ball of $n\times n$ matrices,
$n=1,2,\ldots$) nc function which is not uniformly analytic at
$0_{1\times 1}$, that is, not uniformly analytic in any
uniformly-open nc neighborhood of $0_{1\times 1}$.
\begin{ex}\label{ex:unif-normal_neq_unif}
Let $\vecspace{V}=\mathbb{C}^2$ with the norm
$\|(z_1,z_2)\|=\max\{|z_1|,|z_2|\}$ and $\vecspace{W}=\mathbb{C}$.
We identify $n\times n$ matrices over $\vecspace{V}=\mathbb{C}^2$
with pairs of matrices over $\mathbb{C}$. It is clear that the
norms on $\mat{\vecspace{V}}{n}$, $n=1,2,\ldots$, defined by
$\|(Z_1,Z_2)\|_n=\max\{\|Z_1\|,\|Z_2\|\}$ and componentwise
multiplication of pairs $(Z_1,Z_2)$ of matrices over $\mathbb{C}$
on the left (on the right) with a matrix over $\mathbb{C}$
determine the operator space structure on $\vecspace{V}$. The
canonical (i.e., with $\|1\|_1=1$) operator space structure on
$\vecspace{W}=\mathbb{C}$ is uniquely determined. Consider the
power series
$$\sum_{k=1}^\infty Z^{\odot \alpha_k}f_{\alpha_k}=\sum_{k=1}^\infty c_kp_{k}(Z_1,Z_2)=
\sum_{k=1}^\infty c_k\sum_{\pi\in
\mathcal{S}_{k+1}}{\rm sign}(\pi)Z_1^{\pi(1)-1}Z_2\cdots
Z_1^{\pi(k+1)-1}Z_2$$ with positive constants $c_k$ which will be
defined later and $\alpha_k=\deg p_k=\frac{(k+1)(k+2)}{2}$ (see
Example \ref{ex:degrees_unbdd} for more details on the polynomials
$p_k$). The $\alpha_k$-linear mappings $f_{\alpha_k}$ are
completely bounded. Indeed, by Proposition \ref{prop:k-cb-norm} we
have
\begin{multline*}
\|f_{\alpha_k}\|_{\mathcal{L}_{\rm
cb}^{\alpha_k}}=\sup_{n\in\mathbb{N},\
Z=(Z_1,Z_2)\in(\mat{\mathbb{C}}{n})^2\colon
\max\{\|Z_1\|,\|Z_2\|\}=1}c_k\|p_k(Z_1,Z_2)\|_n\\
\le c_k(k+1)!<\infty.
\end{multline*}
Since the polynomials $p_k$ vanish identically on pairs of
$k\times k$ matrices, for every fixed $n\in\mathbb{N}$ and any
$Z=(Z_1,Z_2)\in (\mat{\mathbb{C}}{n})^2$ the series contains only
the powers smaller than $\alpha_n$ and thus converges uniformly on
$(\mat{\mathbb{C}}{n})^2$. Therefore, $\rho(Z)=\infty$ for every
$Z\in\ncspace{\vecspace{V}}$. We also have $\rho_{\rm
unif}(Z)=\infty$ for every $Z\in\ncspace{\vecspace{V}}$ and
$\Upsilon_{\rm nc, unif}=\ncspace{\vecspace{V}}$. On the other
hand, any nonzero polynomial $p$ in two noncommuting
indeterminates is not identically zero on pairs of square matrices
of sufficiently large size due to the absence of polynomial
identities for matrices of all sizes --- see \cite[Example 1.4.4
and Theorem 1.4.5, page 22]{Row80}. Therefore
$$\sup_{n\in\mathbb{N},\
Z=(Z_1,Z_2)\in(\mat{\mathbb{C}}{n})^2\colon
\max\{\|Z_1\|,\|Z_2\|\}=1}\|p_k(Z_1,Z_2)\|_n\neq 0.$$ Define
$$c_k=k^{\alpha_k}\Big(\sup_{n\in\mathbb{N},\
Z=(Z_1,Z_2)\in(\mat{\mathbb{C}}{n})^2\colon
\max\{\|Z_1\|,\|Z_2\|\}=1}\|p_k(Z_1,Z_2)\|_n\Big)^{-1}.$$ Then
$$\|f_{\alpha_k}\|_{\mathcal{L}_{\rm
cb}^{\alpha_k}}=k^{\alpha_k}.$$ For any $\delta>0$ we have
\begin{multline*}
\sup_{\ell\in\mathbb{N}}\sup_{W\in B_{\rm nc}(0_{1\times
1},\delta)}
\|W^{\odot\ell}f_\ell\|\\
=\sup_{k\in\mathbb{N}}c_k\delta^{\alpha_k}k^{\alpha_k}\sup_{n\in\mathbb{N},\
Z=(Z_1,Z_2)\in(\mat{\mathbb{C}}{n})^2\colon
\max\{\|Z_1\|,\|Z_2\|\}=1}\|p_k(Z_1,Z_2)\|_n\\
=\sup_{k\in\mathbb{N}}(\delta k)^{\alpha_k}=\infty.
\end{multline*}
Then by part 2 of Theorem \ref{thm:king-series} we have that
$0_{1\times 1}\notin \Upsilon_{\rm nc, unif-normal}$ and thus
$\Upsilon_{\rm nc, unif-normal}=\emptyset$. Alternatively, we can
compute
$$
\mu_{\rm unif-normal}(0_{1\times 1})=\lim_{\delta\to
0}\limsup_{\ell\to 0}\sqrt[\ell]{\sup_{W\in B_{\rm nc}(0_{1\times
1},\delta)} \|W^{\odot\ell}f_\ell\|}=\lim_{\delta\to
0}\lim_{k\to\infty}\delta k=\infty,
$$
and thus $\rho_{\rm unif-normal}(0_{1\times 1})=0$.  By Remark
\ref{rem:series-uniqueness} and Theorem \ref{thm:king-conv}, the
sum of the series, $f(Z)=\sum_{k=1}^\infty
Z^{\odot\alpha_k}f_{\alpha_k}$ is not uniformly analytic at
$0_{1\times 1}$.
\end{ex}

\label{OPEN-QUESTION}

\begin{rem}\label{rem:unif-normal_neq_unif}
It is not too difficult to modify Example \ref{ex:unif-normal_neq_unif} by changing the coefficients $c_k$ so
that $\Upsilon_{\rm nc, unif-normal}\subsetneq \Upsilon_{\rm nc, unif}=\ncspaced{\mathbb{C}}{2}$ contains a nc
ball centered at $0_{1\times 1}$ of a given finite radius.
\end{rem}

We consider now the case where $\vecspace{V}=\nspace{C}{d}$ with some operator space structure and we expand each
term of the power series \eqref{eq:power-series} obtaining a series along the free monoid $\free_d$. Our
results will be the converse of Theorem \ref{thm:king-semigr}. In other words, we shall consider the convergence
of power series of the form
\begin{equation}\label{eq:semigr-power-series}
\sum_{w\in\free_d}
\Big(X-\bigoplus_{\alpha=1}^mY\Big)^{\odot_sw}f_w.
\end{equation}
Here $Y=(Y_1,\ldots,Y_d)\in(\mat{\mathbb{C}}{s})^d$ is a given
center; $X=(X_1,\ldots,X_d)\in(\mat{\mathbb{C}}{sm})^d$,
$m=1,2,\ldots$; $Z^{\odot_sw}$ for $w=g_{i_1}\cdots g_{i_\ell}$
and $Z=(Z_1,\ldots,Z_d)\in(\mat{\mathbb{C}}{sm})^d$ is defined in
\eqref{eq:faux-power-semigr};
$f_w\colon(\mat{\mathbb{C}}{s})^\ell\to\mat{\vecspace{W}}{s}$,
$w\in\free_d$, $\ell=|w|$, is a given sequence of $\ell$-linear
mappings (here $\mathbb{C}$ and, hence, $\mat{\mathbb{C}}{s}$ are
equipped with the canonical operator space structure); and
conditions
\eqref{eq:ncfun_coef_empty}--\eqref{eq:ncfun-coef_w_ell} are
satisfied.
 The series
\eqref{eq:semigr-power-series} can be alternatively written as
\begin{equation*}
\sum_{w\in\free_d}\left(\Big(\bigoplus_{\alpha=1}^mA_{w,(0)}\Big)\otimes
\Big(\bigoplus_{\alpha=1}^mA_{w,(1)}\Big)
\otimes\cdots\otimes\Big(\bigoplus_{\alpha=1}^mA_{w,(|w|)}\Big)\right)\star
Z^{[w]}.
\end{equation*}
Here
$$f_w=A_{w,(0)}\otimes
A_{w,(1)}\otimes\cdots\otimes A_{w,(|w|)}\in\mat{\vecspace{W}}{s}\otimes\underset{|w|\ {\rm
times}}{\underbrace{\mat{\mathbb{C}}{s}\otimes \cdots\otimes\mat{\mathbb{C}}{s}}},$$ with the tensor product
interpretation for the multilinear mappings $f_w$ (Remark \ref{rem:natur_map}), the sumless Sweedler notation
\eqref{eq:Sweedler}, and the pseudo-power notation \eqref{eq:Ramamurti}. In the case $s=1$, the series
\eqref{eq:semigr-power-series} becomes
$$\sum_{w\in\free_d}Z^wf_w,$$
with $f_w\in\vecspace{W}$; more precisely, we identify the $\ell$-linear mapping
$f_w\colon\mathbb{C}^\ell\to\vecspace{W}$ with the vector $f_w(1,\ldots, 1)\in\vecspace{W}$, and
\begin{equation}\label{eq:s=1}
\|Z^wf_w\|_m=\|Z^w\|\|f_w\|_1
\end{equation}
for every $m\in\mathbb{N}$ and $Z\in\mattuple{\mathbb{C}}{m}{d}$ (see \cite[Proposition 1.10(ii)]{Pi}), so that
\begin{equation}\label{eq:s=1_cb_norm}
\|f_w\|_{\mathcal{L}_{\rm cb}^\ell}=\|f_w\|_1.
\end{equation}

We associate with the series \eqref{eq:semigr-power-series} the
series \eqref{eq:power-series} where the $\ell$-linear mappings
$f_\ell\colon(\mattuple{\mathbb{C}}{s}{d})^\ell\to\mat{\vecspace{W}}{s}$
are defined by \eqref{eq:lw-forms}, so that \eqref{eq:lw-powers}
and \eqref{eq:f_w_via_f_l} hold. Similarly to Remark
\ref{rem:lost_abbey_semigr}, conditions
\eqref{eq:ncfun_coef_empty}--\eqref{eq:ncfun-coef_w_ell} on the
linear mappings $f_w$ are easily seen to be equivalent to
conditions \eqref{eq:ncfun_coef_0}--\eqref{eq:ncfun-coef_ell_ell}
on the linear mappings $f_\ell$. Notice that the completely
bounded norms of $f_\ell$ and of $f_w$ for all $w$ with $|w|=\ell$
are related by
\begin{equation}\label{eq:lw-cb-norms-1}
\|f_\ell\|_{\mathcal{L}_{\rm cb}^\ell}\le\sum_{|w|=\ell}\mathbf{C}_1^w\|f_w\|_{\mathcal{L}_{\rm cb}^\ell},
\end{equation}
where $\mathbf{C}_1=(\|\pi_1\|_{\rm cb},\ldots,\|\pi_d\|_{\rm cb})$ for the $i$-th coordinate mapping
$\pi_i\colon\mathbb{C}^d\to\mathbb{C}$, and
\begin{equation}\label{eq:lw-cb-norms-2}
\|f_w\|_{\mathcal{L}_{\rm cb}^\ell}\le\mathbf{C}_2^w\|f_\ell\|_{\mathcal{L}_{\rm cb}^\ell},
\end{equation}
where $\mathbf{C}_2=(\|e_1\|_1,\ldots,\|e_d\|_1)$.

Let $\vecspace{X}$ be a vector space over $\mathbb{C}$ with a
direct sum decomposition
$\vecspace{X}=\bigoplus_{j=1}^d\vecspace{X}_j$, and let
$Y=(Y_1,\ldots,Y_d)\in\vecspace{X}$. A set
$\Upsilon\subseteq\vecspace{X}$ is called \emph{complete
Reinhardt-like about $Y$} \index{complete Reinhardt-like set about
$Y$} if for every $X=(X_1,\ldots,X_d)\in\Upsilon$ and every
$t=(t_1,\ldots,t_d)\in\mathbb{C}^d$ with $|t_j|\le 1$ for all $j$,
one has $Y+t\circ (X-Y)\in\Upsilon$, where $t\circ
(X-Y):=(t_1(X_1-Y_1),\ldots,t_d(X_d-Y_d))$. Clearly, a complete
Reinhardt-like set is complete circular. In the case of $d=1$ we
recover the definition of complete circular set. In the case where
$\vecspace{X}=\mathbb{C}^d$ and $\vecspace{X}_j=\mathbb{C}e_j$,
$j=1,\ldots,d$, we recover the usual definition of complete
Reinhardt set \cite[Page 8]{Sh}.

 The following is an analogue of Lemma
\ref{lem:HiPh_lemma} for complete Reinhardt-like sets.
\begin{lem}\label{lem:HiPh-Reinhardt}
Let $\vecspace{X}$ be a vector space over $\mathbb{C}$ with the
direct sum decomposition
$\vecspace{X}=\bigoplus_{j=1}^d\vecspace{X}_j$, let $\vecspace{W}$
be a Banach space over $\mathbb{C}$, and let
$\Gamma\subseteq\vecspace{X}$ be a complete Reinhardt-like set
about $0$. Suppose that
$\omega\colon\vecspace{X}^k\to\vecspace{W}$ is a $k$-linear
mapping whose restriction to the set
$$\diag[\vecspace{X}^k]=\{(\underset{k\ {\rm
times}}{\underbrace{W,\ldots, W}})\colon
W=(W_1,\ldots,W_d)\in\vecspace{X}\}$$ is homogeneous in $W_j$ of
degree $k_j$ (so that $k_1+\cdots +k_d=k$), and suppose that
$X=(X_1,\ldots,X_d)\in\vecspace{X}$. Then
 $$\sup_{W\in \bigcup\limits_{t\in\overline{\mathbb{D}^d}}(t\circ X+\Gamma)}\|\omega(W,\ldots,W)\|
 =\sup_{W\in X+\Gamma}\|\omega(W,\ldots,W)\|,$$
where
$\overline{\mathbb{D}^d}=\overline{\mathbb{D}}^d=\{t=(t_1,\ldots,t_d)\in\mathbb{C}^d\colon|t_j|\le
1\}$ is the closed unit polydisk, $X+\Gamma:=\{X+Z\colon
Z\in\Gamma\}$ is a complete Reinhardt-like set about $X$ and
$\bigcup\limits_{t\in\overline{\mathbb{D}^d}}(t\circ X+\Gamma)$ is
a complete Reinhardt-like set about $0$.
\end{lem}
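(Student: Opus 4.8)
The statement to be proved is Lemma~\ref{lem:HiPh-Reinhardt}, the complete Reinhardt-like analogue of Lemma~\ref{lem:HiPh_lemma}. The plan is to reduce the $d$-variable statement to an iterated application of the one-variable maximum principle, exactly as in the proof of Lemma~\ref{lem:HiPh_lemma}, but now peeling off one coordinate at a time. First I would observe the two easy structural facts asserted in the statement: that $X+\Gamma$ is complete Reinhardt-like about $X$ (immediate from the definition of $\Gamma$ being complete Reinhardt-like about $0$) and that $\bigcup_{t\in\overline{\mathbb{D}^d}}(t\circ X+\Gamma)$ is complete Reinhardt-like about $0$ (if $W = t\circ X + Z$ with $Z\in\Gamma$ and $|t_j|\le 1$, then for $|s_j|\le 1$ we have $s\circ W = (s\circ t)\circ X + s\circ Z$, and $s\circ t\in\overline{\mathbb D^d}$ while $s\circ Z\in\Gamma$ since $\Gamma$ is complete Reinhardt-like about $0$). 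Since $X+\Gamma\subseteq\bigcup_{t}(t\circ X+\Gamma)$, the inequality ``$\ge$'' is trivial, and the whole content is the reverse inequality.

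\textbf{Key steps.} Fix $W\in\bigcup_{t\in\overline{\mathbb D^d}}(t\circ X+\Gamma)$, say $W = t^0\circ X + Z_0$ with $t^0=(t^0_1,\ldots,t^0_d)\in\overline{\mathbb D^d}$ and $Z_0=(Z_{0,1},\ldots,Z_{0,d})\in\Gamma$. I want to show $\|\omega(W,\ldots,W)\|\le \sup_{V\in X+\Gamma}\|\omega(V,\ldots,V)\|$. Consider the function of $d$ complex variables
$$\psi(\zeta_1,\ldots,\zeta_d) = \omega\big((\zeta_1 X_1 + Z_{0,1},\ldots,\zeta_d X_d + Z_{0,d}),\ldots,(\zeta_1 X_1+Z_{0,1},\ldots,\zeta_d X_d + Z_{0,d})\big).$$
Because $\omega$ restricted to the diagonal is homogeneous of degree $k_j$ in the $j$-th slot, $\psi$ is a polynomial in $\zeta_1,\ldots,\zeta_d$ which is homogeneous of degree $k_j$ when only $\zeta_j$ is scaled; in particular $\psi$ is analytic on $\overline{\mathbb D^d}$. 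Now apply the one-variable maximum principle successively: holding $\zeta_2,\ldots,\zeta_d$ fixed with moduli $\le 1$, the function $\zeta_1\mapsto\psi(\zeta_1,\zeta_2,\ldots,\zeta_d)$ is a polynomial on $\overline{\mathbb D}$, so its maximum over $|\zeta_1|\le 1$ is attained on $|\zeta_1|=1$; repeating for $\zeta_2$, then $\zeta_3$, etc., one finds $\zeta^1=(\zeta^1_1,\ldots,\zeta^1_d)$ with $|\zeta^1_j|=1$ for all $j$ such that $\|\psi(t^0)\|\le\|\psi(\zeta^1)\|$. (Alternatively one invokes the maximum principle for analytic functions of several variables on the distinguished boundary of the polydisk directly; the iterated one-variable version is elementary and self-contained.) Then, using the homogeneity of degree $k_j$ in each slot to absorb the unimodular factors $\zeta^1_j$ — precisely, $\psi(\zeta^1) = \omega(V,\ldots,V)$ where $V_j = \zeta^1_j X_j + Z_{0,j} = \zeta^1_j(X_j + (\zeta^1_j)^{-1}Z_{0,j})$, so that after pulling the scalar $\prod_j (\zeta^1_j)^{k_j}$ (of modulus $1$) out, $\|\psi(\zeta^1)\| = \|\omega(X+Z',\ldots,X+Z')\|$ with $Z' = ((\zeta^1_1)^{-1}Z_{0,1},\ldots,(\zeta^1_d)^{-1}Z_{0,d})$ — and noting $Z'\in\Gamma$ since $\Gamma$ is complete Reinhardt-like about $0$ and each $|(\zeta^1_j)^{-1}|=1$, we conclude $\|\omega(W,\ldots,W)\| = \|\psi(t^0)\|\le\|\psi(\zeta^1)\| = \|\omega(X+Z',\ldots,X+Z')\|\le\sup_{V\in X+\Gamma}\|\omega(V,\ldots,V)\|$. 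Taking the supremum over $W$ gives ``$\le$''.

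\textbf{Main obstacle.} The proof is essentially routine; the only point requiring a little care is the bookkeeping with the multi-homogeneity. One must verify that $\omega$ restricted to the diagonal really is separately homogeneous in each $W_j$ (this is an exact hypothesis of the lemma, tailored so that it applies to $\omega = f_w$ with the coordinatewise faux-powers, where the degree $k_j$ equals the number of times the letter $g_j$ occurs in $w$), and then check that the unimodular scalars $\zeta^1_j$ factor out cleanly as a single phase $\prod_j(\zeta^1_j)^{k_j}$ of modulus one — so that passing from $\psi(\zeta^1)$ to an honest element of $X+\Gamma$ does not change the norm. A secondary, purely cosmetic, subtlety is making sure the iterated maximum-principle argument is applied with the remaining variables frozen inside $\overline{\mathbb D}$ at each stage, which is automatic since we only ever move to boundary points. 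Beyond that, everything — the two ``complete Reinhardt-like'' closure claims and the trivial inequality — is immediate from the definitions, so I do not anticipate any genuine difficulty.
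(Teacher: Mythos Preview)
Your proposal is correct and follows essentially the same approach as the paper's proof: fix a point in the union, consider the polynomial function $\psi(\zeta)=\omega(\zeta\circ X+Z_0,\ldots,\zeta\circ X+Z_0)$ on $\overline{\mathbb{D}^d}$, apply the maximum principle to reach the distinguished boundary $|\zeta_j|=1$, and then use the coordinatewise homogeneity to absorb the unimodular factors and land in $X+\Gamma$. The paper invokes the several-variable maximum principle directly rather than iterating the one-variable version, but this is a cosmetic difference.
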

\begin{proof}
Fix an arbitrary $Z^0\in\Gamma$. Consider the function
$$\phi\colon\overline{\mathbb{D}^d}\to\vecspace{W},\quad
\phi\colon t\mapsto\omega(t\circ X+Z^0,\ldots,t\circ X+Z^0).$$
 Clearly, $\phi$ is a polynomial in $t_1$, \ldots, $t_d$ and hence $\phi$ is
analytic. By the maximum principle, there exists $t^0$ with
$|t^0_j|=1$, $j=1,\ldots,d$, such that
$\|\phi(t^0)\|=\max_{t\in\overline{\mathbb{D}^d}}\|\phi(t)\|$. By
the simultaneous homogeneity of $\omega$,
\begin{multline*}
\|\omega(t^0\circ X+Z^0,\ldots,t^0\circ X+Z^0)\|=\|\omega(
X+(t^0)^{-1}\circ Z^0,\ldots,
X+(t^0)^{-1}\circ Z^0)\|\\
\le\sup_{Z\in\Gamma}\|\omega(X+Z,\ldots,X+Z)\|,
\end{multline*} where
$(t^0)^{-1}=((t^0_1)^{-1},\ldots,(t^0_d)^{-1})$, since $\Gamma$ is
complete Reinhardt-like about $0$. This proves the lemma.
\end{proof}

We shall be mostly interested in the case where
$\vecspace{X}=(\mat{\mathbb{C}}{n})^d$ and
$\vecspace{X}_j=\mat{\mathbb{C}}{n}e_j$. A nc set $\Upsilon_{\rm
nc}\subseteq\ncspaced{\mathbb{C}}{d}$ is called \emph{complete
Reinhardt-like about $Y\in\mattuple{\mathbb{C}}{s}{d}$}
\index{complete Reinhardt-like nc set about $Y$} if
$(\Upsilon_{\rm nc})_{sm}\subseteq \mattuple{\mathbb{C}}{sm}{d}$
is complete Reinhardt-like about $\bigoplus_{\alpha=1}^mY$ for
every $m\in\mathbb{N}$.

For $Y=(Y_1,\ldots,Y_d)\in\mattuple{\mathbb{C}}{s}{d}$ and
$\mathbf{r}=(r_1,\ldots,r_d)$ a $d$-tuple of positive extended
real numbers  (i.e., $0< r_j\le\infty$, $j=1$, \ldots, $d$),
define the \emph{nc polydisk centered at $Y$ of multiradius
$\mathbf{r}$} \index{nc polydisk} \index{$(\mathbb{D}^d)_{\rm
nc}(Y,\mathbf{r})$} \begin{multline*} (\mathbb{D}^d)_{\rm
nc}(Y,\mathbf{r})\\
=\Big\{Z=(Z_1,\ldots,Z_d)\in\coprod_{m=1}^\infty\mattuple{\mathbb{C}}{sm}{d}\colon
\Big\|Z_j-\bigoplus_{\alpha=1}^mY_j\Big\|_{sm}<r_j,\ j=1,\ldots,d\Big\}.
\end{multline*}
Clearly, $(\mathbb{D}^d)_{\rm nc}(Y,\mathbf{r})$ is a complete
Reinhardt-like nc set about $Y$.

For $\mathbf{r}=(r_1,\ldots,r_d)$ a $d$-tuple of nonnegative real
numbers, we define \index{$\mu(\mathbf{r})$}
\begin{equation}\label{eq:br-mu}
\mu(\mathbf{r})=\limsup_{\ell\to\infty}\sqrt[\ell]{\sum_{w\in\free_d\colon
|w|=\ell}\mathbf{r}^w\|f_w\|_{\mathcal{L}_{\rm cb}^\ell}}.
\end{equation}
\begin{lem}\label{lem:mu}
The function $\mu(\cdot)$ is continuous on $d$-tuples of strictly
positive real numbers.
\end{lem}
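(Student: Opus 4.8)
The claim is that $\mu(\mathbf{r})$, defined by
$$\mu(\mathbf{r})=\limsup_{\ell\to\infty}\sqrt[\ell]{\sum_{|w|=\ell}\mathbf{r}^w\|f_w\|_{\mathcal{L}_{\rm cb}^\ell}},$$
is continuous on the open positive orthant $(\mathbb{R}_+\setminus\{0\})^d$. The plan is to exploit the monotonicity and homogeneity-type structure of $\mu$ in the variables $\mathbf{r}$, reducing continuity to a squeeze between two comparable evaluations.

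First I would record the elementary monotonicity: if $\mathbf{r}=(r_1,\ldots,r_d)$ and $\mathbf{r}'=(r_1',\ldots,r_d')$ with $0\le r_j\le r_j'$ for all $j$, then for every word $w$ we have $\mathbf{r}^w\le(\mathbf{r}')^w$ (all factors are nonnegative), hence $\sum_{|w|=\ell}\mathbf{r}^w\|f_w\|_{\mathcal{L}^\ell_{\rm cb}}\le\sum_{|w|=\ell}(\mathbf{r}')^w\|f_w\|_{\mathcal{L}^\ell_{\rm cb}}$, and taking $\ell$-th roots and $\limsup$ gives $\mu(\mathbf{r})\le\mu(\mathbf{r}')$. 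So $\mu$ is non-decreasing in each coordinate. Next I would record a scaling estimate: for $\lambda>0$ and a word $w$ of length $\ell$, $(\lambda\mathbf{r})^w=\lambda^\ell\mathbf{r}^w$, so $\sum_{|w|=\ell}(\lambda\mathbf{r})^w\|f_w\|_{\mathcal{L}^\ell_{\rm cb}}=\lambda^\ell\sum_{|w|=\ell}\mathbf{r}^w\|f_w\|_{\mathcal{L}^\ell_{\rm cb}}$, and therefore $\mu(\lambda\mathbf{r})=\lambda\,\mu(\mathbf{r})$; that is, $\mu$ is positively homogeneous of degree $1$.

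Now fix $\mathbf{r}$ in the open positive orthant and let $\varepsilon>0$ be small enough that $(1+\varepsilon)^{-1}\mathbf{r}$ still lies in the orthant (automatic) and so that we can compare. Given any $\mathbf{r}'$ with $\|\mathbf{r}'-\mathbf{r}\|$ small, choose $\varepsilon>0$ with $(1+\varepsilon)^{-1}r_j\le r_j'\le(1+\varepsilon)r_j$ for all $j$ — this is possible precisely because each $r_j>0$, and the required smallness of $\|\mathbf{r}'-\mathbf{r}\|$ depends only on $\varepsilon$ and on $\min_j r_j>0$. By monotonicity and homogeneity,
$$(1+\varepsilon)^{-1}\mu(\mathbf{r})=\mu\big((1+\varepsilon)^{-1}\mathbf{r}\big)\le\mu(\mathbf{r}')\le\mu\big((1+\varepsilon)\mathbf{r}\big)=(1+\varepsilon)\mu(\mathbf{r}).$$
Hence $|\mu(\mathbf{r}')-\mu(\mathbf{r})|\le\varepsilon\,\mu(\mathbf{r})$ (using $(1+\varepsilon)-1=\varepsilon$ and $1-(1+\varepsilon)^{-1}\le\varepsilon$), which tends to $0$ as $\varepsilon\to0$, i.e. as $\mathbf{r}'\to\mathbf{r}$. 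This proves continuity at $\mathbf{r}$, and since $\mathbf{r}$ was arbitrary in the open orthant, we are done. One caveat: the argument as stated gives a clean bound only when $\mu(\mathbf{r})<\infty$; if $\mu(\mathbf{r})=\infty$ then monotonicity already forces $\mu(\mathbf{r}')=\infty$ for all $\mathbf{r}'\ge(1+\varepsilon)^{-1}\mathbf{r}$, in particular in a neighborhood of $\mathbf{r}$, so continuity (as a map to $[0,\infty]$) holds there too.

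The only genuine subtlety — and the step I would be most careful about — is the interchange of the two sandwiching inequalities with the $\limsup$ over $\ell$: monotonicity of the summand for each $\ell$ passes to the $\ell$-th root and then to the $\limsup$ without trouble, and the factor $\lambda^\ell$ under an $\ell$-th root becomes exactly $\lambda$, so homogeneity is exact rather than merely asymptotic. Everything else is the routine observation that $r_j>0$ lets us absorb an additive perturbation into a multiplicative one with a controlled constant. I expect no serious obstacle; the homogeneity identity is what makes the estimate sharp enough to yield continuity rather than just, say, upper or lower semicontinuity.
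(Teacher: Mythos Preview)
Your proof is correct and follows essentially the same approach as the paper: both exploit that $\mu$ is nondecreasing in each coordinate and positively homogeneous of degree $1$ to obtain a multiplicative squeeze of the form $(1\pm\varepsilon)\mu(\mathbf{r})$ around $\mu(\mathbf{r}')$ for $\mathbf{r}'$ near $\mathbf{r}$. Your write-up is slightly more explicit (you prove monotonicity and homogeneity rather than just asserting them, and you address the case $\mu(\mathbf{r})=\infty$), but the argument is the same.
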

\begin{proof}
Let $\mathbf{r}^0=(r_1^0,\ldots,r_d^0)\in\mathbb{R}^d_+$ with
$r_j^0>0$ for all $j=1,\ldots,d$. Let $\epsilon>0$. Then for all
$d$-tuples $\mathbf{r}$ of positive real numbers satisfying
$1-\epsilon<\frac{r_j^0}{r_j}<1+\epsilon$ for all $j=1,\ldots,d$
we have
$$(1-\epsilon)\mu(\mathbf{r})=\mu((1-\epsilon)\mathbf{r})\le\mu(\mathbf{r}^0)\le\mu((1+\epsilon)\mathbf{r})=
(1+\epsilon)\mu(\mathbf{r}),$$ since the function $\mu(\cdot)$ is
nondecreasing in each argument and homogeneous of degree one.
Therefore
$\lim_{\mathbf{r}\to\mathbf{r}^0}\mu(\mathbf{r})=\mu(\mathbf{r}^0)$,
i.e., $\mu(\cdot)$ is continuous at $\mathbf{r}^0$.
\end{proof}

We call nonnegative extended real numbers $\rho_1$, \ldots,
$\rho_d$
 \emph{associated radii of normal convergence of the
series} \index{associated radii of normal convergence of a series}
\eqref{eq:semigr-power-series} if $\mu(\mathbf{r})\le 1$ for every
$\mathbf{r}=(r_1,\ldots,r_d)$ with $r_j\le\rho_j$, $j=1,\ldots,d$,
and for every $d$-tuple $\mathbf{r}'=(r_1',\ldots,r_d')$ of
nonnegative extended real numbers with $r_j'\ge \rho_j$ so that
strict inequality occurs for at least one $j$, there exists a
$d$-tuple $\mathbf{r}=(r_1,\ldots,r_d)$ of nonnegative real
numbers $r_j\le r_j'$, $j=1,\ldots,d$, such that
$\mu(\mathbf{r})>1$.

\begin{thm}\label{thm:free-series}
1. The set of absolute convergence of the series
\eqref{eq:semigr-power-series}, \index{$\Upsilon_{\rm free, abs}$}
\begin{multline}\label{eq:free-abs-conv-set}
\Upsilon_{\rm free,
abs}=\coprod_{m=1}^\infty\Big\{X\in\mattuple{\mathbb{C}}{sm}{d}\colon
\sum_{w\in\free_d}\Big\|\Big(X-\bigoplus_{\alpha=1}^mY\Big)^{\odot_sw}f_w\Big\|_{sm}<\infty\Big\}\\
\subseteq\Upsilon_{\rm nc}\subseteq\ncspaced{\mathbb{C}}{d},
\end{multline}
is a nc set which is complete Reinhardt-like about $Y$. Here $\Upsilon_{\rm nc}$ is the convergence set (see
Theorem \ref{thm:g-series}) of the corresponding series \eqref{eq:power-series}; see \eqref{eq:lw-forms} and
\eqref{eq:lw-powers}.

2. The set \index{$\Upsilon_{\rm free, unif-normal}$}
\begin{multline}\label{eq:free-normal} \Upsilon_{\rm free,
unif-normal} =\Big\{X\in\Upsilon_{\rm free,
abs}\colon \exists\delta>0 {\ \rm such\ that\ }\\
\sum_{w\in\free_d} \sup_{W\in B_{\rm nc}(X,\delta)}
\Big\|\Big(W-\bigoplus_{\alpha=1}^{m_W}Y\Big)^{\odot_sw}f_w\Big\|_{sm_W}<\infty\Big\}
\subseteq\Upsilon_{\rm nc,unif-normal}
\end{multline}
(see Theorem \ref{thm:king-series} for the definition of the set $\Upsilon_{\rm nc,unif-normal}$; here $m_W$ is
the size of a block matrix $W$ with $s \times s$ blocks) is a uniformly-open subset of $\Upsilon_{\rm free,abs}$
which is nonempty if and only if
 it contains $Y$. In this
case, $\Upsilon_{\rm free, unif-normal}$ is complete Reinhardt-like about $Y$ (i.e., $(\Upsilon_{\rm free,
unif-normal})_{sm}$ is
 a complete Reinhardt-like set about $\bigoplus_{\alpha=1}^mY$ for every
 $m\in\mathbb{N}$).

3. For every $m\in\mathbb{N}$ and $X\in(\Upsilon_{\rm free, unif-normal})_{sm}$ there exists a uniformly-open
complete Reinhardt-like set $\Upsilon_{{\rm free}, X}$ about $\bigoplus_{\alpha=1}^mY$, which contains $X$ and
such that
\begin{equation}\label{eq:normal-free-conv}
\sum_{w\in\free_d} \sup_{W\in \Upsilon_{{\rm free}, X}}
\Big\|\Big(W-\bigoplus_{\alpha=1}^{m_W}Y\Big)^{\odot_sw}f_w\Big\|_{sm_W}<\infty.
\end{equation}

4. Let $\brho=(\rho_1,\ldots,\rho_d)$ be a $d$-tuple of associated radii of normal convergence of the series
\eqref{eq:semigr-power-series}. Then the series converges absolutely on every nc polydisk $(\mathbb{D}^d)_{\rm
nc}(Y,\mathbf{r})$ with $r_j<\rho_j$ for all $j=1,\ldots,d$, moreover
\begin{equation}\label{eq:normal-free-conv-poly}
\sum_{w\in\free_d} \sup_{W \in (\mathbb{D}^d)_{\rm
nc}(Y,\mathbf{r})}
\Big\|\Big(W-\bigoplus_{\alpha=1}^{m_W}Y\Big)^{\odot_sw}f_w\Big\|_{sm_W}<\infty.
\end{equation}
For every $\mathbf{r}$ with $r_j\ge \rho_j$ for all $j=1,\ldots,d$, so that strict inequality occurs for at least
one $j$, \eqref{eq:normal-free-conv-poly} fails. In the case $s=1$, there exists $Z\in(\mathbb{D}^d)_{\rm
nc}(Y,\mathbf{r})$ such that the series \eqref{eq:semigr-power-series} does not converge absolutely at $Z$. The
set $\Upsilon_{\rm free, unif-normal}$ is nonempty if and only if there exists a $d$-tuple of strictly positive
associated radii of normal convergence of the series \eqref{eq:semigr-power-series}.
\end{thm}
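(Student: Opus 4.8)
Looking at Theorem \ref{thm:free-series}, the final statement to prove is Part 4, which concerns the absolute convergence of the series \eqref{eq:semigr-power-series} on nc polydisks in terms of associated radii of normal convergence $\brho = (\rho_1,\ldots,\rho_d)$.

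\medskip

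\textbf{Proof proposal for Theorem \ref{thm:free-series}(4).}

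The plan is to reduce everything to the scalar power series $\sum_{\ell} \mu(\mathbf{r})^{\ell}$ in a single variable and to the Cauchy--Hadamard-type estimates already established, exploiting Proposition \ref{prop:k-cb-norm} which expresses the completely bounded norm of a multilinear map as a supremum over the diagonal. First I would fix a $d$-tuple $\mathbf{r} = (r_1,\ldots,r_d)$ with $r_j < \rho_j$ for all $j$. Since $\brho$ is a $d$-tuple of associated radii of normal convergence, by definition $\mu(\mathbf{r}) \le 1$; but I need strict control, so I would pick $\mathbf{r}'$ with $r_j < r_j' < \rho_j$ for all $j$, note $\mu(\mathbf{r}') \le 1$, and use the homogeneity of degree one of $\mu(\cdot)$ together with Lemma \ref{lem:mu} (continuity of $\mu$ on strictly positive tuples) to conclude that for $q := \max_j r_j/r_j' < 1$ we have $\mu(\mathbf{r}) = \mu(q \cdot (\mathbf{r}'/q)) $ is controlled; more directly, $\mu(\mathbf{r}) \le q\,\mu(\mathbf{r}'/q \cdot q) $... the clean way is: since $\mu$ is nondecreasing in each argument and homogeneous of degree one, $\mu(\mathbf{r}) \le \mu(q\mathbf{r}') = q\mu(\mathbf{r}') \le q < 1$. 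Hence there exist $C > 0$ and $\eta \in (q,1)$ with $\sum_{|w|=\ell} \mathbf{r}^w \|f_w\|_{\mathcal{L}^\ell_{\mathrm{cb}}} \le C\eta^\ell$ for all $\ell$.

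\medskip

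Next I would use Proposition \ref{prop:k-cb-norm} to bound each summand: for $W \in (\mathbb{D}^d)_{\mathrm{nc}}(Y,\mathbf{r})_{sm_W}$, writing $V_j := W_j - \bigoplus_{\alpha=1}^{m_W} Y_j$, we have $\|V_j\|_{sm_W} < r_j$, and by the tensor-product/pseudo-power interpretation of $\Delta_R^{w^\top}f$ together with \eqref{eq:Ramamurti} and the definition of $\|\cdot\|_{\mathcal{L}^\ell_{\mathrm{cb}}}$,
$$
\Big\|\Big(W - \bigoplus_{\alpha=1}^{m_W} Y\Big)^{\odot_s w} f_w\Big\|_{sm_W} \le \|f_w\|_{\mathcal{L}^\ell_{\mathrm{cb}}}\, \|V_{i_1}\|_{sm_W} \cdots \|V_{i_\ell}\|_{sm_W} \le \mathbf{r}^w \|f_w\|_{\mathcal{L}^\ell_{\mathrm{cb}}}
$$
for $w = g_{i_1}\cdots g_{i_\ell}$. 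Here I am using that $\mat{\mathbb{C}}{s}$ carries the canonical operator space structure so that Proposition \ref{prop:k-cb-norm} applies to the multilinear map $f_w$ on $\mat{\mathbb{C}}{s}$-matrices, and that the operator-space norms are submultiplicative under the $\odot_s$ product (this is the operator-space version of \ref{subsub:prod}, with constant $1$). Summing over $w$ and using the estimate from the previous paragraph gives
$$
\sum_{w\in\free_d} \sup_{W \in (\mathbb{D}^d)_{\mathrm{nc}}(Y,\mathbf{r})} \Big\|\Big(W - \bigoplus_{\alpha=1}^{m_W} Y\Big)^{\odot_s w} f_w\Big\|_{sm_W} \le \sum_{\ell=0}^\infty \sum_{|w|=\ell} \mathbf{r}^w \|f_w\|_{\mathcal{L}^\ell_{\mathrm{cb}}} \le C\sum_{\ell=0}^\infty \eta^\ell < \infty,
$$
which is \eqref{eq:normal-free-conv-poly}. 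This also shows $(\mathbb{D}^d)_{\mathrm{nc}}(Y,\mathbf{r}) \subseteq \Upsilon_{\mathrm{free, abs}}$.

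\medskip

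For the converse direction, fix $\mathbf{r}$ with $r_j \ge \rho_j$ for all $j$ and $r_{j_0} > \rho_{j_0}$ for some $j_0$. By the definition of associated radii of normal convergence, there is $\mathbf{r}'' = (r_1'',\ldots,r_d'')$ with $r_j'' \le r_j$ and $\mu(\mathbf{r}'') > 1$. Then $\limsup_\ell \sqrt[\ell]{\sum_{|w|=\ell} (\mathbf{r}'')^w \|f_w\|_{\mathcal{L}^\ell_{\mathrm{cb}}}} > 1$, so the scalar series $\sum_\ell \big(\sum_{|w|=\ell} (\mathbf{r}'')^w \|f_w\|_{\mathcal{L}^\ell_{\mathrm{cb}}}\big)$ diverges; a fortiori \eqref{eq:normal-free-conv-poly} with $\mathbf{r}$ in place of $\mathbf{r}''$ fails, since each term is at least the corresponding one for $\mathbf{r}''$. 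The main obstacle --- and the only genuinely delicate point --- is the case $s=1$ claim that there is an actual point $Z \in (\mathbb{D}^d)_{\mathrm{nc}}(Y,\mathbf{r})$ where the series fails to converge absolutely. Here I would argue as in Example \ref{ex:unif_neq_norm} / the proof of Theorem \ref{thm:king-series}(5): for infinitely many $\ell$ with $\sum_{|w|=\ell}(\mathbf{r}'')^w\|f_w\|_{\mathcal{L}^\ell_{\mathrm{cb}}}$ large, use Proposition \ref{prop:k-cb-norm} (for $s=1$, with \eqref{eq:s=1}--\eqref{eq:s=1_cb_norm}) to select, for each $j$, a matrix $Z_j$ of appropriate operator norm $< r_j$ realizing the completely bounded norm nearly; one assembles a single $Z = (Z_1,\ldots,Z_d) \in (\mathbb{D}^d)_{\mathrm{nc}}(Y,\mathbf{r})$ (after translating by $\bigoplus Y$ and using the block-diagonal/ampliation trick of Lemma \ref{lem:hvacts} to handle all the offending $\ell$ simultaneously, passing to a direct sum of matrices of increasing sizes with a null block for padding) at which $\sum_w \|Z^{\odot w}f_w\|$ diverges. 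Finally, $\Upsilon_{\mathrm{free, unif-normal}} \ne \emptyset$ iff there is a $d$-tuple of strictly positive associated radii of normal convergence: if such $\brho$ exists, then $(\mathbb{D}^d)_{\mathrm{nc}}(Y, \mathbf{r})$ for small $\mathbf{r}$ is a uniformly-open nc neighborhood of $Y$ contained in $\Upsilon_{\mathrm{free, unif-normal}}$ by Part 3 / the estimate above, so $Y \in \Upsilon_{\mathrm{free, unif-normal}}$; conversely if $\Upsilon_{\mathrm{free, unif-normal}} \ne \emptyset$ then by Part 2 it contains $Y$, hence a nc ball $B_{\mathrm{nc}}(Y,\delta)$, hence a nc polydisk $(\mathbb{D}^d)_{\mathrm{nc}}(Y, (\delta',\ldots,\delta'))$ for some $\delta' > 0$ (since nc balls contain nc polydisks of a comparable multiradius by Remark \ref{rem:exhaust}), and on such a polydisk normal convergence gives $\mu((r,\ldots,r)) \le 1$ for $r < \delta'$, whence $\rho_j \ge \delta' > 0$ for all $j$ is a valid choice of strictly positive associated radii.
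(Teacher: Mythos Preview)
Your argument for the convergence direction of Part~4 is correct and essentially matches the paper's (the paper scales by $1+\epsilon$ rather than by your $q=\max_j r_j/r_j'$, but this is cosmetic). The problems are in the converse direction.

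First, your ``a fortiori'' is in the wrong direction. In the first part you only established
\[
\sup_{W\in(\mathbb{D}^d)_{\rm nc}(Y,\mathbf{r}'')}\Big\|\Big(W-\bigoplus Y\Big)^{\odot_s w}f_w\Big\|\ \le\ (\mathbf{r}'')^w\|f_w\|_{\mathcal{L}^\ell_{\rm cb}},
\]
so divergence of the scalar series $\sum_w(\mathbf{r}'')^w\|f_w\|_{\mathcal{L}^\ell_{\rm cb}}$ does \emph{not} automatically force failure of \eqref{eq:normal-free-conv-poly}. You need the reverse inequality, i.e., the $d$-tuple analog of Proposition~\ref{prop:k-cb-norm}:
\[
\|f_w\|_{\mathcal{L}^\ell_{\rm cb}}=\sup\big\{\|W^{\odot_s w}f_w\|_{sm}:m\in\mathbb{N},\ W\in(\mat{\mathbb{C}}{sm})^d,\ \|W_j\|\le 1\big\}.
\]
This is \eqref{eq:w-cb-norm} in the paper; it is proved by the same upper-bidiagonal trick as in Proposition~\ref{prop:k-cb-norm}, and the paper then exhibits, for each bad $\ell$, a single finite direct sum $W(\ell)=\bigoplus_{|w|=\ell}(W_w+\bigoplus Y)$ in the polydisk making $\sum_{|w|=\ell}\|(\,\cdot\,)^{\odot_s w}f_w\|>1$.

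Second, the $s=1$ case is much simpler than your sketch suggests, and your ``direct sum of matrices of increasing sizes'' cannot produce a point of the nc polydisk (which lives at a fixed finite size). For $s=1$, \eqref{eq:s=1}--\eqref{eq:s=1_cb_norm} give $\|z^w f_w\|_1=|z^w|\,\|f_w\|_1=|z^w|\,\|f_w\|_{\mathcal{L}^{|w|}_{\rm cb}}$ for a \emph{scalar} point $z$, so one simply takes $Z=\widetilde{\mathbf{r}}+Y\in(\mathbb{D}^d)_{\rm nc}(Y,\mathbf{r})_1$ and reads off $\sum_w\|\widetilde{\mathbf{r}}^w f_w\|_1=\sum_w\widetilde{\mathbf{r}}^w\|f_w\|_{\mathcal{L}^{|w|}_{\rm cb}}=\infty$. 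Finally, in the last equivalence, showing $\mu((\delta',\ldots,\delta'))\le 1$ does not make $(\delta',\ldots,\delta')$ a $d$-tuple of associated radii---the definition carries a maximality clause. One must actually \emph{construct} such a $\brho$ from a starting $\mathbf{r}^0$ with $\mu(\mathbf{r}^0)\le 1$; the paper does this by a coordinate-by-coordinate supremum (using Lemma~\ref{lem:mu}) in the last paragraph of its proof. (Also, Remark~\ref{rem:exhaust} is not the right reference for ``nc balls contain nc polydisks''; that inclusion is the elementary estimate $\|W-\bigoplus X\|\le\sum_j\|W_j-\bigoplus X_j\|\,\|e_j\|_1$ used in the proof of Parts~2--3.)
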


\begin{proof}[Proof of Theorem \ref{thm:free-series}]
1. The statements in this part are straightforward.

2 $\&$ 3. The inclusion in \eqref{eq:free-normal} and the fact
that $\Upsilon_{\rm free, unif-normal}$ is a uniformly-open subset
of $\Upsilon_{\rm free, abs}$ are obvious.

Clearly, if $Y\in\Upsilon_{\rm free, unif-normal}$, then $\Upsilon_{\rm free, unif-normal}$ is nonempty.
Conversely, suppose there exist $m\in\mathbb{N}$ and $X\in(\Upsilon_{\rm free, unif-normal})_{sm}$. Then there
exists $\delta>0$ such that
\begin{equation}\label{eq:free-normal-ball}
 \sum_{w\in\free_d} \sup_{W\in B_{\rm nc}(X,\delta)}
\Big\|\Big(W-\bigoplus_{\alpha=1}^{m_W}Y\Big)^{\odot_sw}f_w\Big\|_{sm_W}<\infty.
\end{equation}
 There
exists a nc polydisk $(\mathbb{D}^d)_{\rm nc}(X,\brho)$ contained
in $B_{\rm nc}(X,\delta)$, e.g., one can set
$\rho_j=\frac{\delta}{d\|e_j\|_1}$, $j=1,\ldots,d$, and then for
$W\in(\mathbb{D}^d)_{\rm nc}(X,\brho)_{smk}$ one has
$$\Big\|W-\bigoplus_{\alpha=1}^kX\Big\|_{smk}\le\sum_{j=1}^d\Big\|W_j-\bigoplus_{\alpha=1}^kX_j\Big\|_{smk}
\|e_j\|_1<\delta,$$ i.e., $W\in B_{\rm nc}(X,\delta)_{smk}$. Since
$\Gamma=(\mathbb{D}^d)_{\rm
nc}\left(\bigoplus_{\alpha=1}^mY,\brho\right)$ is a complete
Reinhardt-like nc set about $\bigoplus_{\alpha=1}^mY$, it follows
from Lemma \ref{lem:HiPh-Reinhardt} that the inequality in
\eqref{eq:free-normal-ball} holds with the suprema over $B_{\rm
nc}(X,\delta)$ replaced by the suprema over $$\Upsilon_{\rm
free,X}:=\bigcup\limits_{t\in\overline{\mathbb{D}^d}}(\mathbb{D}^d)_{\rm
nc}\!\left(\bigoplus_{\alpha=1}^mY+t\circ
\Big(X-\bigoplus_{\alpha=1}^mY\Big),\brho\right).$$ This implies
part 3 of the theorem with $\Upsilon_{\rm free,X}$ as above. In
particular, $\bigoplus_{\alpha=1}^mY\in\Upsilon_{\rm free,
unif-normal}$ and
$$\sum_{w\in\free_d} \sup_{W\in B_{\rm nc}(\bigoplus_{\alpha=1}^mY,\delta)}
\Big\|\Big(W-\bigoplus_{\alpha=1}^{m_W}Y\Big)^{\odot_sw}f_w\Big\|_{sm_W}<\infty.$$
By picking up only $W\in B_{\rm
nc}\left(\bigoplus_{\alpha=1}^mY,\delta\right)$ of the form
$W=\bigoplus_{\alpha=1}^{m}V$, where $V\in B_{\rm nc}(Y,\delta)$,
we obtain that
$$\sum_{w\in\free_d} \sup_{V\in B_{\rm nc}(Y,\delta)}
\Big\|\Big(V-\bigoplus_{\alpha=1}^{m_V}Y\Big)^{\odot_sw}f_w\Big\|_{sm_V}<\infty,$$
i.e., $Y\in\Upsilon_{\rm free, unif-normal}$.

 Since $Y\in\Upsilon_{\rm free, unif-normal}$ and since the
union of complete Reinhardt-like sets about
$\bigoplus_{\alpha=1}^mY$ for all $m\in\mathbb{N}$ is a complete
Reinhardt-like set about $Y$, the set $$\Upsilon_{\rm free,
unif-normal}=\bigcup_{X\in\Upsilon_{\rm free,
unif-normal}}\Upsilon_{\rm free,X}$$ is a complete Reinhardt-like
set about $Y$. This completes the proof of part 2.

4. Let $\brho=(\rho_1,\ldots,\rho_d)$ be a $d$-tuple of associated
radii of absolute convergence of the series
\eqref{eq:semigr-power-series}, and let $\mathbf{r}=(r_1,\ldots,
r_d)$ be a $d$-tuple of positive real numbers such that
$r_j<\rho_j$. Then there exists $\epsilon>0$ small enough, so that
$\widetilde{r}_j=r_j(1+\epsilon)<\rho_j$, $j=1,\ldots,d$. Then for
$\widetilde{\mathbf{r}}=(\widetilde{r}_1,\ldots,\widetilde{r}_d)$
we have $\mu(\widetilde{\mathbf{r}})\le 1$. Since the function
$\mu(\cdot)$ is homogeneous of degree $1$, we have
$$\mu(\mathbf{r})=\frac{\mu(\widetilde{\mathbf{r}})}{1+\epsilon}\le\frac{1}{1+\epsilon}.$$
Hence there exists $\ell_0\in\mathbb{N}$ such that
$$\sqrt[\ell]{\sum_{|w|=\ell}\mathbf{r}^w\|f_w\|_{\mathcal{L}_{\rm
cb}^\ell}}<\frac{1+0.5\epsilon}{1+\epsilon}<1$$ for all
$\ell>\ell_0$. This implies that the series
\eqref{eq:semigr-power-series} converges absolutely on the nc
polydisk $(\mathbb{D}^d)_{\rm nc}(Y,\mathbf{r})$, moreover
\begin{multline*}
\sum_{w\in\free_d} \sup_{W\in (\mathbb{D}^d)_{\rm
nc}(Y,\mathbf{r})}
\Big\|\Big(W-\bigoplus_{\alpha=1}^{m_W}Y\Big)^{\odot_sw}f_w\Big\|_{sm_W
}=
\sum_{\ell=0}^\infty\sum_{|w|=\ell}\mathbf{r}^w\|f_w\|_{\mathcal{L}_{\rm
cb}^\ell}\\
<\sum_{\ell=0}^{\ell_0}\sum_{|w|=\ell}\mathbf{r}^w\|f_w\|_{\mathcal{L}_{\rm
cb}^\ell}+\sum_{\ell=\ell_0+1}^\infty\left(\frac{1+0.5\epsilon}{1+\epsilon}\right)^\ell<\infty.
\end{multline*}

Now suppose that $\mathbf{r}=(r_1,\ldots,r_d)$ is a $d$-tuple of
positive extended real numbers with $r_j\ge\rho_j$ for all $j$ and
$r_i>\rho_i$ for some $i$. Then there exists a $d$-tuple
$\widetilde{\mathbf{r}}=(\widetilde{r}_1,\ldots,\widetilde{r}_d)$
of positive real numbers with $\widetilde{r}_j < r_j$ such that
$\mu(\widetilde{\mathbf{r}})>1$. Then there exists $\epsilon>0$
such that $\mu(\widetilde{\mathbf{r}})>1+\epsilon$. Hence
\begin{equation}\label{eq:normal-diverge}
\sum_{|w|=\ell}\widetilde{\mathbf{r}}^w\|f_w\|_{\mathcal{L}_{\rm
cb}^\ell}>(1+0.5\epsilon)^\ell>1+0.5\ell\epsilon
\end{equation}
 for infinitely
many $\ell\in\mathbb{N}$. Let $S\subseteq\mathbb{N}$ be the
(infinite) set of all such $\ell$'s. For any $\ell\in S$ and
$w\in\free_d$ with $|w|=\ell$, using an argument similar to the
one in Proposition \ref{prop:k-cb-norm} we obtain that
\begin{align}\label{eq:w-cb-norm}
\|f_w\|_{\mathcal{L}_{\rm cb}^\ell}
&=\sup\{\|W^{\odot_sw}f_w\|_{sm} \colon  \\
\nonumber & m\in\mathbb{N},\ W\in(\mat{\mathbb{C}}{sm})^d,\
\|W_j\|_{sm}\le 1,\ j=1,\ldots,d\}.
\end{align}
Therefore, there exist $m_w\in\mathbb{N}$ and
$W_w\in(\mathbb{D}^d)_{\rm nc}(0_{s\times
s},\widetilde{\mathbf{r}})_{sm_w}$ such that
$$\|W_w^{\odot_s
w}f_w\|_{sm_w}>\widetilde{\mathbf{r}}^w\|f_w\|_{\mathcal{L}_{\rm
cb}^\ell}-\frac{0.5\ell\epsilon}{d^\ell}.$$ Set
$m(\ell)=\sum_{|w|=\ell}m_w$ and
$W(\ell)=\bigoplus_{|w|=\ell}\Big(W_w+\bigoplus_{\alpha=1}^{m_w}Y\Big)$.
Then $W(\ell)\in(\mathbb{D}^d)_{\rm
nc}(Y,\widetilde{\mathbf{r}})_{sm(\ell)}$ and
$$\sum_{|w|=\ell}\Big\|\Big(W(\ell)-\bigoplus_{\alpha=1}^{m(\ell)}Y\Big)^{\odot_sw}f_w\|_{sm(\ell)}>
\sum_{|w|=\ell}\widetilde{\mathbf{r}}^w\|f_w\|_{\mathcal{L}_{\rm
cb}^\ell}-0.5\ell\epsilon>1.$$ Thus for $\widetilde{\mathbf{r}}$,
and hence for $\mathbf{r}$, \eqref{eq:normal-free-conv-poly}
fails.

Now, let $s=1$. Choosing $\widetilde{\mathbf{r}}\in(\mathbb{D}^d)_{\rm nc}(0_{1\times 1},\mathbf{r})_{1}$  as in
the preceding paragraph, we obtain from \eqref{eq:s=1},  \eqref{eq:s=1_cb_norm}, and \eqref{eq:normal-diverge},
that
$$\sum_{w\in\free_d}\|\widetilde{\mathbf{r}}^wf_w\|_1=
\sum_{w\in\free_d}\widetilde{\mathbf{r}}^w\|f_w\|_{\mathcal{L}_{\rm
cb}^{|w|}}=\infty,$$ i.e., the series
\eqref{eq:semigr-power-series} does not converge absolutely at
$Z=\widetilde{\mathbf{r}}+Y$.

For the proof of the last statement, first suppose that
$\brho=(\rho_1,\ldots,\rho_d)$ is a $d$-tuple of strictly positive
associated radii of normal convergence of the series
\eqref{eq:semigr-power-series}. We have $B_{\rm
nc}(Y,\delta)\subseteq(\mathbb{D}^d)_{\rm
nc}(Y,(\delta,\ldots,\delta))\subseteq(\mathbb{D}^d)_{\rm
nc}(Y,\brho)$ for any positive real $\delta<\min\limits_{1\le j\le
d}\rho_j$, where for the first inclusion we used the fact that the
block projections $\pi^s_{ij}$ are completely contractive. By the
(already proven) first statement of part 4,
\eqref{eq:normal-free-conv-poly} holds with
$\mathbf{r}=(\delta,\ldots,\delta)$. Then
\begin{equation}\label{eq:normal-delta}
\sum_{w\in\free_d} \sup_{W\in B_{\rm nc}(Y,\delta)}
\Big\|\Big(W-\bigoplus_{\alpha=1}^{m_W}Y\Big)^{\odot_sw}f_w\Big\|_{sm_W}<\infty,
\end{equation}
 i.e., $Y\in\Upsilon_{\rm
free,unif-normal}$. Conversely, if $\Upsilon_{\rm
free,unif-normal}$ is nonempty, then by part 2 of the theorem we
have that $Y\in\Upsilon_{\rm free,unif-normal}$. The latter means
that \eqref{eq:normal-delta} holds for some $\delta>0$. Then there
exists a polydisk $(\mathbb{D}^d)_{\rm nc}(Y,\mathbf{r}^0)$
contained in $B_{\rm nc}(Y,\delta)$, say for
$r^0_j=\frac{\delta}{d\|e_j\|_1}$, $j=1,\ldots,d$, so that
\eqref{eq:normal-free-conv-poly} holds with $\mathbf{r}^0$ in the
place of $\mathbf{r}$. Clearly, in this case $\mu(\mathbf{r}^0)\le
1$. Let $$\rho_1=\sup\{r_1\ge r_1^0\colon
\mu(r_1,r_2^0,\ldots,r_d^0)\le 1\}.$$ Then for any positive real
number $r_1\le \rho_1$ we have $\mu(r_1,r_2^0,\ldots,r_d^0)\le 1$
(in the case of $\rho_1<\infty$ this is a consequence of Lemma
\ref{lem:mu}). Let $$\rho_2= \sup\{r_2\ge r_2^0\colon
\mu(r_1,r_2,r_3^0,\ldots,r_d^0)\le 1\ {\rm for\ every\ }
r_1<\rho_1\}.$$ Then for any positive real numbers $r_1\!\le\!
\rho_1$ and $r_2\!\le\!\rho_2$ we have
$\mu(r_1,r_2,r_3^0,\ldots,r_d^0)\!\le\! 1$ (here we use Lemma
\ref{lem:mu} again in the case of $\rho_1<\infty$ and/or
$\rho_2<\infty$). Continuing this construction in an obvious way,
we define $\rho_3,\ldots,\rho_{d-1}$, and finally,
$$\rho_d=\sup\{r_d\ge r_d^0\colon \mu(r_1,\ldots,r_{d-1},r_d)\le
1\ {\rm for\ every\ } r_1<\rho_1,\ldots,r_{d-1}<\rho_{d-1}\}.$$
Then for any $d$-tuple $\mathbf{r}=(r_1,\ldots,r_d)$ of positive
real numbers $r_j\le\rho_j$ we have $\mu(\mathbf{r})\le 1$ (we use
Lemma \ref{lem:mu} again when $\rho_j$ are finite). It is clear
that $\brho=(\rho_1,\ldots,\rho_d)$ is a $d$-tuple of strictly
positive associated radii of normal convergence of the series
\eqref{eq:semigr-power-series}.
\end{proof}
\begin{rem}\label{rem:trich}
It follows from the homogeneity and monotonicity of $\mu(\cdot)$
that we have the following trichotomy:\begin{enumerate}
    \item $\mu(\mathbf{r})=\infty$ for some, and hence for all, $d$-tuples $\mathbf{r}$ of strictly positive real
    numbers; thus there is no strictly positive associated radii
    of normal convergence of the series
\eqref{eq:semigr-power-series}.
    \item $0<\mu(\mathbf{r})<\infty$ for some, and hence for all, $d$-tuples $\mathbf{r}$ of strictly positive real
    numbers; the argument in the last paragraph of the proof of
    Theorem \ref{thm:free-series} shows that there exists a
    $d$-tuple of strictly positive associated radii
    of normal convergence of
\eqref{eq:semigr-power-series} not all of which equal $\infty$.
    \item $\mu(\mathbf{r})=0$ for some, and hence for all, $d$-tuples $\mathbf{r}$ of strictly positive real
    numbers; thus the only $d$-tuple of associated radii of normal
    convergence of \eqref{eq:semigr-power-series} is
    $(\infty,\ldots,\infty)$.
\end{enumerate}
\end{rem}
\begin{rem}\label{rem:ass-rad-vs-rho-cb}
There exists a $d$-tuple of strictly positive associated radii of normal convergence of the series
\eqref{eq:semigr-power-series} if and only if $\rho_{\rm cb}>0$ --- see \eqref{eq:mu-rho-cb}. One direction is
obvious since $\Upsilon_{\rm free, unif-normal}\subseteq\Upsilon_{\rm nc, unif-normal}$. The other direction
follows by applying Theorem \ref{thm:king-semigr} (or Corollary \ref{cor:nc-balls-semigr}) to the nc function $f$
which is uniformly analytic on $\Upsilon_{\rm nc, unif-normal}$. The statement also follows by direct estimates.
We obtain from \eqref{eq:lw-cb-norms-1}, \eqref{eq:lw-cb-norms-2}, \eqref{eq:br-mu}, and \eqref{eq:mu-rho-cb}
that
\begin{align}\label{eq:dir-est-1}
\mu(\mathbf{r}) &\le  (\mathbf{r}\cdot\mathbf{C}_2)\mu_{\rm
cb}=\Big(\sum_{j=1}^dr_j\|e_j\|_1\Big)\mu_{\rm cb},\\
 \label{eq:dir-est-2} \mu_{\rm cb} &\le \mu(\mathbf{C}_1).
\end{align}
These inequalities imply that $\mu_{\rm cb}<\infty$ if and only if
$\mu(\mathbf{r})<\infty$ for some $d$-tuple $\mathbf{r}$ of
strictly positive real numbers. Clearly, $\mu_{\rm cb}<\infty$ is
equivalent to $\rho_{\rm cb}>0$, and the conclusion follows from
Remark \ref{rem:trich}.
\end{rem}

For power series in several complex variables, the classical Abel
lemma states that if the terms of the series at a point
$\mu=(\mu_1,\ldots,\mu_d)$ are bounded,  then the series converges
absolutely and locally normally in the polydisk with multiradius
$(|\mu_1|,\ldots,|\mu_d|)$ --- see \cite[Page 19]{Sh}. We proceed
to establish a noncommutative counterpart of this statement.

For $Y=(Y_1,\ldots,Y_d)\in\mattuple{\mathbb{C}}{s}{d}$ and
$\mathbf{r}=(r_1,\ldots,r_d)$ a $d$-tuple of positive extended
real numbers, define the \emph{nc diamond centered at $Y$ of
multiradius $\mathbf{r}$} \index{nc diamond}
\index{$(\diamondsuit^d)_{\rm nc}(Y,\mathbf{r})$}
$$(\diamondsuit^d)_{\rm nc}(Y,\mathbf{r}):=\coprod_{m=1}^\infty
\Big\{Z=(Z_1,\ldots,Z_d)\in\mattuple{\mathbb{C}}{sm}{d}\colon\sum_{j=1}^d
\frac{\|Z_j-\bigoplus_{\alpha=1}^mY_j\|}{r_j}<1\Big\}.$$ Notice
that $\diamondsuit_{\rm nc}(Y,r)=(\diamondsuit^d)_{\rm
nc}(Y,\mathbf{r})$ with
$\mathbf{r}=\left(\frac{r}{\|e_1\|_1},\ldots,\frac{r}{\|e_d\|_1}\right)$
--- see \eqref{eq:diamond}.

\begin{thm}\label{thm:abel}
1. Suppose there exist $\mu=(\mu_1,\ldots,\mu_d)\in\mathbb{C}^d$
with all $\mu_j\neq 0$ and $C>0$ such that
$|\mu^w|\|f_w\|_{\mathcal{L}_{\rm cb}^{|w|}}\le C$ for every
$w\in\free_d$ (in the case $s=1$ this means that the terms of the
series \eqref{eq:semigr-power-series} at $Y+\mu$ are bounded).
Then the series \eqref{eq:semigr-power-series} converges
absolutely on every nc diamond $(\diamondsuit^d)_{\rm
nc}(Y,\mathbf{r})$ with $r_j<|\mu_j|$ for all $j=1,\ldots,d$.
Moreover,
\begin{equation}\label{eq:normal-free-conv-diam}
\sum_{w\in\free_d} \sup_{W\in (\diamondsuit^d)_{\rm
nc}(Y,\mathbf{r})}
\Big\|\Big(W-\bigoplus_{\alpha=1}^{m_W}Y\Big)^{\odot_sw}f_w\Big\|_{sm_W}<\infty;
\end{equation}
here $m_W$ is the size of a block matrix $W$ with $s \times s$
blocks.

2. Assume there exist $m\in\mathbb{N}$ and
$X=(X_1,\ldots,X_d)\in\mattuple{\mathbb{C}}{sm}{d}$ such that the
terms of the series \eqref{eq:semigr-power-series} are bounded on
some uniformly-open neighborhood of $X$, i.e., there exist
$\delta>0$ and $C>0$ such that
$\|(W-\bigoplus_{\alpha=1}^{m_W}Y)^{\odot_sw}f_w\|_{sm_W}\le C$
for every $W\in B_{\rm nc}(X,\delta)$ and $w\in\free_d$. Then the
set $\Upsilon_{\rm free,unif-normal}$ is nonempty.
\end{thm}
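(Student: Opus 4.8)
<br>

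The plan is to prove Theorem~\ref{thm:abel} in two parts, mirroring the structure of the statement, and to use the results already established for the associated square-coefficient series \eqref{eq:power-series} via the dictionary \eqref{eq:lw-forms}--\eqref{eq:lw-powers}. The key reduction is to show that a boundedness hypothesis on the terms $\left(W-\bigoplus_{\alpha=1}^{m_W}Y\right)^{\odot_s w}f_w$ at a single point (or on a small nc ball) forces the geometric-type decay needed for normal convergence on a strictly smaller nc diamond.

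\medskip

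For part~1, I would argue as follows. Fix $\mathbf{r}=(r_1,\ldots,r_d)$ with $r_j<|\mu_j|$ for all $j$, and set $q_j:=r_j/|\mu_j|<1$. Let $W\in (\diamondsuit^d)_{\rm nc}(Y,\mathbf{r})_{sm_W}$, so that $\sum_{j=1}^d \|W_j-\bigoplus_{\alpha=1}^{m_W}Y_j\|/r_j<1$. Using the formula \eqref{eq:w-cb-norm} for $\|f_w\|_{\mathcal{L}^{|w|}_{\rm cb}}$ (which expresses it as a supremum over normalized tuples of rectangular matrices), together with submultiplicativity of the relevant norms under the $\odot_s$ operation and the hypothesis $|\mu^w|\,\|f_w\|_{\mathcal{L}^{|w|}_{\rm cb}}\le C$, I obtain the pointwise estimate
\begin{equation*}
\Big\|\Big(W-\bigoplus_{\alpha=1}^{m_W}Y\Big)^{\odot_s w}f_w\Big\|_{sm_W}\le C\prod_{p=1}^{|w|}\frac{\|W_{i_p}-\bigoplus_{\alpha=1}^{m_W}Y_{i_p}\|}{|\mu_{i_p}|}
\end{equation*}
for $w=g_{i_1}\cdots g_{i_{|w|}}$. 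Writing $t_j:=\|W_j-\bigoplus_{\alpha=1}^{m_W}Y_j\|/r_j$, so $\sum_j t_j<1$, the right-hand side is $C\prod_{p=1}^{|w|}(q_{i_p}t_{i_p})$, and summing over all $w$ with $|w|=\ell$ gives $C\big(\sum_{j=1}^d q_j t_j\big)^\ell\le C\big(\max_j q_j\big)^\ell\big(\sum_j t_j\big)^\ell$. Since $\max_j q_j<1$ and $\sum_j t_j<1$ uniformly over $(\diamondsuit^d)_{\rm nc}(Y,\mathbf{r})$, taking the supremum first and then summing over $\ell$ yields a convergent geometric series, which is exactly \eqref{eq:normal-free-conv-diam}. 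In particular the series converges absolutely there.

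\medskip

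For part~2, I would feed the hypothesis into the machinery of Theorem~\ref{thm:free-series}. The assumption that $\|(W-\bigoplus_{\alpha=1}^{m_W}Y)^{\odot_s w}f_w\|_{sm_W}\le C$ for all $W\in B_{\rm nc}(X,\delta)$ and all $w\in\free_d$ does not immediately give summability, but one can pass to a slightly smaller nc diamond centered at $X$: as in the proof of part~3 of Theorem~\ref{thm:free-series}, there is an nc polydisk, hence an nc diamond, $(\diamondsuit^d)_{\rm nc}(X,\brho)$ contained in $B_{\rm nc}(X,\delta)$ (take $\rho_j=\delta/(2d\|e_j\|_1)$). Applying the Cauchy-type argument from part~1 with the roles played by the bound $C$ on $B_{\rm nc}(X,\delta)$ — more precisely, estimating $\sup_{W\in(\diamondsuit^d)_{\rm nc}(X,\brho)}\|(W-\bigoplus Y)^{\odot_s w}f_w\|$ by interpolating between $X$ and the center via the rescaling trick and invoking the boundedness on the larger ball, exactly as in the passage from $B_{\rm nc}(X,\delta)$ to the complete Reinhardt-like hull in Theorem~\ref{thm:free-series}(2,3) using Lemma~\ref{lem:HiPh-Reinhardt} — I get that $\sum_{w\in\free_d}\sup_{W\in(\diamondsuit^d)_{\rm nc}(X,\brho)}\|(W-\bigoplus_{\alpha=1}^{m_W}Y)^{\odot_s w}f_w\|_{sm_W}<\infty$. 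This says precisely that every point of $(\diamondsuit^d)_{\rm nc}(X,\brho)$, and in particular $X$ itself, lies in $\Upsilon_{\rm free,unif-normal}$ as defined in \eqref{eq:free-normal}; hence that set is nonempty.

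\medskip

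I expect the main obstacle to be part~2: the hypothesis there controls the terms only on a fixed nc ball, not the individual completely bounded norms $\|f_w\|_{\mathcal{L}^{|w|}_{\rm cb}}$, so the naive pointwise estimate of part~1 is unavailable and one genuinely needs the maximum-principle device of Lemma~\ref{lem:HiPh-Reinhardt} to convert a bound on $B_{\rm nc}(X,\delta)$ (which is \emph{not} complete Reinhardt-like) into a bound on a complete Reinhardt-like subset, thereby recovering geometric decay in $\ell$. Making the bookkeeping precise — replacing $B_{\rm nc}(X,\delta)$ by a nested sequence of nc diamonds, keeping track that the constant $C$ is uniform in $w$, and verifying that the resulting sum is finite — is the delicate part; part~1 is a routine Cauchy--Hadamard/geometric-series computation once \eqref{eq:w-cb-norm} is in hand.
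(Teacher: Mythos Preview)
Your Part~1 argument has a genuine gap in the order of operations. You bound, for a \emph{fixed} $W$ in the diamond, $\sum_{|w|=\ell}\|(\,\cdot\,)^{\odot_s w}f_w\|\le C\bigl(\sum_j q_jt_j\bigr)^\ell\le C(\max_j q_j)^\ell$ and then take the supremum over $W$; this yields $\sum_\ell\sup_W\sum_{|w|=\ell}<\infty$, i.e.\ uniform absolute convergence. But \eqref{eq:normal-free-conv-diam} requires $\sum_\ell\sum_{|w|=\ell}\sup_W<\infty$, which is strictly stronger. Your per-word bound $\sup_W\|(\,\cdot\,)^{\odot_s w}f_w\|\le Cq^w\sup_{\sum t_j<1}t^w$ leads only to $\sum_{|w|=\ell}q^w\sup t^w$, and since the suprema for different $w$ are attained at \emph{different} points of the simplex $\{\sum t_j<1\}$, you cannot collapse this to $(\sum_j q_jt_j)^\ell$. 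The paper closes this gap with a genuinely noncommutative trick: for each $w$ with $|w|=\ell$ choose $W_w\in(\diamondsuit^d)_{\rm nc}(Y,\mathbf r)$ nearly achieving $\sup_W$, and form the single element $W(\ell)=\bigoplus_{|w|=\ell}W_w$, which by the operator-space direct-sum property still lies in the \emph{nc} diamond. Then $\sum_{|w|=\ell}\sup_W(\cdots)\le(1+\epsilon)^\ell\sum_{|w|=\ell}(\text{value at }W(\ell))=(1+\epsilon)^\ell\bigl(\sum_j\|W(\ell)_j\|/r_j\bigr)^\ell<(1+\epsilon)^\ell$, and combined with the factor $(1-\epsilon)^\ell$ coming from $r_j<(1-\epsilon)|\mu_j|$ this gives a convergent series. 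This is not visible in your outline.

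For Part~2 your sketch is in the right spirit but misses the key reduction. The hypothesis bounds terms on $B_{\rm nc}(X,\delta)$, a set centered at $X$, while the terms are homogeneous in $W-\bigoplus Y$; ``rescaling around $X$'' gives no geometric decay. The paper first applies Lemma~\ref{lem:HiPh_lemma} (the complete-circular maximum principle, not the Reinhardt version) to transfer the uniform bound to a neighbourhood of $\bigoplus_{\alpha=1}^m Y$, then restricts to elements of the form $\bigoplus_\alpha V$ to get a bound on $B_{\rm nc}(Y,\delta)$, and then to a polydisk $(\mathbb{D}^d)_{\rm nc}(Y,\brho)\subseteq B_{\rm nc}(Y,\delta)$. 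On this polydisk, by \eqref{eq:w-cb-norm} the supremum of $\|(\,\cdot\,)^{\odot_s w}f_w\|$ equals $\brho^w\|f_w\|_{\mathcal L^{|w|}_{\rm cb}}$, so one \emph{recovers} a hypothesis of the form $\brho^w\|f_w\|_{\mathcal L^{|w|}_{\rm cb}}\le C$ and can invoke Part~1 with $\mu=\brho$. Your plan to work on a diamond centered at $X$ and appeal to Lemma~\ref{lem:HiPh-Reinhardt} does not produce a bound on $\|f_w\|_{\mathcal L^{|w|}_{\rm cb}}$, so the link to Part~1 is missing.
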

\begin{proof}
1. We may assume that $0<r_j<(1-\epsilon)|\mu_j|$ for some
$\epsilon>0$ and for all $j=1,\ldots, d$. Then
\begin{multline*}
\sum_{w\in\free_d} \sup_{W\in (\diamondsuit^d)_{\rm nc}(0_{s\times
s},\mathbf{r})}\|W^{\odot_sw}f_w\|=\sum_{\ell=0}^\infty\sum_{|w|=\ell}
\sup_{W\in (\diamondsuit^d)_{\rm nc}(0_{s\times s},\mathbf{r})}
\|W^{\odot_sw}f_w\|\\
=\sum_{\ell=0}^\infty\sum_{|w|=\ell} \sup_{W\in
(\diamondsuit^d)_{\rm nc}(0_{s\times s},\mathbf{r})}
\left(\frac{\|W_1\|}{r_1},\ldots,\frac{\|W_d\|}{r_d}\right)^w\mathbf{r}^w\|f_w\|_{\mathcal{L}_{\rm
cb}^\ell}\\
<\sum_{\ell=0}^\infty(1-\epsilon)^\ell\sum_{|w|=\ell} \sup_{W\in
(\diamondsuit^d)_{\rm nc}(0_{s\times s},\mathbf{r})}
\left(\frac{\|W_1\|}{r_1},\ldots,\frac{\|W_d\|}{r_d}\right)^w|\mu^w|\|f_w\|_{\mathcal{L}_{\rm
cb}^\ell}\\
\le C\sum_{\ell=0}^\infty(1-\epsilon)^\ell\sum_{|w|=\ell}
\sup_{W\in (\diamondsuit^d)_{\rm nc}(0_{s\times s},\mathbf{r})}
\left(\frac{\|W_1\|}{r_1},\ldots,\frac{\|W_d\|}{r_d}\right)^w.
\end{multline*}
For any fixed $\ell$ and any $w\in\free_d$ with $|w|=\ell$ there
exists $W_w\in(\diamondsuit^d)_{\rm nc}(0_{s\times s},\mathbf{r})$
such that
$$\sup_{W\in (\diamondsuit^d)_{\rm nc}(0_{s\times s},\mathbf{r})}
\left(\frac{\|W_1\|}{r_1},\ldots,\frac{\|W_d\|}{r_d}\right)^w\le
(1+\epsilon)^\ell\left(\frac{\|(W_{w})_1\|}{r_1},\ldots,\frac{\|(W_w)_d\|}{r_d}\right)^w.$$
Define $W(\ell)=\bigoplus_{|w|=\ell}W_w\in(\diamondsuit^d)_{\rm
nc}(0_{s\times s},\mathbf{r})$. Clearly,
$$\left(\frac{\|(W_{w})_1\|}{r_1},\ldots,\frac{\|(W_w)_d\|}{r_d}\right)^w
\le\left(\frac{\|W(\ell)_1\|}{r_1},\ldots,\frac{\|W(\ell)_d\|}{r_d}\right)^w.$$
Therefore,
\begin{multline*}
\sum_{w\in\free_d} \sup_{W\in (\diamondsuit^d)_{\rm nc}(0_{s\times
s},\mathbf{r})}\|W^{\odot_sw}f_w\|\\
\le C\sum_{\ell=0}^\infty(1-\epsilon^2)^\ell\sum_{|w|=\ell}
\left(\frac{\|W(\ell)_1\|}{r_1},\ldots,\frac{\|W(\ell)_d\|}{r_d}\right)^w\\
=C\sum_{\ell=0}^\infty(1-\epsilon^2)^\ell\Big(\sum_{j=1}^d\frac{\|W(\ell)_j\|}{r_j}\Big)^\ell
<C\sum_{\ell=0}^\infty(1-\epsilon^2)^\ell=\frac{C}{\epsilon^2}<\infty,
\end{multline*}
i.e., \eqref{eq:normal-free-conv-diam} holds. In the case $s=1$ we have by \eqref{eq:s=1} and
\eqref{eq:s=1_cb_norm} that
$$\|f_w(\mu_1,\ldots,\mu_d)\|_1=|\mu^w|\|f_w\|_{\mathcal{L}_{\rm
cb}^{|w|}}\le C,\quad w\in\free_d.$$

2. By Lemma \ref{lem:HiPh_lemma}, the inequality
$$\sup_{W\in B_{\rm
nc}(X,\delta)}\Big\|\Big(W-\bigoplus_{\alpha=1}^{m_W}Y\Big)^{\odot_sw}f_w\Big\|_{sm_W}\le
C,\quad w\in\free_d,$$ holds with the suprema over $ B_{\rm
nc}(X,\delta)$ replaced by the suprema over
$$\bigcup_{\zeta\in\overline{\mathbb{D}}}B_{\rm
nc}\left(\bigoplus_{\alpha=1}^mY+\zeta\Big(X-\bigoplus_{\alpha=1}^mY\Big),\delta\right).$$
 In
particular, taking $\zeta=0$ we obtain that
$$\sup_{W\in B_{\rm nc}(\bigoplus_{\alpha=1}^mY,\delta)}\Big\|\Big(W-\bigoplus_{\alpha=1}^{m_W}Y\Big)^{\odot_sw}
f_w\Big\|_{sm_W}\le C,\quad w\in\free_d.$$ By picking up only
$W\in B_{\rm nc}\Big(\bigoplus_{\alpha=1}^mY,\delta\Big)$ of the
form $W=\bigoplus_{\alpha=1}^mV$, where $V\in B_{\rm
nc}(Y,\delta)$, we obtain that
 $$\sup_{V\in B_{\rm nc}(Y,\delta)}\Big\|\Big(V-\bigoplus_{\alpha=1}^{m_V}\Big)^{\odot_sw}f_w\Big\|_{sm_V}\le C,
 \quad w\in\free_d.$$
 As in
the proof of Theorem \ref{thm:free-series}, we can find a nc
polydisk $(\mathbb{D}^d)_{\rm nc}(Y,\brho)$ which is contained in
$B_{\rm nc}(Y,\delta)$, with $\rho_j=\frac{\delta}{d\|e_j\|_1}$,
$j=1,\ldots,d$. Then
\begin{multline*} \brho^w\|f_w\|_{\mathcal{L}_{\rm
cb}^{|w|}}= \sup_{W\in (\mathbb{D}^d)_{\rm
nc}(Y,\brho)}\Big(\Big\|V_1-\bigoplus_{\alpha=1}^{m_V}Y_1\Big\|_{sm_V},\ldots,
\Big\|V_d-\bigoplus_{\alpha=1}^{m_V}Y_d\Big\|_{sm_V}\Big)^w\|f_w\|_{\mathcal{L}_{\rm
cb}^{|w|}}\\
=\sup_{V\in (\mathbb{D}^d)_{\rm
nc}(Y,\brho)}\Big\|\Big(V-\bigoplus_{\alpha=1}^{m_V}Y\Big)^{\odot_sw}f_w\|_{sm_V}
\le C,\quad w\in\free_d.\end{multline*} By part 1 of the theorem
for
 $\mathbf{\mu}=\brho$ and every  $\mathbf{r}=(r_1,\ldots,r_d)$ with
$0<r_j<\mu_j$, $j=1,\ldots, d$, \eqref{eq:normal-free-conv-diam}
holds. Clearly, the nc diamond $(\diamondsuit^d)_{\rm
nc}(Y,\mathbf{r})$ is open in the uniformly-open topology on
$\ncspaced{\mathbb{C}}{d}$. In particular, $Y$ is its interior
point. Indeed, $B_{\rm nc}(Y,\frac{1}{d}\min_{1\le j\le
d}r_j)\subseteq(\diamondsuit^d)_{\rm nc}(Y,\mathbf{r})$.
Therefore, we obtain that
$$\sum_{w\in\free_d}\sup_{W\in B_{\rm nc}(Y,\frac{1}{d}\min_{1\le j\le d}r_j)}\Big\|
\Big(W-\bigoplus_{\alpha=1}^{m_W}Y\Big)^{\odot_sw}f_w\Big\|_{sm_W}<\infty,$$
i.e., $Y\in\Upsilon_{\rm free,unif-normal}$.
\end{proof}
\begin{rem}\label{rem:diamond-of-conv}
It follows from Theorem \ref{thm:abel}, part 1, that one can
formulate an analogue of Theorem \ref{thm:free-series}, part 4,
for nc diamonds rather than nc polydisks, replacing
\eqref{eq:br-mu} by \index{$\mu_\diamondsuit(\mathbf{r})$}
\begin{equation}\label{eq:br-mu-diamond}
\mu_\diamondsuit(\mathbf{r})=\limsup_{\ell\to\infty}\sqrt[\ell]{\max_{w\in\free_d\colon
|w|=\ell}\mathbf{r}^w\|f_w\|_{\mathcal{L}_{\rm cb}^\ell}}
\end{equation}
and defining the corresponding associated radii of normal
convergence.
\end{rem}

For power series in several complex variables, the union of
polydiscs of (absolute) convergence equals the interior of the set
of locally normal convergence equals the interior of the set of
absolute convergence equals the interior of the set of
convergence. On the other hand, the interior of the set of
convergence of a power series in several complex variables may be
strictly contained in the interior of the set of convergence of
the corresponding series of homogeneous polynomials --- consider,
e.g., the power series $\sum_{n=0}^\infty(z_1+z_2)^n.$

In the noncommutative setting we have, for the series
\eqref{eq:semigr-power-series}, the following sequence of
inclusions:
\begin{equation} \label{eq:free-inclusions}
\bigcup_{\brho}(\mathbb{D}^d)_{\rm nc}(0_{s\times s},\brho)
\subseteq \Upsilon_{\rm free, unif-normal} \subseteq \Int
\Upsilon_{\rm free, abs} \subseteq \Int \Upsilon_{{\rm
free},\prec} \subseteq \Upsilon_{\rm nc, unif}.
\end{equation}
 Here the union  is taken over all $d$-tuples
$\brho=(\rho_1,\ldots,\rho_d)$ of strictly positive associated
radii of normal convergence, $\Int$ denotes the interior in the
uniformly-open topology, $\Upsilon_{{\rm free},\prec}$ is the
\index{$\Upsilon_{{\rm free},\prec}$} convergence set of the
series \eqref{eq:semigr-power-series} with respect to some order
$\prec$ on the free monoid $\free_d$ which is compatible with
the word length (i.e., $w \prec w' \Longrightarrow |w| < |w'|$),
and $\Upsilon_{\rm nc, unif} = \Int \Upsilon_{\rm nc}$ (see
Theorem \ref{thm:king-series}).

We will calculate various sets appearing in
\eqref{eq:free-inclusions} for several examples. We need the
following lemma (where $\specrad(Z)$ \index{$\specrad(Z)$} denotes
the spectral radius of a matrix $Z$).
\begin{lem}\label{lem:specrad}
$$\lim\limits_{\delta\to
0}\limsup\limits_{\ell\to\infty}\sqrt[\ell]{\sup_{W\in B_{\rm
nc}(Z,\delta)}\|W^\ell\|} =\lim\limits_{\delta\to
0}\liminf\limits_{\ell\to\infty}\sqrt[\ell]{\sup_{W\in B_{\rm
nc}(Z,\delta)}\|W^\ell\|}= \specrad(Z).$$
\end{lem}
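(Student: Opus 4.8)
The plan is to establish the two inequalities $\specrad(Z)\le\lim_{\delta\to0}\liminf_{\ell\to\infty}\sqrt[\ell]{\sup_{W\in B_{\rm nc}(Z,\delta)}\|W^\ell\|}$ and $\lim_{\delta\to0}\limsup_{\ell\to\infty}\sqrt[\ell]{\sup_{W\in B_{\rm nc}(Z,\delta)}\|W^\ell\|}\le\specrad(Z)$ separately. Since $\liminf_\ell\le\limsup_\ell$ always, and since $\delta\mapsto\sup_{W\in B_{\rm nc}(Z,\delta)}\|W^\ell\|$ is nondecreasing in $\delta$ (a larger nc ball gives a larger supremum), both the $\liminf$- and $\limsup$-quantities are nondecreasing in $\delta$, so the limits as $\delta\downarrow0$ exist; the lemma then follows by squeezing. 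Here $Z\in\mat{\mathbb{C}}{s}$ and $\|\cdot\|$ denotes the operator norm on matrices over $\mathbb{C}$, so that $\|A\oplus B\|=\max\{\|A\|,\|B\|\}$ and $\|A^\ell\|\ge\specrad(A^\ell)=\specrad(A)^\ell$.

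For the lower bound, observe that for every $m$ the point $\bigoplus_{\alpha=1}^mZ$ lies in $B_{\rm nc}(Z,\delta)$ and satisfies $\|(\bigoplus_{\alpha=1}^mZ)^\ell\|=\|Z^\ell\|\ge\specrad(Z)^\ell$. Hence $\sup_{W\in B_{\rm nc}(Z,\delta)}\|W^\ell\|\ge\specrad(Z)^\ell$ for every $\ell$ and every $\delta$, which gives $\liminf_{\ell\to\infty}\sqrt[\ell]{\sup_{W\in B_{\rm nc}(Z,\delta)}\|W^\ell\|}\ge\specrad(Z)$, and therefore $\lim_{\delta\to0}\liminf_{\ell\to\infty}(\cdots)\ge\specrad(Z)$.

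For the upper bound, fix $\epsilon>0$ and choose, by Schur triangularization followed by a diagonal rescaling that shrinks the strictly upper-triangular part, an invertible $S\in\mat{\mathbb{C}}{s}$ with $\|SZS^{-1}\|\le\specrad(Z)+\epsilon$; write $\kappa=\|S\|\,\|S^{-1}\|$. Given any $W\in B_{\rm nc}(Z,\delta)_{sm}$, write $W=\bigoplus_{\alpha=1}^mZ+E$ with $\|E\|<\delta$ and set $T=\bigoplus_{\alpha=1}^mS$; then $\|T\|\,\|T^{-1}\|=\|S\|\,\|S^{-1}\|=\kappa$ (the key point: the condition number does not grow with $m$), and $TWT^{-1}=\bigoplus_{\alpha=1}^mSZS^{-1}+TET^{-1}$, so $\|TWT^{-1}\|\le\|SZS^{-1}\|+\kappa\|E\|\le\specrad(Z)+\epsilon+\kappa\delta$. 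By submultiplicativity, $\|W^\ell\|=\|T^{-1}(TWT^{-1})^\ell T\|\le\kappa\,(\specrad(Z)+\epsilon+\kappa\delta)^\ell$, a bound uniform in $m$. Taking the supremum over $W\in B_{\rm nc}(Z,\delta)$, then the $\ell$-th root and $\limsup_{\ell\to\infty}$, yields $\limsup_{\ell\to\infty}\sqrt[\ell]{\sup_{W\in B_{\rm nc}(Z,\delta)}\|W^\ell\|}\le\specrad(Z)+\epsilon+\kappa\delta$; letting $\delta\to0$ and then $\epsilon\to0$ gives $\lim_{\delta\to0}\limsup_{\ell\to\infty}(\cdots)\le\specrad(Z)$.

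The main obstacle is precisely the uniformity in the block size $m$: the supremum ranges over square matrices of all sizes $sm$, so any perturbation estimate whose constants depend on dimension would be useless. This is resolved by noting that the similarity $S$ realizing a near-optimal norm for $Z$ extends blockwise to $T=\bigoplus_{\alpha=1}^mS$ with condition number $\kappa$ independent of $m$, together with $\|W^\ell\|\le\kappa\,\|TWT^{-1}\|^\ell$; everything else is routine.
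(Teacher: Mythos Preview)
Your proof is correct and takes a genuinely different route from the paper's. The paper establishes the upper bound by invoking the lower semicontinuity of the spectral radius on $\mathcal{L}(\ell^2)$: given $\epsilon>0$ it finds $\delta>0$ so that $\specrad(W)<\specrad(Z)+\epsilon$ for all $W\in B_{\rm nc}(Z,\delta)$, then packages a near-extremal sequence $W_\ell$ (with $\|W_\ell^\ell\|\ge\tfrac12 M_\ell$) into a single infinite direct sum $X=\bigoplus_{\ell=1}^\infty W_\ell\in\mathcal{L}(\ell^2)$ and applies Gelfand's formula to $X$ to bound $\limsup_\ell\sqrt[\ell]{M_\ell}$ by $\specrad(X)\le\specrad(Z)+\epsilon$. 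Your argument is entirely finite-dimensional and more elementary: you pick one similarity $S$ with $\|SZS^{-1}\|\le\specrad(Z)+\epsilon$ and observe that $T=\bigoplus_{\alpha=1}^m S$ has condition number independent of $m$, so conjugation gives a uniform bound $\|W^\ell\|\le\kappa(\specrad(Z)+\epsilon+\kappa\delta)^\ell$ directly. The payoff of your approach is self-containment --- no appeal to infinite-dimensional spectral theory --- while the paper's approach has the conceptual appeal of reducing a uniform-in-$m$ statement to a single application of Gelfand's formula on an infinite direct sum. Both hinge on the same structural feature of the operator-space norms, namely that direct sums do not inflate norms or condition numbers.
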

\begin{proof}
 Suppose $Z\in\mat{\mathbb{C}}{s}$ and let $\epsilon>0$. Using the lower semicontinuity of the spectral radius
 for elements
of $\mathcal{L}(\ell^2)$  \cite[page 167]{HiPh}, we obtain that
there exists $\delta>0$ such that $$r_{\rm spec}(X)< r_{\rm
spec}\Big(\bigoplus_{\alpha=1}^\infty Z\Big)+\epsilon=r_{\rm
spec}(Z)+\epsilon$$ for every $X\in \mathcal{L}(\ell^2)$ with
$\|X-\bigoplus_{\alpha=1}^\infty Z\|<\delta$. It follows that
$r_{\rm spec}(W)< r_{\rm spec}(Z)+\epsilon$ for every $W\in B_{\rm
nc}(Z,\delta)$ (by taking $X=\bigoplus_{\beta=1}^\infty W$). Set
$$M_\ell:=\sup_{W\in B_{\rm nc}(Z,\delta)}\|W^\ell\|,\quad
\ell\in\mathbb{N}.$$ Then for any fixed $\ell$ we have
$M_\ell\le(\|Z\|+\delta)^\ell<\infty$. Next, for every fixed
$\ell$ one can find $W_\ell\in B_{\rm nc}(Z,\delta)$ such that
$\|W_\ell^\ell\|\ge \frac{M_\ell}{2}$. Set
$X:=\bigoplus_{\ell=1}^\infty W_\ell\in\mathcal{L}(\ell^2)$. Since
$\specrad(X)=\sup_{\ell}r_{\rm spec}(W_\ell)$ and
$\specrad(W_\ell)<\specrad(Z)+\epsilon$, we have
\begin{multline*}
\specrad(Z)+\epsilon\ge\specrad(X)=\lim_{\ell\to\infty}\sqrt[\ell]{\|X^\ell\|}\ge
\limsup_{\ell\to\infty}\sqrt[\ell]{\|W_\ell^\ell\|}
\ge\limsup_{\ell\to\infty}\sqrt[\ell]{\frac{M_\ell}{2}}\\
=\limsup_{\ell\to\infty}\sqrt[\ell]{M_\ell}
=\limsup_{\ell\to\infty}\sqrt[\ell]{\sup_{W\in B_{\rm
nc}(Z,\delta)}\|W^\ell\|}\ge\lim\limits_{\delta\to
0}\liminf\limits_{\ell\to\infty}\sqrt[\ell]{\sup_{W\in B_{\rm
nc}(Z,\delta)}\|W^\ell\|}\\ \ge
\lim_{\ell\to\infty}\sqrt[\ell]{\|Z^\ell\|}=\specrad(Z).
\end{multline*}
We conclude that
$$\lim_{\delta\to 0}\limsup_{\ell\to\infty}\sqrt[\ell]{\sup_{W\in B_{\rm
nc}(Z,\delta)}\|W^\ell\|}=\lim\limits_{\delta\to
0}\liminf\limits_{\ell\to\infty}\sqrt[\ell]{\sup_{W\in B_{\rm
nc}(Z,\delta)}\|W^\ell\|}=\specrad(Z)$$ as required.
\end{proof}

The following two examples show that the first inclusion in
\eqref{eq:free-inclusions} can be proper.
\begin{ex} \label{ex:free_scalar}
Let $d=1$ and $s=1$, i.e., \eqref{eq:power-series} and
\eqref{eq:semigr-power-series} both collapse to
\begin{equation} \label{eq:1d-1s-power-series}
\sum_{\ell=0}^\infty Z^{\ell}  f_\ell,
\end{equation}
where $Z \in \mat{\mathbb{C}}{m}$, $m=1,2,\ldots$, for a given
sequence of vectors $f_\ell \in \vecspace{W}$. Let $\brho =
\left(\limsup_{\ell \to \infty}
\sqrt[\ell]{\|f_\ell\|}\right)^{-1}$ be the (unique associated)
radius of normal convergence; of course, $\brho = \rho_{\rm cb} =
\rho_m$ for all $m$. Then clearly
\begin{equation*}
\rho(Z) = \frac{\brho}{\specrad(Z)},
\end{equation*}
with the convention that the fraction equals $\infty$ when the
denominator is zero, irrespectively of the numerator. It follows
from Lemma \ref{lem:specrad} that if $\brho$ and $\specrad(Z)$ are
not both zero, then we have also
$$\rho_{\rm unif-normal}(Z)=\frac{\brho}{\specrad(Z)}.$$
Indeed,
\begin{multline*}
\rho(Z)\ge\rho_{\rm
unif-normal}(Z)=\frac{1}{\lim\limits_{\delta\to
0}\limsup\limits_{\ell\to\infty}\sqrt[\ell]{\sup\limits_{W\in
B_{\rm
nc}(Z,\delta)}\|W^\ell f_\ell\|}}\\
=\frac{1}{\lim\limits_{\delta\to
0}\limsup\limits_{\ell\to\infty}\sqrt[\ell]{\sup\limits_{W\in
B_{\rm nc}(Z,\delta)}\|W^\ell\| \|f_\ell\|}}\\
\ge\frac{1}{\lim\limits_{\delta\to
0}\limsup\limits_{\ell\to\infty}\sqrt[\ell]{\sup\limits_{W\in
B_{\rm
nc}(Z,\delta)}\|W^\ell\|}\limsup\limits_{\ell\to\infty}\sqrt[\ell]{
\|f_\ell\|}}=\frac{\brho}{\specrad(Z)}=\rho(Z),
\end{multline*}
which implies the required equality. It follows that
\begin{equation*}
\Upsilon_{\rm nc, unif-normal} = \Upsilon_{\rm nc, unif} =
\Upsilon_{\rm nc, norm} = \left\{Z \in\ncspace{\mathbb{C}} \colon
\specrad(Z) < \brho\right\}.
\end{equation*}
Of course, $\Upsilon_{\rm free, unif-normal}=\Upsilon_{\rm nc,
unif-normal}$; except for the cases $\brho=0$ and $\brho=\infty$
this is a much bigger set than
\begin{equation*}
(\mathbb{D}^1)_{\rm nc}(0_{1\times 1},\brho) = \left\{Z \in
\ncspace{\mathbb{C}} \colon \|Z\| < \brho\right\}.
\end{equation*}
\end{ex}

\label{QUESTIONS*}

\begin{ex}\label{ex:incl-1-proper}
Consider the series
$$\sum_{n=0}^\infty(Z_1Z_2)^n.$$
We have
$$\mu(\mathbf{r})=\limsup_{n\to\infty}\sqrt[2n]{r_1^nr_2^n}=\sqrt{r_1r_2}.$$
It follows that a pair $\brho=(\rho_1,\rho_2)$ of strictly
positive extended real numbers is a pair of associated radii of
normal convergence of the series if and only if $\rho_1\rho_2=1$,
and
$$\bigcup_{\brho} (\mathbb{D}^2)_{\rm nc}(0_{1\times
1},\brho)=\left\{(Z_1,Z_2)\in\ncspaced{\mathbb{C}}{2}\colon
\|Z_1Z_2\|<1\right\}.$$ It is clear that the convergence set of
this series is
$$\Upsilon_{\rm nc}=\{(Z_1,Z_2)\in\ncspaced{\mathbb{C}}{2}\colon
r_{\rm spec}(Z_1Z_2)<1\},$$ and $\rho(Z)=(\specrad(Z_1Z_2))^{-1}$.
It follows from Lemma \ref{lem:specrad} that
\begin{multline*}\rho(Z)\ge\rho_{\rm unif-normal}(Z)=\Big(\lim\limits_{\delta\to
0}\limsup\limits_{\ell\to\infty}\sqrt[\ell]{\sup\limits_{(W_1,W_2)\in
B_{\rm nc}(Z,\delta)}\|(W_1W_2)^\ell \|}\Big)^{-1}\\
\ge\Big(\lim\limits_{\delta\to
0}\limsup\limits_{\ell\to\infty}\sqrt[\ell]{\sup\limits_{(W)\in
B_{\rm nc}(Z_1Z_2,\delta)}\|W^\ell
\|}\Big)^{-1}=(\specrad(Z_1Z_2))^{-1}=\rho(Z),
\end{multline*}
and therefore
\begin{equation*}
\Upsilon_{\rm nc, unif-normal}=\Upsilon_{\rm nc, unif} =
\Upsilon_{\rm nc, norm} =
\{(Z_1,Z_2)\in\ncspaced{\mathbb{C}}{2}\colon r_{\rm
spec}(Z_1Z_2)<1\}.
\end{equation*}
 Since each homogeneous
polynomial  in the series has only one term, the convergence of
this series along the free monoid is the same as its
convergence as a series of homogeneous polynomials, so that
$\Upsilon_{\rm free,unif-normal}=\Upsilon_{\rm nc, unif-normal}$.
Similarly to the previous example, the second set in
\eqref{eq:free-inclusions} is much bigger than the first one.
\end{ex}

The following two examples show that the last inclusion in
\eqref{eq:free-inclusions} may be proper. The first example also
shows that, unlike in the classical Abel lemma, the convergence of
the series \eqref{eq:semigr-power-series} at a point
$\mu=(\mu_1,\ldots,\mu_d)\in\mathbb{C}^d$ does not imply its
convergence on the nc polydisk of multiradius
$(|\mu_1|,\ldots,|\mu_d|)$. The second example also shows that the
TT series along $\free_d$ of a uniformly analytic nc function does
not necessarily converge on a nc polydisk when the nc function is
bounded there --- cf. Theorem \ref{thm:king-semigr} and Corollary
\ref{cor:nc-balls-semigr}.
\begin{ex}\label{ex:free_ncgeomseries}
Let $d>1$ and let us consider ``nc geometric series''
\begin{equation} \label{eq:ncgeomseries}
\sum_{w \in \free_d} Z^w,
\end{equation}
where $Z \in \mat{\mathbb{C}}{m}$, $m=1,2,\ldots$ (i.e., $s=1$,
$\vecspace{W}=\mathbb{C}$, and $f_w=1$ for all $w \in \free_d$).
Notice that $$\sum_{w \in \free_d,\, |w|=\ell} Z^w = (Z_1 + \cdots
+ Z_d)^\ell,$$ therefore $$\Upsilon_{\rm nc}=\left\{(Z_1,\ldots,
Z_d)\in\ncspaced{\mathbb{C}}{d}\colon \specrad(Z_1+\cdots
+Z_d)<1\right\}$$ and $\rho(Z)=(\specrad(Z_1+\cdots+Z_d))^{-1}$.
Similarly to Example \ref{ex:incl-1-proper}, it follows from Lemma
\ref{lem:specrad} that \begin{multline}\label{eq:set-above}
\Upsilon_{\rm
nc,unif-normal}=\Upsilon_{\rm nc,unif}=\Upsilon_{\rm nc,norm}\\
=\left\{(Z_1,\ldots, Z_d)\in\ncspaced{\mathbb{C}}{d}\colon
\specrad(Z_1+\cdots +Z_d)<1\right\} \end{multline}
 as well. Notice that the set $\Upsilon_{\rm free,prec}$ is
 properly contained in \eqref{eq:set-above} for any order $\prec$ on
 $\free_d$ which is compatible with the word length ---
 because $\specrad(Z^w)<1$ for every $w\in\free_d$ is a necessary
 condition
 for the convergence of the series \eqref{eq:ncgeomseries}, and there are plenty of $d$-tuples of matrices in
 \eqref{eq:set-above}
  which violate this condition.

  Next, for a $d$-tuple
 $\mathbf{r}=(r_1,\ldots,r_d)$ of nonnegative real numbers,
 $\mu(\mathbf{r})=r_1+\cdots +r_d$. Therefore a $d$-tuple $\brho=(\rho_1,\ldots,\rho_d)$
 of strictly positive extended real numbers is a $d$-tuple of
 associated radii of normal convergence if and only if
 $\rho_1+\cdots +\rho_d=1$, so that
\begin{multline}\label{eq:union-of-polydisks}
\bigcup_{\brho}(\mathbb{D}^d)_{\rm nc}(0_{1\times
 1},\brho)=\left\{Z\in\ncspaced{\mathbb{C}}{d}\colon
 \|Z_1\|+\cdots +\|Z_d\|<1\right\}\\
 =(\diamondsuit^d)_{\rm nc}(0_{1\times
 1},(1,\ldots,1)).
 \end{multline}
On the other hand, for a $d$-tuple
 $\mathbf{r}=(r_1,\ldots,r_d)$ of nonnegative real numbers,
 $\mu_\diamondsuit(\mathbf{r})=\max_{j}r_j$. Therefore a $d$-tuple $\brho=(\rho_1,\ldots,\rho_d)$
 of strictly positive extended real numbers is a $d$-tuple of
 associated radii of normal convergence in the sense of diamonds --- see Remark \ref{rem:diamond-of-conv} ---
 if and only if
 $\rho_1=\cdots =\rho_d=1$, so that
the set \eqref{eq:union-of-polydisks} coincides with
$\bigcup_{\brho}(\diamondsuit^d)_{\rm nc}(0_{1\times
 1},\brho)$ where the union is taken over all $d$-tuples
 $\brho=(\rho_1,\ldots,\rho_d)$ of  of strictly positive associated
 radii of normal convergence in the sense of diamonds.

\label{COINCIDENCE}

Clearly, $Z\in\Upsilon_{\rm free,\prec}$ if and only if
$Z\in\Upsilon_{\rm nc}$, i.e., $\specrad(Z_1+\cdots +Z_d)<1$, and
$$\lim_{w_1\preceq w_2, |w_1|=|w_2|\to\infty}\sum_{w_1\preceq w\preceq
w_2}Z^w=0.$$
 Let $d=2$ and let $\zeta \in \mathbb{C}$, $\dfrac{1}{2} < |\zeta| <
1$. It folows that $(\zeta,-\zeta) \in \Upsilon_{\rm free,\prec}$
if and only if
\begin{equation}\label{eq:pm-conv}
\lim_{w_1\preceq w_2,
|w_1|=|w_2|\to\infty}\zeta^{|w_1|}\sum_{w_1\preceq w\preceq
w_2}(-1)^{|w|_2}=0,
\end{equation}
 where $|w|_2$ denotes the number of times the
letter $g_2$ appears in the word $w$. Notice that the number of
words of length $\ell$ with $|w|_2$ even is the same as the number
of words of length $\ell$ with $|w|_2$ odd and equals
$2^{\ell-1}$. Thus if we choose the order $\prec$ so that all the
words of a given length with $|w|_2$ even preceed all the words of
the same length with $|w|_2$ odd, then \eqref{eq:pm-conv} fails.
On the other hand, if we choose the order $\prec$ so that words
with $|w|_2$ even and words with $|w|_2$ odd interlace, then
\eqref{eq:pm-conv} holds. Therefore, $(\zeta,-\zeta)\notin
\Upsilon_{\rm free,\prec}$ or $(\zeta,-\zeta)\in \Upsilon_{\rm
free,\prec}$, depending on the choice of $\prec$. Furthemore, in
the case where $(\zeta,-\zeta)\in \Upsilon_{\rm free,\prec}$ the
series does not converge on $(\mathbb{D}^2)_{\rm nc}(0_{1\times
1},(|\zeta|,|\zeta|))$, e.g., it does not converge at the point
$(\epsilon\zeta,\epsilon\zeta)$ for
$\frac{1}{2|\zeta|}<\epsilon<1$.
\end{ex}

\label{EXAMPLE-QUESTIONS}

\begin{ex}\label{ex:resolvent}
Consider
$$f(Z)=(I-\mathcal{Z}(Z)\otimes
A)^{-1}=(I-Z_1(P_1A)-Z_2(P_2A))^{-1},$$ where
$$A=\begin{bmatrix}
a_{11} & a_{12}\\
a_{21} & a_{22}
\end{bmatrix}, \quad P_1=\begin{bmatrix}
1 & 0\\
0 & 0
\end{bmatrix},\quad  P_2=\begin{bmatrix}
0 & 0\\
0 & 1
\end{bmatrix}
$$
are $2\times 2$ matrices over $\mathbb{C}$, and
$\mathcal{Z}(Z)=\begin{bmatrix}
Z_1 & 0\\
0 & Z_2
\end{bmatrix}=Z_1\otimes P_1+ Z_2\otimes P_2$ is a $2n\times 2n$
matrix over $\mathbb{C}$ for every $n\in\mathbb{N}$ and
$Z\in\mat{\mathbb{C}}{n}$. It is easy to see that $f$ is a
uniformly analytic nc function on the uniformly-open nc set
$$\left\{
Z\in\ncspaced{\mathbb{C}}{2}\colon\det(I-\mathcal{Z}(Z)\otimes
A)\neq 0\right\}.$$

From now on, we assume that $\|A\|<1$, so that $f$ is bounded
 by
$(1-\|A\|)^{-1}$ on $(\mathbb{D}^2)_{\rm nc}(0_{1\times
1},(1,\ldots,1))$. The TT series of $f$ at $0_{1\times 1}$ is
$$\sum_{\ell=0}^\infty (\mathcal{Z}(Z)\otimes A)^\ell.$$
For this series, it is clear that
$$\Upsilon_{\rm nc}=\left\{Z\in\ncspaced{\mathbb{C}}{2}\colon
\specrad(\mathcal{Z}(Z)\otimes A)<1\right\}$$ and
$\rho(Z)=(\specrad(\mathcal{Z}(Z)\otimes A))^{-1}$. Using Lemma
\ref{lem:specrad}, we obtain that
$$
\Upsilon_{\rm nc, unif-normal}=\Upsilon_{\rm
nc,unif}=\Upsilon_{\rm nc, norm}
=\left\{Z\in\ncspaced{\mathbb{C}}{2}\colon
\specrad(\mathcal{Z}(Z)\otimes A)<1\right\}
$$
as well.

We consider now the corresponding series along $\free_2$. The
$(i,j)$-th entry of the sum is given by
$$\delta_{ij}+\sum_{\ell=0}^\infty\sum_{1\le
i_1,\ldots,i_{\ell-1}\le 2}a_{ii_1}a_{i_1i_2}\cdots
a_{i_{\ell-1}j}Z_iZ_{i_1}\cdots Z_{i_{\ell-1}},
$$
for all $i,j=1,2$. For $(z_1,z_2)\in\mathbb{C}^2$, the series
converges absolutely if and only if the series
\begin{equation}\label{eq:abs-series}
\sum_{\ell=0}^\infty\left(\begin{bmatrix}
|z_1| & 0\\
0 & |z_2|
\end{bmatrix}\begin{bmatrix}
|a_{11}| & |a_{12}|\\
|a_{21}| & |a_{22}|
\end{bmatrix}\right)^\ell
\end{equation}
converges. We choose $A$ with $\|A\|<1$ and
$\specrad\begin{bmatrix}
|a_{11}| & |a_{12}|\\
|a_{21}| & |a_{22}|
\end{bmatrix}>1$, e.g.,
$$A=\frac{1-\epsilon}{\sqrt{2}}\begin{bmatrix}
1 & -1\\
1 & 1
\end{bmatrix},$$
with $\epsilon>0$ small enough. Then for $|z_1|,|z_2|<1$
sufficiently close to $1$, the series \eqref{eq:abs-series}
diverges.
\end{ex}

\chapter{Direct summands extensions of nc sets and nc functions}\label{sec:dirsum-ext}
In this chapter, we consider a natural extension of a similarity
invariant nc set $\Omega$ to a larger nc set, $\Omega_{\rm
d.s.e.}$ \index{$\Omega_{\rm d.s.e.}$} which contains all direct
summands of matrices from $\Omega$, and the corresponding
extension of nc functions on $\Omega$ to nc functions on
$\Omega_{\rm d.s.e.}$. Clearly, $\Omega_{\rm d.s.e.}$ includes not
only $\Omega$, but also the set $\rad\Omega$ which was introduced
in Section \ref{subsec:unif-open-top}---see the paragraph
preceding Proposition \ref{prop:rad}.

Let $\module{M}$ be a module over a commutative unital ring
$\ring$, and let $\Omega\subseteq\ncspace{\module{M}}$ be a
similarity invariant nc set. We define the \emph{direct summands
extension} \index{direct summands extension of a nc set} of
$\Omega$ by
\begin{equation}\label{eq:dirsum-ext}
\Omega_{\rm d.s.e.}:=\{X\in\ncspace{\module{M}}\colon X\oplus
Y\in\Omega\ {\rm for\ some}\ Y\in\ncspace{\module{M}}\}.
\end{equation}
Since $\Omega$ is invariant under similarities, in particular,
when a similarity matrix is a permutation of rows (columns),
$X\in\Omega_{\rm d.s.e.}$ if and only if $Y^1\oplus X\oplus
Y^2\in\Omega$ for some $Y^1$, $Y^2\in\ncspace{\module{M}}$.
\begin{prop}\label{prop:dirsum-ext-properties}
Let $\Omega\subseteq\ncspace{\module{M}}$ be a similarity
invariant nc set. Then
\begin{enumerate}
    \item $\Omega_{\rm d.s.e.}$ is a radical similarity invariant nc set;
    \item If $\Omega$ is right (resp., left) admissible, then so
    is $\Omega_{\rm d.s.e.}$;
    \item If $\Omega$ is finitely open  (resp., open,
    uniformly-open), then so is $\Omega_{\rm d.s.e.}$;
    moreover, if $\Omega$ is uniformly-open, then $\Omega_{\rm
    d.s.e.}$ is $\tau$-open (see Section
    \ref{subsec:unif-open-top} for the definition of the pseudometric
    $\tau$).
\end{enumerate}
\end{prop}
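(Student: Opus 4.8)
\textbf{Plan of proof for Proposition \ref{prop:dirsum-ext-properties}.}

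The plan is to verify the three assertions in order, leaning on the fact that $\Omega_{\rm d.s.e.}$ is defined in terms of $\Omega$ together with the similarity invariance of $\Omega$ (in particular, invariance under permutation of rows/columns, which lets us reorder direct summands freely).

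\emph{Part (1).} First I would show $\Omega_{\rm d.s.e.}$ is a nc set. Given $X^1, X^2 \in \Omega_{\rm d.s.e.}$, pick $Y^1, Y^2$ with $X^i \oplus Y^i \in \Omega$; then $(X^1\oplus Y^1)\oplus(X^2\oplus Y^2)\in\Omega$ since $\Omega$ is a nc set, and a permutation similarity reorders this as $(X^1\oplus X^2)\oplus(Y^1\oplus Y^2)\in\Omega$, so $X^1\oplus X^2\in\Omega_{\rm d.s.e.}$. Similarity invariance: if $X\in\Omega_{\rm d.s.e.}$ with $X\oplus Y\in\Omega$ and $S$ is invertible of the size of $X$, then $(S\oplus I)(X\oplus Y)(S\oplus I)^{-1}=SXS^{-1}\oplus Y\in\Omega$, so $SXS^{-1}\in\Omega_{\rm d.s.e.}$. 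Radicality: I must check $\rad\,\Omega_{\rm d.s.e.}=\Omega_{\rm d.s.e.}$. The inclusion $\Omega_{\rm d.s.e.}\subseteq\rad\,\Omega_{\rm d.s.e.}$ is trivial. Conversely, if $\bigoplus_{\alpha=1}^m X\in\Omega_{\rm d.s.e.}$, then $\bigoplus_{\alpha=1}^m X$ is itself a matrix having $X$ as a direct summand, and $\bigoplus_{\alpha=1}^m X\in\Omega_{\rm d.s.e.}$ means $(\bigoplus_{\alpha=1}^m X)\oplus Y\in\Omega$ for some $Y$; then $X$ is a direct summand of an element of $\Omega$, i.e. $X\in\Omega_{\rm d.s.e.}$. (Here I use the observation, noted just before the proposition, that $X\in\Omega_{\rm d.s.e.}$ iff $Y^1\oplus X\oplus Y^2\in\Omega$ for some $Y^1,Y^2$.)

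\emph{Part (2).} Suppose $\Omega$ is right admissible. Let $X\in(\Omega_{\rm d.s.e.})_n$, $\widetilde{X}\in(\Omega_{\rm d.s.e.})_{\widetilde{n}}$, $Z\in\rmat{\module{M}}{n}{\widetilde{n}}$; pick $Y,\widetilde{Y}$ with $X\oplus Y\in\Omega$, $\widetilde{X}\oplus\widetilde{Y}\in\Omega$. Since $\Omega$ is similarity invariant and right admissible, by Proposition \ref{prop:adm-env} it is invariant under formation of block upper triangular matrices; in particular $\begin{bmatrix} X\oplus Y & \begin{bmatrix} Z & 0\\ 0 & 0\end{bmatrix}\\ 0 & \widetilde{X}\oplus\widetilde{Y}\end{bmatrix}\in\Omega$ (the upper-right block being $Z$ padded by zeros, placed so that after a permutation similarity reordering the summands the matrix becomes $\begin{bmatrix} X & Z\\ 0 & \widetilde{X}\end{bmatrix}\oplus(\text{something involving }Y,\widetilde{Y})$). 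Reading off that this reordered matrix lies in $\Omega$ exhibits $\begin{bmatrix} X & Z\\ 0 & \widetilde{X}\end{bmatrix}$ as a direct summand of an element of $\Omega$, hence in $\Omega_{\rm d.s.e.}$, with $r=1$; so $\Omega_{\rm d.s.e.}$ is right admissible. The left case is identical.

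\emph{Part (3).} For the topological statements, fix $X\in(\Omega_{\rm d.s.e.})_n$ and $Y$ with $X\oplus Y\in\Omega_{n+k}$. If $\Omega$ is finitely open, then $\Omega_{n+k}$ is finitely open in $\mat{\module{M}}{n+k}$; intersecting with a finite-dimensional subspace containing $X\oplus Y$ and moving $X$ within a small finite-dimensional neighborhood (keeping $Y$ fixed) stays in $\Omega_{n+k}$, which shows $(\Omega_{\rm d.s.e.})_n$ is finitely open. The norm-open case is the same with balls: the map $X'\mapsto X'\oplus Y$ is bounded (by the admissibility estimate \eqref{eq:intr-dirsums-norms}, the injection $\mat{\vecspace{V}}{n}\to\mat{\vecspace{V}}{n+k}$, $X'\mapsto X'\oplus Y$, is continuous), so a small ball around $X$ maps into $\Omega_{n+k}$. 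For the uniformly-open case, if $B_{\mathrm{nc}}(X\oplus Y,r)\subseteq\Omega$, then for $X'\in B_{\mathrm{nc}}(X,r)$ one has $\bigoplus_\beta(X'\oplus Y)\in B_{\mathrm{nc}}(X\oplus Y,r)\subseteq\Omega$ after reordering summands (using that an operator space system of norms has $C_1'=1$, so appending the fixed block $Y$ does not increase the distance), hence $X'\oplus Y\in\Omega_{\rm d.s.e.}$ is a direct summand of an element of $\Omega$; thus $B_{\mathrm{nc}}(X,r)\subseteq\Omega_{\rm d.s.e.}$. Finally, since $\Omega_{\rm d.s.e.}$ is both uniformly-open and radical (Part (1)), Proposition \ref{prop:rad-properties}(3) gives that it is $\tau$-open.

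\emph{Main obstacle.} I expect the bookkeeping with permutation similarities (reordering direct summands so that a block-triangular-with-small-corner matrix built from $X\oplus Y$ and $\widetilde X\oplus\widetilde Y$ genuinely displays $\begin{bmatrix} X & Z\\ 0 & \widetilde X\end{bmatrix}$ as a direct summand) in Part (2) to be the fiddliest point, and the one place where I would have to write an explicit permutation matrix; everything else is a routine transfer of the corresponding property of $\Omega$ across the "append a fixed block $Y$" operation.
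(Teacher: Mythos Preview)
Your plan is correct and follows essentially the same approach as the paper's proof: Part~(1) via permutation similarities to reorder summands, Part~(2) via Proposition~\ref{prop:adm-env} applied to $X\oplus Y$ and $\widetilde X\oplus\widetilde Y$, and Part~(3) by pushing forward the relevant open neighborhood of $X\oplus Y$ along the ``append $Y$'' map; your explicit invocation of Proposition~\ref{prop:rad-properties}(3) for the $\tau$-open claim is exactly right (the paper leaves this step implicit). One notational slip to fix in the uniformly-open case: for $X'\in B_{\mathrm{nc}}(X,r)_{nk}$ the matrix you want is $X'\oplus\bigl(\bigoplus_{\beta=1}^{k}Y\bigr)$, not $\bigoplus_\beta(X'\oplus Y)$, and the conclusion is $X'\in\Omega_{\rm d.s.e.}$ (not $X'\oplus Y$); after a permutation similarity this matrix lies in $B_{\mathrm{nc}}(X\oplus Y,r)\subseteq\Omega$, and then similarity invariance of $\Omega$ gives $X'\oplus\bigl(\bigoplus_\beta Y\bigr)\in\Omega$.
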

\begin{proof}
(1) If $X^1$, $X^2\in\Omega_{\rm d.s.e.}$, then $X^1\oplus
Y^1\in\Omega$ and $X^2\oplus Y^2\in\Omega$ for some $Y^1,Y^2
\in\ncspace{\module{M}}$. Since $\Omega$ is a nc set, $X^1\oplus
Y^1\oplus X^2\oplus Y^2\in\Omega$. Since $\Omega$ is similarity
invariant, and the matrix $X^1\oplus Y^1\oplus X^2\oplus Y^2$ is
similar, with the similarity matrix being a permutation, to
$X^1\oplus X^2\oplus Y^1\oplus Y^2$, the latter belongs to
$\Omega$. Therefore, $X^1\oplus X^2\in\Omega_{\rm d.s.e.}$, and we
conclude that $\Omega_{\rm d.s.e.}$ is a nc set.

 Let $X\in(\Omega_{\rm d.s.e.})_n$ and let $S\in\mat{\ring}{n}$
be invertible. Then $X\oplus Y\in\Omega_{n+m}$ for some
$m\in\mathbb{N}$ and $Y\in\mat{\module{M}}{m}$, and since $\Omega$
is similarity invariant,
$$SXS^{-1}\oplus Y=(S\oplus I_m)(X\oplus Y)(S\oplus
I_m)^{-1}\in\Omega_{n+m}.$$ Therefore, $SXS^{-1}\in(\Omega_{\rm
d.s.e.})_n$, and we conclude that $\Omega_{\rm d.s.e.}$ is
similarity invariant.

Suppose that $Y\in\rad\,\Omega_{\rm d.s.e.}$. Then
$X=\bigoplus_{\alpha=1}^mY\in\Omega_{\rm d.s.e.}$ for some
$m\in\mathbb{N}$. Therefore, $X\oplus Z\in\Omega$ for some
$Z\in\ncspace{\module{M}}$, i.e.,
$$Y\oplus\Big(\bigoplus_{\alpha=1}^{m-1}Y\Big)\oplus Z\in\Omega.$$
Hence, $Y\in\Omega_{\rm d.s.e.}$. This proves the inclusion
$\rad\,\Omega_{\rm d.s.e.}\subseteq\Omega_{\rm d.s.e.}$. Together
with the obvious inclusion $\rad\,\Omega_{\rm
d.s.e.}\supseteq\Omega_{\rm d.s.e.}$, this implies that
$\rad\,\Omega_{\rm d.s.e.}=\Omega_{\rm d.s.e.}$, i.e., the set
$\Omega_{\rm d.s.e.}$ is radical.

(2) Suppose that $\Omega$ is right admissible. Let
$X^1\in(\Omega_{\rm d.s.e.})_{n_1}$, $X^2\in(\Omega_{\rm
d.s.e.})_{n_2}$, and $Z\in\rmat{\module{M}}{n_1}{n_2}$. Then there
exist $m_1,m_2\in\mathbb{N}$ and $Y^1\in\mat{\module{M}}{m_1}$,
$Y^2\in\mat{\module{M}}{m_2}$ such that $X^1\oplus
Y^1\in\Omega_{n_1+m_1}$ and $X^2\oplus Y^2\in\Omega_{n_2+m_2}$.
Since $\Omega$ is a similarity invariant right admissible nc set,
by Proposition \ref{prop:adm-env},
$$\begin{bmatrix}
X^1 & 0 & Z & 0\\
0 & Y^1 & 0 & 0\\
0 & 0 & X^2 & 0\\
0 & 0 & 0 & Y^2
\end{bmatrix}\in\Omega_{n_1+m_1+n_2+m_2}.$$
The latter matrix is similar, with the similarity matrix being a
permutation, to
$$\begin{bmatrix}
X^1 & Z & 0 & 0\\
0 & X^2 & 0 & 0\\
0 & 0 & Y^1 & 0\\
0 & 0 & 0 & Y^2
\end{bmatrix},$$
which therefore also belongs to
$\Omega_{n_1+m_1+n_2+m_2}=\Omega_{n_1+n_2+m_1+m_2}$. Hence,
$$\begin{bmatrix} X^1 & Z\\
0 & X^2
\end{bmatrix}\in(\Omega_{\rm d.s.e.})_{n_1+n_2},$$ and we conclude
that $\Omega_{\rm d.s.e.}$ is right admissible.

The statement for a left admissible $\Omega$ is proved
analogously.

(3) Let $\module{M}=\vecspace{V}$ be a vector space over
$\mathbb{C}$, and let the nc set
$\Omega\subseteq\ncspace{\vecspace{V}}$ be finitely open. Let
$X\in(\Omega_{\rm d.s.e.})_n$. Then there exist $m\in\mathbb{N}$
and $Y\in\mat{\vecspace{V}}{m}$ such that $X\oplus
Y\in\Omega_{n+m}$. Let $\vecspace{U}$ be a finite-dimensional
subspace of $\mat{\vecspace{V}}{n}$ which contains $X$. Then the
space $\vecspace{U}\oplus\spa\{Y\}:=\{U\oplus\alpha Y\colon
U\in\vecspace{U},\alpha\in\mathbb{C}\}$ is finite-dimensional and
contains $Y$. Since $\Omega_{n+m}$ is finitely open, there exists
an open neighborhood $\Gamma$ of $X\oplus Y$ in
$\vecspace{U}\oplus\spa\{Y\}$ which is contained in
$\Omega_{n+m}$. Moreover, one can choose such a neighborhood of
the form $\Gamma=\Phi\oplus\Psi$ where $\Phi$ is an open
neighborhood of $X$ in $\vecspace{U}$, and $\Psi$ is an open
neighborhood of $Y$ in $\spa\{Y\}$. Since for every $U\in\Phi$ and
$W\in\Psi$ one has $U\oplus W\in\Gamma\subseteq\Omega_{n+m}$, we
have that $\Phi\subseteq(\Omega_{\rm d.s.e.})_n$. We conclude that
$\Omega_{\rm d.s.e.}$ is finitely open.

Let $\module{M}=\vecspace{V}$ be a Banach space equipped with an
admissible system of matrix norms over $\vecspace{V}$, and let the
nc set $\Omega\subseteq\ncspace{\vecspace{V}}$ be open. Let
$X\in(\Omega_{\rm d.s.e.})_n$. Then there exist $m\in\mathbb{N}$
and $Y\in\mat{\vecspace{V}}{m}$ such that $X\oplus
Y\in\Omega_{n+m}$. Let $\epsilon>0$ be such that $B(X\oplus
Y,\epsilon)\subseteq\Omega_{n+m}$. Set
$\delta:=C_1'(n,m)^{-1}\epsilon$ (see \eqref{eq:dirsums-norms}
where $C_1'(n,m)$ is introduced). Then for every $U\in
B(X,\delta)$ one has $U\oplus Y\in B(X\oplus
Y,\epsilon)\subseteq\Omega_{n+m}$, hence $U\in(\Omega_{\rm
d.s.e.})_n$. We conclude that $\Omega_{\rm d.s.e.}$ is open.

Let $\module{M}=\vecspace{V}$ be an operator space, and let the nc
set $\Omega\subseteq\ncspace{\vecspace{V}}$ be uniformly-open. Let
$X\in(\Omega_{\rm d.s.e.})_n$. Then there exist $m\in\mathbb{N}$
and $Y\in\mat{\vecspace{V}}{m}$ such that $X\oplus
Y\in\Omega_{n+m}$. Let $\epsilon>0$ be such that $B_{\rm
nc}(X\oplus Y,\epsilon)\subseteq\Omega$. Let $Z\in B_{\rm
nc}(X,\epsilon)_{nk}$, and let $U\in\mat{\ring}{(n+m)k}$ be a
permutation matrix such that \begin{equation}\label{eq:u}
U\Big(\bigoplus_{\alpha=1}^k(X\oplus
Y)\Big)U^{-1}=\Big(\bigoplus_{\alpha=1}^kX\Big)\oplus\Big(\bigoplus_{\beta=1}^kY\Big).
\end{equation}
Since $\Omega$ is a similarity invariant nc set, this matrix
belongs to $\Omega_{(n+m)k}$. Since $U$ is unitary,
\begin{equation}\label{eq:z}
U^{-1}\Big(Z\oplus\bigoplus_{\beta=1}^kY\Big)U\in B_{\rm
nc}(X\oplus Y,\epsilon)\subseteq\Omega_{(n+m)k}, \end{equation}
and since $\Omega$ is similarity invariant,
$Z\oplus\bigoplus_{\beta=1}^kY\in \Omega_{(n+m)k}$. Therefore,
$Z\in(\Omega_{\rm d.s.e.})_{nk}$. This proves that $B_{\rm
nc}(X,\epsilon)\subseteq\Omega_{\rm d.s.e.}$, and we conclude that
$\Omega_{\rm d.s.e.}$ is uniformly-open.
\end{proof}

We define next a direct summands extension $f_{\rm d.s.e.}$
\index{$f_{\rm d.s.e.}$} of a nc function $f$.

\begin{prop}\label{prop:ncfun-dirsum-ext}
Let $\module{M}$, $\module{N}$ be modules over a unital
commutative ring $\ring$, let
$\Omega\subseteq\ncspace{\module{M}}$ be a similarity invariant nc
set, and let $f\colon\Omega\to\ncspace{\module{N}}$ be a nc
function. Then
\begin{enumerate}
    \item There exists a uniquely determined \emph{direct summands extension} \index{direct summands extension of a
    nc function} of $f$, i.e., a nc function $f_{\rm
    d.s.e.}\colon\Omega_{\rm d.s.e.}\to\ncspace{\module{N}}$ such
    that $f_{\rm d.s.e.}|_\Omega =f$;
    \item If $\ring=\field$ is an infinite field, $\Omega=\coprod_{m=1}^\infty \mat{\module{M}}{sm}$
     and $f|_{\mat{\module{M}}{sm}}$ is polynomial on slices for every $m\in\mathbb{N}$,
    then $\Omega_{\rm d.s.e.}=\ncspace{\module{M}}$ and  $f_{\rm d.s.e.}$ is polynomial on
    slices; if, furthermore, the degrees of
    $f|_{\mat{\module{M}}{sm}}$, $m=1,2,\ldots$, are bounded, then so are the
    degrees of $f_{\rm d.s.e.}|_{\mat{\module{M}}{n}}$,
    $n=1,2,\ldots$, so that $f_{\rm d.s.e.}$ is a nc polynomial over $\module{M}$;
    \item If $\module{M}=\vecspace{V}$ is a vector space over $\mathbb{C}$,
    $\module{N}=\vecspace{W}$ is a Banach space equipped with an admissible system
    of matrix norms over $\vecspace{W}$, $\Omega\subseteq\ncspace{\vecspace{V}}$ is
 a finitely open nc set, and $f$ is
    G-differentiable on $\Omega$, then $f_{\rm d.s.e.}$ is G-differentiable on $\Omega_{\rm
    d.s.e.}$; moreover, if $X\in(\Omega_{\rm d.s.e.})_n$, $Z\in\mat{\vecspace{V}}{n}$, and
    $X\oplus Y\in\Omega_{n+m}$, then
    $$\delta f(X\oplus Y)(Z\oplus 0_{m\times m})=\delta f_{\rm
    d.s.e.}(X)(Z)\oplus f_{\rm d.s.e.}(Y);$$
    \item If $\module{M}=\vecspace{V}$ and $\module{N}=\vecspace{W}$ are Banach spaces equipped with
    admissible systems of matrix norms over $\vecspace{V}$ and over $\vecspace{W}$,
 $\Omega\subseteq\ncspace{\vecspace{V}}$ is an open nc set, and $f$ is
    analytic on $\Omega$, then  $f_{\rm d.s.e.}$ is analytic on $\Omega_{\rm d.s.e.}$;
     \item If $\module{M}=\vecspace{V}$ and
    $\module{N}=\vecspace{W}$ are operator spaces, $\Omega\subseteq\ncspace{\vecspace{V}}$
     is a uniformly-open nc set, and $f$ is
    uniformly analytic on $\Omega$, then  $f_{\rm d.s.e.}$ is uniformly analytic on $\Omega_{\rm d.s.e.}$.
\end{enumerate}
\end{prop}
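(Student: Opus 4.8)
The plan is to verify the two defining properties of a uniformly analytic nc function for $f_{\rm d.s.e.}$, namely $G$-differentiability and uniform local boundedness. By Proposition \ref{prop:dirsum-ext-properties}(3) the set $\Omega_{\rm d.s.e.}$ is a uniformly-open similarity invariant nc set, and by part (1) of the present proposition $f_{\rm d.s.e.}$ is a well-defined nc function on it. Since every uniformly-open nc set is in particular finitely open (each nc ball, restricted to a fixed matrix level, is a norm ball, hence finitely open), and a uniformly analytic $f$ is by definition $G$-differentiable, part (3) of the present proposition already delivers that $f_{\rm d.s.e.}$ is $G$-differentiable on $\Omega_{\rm d.s.e.}$. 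So the only real content is the uniform local boundedness of $f_{\rm d.s.e.}$.

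To obtain it, I would fix $s\in\mathbb{N}$ and $Y\in(\Omega_{\rm d.s.e.})_s$, choose $m\in\mathbb{N}$ and $Y'\in\mat{\vecspace{V}}{m}$ with $Y\oplus Y'\in\Omega_{s+m}$, and invoke uniform local boundedness of $f$ to get $\delta>0$ and $M>0$ with $B_{\rm nc}(Y\oplus Y',\delta)\subseteq\Omega$ and $\|f(\cdot)\|\le M$ on that nc ball. The claim to prove is that $f_{\rm d.s.e.}$ is bounded by $M$ on $B_{\rm nc}(Y,\delta)$. Given $X\in B_{\rm nc}(Y,\delta)_{sk}$, I would form $X\oplus\bigoplus_{\beta=1}^k Y'$ and conjugate by the block permutation matrix $U\in\mat{\ring}{(s+m)k}$ that interleaves the summands so that $U\bigl((\bigoplus_{\alpha=1}^k Y)\oplus(\bigoplus_{\beta=1}^k Y')\bigr)U^{-1}=\bigoplus_{\gamma=1}^k(Y\oplus Y')$, exactly as in equation \eqref{eq:u} in the proof of Proposition \ref{prop:dirsum-ext-properties}(3). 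Because $\vecspace{V}$ is an operator space, \eqref{eq:os-dirsum-norms} gives $\|(X-\bigoplus_{\alpha=1}^k Y)\oplus 0\|=\|X-\bigoplus_{\alpha=1}^k Y\|<\delta$, and \eqref{eq:os-simprod-norms} (with $U$ unitary) shows conjugation by $U$ is isometric; hence $U(X\oplus\bigoplus_{\beta=1}^k Y')U^{-1}\in B_{\rm nc}(Y\oplus Y',\delta)\subseteq\Omega$, and by similarity invariance $X\oplus\bigoplus_{\beta=1}^k Y'\in\Omega$.

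From here the conclusion is routine: $f_{\rm d.s.e.}$ extends $f$ and respects direct sums, and conjugation by $U$ on $\mat{\vecspace{W}}{(s+m)k}$ is isometric, so
\begin{equation*}
M\ \ge\ \Bigl\|f\Bigl(X\oplus\bigoplus_{\beta=1}^k Y'\Bigr)\Bigr\|\ =\ \Bigl\|f_{\rm d.s.e.}(X)\oplus\bigoplus_{\beta=1}^k f_{\rm d.s.e.}(Y')\Bigr\|\ \ge\ \|f_{\rm d.s.e.}(X)\|,
\end{equation*}
the last step again by \eqref{eq:os-dirsum-norms}. Thus $f_{\rm d.s.e.}$ is bounded on $B_{\rm nc}(Y,\delta)$, which proves uniform local boundedness; combined with $G$-differentiability this yields that $f_{\rm d.s.e.}$ is uniformly analytic on $\Omega_{\rm d.s.e.}$.

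I expect the only obstacle to be bookkeeping in the permutation step — tracking matrix levels and the number of copies of $Y'$, and checking that conjugation by the block permutation transports perturbations between $B_{\rm nc}(Y,\delta)$ and $B_{\rm nc}(Y\oplus Y',\delta)$ with no norm distortion. This is exactly where the operator-space axioms (constants equal to $1$) are used, and the argument is just the function-value refinement of the one already carried out for the uniformly-open case in the proof of Proposition \ref{prop:dirsum-ext-properties}(3).
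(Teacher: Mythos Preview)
Your proof is correct and follows essentially the same route as the paper's: reduce to uniform local boundedness, then for a point in $(\Omega_{\rm d.s.e.})_s$ choose a complementary summand, take the nc ball at the direct sum where $f$ is bounded, and use the block permutation $U$ (as in \eqref{eq:u}--\eqref{eq:z}) together with the operator-space axioms to transport elements of $B_{\rm nc}(Y,\delta)$ into that ball and bound $\|f_{\rm d.s.e.}\|$. The only cosmetic difference is that you establish $G$-differentiability separately via part~(3), whereas the paper simply invokes Corollary~\ref{cor:nc-u-analytic} (uniformly locally bounded $\Leftrightarrow$ uniformly analytic) to make the whole claim follow from local boundedness alone.
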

\begin{proof}
(1) Let $X\in(\Omega_{\rm d.s.e.})_n$. Then there exist
$m\in\mathbb{N}$ and $Y\in\mat{\module{M}}{m}$ such that $X\oplus
Y\in\Omega_{n+m}$. The matrix $\mu I_n\oplus\nu
I_m\in\mat{\ring}{(n+m)}$, with arbitrary distinct
$\mu,\nu\in\ring$, commutes with $X\oplus Y$. Hence, $\mu
I_n\oplus\nu I_m$ commutes with $f(X\oplus Y)$. This is possible
only if $f(X\oplus Y)=A\oplus B$, with some
$A\in\mat{\module{N}}{n}$ and $B\in\mat{\module{N}}{m}$. We will
show now that $A$ is independent of $Y$, i.e., is determined by
$f$ and $X$ only. Suppose $X\oplus Y'\in\Omega_{n+m'}$ for some
$m'\in\mathbb{N}$ and $Y'\in\mat{\module{M}}{m'}$. Then, using the
same argument as above, we conclude that $f(X\oplus Y')=A'\oplus
B'$, for some $A'\in\mat{\module{N}}{n}$ and
$B'\in\mat{\module{N}}{m'}$. Since
$$(X\oplus Y)(I_n\oplus 0_{m\times m'})=(I_n\oplus 0_{m\times
m'})(X\oplus Y'),$$ we must have
$$(A\oplus B)(I_n\oplus 0_{m\times m'})=(I_n\oplus 0_{m\times
m'})(A'\oplus B'),$$ so that $A=A'$. We define $f_{\rm
d.s.e.}(X):=A$. On the other hand, since $\Omega$ is similarity
invariant, we have that $Y\in(\Omega_{\rm d.s.e.})_{m}$. An
analogous argument then shows that $B$ is uniquely determined by
$f$ and $Y$, and we define $f_{\rm d.s.e.}(Y):=B$.

The mapping $f_{\rm d.s.e.}\colon\Omega_{\rm
d.s.e.}\to\ncspace{N}$ is correctly defined. Indeed, by the
similarity invariance of $\Omega$,  if $X\oplus Y\in\Omega$ and
$f(X\oplus Y)=A\oplus B$, then $Y\oplus X\in\Omega$ and $f(Y\oplus
X)=B\oplus A$.

Clearly, $f_{\rm d.s.e.}((\Omega_{\rm
d.s.e.})_n)\subseteq\mat{\module{N}}{n}$ for every
$n\in\mathbb{N}$. Since, by Proposition
\ref{prop:dirsum-ext-properties}, $\Omega_{\rm d.s.e.}$ is a nc
set, we have for arbitrary $X\in(\Omega_{\rm d.s.e.})_n$ and
$X'\in(\Omega_{\rm d.s.e.})_{n'}$ that $X\oplus X'\in(\Omega_{\rm
d.s.e.})_{n+n'}$ and $X\oplus X'\oplus Y\in\Omega_{n+n'+m}$, with
some $m\in\mathbb{N}$ and $Y\in\mat{\module{M}}{m}$. Using the
similarity invariance of $\Omega$, we observe that $$f(X\oplus
X'\oplus Y)=f_{\rm d.s.e.}(X)\oplus f_{\rm d.s.e.}(X')\oplus
f_{\rm d.s.e.}(Y)=f_{\rm d.s.e.}(X\oplus X')\oplus f_{\rm
d.s.e.}(Y),$$ thus $f_{\rm d.s.e.}(X\oplus X')=f_{\rm
d.s.e.}(X)\oplus f_{\rm d.s.e.}(X')$, i.e., $f_{\rm d.s.e.}$
respects direct sums.

If $X\in(\Omega_{\rm d.s.e.})_n$ and $S\in\mat{\ring}{n}$ is
invertible, then, by Proposition \ref{prop:dirsum-ext-properties},
$SXS^{-1}\in(\Omega_{\rm d.s.e.})_n$. We have $X\oplus
Y\in\Omega_{n+m}$ for some $m\in\mathbb{N}$ and
$Y\in\mat{\module{M}}{m}$. Since $\Omega$ is similarity invariant,
we also have that $$SXS^{-1}\oplus Y=(S\oplus I_m)(X\oplus
Y)(S\oplus I_m)^{-1}\in\Omega_{n+m}.$$ Then
\begin{multline*}
f_{\rm d.s.e.}(SXS^{-1})\oplus f_{\rm d.s.e.}(Y)=f(SXS^{-1}\oplus
Y)\\
=f\Big((S\oplus I_m)(X\oplus Y)(S\oplus I_m)^{-1} \Big)=(S\oplus
I_m)f(X\oplus Y)(S\oplus I_m)^{-1}\\ = (S\oplus I_m)(f_{\rm
d.s.e.}(X)\oplus f_{\rm d.s.e.}(Y))(S\oplus I_m)^{-1}=Sf_{\rm
d.s.e.}(X)S^{-1}\oplus f_{\rm d.s.e.}(Y),
\end{multline*}
thus $f_{\rm d.s.e.}(SXS^{-1})=Sf_{\rm d.s.e.}(X)S^{-1}$, i.e.,
$f_{\rm d.s.e.}$ respects similarities.

We conclude that $f_{\rm d.s.e.}$ is a nc function. By the
construction, $f_{\rm d.s.e.}$ extends $f$ to $\Omega_{\rm
d.s.e.}$ and such an extended nc function is unique.

(2) It is obvious that $\Omega_{\rm d.s.e.}=\ncspace{\module{M}}$.
Let $X\in\mat{\module{M}}{n}$. Then $X\oplus
Y\in\mat{\module{M}}{sm}$ for some $m\in\mathbb{N}$ and
$Y\in\mat{\module{M}}{(sm-n)}$. Since $f|_{\mat{\module{M}}{sm}}$
is polynomial on slices of some finite degree $M_{sm}$, one has
that, for an arbitrary $Z\in\mat{\module{M}}{n}$,
$$f_{X\oplus Y,Z\oplus 0_{(sm-n)\times (sm-n)}}(t)=f((X+tZ)\oplus Y)=f_{\rm
d.s.e.}(X+tZ)\oplus f_{\rm d.s.e.}(Y)$$ is a polynomial in $t$ of
degree at most $M_{sm}$. Then so is ${(f_{\rm
d.s.e.})}_{X,Z}(t)=f_{\rm d.s.e.}(X+tZ)$, and therefore, $f_{\rm
d.s.e.}$ is polynomial on slices. If, in addition, the degrees
$M_{sm}$ of $f|_{\mat{\module{M}}{sm}}$, $m=1,2,\ldots$, are
bounded, then the degrees of $f_{\rm
d.s.e.}|_{\mat{\module{M}}{n}}$, $n=1,2,\ldots$, are bounded by
the same constant. By Theorem \ref{thm:king-poly-gen}, $f|_{\rm
d.s.e.}$ is a nc polynomial over $\module{M}$ with coefficients in
$\module{N}$.

(3) Let $X\in(\Omega_{\rm d.s.e.})_n$ and
$Z\in\mat{\vecspace{V}}{n}$. Then $X\oplus Y\in\Omega_{n+m}$ for
some $m\in\mathbb{N}$ and $Y\in\mat{\vecspace{V}}{m}$. Since
$\Omega$ is finitely open, so is $\Omega_{\rm d.s.e.}$ by
Proposition \ref{prop:dirsum-ext-properties}. Then both
$X+tZ\in(\Omega_{\rm d.s.e.})_n$ and $(X+tZ)\oplus
Y\in\Omega_{n+m}$ for a sufficiently small scalar $t$. Since $f$
is G-differentiable at $X\oplus Y$, we have
\begin{multline*}
\delta f(X\oplus Y)(Z\oplus 0_{m\times m})=\lim_{t\to
0}\frac{f((X\oplus Y)+t(Z\oplus 0_{m\times m}))-f(X\oplus Y)}{t}\\
=\lim_{t\to
0}\frac{f((X+tZ)\oplus Y)-f(X\oplus Y)}{t}\\
=\lim_{t\to 0}\frac{(f_{\rm d.s.e.}(X+tZ)\oplus f_{\rm
d.s.e.}(Y))-(f_{\rm d.s.e.}(X)\oplus f_{\rm d.s.e.}(Y))}{t}\\
=\lim_{t\to 0}\frac{(f_{\rm d.s.e.}(X+tZ)-f_{\rm d.s.e.}(X))\oplus
f_{\rm
d.s.e.}(Y)}{t}\\
=\lim_{t\to 0}\frac{f_{\rm d.s.e.}(X+tZ)-f_{\rm
d.s.e.}(X)}{t}\oplus f_{\rm
d.s.e.}(Y)\\
=\delta f_{\rm d.s.e.}(X)(Z)\oplus f_{\rm d.s.e.}(Y),
\end{multline*}
i.e., $\delta f_{\rm d.s.e.}(X)(Z)$ exists and is determined by
$\delta f(X\oplus Y)(Z\oplus 0_{m\times m})$. Clearly, $\delta
f_{\rm d.s.e.}(X)(Z)$ is independent of the choice of $Y$, since
it is unique as a limit. We conclude that $f_{\rm d.s.e.}$ is
G-differentiable on $\Omega_{\rm d.s.e.}$.

(4) It suffices to show, either by the result of part (4) and the
definition of an analytic nc function or by Theorem
\ref{thm:f-queen}, that $f_{\rm d.s.e.}$ is locally bounded on
$\Omega_{\rm d.s.e.}$. Let $X\in(\Omega_{\rm d.s.e.})_n$. Then
$X\oplus Y\in\Omega_{n+m}$ for some $m\in\mathbb{N}$ and
$Y\in\mat{\vecspace{V}}{m}$. Since $\Omega_{n+m}$ is open and $f$
is locally bounded on $\Omega_{n+m}$, there exists a constant
$K>0$ such that $\|f(W)\|_{n+m}\le K$ for every $W$ in some open
ball $B(X\oplus Y,\epsilon)\subseteq\Omega_{n+m}$. By Proposition
\ref{prop:dirsum-ext-properties},  $(\Omega_{\rm d.s.e.})_n$ is
open. Let $\delta<C_1'(n,m)^{-1}\epsilon$ be such that
$B(X,\delta)\subseteq(\Omega_{\rm d.s.e.})_n$, where
$C_1'(n,m)=C_1'(\vecspace{V};n,m)$ is the constant on the
right-hand side of \eqref{eq:dirsums-norms} for $\vecspace{V}$.
Let $Z\in B(X,\delta)$ be arbitrary. Then $Z\oplus Y\in B(X\oplus
Y,\epsilon)$ and
\begin{multline*}
\|f_{\rm d.s.e.}(Z)\|_n\le\max\{\|f_{\rm d.s.e.}(Z)\|_n,\|f_{\rm
d.s.e.}(Y)\|_m\}\\
\le C_1(n,m)\|f_{\rm d.s.e.}(Z)\oplus f_{\rm
d.s.e.}(Y)\|_{n+m}\\
=C_1(n,m)\|f(Z\oplus Y)\|_{n+m}\le C_1(n,m)K,
\end{multline*}
where $C_1(n,m)=C_1(\vecspace{W};n,m)$ is the constant on the
left-hand side of \eqref{eq:dirsums-norms} for $\vecspace{W}$.
Thus, $f_{\rm d.s.e.}$ is locally bounded on $\Omega_{\rm
d.s.e.}$.

(5) It suffices to show, either by the result of part (4) and the
definition of a uniformly analytic nc function or by Corollary
\ref{cor:nc-u-analytic}, that $f_{\rm d.s.e.}$ is uniformly
locally bounded on $\Omega_{\rm d.s.e.}$. Let $X\in(\Omega_{\rm
d.s.e.})_n$. Then $X\oplus Y\in\Omega_{n+m}$ for some
$m\in\mathbb{N}$ and $Y\in\mat{\vecspace{V}}{m}$. Since $\Omega$
is uniformly-open and $f$ is uniformly locally bounded on
$\Omega$, there exists a constant $K>0$ such that
$\|f(W)\|_{n_W}\le K$ for every $W$ in some open nc ball $B_{\rm
nc}(X\oplus Y,\epsilon)\subseteq\Omega$ (here $n_W$ is the size of
the matrix $W$). By Proposition \ref{prop:dirsum-ext-properties},
$\Omega_{\rm d.s.e.}$ is uniformly-open. Moreover, the argument in
the proof of Proposition \ref{prop:dirsum-ext-properties} implies
that $B_{\rm nc}(X,\epsilon)\subseteq\Omega_{\rm d.s.e.}$: for an
arbitrary $Z\in B_{\rm nc}(X,\epsilon)_{nk}$, one has \eqref{eq:z}
with the permutation matrix $U$ defined by \eqref{eq:u}. By the
properties of operator space norms,
\begin{multline*}
\|f_{\rm d.s.e.}(Z)\|_{nk}\le\Big\|f_{\rm
d.s.e.}(Z)\oplus\bigoplus_{\beta=1}^kf_{\rm d.s.e.}(Y)\Big\|_{nk}
=\Big\|f\Big(Z\oplus\bigoplus_{\beta=1}^kY\Big)\Big\|_{nk}\\
=\Big\|U^{-1}f\Big(Z\oplus\bigoplus_{\beta=1}^kY\Big)U\Big\|_{nk}
=\Big\|f\Big(U^{-1}\Big(Z\oplus\bigoplus_{\beta=1}^kY\Big)U\Big)\Big\|_{nk}
\le K.
\end{multline*}
We conclude that $f_{\rm d.s.e.}$ is uniformly locally bounded on
$\Omega_{\rm d.s.e.}$.
\end{proof}

Proposition \ref{prop:ncfun-dirsum-ext} can be extended to higher
order nc functions as follows.

\begin{prop}\label{prop:higher-ncfun-dirsum-ext}
Let $\module{M}_0$, \ldots, $\module{M}_k$, $\module{N}_0$,
\ldots, $\module{N}_k$ be modules over a commutative unital ring
$\ring$, let $\Omega^{(0)}\subseteq\ncspacej{\module{M}}{0}$,
\ldots, $\Omega^{(k)}\subseteq\ncspacej{\module{M}}{k}$ be
similarity invariant nc sets, and let
$f\in\tclass{k}(\Omega^{(0)},\ldots,\Omega^{(k)};\ncspacej{\module{N}}{0},\ldots,\ncspacej{\module{N}}{k})$.
Then

\begin{enumerate}
    \item There exists a uniquely determined \emph{direct summands extension} \index{direct summands extension
    of a higher order nc function} of $f$, i.e., a nc function
    of order $k$,
${f}_{\rm d.s.e.}\in\tclass{k}({\Omega}^{(0)}_{\rm
d.s.e.},\ldots,{\Omega}^{(k)}_{\rm d.s.e.};
\ncspacej{\module{N}}{0},\ldots,\ncspacej{\module{N}}{k})$, such
that ${f}_{\rm
d.s.e.}|_{\Omega^{(0)}\times\cdots\times\Omega^{(k)}}=f$;
    \item If $\module{M}_0=\vecspace{V}_0$, \ldots, $\module{M}_k=\vecspace{V}_k$ are vector spaces over
    $\mathbb{C}$,
    $\module{N}_0=\vecspace{W}_0$, \ldots, $\module{N}_k=\vecspace{W}_k$ are Banach spaces equipped with
    admissible systems
    of rectangular matrix norms over $\vecspace{W}_0$, \ldots,
    $\vecspace{W}_k$,
    $\Omega^{(0)}\subseteq\ncspacej{\vecspace{V}}{0}$,
    \ldots, $\Omega^{(k)}\subseteq\ncspacej{\vecspace{V}}{k}$ are
    finitely open nc sets, and $f$ is
    $G_W$-differentiable on $\Omega^{(0)}\times\cdots\times\Omega^{(k)}$, then
    $f_{\rm d.s.e.}$ is $G_W$-differentiable on $\Omega^{(0)}_{\rm
    d.s.e.}\times\cdots\times\Omega^{(k)}_{\rm d.s.e.}$;
    moreover, if $X^0\in(\Omega^{(0)}_{\rm d.s.e.})_{n_0}$,
    \ldots, $X^k\in(\Omega^{(k)}_{\rm d.s.e.})_{n_k}$,
     $Z^0\in\mat{\vecspace{V}_0}{n_0}$, \ldots, $Z^k\in\mat{\vecspace{V}_k}{n_k}$,
$W^1\in\rmat{\vecspace{W}_1}{n_0}{n_1}$, \ldots,
$W^k\in\rmat{\vecspace{W}_k}{n_{k-1}}{n_k}$,
     and
    $X^0\oplus Y^0\in\Omega^{(0)}_{n_0+m_0}$, \ldots, $X^k\oplus Y^k\in\Omega^{(k)}_{n_k+m_k}$,  then
    \begin{multline*}
\frac{d}{dt} f\Big((X^0\oplus Y^0)+t(Z^0\oplus 0_{m_0\times
m_0}),\ldots,(X^k\oplus Y^k)+t(Z^k\oplus 0_{m_k\times
m_k})\Big)\\
(W^1\oplus 0_{m_0\times m_1},\ldots,W^k\oplus
0_{m_{k-1}\times m_k})\Big|_{t=0}\\
= \frac{d}{dt} f_{\rm
    d.s.e.}(X^0+tZ^0,\ldots,X^k+tZ^k)(W^1,\ldots,W^k)\Big|_{t=0}\oplus 0_{m_0\times m_k};
    \end{multline*}
    \item If $\module{M}_0=\vecspace{V}_0$, \ldots, $\module{M}_k=\vecspace{V}_k$,
    $\module{N}_0=\vecspace{W}_0$, \ldots, $\module{N}_k=\vecspace{W}_k$ are Banach spaces equipped with
    admissible systems
    of rectangular matrix norms over $\vecspace{V}_0$, \ldots,
    $\vecspace{V}_k$, $\vecspace{W}_0$, \ldots,
    $\vecspace{W}_k$,  $\Omega^{(0)}\subseteq\ncspacej{\vecspace{V}}{0}$,
    \ldots, $\Omega^{(k)}\subseteq\ncspacej{\vecspace{V}}{k}$ are
    open nc sets, and $f$ is analytic on $\Omega^{(0)}\times\cdots\times\Omega^{(k)}$, then
    $f_{\rm d.s.e.}$ is analytic on $\Omega^{(0)}_{\rm
    d.s.e.}\times\cdots\times\Omega^{(k)}_{\rm d.s.e.}$;
\item If $\module{M}_0=\vecspace{V}_0$, \ldots,
$\module{M}_k=\vecspace{V}_k$,
    $\module{N}_0=\vecspace{W}_0$, \ldots, $\module{N}_k=\vecspace{W}_k$ are operator spaces,
      $\Omega^{(0)}\subseteq\ncspacej{\vecspace{V}}{0}$,
    \ldots, $\Omega^{(k)}\subseteq\ncspacej{\vecspace{V}}{k}$ are
   uniformly-open nc sets, and $f$ is uniformly analytic on $\Omega^{(0)}\times\cdots\times\Omega^{(k)}$, then
    $f_{\rm d.s.e.}$ is uniformly analytic on $\Omega^{(0)}_{\rm
    d.s.e.}\times\cdots\times\Omega^{(k)}_{\rm d.s.e.}$.
\end{enumerate}
\end{prop}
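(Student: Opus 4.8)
�The plan is to prove Proposition \ref{prop:higher-ncfun-dirsum-ext} by running, in the higher-order setting, the same arguments used for Propositions \ref{prop:dirsum-ext-properties} and \ref{prop:ncfun-dirsum-ext}, with the only genuinely new bookkeeping coming from the fact that a nc function of order $k$ has $k+1$ matrix arguments $X^0,\ldots,X^k$ (each living in its own nc set $\Omega^{(j)}$) and its value is a $k$-linear map whose value carries the first and last block-row/column indices. So the direct-summands extension must be performed in each argument separately and simultaneously.

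First I would prove part (1). Given $X^0\in(\Omega^{(0)}_{\rm d.s.e.})_{n_0},\ldots,X^k\in(\Omega^{(k)}_{\rm d.s.e.})_{n_k}$, pick $Y^0,\ldots,Y^k$ with $X^j\oplus Y^j\in\Omega^{(j)}$. Apply $f$ to the tuple $(X^0\oplus Y^0,\ldots,X^k\oplus Y^k)$; then for each $j$, insert the idempotent $I_{n_j}\oplus 0$ as an intertwining matrix (using that $f$ respects intertwinings, Proposition \ref{prop:simint_k}, equivalently conditions \eqref{eq:3X^0}--\eqref{eq:3X^k}) to ``cut out'' the $X^j$ block from $Y^j$. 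This shows the value $f(X^0\oplus Y^0,\ldots,X^k\oplus Y^k)(W^1\oplus 0,\ldots,W^k\oplus 0)$ has a block decomposition in which the ``upper-left'' piece, living in $\rmat{\module{N}_0}{n_0}{n_k}$, depends only on $X^0,\ldots,X^k$ and $W^1,\ldots,W^k$; we define $f_{\rm d.s.e.}(X^0,\ldots,X^k)(W^1,\ldots,W^k)$ to be this piece. Independence of the choice of the $Y^j$'s follows exactly as in the order-$0$ case, by comparing two such tuples via a rectangular intertwining matrix $I_{n_j}\oplus 0_{m_j\times m_j'}$ in argument $j$. Multilinearity in $W^1,\ldots,W^k$ is inherited; the respecting of direct sums in each argument \eqref{eq:1X^0}--\eqref{eq:1X^k} and of similarities \eqref{eq:2X^0}--\eqref{eq:2X^k} is checked just as in Proposition \ref{prop:ncfun-dirsum-ext}(1), using Proposition \ref{prop:dirsum-ext-properties} (applied to each $\Omega^{(j)}$) to know $\Omega^{(j)}_{\rm d.s.e.}$ is a similarity-invariant nc set and that direct sums/similarities in the $j$-th argument can be lifted to the padded tuple. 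Uniqueness is forced since $f_{\rm d.s.e.}|_{\Omega^{(0)}\times\cdots\times\Omega^{(k)}}=f$ together with respecting intertwinings determines the value on any $(X^0,\ldots,X^k)$ that is a direct summand.

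Next I would treat parts (2)--(4). Part (2): with $X^j+tZ^j\in\Omega^{(j)}_{\rm d.s.e.}$ and $(X^j+tZ^j)\oplus Y^j\in\Omega^{(j)}$ for small $t$ (using that the $\Omega^{(j)}_{\rm d.s.e.}$ are finitely open, Proposition \ref{prop:dirsum-ext-properties}(3)), apply $G_W$-differentiability of $f$ to the one-parameter family of padded tuples; since $f$ evaluated on padded tuples and padded $W$'s splits as $f_{\rm d.s.e.}(\cdots)(\cdots)\oplus 0_{m_0\times m_k}$ — which itself follows from the block structure established in part (1) applied with the $Y^j$ fixed — the difference quotient converges, giving the displayed formula and $G_W$-differentiability of $f_{\rm d.s.e.}$. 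Part (3) (analyticity): by part (3) of the proposition we have $G_W$-differentiability hence, by Theorem \ref{thm:g-queen-k} combined with Theorem \ref{thm:f-queen-k}, it suffices to show $f_{\rm d.s.e.}$ is locally bounded; given $X^j\in(\Omega^{(j)}_{\rm d.s.e.})_{n_j}$ choose $Y^j$ with $X^j\oplus Y^j\in\Omega^{(j)}$, take a polyball of radius $\epsilon$ on which $f$ is norm-bounded, and use the admissibility of the rectangular matrix norm systems (Proposition \ref{prop:rect-dirsums-norms}, i.e. the constants $C_1,C_1'$) to transfer boundedness from the padded tuples $Z^j\oplus Y^j$ back to $Z^j$ — on a suitably shrunken ball $B(X^j,\delta)$ — exactly as in Proposition \ref{prop:ncfun-dirsum-ext}(4). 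Part (4) (uniform analyticity): same scheme, now using Proposition \ref{prop:dirsum-ext-properties}(3) to know the $\Omega^{(j)}_{\rm d.s.e.}$ are uniformly-open and that $B_{\rm nc}(X^j,\epsilon)\subseteq\Omega^{(j)}_{\rm d.s.e.}$, a permutation matrix $U$ to rearrange $Z^j\oplus\bigoplus_\beta Y^j$ into $\bigoplus_\alpha Z^j\oplus\bigoplus_\beta Y^j$, and the operator-space norm identities ($C=1$), together with Corollary \ref{cor:nc-u-analytic} (more precisely its higher-order analogue Theorem \ref{thm:king-k} / Corollary \ref{cor:ucb-an-dif-op}) to conclude; this mirrors Proposition \ref{prop:ncfun-dirsum-ext}(5).

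The main obstacle I anticipate is purely notational: keeping straight the block decomposition of a $k$-linear value when $k+1$ different arguments are simultaneously padded with $0$-blocks of $k+1$ different sizes $m_0,\ldots,m_k$, so that the value of $f$ on padded data lies in $\rmat{\module{N}_0}{(n_0+m_0)}{(n_k+m_k)}$ and one must verify — once and for all, using intertwinings with the idempotents $I_{n_j}\oplus 0$ in every slot — that it equals $f_{\rm d.s.e.}(X^0,\ldots,X^k)(W^1,\ldots,W^k)\oplus(\text{terms involving the }Y^j)$ with the stated $(1,k+1)$-type upper-left block. Once that block identity is in place, parts (2)--(4) are essentially the order-$0$ arguments run slotwise with the rectangular-norm admissibility estimates replacing the square-norm ones. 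No essentially new analytic input is required beyond what is already in Chapters \ref{sec:conv} and the present chapter.
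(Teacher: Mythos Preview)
Your proposal is correct and follows exactly the approach the paper intends: the paper explicitly omits the proof of this proposition, stating only that it ``is essentially similar to that of Proposition \ref{prop:ncfun-dirsum-ext}.'' Your plan to run the order-$0$ arguments slotwise in each of the $k+1$ arguments, using the intertwining conditions \eqref{eq:3X^0}--\eqref{eq:3X^k} with idempotents $I_{n_j}\oplus 0$ (staying within $\Omega^{(j)}$ by keeping the padded argument $X^j\oplus Y^j$) to extract the upper-left $n_0\times n_k$ block, and then invoking Theorems \ref{thm:f-queen-k} and \ref{thm:king-k} for parts (3) and (4), is precisely the intended adaptation.
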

We omit the proof which is essentially similar to that of
Proposition \ref{prop:ncfun-dirsum-ext}.

Our next goal is to define  the direct summands extension of a
sequence of $\ell$-linear mappings
$f_\ell\colon\mattuple{\module{M}}{s}{\ell}\to\mat{\module{N}}{s}$,
$\ell=0,1,\ldots$ satisfying conditions
\eqref{eq:ncfun_coef_0}--\eqref{eq:ncfun-coef_ell_ell}.
\begin{prop}\label{prop:LAC-dirsum-ext}
Let $\module{M}$ and $\module{N}$ be modules over a commutative
unital ring $\ring$, let $s_1$, \ldots, $s_m\in\mathbb{N}$ be such
that $s=s_1+\cdots+s_m$, let $Y=Y_{11}\oplus\cdots\oplus
Y_{mm}\in\mat{\module{M}}{s}$, with
$Y_{ii}\in\mat{\module{M}}{s_i}$, $i=1,\ldots,m$, and let a
sequence of $\ell$-linear mappings
$f_\ell\colon\mattuple{\module{M}}{s}{\ell}\to\mat{\module{N}}{s}$,
$\ell=0,1,\ldots$ satisfy
\eqref{eq:ncfun_coef_0}--\eqref{eq:ncfun-coef_ell_ell}.  Then
\begin{enumerate}
\item With respect to the corresponding block decomposition of
$s\times s$ matrices,
\begin{equation}\label{eq:f_0-dirsum-decomp}
f^{\rm
d.s.e.}_{0;\alpha,\beta}:=(f_0)_{\alpha\beta}\in\rmat{\module{N}}{s_\alpha}{s_\beta},\quad
 \alpha,\beta=1,\ldots,m,
\end{equation}
satisfy $f^{\rm d.s.e.}_{0;\alpha,\beta}=0$ if $\alpha\neq \beta$,
\begin{multline}\label{eq:f_ell-dirsum-decomp}
f_\ell(Z^1,\ldots,Z^\ell)_{\alpha\beta}=
\sum_{1\le\alpha_1,\ldots,\alpha_{\ell-1}\le m}f^{\rm
d.s.e.}_{\ell;\alpha_0,\ldots,\alpha_\ell}(Z^1_{\alpha_0\alpha_1},\ldots,Z^\ell_{\alpha_{\ell-1}\alpha_\ell}),\\
{\text where}\ \alpha_0=\alpha,\ \alpha_\ell=\beta,\,
\alpha,\beta=1,\ldots,m,\ \ell=1,2,\ldots,\
\end{multline}
with uniquely determined $\ell$-linear mappings
$$f^{\rm
d.s.e.}_{\ell;\alpha_0,\ldots,\alpha_\ell}\colon\rmat{\module{M}}{s_{\alpha_0}}{s_{\alpha_1}}\times\cdots\times
\rmat{\module{M}}{s_{\alpha_{\ell-1}}}{s_{\alpha_\ell}}\to\rmat{\module{N}}{s_{\alpha_0}}{s_{\alpha_\ell}}$$
satisfying
\begin{equation}\label{eq:ncfun-dirsum_coef_0}
Sf^{\rm d.s.e.}_{0;\beta,\beta}-f^{\rm
d.s.e.}_{0;\alpha,\alpha}S=f^{\rm
d.s.e.}_{1;\alpha,\beta}(SY_{\beta \beta}-Y_{\alpha
\alpha}S),\quad S\in\rmat{\ring}{s_\alpha}{s_\beta},
\end{equation}
 and for
$\ell=1,2,\ldots$,
\begin{multline}\label{eq:ncfun-dirsum-coef-ell_0}
Sf^{\rm
d.s.e.}_{\ell;\gamma,\alpha_1,\ldots,\alpha_\ell}(W^1,\ldots,W^\ell)-
f^{\rm
d.s.e.}_{\ell;\alpha_0,\ldots,\alpha_\ell}(SW^1,W^2,\ldots,W^\ell)\\
=f^{\rm
d.s.e.}_{\ell+1;\alpha_0,\gamma,\alpha_1,\ldots,\alpha_\ell}(SY_{\gamma\gamma}-Y_{\alpha_0\alpha_0}S,W^1,\ldots,W^\ell),
\quad S\in\rmat{\ring}{s_{\alpha_0}}{\gamma},
\end{multline}
\begin{multline}\label{eq:ncfun-dirsum-coef_ell_j}
f^{\rm
d.s.e.}_{\ell;\alpha_0,\ldots,\alpha_{j-1},\gamma,\alpha_{j+1},\ldots,\alpha_\ell}
(W^1,\ldots,W^{j-1},W^jS,W^{j+1},\ldots,W^\ell)\\
-f^{\rm d.s.e.}_{\ell;\alpha_0,\ldots,\alpha_\ell}
(W^1,\ldots,W^j,SW^{j+1},W^{j+2},\ldots,W^\ell)\\
=f^{\rm
d.s.e.}_{\ell+1;\alpha_0,\ldots,\alpha_j,\gamma,\alpha_{j+1},\ldots,\alpha_\ell}
(W^1,\ldots,W^j,SY_{\gamma\gamma}-Y_{\alpha_j\alpha_j}S,W^{j+1},\ldots,W^\ell),\\
S\in\rmat{\ring}{s_{\alpha_j}}{\gamma},
\end{multline}
\begin{multline}\label{eq:ncfun-dirsum-coef_ell_ell}
f^{\rm
d.s.e.}_{\ell;\alpha_0,\ldots,\alpha_{\ell-1},\gamma}(W^1,\ldots,W^{\ell-1},W^\ell
S)-f^{\rm
d.s.e.}_{\ell;\alpha_0,\ldots,\alpha_\ell}(W^1,\ldots,W^\ell)S\\
=f^{\rm
d.s.e.}_{\ell+1;\alpha_0,\ldots,\alpha_{\ell},\gamma}(W^1,\ldots,W^\ell,SY_{\gamma\gamma}-Y_{\alpha_\ell\alpha_\ell}S),
\quad S\in\rmat{\ring}{\alpha_\ell}{\gamma}.
\end{multline}
In particular, conditions
\eqref{eq:ncfun-dirsum_coef_0}--\eqref{eq:ncfun-dirsum-coef_ell_ell}
for $f^{\rm d.s.e.}_{\ell;\alpha,\ldots,\alpha}$, \index{$f^{\rm
d.s.e.}_{\ell;\alpha_0,\ldots,\alpha_\ell}$} $\ell=0,1,\ldots$,
with a fixed $\alpha$, coincide with
\eqref{eq:ncfun_coef_0}--\eqref{eq:ncfun-coef_ell_ell} for $s$
replaced by $s_\alpha$ and $Y$ replaced by $Y_{\alpha\alpha}$;
\item If $s_1=\ldots=s_m$ and $Y_{11}=\cdots=Y_{mm}$, then the
mappings $f^{\rm d.s.e.}_{\ell;\alpha_0,\ldots,\alpha_\ell}$,
$1\le\alpha_0$, \ldots, $\alpha_\ell\le m$, coincide
 for every fixed $\ell$;
 \item If $\ring=\mathbb{C}$, $\module{M}=\vecspace{V}$ and
$\module{N}=\vecspace{W}$ are Banach spaces equipped with
admissible systems of rectangular matrix norms over $\vecspace{V}$
and $\vecspace{W}$, and the $\ell$-linear mappings $f_\ell$ are
bounded, then so are $f^{\rm
d.s.e.}_{\ell;\alpha_0,\ldots,\alpha_\ell}$;
 \item If $\ring=\mathbb{C}$, $\module{M}=\vecspace{V}$ and
$\module{N}=\vecspace{W}$ are Banach spaces equipped with
admissible systems of rectangular matrix norms over $\vecspace{V}$
and $\vecspace{W}$, \eqref{eq:rect-simprod-norms} hold for
$\vecspace{V}$ and $\vecspace{W}$ with the constants
$C^{\vecspace{V}}$ and $C^{\vecspace{W}}$ which are independent of
$n,p,q$, and $m$, and the $\ell$-linear mappings $f_\ell$ are
completely bounded,  then so are $f^{\rm
d.s.e.}_{\ell;\alpha_0,\ldots,\alpha_\ell}$, and
\begin{equation}\label{eq:f_ell-dirsum-cb-norms}
\|f^{\rm
d.s.e.}_{\ell;\alpha_0,\ldots,\alpha_\ell}\|_{\mathcal{L}^\ell_{\rm
cb}}\le
C^{\vecspace{W}}(C^{\vecspace{V}})^\ell\|f_\ell\|_{\mathcal{L}^\ell_{\rm
cb}};
\end{equation}
in particular, if $\vecspace{V}$ and $\vecspace{W}$ are operator
spaces, then
\begin{equation}\label{eq:f_ell-dirsum-cb-norms-os}
\|f^{\rm
d.s.e.}_{\ell;\alpha_0,\ldots,\alpha_\ell}\|_{\mathcal{L}^\ell_{\rm
cb}}\le \|f_\ell\|_{\mathcal{L}^\ell_{\rm cb}}.
\end{equation}
\end{enumerate}
\end{prop}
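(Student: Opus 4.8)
The plan rests on a single observation: every matrix over $\ring$ that is block diagonal with respect to the decomposition $s=s_1+\cdots+s_m$ commutes with $Y=Y_{11}\oplus\cdots\oplus Y_{mm}$, so that for such $S$ the terms $f_{\ell+1}(SY-YS,\ldots)$ etc. in conditions \eqref{eq:ncfun_coef_0}--\eqref{eq:ncfun-coef_ell_ell} vanish and the conditions collapse into the intertwining identities $Sf_0=f_0S$ and, for $\ell\ge1$, $Sf_\ell(Z^1,\ldots,Z^\ell)=f_\ell(SZ^1,Z^2,\ldots,Z^\ell)$, $f_\ell(\ldots,Z^jS,Z^{j+1},\ldots)=f_\ell(\ldots,Z^j,SZ^{j+1},\ldots)$, and $f_\ell(Z^1,\ldots,Z^\ell S)=f_\ell(Z^1,\ldots,Z^\ell)S$. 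Write $P_\alpha=\iota^{(s_1,\ldots,s_m)}_{\alpha\alpha}(I_{s_\alpha})$ for the block projections and abbreviate $\iota_{\alpha\beta}=\iota^{(s_1,\ldots,s_m)}_{\alpha\beta}$, $\pi_{\alpha\beta}=\pi^{(s_1,\ldots,s_m)}_{\alpha\beta}$ for the rectangular block injections and projections of \eqref{eq:rect-inj}--\eqref{eq:rect-proj} (applied to $\module{M}$, $\module{N}$, and to $\ring$ itself).

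For part (1), taking $S=P_\alpha$ in $Sf_0=f_0S$ shows $f_0$ commutes with every $P_\alpha$, hence is block diagonal, so $f^{\rm d.s.e.}_{0;\alpha,\beta}=(f_0)_{\alpha\beta}$ vanishes for $\alpha\neq\beta$. For $\ell\ge1$ I would \emph{define} $f^{\rm d.s.e.}_{\ell;\alpha_0,\ldots,\alpha_\ell}(W^1,\ldots,W^\ell):=\pi_{\alpha_0\alpha_\ell}\bigl(f_\ell(\iota_{\alpha_0\alpha_1}(W^1),\ldots,\iota_{\alpha_{\ell-1}\alpha_\ell}(W^\ell))\bigr)$, which is clearly $\ell$-linear and of the right rectangular shape, and then prove \eqref{eq:f_ell-dirsum-decomp}: writing each $Z^j=\sum_{\gamma,\delta}\iota_{\gamma\delta}(Z^j_{\gamma\delta})$, expanding $f_\ell(Z^1,\ldots,Z^\ell)$ by multilinearity and extracting the $(\alpha,\beta)$-block, the collapsed identities (applied with $S=P_\alpha$ on the left, $S=P_{\delta_0},P_{\delta_1},\ldots$ between consecutive arguments) annihilate every term whose block positions do not chain $\alpha\to\beta$, leaving exactly the asserted sum. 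Uniqueness of the $f^{\rm d.s.e.}_{\ell;\cdot}$ is then immediate by feeding single block-injected arguments into \eqref{eq:f_ell-dirsum-decomp}. Finally, conditions \eqref{eq:ncfun-dirsum_coef_0}--\eqref{eq:ncfun-dirsum-coef_ell_ell} are obtained mechanically: substitute $Z^j=\iota_{\alpha_{j-1}\alpha_j}(W^j)$ and the appropriate rectangular injection of $S$ into \eqref{eq:ncfun_coef_0}--\eqref{eq:ncfun-coef_ell_ell}, use $\iota_{\alpha\gamma}(S)Y=\iota_{\alpha\gamma}(SY_{\gamma\gamma})$ and $Y\iota_{\alpha\gamma}(S)=\iota_{\alpha\gamma}(Y_{\alpha\alpha}S)$, recognize each resulting term as an injection of an $f^{\rm d.s.e.}$-expression, and cancel the common injection $\iota_{\alpha_0\alpha_\ell}$ (which is one-to-one). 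Specializing all indices to a fixed $\alpha$ turns these identities into \eqref{eq:ncfun_coef_0}--\eqref{eq:ncfun-coef_ell_ell} with $s,Y$ replaced by $s_\alpha,Y_{\alpha\alpha}$.

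For part (2), when $s_1=\cdots=s_m$ and $Y_{11}=\cdots=Y_{mm}$ the matrices $\iota_{\alpha\beta}(I)$ also commute with $Y$, since both $\iota_{\alpha\beta}(I)Y$ and $Y\iota_{\alpha\beta}(I)$ equal $\iota_{\alpha\beta}(Y_{\alpha\alpha})=\iota_{\alpha\beta}(Y_{\beta\beta})$; hence the collapsed identity with $S=\iota_{\beta_0\alpha_0}(I)$ replaces $\iota_{\alpha_0\alpha_1}(W^1)$ by $\iota_{\beta_0\alpha_1}(W^1)$ inside $f_\ell$ and so $f^{\rm d.s.e.}_{\ell;\beta_0,\alpha_1,\ldots,\alpha_\ell}=f^{\rm d.s.e.}_{\ell;\alpha_0,\alpha_1,\ldots,\alpha_\ell}$, and iterating this move at each index shows all $f^{\rm d.s.e.}_{\ell;\alpha_0,\ldots,\alpha_\ell}$ coincide. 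Parts (3) and (4) use the presentation of $f^{\rm d.s.e.}_{\ell;\alpha_0,\ldots,\alpha_\ell}$ as $\pi_{\alpha_0\alpha_\ell}$ post-composed with $f_\ell$ whose $j$-th argument has been pre-composed with $\iota_{\alpha_{j-1}\alpha_j}$: for (3) one bounds $\|f^{\rm d.s.e.}_{\ell;\alpha_0,\ldots,\alpha_\ell}\|\le\|\pi_{\alpha_0\alpha_\ell}\|\,\|f_\ell\|\prod_j\|\iota_{\alpha_{j-1}\alpha_j}\|$, finite by Proposition \ref{prop:rect-inj-proj}; for (4) one amplifies the same composition identity at arbitrary sizes $n_0,\ldots,n_\ell$ and applies the uniform complete boundedness $\|\iota^{(s_1,\ldots,s_m)}_{\alpha\beta}\|_{\cb}\le C^{\vecspace{V}}$, $\|\pi^{(s_1,\ldots,s_m)}_{\alpha\beta}\|_{\cb}\le C^{\vecspace{W}}$ from Proposition \ref{prop:rect-ucb-inj-proj}, together with the definition \eqref{eq:k-CS} of the $\ell$-linear cb norm, to get \eqref{eq:f_ell-dirsum-cb-norms}; since all these constants equal $1$ for operator spaces, \eqref{eq:f_ell-dirsum-cb-norms-os} follows.

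The hard part will be bookkeeping rather than ideas: in part (1) one must keep the rectangular shapes of $S$, the $W^j$ and the outputs consistent while grinding out \eqref{eq:ncfun-dirsum-coef-ell_0}--\eqref{eq:ncfun-dirsum-coef_ell_ell}, and in part (4) one must pin down exactly which amplification of the rectangular block maps is meant and verify that the $\ell$-linear cb norm is submultiplicative under pre- and post-composition by completely bounded linear maps — a standard Christensen--Sinclair-type fact that is not isolated as a lemma earlier in the text, so it should be spelled out directly from \eqref{eq:k-CS} rather than merely cited.
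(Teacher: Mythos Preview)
Your proposal is correct and follows essentially the same approach as the paper: the same definition $f^{\rm d.s.e.}_{\ell;\alpha_0,\ldots,\alpha_\ell}=\pi_{\alpha_0\alpha_\ell}\circ f_\ell\circ(\iota_{\alpha_0\alpha_1},\ldots,\iota_{\alpha_{\ell-1}\alpha_\ell})$, the same use of the block projections $P_\alpha$ commuting with $Y$ to collapse \eqref{eq:ncfun_coef_0}--\eqref{eq:ncfun-coef_ell_ell} and to prove \eqref{eq:f_ell-dirsum-decomp}, and the same boundedness estimates via Propositions~\ref{prop:rect-inj-proj} and~\ref{prop:rect-ucb-inj-proj} for parts (3) and (4). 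The one noticeable difference is in part~(2): you change indices one at a time by applying the collapsed identities with the off-diagonal $S=\iota_{\beta_0\alpha_0}(I)$, whereas the paper simply sets $S=I_{s/m}$ in the already-established block relations \eqref{eq:ncfun-dirsum_coef_0}--\eqref{eq:ncfun-dirsum-coef_ell_ell}, so that each right-hand side vanishes (since $Y_{\gamma\gamma}=Y_{\alpha_j\alpha_j}$) and the equalities $f^{\rm d.s.e.}_{\ell;\ldots,\gamma,\ldots}=f^{\rm d.s.e.}_{\ell;\ldots,\alpha_j,\ldots}$ drop out immediately; this is a bit slicker but not substantively different.
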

\begin{proof}
(1) Let $S=E_\alpha E_\alpha^\top\in\mat{\ring}{s}$. Since
$SY=YS$, we obtain from \eqref{eq:ncfun_coef_0} that
${(Sf_0)}_{\alpha\beta}={(f_0S)}_{\alpha\beta}$, i.e.,
${(f_0)}_{\alpha\beta}={(f_0)}_{\alpha\alpha}\delta_{\alpha\beta}$,
hence ${(f_0)}_{\alpha\beta}=0$ if $\alpha\neq\beta$.

For any $\alpha,\beta=1,\ldots,m$, and for an arbitrary
$S\in\rmat{\ring}{s_\alpha}{s_\beta}$, we set $T=E_\alpha
SE_\beta^\top$. Define the linear mapping $f^{\rm
d.s.e.}_{1;\alpha,\beta}\colon\rmat{\module{M}}{s_\alpha}{s_\beta}\to\rmat{\module{N}}{s_\alpha}{s_\beta}$
by $$f^{\rm d.s.e.}_{1;\alpha,\beta}(W):=f_1(E_\alpha
WE_\beta^\top)_{\alpha\beta}.$$ Then \eqref{eq:ncfun_coef_0} with
$T$ in the place of $S$ implies
\begin{multline*}
Sf^{\rm d.s.e.}_{0;\beta,\beta}-f^{\rm
d.s.e.}_{0;\alpha,\alpha}S=S(f_0)_{\beta\beta}-(f_0)_{\alpha\alpha}S
=(Tf_0-f_0T)_{\alpha\beta}=f_1(TY-YT)_{\alpha\beta}\\
=f_1(E_\alpha
(SY_{\beta\beta}-Y_{\alpha\alpha}S)E_\beta^\top)_{\alpha\beta}=f^{\rm
d.s.e.}_{1;\alpha,\beta}(SY_{\beta\beta}-Y_{\alpha\alpha}S),
\end{multline*}
i.e., \eqref{eq:ncfun-dirsum_coef_0} holds. Similarly, for every
$\ell=2,3,\ldots$, $\alpha_0, \ldots, \alpha_\ell=1,\ldots,m$, one
defines the $\ell$-linear mapping $$f^{\rm
d.s.e.}_{\ell;\alpha_0,\ldots,\alpha_\ell}\colon\rmat{\module{M}}{\alpha_0}{\alpha_1}\times\cdots\times
\rmat{\module{M}}{\alpha_{\ell-1}}{\alpha_\ell}\to\rmat{\module{N}}{\alpha_0}{\alpha_\ell}$$
by
\begin{equation}\label{eq:def-f_ell-dirsum}
f^{\rm
d.s.e.}_{\ell;\alpha_0,\ldots,\alpha_\ell}(W^1,\ldots,W^\ell):=f_\ell(E_{\alpha_0}W^1E_{\alpha_1}^\top,\ldots,
E_{\alpha_{\ell-1}}W^\ell
E_{\alpha_\ell}^\top)_{\alpha_0\alpha_\ell}
\end{equation}
 and checks that
\eqref{eq:ncfun-dirsum-coef-ell_0}--\eqref{eq:ncfun-dirsum-coef_ell_ell}
hold.

Let $1\le\alpha,\beta\le m$ be arbitrary. Since $E_i E_i^\top$
commutes with $Y$ for every $i=1,\ldots,m$, it follows by
linearity from
\eqref{eq:ncfun-coef-ell_0}--\eqref{eq:ncfun-coef_ell_ell} that
\begin{multline*}
f_\ell(Z^1,\ldots,Z^\ell)_{\alpha\beta}=f_\ell\Big(\sum_{1\le\gamma_0,\alpha_1\le
m}E_{\gamma_0}Z^1_{\gamma_0\alpha_1}E_{\alpha_1}^\top,\ldots,\\
\sum_{1\le\gamma_{\ell-2},\alpha_{\ell-1}\le
m}E_{\gamma_{\ell-2}}Z^{\ell-1}_{\gamma_{\ell-2}\alpha_{\ell-1}}E_{\alpha_{\ell-1}}^\top,
\sum_{1\le\gamma_{\ell-1},\gamma_{\ell}\le
m}E_{\gamma_{\ell-1}}Z^{\ell}_{\gamma_{\ell-1}\gamma_{\ell}}E_{\gamma_{\ell}}^\top\Big)_{\alpha\beta}\\
=\Bigg(E_\alpha E_\alpha^\top
f_\ell\Big(\sum_{1\le\gamma_0,\alpha_1\le
m}E_{\gamma_0}Z^1_{\gamma_0\alpha_1}E_{\alpha_1}^\top
E_{\alpha_1}E_{\alpha_1}^\top,
\ldots,\\
\sum_{1\le\gamma_{\ell-2},\alpha_{\ell-1}\le
m}E_{\gamma_{\ell-2}}Z^{\ell-1}_{\gamma_{\ell-2}\alpha_{\ell-1}}E_{\alpha_{\ell-1}}^\top
E_{\alpha_{\ell-1}} E_{\alpha_{\ell-1}}^\top,\\
\sum_{1\le\gamma_{\ell-1},\gamma_{\ell}\le
m}E_{\gamma_{\ell-1}}Z^{\ell}_{\gamma_{\ell-1}\gamma_{\ell}}
E_{\gamma_{\ell}}^\top\Big)E_\beta E_\beta^\top\Bigg)_{\alpha\beta}\\
= f_\ell\Big(E_\alpha E_\alpha^\top\sum_{1\le\gamma_0,\alpha_1\le
m}E_{\gamma_0}Z^1_{\gamma_0\alpha_1}E_{\alpha_1}^\top,\
E_{\alpha_1}E_{\alpha_1}^\top\sum_{1\le\gamma_1,\alpha_2\le
m}E_{\gamma_1}Z^1_{\gamma_1\alpha_2}E_{\alpha_2}^\top, \ldots,\\
E_{\alpha_{\ell-2}}E_{\alpha_{\ell-2}}^\top\sum_{1\le\gamma_{\ell-2},\alpha_{\ell-1}\le
m}E_{\gamma_{\ell-2}}Z^{\ell-1}_{\gamma_{\ell-2}\alpha_{\ell-1}}E_{\alpha_{\ell-1}}^\top,\\
E_{\alpha_{\ell-1}}
E_{\alpha_{\ell-1}}^\top\sum_{1\le\gamma_{\ell-1},\gamma_{\ell}\le
m}E_{\gamma_{\ell-1}}Z^{\ell}_{\gamma_{\ell-1}\gamma_{\ell}}
E_{\gamma_{\ell}}^\top E_\beta E_\beta^\top\Big)_{\alpha\beta}\\
=\sum_{1\le\alpha_1,\ldots,\alpha_{\ell-1}\le
m}f_\ell(E_{\alpha}Z^1_{\alpha\alpha_1}E_{\alpha_1}^\top,
E_{\alpha_1}Z^2_{\alpha_1\alpha_2}E_{\alpha_2}^\top,\ldots,\\
E_{\alpha_{\ell-2}}
Z^{\ell-1}_{\alpha_{\ell-2}\alpha_{\ell-1}}E_{\alpha_{\ell-1}}^\top,E_{\alpha_{\ell-1}}
Z^{\ell}_{\alpha_{\ell-1}\beta}E_{\beta}^\top)_{\alpha\beta}
\\
=\sum_{1\le\alpha_1,\ldots,\alpha_{\ell-1}\le m}f^{\rm
d.s.e.}_{\ell;\alpha,\alpha_1,\ldots,\alpha_{\ell-1},\alpha_\ell}(Z^1_{\alpha\alpha_1},
Z^2_{\alpha_1\alpha_2},\ldots,
Z^{\ell-1}_{\alpha_{\ell-2}\alpha_{\ell-1}},
Z^{\ell}_{\alpha_{\ell-1}\beta}),
\end{multline*}
i.e., \eqref{eq:f_ell-dirsum-decomp} holds. On the other hand, if
\eqref{eq:f_ell-dirsum-decomp} is satisfied with some
$\ell$-linear mappings $$f^{\rm
d.s.e.}_{\ell;\alpha_0,\ldots,\alpha_\ell}\colon\rmat{\module{M}}{s_{\alpha_0}}{s_{\alpha_1}}\times\cdots\times
\rmat{\module{M}}{s_{\alpha_{\ell-1}}}{s_{\alpha_\ell}}\to\rmat{\module{N}}{s_{\alpha_0}}{s_{\alpha_\ell}},$$
then
\begin{multline*}
f_\ell(E_{\alpha_0}Z^1_{\alpha_0\alpha_1}E_{\alpha_1}^\top,
\ldots,E_{\alpha_{\ell-1}}
Z^{\ell}_{\alpha_{\ell-1}\alpha_\ell}E_{\alpha_\ell}^\top)_{\alpha_0\alpha_\ell}\\
=f^{\rm
d.s.e.}_{\ell;\alpha_0,\ldots,\alpha_\ell}\Big((E_{\alpha_0}Z^1_{\alpha_0\alpha_1}E_{\alpha_1}^\top)_{\alpha_0\alpha_1},
\ldots,(E_{\alpha_{\ell-1}}
Z^{\ell}_{\alpha_{\ell-1}\alpha_\ell}E_{\alpha_\ell}^\top)_{\alpha_{\ell-1}\alpha_\ell}\Big)\\
=f^{\rm
d.s.e.}_{\ell;\alpha_0,\ldots,\alpha_\ell}(Z^1_{\alpha_0\alpha_1},
\ldots, Z^{\ell}_{\alpha_{\ell-1}\alpha_\ell}),
\end{multline*}
which coincides with our original definition of $f^{\rm
d.s.e.}_{\ell;\alpha_0,\ldots,\alpha_\ell}$ as in
\eqref{eq:def-f_ell-dirsum}.

(2) follows from
\eqref{eq:ncfun-dirsum_coef_0}--\eqref{eq:ncfun-dirsum-coef_ell_ell}
with $S=I_{s/m}$.

(3) By \eqref{eq:def-f_ell-dirsum} and
\eqref{eq:rect-inj-proj-estimates}, we have for
$W^1\in\rmat{\vecspace{V}}{s_{\alpha_0}}{s_{\alpha_1}}$, \ldots,
$W^\ell\in\rmat{\vecspace{V}}{s_{\alpha_{\ell-1}}}{s_{\alpha_\ell}}$:
\begin{multline*}
\|f^{\rm
d.s.e.}_{\ell;\alpha_0,\ldots,\alpha_\ell}(W^1,\ldots,W^\ell)\|_{s_{\alpha_0},s_{\alpha_1}}\\
=
\|\pi_{\alpha_0,\alpha_\ell}^{(s_1,\ldots,s_m),\vecspace{W}}f^\ell(E_{\alpha_0}W^1E_{\alpha_1}^\top,\ldots,
E_{\alpha_{\ell-1}}W^\ell
E_{\alpha_\ell}^\top)\|_{s_{\alpha_0},s_{\alpha_1}}
\\
 \le
C^{\vecspace{W}}(s_{\alpha_0},s,s,s_{\alpha_\ell})\|f_\ell\|\|E_{\alpha_0}W^1
E_{\alpha_{1}}^\top\|_s\cdots\|E_{\alpha_{\ell-1}}W^\ell
E_{\alpha_\ell}^\top\|_s\\
\le
C^{\vecspace{W}}(s_{\alpha_0},s,s,s_{\alpha_\ell})C^{\vecspace{V}}(s,s_{\alpha_0},s_{\alpha_1},s)\cdots
C^{\vecspace{V}}(s,s_{\alpha_0},s_{\alpha_1},s)\\
\cdot\|f_\ell\|\,\|W^1\|_{s_{\alpha_0},s_{\alpha_1}}\cdots
\|W^\ell\|_{s_{\alpha_{\ell-1}},s_{\alpha_\ell}},
\end{multline*}
where we indicate the corresponding Banach space in the
superscript, for the corresponding rectangular block projections
and constants. Thus the $\ell$-linear mappings $f^{\rm
d.s.e.}_{\ell;\alpha_0,\ldots,\alpha_\ell}$ are bounded.

(4)  By \eqref{eq:def-f_ell-dirsum} and
\eqref{eq:rect-inj-proj-estimates}, we have for every $n_0$,
\ldots, $n_\ell\in\mathbb{N}$ and for every
$W^1\in\rmat{\left(\rmat{\vecspace{V}}{s_{\alpha_0}}{s_{\alpha_1}}\right)}{n_0}{n_1}\cong
\rmat{\vecspace{V}}{s_{\alpha_0}n_0}{s_{\alpha_1}n_1}$, \ldots,
$W^\ell\in\rmat{\left(\rmat{\vecspace{V}}{s_{\alpha_{\ell-1}}}{s_{\alpha_\ell}}\right)}{n_{\ell-1}}{n_\ell}
\cong
\rmat{\vecspace{V}}{s_{\alpha_{\ell-1}}n_{\ell-1}}{s_{\alpha_\ell}n_\ell}$:
\begin{multline*}
\|f^{{\rm
d.s.e.}(n_0,\ldots,n_\ell)}_{\ell;\alpha_0,\ldots,\alpha_\ell}(W^1,\ldots,W^\ell)\|_{s_{\alpha_0}n_0,s_{\alpha_1}n_1}\\
=
\|f_\ell^{(n_0,\ldots,n_\ell)}((\id_{\mat{\mathbb{C}}{n_0}}\otimes
E_{\alpha_0})W^1(\id_{\mat{\mathbb{C}}{n_1}}\otimes
E_{\alpha_1}^\top),\ldots,\hfill \\
(\id_{\mat{\mathbb{C}}{n_{\ell-1}}}\otimes
E_{\alpha_{\ell-1}})W^\ell (\id_{\mat{\mathbb{C}}{n_\ell}}\otimes
E_{\alpha_\ell}^\top))_{s_{\alpha_0}n_0,s_{\alpha_1}n_1}\|_{s_{\alpha_0}n_0,s_{\alpha_1}n_1}
\\
 \le
C^{\vecspace{W}}\|f_\ell\|_{\mathcal{L}^\ell_{\rm
cb}}\|(\id_{\mat{\mathbb{C}}{n_0}}\otimes
E_{\alpha_0})W^1(\id_{\mat{\mathbb{C}}{n_1}}\otimes
E_{\alpha_1}^\top)\|_{sn_0,sn_1}
\cdots\\
\cdot\|(\id_{\mat{\mathbb{C}}{n_{\ell-1}}}\otimes
E_{\alpha_{\ell-1}})W^\ell (\id_{\mat{\mathbb{C}}{n_\ell}}\otimes
E_{\alpha_\ell}^\top)\|_{sn_{\ell-1},sn_\ell}\\
\le
C^{\vecspace{W}}(C^{\vecspace{V}})^\ell\|f_\ell\|_{\mathcal{L}^\ell_{\rm
cb}} \,\|W^1\|_{s_{\alpha_0}n_0,s_{\alpha_1}n_1}\cdots
\|W^\ell\|_{s_{\alpha_{\ell-1}n_{\ell-1}},s_{\alpha_\ell}n_\ell},
\end{multline*}
which implies \eqref{eq:f_ell-dirsum-cb-norms}. Clearly, in the
case of operator spaces, $C^{\vecspace{V}}=C^{\vecspace{W}}=1$,
and  \eqref{eq:f_ell-dirsum-cb-norms} becomes
\eqref{eq:f_ell-dirsum-cb-norms-os}.
\end{proof}

\begin{rem}\label{rem:LAC-dirsum-ext}
Let $\Omega\subseteq\ncspace{\module{M}}$ be a similarity
invariant nc set, let $f\colon\Omega\to\ncspace{\module{N}}$ be a
nc function, and let $Y=\bigoplus_{i=1}^mY_{ii}\in\Omega_s$, with
$Y_{ii}\in\mat{\module{M}}{s_i}$. Then the sequence of
$\ell$-linear mappings $f_\ell:=\Delta_R^\ell
f(Y,\ldots,Y)\colon\mattuple{\module{M}}{s}{\ell}\to\mat{\module{N}}{s}$,
$\ell=0,1,\ldots$,
 satisfies
\eqref{eq:ncfun_coef_0}--\eqref{eq:ncfun-coef_ell_ell}; see Remark
\ref{rem:lost_abbey}. By Proposition
\ref{prop:dirsum-ext-properties}, $\Omega_{\rm
d.s.e.}\subseteq\ncspace{\module{M}}$ is a similarity invariant nc
set extending $\Omega$, and by Proposition
\ref{prop:ncfun-dirsum-ext}, there exists a nc function $f_{\rm
d.s.e.}\colon\Omega_{\rm d.s.e.}\to\ncspace{\module{N}}$ extending
$f$. We then have \eqref{eq:f_entries-mult} for $k=\ell$ and
$X^0=\cdots=X^\ell=Y$, with $f$ in the left-hand side replaced by
$f_\ell$, and $f$ in the
 right-hand side replaced by
$\Delta_R^\ell f_{\rm d.s.e.}$. Comparing this equality with
\eqref{eq:f_ell-dirsum-decomp}, we conclude that
\begin{equation}\label{eq:LAC-dirsum-vs-delta}
f^{\rm d.s.e.}_{\ell;\alpha_0,\ldots,\alpha_\ell}=\Delta_R^\ell
f_{\rm
d.s.e.}(Y_{\alpha_0\alpha_0},\ldots,Y_{\alpha_\ell\alpha_\ell}).
\end{equation}
On the other hand, given a sequence of $\ell$-linear mappings
$f_\ell\colon\mattuple{\module{M}}{s}{\ell}\to\mat{\module{N}}{s}$,
$\ell=0,1,\ldots$, satisfying conditions
\eqref{eq:ncfun_coef_0}--\eqref{eq:ncfun-coef_ell_ell} for
$Y=\bigoplus_{i=1}^mY_{ii}\in\mat{\module{M}}{s}$, by Theorem
\ref{thm:sufficiency_of_LAC} one can define a nc function
\begin{equation}\label{eq:ncps-on-nilp}
f(X)=\sum_{\ell=0}^\infty
\Big(X-\bigoplus_{\alpha=1}^mY\Big)^{\odot_s\ell}f_\ell
\end{equation}
 on ${\rm Nilp}(\module{M},Y)$ with values in
$\ncspace{\module{N}}$. Moreover, by Theorem
\ref{thm:ncps-nilp-gen-unique}, $f_\ell=\Delta_R^\ell f(Y,\ldots,
Y)$. Extending $f$ to ${\rm Nilp}(\module{M},Y)_{\rm d.s.e.}$ and
writing the TT expansion for $f_{\rm d.s.e.}$ (which is a finite
sum at every point of ${\rm Nilp}(\module{M},Y)_{\rm d.s.e.}$), we
can define the direct summands extensions $f^{\rm
d.s.e.}_{\ell;\alpha_0,\ldots,\alpha_\ell}$ of the $\ell$-linear
mappings $f_\ell$ by \eqref{eq:LAC-dirsum-vs-delta}. This gives an
alternative way of proving parts (1) and (2) of Proposition
\ref{prop:LAC-dirsum-ext} using Proposition
\ref{prop:ncfun-dirsum-ext}. Parts (3) and (4) of Proposition
\ref{prop:LAC-dirsum-ext} can also be derived from Proposition
\ref{prop:ncfun-dirsum-ext} provided the the series in
\eqref{eq:ncps-on-nilp} defines an analytic (resp., uniformly
analytic) nc function $f$ on some open (resp., uniformly open)
neighborhood of $Y$.
\end{rem}

In a similar way, we can define  the direct summands extension of
a sequence of $\ell$-linear mappings
$f_w\colon\mattuple{\module{M}}{s}{\ell}\to\mat{\module{N}}{s}$,
$w\in\free_d$, satisfying conditions
\eqref{eq:ncfun_coef_empty}--\eqref{eq:ncfun-coef_w_ell}. The
 next proposition can be proved similarly to Proposition
\ref{prop:LAC-dirsum-ext}, or can be derived from the result of
Proposition \ref{prop:LAC-dirsum-ext} via relations
\eqref{eq:lw-forms}, \eqref{eq:f_w_via_f_l}.
\begin{prop}\label{prop:LAC-dirsum-ext-semigr}
Let $\ring$ be a commutative unital ring, let $\module{N}$ be a
module over $\ring$, let $s_1$, \ldots, $s_m\in\mathbb{N}$ be such
that $s=s_1+\cdots+s_m$, let
$Y=(Y_1,\ldots,Y_d)\in\mattuple{\ring}{s}{d}$ and
$Y_k=(Y_k)_{11}\oplus\cdots\oplus (Y_k)_{mm}\in\mat{\ring}{s}$,
with $(Y_k)_{ii}\in\mat{\ring}{s_i}$, $k=1,\ldots,d$,
$i=1,\ldots,m$, and let a sequence of $\ell$-linear mappings
$f_w\colon\mattuple{\ring}{s}{\ell}\to\mat{\module{N}}{s}$,
$w\in\free_d\colon |w|=\ell$, $\ell=0,1,\ldots$, satisfy
\eqref{eq:ncfun_coef_empty}--\eqref{eq:ncfun-coef_w_ell}.  Then
\begin{enumerate}
\item With respect to the corresponding block decomposition of
$s\times s$ matrices,
\begin{equation}\label{eq:f_empty-dirsum-decomp}
f^{\rm
d.s.e.}_{\emptyset;\alpha,\beta}:=(f_\emptyset)_{\alpha\beta}\in\rmat{\module{N}}{s_\alpha}{s_\beta},\quad
 \alpha,\beta=1,\ldots,m,
\end{equation}
satisfy $f^{\rm d.s.e.}_{\emptyset;\alpha,\beta}=0$ if $\alpha\neq
\beta$,
\begin{multline}\label{eq:f_w-dirsum-decomp}
f_w(A^1,\ldots,A^\ell)_{\alpha\beta}=
\sum_{1\le\alpha_1,\ldots,\alpha_{\ell-1}\le m}f^{\rm
d.s.e.}_{w;\alpha_0,\ldots,\alpha_\ell}(A^1_{\alpha_0\alpha_1},\ldots,A^\ell_{\alpha_{\ell-1}\alpha_\ell}),\\
{\text where}\ |w|=\ell,\ \alpha_0=\alpha,\ \alpha_\ell=\beta,\,
\alpha,\beta=1,\ldots,m,\ \ell=1,2,\ldots,\
\end{multline}
with uniquely determined $\ell$-linear mappings
$$f^{\rm
d.s.e.}_{w;\alpha_0,\ldots,\alpha_\ell}\colon\rmat{\ring}{s_{\alpha_0}}{s_{\alpha_1}}\times\cdots\times
\rmat{\ring}{s_{\alpha_{\ell-1}}}{s_{\alpha_\ell}}\to\rmat{\module{N}}{s_{\alpha_0}}{s_{\alpha_\ell}}$$
satisfying
\begin{equation}\label{eq:ncfun-dirsum_coef_empty}
Sf^{\rm d.s.e.}_{\emptyset;\beta,\beta}-f^{\rm
d.s.e.}_{\emptyset;\alpha,\alpha}S=\sum_{k=1}^df^{\rm
d.s.e.}_{g_k;\alpha,\beta}(S(Y_k)_{\beta \beta}-(Y_k)_{\alpha
\alpha}S),\quad S\in\rmat{\ring}{s_\alpha}{s_\beta},
\end{equation}
 and for \index{$f^{\rm
d.s.e.}_{w;\gamma,\alpha_1,\ldots,\alpha_\ell}$}
$\ell=1,2,\ldots$,
\begin{multline}\label{eq:ncfun-dirsum-coef-w_0}
Sf^{\rm
d.s.e.}_{w;\gamma,\alpha_1,\ldots,\alpha_\ell}(A^1,\ldots,A^\ell)-
f^{\rm
d.s.e.}_{w;\alpha_0,\ldots,\alpha_\ell}(SA^1,A^2,\ldots,A^\ell)\\
=\sum_{k=1}^df^{\rm
d.s.e.}_{g_kw;\alpha_0,\gamma,\alpha_1,\ldots,\alpha_\ell}(S(Y_k)_{\gamma\gamma}-
(Y_k)_{\alpha_0\alpha_0}S,A^1,\ldots,A^\ell), \quad
S\in\rmat{\ring}{s_{\alpha_0}}{\gamma},
\end{multline}
\begin{multline}\label{eq:ncfun-dirsum-coef_w_j}
f^{\rm
d.s.e.}_{w;\alpha_0,\ldots,\alpha_{j-1},\gamma,\alpha_{j+1},\ldots,\alpha_\ell}
(A^1,\ldots,A^{j-1},A^jS,A^{j+1},\ldots,A^\ell)\\
-f^{\rm d.s.e.}_{w;\alpha_0,\ldots,\alpha_\ell}
(A^1,\ldots,A^j,SA^{j+1},A^{j+2},\ldots,A^\ell)\\
=\sum_{k=1}^df^{\rm d.s.e.}_{g_{i_1}\cdots
g_{i_j}g_kg_{i_{j+1}}\cdots
g_{i_\ell};\alpha_0,\ldots,\alpha_j,\gamma,\alpha_{j+1},\ldots,\alpha_\ell}
(A^1,\ldots,A^j,\\
S(Y_k)_{\gamma\gamma}-(Y_k)_{\alpha_j\alpha_j}S,
A^{j+1},\ldots,A^\ell),\\ w=g_{i_1}\cdots g_{i_\ell}\in\free_d,\
S\in\rmat{\ring}{s_{\alpha_j}}{\gamma},
\end{multline}
\begin{multline}\label{eq:ncfun-dirsum-coef_w_ell}
f^{\rm
d.s.e.}_{w;\alpha_0,\ldots,\alpha_{\ell-1},\gamma}(A^1,\ldots,A^{\ell-1},A^\ell
S)-f^{\rm
d.s.e.}_{w;\alpha_0,\ldots,\alpha_\ell}(A^1,\ldots,A^\ell)S\\
=\sum_{k=1}^df^{\rm
d.s.e.}_{wg_k;\alpha_0,\ldots,\alpha_{\ell},\gamma}(A^1,\ldots,A^\ell,S(Y_k)_{\gamma\gamma}-
(Y_k)_{\alpha_\ell\alpha_\ell}S), \quad
S\in\rmat{\ring}{\alpha_\ell}{\gamma}.
\end{multline}
In particular, conditions
\eqref{eq:ncfun-dirsum_coef_empty}--\eqref{eq:ncfun-dirsum-coef_w_ell}
for $f^{\rm d.s.e.}_{w;\alpha,\ldots,\alpha}$, $w\in\free_d$, with
a fixed $\alpha$, coincide with
\eqref{eq:ncfun_coef_empty}--\eqref{eq:ncfun-coef_w_ell} for $s$
replaced by $s_\alpha$ and $Y$ replaced by $Y_{\alpha\alpha}$;
\item If $s_1=\ldots=s_m$ and $Y_{11}=\cdots=Y_{mm}$, then the
mappings $f^{\rm d.s.e.}_{w;\alpha_0,\ldots,\alpha_\ell}$,
$1\le\alpha_0$, \ldots, $\alpha_\ell\le m$, coincide
 for every fixed $w$;
 \item If $\ring=\mathbb{C}$, $\module{M}=\vecspace{V}$ and
$\module{N}=\vecspace{W}$ are Banach spaces equipped with
admissible systems of rectangular matrix norms over $\vecspace{V}$
and $\vecspace{W}$, and the $\ell$-linear mappings $f_w$ are
bounded, then so are $f^{\rm
d.s.e.}_{w;\alpha_0,\ldots,\alpha_\ell}$;
 \item If $\ring=\mathbb{C}$, $\module{M}=\vecspace{V}$ and
$\module{N}=\vecspace{W}$ are Banach spaces equipped with
admissible systems of rectangular matrix norms over $\vecspace{V}$
and $\vecspace{W}$, \eqref{eq:rect-simprod-norms} hold for
$\vecspace{V}$ and $\vecspace{W}$ with the constants
$C^{\vecspace{V}}$ and $C^{\vecspace{W}}$ which are independent of
$n,p,q$, and $m$, and the $\ell$-linear mappings $f_w$ are
completely bounded,  then so are $f^{\rm
d.s.e.}_{w;\alpha_0,\ldots,\alpha_\ell}$, and
\begin{equation}\label{eq:f_w-dirsum-cb-norms}
\|f^{\rm
d.s.e.}_{w;\alpha_0,\ldots,\alpha_\ell}\|_{\mathcal{L}^\ell_{\rm
cb}}\le
C^{\vecspace{W}}(C^{\vecspace{V}})^\ell\|f_w\|_{\mathcal{L}^\ell_{\rm
cb}};
\end{equation}
in particular, if $\vecspace{V}$ and $\vecspace{W}$ are operator
spaces, then
\begin{equation}\label{eq:f_w-dirsum-cb-norms-os}
\|f^{\rm
d.s.e.}_{w;\alpha_0,\ldots,\alpha_\ell}\|_{\mathcal{L}^\ell_{\rm
cb}}\le \|f_w\|_{\mathcal{L}^\ell_{\rm cb}}.
\end{equation}
\end{enumerate}
\end{prop}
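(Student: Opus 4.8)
The plan is to reduce everything to Proposition \ref{prop:LAC-dirsum-ext} applied with the module $\module{M}=\ring^d$. First I would associate to the sequence $\{f_w\}_{w\in\free_d}$ the sequence of $\ell$-linear mappings $f_\ell\colon\left(\mattuple{\ring}{s}{d}\right)^\ell\to\mat{\module{N}}{s}$ defined by \eqref{eq:lw-forms}, so that \eqref{eq:lw-powers} and \eqref{eq:f_w_via_f_l} hold. As noted in Remark \ref{rem:lost_abbey_semigr} and in the discussion preceding Theorem \ref{thm:sufficiency_of_LAC-semigr}, conditions \eqref{eq:ncfun_coef_empty}--\eqref{eq:ncfun-coef_w_ell} on the mappings $f_w$ are equivalent to conditions \eqref{eq:ncfun_coef_0}--\eqref{eq:ncfun-coef_ell_ell} on the mappings $f_\ell$, with $\module{M}=\ring^d$ and $Y=\bigoplus_{i=1}^mY_{ii}$ where $Y_{ii}=((Y_1)_{ii},\ldots,(Y_d)_{ii})\in\mattuple{\ring}{s_i}{d}$. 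Hence Proposition \ref{prop:LAC-dirsum-ext} applies and produces $\ell$-linear mappings $f^{\rm d.s.e.}_{\ell;\alpha_0,\ldots,\alpha_\ell}$ satisfying \eqref{eq:f_0-dirsum-decomp}--\eqref{eq:ncfun-dirsum-coef_ell_ell}, together with the boundedness and complete boundedness conclusions and the estimate \eqref{eq:f_ell-dirsum-cb-norms}.

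Next I would define the candidate direct summands extensions along the free monoid by
$$f^{\rm d.s.e.}_{w;\alpha_0,\ldots,\alpha_\ell}(A^1,\ldots,A^\ell):=f^{\rm d.s.e.}_{\ell;\alpha_0,\ldots,\alpha_\ell}(A^1 e_{i_1},\ldots,A^\ell e_{i_\ell}),\qquad w=g_{i_1}\cdots g_{i_\ell},$$
in complete analogy with \eqref{eq:f_w_via_f_l}, and $f^{\rm d.s.e.}_{\emptyset;\alpha,\beta}:=f^{\rm d.s.e.}_{0;\alpha,\beta}$. One checks (a short computation, as $e_{i_j}$ acts entrywise) that this agrees with the ``direct'' definition $f^{\rm d.s.e.}_{w;\alpha_0,\ldots,\alpha_\ell}(A^1,\ldots,A^\ell)=f_w(E_{\alpha_0}A^1E_{\alpha_1}^\top,\ldots,E_{\alpha_{\ell-1}}A^\ell E_{\alpha_\ell}^\top)_{\alpha_0\alpha_\ell}$, mimicking \eqref{eq:def-f_ell-dirsum}. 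The decomposition identities \eqref{eq:f_empty-dirsum-decomp}--\eqref{eq:f_w-dirsum-decomp} then follow from \eqref{eq:f_0-dirsum-decomp}--\eqref{eq:f_ell-dirsum-decomp} by substituting $Z^j=A^j e_{i_j}$ (or directly from the second definition, exactly as \eqref{eq:f_ell-dirsum-decomp} was verified), and uniqueness of the $f^{\rm d.s.e.}_{w;\alpha_0,\ldots,\alpha_\ell}$ follows from uniqueness of the $f^{\rm d.s.e.}_{\ell;\alpha_0,\ldots,\alpha_\ell}$ via \eqref{eq:f_w_via_f_l}.

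For the four families of conditions \eqref{eq:ncfun-dirsum_coef_empty}--\eqref{eq:ncfun-dirsum-coef_w_ell}, I would specialize \eqref{eq:ncfun-dirsum_coef_0}--\eqref{eq:ncfun-dirsum-coef_ell_ell} to $\module{M}=\ring^d$, write $SY_{\gamma\gamma}-Y_{\alpha\alpha}S=(S(Y_1)_{\gamma\gamma}-(Y_1)_{\alpha\alpha}S,\ldots,S(Y_d)_{\gamma\gamma}-(Y_d)_{\alpha\alpha}S)$, take the argument arrays $Z^j=A^j e_{i_j}$, and read off the sum over $k=1,\ldots,d$ --- this is precisely the passage from \eqref{eq:LAC_0}--\eqref{eq:LAC_ell_ell} to \eqref{eq:ncfun_coef_empty}--\eqref{eq:ncfun-coef_w_ell} already carried out in Remark \ref{rem:lost_abbey_semigr}. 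Part (2) is then immediate from part (2) of Proposition \ref{prop:LAC-dirsum-ext} and \eqref{eq:f_w_via_f_l}. For parts (3) and (4), since $f^{\rm d.s.e.}_{w;\alpha_0,\ldots,\alpha_\ell}$ is $f^{\rm d.s.e.}_{\ell;\alpha_0,\ldots,\alpha_\ell}$ pre-composed with the coordinate injections $A\mapsto A e_{i_j}$, the norm and completely bounded norm estimates \eqref{eq:f_w-dirsum-cb-norms}, \eqref{eq:f_w-dirsum-cb-norms-os} follow from \eqref{eq:f_ell-dirsum-cb-norms}, \eqref{eq:f_ell-dirsum-cb-norms-os} together with \eqref{eq:lw-cb-norms-2}, which bounds $\|f_w\|_{\mathcal{L}^\ell_{\cb}}$ by $\mathbf{C}_2^w\|f_\ell\|_{\mathcal{L}^\ell_{\cb}}$ --- or, again, directly from the second definition of $f^{\rm d.s.e.}_{w;\alpha_0,\ldots,\alpha_\ell}$ via \eqref{eq:rect-inj-proj-estimates}, exactly as in part (4) of Proposition \ref{prop:LAC-dirsum-ext}.

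The main obstacle I anticipate is purely combinatorial: translating each of the four condition families and the decomposition identity from the multilinear picture to the free-monoid picture, in particular placing the inserted letter $g_k$ at the correct position (between $g_{i_j}$ and $g_{i_{j+1}}$ in \eqref{eq:ncfun-dirsum-coef_w_j}) and making sure that in parts (3)--(4) no merely bounded linear map is used where a completely bounded one is needed. Since all of these translations mirror ones already performed in the paper --- Remark \ref{rem:lost_abbey_semigr}, the proof of Theorem \ref{thm:sufficiency_of_LAC-semigr}, and the proof of Proposition \ref{prop:LAC-dirsum-ext} --- I would present the argument compactly by invoking those passages rather than re-deriving the identities from scratch.
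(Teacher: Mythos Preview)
Your approach is exactly what the paper itself suggests: it states that the proposition ``can be proved similarly to Proposition \ref{prop:LAC-dirsum-ext}, or can be derived from the result of Proposition \ref{prop:LAC-dirsum-ext} via relations \eqref{eq:lw-forms}, \eqref{eq:f_w_via_f_l}'', and you spell out the second route, with the direct route (mimicking \eqref{eq:def-f_ell-dirsum}) as a backup. Parts (1)--(2) go through as you describe.

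One correction in part (4): the indirect route through $f_\ell$ does \emph{not} yield the sharp estimate \eqref{eq:f_w-dirsum-cb-norms}. From \eqref{eq:f_ell-dirsum-cb-norms} and the pre-composition with $A\mapsto A e_{i_j}$ you get a bound on $\|f^{\rm d.s.e.}_{w;\alpha_0,\ldots,\alpha_\ell}\|_{\mathcal{L}^\ell_{\rm cb}}$ in terms of $\|f_\ell\|_{\mathcal{L}^\ell_{\rm cb}}$, and \eqref{eq:lw-cb-norms-2} goes the wrong way --- it bounds $\|f_w\|$ above by $\mathbf{C}_2^w\|f_\ell\|$, whereas you would need $\|f_\ell\|\le(\text{const})\|f_w\|$, which is false in general (only the summed inequality \eqref{eq:lw-cb-norms-1} holds). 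So for the precise constant in \eqref{eq:f_w-dirsum-cb-norms} you must use your alternative, the direct definition $f^{\rm d.s.e.}_{w;\alpha_0,\ldots,\alpha_\ell}(A^1,\ldots,A^\ell)=f_w(E_{\alpha_0}A^1E_{\alpha_1}^\top,\ldots)_{\alpha_0\alpha_\ell}$, and repeat the computation from part (4) of Proposition \ref{prop:LAC-dirsum-ext} with $f_w$ in place of $f_\ell$. Drop the appeal to \eqref{eq:lw-cb-norms-2} and the proposal is clean.
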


\chapter*{(Some) earlier work on nc functions} \label{sec:revprev}
We concentrate here on the works most directly related to our main theme,
namely the theory of nc functions viewed as functions on square matrices of all sizes,
and how they compare to our results; we apologize in advance for any omissions.

Taylor \cite[Section 6]{T1} introduced the algebra that he denoted by ${\mathfrak D}(U)$,
where $U$ is (in our terminology) an open nc subset of $\ncspaced{{\mathbb C}}{n}$,
that consists of functions $f \colon U \to \ncspace{{\mathbb C}}$ that preserve matrix size
and respect intertwining, and are {\em assumed} to be analytic when restricted to $U \cap \mattuple{{\mathbb C}}{k}{n}$
for all $k$. He proved, using the analyticity assumption,
the existence of the nc difference-differential $\Delta_R f$ (to be specific, the analyticity assumption is used
to show that $\Delta_R f(X,Y)$ is a linear mapping between appropriate matrix spaces),
and established the generalized finite difference formula
(our \eqref{eq:gen-fin-dif} in the Introduction and Section \ref{subsec:Lagrange})
as well as the properties of $\Delta_R f(X,Y)$ as a function of $X$ and $Y$ (our Section \ref{subsec:proper}).
He also introduced explicitly the corresponding space ${\mathfrak D}(U,U)$ of (in our terminology) first order
nc functions on $U \times U$ --- again, with an analyticity assumption
when restricted to each (pair) of matrix sizes, and showed the canonical isomorphism
${\mathfrak D}(U) \widehat{\otimes} {\mathfrak D}(U) \cong {\mathfrak D}(U,U)$
(compare our Remark \ref{rem:tensor_prod});
here all the function spaces are given the topology of uniform convergence on compacta in every matrix size,
and $\widehat{\otimes}$ denotes the completed projective tensor product.

Taylor also considered in \cite[Section 6]{T1} the algebra of nc power series with a given multiradius
of convergence. He noticed that it is not a nuclear Frechet space, whereas ${\mathfrak D}(U)$ is always nuclear,
implying (in our terminology and without an explicit example, compare our Example \ref{ex:unif-normal_neq_unif})
that not every analytic nc function on a nc polydisc is uniformly analytic.

In \cite{T2}, Taylor considered quite generally a locally convex topological algebra $A$
together with a unital algebra homomorphism from
the free associative algebra ${\mathbb C}\langle z_1,\ldots,z_n \rangle$ to $A$,
equipped with continuous linear maps
$\Delta_i \colon A \to A \widehat{\otimes} A$\footnote{
To be precise, Taylor assumes that the multiplication on $A$ is separately continuous and
looks at the completed inductive tensor product; this makes no difference if $A$ is a Fr\' echet space.
}, $i=1,\ldots,n$ --- playing the role of partial nc difference-differential
operators --- that satisfy the Leibnitz rule (compare our Section \ref{subsub:prod})
and a version of the generalized finite difference formula.
Taylor called ${\mathbb C}\langle z_1,\ldots,z_n \rangle \to A$ a localization
(he introduced this notion in \cite{T1} in a more general context from certain homological considerations).
By considering representations of $A$, he showed that each element of $A$ yields a function on a class of $n$-tuples
of operators on Banach spaces that respects intertwining, much like nc functions except that
infinite-dimensional Banach spaces may be allowed.
With respect to such an infinite-dimensional setting, we notice also the work of
Hadwin \cite{Ha78} and Hadwin--Kaonga--Mathes \cite{HaKaMa03}
for infinite-dimensional Hilbert spaces and functions respecting orthogonal direct sums and unitary similarity,
and the work of Muhly--Solel \cite{MS,MS3}
for representations of a $W^*$-correspondence.

Taylor showed in \cite[Section 4]{T2} that for any localization ${\mathbb C}\langle z_1,\ldots,z_n \rangle \to A$,
$(1 \otimes \Delta) \circ \Delta = (\Delta \otimes 1) \circ \Delta$, where
$\Delta \colon A \to \bigoplus_{i=1}^n A \widehat{\otimes} A$ with the coordinates $\Delta_i$
(in particular, each $\Delta_i$ is a coassociative comultiplication),
compare our Section \ref{subsec:integra}.
He used this to define $\Delta^k \colon A \to \bigoplus_{w\in\free_n,\,|w|=k} A^{\widehat{\otimes}k}$
(here we use our notation for the free monoid, see Section \ref{subsec:ncpoly}) iteratively,
compare our formulae \eqref{eq:delta_tensor_prod_k} and \eqref{eq:j-delta_tensor_prod_k}.
He then derived (in our terminology) a finite TT expansion as in our Theorem \ref{thm:TT},
and showed that it converges under some additional conditions on the localization
${\mathbb C}\langle z_1,\ldots,z_n \rangle \to A$
(essentially, refined continuity requirements for $\Delta_i$)
that he referred to as a free analytic function algebra on $n$ generators
and that he verified for the algebra of nc power series with a given multiradius
of convergence.

In this connection, Taylor also considered
nc power series with coefficients in a Banach algebra
(in the case of a matrix algebra
these are our nc power series \eqref{eq:series} and\eqref{eq:power-series},
though Taylor never considers their evaluations)
and posed the question of when they form a free analytic function algebra;
while he was unable to answer this question, he did derive a version
of our conditions
\eqref{eq:LAC_0}--\eqref{eq:LAC_ell_ell} and \eqref{eq:ncfun_coef_0}--\eqref{eq:ncfun-coef_ell_ell}.
Among other noteworthy results in \cite{T2},
we mention a version of the implicit function theorem.

Voiculescu introduced in \cite{Voi04} (which is a continuation of \cite{Voi00})
what he called fully matricial sets and fully matricial functions.
A fully matricial $G$-set $\Omega$, for $G$ a Banach space, is in our terminology a nc set in the nc space over $G$
that is similarity invariant and contains all direct summands of matrices from $\Omega$
(so that $\Omega$ coincides with its direct summands extension as in Chapter \ref{sec:dirsum-ext}).
A fully matricial $H$-valued function on $\Omega$, for $H$ another Banach space,
is in our terminology a nc function on $\Omega$ with values in the nc space over $H$.
Voiculescu called such a function continuous or analytic if for all $n$, $f|_{\Omega_n}$ is a continuous or analytic function
on $\Omega_n \subseteq \mat{G}{n}$ (assumed to be open) with values in $\mat{H}{n}$.
He introduced $p$-variable fully matricial analytic functions ---
in our terminology, these are nc functions of order $p-1$
that are assumed to be analytic when restricted to fixed matrix sizes,
and established the existence of
(in our terminology, see Sections \ref{subsec:dir_difdif} and \ref{subsec:dir_difdif-k})
directional nc difference-differential operator, that he denoted by $\partial$,
from scalar fully matricial analytic functions to $2$-variable scalar fully matricial analytic functions
\footnote{
To be more precise, Voiculescu considered the case where $G$ is an operator system
hence has a distinguished vector $1_G$, and looked at the nc difference-differential operator in the direction $1_G$.
};
the analyticity assumption is used essentially in the proof.
Voiculescu proved that $\partial$ satisfies the Leibnitz rule (see our Section \ref{subsub:prod})
and a coassociativity property (see our Section \ref{subsec:integra}),
so that the algebra of scalar fully matricial analytic functions becomes a topological version
of what he called a generalized difference quotient ring
(an infinitesimal bialgebra in the terminology of \cite{Rota,Agui}).
One of the main results of \cite{Voi04} is a duality transform relating
the generalized difference quotient ring $B \langle Y \rangle$ generated freely by an element $Y$
over a Banach algebra $B$ with a unit
and the generalized difference quotient ring of scalar fully matricial analytic functions
on what Voiculescu called the full $B$-resolvent set of $Y$
(which is
$\coprod_{n=1}^\infty \left\{
b \in \mat{B}{n} \colon \bigoplus_{\alpha=1}^n Y - b \text{ is invertible in } \mat{E}{n}\right\}$,
where $E$ is a Banach algebra containing $B$ and $Y$ and some additional technical assumptions are satisfied).

In \cite[Section 11]{Voi09}, Voiculescu introduced what he called stably matricial sets
and stably matricial functions by relaxing the similarity invariance property for a fully matricial set
(so that, using our terminology, nc balls in the nc space over an operator space and naturally defined
nc half planes in the nc space over an operator system are included),
and showed that an analytic stably matricial function naturally extends to an analytic fully matricial function
(compare our canonical extension of a nc function to the similarity invariant envelope of the original domain
of definition in the Appendix, and the direct summands extension in Chapter \ref{sec:dirsum-ext}).
We mention also an inverse function theorem in \cite[Section 11.5]{Voi09}.
In \cite[Section 13]{Voi09}, Voiculescu established that an analytic fully matricial function on a fully matricial set
containing the origin admits,
in our terminology, a nc power series expansion at the origin
(see our \eqref{eq:tt-series-gen'} in Theorem \ref{thm:tt-series-gen},
except for the difference of notation for nc power series and the fact that Voiculescu does not identify
the multlinear forms appearing as coefficients
as the values at the origin of the higher order nc difference-differential operator applied to the function).
It is interesting to notice that unlike Taylor's approach in \cite{T2} mentioned above
and our approach in Chapters \ref{sec:TT} and \ref{sec:conv}, Voiculescu does not obtain the nc power series expansion
via the nc difference-differential calculus;
he rather starts with the usual power series expansion of $f|_{\Omega_n}$ at $0_{n \times n}$ for every matrix size $n$,
and uses the fact that $f$ respects direct sums and similarities to deduce that for all $N$
(using our notation, compare Remark \ref{rem:polylin})
$\delta^N(f|_{\Omega_n})(0_{n \times n})(Z) = Z^{\odot N} \alpha_N$ for some
bounded $N$-linear mapping $\alpha_N$.
In \cite[Sections 14--16]{Voi09}, Voiculescu combines the nc power series expansion with asymptotic freeness results
for random matrices to study boundary values for (using our terminology)
bounded nc functions on the open nc unit ball in $\ncspaced{{\mathbb C}}{k \times k}$
with the operator space structure $\|X\|_n = \|X\|$ for
$X \in \mat{\left(\mat{{\mathbb C}}{k}\right)}{n} \cong \mat{{\mathbb C}}{nk}$
and in $\ncspaced{{\mathbb C}}{k}$
with the operator space structure $\|X\|_n = \max\left\{\|X_1\|,\ldots,\|X_k\|\right\}$ for
$X=(X_1,\ldots,X_k) \in \mattuple{{\mathbb C}}{n}{k}$
(here $\|\cdot\|$ denotes the usual Euclidean norm of a complex matrix),
and to obtain asymptotic integral formulae for the coefficients
in terms of integration over the corresponding distinguished boundary.

The main thrust of \cite{Voi09} is to develop
nc function theory on the Grassmanian completion of (using our terminology) the nc space over a Banach algebra $B$,
which is to play the role of the Riemann sphere completion of the complex plane in the case $B={\mathbb C}$.

We finish this review with the mention of the two papers
\cite{Sch} (see also \cite{SchZa}) and \cite{Nie} dealing with functions of a single complex or real nc variable.
More precisely, Schanuel considered in \cite{Sch} a family $f$ of functions $f_A \colon A \to A$,
where $A$ runs over all finite-dimensional $\field$-algebras with unit in the case $\field={\mathbb C}$
and over all finite-dimensional $\field$-algebras with unit that are embedded in upper triangular
matrices in the case $\field={\mathbb R}$,
and the functions $f_A$ respect algebra homomorphisms.
Schanuel called $f$ a global function;
$f$ is essentially the same as (in our terminology)
a nc function from $\ncspace{{\mathbb C}}$ to $\ncspace{{\mathbb C}}$
in the case $\field={\mathbb C}$,
and as a nc function from $\Lambda$ to $\ncspace{{\mathbb R}}$
in the case $\field={\mathbb R}$,
where $\Lambda \subseteq \ncspace{{\mathbb R}}$ is the nc set consisting of matrices with only real eigenvalues
(notice that $\Lambda$ is not an open nc set).
Schanuel remarked that every entire function in the case $\field={\mathbb C}$ and
every $C^\infty({\mathbb R})$ function in the case $\field={\mathbb R}$ extend naturally to a global function.
He proved that if
\begin{itemize}
\item
either for every $A$ and every $\lambda \in \field$, $f_A$ is bounded on an open neighbourhood of $\lambda$ in $A$,
\item
or there exists an integer $n \geq 0$ so that for every $A$ and every $a,b \in A$ the function
$t \mapsto t^n f_A(a+tb)$ from $\field$ to $A$ is bounded in a neignhourhood of $0$,
\end{itemize}
then for every $A$, $f_A \colon A \to A$ is an analytic function in the case $\field={\mathbb C}$
and a $C^\infty$ function in the case $\field={\mathbb R}$,
and the global function $f$ is obtained from $f_{\field}$ by the natural extension
(in the case $\field={\mathbb C}$, compare our Section \ref{subsec:analytic} ---
notice that for nc functions on $\ncspace{\mathbb C}$ local boundedness at scalar points
implies local boundedness everywhere because of the decomposition \eqref{eq:1var_decomp}).
The proof uses evaluation on upper triangular matrices and
a formal power series expansion for a global function in terms of a sequence of scalar functions
(see our \eqref{eq:1var_fun}--\eqref{eq:1var_ps} --- notice that the desired conclusion is that
$f_0$ is an entire function in the case $\field={\mathbb C}$
and a $C^\infty$ function in the case $\field={\mathbb R}$, and $f_\ell'=f_{\ell+1}$ for $\ell=0,1,\ldots$).

Niemiec \cite{Nie} considered the sets that he denoted by ${\mathcal D}_k(\Omega)$ ---
of $k \times k$ diagonalizable matrices with all the eigenvalues in $\Omega$ ---
and by ${\mathcal M}_k(\Omega)$ ---
of all $k \times k$ matrices with all the eigenvalues in $\Omega$;
here $\Omega \subseteq {\mathbb R}$ or $\Omega \subseteq {\mathbb C}$
and the matrices are over ${\mathbb R}$ or over ${\mathbb C}$, respectively.
For any function $f \colon \Omega \to {\mathbb C}$, there is a natural extension
to (in our terminolgy) nc function from the nc set
$\coprod_{k=1}^\infty {\mathcal D}_k(\Omega)$ to $\ncspace{\mathbb C}$
(here we view ${\mathbb C}$ as a ${\mathbb R}$-vector space in the case $\Omega \subseteq {\mathbb R}$)
that Niemiec denoted by $f_{\text{op}}$.
He showed that for an open set $\Omega \subseteq {\mathbb C}$ and for any $k \geq 2$,
$f_{\text{op}}$ is continuous on ${\mathcal D}_k(\Omega)$ if and only if
$f_{\text{op}}$ extends to to a continuous function from ${\mathcal M}_k(\Omega)$ to ${\mathcal M}_k({\mathbb C})$
if and only if $f$ is analytic.
He also showed that for any set $\Omega \subseteq {\mathbb C}$,
$f_{\text{op}}$ is bounded on the intersection of some nc ball around $\lambda$ (in our terminology)
  with $\coprod_{k=1}^\infty {\mathcal D}_k(\Omega)$
for all $\lambda \in \Omega$ if and only if
for all $\lambda \in \Omega$ and all $\epsilon>0$
there exists a $\delta>0$ such that
$\|f_{\text{op}}(X)-I_k f(\lambda)\|<\epsilon$ for all $k=1,2,\ldots$ and all
$X \in {\mathcal D}_k(\Omega)$: $\|X-I_k \lambda\|<\delta$, if and only if
$f$ extends to an analytic function on an open set in ${\mathbb C}$ containing $\Omega$.
For both of these results, compare our Section \ref{subsec:analytic}.
Niemiec also proved
that for an open set (or an interval) $\Omega \subseteq {\mathbb R}$ and for any $k \geq 3$,
$f_{\text{op}}$ is continuous on ${\mathcal D}_k(\Omega)$ if and only if
$f$ is of class $C^{k-2}$ and $f^{(k-2)}$ is locally Lipschitz,
while $f_{\text{op}}$ extends to to a continuous function from ${\mathcal M}_k(\Omega)$ to ${\mathcal M}_k({\mathbb C})$
if and only if $f$ is of class $C^{k-1}$.
The proofs use Taylor's formula with remainder and the calculus of divided differences.

\appendix
\chapter[Similarity invariant envelopes and extension of
nc functions]{Similarity invariant envelopes and extension of nc
functions} \label{app}

\centerline{\Large (by Shibananda Biswas\footnote{
Department of Mathematics,
Ben-Gurion University of the Negev,
 Beer-Sheva, Israel, 84105;
current address: Department of Mathematics and Statistics,
Indian Institute of Science Education and Research Kolkata,
Mohanpur -- 741246, Nadia, West Bengal, India; e-mail address: shibananda@gmail.com}, Dmitry S.
Kaliuzhnyi-Verbovetskyi,}

\centerline{\Large and Victor Vinnikov)}
\bigskip

\begin{prop}\label{prop:sim-env}
Let $\module{M}$ be a module over a commutative unital ring
$\ring$, and let $\Omega\subseteq\ncspace{\module{M}}$ be a nc
set. Then
\begin{equation*}\label{eq:sim-inv} \widetilde{\Omega}:=\{SXS^{-1}\colon X\in\Omega_n, S\in
\mat{\ring}{n}\ {\rm invertible},n\in\mathbb{N}\}
\end{equation*}
is a nc set. \index{$\widetilde{\Omega}$}
\end{prop}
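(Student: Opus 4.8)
The plan is to verify the two defining properties of a nc set for $\widetilde{\Omega}$: that it is a subset of $\ncspace{\module{M}}$ (clear from the construction, since each $SXS^{-1}$ with $X \in \Omega_n$ and $S \in \mat{\ring}{n}$ invertible lies in $\mat{\module{M}}{n}$), and that it is closed under direct sums. So the real content is the direct sum closure.

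First I would take two elements of $\widetilde{\Omega}$, say $\widetilde{X} = SXS^{-1}$ with $X \in \Omega_n$, $S \in \mat{\ring}{n}$ invertible, and $\widetilde{Y} = TYT^{-1}$ with $Y \in \Omega_m$, $T \in \mat{\ring}{m}$ invertible. The goal is to exhibit $\widetilde{X} \oplus \widetilde{Y}$ in the form $R Z R^{-1}$ for some $Z \in \Omega_{n+m}$ and some invertible $R \in \mat{\ring}{n+m}$. The natural candidate is $R = S \oplus T = \begin{bmatrix} S & 0 \\ 0 & T \end{bmatrix}$, which is invertible in $\mat{\ring}{n+m}$ with inverse $S^{-1} \oplus T^{-1}$, and $Z = X \oplus Y$. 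Then one computes directly that
\[
(S \oplus T)(X \oplus Y)(S \oplus T)^{-1} = (SXS^{-1}) \oplus (TYT^{-1}) = \widetilde{X} \oplus \widetilde{Y},
\]
using that block-diagonal matrices multiply blockwise. Since $\Omega$ is a nc set, $X \oplus Y \in \Omega_{n+m}$, so $\widetilde{X} \oplus \widetilde{Y} \in \widetilde{\Omega}_{n+m}$, which is exactly what is required.

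Honestly, there is no serious obstacle here: the proof is essentially the observation that conjugation by a block-diagonal matrix respects the block-diagonal (direct sum) structure, combined with the fact that $\Omega$ itself is closed under direct sums. The only minor point to be careful about is the bookkeeping of sizes and the convention (mentioned earlier in the paper) regarding empty matrices of size zero, but this is routine. I would also remark in passing that $\widetilde{\Omega}$ is by construction similarity invariant and contains $\Omega$, and is the smallest such nc set — though the proposition as stated only asks for the nc set property, so I would keep the proof to the verification above.
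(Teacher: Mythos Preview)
Your proof is correct and follows essentially the same approach as the paper's: both verify closure under direct sums by writing $SXS^{-1}\oplus TYT^{-1}=(S\oplus T)(X\oplus Y)(S\oplus T)^{-1}$ and using that $X\oplus Y\in\Omega_{n+m}$ since $\Omega$ is a nc set.
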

Clearly, $\widetilde{\Omega}$ is the smallest nc set that contains
$\Omega$ and that is invariant under similarities. We shall call
$\widetilde{\Omega}$ the \emph{similarity invariant envelope of
the nc set $\Omega$}. \index{similarity invariant envelope}
\begin{proof}[Proof of Proposition \ref{prop:sim-env}]
Let $X\in\Omega_n$, $Y\in\Omega_m$, and let $S\in \mat{\ring}{n}$,
$T\in\mat{\ring}{m}$ be invertible, $n,m\in\mathbb{N}$. Then
$S\oplus T\in \mat{\ring}{(n+m)}$ is invertible, $X\oplus
Y\in\Omega_{n+m}$, and
$$SXS^{-1}\oplus TYT^{-1}=(S\oplus T)(X\oplus Y)(S\oplus T)^{-1}\in\widetilde{\Omega}_{n+m}.$$
\end{proof}
\begin{prop}\label{prop:adm-env}
If $\Omega\subseteq\ncspace{\vecspace{M}}$ is a right (resp.,
left) admissible nc set, then so is its similarity invariant
envelope $\widetilde{\Omega}$. Moreover, for any
$\widetilde{X}\in\widetilde{\Omega}_n$,
$\widetilde{Y}\in\widetilde{\Omega}_m$ and
$Z\in\rmat{\module{M}}{n}{m}$, one has $$\begin{bmatrix} \widetilde{X} & Z\\
0 & \widetilde{Y}
\end{bmatrix}\in\widetilde{\Omega}_{n+m},$$
and, respectively, for any $\widetilde{X}\in\widetilde{\Omega}_n$,
$\widetilde{Y}\in\widetilde{\Omega}_m$ and
$Z\in\rmat{\module{M}}{m}{n}$, one has $$\begin{bmatrix} \widetilde{X} & 0\\
Z & \widetilde{Y}
\end{bmatrix}\in\widetilde{\Omega}_{n+m},$$
\end{prop}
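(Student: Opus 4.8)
The plan is to reduce the admissibility of $\widetilde{\Omega}$ to the admissibility of $\Omega$ together with the characterization of $\widetilde{\Omega}$ as the set of all similarity translates of elements of $\Omega$. I will treat the right-admissible case; the left case is identical after transposing, or can be deduced from the right case via Proposition \ref{prop:trans} applied to $\Omega^\top$.

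First I would fix $\widetilde{X}\in\widetilde{\Omega}_n$, $\widetilde{Y}\in\widetilde{\Omega}_m$, and $Z\in\rmat{\module{M}}{n}{m}$. By the description of $\widetilde{\Omega}$ in Proposition \ref{prop:sim-env}, there exist $X\in\Omega_n$, $Y\in\Omega_m$ and invertible $S\in\mat{\ring}{n}$, $T\in\mat{\ring}{m}$ with $\widetilde{X}=SXS^{-1}$ and $\widetilde{Y}=TYT^{-1}$. Now observe the identity
\begin{equation*}
\begin{bmatrix} \widetilde{X} & Z\\ 0 & \widetilde{Y}\end{bmatrix}
=\begin{bmatrix} S & 0\\ 0 & T\end{bmatrix}
\begin{bmatrix} X & S^{-1}ZT\\ 0 & Y\end{bmatrix}
\begin{bmatrix} S & 0\\ 0 & T\end{bmatrix}^{-1}.
\end{equation*}
Since $\Omega$ is right admissible and $S^{-1}ZT\in\rmat{\module{M}}{n}{m}$, there is an invertible $r\in\ring$ with $\begin{bmatrix} X & r(S^{-1}ZT)\\ 0 & Y\end{bmatrix}\in\Omega_{n+m}$. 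Then conjugating by $\diag[rS,T]$ (which is invertible, as $r$ is a unit) gives
\begin{equation*}
\begin{bmatrix} \widetilde{X} & Z\\ 0 & \widetilde{Y}\end{bmatrix}
=\begin{bmatrix} rS & 0\\ 0 & T\end{bmatrix}
\begin{bmatrix} X & r(S^{-1}ZT)\\ 0 & Y\end{bmatrix}
\begin{bmatrix} rS & 0\\ 0 & T\end{bmatrix}^{-1}\in\widetilde{\Omega}_{n+m},
\end{equation*}
which is exactly the ``moreover'' clause. Taking $\widetilde{X}=X\in\Omega_n$, $\widetilde{Y}=Y\in\Omega_m$ (so $S=I_n$, $T=I_m$) shows in particular that $\begin{bmatrix} X & rZ\\ 0 & Y\end{bmatrix}\in\widetilde{\Omega}_{n+m}$ for the invertible $r$ supplied by right admissibility of $\Omega$; hence $\widetilde{\Omega}$ is right admissible in its own right. (One could also simply note that $\widetilde{\Omega}$ is a nc set by Proposition \ref{prop:sim-env} and that the block upper triangular matrix above witnesses admissibility with $r=1$, since it already lies in $\widetilde{\Omega}_{n+m}$.)

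There is essentially no obstacle here: the only thing to be careful about is that the scalar $r$ produced by admissibility of $\Omega$ must be a \emph{unit} in $\ring$, which it is by definition of right admissibility, so that $rS$ is invertible; and that the conjugation identity is checked by a routine block matrix multiplication. The left-admissible statement follows by the symmetric argument using block \emph{lower} triangular matrices, or by applying the already-proven right case to $\Omega^\top$ and using Proposition \ref{prop:trans}(1) together with the evident fact $\widetilde{\Omega^\top}=(\widetilde{\Omega})^\top$.
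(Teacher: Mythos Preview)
Your approach is exactly the paper's: write $\widetilde{X}=SXS^{-1}$, $\widetilde{Y}=TYT^{-1}$, use right admissibility of $\Omega$ to find an invertible $r$ with $\begin{bmatrix} X & rS^{-1}ZT\\ 0 & Y\end{bmatrix}\in\Omega_{n+m}$, and then conjugate by a block-diagonal invertible to land in $\widetilde{\Omega}_{n+m}$.

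There is, however, a computational slip in your displayed identity. Conjugating $\begin{bmatrix} X & rS^{-1}ZT\\ 0 & Y\end{bmatrix}$ by $\diag[rS,T]$ gives
\[
\begin{bmatrix} rS & 0\\ 0 & T\end{bmatrix}
\begin{bmatrix} X & rS^{-1}ZT\\ 0 & Y\end{bmatrix}
\begin{bmatrix} rS & 0\\ 0 & T\end{bmatrix}^{-1}
=\begin{bmatrix} \widetilde{X} & r^{2}Z\\ 0 & \widetilde{Y}\end{bmatrix},
\]
not $\begin{bmatrix} \widetilde{X} & Z\\ 0 & \widetilde{Y}\end{bmatrix}$. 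The fix is to conjugate instead by $\diag[r^{-1}S,T]$ (this is precisely what the paper does), which yields the correct $(1,2)$ block $r^{-1}S\cdot rS^{-1}ZT\cdot T^{-1}=Z$. With that correction your argument is complete and identical to the paper's; your remark that one can then take $r=1$ to witness right admissibility of $\widetilde{\Omega}$ itself is also fine.
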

\begin{proof}
We shall give the proof for a right admissible nc set $\Omega$. A
similar argument works for a left admissible nc set $\Omega$. Let
$\widetilde{X}\in\widetilde{\Omega}_n$,
$\widetilde{Y}\in\widetilde{\Omega}_m$ and
$Z\in\rmat{\module{M}}{n}{m}$. Then there exist $X\in\Omega_n$,
$Y\in\Omega_m$, and invertible $S\in \mat{\ring}{n}$, $T\in
\mat{\ring}{m}$, and $r\in\ring$ such that
$\widetilde{X}=SXS^{-1}$, $\widetilde{Y}=TYT^{-1}$, and
$$
\begin{bmatrix}
X & rS^{-1}ZT\\
0 & Y
\end{bmatrix}\in\Omega_{n+m}.$$
Then $$\begin{bmatrix}
\widetilde{X} & Z\\
0 & \widetilde{Y}
\end{bmatrix}=\begin{bmatrix}
r^{-1}S & 0\\
0 & T
\end{bmatrix}\begin{bmatrix}
X & rS^{-1}ZT\\
0 & Y
\end{bmatrix}\begin{bmatrix}
r^{-1}S & 0\\
0 & T
\end{bmatrix}^{-1}\\
\in\widetilde{\Omega}_{n+m}.$$
\end{proof}

\begin{prop}\label{prop:ncfun-ext}
Let $\module{M}$, $\module{N}$ be modules over a commutative
unital ring $\ring$, let $\Omega\subseteq\ncspace{\module{M}}$ be
a nc set, and let
 $f\colon\Omega\to\ncspace{\module{N}}$
be a nc function. Then there exists a unique nc function
$\widetilde{f}\colon\widetilde{\Omega}\to\ncspace{\module{N}}$
such that $\widetilde{f}\,|_{\Omega}=f$.
\end{prop}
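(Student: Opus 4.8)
\textbf{Proof proposal for Proposition \ref{prop:ncfun-ext}.}
The plan is to define $\widetilde{f}$ by the obvious formula and then check that it is well defined, which is where all the work lies. Given $\widetilde{X}\in\widetilde{\Omega}_n$, by definition there exist $X\in\Omega_n$ and an invertible $S\in\mat{\ring}{n}$ with $\widetilde{X}=SXS^{-1}$, and we set $\widetilde{f}(\widetilde{X}):=Sf(X)S^{-1}$. The first and main step is to show this does not depend on the choice of $(X,S)$. So suppose $\widetilde{X}=SXS^{-1}=TYT^{-1}$ with $X,Y\in\Omega_n$ and $S,T\in\mat{\ring}{n}$ invertible; I must show $Sf(X)S^{-1}=Tf(Y)T^{-1}$. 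Set $R:=T^{-1}S$, which is invertible; then $RXR^{-1}=Y$, i.e. $RX=YR$ with both $X,Y\in\Omega_n$ and $R\in\rmat{\ring}{n}{n}$. By Proposition \ref{prop:simint} ($f$ respects intertwinings), $RX=YR$ implies $f(X)R=Rf(Y)$ — wait, more precisely $f(X)\,R^{\trans\!\!}$... let me be careful: applying \eqref{eq:simint} with the roles chosen so that $RX=YR$ gives $Rf(X)=f(Y)R$, hence $f(X)=R^{-1}f(Y)R=S^{-1}Tf(Y)T^{-1}S$, and therefore $Sf(X)S^{-1}=Tf(Y)T^{-1}$, as desired. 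This establishes that $\widetilde{f}$ is well defined, and since every $X\in\Omega$ can be written with $S=I$, we get $\widetilde{f}\,|_{\Omega}=f$ and $\widetilde{f}(\Omega_n)\subseteq\mat{\module{N}}{n}$, so that $\widetilde{f}(\widetilde{\Omega}_n)\subseteq\mat{\module{N}}{n}$ by construction.

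The second step is to verify that $\widetilde{f}$ is a nc function, for which by Proposition \ref{prop:simint} it suffices to check that $\widetilde{f}$ respects intertwinings on $\widetilde{\Omega}$. So let $\widetilde{X}\in\widetilde{\Omega}_n$, $\widetilde{Y}\in\widetilde{\Omega}_m$, and $P\in\rmat{\ring}{n}{m}$ with $\widetilde{X}P=P\widetilde{Y}$; write $\widetilde{X}=SXS^{-1}$, $\widetilde{Y}=TYT^{-1}$ with $X\in\Omega_n$, $Y\in\Omega_m$, $S,T$ invertible. Setting $Q:=S^{-1}PT\in\rmat{\ring}{n}{m}$, the intertwining relation $\widetilde{X}P=P\widetilde{Y}$ rewrites as $XQ=QY$. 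Since $f$ respects intertwinings (Proposition \ref{prop:simint}), $f(X)Q=Qf(Y)$. Multiplying on the left by $S$ and on the right by $T^{-1}$ and inserting $S^{-1}S$, $TT^{-1}$ appropriately gives
\begin{equation*}
\widetilde{f}(\widetilde{X})P=Sf(X)S^{-1}\cdot SQT^{-1}=Sf(X)QT^{-1}=SQf(Y)T^{-1}=SQT^{-1}\cdot Tf(Y)T^{-1}=P\widetilde{f}(\widetilde{Y}),
\end{equation*}
so $\widetilde{f}$ respects intertwinings and hence is a nc function on $\widetilde{\Omega}$.

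Finally, uniqueness: if $g\colon\widetilde{\Omega}\to\ncspace{\module{N}}$ is any nc function with $g|_{\Omega}=f$, then for $\widetilde{X}=SXS^{-1}\in\widetilde{\Omega}_n$ with $X\in\Omega_n$, since $g$ respects similarities we get $g(\widetilde{X})=g(SXS^{-1})=Sg(X)S^{-1}=Sf(X)S^{-1}=\widetilde{f}(\widetilde{X})$, so $g=\widetilde{f}$. I expect the only subtlety to be the well-definedness argument in the first paragraph — in particular keeping straight the direction of the intertwining relation when invoking \eqref{eq:simint} — but this is a short computation using Proposition \ref{prop:simint} rather than a genuine obstacle; everything else is routine bookkeeping with the similarity action.
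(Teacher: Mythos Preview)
Your proof is correct and follows essentially the same approach as the paper: define $\widetilde{f}(SXS^{-1})=Sf(X)S^{-1}$, verify well-definedness via the relation $Y=(T^{-1}S)X(T^{-1}S)^{-1}$, and deduce uniqueness from the similarity property. The only cosmetic difference is that you invoke the intertwining form (Proposition \ref{prop:simint}) for well-definedness whereas the paper uses \eqref{eq:simsim} directly; you also spell out the verification that $\widetilde{f}$ respects intertwinings, which the paper leaves as ``straightforward.''
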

\begin{proof}
For any $n\in\mathbb{N}$ and
$\widetilde{X}\in\widetilde{\Omega}_n$, there exist $X\in\Omega_n$
and an invertible $S\in \mat{\ring}{n}$ such that
$\widetilde{X}=SXS^{-1}$, and we set
$\widetilde{f}(\widetilde{X})=Sf(X)S^{-1}$. This definition is
correct, since $SXS^{-1}=TYT^{-1}$ for $X,Y\in\Omega_n$ and for
invertible $S,T\in \mat{\ring}{n}$ implies that
$Y=(T^{-1}S)X(T^{-1}S)^{-1}$, hence
$f(Y)=(T^{-1}S)f(X)(T^{-1}S)^{-1}$ and $Tf(Y)T^{-1}=Sf(X)S^{-1}$.
It is straightforward to check that
$\widetilde{f}\colon\widetilde{\Omega}\to\ncspace{\module{N}}$ is
a nc function and that $\widetilde{f}\,|_{\Omega}=f$. If
$\widetilde{g}\colon\widetilde{\Omega}\to\ncspace{\module{N}}$ is
another nc function such that $\widetilde{g}\,|_{\Omega}=f$, then
for any $X\in\Omega_n$ and invertible $S\in \mat{\ring}{n}$ we
have
$$\widetilde{g}(SXS^{-1})=S\widetilde{g}(X)S^{-1}=Sf(X)S^{-1}=\widetilde{f}(SXS^{-1}),$$ i.e., we have necessarily
$\widetilde{g}=\widetilde{f}$.
\end{proof}
We will call the nc function
$\widetilde{f}\colon\widetilde{\Omega}\to\ncspace{\module{N}}$
from Proposition \ref{prop:ncfun-ext} the \emph{canonical
extension of the nc function \index{$\widetilde{f}$}
$f\colon\Omega\to\ncspace{\module{N}}$}. \index{canonical
extension of a nc function}

\begin{ex}
{\rm Let $\vecspace{V}=\vecspace{W}=\mathbb{C}$, with the
canonical operator space structure. The nc unit ball
$\Omega=B_{\rm nc}(0_{1\times 1},1)$ is a nc set. Then its
similarity invariant envelope is
$$\widetilde{\Omega}=\{Z\in\ncspace{\mathbb{C}}\colon \specrad(Z)<1\}.$$
Indeed, let $Z\in\mat{\mathbb{C}}{n}$ be such that
$\specrad(Z)<1$. We need to show that $Z=SXS^{-1}$ for some $X\in
\mat{\mathbb{C}}{n}$ with $\|X\|<1$  and some invertible
$S\in\mat{\mathbb{C}}{n}$. It suffices to show this for $Z$ a
Jordan cell, i.e., for $$Z=\begin{bmatrix}
\lambda & 1       & & \\
        & \ddots & \ddots & \\
        &         & \ddots & 1 \\
        &         &        & \lambda
\end{bmatrix}$$
with $|\lambda|<1$. Let $\epsilon>0$ be such that $\|X\|<1$, where
$$X=\begin{bmatrix}
\lambda & \epsilon       & & \\
        & \ddots & \ddots & \\
        &         & \ddots & \epsilon \\
        &         &        & \lambda
\end{bmatrix}.$$
Then $Z=SXS^{-1}$ with
$S=\diag(\epsilon^{-1},\ldots,\epsilon^{-n})$.

Clearly the nc function $f\colon\Omega\to\ncspace{\vecspace{W}}$
defined by $f(Z)=(I-Z)^{-1}$ can be canonically extended to a nc
function on $\widetilde{\Omega}$ using the same formula.
 }
\end{ex}

Proposition \ref{prop:ncfun-ext} can be extended to higher order
nc functions as follows.

\begin{prop}\label{prop:higher-ncfun-ext}
Let $\module{M}_0$, \ldots, $\module{M}_k$, $\module{N}_0$,
\ldots, $\module{N}_k$ be modules over a commutative unital ring
$\ring$, let $\Omega^{(0)}\subseteq\ncspacej{\module{M}}{0}$,
\ldots, $\Omega^{(k)}\subseteq\ncspacej{\module{M}}{k}$ be  nc
sets, and let \index{$\widetilde{f}$}
$f\in\tclass{k}(\Omega^{(0)},\ldots,\Omega^{(k)};\ncspacej{\module{N}}{0},\ldots,\ncspacej{\module{N}}{k})$.
Then there exists a unique nc function of order $k$,
$\widetilde{f}\in\tclass{k}(\widetilde{\Omega}^{(0)},\ldots,\widetilde{\Omega}^{(k)};
\ncspacej{\module{N}}{0},\ldots,\ncspacej{\module{N}}{k})$, such
that
$\widetilde{f}|_{\Omega^{(0)}\times\cdots\times\Omega^{(k)}}=f$.
\end{prop}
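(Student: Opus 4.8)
The plan is to mimic exactly the proof of Proposition \ref{prop:ncfun-ext}, working one argument at a time. First I would note that, by Proposition \ref{prop:sim-env}, each $\widetilde{\Omega}^{(j)}$ is a nc set, so the target class $\tclass{k}(\widetilde{\Omega}^{(0)},\ldots,\widetilde{\Omega}^{(k)};\ncspacej{\module{N}}{0},\ldots,\ncspacej{\module{N}}{k})$ makes sense. Given $\widetilde{X}^j\in\widetilde{\Omega}^{(j)}_{n_j}$ for $j=0,\ldots,k$, choose $X^j\in\Omega^{(j)}_{n_j}$ and invertible $S_j\in\mat{\ring}{n_j}$ with $\widetilde{X}^j=S_jX^jS_j^{-1}$, and define
$$
\widetilde{f}(\widetilde{X}^0,\ldots,\widetilde{X}^k)(Z^1,\ldots,Z^k)
:=S_0\,f(X^0,\ldots,X^k)(S_0^{-1}Z^1S_1,\ldots,S_{k-1}^{-1}Z^kS_k)\,S_k^{-1},
$$
which is the formula dictated by \eqref{eq:sim_k} in Proposition \ref{prop:joint_k}. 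This is manifestly $k$-linear in $(Z^1,\ldots,Z^k)$ with values in $\rmat{\module{N}_0}{n_0}{n_k}$.

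The first real step is to check that this definition does not depend on the choice of the representatives $X^j$ and similarity matrices $S_j$. Suppose $S_jX^jS_j^{-1}=T_jY^jT_j^{-1}$ with $X^j,Y^j\in\Omega^{(j)}_{n_j}$ and $S_j,T_j$ invertible. Then $Y^j=(T_j^{-1}S_j)X^j(T_j^{-1}S_j)^{-1}$, and since $f$ respects similarities in each argument (apply \eqref{eq:sim_k} to $f$ with the invertible matrices $T_j^{-1}S_j$), we get
$$
f(Y^0,\ldots,Y^k)(W^1,\ldots,W^k)=(T_0^{-1}S_0)f(X^0,\ldots,X^k)\big((T_0^{-1}S_0)^{-1}W^1(T_1^{-1}S_1),\ldots\big)(T_k^{-1}S_k)^{-1}.
$$
Substituting $W^j=T_{j-1}^{-1}Z^jT_j$ and simplifying, the two candidate values for $\widetilde{f}(\widetilde{X}^0,\ldots,\widetilde{X}^k)(Z^1,\ldots,Z^k)$ coincide. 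I expect this to be the main obstacle — not because it is deep, but because it requires a careful bookkeeping of the conjugations so that everything collapses correctly; it is the place where one must genuinely use that $f$ respects similarities jointly, i.e.\ the full strength of \eqref{eq:sim_k}, rather than in one variable only.

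Next I would verify that $\widetilde{f}\in\tclass{k}$, i.e.\ that it respects direct sums and similarities in each argument. By Proposition \ref{prop:simint_k} it is enough to check that $\widetilde{f}$ respects intertwinings \eqref{eq:3X^0}--\eqref{eq:3X^k}, or equivalently the joint form \eqref{eq:int_k}. For this, write each $\widetilde{X}^j=S_jX^jS_j^{-1}$ and $\widetilde{\widetilde{X}}^j=U_jV^jU_j^{-1}$ with $X^j,V^j\in\Omega^{(j)}$; if $T_j\widetilde{X}^j=\widetilde{\widetilde{X}}^jT_j$, then $(U_j^{-1}T_jS_j)X^j=V^j(U_j^{-1}T_jS_j)$, so the intertwining \eqref{eq:int_k} for $f$ (which holds because $f\in\tclass{k}$) transfers, after the appropriate conjugations, to the desired intertwining for $\widetilde{f}$; this is a routine if slightly lengthy computation of the same flavour as the consistency check. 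Finally, restricting to $X^j\in\Omega^{(j)}$ with $S_j=I_{n_j}$ gives $\widetilde{f}|_{\Omega^{(0)}\times\cdots\times\Omega^{(k)}}=f$. Uniqueness is immediate: if $\widetilde{g}$ is another such extension, then for any $X^j\in\Omega^{(j)}_{n_j}$ and invertible $S_j\in\mat{\ring}{n_j}$, applying \eqref{eq:sim_k} to $\widetilde{g}$ forces $\widetilde{g}(S_0X^0S_0^{-1},\ldots,S_kX^kS_k^{-1})(\cdots)=S_0\widetilde{g}(X^0,\ldots,X^k)(\cdots)S_k^{-1}=S_0 f(X^0,\ldots,X^k)(\cdots)S_k^{-1}=\widetilde{f}(S_0X^0S_0^{-1},\ldots)(\cdots)$, so $\widetilde{g}=\widetilde{f}$ on all of $\widetilde{\Omega}^{(0)}\times\cdots\times\widetilde{\Omega}^{(k)}$.
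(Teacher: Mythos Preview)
Your proposal is correct and follows essentially the same approach as the paper: the definition of $\widetilde{f}$, the well-definedness check via \eqref{eq:sim_k}, and the uniqueness argument are identical. The only minor difference is that to verify $\widetilde{f}\in\tclass{k}$ the paper checks the direct-sum condition \eqref{eq:dirsums_k}--\eqref{eq:f_entries} explicitly by a block computation (similarity invariance being obvious from the construction), whereas you propose to verify the equivalent intertwining condition \eqref{eq:int_k} via Proposition \ref{prop:simint_k}; both routes involve the same kind of conjugation bookkeeping and neither is materially shorter.
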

\begin{proof}
For any $n_0,\ldots,n_k\in\mathbb{N}$ and
$\widetilde{X}^j\in\widetilde{\Omega}^{(j)}_{n_j}$, there exist
$X^j\in\Omega^{(j)}_{n_j}$ and invertible $S_j\in\mat{\ring}{n_j}$
such that $\widetilde{X}^j=S_jX^jS_j^{-1}$, $j=0$, \ldots, $k$. We
set, for any $Z^j\in\rmat{\module{N}_j}{n_{j-1}}{n_j}$, $j=1$,
\ldots, $k$,
\begin{equation*}
\widetilde{f}(\widetilde{X}^0,\ldots,\widetilde{X}^k)(Z^1,\ldots,Z^k)=S_0f(X^0,\ldots,X^k)(S_0^{-1}Z^1S_1,\ldots,
S_{k-1}^{-1}Z^kS_k)S_k^{-1}.
\end{equation*}
This definition is correct, since the equalities
$S_jX^jS_j^{-1}=T_jY^jT_j^{-1}$ for $X^j,Y^j\in\Omega^{(j)}_{n_j}$
and for invertible $S_j,T_j\in\mat{\ring}{n_{j}}$ imply
$Y^j=(T_j^{-1}S_j)X^j(T_j^{-1}S_j)^{-1}$, hence  by
\eqref{eq:sim_k}
\begin{multline*}
f(Y^0,\ldots,Y^k)(T_0^{-1}S_0Z^1S_1^{-1}T_1,\ldots,
T_{k-1}^{-1}S_{k-1}Z^kS_k^{-1}T_k)\\
=T_0^{-1}S_0f(X^0,\ldots,X^k)(Z^1,\ldots,Z^k)S_k^{-1}T_k,
\end{multline*}
 and for
$W_j=S_{j-1}Z^jS_j^{-1}$ we have
\begin{multline*}
T_0f(Y^0,\ldots,Y^k)(T_0^{-1}W^1T_1,\ldots,
T_{k-1}^{-1}W^kT_k)T_k^{-1}\\
=S_0f(X^0,\ldots,X^k)(S_0^{-1}W^jS_1,\ldots,S_{k-1}^{-1}W^1S_k)S_k^{-1}.
\end{multline*}
It is obvious that $\widetilde{f}$ respects similarities and that
$\widetilde{f}\,|_{\Omega^{(0)}\times\cdots\times\Omega^{(k)}}=f$.
We also have
\begin{multline*}
\widetilde{f}(\widetilde{X}^{0\prime}\oplus
\widetilde{X}^{0\prime\prime},\ldots,\widetilde{X}^{k\prime}\oplus
\widetilde{X}^{k\prime\prime})\left(\begin{bmatrix}
Z^{1\prime,\prime} &
Z^{1\prime,\prime\prime}\\
Z^{1\prime\prime,\prime} & Z^{1\prime\prime,\prime\prime}
\end{bmatrix},\ldots,\begin{bmatrix} Z^{k\prime,\prime} &
Z^{k\prime,\prime\prime}\\
Z^{k\prime\prime,\prime} & Z^{k\prime\prime,\prime\prime}
\end{bmatrix}\right)\\
=(S_0^{\prime}\oplus S_0^{\prime\prime})f({X}^{0\prime}\oplus
{X}^{0\prime\prime},\ldots,{X}^{k\prime}\oplus
{X}^{k\prime\prime})\left(\begin{bmatrix}
(S_0^\prime)^{-1}Z^{1\prime,\prime}S_1^{\prime} &
(S_0^\prime)^{-1}Z^{1\prime,\prime\prime}S_1^{\prime\prime}\\
(S_0^{\prime\prime})^{-1}Z^{1\prime\prime,\prime}S_1^\prime &
(S_0^{\prime\prime})^{-1}Z^{1\prime\prime,\prime\prime}S_1^{\prime\prime}
\end{bmatrix},\right.\\
\left.\ldots,\begin{bmatrix}
(S_{k-1}^\prime)^{-1}Z^{k\prime,\prime}S_k^\prime &
(S_{k-1}^\prime)^{-1}Z^{k\prime,\prime\prime}S_k^{\prime\prime}\\
(S_{k-1}^{\prime\prime})^{-1}Z^{k\prime\prime,\prime}S_k^\prime &
(S_k^{\prime\prime})^{-1}Z^{k\prime\prime,\prime\prime}S_k^{\prime\prime}
\end{bmatrix}\right)(S_k^\prime\oplus S_k^{\prime\prime})^{-1}\\
=\begin{bmatrix} S_0^\prime f^{\prime,\prime}(S_k^\prime)^{-1} &
S_0^\prime f^{\prime,\prime\prime}(S_k^{\prime\prime})^{-1}\\
S_0^{\prime\prime}f^{\prime\prime,\prime}(S_k^\prime)^{-1} &
S_0^{\prime\prime}f^{\prime\prime,\prime\prime}(S_k^{\prime\prime})^{-1}
\end{bmatrix},
\end{multline*}
where according to \eqref{eq:dirsums_k} and \eqref{eq:f_entries},
for $\alpha,\beta\in\{^\prime,^{\prime\prime}\}$,
\begin{multline*}
f^{\alpha,\beta}=\sum_{
  \alpha_0,\ldots,\alpha_k\in \{^\prime,^{\prime\prime}\}\colon
  \alpha_0=\alpha, \alpha_k=\beta}
f(X^{0\alpha_0},\ldots,X^{k\alpha_k})\\
\left((S_0^{\alpha_0})^{-1}Z^{1\alpha_0,\alpha_1}S_1^{\alpha_1},\ldots,(S_{k-1}^{\alpha_{k-1}})^{-1}
Z^{k\alpha_{k-1},\alpha_k}S_k^{\alpha_k}\right).
\end{multline*}
Here $n_j^\prime\in\mathbb{N}$,
$n_j^{\prime\prime}\in\mathbb{Z}_+$,
$X^{j\alpha}\in\Omega^{(j)}_{n_j^\alpha}$,
$\widetilde{X}^{j\alpha}\in\widetilde{\Omega}^{(j)}_{n_j^\alpha}$
for $j=0,\ldots,k$ are such that
$\widetilde{X}^{j\alpha}=S_j^{\alpha}X^{j\alpha}(S_j^{\alpha})^{-1}$
for an invertible $S_j\in\mat{\ring}{n_j^\alpha}$,
$\alpha\in\{^\prime,^{\prime\prime}\}$;
$Z^{j\alpha,\beta}\in\rmat{\module{N}_j}{n_{j-1}^\alpha}{n_j^\beta}$
for $j=1,\ldots,k$, $\alpha,\beta\in\{^\prime,^{\prime\prime}\}$,
the block entry $f^{\alpha,\beta}$ is void if either $n_0^\alpha$
or $n_k^\beta$ is $0$, and a summand in the right-hand side sum is
$0$ if at least one of $n_j^{\alpha_j}$, $j=1,\ldots,k-1$, is $0$.
Therefore,
\begin{multline*}
\widetilde{f}(\widetilde{X}^{0\prime}\oplus
\widetilde{X}^{0\prime\prime},\ldots,\widetilde{X}^{k\prime}\oplus
\widetilde{X}^{k\prime\prime})\left(\begin{bmatrix}
Z^{1\prime,\prime} &
Z^{1\prime,\prime\prime}\\
Z^{1\prime\prime,\prime} & Z^{1\prime\prime,\prime\prime}
\end{bmatrix},\ldots,\begin{bmatrix} Z^{k\prime,\prime} &
Z^{k\prime,\prime\prime}\\
Z^{k\prime\prime,\prime} & Z^{k\prime\prime,\prime\prime}
\end{bmatrix}\right)\\
=\begin{bmatrix} \widetilde{f}^{\prime,\prime} &
 \widetilde{f}^{\prime,\prime\prime}\\
\widetilde{f}^{\prime\prime,\prime} &
\widetilde{f}^{\prime\prime,\prime\prime}
\end{bmatrix},
\end{multline*}
with
\begin{multline*}
\widetilde{f}^{\alpha,\beta}=S_0^\alpha
f^{\alpha,\beta}(S_k^{\beta})^{-1}=\sum_{
  \alpha_0,\ldots,\alpha_k\in \{^\prime,^{\prime\prime}\}\colon
  \alpha_0=\alpha, \alpha_k=\beta}
S_0^\alpha f\left(X^{0\alpha_0},\ldots,X^{k\alpha_k}\right)\\
\hfill\left((S_0^{\alpha_0})^{-1}Z^{1\alpha_0,\alpha_1}S_1^{\alpha_1},\ldots,(S_{k-1}^{\alpha_{k-1}})^{-1}
Z^{k\alpha_{k-1},\alpha_k}S_k^{\alpha_k}\right)(S_k^\beta)^{-1}\\
=\sum_{
  \alpha_0,\ldots,\alpha_k\in \{^\prime,^{\prime\prime}\}\colon
  \alpha_0=\alpha, \alpha_k=\beta}
\widetilde{f}(\widetilde{X}^{0\alpha_0},\ldots,\widetilde{X}^{k\alpha_k})
(Z^{1\alpha_0,\alpha_1},\ldots, Z^{k\alpha_{k-1},\alpha_k}).
\end{multline*}
Thus, $\widetilde{f}$ respects direct sums. Next, if
$\widetilde{g}\in\tclass{k}(\widetilde{\Omega}^{(0)},\ldots,\Omega^{(k)};
\ncspacej{\module{N}}{0},\ldots,\ncspacej{\module{N}}{k})$ is
another nc function of order $k$ such that
$\widetilde{g}\,|_{\Omega^{(0)}\times\cdots\times\Omega^{(k)}}=f$,
then for any $X^j\in\Omega^{(j)}_{n_j}$ and invertible
$S_j\in\mat{\ring}{n_j}$, $j=0,\ldots,k$, we have
\begin{multline*}
\widetilde{g}(S_0X^0S_0^{-1},\ldots,S_kX^kS_k^{-1})(S_0Z^1S_1^{-1},\ldots,S_{k-1}Z^kS_k^{-1})\\
=S_0\widetilde{g}(X^0,\ldots,X^k)(Z^1,\ldots,Z^k)S_k^{-1}
=S_0f(X^0,\ldots,X^k)(Z^1,\ldots,Z^k)S_k^{-1}\\
=\widetilde{f}(S_0X^0S_0^{-1},\ldots,S_kX^kS_k^{-1})(S_0Z^1S_1^{-1},\ldots,S_{k-1}Z^kS_k^{-1}),
\end{multline*}
i.e., we have necessarily $\widetilde{g}=\widetilde{f}$.
\end{proof}
We will call the nc function
$\widetilde{f}\in\tclass{k}(\widetilde{\Omega}^{(0)},\ldots,\widetilde{\Omega}^{(k)};
\ncspacej{\module{N}}{0},\ldots,\ncspacej{\module{N}}{k})$ the
\emph{canonical extension of the nc function
${f}\in\tclass{k}({\Omega^{(0)}},\ldots,\Omega^{(k)};\ncspacej{\module{N}}{0},\ldots,\ncspacej{\module{N}}{k})$}.
\index{canonical extension of a higher order nc function}

\backmatter

\printindex


\begin{thebibliography}{99}

\bibitem{AKV}
G. Abduvalieva and D. S. Kaliuzhnyi-Verbovetskyi. Fixed point
theorems for noncommutative functions. {\em J. Math. Anal. Appl.}
401 (2013), no. 1, 436--446.

\bibitem{AA}
V. M.~Adamjan  and D. Z.~Arov.
On unitary coupling of semiunitary operators. Russian, {\em Mat. Issled.} \textbf{1} (1966), vyp. 2, 3--64.
English translation:  {\em Amer. Math. Soc. Transl. (2)}
(95) (1970), 75--129.

\bibitem{Agui}
M. Aguiar.
Infinitesimal Hopf algebras. {\em New trends in Hopf algebra theory (La Falda, 1999)}, 1-–29,
{\em Contemp. Math.}, 267, Amer. Math. Soc., Providence, RI, 2000.

\bibitem{Ag}
J.~Agler. On the representation of certain holomorphic functions
defined
  on a polydisc. {\em Oper. Theory Adv. Appl.} 48,
  pp.~47--66, Birkh{\" a}user Verlag, Basel, 1990.

\bibitem{AgMcC1}
J. Agler and J. E. McCarthy.
Global holomorphic functions in several non-commuting variables.
Preprint, arXiv:1305.1636.

\bibitem{AgMcC2}
J. Agler and J. E. McCarthy.
Pick Interpolation for free holomorphic functions.
Preprint, arXiv:1308.3730.

\bibitem{AgY}
J. Agler and N. J. Young.
Symmetric functions of two noncommuting variables.
Preprint, arXiv:1307.1588.

\bibitem{AK0}
D. Alpay and D. S. Kaliuzhnyi-Verbovetskyi. On the intersection of
null spaces for matrix substitutions in a non-commutative rational
formal power series. {\em C. R. Math. Acad. Sci. Paris} 339
(2004), no. 8, 533--538.

\bibitem{AK}
D. Alpay and D. S. Kaliuzhnyi-Verbovetskyi.   Matrix-J-unitary
non-commutative rational formal power series.  {\em Oper. Theory
Adv. Appl.} 161, Birkh\"{a}user, Basel, 2006, 49--113.

\bibitem{AT}
C.-G.Ambrozie and D. Timotin. A von Neumann type inequality for
certain domains in $C^n$. {\em Proc. Amer. Math. Soc.}
131:859--869, 2003.

\bibitem{Am66}
S.~A. Amitsur.
\newblock Rational identities and applications to algebra and
geometry.
\newblock {\em J. Algebra}, 3:304--359, 1966.

\bibitem{BBFtH} J. A.~Ball, A.~Biswas, Q.~Fang, and S.~ter Horst.
    Multivariable generalizations of the Schur class:  Positive kernel
    characterization and transfer function realization. {\em Recent
    Advances in Operator Theory and Applicaions}, pp. 17--79,
    OT 187, Birkh\"auser, Basel, 2008.

\bibitem{BB}
J.~A. Ball and V. Bolotnikov. Realization and interpolation for
Schur-Agler-class functions on domains with matrix polynomial
defining function in $C^n$. {\em J. Funct. Anal.} 213:45--87,
2004.

\bibitem{BC}
J.A.~Ball and N.~Cohen.
de Branges--Rovnyak operator models and systems theory: a survey.
{\em Topics in Matrix and Operator Theory} (ed. by H. Bart, I. Gohberg, and M.A. Kaashoek),
pp. 93--136, {\em Operator Theory: Adv. Appl.} 50, Birkh\"auser-Verlag, Boston, 1991.

\bibitem{BGM1}
J.~A. Ball, G.~Groenewald, and T.~Malakorn.
\newblock Structured noncommutative multidimensional linear systems.
\newblock {\em SIAM J. Control Optim.}, 44(4):1474--1528 (electronic), 2005.

\bibitem{BGM3}
J.~A. Ball, G.~Groenewald, and T.~Malakorn.
\newblock Bounded real lemma for structured noncommutative multidimensional
  linear systems and robust control.
\newblock {\em Multidimens. Syst. Signal Process.}, 17(2-3):119--150, 2006.

\bibitem{BGM2}
J.~A. Ball, G.~Groenewald, and T.~Malakorn.
\newblock Conservative structured noncommutative multidimensional linear
  systems.
\newblock In {\em The state space method generalizations and applications},
  volume 161 of {\em Oper. Theory Adv. Appl.}, pages 179--223. Birkh\"auser,
  Basel, 2006.

\bibitem{BK-V}
J.~A. Ball and D.~S. Kaliuzhnyi-Verbovetskyi.
\newblock Conservative dilations of dissipative multidimensional systems: The
  commutative and non-commutative settings.
\newblock {\em Multidim. Syst. Signal Processing}, 19:79--122, 2008.

\bibitem{BT}
J. A. Ball and T. T. Trent. {Unitary colligations, reproducing
kernel {H}ilbert spaces, and
  {N}evanlinna-{P}ick interpolation in several variables}. {\em
J. Funct. Anal.} 157:1--61, 1998.


\bibitem{Cuntz2} J. A.~Ball and V.~Vinnikov, Lax-Phillips scattering
and conservative linear systems: a Cuntz-algebra multidimensional
setting, {\em Memoirs Amer. Math. Soc.} 178, no. 837, 2005.




\bibitem{BPV1}
S. T. Belinschi, M. Popa, and V. Vinnikov. Infinite divisibility
and a non-commutative Boolean-to-free Bercovici-Pata bijection.
{\em J. Funct. Anal.} 262 (2012), no. 1, 94--123.

\bibitem{BPV2}
S. T. Belinschi, M. Popa, and V. Vinnikov. On the operator-valued
analogues of the semicircle, arcsine and Bernoulli laws. {em J. Oper. Theory} 70 (2013),
239–-258.

\bibitem{BeVoi93}
H. Bercovici and D.-V. Voiculescu. {Free convolutions of measures
with unbounded support.}  {\em Indiana Univ. Math. J.}
42:733--773, 1993.

\bibitem{Be70}
G.~M. Bergman.
\newblock Skew fields of noncommutative rational functions, after {A}mitsur.
\newblock In {\em S\' eminaire Sch\" utzenberger--Lentin--Nivat, Ann\' ee
1969/70, No. 16}. Paris, 1970.

\bibitem{BerDicks}
G. M. Bergman and W. Dicks. On Universal Derivations.
{\em J. Algebra} 36 (1975), 193--211.

\bibitem{BR}
J.~Berstel and C.~Reutenauer.
\newblock {\em Rational series and their languages}, volume~12 of {\em EATCS
  Monographs on Theoretical Computer Science}.
\newblock Springer-Verlag, Berlin, 1988.

\bibitem{BJ} O. Bratelli and P. E. T.~Jorgensen, Iterated function
systems and permutation representations of the Cuntz algebra, {\em
Memoirs Amer. Math. Soc.} no. 139, 1999.

\bibitem{Co71a}
P.~M. Cohn.
\newblock The embedding of firs in skew fields.
\newblock {\em Proc. London Math. Soc.}, 23:193--213, 1971.

\bibitem{Co71}
P.~M. Cohn.
\newblock {\em Free rings and their relations}.
\newblock Academic Press, London, 1971.
\newblock London Mathematical Society Monographs, No. 2.

\bibitem{Co72}
P.~M. Cohn.
\newblock Universal skew fields of fractions.
\newblock {\em Symposia Math.}, 8:135--148, 1972.

\bibitem{Co06}
P.~M. Cohn.
\newblock {\em Free ideal rings and localization in general rings}.
\newblock Cambridge University Press, Cambridge, 2006.
\newblock New Mathematical Monographs 3.

\bibitem{DP} K. R.~Davidson and D. R.~Pitts, Invariant subspaces and
hyper-reflexivity for the free semigroup algebras, {\em
Proc.~London Math.~Soc.} 78:401--430, 1999.

\bibitem{DicksLew}
W. Dicks and J. Lewin.
A Jacobian conjecture for free associative algebras.
{\em Comm. Algebra} 10 (1982), no. 12, 1285–-1306.

   \bibitem{DMMcC}  M. A.~Dritschel, S.~Marcantognini, and S.~McCullough.
    Interpolation in semigroupoid algebras. {\em J.~Reine
    Angew.~Math.} 606:1--40, 2007.

   \bibitem{DMcC} M. A.~Dritschel and S.~McCullough.  Test functions,
    kernels, realizations and interpolation. {\em Operator Theory,
    Structured Matrices, and Dilation}, pp. 153--179, Theta Series in
    Advanced Mathematics, Bucharest, 2007.

\bibitem{Dyn}
E. B. Dynkin. Calculation of the coefficients in the
Campbell-Hausdorff formula.   {\em Doklady Akad. Nauk SSSR (N.S.)}
57:323--326, 1947 (Russian).

\bibitem{ER}
E.~G.~Effros and Zh.-J. Ruan. {\em Operator spaces}. London
Mathematical Society Monographs. New Series, \textbf{23}. The
Clarendon Press, Oxford University Press, New York, 2000.



\bibitem{Fliess70}
M.~Fliess.
\newblock Sur le plongement de l'alg\` ebre des s\' eries rationnelles non
  commutatives dans un corps gauche.
\newblock {\em C. R. Acad. Sci. Paris, Ser. A}, 271:926--927, 1970.

\bibitem{Fliess74a}
M.~Fliess.
\newblock Matrices de {H}ankel.
\newblock {\em J. Math. Pures Appl.}, 53(9):197--222, 1974.

\bibitem{Fliess74b}
M.~Fliess.
\newblock Sur divers produits de s\'eries formelles.
\newblock {\em Bull. Soc. Math. France}, 102:181--191, 1974.

\bibitem{Form}
E.~Formanek.
\newblock \emph{The polynomial identities and invariants of $n\times n$ matrices}.
\newblock CBMS Regional Conference Series in Mathematics, \textbf{78}.
 The American Mathematical Society, Providence,
RI, 1991.


\bibitem{GGRetW}
I. Gelfand, S. Gelfand, V. Retakh, and R. L. Wilson.
Quasideterminants. \emph{Adv. Math.} 193(1):56--141, 2005.

\bibitem{GKLLRT}
I. M. Gelfand, D. Krob, A. Lascoux, B. Leclerc, V. S. Retakh,  and
J.-Y. Thibon. Noncommutative symmetric functions. \emph{Adv.
Math.} 112(2):218--348, 1995.

\bibitem{GRet1}
I. M. Gelfand and V. S. Retakh. Determinants of matrices over
noncommutative rings. \emph{Funktsional. Anal. i Prilozhen.})
25(2):13--25, 96, 1991 (Russian);  translation in  {\em Funct.
Anal. Appl.} 25(2):91--102, 1991.

\bibitem{GRet2}
I. M. Gelfand and V. S. Retakh. Theory of noncommutative
determinants, and characteristic functions of graphs.
\emph{Funktsional. Anal. i Prilozhen.} 26(4):1--20, 96, 1992
(Russian); translation in {\em Funct. Anal. Appl.} 26(4):231--246,
1992.

\bibitem{Ha78}
D. W. Hadwin.
Continuous functions of operators; a functional calculus.
{\em Indiana Univ. Math. J.} 27 (1978), no. 1, 113--125.

\bibitem{HaKaMa03}
D. W. Hadwin, L. Kaonga, and B. Mathes.
Noncommutative continuous functions.
{\em J. Korean Math. Soc.} 40 (2003), no. 5, 789–-830.


\bibitem{Helton-scat}
J. W.~Helton.
Discrete time systems, operator models, and scattering theory.
{\em J. Funct. Anal.}
(16) (1974),  15--38.

\bibitem{HSOS} J. W. Helton, ``Positive'' noncommutative polynomials are sums of squares.
 \emph{Ann. of Math.} (2) 156 (2002), no. 2, 675--694.

\bibitem{H03}
J.~W. Helton.
\newblock Manipulating matrix inequalities automatically.
\newblock In {\em Mathematical systems theory in biology, communications,
  computation, and finance (Notre Dame, IN, 2002)}, volume 134 of {\em IMA Vol.
  Math. Appl.}, pages 237--256. Springer, New York, 2003.


\bibitem{HKMcC1} J. W. Helton, I. Klep, and S. McCullough.
Analytic mappings between noncommutative pencil balls.  {\em J.
Math. Anal. Appl.} 376 (2011), no. 2, 407--428.

\bibitem{HKMcC2} J. W. Helton, I. Klep, and S. McCullough.
Proper Analytic Free Maps.  {\em J. Funct. Anal.} 260 (2011), no.
5, 1476--1490.

\bibitem{HKMcC3} J. W. Helton, I. Klep, and S. McCullough.
Free analysis, convexity and LMI domains.
{\em Mathematical methods in systems, optimization and control} (ed. by H. Dym, M. de Oliveira, and M. Putinar),
pp. 195--219,
{\em Operator Theory: Adv. Appl.} 222, Birkhauser, 2012.

\bibitem{HKMcC-freecon}
J. W. Helton, I. Klep, and S. McCullough.
Free Convex Algebraic Geometry.
{\em Semidefinite Optimization and Convex Algebraic Geometry}
(ed. by G. Blekherman, P. Parrilo, and R. Thomas), pp. 341--405, SIAM, 2013.

\bibitem{HKMcCS}
J. W. Helton, I. Klep, S. McCullough, and N. Slinglend. Non-commutative ball maps.
{\em J. Funct. Anal.} 257(2009), 47-–87.

\bibitem{HMcCDeg2} J. W. Helton and S. McCullough.
 Convex noncommutative polynomials have degree two or less.
{\em SIAM J. Matrix Anal. Appl.} 25 (2004), no. 4, 1124--1139.

\bibitem{HMcC-last} J. W. Helton and S. McCullough.
 Free convex sets defined by rational expressions have LMI
representations. Preprint,  arXiv:1209.3412.

\bibitem{HMcCPV}
J.~W. Helton, S.~A. McCullough, M.~Putinar, and V.~Vinnikov.
\newblock Convex Matrix Inequalities versus Linear Matrix Inequalities,
\newblock {\em IEEE Trans. Aut. Control}, 54(5):952--964, 2009.

\bibitem{HMcCV}
J.~W. Helton, S.~A. McCullough, and V.~Vinnikov.
\newblock Noncommutative convexity arises from linear matrix inequalities.
\newblock {\em J. Funct. Anal.}, 240(1):105--191, 2006.

\bibitem{HSw}
R.~G.~Heyneman and M.~E.~Sweedler.
\newblock Affine Hopf
algebras. I.
\newblock {\em J. Algebra},
\textbf{13}:192--241, 1969.

\bibitem{HiPh}
E.~Hille and R.~S.~Phillips.
\newblock {\em Functional analysis and
semi-groups.} Third printing of the revised edition of 1957.
American Mathematical Society Colloquium Publications, Vol. XXXI.
American Mathematical Society, Providence, R. I., 1974.

\bibitem{HJ}
R.~A.~Horn and C.~R.~Johnson. {\em Topics in matrix analysis.}
Corrected reprint of the 1991 original. Cambridge University
Press, Cambridge, 1994.


\bibitem{Hump}
J. E. Humphreys. {\em Introduction to Lie algebras and
representation theory.} Springer-Verlag,
New-York--Heidelberg--Berlin, 1972.


\bibitem{KVV2}
D.~S. Kaliuzhnyi-Verbovetskyi and V.~Vinnikov.
\newblock Singularities of Noncommutative Rational
Functions and Minimal Factorizations.
\newblock {\em Lin. Alg. Appl.} 430:869--889, 2009.

\bibitem{KVV3}
D.~S. Kaliuzhnyi-Verbovetskyi and V.~Vinnikov. Noncommutative
rational functions, their difference-differential calculus and
realizations. {\em Multidimens. Syst. Signal Process.} 23 (2012),
no. 1--2, 49--77.

\bibitem{Kle}
S.~C. Kleene.
\newblock Representation of events in nerve nets and finite automata.
\newblock In {\em Automata studies}, Annals of mathematics studies, no. 34,
  pages 3--41. Princeton University Press, Princeton, N. J., 1956.

\bibitem{Lew}
J. Lewin. A Matrix Representation for Associative Algebras I, II.
{\em Trans. Amer. Math. Soc.} 188 (1977), 293--308, 309--317.

\bibitem{MS1}  P. S.~Muhly and B.~Solel. Hardy algebras,
    $W^{*}$-correspondences and interpolation theory. {\em
    Math.~Ann.} 330:353--415, 2004.

\bibitem{MS} P. S. Muhly and B. Solel. Progress in noncommutative function theory.
 {\em Sci. China Math.} 54 (2011), no. 11, 2275--2294.

\bibitem{MS2} P. S.~Muhly and B.~Solel. Schur class operator functions
    and automorphisms of Hardy algbras. {\em Doc.~Math.~} 13:365--411, 2008.

\bibitem{MS3} P. S. Muhly and B. Solel. Tensorial function theory: From Berezin transforms to Taylor's Taylor
series and back. Integral Equations Operator Theory 76 (2013), no. 4, 463–-508.

\bibitem{Mu}
J.~Mujica. {\em Complex analysis in Banach spaces. Holomorphic
functions and domains of holomorphy in finite and infinite
dimensions.} North-Holland Mathematics Studies, \textbf{120}.
Notas de Matema'tica [Mathematical Notes], \textbf{107}.
North-Holland Publishing Co., Amsterdam, 1986.

\bibitem{N06}
A.~Nemirovskii. Advances in convex optimization: conic
programming. {\em Plenary Lecture,  International Congress of
Mathematicians (ICM)}, Madrid, Spain, 2006.

\bibitem{NN}
Yu. Nesterov and A.~Nemirovskii.
\newblock {\em Interior-point polynomial algorithms in convex programming},
  volume~13 of {\em SIAM Studies in Applied Mathematics}.
\newblock Society for Industrial and Applied Mathematics (SIAM), Philadelphia,
  PA, 1994.

\bibitem{NS}
A. Nica and R. Speicher. \emph{Lectures on combinatorics of free
probability}, London Math. Soc. Lecture Note Series 335, Cambridge
Univ. Press, 2006.

\bibitem{Nie}
P. Niemiec.
Functional calculus for diagonalizable matrices.
{\em Linear Multilinear Algebra} 62 (2014), no. 3, 297-–321.

\bibitem{Pa1}
V.~I.~Paulsen. Representations of function algebras, abstract
operator spaces, and Banach space geometry.
\newblock {\em J. Funct. Anal.}  \textbf{109}(1):113--129, 1992.


\bibitem{Pa}
V.~Paulsen. {\em Completely bounded maps and operator algebras.}
Cambridge Studies in Advanced Mathematics, \textbf{78}. Cambridge
University Press, Cambridge, 2002.

\bibitem{Pi}
G.~Pisier.
\newblock {\em Introduction to operator space theory.}
 London Mathematical Society Lecture Note Series, \textbf{294}. Cambridge
University Press, Cambridge, 2003.

\bibitem{PV}
M. Popa and V. Vinnikov. Non-Commutative Functions and
Non-Commutative Free Levy-Hincin Formula. {\em Adv. Math.} 236 (2013), 131–-157.

\bibitem{Popescu-model1} G.~Popescu, Models for infinite sequences of
noncommuting operators, {\it Acta Sci. Math.} 53:355-368, 1989.



\bibitem{Popescu-model2} G.~Popescu, Characteristic functions for
infinite sequences of noncommuting operators, {\it J. Operator
 Theory} 22:51-71, 1989.



\bibitem{Popescu-CLT1} G.~Popescu, Isometric dilations for infinite
sequences of noncommuting operators, {\it Trans. Amer. Math. Soc.}
316:523-536, 1989. 


\bibitem{Popescu-CLT2} G.~Popescu, Multi-analytic operators on Fock
spaces, {\it Math.~Ann.} 303:31-46, 1995.

\bibitem{Po06} G. Popescu. Free holomorphic functions on the unit ball of $B(H)^n$. {\em J. Funct. Anal.} 241:268–-333,
2006.

\bibitem{Po10} G. Popescu. Free holomorphic automorphisms of the unit ball of $B(H)^n$. {\em J. reine angew.
Math.} 638:119–-168, 2010.

\bibitem{Popescu-OTncdom} G. Popescu.
Operator theory on noncommutative domains. {\em Mem. Amer. Math. Soc.} 205 (2010), no. 964.

\bibitem{Po12} G. Popescu.
Free biholomorphic functions and operator model theory. {\em J. Funct. Anal.} 262 (2012), no. 7, 3240-–3308.

\bibitem{Po13} G. Popescu.
Free biholomorphic functions and operator model theory, II. {\em J. Funct. Anal.} 265 (2013), no. 5, 786–-836.

\bibitem{Rota}
S. A. Joni and G.-C. Rota.
Coalgebras and bialgebras in combinatorics.
{\em Stud. Appl. Math.} 61 (1979), no. 2,
93–-139.

\bibitem{Row80}
L.~H. Rowen.
\newblock {\em Polynomial identities in ring theory}, volume~\textbf{84} of {\em Pure
  and Applied Mathematics}.
\newblock Academic Press Inc. [Harcourt Brace Jovanovich Publishers], New York,
  1980.

\bibitem{Ru}
Zh.-J.~Ruan.
\newblock On real operator spaces.
 International Workshop on Operator Algebra and Operator Theory (Linfen, 2001).
 {\em Acta Math. Sin. (Engl. Ser.)}, \textbf{19}(3):485--496, 2003.



\bibitem{Sch}
S. H. Schanuel.
Continuous extrapolation to triangular matrices characterizes smooth functions.
{\em J. Pure Appl. Algebra} 24 (1982), no. 1, 59–-71.

\bibitem{SchZa}
S. H. Schanuel and W. R. Zame.
Naturality of the functional calculus.
{\em Bull. Lond. Math. Soc.} 14 (1982), 218--220.

\bibitem{Schutz61}
M.~P. Sch{\"u}tzenberger.
\newblock On the definition of a family of automata.
\newblock {\em Information and Control}, 4:245--270, 1961.

\bibitem{Schutz62b}
M.~P. Sch{\"u}tzenberger.
\newblock Certain elementary families of automata.
\newblock In {\em Proc. Sympos. Math. Theory of Automata (New York, 1962)},
  pages 139--153. Polytechnic Press of Polytechnic Inst. of Brooklyn, Brooklyn,
  New York, 1963.

\bibitem{Sh}
 B. V.~Shabat. \emph{Introduction to Complex Analysis. {P}art {II}}.
 Translations of Mathematical Monographs 110, Functions of several
 variables, Translated from the third (1985) Russian edition by J.
 S. Joel, AMS, Providence, RI, 1992.

\bibitem{SIG97}
R.~E. Skelton, T.~Iwasaki, and K.~M. Grigoriadis.
\newblock {\em A Unified Algebraic Approach to Linear Control Design}.
\newblock Taylor \& Francis, 1997.

\bibitem{T1}
J.~L.~Taylor.
\newblock
A general framework for a multi-operator functional calculus.
\newblock {\em Advances in Math.},
\textbf{9}:183--252, 1972.

\bibitem{T2}
J.~L.~Taylor.
\newblock Functions of several noncommuting variables.
\newblock {\em Bull. Amer. Math. Soc.},
\textbf{79}:1--34, 1973.

\bibitem{Vinograd}  I. M. Vinogradov, Elements of Number Theory, Mineola, NY: Dover Publications, 2003,
reprint of the 1954 edition.

\bibitem{Voi85}
D.-V. Voiculescu. Symmetries of some reduced free product
$C^*$-algebras. {\em Operator algebras and their connections with
topology and ergodic theory (Bus¸teni, 1983)}, Lecture Notes in
Math. 1132, Springer, Berlin, 1985, pp. 556–-588.

\bibitem{Voi86}
D.-V. Voiculescu. { Addition of certain non-commutative random
variables}. {\em J. Funct. Anal.} 66:323--346, 1986.

\bibitem{Voi95}
D.-V. Voiculescu. Operations on certain non-commutative
operator-valued random variables. {\em Ast\'erisque} 232:243--275,
1995.

\bibitem{Voi00}
D.-V. Voiculescu.
The coalgebra of the free di¤erence quotient and free probability.
{\em Internat. Math. Res. Not.} 2000 (2000), no. 2, 79-–106.

\bibitem{Voi04} D.-V. Voiculescu. { Free Analysis Questions I: Duality Transform for the Coalgebra of $\partial_{X:B}$}
{\em International Math. Res. Notices} 16:793--822, 2004.

\bibitem{Voi09} D.-V. Voiculescu.
Free analysis questions. II: The Grassmannian completion and the
series expansion at the origin.  {\em J. Reine Angew. Math.} 645
(2010), 155--236.

\bibitem{VoiDyNi}
D.-V. Voiculescu, K. J. Dykema, and A. Nica. Free random
variables. A noncommutative probability approach to free products
with applications to random matrices, operator algebras and
harmonic analysis on free groups. CRM Monograph Series, 1.
American Mathematical Society, Providence, RI, 1992. vi+70 pp.

\bibitem{Wed}
J. H. M. Wedderburn. On continued fractions in non-commutative
quantities.  {\em Ann. of Math.}   15:101--105, 1913.


\bibitem{Z}
M.~A.~Zorn. Derivatives and Fr\'{e}chet differentials. {\em Bull.
Amer. Math. Soc.}, \textbf{52}:133-137, 1946.

\end{thebibliography}
\end{document}